\documentclass[a4paper,11pt]{amsart}
\usepackage{amssymb,amsmath,latexsym, upgreek,comment}
\usepackage{epsfig,caption}
\usepackage{mathrsfs}
\usepackage{hyperref}
\usepackage{color,graphics,subfigure}

\usepackage{epic,eepic,wrapfig,ifthen}
\usepackage{url}
\oddsidemargin 0in
\evensidemargin 0in
\topmargin -0.2in
\textwidth 6.3in
\textheight 9.3in

\newtheorem{thm}{Theorem}[section]
\newtheorem{corollary}[thm]{Corollary}
\newtheorem{lemma}[thm]{Lemma}
\newtheorem{prop}[thm]{Proposition}
\newtheorem{defn}[thm]{Definition}

\newtheorem{remark}[thm]{Remark}
\newtheorem{example}[thm]{Example}
\numberwithin{equation}{section}

\newcommand{\formula}[2][nolabel]
{\ifthenelse{\equal{#1}{nolabel}}
 {\begin{align*} #2 \end{align*}}
 {\ifthenelse{\equal{#1}{}}
  {\begin{align} #2 \end{align}}
  {\begin{align} \label{#1} #2 \end{align}}
 }
}

\def\qed{{\hfill $\Box$ \bigskip}}

\DeclareMathOperator*{\esssup}{ess\,sup}

\renewcommand{\bar}{\overline}
\captionsetup{font=small}

\newcommand{\cal}[1]{\mathcal{#1}}
\def\sA {{\cal A}} \def\sB {{\cal B}} \def\sC {{\cal C}}
\def\sD {{\cal D}} \def\sE {{\cal E}} \def\sF {{\cal F}}
  \def\sI {{\cal I}}
  
 \def\sN {{\cal N}} \def\sO {{\cal O}}

 \def\bH {{\mathbb H}} 
  
 \def\bN {{\mathbb N}}

\def\R {{\mathbb R}}
\def\N {{\mathbb N}}

\def\P{{\mathbb P}}
\def\E{{\mathbb E}}

\def\e{{\mathbf e}}
\def\EE{{\mathcal E}}
\def\eps{\varepsilon}
\def\e{\mathbf e}

\def\FF{{\mathcal F}}
\def\EE{{\mathcal E}}

\def\R{{\mathbb R}}

\def\E{{\mathbb E}}
\def\F{{\mathbf F}}
\def\P{{\mathbb P}}
\def\N{{\mathbb N}}
\def\eps{\varepsilon}
\def\wh{\widehat}
\def\wt{\widetilde}

\def\ub{{\overline{\beta}}}
\def\lb{{\underline{\beta}}}

\def\1{{\bf 1}}

\def\nn{\nonumber}

\def\wt{\widetilde}
\def\wh{\widehat}

\def\eps{{\varepsilon}}

\allowdisplaybreaks

\makeatletter
\@addtoreset{equation}{section}

\makeatother

\begin{document}
\bibliographystyle{plain}

\title[Markov processes with jump kernels decaying at the boundary]
{ \bf    Markov processes with jump kernels decaying at the boundary}

\author{Soobin Cho, \quad Panki Kim, \quad Renming Song \quad and \quad Zoran Vondra\v{c}ek}

\address[Cho]{Department of Mathematics, University of Illinois Urbana-Champaign, Urbana, IL 61801, USA}
\curraddr{}
\email{soobinc@illinois.edu}

\address[Kim]{Department of Mathematical Sciences and Research Institute of Mathematics,
	Seoul National University,	Seoul 08826, Republic of Korea}
\thanks{Panki Kim is supported by the National Research Foundation of Korea (NRF) grant funded by the Korea government (MSIP) (No. 2022H1D3A2A01080536).}
\curraddr{}
\email{pkim@snu.ac.kr}

\address[Song]{
	Department of Mathematics, University of Illinois Urbana-Champaign, Urbana, IL 61801,
	USA}
\curraddr{}
\thanks{Research of Renming Song is supported in part by a grant from
	the Simons Foundation \#960480.}
\email{rsong@illinois.edu}

\address[Vondra\v{c}ek]
{
	Department of Mathematics, Faculty of Science, University of Zagreb, Zagreb, Croatia
}
\curraddr{}
\thanks{Zoran Vondra\v{c}ek is supported by the National Research Foundation of Korea (NRF) grant funded by the Korea government (MSIP) (No. 2022H1D3A2A01080536).  Supported in part by the Croatian Science Foundation under the project IP-2022-10-2277.} 
\email{vondra@math.hr}

\date{}

\begin{abstract}
The goal of this work is to develop a general theory for non-local singular operators of the type
$$
	L^{\mathcal{B}}_{\alpha}f(x)=\lim_{\epsilon\to 0} \int_{D,\, |y-x|>\epsilon}\big(f(y)-f(x)\big) \mathcal{B}(x,y)|x-y|^{-d-\alpha}\,dy,
$$
and
$$
L f(x)=L^{\mathcal{B}}_{\alpha}f(x) - \kappa(x) f(x),	
$$ 
in case $D$ is  a $C^{1,1}$ open set in $\mathbb{R}^d$, $d\ge 2$.
The function $\mathcal{B}(x,y)$ above may 
vanish at the boundary of $D$, and the killing potential $\kappa$ 
may be subcritical or critical. 

From a probabilistic point of view we study the reflected process on the closure $\overline{D}$ with 
infinitesimal generator $L^{\mathcal{B}}_{\alpha}$, 
and its part process on $D$ obtained by either killing at the boundary $\partial D$, or by killing via the killing potential 
$\kappa(x)$. 
The general theory developed in this work (i)  contains  subordinate killed stable processes in $C^{1,1}$ open sets as a special case, 
(ii) covers the case when $\mathcal{B}(x,y)$ is bounded between two positive constants and is well approximated by certain H\"older continuous functions, and (iii) extends the main results known for the half-space in $\mathbb{R}^d$.
The main results of the work are the boundary Harnack principle and its possible failure, and sharp two-sided Green function estimates. Our results on the boundary Harnack principle completely cover the corresponding earlier results in the case of half-space. Our
Green function estimates extend the corresponding earlier estimates in the case of half-space to  bounded $C^{1, 1}$ open sets.
\end{abstract}

\maketitle

\bigskip

\maketitle

\noindent {\bf AMS 2020 Mathematics Subject Classification}: Primary 60Jxx, 60J45; Secondary 31C25, 35J08, 47G20, 60J46, 60J50, 60J76

\bigskip\noindent
{\bf Keywords and phrases}: 
Markov processes, Dirichlet forms, fractional Laplacian, stable process, jump kernel decaying at the boundary, boundary Harnack principle, Green function 

\bigskip
\tableofcontents

%%%%%%%%%%%%%%%%%%%%%%%%%%%%%%%%%%%%%%%%%%%%%%%%%%%%%%%%%%%%%%%%%%%%%%%%%%%%%%%%%%%%%%%%%%%%%%%%%%%%%%%%%%%%%%%%%%%%%%%%%%%%%%%%%%%%%%%
%%%%%%%%%%%%%%%%%%%%%%%%%%                           Introduction                    %%%%%%%%%%%%%%%%%%%%%%%%%%%%%%%%%%%%%%%%%%%%%%%%%%
%%%%%%%%%%%%%%%%%%%%%%%%%%%%%%%%%%%%%%%%%%%%%%%%%%%%%%%%%%%%%%%%%%%%%%%%%%%%%%%%%%%%%%%%%%%%%%%%%%%%%%%%%%%%%%%%%%%%%%%%%%%%%%%%%%%%%%%

\section{Introduction}\label{ch:intro}

The fractional Laplacian $\Delta^{\alpha/2}:=-(-\Delta)^{\alpha/2}$, $\alpha\in (0,2)$,
is one of the most important and most studied non-local operators. 
It appears in various branches of mathematics -- partial differential equations (see \cite{BV16, Ros16} for extensive surveys), probability theory (\cite{BGR62, Sat99}), potential theory (\cite{BBKRSV, Lan72}), 
harmonic analysis (\cite{Ste71}), semigroup theory 
(\cite{St}), 
numerical analysis (\cite{Lis-et-al}), as well as in applications involving long range dependence. One of its several equivalent definitions, see \cite{Kwa17}, is the singular integral definition: The fractional Laplacian in $\R^d$, $d\ge 1$, is the principal value integral
\begin{align}\label{e:intro-frac-lap}
\Delta^{\alpha/2}f(x)&=\text{p.v.}\int_{\R^d}  c_{d,-\alpha}
\big(f(y)-f(x)\big)|x-y|^{-d-\alpha}dy \\
&=\lim_{\eps\to 0}  \int_{\R^d, \, |y-x|>\eps} c_{d,-\alpha}
\big(f(y)-f(x)\big)|x-y|^{-d-\alpha}dy,\nonumber
\end{align}
where $c_{d,\beta}=2^{-\beta}\pi^{-d/2}\Gamma((d-\beta)/2)/|\Gamma(\beta/2)|$. 
The fractional Laplacian is the infinitesimal generator of the isotropic $\alpha$-stable L\'evy process in 
$\R^d$, which is a prototype of a purely discontinuous Markov process. For probabilists, the singular kernel 
$j(x,y)= c_{d,-\alpha} |x-y|^{-d-\alpha}$, 
$x,y\in \R^d$, serves as the jump kernel of the $\alpha$-stable process. Both the fractional Laplacian and the isotropic stable process have been studied for a long time. 

Of more recent interest is the investigation of fractional Laplacians
in a (proper) open subset $D$ of $\R^d$. One possible definition is  obtained from \eqref{e:intro-frac-lap} by taking 
$f_{ |  \R^d\setminus D}=0$,
leading to the operator
\begin{equation}\label{e:intro-generator}
L f(x)=\text{p.v.} \int_D c_{d,-\alpha}
(f(y)-f(x))|x-y|^{-d-\alpha}\,dy - \kappa(x)f(x), \quad x\in D,
\end{equation}
where $\kappa(x)=c_{d,-\alpha}\int_{D^c} |x-y|^{-d-\alpha}dy $  is the (critical) killing potential.
In the PDE literature the operator $L$ is usually called the \emph{restricted fractional Laplacian}. For probabilists, it is the infinitesimal generator of the part of the $\alpha$-stable process in $D$ (that is, of the $\alpha$-stable process killed upon first exit from $D$). By removing the killing part $\kappa$ from the operator $L$, one obtains the so-called \emph{censored} (or \emph{regional})  \emph{fractional Laplacian}. The corresponding Markov process -- the \emph{censored stable process} -- was introduced and thoroughly studied in \cite{BBC}. 
Note that on account of \eqref{e:intro-generator}, the restricted fractional Laplacian can be viewed as a (critical) Schr\"odinger perturbation of the censored fractional Laplacian. By changing 
this perturbation, one gets a different operator, and may hope to see different potential-theoretic behaviors. 
Such line of reasoning was employed in \cite{CKSV20}, and will be important in this work as well.

Two of the most important potential-theoretic results related to the fractional Laplacian and its variants in proper open sets are the boundary Harnack principle and the Green function estimates. 

The boundary Harnack principle (BHP) is the result roughly stating that 
all non-negative harmonic functions vanishing at a common part of the boundary of an open subset in $\R^d$ decay at the 
same rate. The first such result for 
$\alpha$-harmonic functions (functions harmonic with respect to the isotropic $\alpha$-stable process)
 in Lipschitz domains was proved in \cite{Bog97} in 1997. The extension to the so-called $\kappa$-open sets
was given two years later in \cite{SW99}, and all restrictions on the boundary were removed in \cite{BKK08}. By use of an extension method, another proof in case of Lipschitz domains was given in \cite{CS07}.
A stronger form of BHP is the BHP with exact decay rate, which requires a certain smoothness of the boundary -- typically $C^{1,1}$ smoothness. For  the fractional Laplacian, the  exact decay rat is $\delta_D(x)^{\alpha/2}$, which means that all non-negative $\alpha$-harmonic functions
 vanishing at a part of the boundary of a $C^{1,1}$ open set $D$ decay at the rate of $\delta_D(x)^{\alpha/2}$.
 Here $\delta_D(x)$ denotes the distance of the point $x$ to the boundary $\partial D$. 
For the censored $\alpha$-stable process in $C^{1,1}$ open set and $\alpha\in (1,2)$,  it was proved in \cite{BBC} that the BHP with exact decay rate $\delta_D(x)^{\alpha-1}$ holds. 
The paper \cite{CKSV20} studied how perturbations of the censored fractional Laplacian  by critical killings
affect the exact decay rates of the corresponding  harmonic functions.

For $d>\alpha$, the 
 potential 
of the fractional Laplacian in $\R^d$ is the Riesz potential:
$$
Gf(x)=\int_{\R^d} c_{d,\alpha} f(y)|x-y|^{-d+\alpha} dy.
$$
It is (at least formally) 
the inverse operator of the fractional Laplacian $\Delta^{\alpha/2}$. The Riesz kernel 
$G(x,y)=c_{d,\alpha}|x-y|^{-d+\alpha}$
is for probabilists the density of the occupation time measure of the $\alpha$-stable process. For the restricted, respectively censored, fractional  Laplacian, there is no explicit formula for the density of the occupation time measure of the killed, respectively censored, $\alpha$-stable process in an open set $D$. The best one can hope for 
is  sharp two-sided estimates. Investigation of the Green function $G^D(x,y)$ of the part of the (isotropic) $\alpha$-stable process in a $C^{1,1}$ open set $D$ also started in the late 1990's. The sharp two-sided estimates of the Green function $G^D(x,y)$, independently obtained in \cite{CS98} and \cite{Kul97}, state that when $d>\alpha$, 
\begin{equation}\label{e:intro-gfe}
	G^D(x,y)\asymp \left(\frac{\delta_D(x)}{|x-y|}\wedge 1\right)^{\alpha/2}
	\left(\frac{\delta_D(y)}{|x-y|}\wedge 1\right)^{\alpha/2}|x-y|^{\alpha-d}, \quad x,y\in D.
\end{equation}
Here $a\asymp b$ means that the ratio $a/b$ is bounded between two positive constants. 
For the censored process and $\alpha\in (1,2)$, \cite{CK02} established the sharp two-sided Green function estimates of the form \eqref{e:intro-gfe} with the power $\alpha/2$ replaced by $\alpha-1$. 

The jump kernel of the part of the process is inherited from its parent process in $\R^d$, and is for $x,y\in D$ 
still equal to $c_{d,-\alpha}  |x-y|^{-d-\alpha}$. 
The same is also true for the censored stable process. This obvious fact  highly facilitates the analysis of both the part process and the censored process. The jump kernel of the stable process in $\R^d$ is spatially homogeneous, hence the fractional Laplacian can be viewed as an operator with constant coefficients. This property is inherited by the censored fractional Laplacian and is also true for the integral part of the restricted Laplacian.
One possibility to introduce non-constant coefficients versions of the fractional Laplacian is to define the kernel 
$J(x,y):=c(x,y)|x-y|^{-d-\alpha}$ (with $x,y$ in the appropriate state space), with the function $c(x,y)$ bounded between two  positive constants. Such operators can be thought 
 as  non-local counterparts of uniformly elliptic differential operators. The pioneering work in this direction is \cite{CK03} (on metric measure spaces), 
which has led to many subsequent developments (see, for instance, \cite{ Acu16, CKK09, CKKW, CKS10-2, CK08, CKW20, CKW21, KS12}).
In the case of Euclidean space, a non-constant coefficients version of the regional fractional Laplacian (and the related reflected process) were studied in \cite{CKS10-2, Gu06, Gu07, GM06}.
In particular, under certain regularity conditions on the function $c(x,y)$, 
\cite{Gu07} proved a boundary Harnack principle and
 and \cite{CKS10-2} established its Green and heat kernel estimates.

We  now describe  another, quite natural, way of  introducing  
 non-constant coefficients into the fractional Laplacian. Let 
$D\subset \R^d$ be a $C^{1,1}$ open set, and let $X^D$ denote the part of an  isotropic $\gamma$-stable process in $D$, 
$\gamma\in(0,2]$ (for $\gamma=2$, $X^D$ is a Brownian motion killed upon exiting $D$). Let $S=(S_t)_{t\ge 0}$ be an independent (of $X^D$) $\beta$-stable subordinator. The subordinate process $Y^D_t:=X^D_{S_t}$ is called a subordinate killed stable process.  
It is worth mentioning that, unlike the part of a stable process in $D$, the process $Y^D$ is \emph{not} part of a larger process in $\R^d$, and is intrinsically connected with its state space $D$.
In case $\gamma=2$ (subordinate killed Brownian motion), 
its infinitesimal generator is the \emph{spectral fractional Laplacian} 
$-(-\Delta_{|D})^{\beta}$ -- the $\beta$-power of the Dirichlet Laplacian. 
This operator has been intensively studied in the PDE literature 
(\cite{AD17, AGCV22, BSV,Gr,SV}).
Similarly, in case $\gamma\in (0,2)$, the infinitesimal generator is the $\beta$-power of the restricted $\gamma$-Laplacian. By setting $\alpha:=\gamma \beta\in (0,2)$, we can regard these operators as versions of the $\alpha$-fractional Laplacian in the open set $D$. They are non-local integral operators of the form
	\begin{equation}\label{e:intro-spectral-gen}
		\text{p.v.}\int_D (f(y)-f(x))J^D(x,y)\, dy -\kappa_D(x)f(x), \quad x\in D,
	\end{equation}
where $\kappa_D(x)\asymp \delta_D(x)^{-\alpha}$, 
and the singular kernel $J^D(x,y)$ enjoys the following sharp two-sided estimates (see \cite{KSV18-a} and \cite{KSV18-b} for more general results, and \cite{KSV} for a version pertinent to this setting): For $\gamma=2$,
	\begin{equation}\label{e:intro-skbm}
		J^D(x,y)\asymp \left(\frac{\delta_D(x)}{|x-y|}\wedge 1\right)\left(\frac{\delta_D(y)}{|x-y|}\wedge 1\right)|x-y|^{-d-\alpha}\, 
	\end{equation}
and for $\gamma\in (0,2)$,
	\begin{equation}\label{e:intro-skst}
		\begin{split}
		& J^D(x,y)\asymp\\
		& \begin{cases} 
		\left(\frac{\delta_D(x)\wedge \delta_D(y)}{|x-y|}\wedge 1\right)^{\gamma(1-\beta)}|x-y|^{-d-\alpha} 
			&\mbox{if }\beta\in (1/2,1),\\[5pt]
		\left(\frac{\delta_D(x)\wedge \delta_D(y)}{|x-y|}\wedge 1\right)^{\gamma/2}
			\log\left(1+\frac{(\delta_D(x)\vee \delta_D(y))\wedge |x-y|}{\delta_D(x)\wedge \delta_D(y)\wedge |x-y|}\right)
			|x-y|^{-d-\alpha} &\mbox{if }\beta=1/2, \\[5pt]
			\left(\frac{\delta_D(x)\wedge \delta_D(y)}{|x-y|}\wedge 1\right)^{\gamma/2}\left(\frac{\delta_D(x)\vee \delta_D(y)}{|x-y|}
			\wedge 1\right)^{ (\gamma/2)(1-\beta/2) }|x-y|^{-d-\alpha}&\mbox{if } \beta\in (0,1/2).
		\end{cases}  
		\end{split}
	\end{equation} 
Thus we see that the kernel $J^D(x,y)$ depends not only on the distance between $x$ and $y$, but also on the distance of these points to the boundary. By defining $\sB(x,y)=J^D(x,y)|x-y|^{d+\alpha}$, we can write the jump kernel of $Y^D$ in the form 
$J^D(x,y)=\sB(x,y)|x-y|^{-d-\alpha}$. It is clear from \eqref{e:intro-skbm} and \eqref{e:intro-skst} that $\sB(x,y)$ decays to 0 as 
$\delta_D(x)\to 0$ or $\delta_D(y)\to 0$, hence it is \emph{not} bounded between two positive constants.
As a consequence, the infinitesimal generator of a subordinate killed L\'{e}vy process is degenerate near the boundary and is not ``uniformly elliptic". In case of local operators, partial differential
equations degenerate at the boundary have been studied intensively in the PDE literature;
see, for instance, \cite{DL03, FP14, Kim07},  and the references therein.

An interesting and important feature of the estimates \eqref{e:intro-skst} is that there is a phase transition at  $\beta=1/2$ which is responsible for qualitatively different, and quite unexpected, potential-theoretic properties. It turned out, cf.~\cite{KSV18-b}, that when $\gamma\in (0,2)$, the scale invariant BHP with exact decay rate $\delta_D(x)^{\gamma/2}$ holds when 
$\beta\in (1/2,1)$, while even the non-scale invariant BHP fails when $\beta\in (0,1/2]$ (the scale invariant BHP holds for $\gamma=2$ regardless of the value of $\beta$, see \cite{KSV18-a}).

As mentioned earlier, the subordinate killed Brownian motion 
is the  probabilistic counterpart of the spectral fractional Laplacian.
The subordinate killed Brownian motion 
and, more generally, subordinate killed L\'evy processes are natural 
and important, and 
there are many papers in the  literature on these. 
They
can be viewed as prototypes of singular non-local integral operators degenerate at the boundary. 
Thus, it is very important, both theoretically and from an application point of view,
to build a general framework for  singular operators degenerate at the boundary of the type \eqref{e:intro-spectral-gen} with or without killing potential.

The first step in this direction was taken in \cite{KSV, KSV20, KSV21} where such operators were studied for the open half-space $\bH=\{x=(\wt{x},x_d): \wt{x}\in \R^{d-1}, x_d>0\}\subset \R^d$ under the assumptions that the underlying singular operator (and consequently, the related process) is  
invariant under horizontal translations and appropriate scaling.

The goal of this work is to develop a general theory for  singular non-local  operators of the type
	\begin{equation}\label{e:intro-gen-oper}
		L^{\sB}_{\alpha}f(x)=\lim_{\eps
		\downarrow 0} \int_{D,\, |y-x|>\eps}\big(f(y)-f(x)\big) \sB(x,y)|x-y|^{-d-\alpha}\,dy,
	\end{equation}
and
	\begin{equation}\label{e:intro-gen-oper-kill}
		L f(x)=L^{\sB}_{\alpha}f(x) - \kappa(x) f(x),
	\end{equation}
in case $D$ is a $C^{1,1}$ open set, the function $\sB(x,y)$ may decay at the boundary of $D$, and the killing potential $\kappa$ is subcritical or critical. 
As  (very)  special cases, such type of singular operators contain
spectral, restricted and censored fractional Laplacian.
From a probabilistic point of view we will study the reflected process on the closure $\overline{D}$ with  
infinitesimal generator $L^{\sB}_{\alpha}$, 
and its part process on $D$ obtained by either killing at the boundary $\partial D$ (this happens only when $\alpha\in (1,2)$ and the obtained process is an analog of the censored process), or by killing via the killing potential $\kappa(x)$. This general theory should (i) include as a special case subordinate killed stable processes in
$C^{1,1}$ open sets; (ii) cover the case when $\sB(x,y)$ is bounded between two positive constants and is well approximated by certain H\"older continuous functions (thus extending main result in \cite{Gu07}), and (iii) contain as a special case the main results obtained in \cite{KSV, KSV20, KSV21}  for the half-space.
The key ingredient in developing such a general theory is to find good and reasonable assumptions on the functions $\sB$ and $\kappa$.

There are two major obstacles towards this goal. The first one is that the 
\emph{flattening the boundary} 
method does not work, hence one cannot 
 use  the half-space results to  get results for  regular smooth open sets.
Flattening the boundary of $D$ is a common way of proving certain results for non-local  operators (or \emph{part} processes) 
in $C^{1,1}$ open sets, and amounts to setting up an orthonormal coordinate system at a boundary point of the $C^{1,1}$ opens set, and 
ingeniously using the results known for the half-space in the local coordinate system, 
see e.g.~\cite{ROS14}. What makes this method work in the nondegenerate case is that the kernels for $D$ and for the half-space are the same – namely $c_{d,-\alpha}|x-y|^{-d-\alpha}$. 
In the axiomatic framework we intend to build, the kernel for $D$ is intrinsically connected to the set itself -- 
it is $\sB(x,y)|x-y|^{-\alpha-d}$, where the function $\sB(x,y)$ is defined only on $D\times D$ (and will usually decay at the boundary). The flattening of the boundary method does not work directly -- 
the function $\sB$ (and thus the jump kernel) is intrinsically connected with distances of the points to the boundary of $D$, while its counterpart in the case of the half-space $\bH$ should be defined in terms of the distances of the points to the boundary of $\bH$. When one flattens the boundary of $D$, 
 distance to the boundary changes. 
Thus, flattening destroys the structure of the function $\sB$ in terms of  distances to the boundary, and one cannot make connections with the half-space case directly.

The first and foremost challenge is to find an appropriate condition on 
$\sB$ that somehow circumvents and replaces the flattening of the boundary method.
We address this challenge by introducing the assumption \hyperlink{Q5}{{\bf (B5)}} and use the whole Section \ref{ch:examples} for its justification.

The second obstacle in developing the theory is the lack of scaling in general $C^{1,1}$ open sets. 
In the half-space case, the operator \eqref{e:intro-gen-oper} (with $D=\bH$), denoted by $L^{\sB_{\bH}}_{\alpha}$, is invariant under horizontal translations and scaling. By using scaling  and horizontal translation invariance in a fundamental way, one can calculate the action of the operator $L^{\sB_{\bH}}_{\alpha}$ on the powers of the distance function to the boundary.
More precisely, for a parameter $p$ in a certain range, one gets that
\begin{equation}\label{e:intro-L-on-dist}
	L^{\sB_{\bH}}_{\alpha} x_d^p =C(\alpha, p, \sB)x_d^{p-\alpha},
\end{equation}
with a semi-explicit constant 
$C(\alpha, p, \sB_{\bH})$.
For general $D$, there is no hope for such a formula.
A substitute for such a result is a good estimate of
the action of $L^{\sB}_{\alpha}$ on the so-called \emph{barrier functions}. The key Proposition \ref{p:barrier} contains such an estimate on the power of the cutoff distance function $\1_V(x)\delta_D(x)^q$ with $V$ a Borel subset of $D$, and relies on the assumption \hyperlink{Q5}{{\bf (B5)}} in a crucial way.

The form of the function $\sB(x,y)$ is motivated by 
the estimates \eqref{e:intro-skbm} and 
\eqref{e:intro-skst} -- we assume that it is comparable to the product 
\begin{equation}\label{e:intro-Phi}
\Phi_1\left(\frac{\delta_D(x)\wedge \delta_D(y)}{|x-y|}\right) \Phi_2\left(\frac{\delta_D(x)\vee \delta_D(y)}{|x-y|}\right)
	\ell\bigg(\frac{\delta_D(x)\wedge \delta_D(y)}{(\delta_D(x)\vee\delta_D(y))\wedge |x-y|}\bigg),
\end{equation}
where 
$\Phi_1$, $\Phi_2$ and  $\ell$ are functions satisfying certain weak scaling conditions (with some parameters)
 -- see the assumption \hyperlink{Q4-c}{{\bf (B4-c)}} for the precise definition. 

Our main results are the boundary Harnack principle with exact decay rate, and the sharp two-sided Green function estimates. We prove that the BHP holds for certain values of 
a parameter $p$ related to the killing potential $\kappa$ and 
the parameters entering the functions $\Phi_1$ 
and $\Phi_2$ in \eqref{e:intro-Phi} 
(and it may fail for the other values).  
In fact, when $\Phi_1$ and $\Phi_2$ are power functions,  and $\ell$ is a slowly varying function, 
we completely  determine the  region of the parameters where the boundary Harnack principle holds. 
Moreover, we also completely cover the boundary Harnack principle results of \cite{KSV20}.

We establish sharp two-sided estimates on the Green functions of these processes for all admissible values of 
the parameters involved.
The sharp two-sided Green function estimates are in terms of 
the quantity on the right-hand side of
\eqref{e:intro-gfe} (with the decay rate parameter $p$ replacing $\alpha/2$) multiplied by an integral involving functions $\Phi_1$ and $\Phi_2$. Depending on the parameters in these functions, 
these estimates may exhibit an anomalous behavior, 
see Corollary \ref{c:Green-2}. 
Recently in \cite{KSV20}, such anomalous behavior of Green function in the half-space has been proved 
under stronger assumptions on the function $\sB$.
Our work on Green function estimates extends the results in \cite{KSV20} to bounded $C^{1,1}$ open sets under weaker assumption 
on $\sB$. 

Examples are an integral part of this paper. They serve as a justification of our assumptions on $\sB$ and $\kappa$, and at the same time show the versatility of the theory. The last section is fully devoted to several types of examples. Besides covering 
subordinate killed stable processes and their variants, we provide an example extending the setting in \cite{Gu07}.

Organization of the work: In the next section we give a detailed overview of the work. We provide the set-up and gradually introduce the assumptions on the functions $\sB(x,y)$ and $\kappa(x)$. We explain and justify these assumptions, and show what type of results they imply. 
Sections \ref{ch:geometry}--\ref{ch:operator} employ only some of the assumptions, and partly use some known results from the literature.
For finer results we need stronger assumptions that supersede the ones already introduced.  
Starting from Section \ref{ch:key-estimates} the presentation is mostly self-contained and does not rely on the half-space results from \cite{KSV, KSV20, KSV21}. 

\medskip

We end this section with a few words on notation. Throughout this 
work, we use ``$:=$" to denote a definition, which is read as ``is defined to be''. We use the notation
$a\wedge b:=\min \{a, b\}$ and  $a\vee b:=\max\{a, b\}$. 
 $\N$ denotes the set of natural number and $\N_0$ denotes the set of non-negative integers.  
The notation $C=C(a,b,\ldots)$ indicates that the constant $C$
depends on $a, b, \ldots$. 
 The dependence on  $d, \alpha$, the localization characteristics $\wh R$, $\Lambda_0$ and $\Lambda$ (see Definition \ref{df:lipschitz}), and the constants in conditions ${\bf (B)}$ and ${\bf (K)}$ (see Section \ref{ch:set-up}) may not be mentioned explicitly. 
 Upper case letters $C_i$, 
 $i \in \N$, 
  with subscripts denote  strictly positive constants whose values are
 fixed throughout this work.
A lower case letter $c$ without subscript denotes a strictly positive constant whose 
value is
unimportant and which may change even within a line, 
while the values of $c_i$, 
$i \in \N_0$,
are fixed in each statement and proof, and the labeling of these constants starts anew in each proof.
We denote $x\in \R^d$ as $x=(\wt{x}, x_d)$ with $\wt{x}\in \R^{d-1}$.
We  use $m_d$ to denote the Lebesgue measure on $\R^d$. 
For a Borel subset $A\subset \R^d$, $\delta_A(x)$ denotes the Euclidean distance between $x$ and $\partial A$. 
  For a  subset $A \subset \R^d$,  we define
\begin{align*}	B_{A}(x,r):=A \cap B(x,r), \quad x \in \R^d, \, r>0.
\end{align*}
 For a given function $f$ defined on $(0,\infty)$, we set $f(\infty):=\lim_{r \to \infty}f(r)$, if the limit exists.  For a Borel set $A\subset \R^d$ and a Borel function $f$ defined on $A\times A \setminus \{(x,x):x\in A\}$, the principal value integral is defined by    
 \begin{align*}
 	 \text{p.v.}\int_A f(x,y)dy = \lim_{\eps 
	 \downarrow
	  0}\int_{A, \, |x-y|>\eps} f(x,y)dy, \quad x \in A.
 \end{align*}
    We adopt the convention $c/0=\infty$ for $c>0$.

%%%%%%%%%%%%%%%%%%%%%%%%%%%%%%%%%%%%%%%%%%%%%%%%%%%%%%%%%%%%%%%%%%%%%%%%%%%%%%%%%%%%%%%%%%%%%%%%%%%%%%%%%%%%%%%%%%%%%%%%%%%%%%%%%%%%%%%
%%%%%%%%%%%%%%%%%%%%%%%%%%                  Set-up an main results                   %%%%%%%%%%%%%%%%%%%%%%%%%%%%%%%%%%%%%%%%%%%%%%%%%%
%%%%%%%%%%%%%%%%%%%%%%%%%%%%%%%%%%%%%%%%%%%%%%%%%%%%%%%%%%%%%%%%%%%%%%%%%%%%%%%%%%%%%%%%%%%%%%%%%%%%%%%%%%%%%%%%%%%%%%%%%%%%%%%%%%%%%%%

\section{Set-up and main results}\label{ch:set-up}

In this work, we study  some
analytic and  potential-theoretic properties of Markov processes in proper open subsets $D$ of $\R^d$, $d\ge 2$, defined through their jump kernels and killing potentials. 

The jump kernels are of the form $\sB(x,y)|x-y|^{-d-\alpha}$, 
$\alpha\in (0,2)$, with a positive function $\sB$ on $D\times D$ which is allowed to decay to zero at the boundary of $D$. The killing 
potentials $\kappa:D\to [0,\infty)$ are either critical or sub-critical. It is clear that the properties of the underlying Markov process depend on the assumptions imposed on $\sB$ and $\kappa$. In this section we gradually introduce these assumptions and explain the type of results that follow. The assumptions pertaining to the function $\sB$ will be denoted as \textbf{(B)}, while those related to $\kappa$ will have the letter \textbf{(K)}. 

We will always assume that $D$ is a Lipschitz open set.
For our main results, we need further regularity of the boundary of $D$. 
Starting from Section \ref{ch:key-estimates},  we  assume that $D$ is a $C^{1,1}$  open set.

\subsection{Construction and some properties of the processes}

We begin with the construction of three processes -- the conservative process $\overline{Y}$ in the closure $\overline{D}$ of $D$, 
the process $Y^0$ in $D$, and  $Y^{\kappa}$ obtained by killing $Y^0$ via the  killing potential $\kappa$. The construction of these processes is carried through Dirichlet form theory and is quite standard.

Let $D\subset \R^d$, $d\ge 2$, be a Lipschitz open set (see 
Definition \ref{df:lipschitz} in Section \ref{ch:geometry} 
for the precise definition).  Denote by 
$\overline{D}$ the closure of $D$, 
and by $\delta_D(x)$ the Euclidean distance between $x\in \R^d$ and the boundary $\partial D$. 
We will assume  that  the jump measure of the process $\overline{Y}$ 
which we will construct is absolutely continuous with respect to the Lebesgue measure on  $\overline{D}$. Since $D$ is Lipschitz, the Lebesgue measure of
$\overline{D}$ is zero and the value of the jump kernel on $\partial D$ does not matter. 
For $\alpha \in (0,2)$ we consider the bilinear form
\begin{equation*}
	\sE^0(u,v):= \frac{1}{2}\iint_{D \times D}  (u(x)-u(y))(v(x)-v(y)) \frac{\sB(x,y)}{|x-y|^{d+\alpha}}dxdy,
\end{equation*}
where $\sB:D \times D \to (0,\infty)$ is a Borel function satisfying the following assumptions:

\bigskip

\noindent \hypertarget{Q1}{{\bf (B1)}} $\sB(x,y)=\sB(y,x)$ for all $x,y \in D$.

\smallskip

\noindent \hypertarget{Q2-a}{{\bf (B2-a)}}  There exists a constant $C_1>0$ such that $\sB(x,y) \le C_1$ for all $x,y \in D$.

\smallskip

\noindent \hypertarget{Q2-b}{{\bf (B2-b)}} For any $a\in (0,1]$, there exists a constant $C_2=C_2(a)>0$ such that
	\begin{align*}	\sB(x,y) \ge C_2 \quad \text{for all }  x,y \in D \text{ with } \delta_D(x) \wedge \delta_D(y) \ge a |x-y|.
	\end{align*}

\medskip

\emph{Assumptions \hyperlink{Q1}{{\bf (B1)}}, 
\hyperlink{Q2-a}{{\bf (B2-a)}} and \hyperlink{Q2-b}{{\bf (B2-b)}} will be in force throughout this work
except in Section \ref{ch:examples}.
}

\smallskip

Assumption \hyperlink{Q1}{{\bf (B1)}} is natural as it ensures the symmetry of the form $\EE^0$.
Note that \hyperlink{Q2-a}{{\bf (B2-a)}} implies
	\begin{align}\label{e:stochastic-complete}	
	\sup_{x \in  D} \int_{D} (1 \wedge |x-y|^2) \frac{\sB(x,y)}{|x-y|^{d+\alpha}}dy <\infty.
	\end{align} 
Moreover, \hyperlink{Q2-a}{{\bf (B2-a)}} and  \hyperlink{Q2-b}{{\bf (B2-b)}} imply that for $C_2=C_2(1)$,
	\begin{align}\label{e:B(x,x)}
		C_2\le 	\sB(x,x) \le C_1 \quad \text{for all} \;\, x \in D.
	\end{align}
Observe that assumptions \hyperlink{Q1}{{\bf (B1)}}, \hyperlink{Q2-a}{{\bf (B2-a)}} and \hyperlink{Q2-b}{{\bf (B2-b)}} do not specify the behavior of $\sB$ at the boundary of $D$.

For a Borel set $A \subset \R^d$ and  $p \in [1,\infty]$, we denote by $L^p(A)$ the $L^p$-space $L^p(A,
m_d)$, and  
by  $\mathrm{Lip}_c(A)$ the family of all Lipschitz functions on $A$ with compact support.  It follows from \eqref{e:stochastic-complete} that $\sE^0(u,u)<\infty$ for any $u \in \mathrm{Lip}_c(\overline D)$. Let  $\overline \sF$ be the closure of 
$\mathrm{Lip}_c(\overline D)$ in 
$L^2(\overline D)=L^2(D)$ under  the norm $(\sE^0_1)^{1/2}$ where  $\sE^0_1:=\sE^0 + \lVert \cdot \rVert_{L^2(D)}^2$.  Then  $(\sE^0, \overline \sF)$ is a regular Dirichlet form on $L^2(\overline D)$,
see \cite[Chapter 1]{FOT}.  Since $\sB(x, y)>0$ for all $x, y\in D$, using \cite[Theorem 1.6.1]{FOT}, one can easily see that   the Dirichlet form $(\sE^0, \overline \sF)$ is irreducible. 
Moreover,  since the  form $(\sE^0, \overline \sF)$ has no killing  and satisfies \eqref{e:stochastic-complete}, it is conservative by \cite[Theorem 1.3]{GHM} or \cite[Theorem 1.1]{MUW}. Associated with the regular Dirichlet form $(\sE^0, \overline \sF)$, there is a conservative Hunt process $\overline Y=(\overline Y_t,t \ge 0; \P_x,x \in \overline D\setminus \sN')$. Here $\sN'$ is an  exceptional set for $\overline Y$. 

Let $\sF^0$ be the closure of  $\mathrm{Lip}_c(D)$ in $L^2(D)$ under $\sE^0_1$. Then $(\sE^0,\sF^0)$ is a regular Dirichlet form. Let $Y^0=(Y^0_t,t \ge 0; \P_x,x \in \overline D\setminus \sN_0)$ be the Hunt process associated with $(\sE^0,\sF^0)$, where  $\sN_0$ is an  
exceptional set for $Y^0$.

The third process is obtained by killing $Y^0$ via  a killing potential $\kappa$. We assume that $\kappa$ is a non-negative Borel function on $D$ satisfying the following assumption:

\bigskip

\noindent \hypertarget{W1}{{\bf (K1)}}  There exists a constant $C_3>0$ such that
	\begin{equation*}
		\kappa(x) \le C_3 (\delta_D(x) \wedge 1)^{-\alpha}.
	\end{equation*}
 If $\alpha \le 1$, then we also assume that $\kappa$ is non-trivial, namely,
\begin{equation}\label{e:K}
	m_d(\{x\in D:\kappa(x) >0\})>0.
\end{equation}

\medskip

Assumption \hyperlink{W1}{{\bf (K1)}} says that  
the killing through $\kappa$
 is sub-critical or critical. 
 Note that $\kappa$ can be identically zero when $\alpha>1$.	
 If $\alpha \le 1$ and $\kappa\equiv 0$, then $Y^0= Y^\kappa=\overline{Y}$ so it is conservative.
 The additional assumption \eqref{e:K} in \hyperlink{W1}{{\bf (K1)}} 
 guarantees that $Y^\kappa$ is not conservative (see Proposition \ref{p:notconservative}).

\medskip

\emph{
	Assumption \hyperlink{W1}{{\bf (K1)}} will be in force throughout this work
	except in Section \ref{ch:examples}.}
	
	\medskip

We consider a symmetric form $(\sE^\kappa, \sF^\kappa)$ defined by
	\begin{align*}
		\sE^\kappa(u,v)&=\sE^0(u,v) + 
		\int_{D} u(x)v(x)\kappa(x)dx, \\
		\sF^\kappa&= 	\wt \sF^0 \cap L^2(D, 
			\kappa(x) dx),\nn
	\end{align*}
	where $\wt \sF^0$ is the family of all $\sE^0_1$-quasi-continuous functions in $\sF^0$. Then $(\sE^\kappa,\sF^\kappa)$ is a regular Dirichlet form on $L^2(D)$ with Lip$_c(D)$ as a special standard core,  see \cite[Theorems 6.1.1 and 6.1.2]{FOT}.
Let  $Y^\kappa=(Y^\kappa_t,t \ge 0; \P_x,x \in D \setminus  \sN_\kappa)$ be the Hunt process associated with 
$(\sE^\kappa, \sF^\kappa)$ where $\sN_\kappa$ is an exceptional set for $Y^\kappa$. We denote by $\zeta^\kappa$ the lifetime of $Y^\kappa$, and define $Y^\kappa_t=\partial$ for $t \ge \zeta^\kappa$, where $\partial$ is a cemetery point added to the state space $D$. 
Note that $Y^\kappa$ includes $Y^0$, when $\alpha\in (1, 2)$, as a special case.
	
In Section \ref{ch:processes} we establish several important properties of the processes $\overline{Y}$, $Y^0$ and $Y^{\kappa}$. 
In Subsection \ref{s-processes-Y-bar} we first look at $\overline{Y}$,  establish a Nash-type inequality (Proposition \ref{p:Nash}) which leads to the existence and some preliminary  upper bound of the  transition densities (Proposition \ref{p:upper-heatkernel}). An important preliminary lower bound of the transition densities of 
$\overline{Y}$ killed upon exiting $\overline{D}\cap B(x_0,r)$ is given in Proposition \ref{p:ndl}. 
Relying on methods from \cite{CKW20, CKW21} we then establish joint H\"older continuity of bounded caloric functions (parabolic H\"older regularity). As a consequence we get that $\overline{Y}$ can be refined to be a strongly Feller process starting from every point in $\overline{D}$ (hence the exceptional set $\sN'$ can be taken to be empty set). Finally, we show that the parabolic Harnack inequality holds true for non-negative caloric functions for $\overline{Y}$. For this property,  we need the following additional assumption on $\sB$:

\medskip
\noindent \hypertarget{UBS2}{{\bf (UBS)}} There exists $C>0$ such that  for  a.e. $x,y\in  D$,
\begin{align}\label{e:UBS-2}
	\sB(x,y)\le \frac{C}{r^d}\int_{ \overline D \cap B(x,r) } \sB(z,y)dz 
	\quad  \text{whenever} \;\, 0< r \le \frac12 ( |x-y| \wedge \wh R).
\end{align}
Here $\wh{R}$ is the localization radius of the Lipschitz open set $D$, see Definition \ref{df:lipschitz} for details.
Assumption \hyperlink{UBS2}{{\bf (UBS)}} implies the usual {\bf (UJS)} condition, see e.g.~\cite[Definition 1.16]{CKW20}.

In Subsection \ref{s-processes-Y-0} we analyze properties of $Y^0$ and $Y^{\kappa}$. We first establish that 
$\sF^0=\overline \sF$ if and only if $\alpha \le 1$ (Proposition \ref{p:alpha>1}). This implies that $Y^0=\overline{Y}$ when $\alpha\le 1$, while in case $\alpha\in (1,2)$, $Y^0$ can be regarded as the part process of $\overline{Y}$ in $D$ with a.s.~finite lifetime $\zeta^0$ such that $Y^0_{\zeta^0-}\in \partial D$. Similarly, the process $Y^{\kappa}$ can be regarded as the part process of $\overline{Y}$ killed  at the a.s.~finite lifetime $\zeta^{\kappa}$.
In this case we have that $Y^{\kappa}_{\zeta^{\kappa}-}\in D$. 
As a consequence of the fact that $Y^\kappa$ is a part process of $\overline{Y}$, we conclude that 
the exceptional set $\sN_{\kappa}$  can be taken to be an empty set.
In the remaining part of the subsection we establish the existence and an upper bound of the transition densities of $Y^\kappa$, a lower bound similar to the one described above, parabolic H\"older regularity, and parabolic Harnack inequality for non-negative caloric functions of $Y^\kappa$.  
  In order to get uniform large time estimates (Proposition \ref{p:upper-heatkernel-3}), we introduce the following assumption for $\kappa$:

\medskip

\noindent \hypertarget{W2}{{\bf (K2)}} If $\alpha\le 1$, then there exist constants $\wh r \in(0, \wh{R})$ and $C_4>0$ such that for every bounded connected component $D_0$ of $D$,
\begin{align*}
	\kappa(x) \ge C_4 \quad \text{for all $x \in D_0$ with $\delta_{D_0}(x)<\wh r$.}
\end{align*}
Note that when $\alpha\le 1$,  without extra condition for $\kappa$, the assertion of Proposition \ref{p:upper-heatkernel-3} does not hold, as demonstrated in Example \ref{ex:alpha<1}.
For the  parabolic Harnack inequality,
 we need the assumption \hyperlink{IUBS}{{\bf (IUBS)}} on $\sB$ saying that \eqref{e:UBS-2} holds when 
 $0< r \le \frac12 ( |x-y| \wedge \delta_D(x) \wedge\wh R)$.

By using the upper and lower bounds on the transition densities of $Y^\kappa$, in Subsection \ref{s-int-green} we establish (interior) estimates on the Green function $G^\kappa(x,y)$ of the process $Y^\kappa$. In case of bounded $D$, we see that 
$G^\kappa(x,y)\le C|x-y|^{-d+\alpha}$ for all $x,y\in D$, and the same lower bound is valid if $x$ and $y$ are away from the boundary
(see Corollary \ref{c-int-green-bounded} for the precise statement).

\subsection{The operator $L^\sB_\alpha$}\label{s-operator-2}

In order to study finer properties of the process $Y^\kappa$ we need additional assumptions on the function $\sB$ that we now describe. We still assume that $D$ is a Lipschitz open set. 
The following assumption 
is needed to make sure that $C^1_c(D)$, the space of continuous functions with compact support in $D$, is contained
in the domain of definition of the operator $L^\sB_\alpha$ introduced below.

\medskip

\noindent \hypertarget{Q3}{{\bf (B3)}} If $\alpha \ge 1$, then there exist constants $\theta_0>\alpha-1$ and $C_5>0$ such that
\begin{align}\label{e:ass-B3-2}
	|\sB(x,x)-\sB(x,y)| \le C_5 \bigg(\frac{|x-y|}{\delta_D(x) \wedge \delta_D(y) \wedge  \wh R} \bigg)^{\theta_0} \quad \text{for all }  x,y \in D.
\end{align}

\medskip

Consider a non-local operator $(L^\sB_\alpha, \sD(L^\sB_\alpha))$ of the form
\begin{align}\label{e:def-L-alpha-2}
	L^\sB_\alpha f(x)&=\text{p.v.}\int_D (f(y)-f(x)) \frac{\sB(x,y)}{|x-y|^{d+\alpha}}dy, \quad x \in D,
\end{align}
where  $\sD(L^\sB_\alpha)$ consists of  all functions $f:D \to \R$ for which the above principal value integral makes sense.  
Note that if $f\in C^1_c(D)$,
the integral above is absolutely convergent for 
$\alpha\in (0,1)$. In case $\alpha\ge 1$, principal value is needed to make sense of the integral. When $\sB(x,y)$ is a constant, a symmetry argument guarantees that the principal value integral is well defined for 
$f\in C^1_c(D)$. When $\sB$ is not a constant, the symmetry argument breaks down, but \hyperlink{Q3}{{\bf (B3)}} guarantees that $L^\sB_\alpha f$ is well defined for $f\in C^1_c(D)$.

Recall that $\kappa$ is a  non-negative Borel  function on $D$ satisfying \hyperlink{W1}{{\bf (K1)}}.
We define an operator $(L^\kappa, \sD(L^\sB_\alpha))$  by 
\begin{align}\label{e:def-operator-2}
	L^\kappa f(x)=L^\sB_\alpha f(x) - \kappa(x)f(x), \quad  x \in D.
\end{align}
Let $(\sA^\kappa, \sD(\sA^\kappa))$ be the $L^2$-generator of $(\sE^\kappa,\sF^\kappa)$. 
Under the assumptions \hyperlink{Q1}{{\bf (B1)}}, \hyperlink{Q2-a}{{\bf (B2-a)}}, \hyperlink{Q2-b}{{\bf (B2-b)}} and \hyperlink{Q3}{{\bf (B3)}}, 
we will establish in Proposition \ref{p:generator-C11} that $\sA^\kappa f = L^\kappa f$ for all $f$ in an appropriate class of functions, showing that $L^\kappa $ is the infinitesimal generator of the semigroup corresponding to $Y^{\kappa}$. Additionally, we will prove a Dynkin-type formula for not necessarily smooth and compactly supported functions, see Corollary \ref{c:Dynkin-local}.

Let $\Phi_0$ be a Borel  function on $(0,\infty)$ such that $\Phi_0(r)=1$ for $r \ge 1$ and 
\begin{align}
 	c_L \bigg( \frac{r}{s}\bigg)^{\lb_0}\le 	\frac{\Phi_0(r)}{\Phi_0(s)} \le c_U \bigg( \frac{r}{s}\bigg)^{\ub_0} 
	\quad \text{for all} \;\, 0<s\le r\le 1, \label{e:scale-2}
\end{align}
for some constants $\ub_0\ge\lb_0\ge0$ and $c_L,c_U>0$.  
Let $\beta_0$ be the lower Matuszewska index of $\Phi_0$ (see \cite[pp. 68--71]{BGT}):
\begin{equation}\label{e:Matuszewska}
 \beta_0=\sup\left \{\beta:\exists \, a>0 \,\mbox{ s.\ t. } \Phi_0(r)/\Phi_0(s)\ge a(r/s)^\beta \, \mbox{ for } \,0<s\le r\le 1 \right\}.
\end{equation}
Typical examples of such a function $\Phi_0$ include $\Phi_0(r)=(r\wedge 1)^{\beta}$ for $\beta \ge 0$. In this case, the lower Matuszewska index of $\Phi_0$  is equal to $\beta$.  
The property \eqref{e:scale-2} of $\Phi_0$ is usually referred to as a \emph{weak scaling condition at zero}. It clearly implies that $\Phi_0$ is \emph{almost increasing}, namely, for all $0\le s\le r<\infty$, $c_L\Phi_0(s)\le \Phi_0(r)$.  The precise value of $\beta_0$ will appear in our results, while the precise value of the upper scaling index $\ub_0$  remains insignificant for most of the content presented in this work.

We next consider the following two assumptions on $\sB$:

\medskip

\noindent \hypertarget{Q4-a}{{\bf (B4-a)}}
There exists a constant $C_6>0$ such that 
\begin{align*}
	\sB(x,y) \le C_6 \Phi_0\bigg(\frac{\delta_D(x) \wedge \delta_D(y)}{|x-y|}\bigg) \quad \text{for all }  x,y \in D.
\end{align*}

\noindent \hypertarget{Q4-b}{{\bf (B4-b)}}  There exists a constant $C_7>0$ such that
\begin{align*}
	\sB(x,y) \ge C_7 \Phi_0\left(\frac{\delta_D(x) \wedge \delta_D(y)}{|x-y|}\right) \quad \text{for all }  x,y \in D \text{ with } \delta_D(x) \vee \delta_D(y) \ge \frac{|x-y|}{2}.
\end{align*}

\medskip

Assumptions \hyperlink{Q4-a}{{\bf (B4-a)}} and \hyperlink{Q4-b}{{\bf (B4-b)}} are inspired by \eqref{e:intro-skst} -- instead of the explicit function there, we use the function $\Phi_0$. Clearly, \hyperlink{Q4-a}{{\bf (B4-a)}} implies that the jump kernel $\sB(x,y)|x-y|^{-d-\alpha}$ may decay to zero at the boundary. Note that \hyperlink{Q4-a}{{\bf (B4-a)}} implies \hyperlink{Q2-a}{{\bf (B2-a)}}.

In the remainder of this section, we assume that
$$
\textit{$\sB$ satisfies \hyperlink{Q1}{{\bf (B1)}}, \hyperlink{Q2-b}{{\bf (B2-b)}}, \hyperlink{Q3}{{\bf (B3)}}, \hyperlink{Q4-a}{{\bf (B4-a)}} and \hyperlink{Q4-b}{{\bf (B4-b)}}.}
$$

A usual way to estimate  exit probabilities of 
a Markov process is to construct appropriate functions, called barriers, which are either superharmonic or subharmonic for the infinitesimal generator (and may have some additional desired properties). Applying a Dynkin-type formula to such barriers provides useful information on the exit probabilities. In Subsection 
\ref{s-barrier} we construct a family of such barriers, $\psi^{(r)}$, and in Proposition \ref{p:compensator} 
give an upper bound on $L^{\sB}_{\alpha}\psi^{(r)}$ in terms of the function $\Phi_0$. In case when $D$ is a half-space, a similar barrier is constructed in \cite[Section 8]{KSV} -- this was the key technical result of that paper. The construction and the estimate given here are simpler, and independent of the half-space result in \cite{KSV}. It is worth mentioning that all subsequent results of this work are independent of the results proved in \cite{KSV, KSV22b}  
in case of the half-space, thus making this work essentially 
self-sufficient. 

\subsection{Key assumptions on $\sB$ and $\kappa$}\label{s-key-assumptions}

We first give the definition of a $C^{1,1}$ open set.
The description of additional assumptions on $\sB$ will be given in local coordinates. 

\begin{defn}\label{df-c11}
	We say that $D$ is a $C^{1,1}$ open set with characteristics  $(\wh R, \Lambda)$, if 
	for each $Q \in \partial D$, there exist a $C^{1,1}$ function $\Psi=\Psi^Q:\R^{d-1} \to \R$ with 
	\begin{align}\label{e:local-map-2}
		\Psi(\wt 0)= |\nabla \Psi(\wt 0)|=0 \quad \text{and} \quad |\nabla \Psi(\wt y)-\nabla\Psi(\wt z)| \le \Lambda |\wt y-\wt z| \; \text{ for all } \,  \wt y, \wt z \in \R^{d-1},
	\end{align}
	 and an orthonormal coordinate system CS$_{Q}$ with origin at $Q$ such that
	\begin{align}\label{e:local-coordinate_2}
		 B_D(Q, \wh R) = \left\{ y= (\wt y, y_d) \in B(0, \wh R) \text{ in CS$_{Q}$} : y_d>\Psi(\wt y) \right\}.
	\end{align}
\end{defn}
From now on we assume that $D\subset \R^d$ is a $C^{1,1}$ open set with characteristics $(\wh{R}, \Lambda)$. Without loss of generality, we assume that   $\wh R\le 1 \wedge (1/(2\Lambda))$.

For $Q\in \partial D$, $\nu\in (0,1]$ and $r\in (0, \wh R/4]$, we introduce the set
	\begin{align}\label{e:EQ}
E^Q_\nu(r)= \left\{ y=(\wt y, y_d) \text{ in CS$_{Q}$}:  |\wt y|<r/4, \,    4r^{-\nu}|\wt y|^{1+\nu} <y_d < r/2 \right\}.
	\end{align}

Here is our key assumption on the killing potential $\kappa$:

\medskip
\noindent \hypertarget{W3}{{\bf (K3)}} There exist constants $\eta_0>0$ and 
	$C_8, C_9\ge 0$ 
	such that for all $x \in D$,
	\begin{align}\label{e:kappa-explicit-2}
		\begin{cases}
			|\kappa(x) - C_9 \sB(x,x) \delta_D(x)^{-\alpha}| \le C_8\delta_D(x)^{-\alpha+\eta_0} &\text{ if }  \delta_D(x) < 1,\\[4pt]
			\kappa(x) \le C_8 &\text{ if } \delta_D(x) \ge 1.
		\end{cases}
	\end{align}
When $\alpha \le 1$, we further assume that $C_9>0$. 

\bigskip 
We note that assumption \hyperlink{W3}{{\bf (K3)}} implies \hyperlink{W1}{{\bf (K1)}} and \hyperlink{W2}{{\bf (K2)}}.
Observe that when $C_9>0$, the killing 
potential $\kappa(x)$ is  comparable to $\delta_D(x)^{-\alpha}$ near the boundary, so we have critical killing. In case $C_9=0$ (which by assumption is allowed only when $1<\alpha <2$), we see that $\kappa(x)\le C_8\delta_D(x)^{-\alpha+\eta_0}$, hence the killing is subcritical (which includes the case of no killing at all). In the next assumption on $\sB$ we will discuss these two cases separately.

For $a\in\R$, let $\bH_a=\{(\wt y,y_d)\in \R^d: y_d>a\}$, and denote $\bH_0$ by $\bH$. Let further $\e_d=(\wt 0,1) \in \R^{d}$ be the unit vector in the vertical direction. 

\medskip
	
	\subsubsection{Case $C_9>0$ -- critical killing.}
	 The assumption that we are going to introduce may be viewed as a substitute for the \emph{flattening of the boundary} method which, as described in the introduction, does not work in the current setting. In order to motivate the assumption, we look at the process $Y^{\bH}$ obtained by subordinating a $\gamma$-stable process killed upon exiting the half-space $\bH$, via an independent $\beta$-stable 
subordinator (non-decreasing L\'evy process), where $\gamma\in (0,2)$ and  $\beta\in (0,1)$.  
Set $\alpha=\gamma \beta$. Let $J^{\bH}(x,y)$, $x,y\in \bH$, denote the jump kernel of $Y^{\bH}$. It can be written in the form 
$J^{\bH}(x,y)=\sB^{\bH}(x,y)|x-y|^{-d-\alpha}$, with $\sB^{\bH}(x,x)=
c_{d, -\alpha}$.
Due to the scale and horizontal translation invariance of $Y^{\bH}$, the function $\sB^{\bH}(x,y)$ satisfies for all $a>0$ and all $\wt{z}\in \R^{d-1}$,
$$
\sB^{\bH}(x,y)=\sB^{\bH}(ax,ay)=\sB^{\bH}(x+(\wt{z},0),y+(\wt{z},0)).
$$
If we define $F_0^{\gamma, \beta}:\bH_{-1}\to [0,\infty)$ by 
$F_0^{\gamma, \beta}(z)=c_{d, -\alpha}^{-1}\sB^{\bH}(\e_d, \e_d+z)$, 
then it is straightforward that (see Lemma \ref{l:ass-1})
\begin{equation}\label{e:B=F_0}
\sB^{\bH}(x,y)
=c_{d, -\alpha}F_0^{\gamma, \beta}\left(\frac{y-x}{x_d}\right), \quad x,y\in \bH,
\end{equation}
and, by the symmetry of $\sB^{\bH}(x,y)$,
\begin{equation}\label{e:F_0-symmetry}
F_0^{\gamma, \beta}(z)=F_0^{\gamma, \beta}(-z/(1+z_d)), \quad z\in \bH_{-1}.
\end{equation}
For a $C^{1,1}$ open set $D$ with characteristics $(\wh{R},\Lambda)$, let $Y^D$ be a process constructed analogously to $Y^{\bH}$ -- we subordinate a $\gamma$-stable process killed upon exiting $D$ by an independent $\beta$-stable subordinator. Its jump kernel can be written as 
$J^D(x,y)=\sB^D(x,y)|x-y|^{-d-\alpha}$
with $\sB^{D}(x,x)=c_{d, -\alpha}$.
 Fix a point $Q\in \partial D$ and consider the orthonormal coordinate system CS$_{Q}$ with 
origin at $Q$ (as in Definition \ref{df-c11}) and recall
that $E^Q_\nu(r)$ is defined in \eqref{e:EQ}.
 Then, under the assumption that $D$ is either (1) bounded or (2) the domain above the graph of a bounded $C^{1,1}$ function in $\R^{d-1}$,
 one can show (see Lemma \ref{l:gamma-beta-extra-decay}) that
  there exists $C>0$ such that for all $\nu\in (0,1)$ and $x,y \in E^Q_{\nu}(\wh{R}/8)$,
	\begin{align*}
		|J^D(x,y)-J^\bH(x,y)| \le  C\bigg(\frac{\delta_D(x) \vee \delta_D(y)}{
		\wh{R}}\bigg)^{(1-\beta)(1-\nu)\gamma/(2+2\nu)
		} 
		\frac{1}{(\delta_D(x)\vee\delta_D(y))^{d+\alpha}}.
	\end{align*}
Taking into account that $J^{\bH}(x,y)=\sB^{\bH}(x,y)|x-y|^{-d-\alpha}$, $J^D(x,y)=\sB^D(x,y)|x-y|^{-d-\alpha}$ and \eqref{e:B=F_0}, we get that for all $\nu\in (0,1)$ and $x,y \in E^Q_{\nu}(\wh{R}/8)$,
\begin{align*}
&\Big|\sB^D(x,y)- \sB^{D}(x,x)F_0^{\gamma,\beta}((y-x)/x_d)\Big|\\
& \quad =\Big|\sB^D(x,y)- c_{d, -\alpha}F_0^{\gamma,\beta}((y-x)/x_d)\Big|\\
& \quad \le  c \bigg(\frac{ |x-y|}{\delta_D(x) \vee \delta_D(y)} \bigg)^{d+\alpha}\bigg( \frac{\delta_D(x)\vee\delta_D(y)}{ \wh{R}}\bigg)^{(1-\beta)(1-\nu)\gamma/(2+2\nu)
	}\\
& \quad \le  c \bigg(\frac{\delta_D(x) \vee \delta_D(y) \vee  |x-y|}{\delta_D(x) \wedge \delta_D(y) \wedge |x-y|} \bigg)^{d+\alpha}\bigg( \frac{\delta_D(x)\vee\delta_D(y) 
\vee |x-y|}{ \wh{R}}\bigg)^{
(1-\beta)(1-\nu)\gamma/(2+2\nu)},
\end{align*}
where the constant 
$c>0$  depends only on $d$ and $\gamma$. 
This calculation serves as one motivation for the following assumption:

\bigskip
	\noindent \hypertarget{Q5-I}{{\bf (B5-I)}} There  exist  constants $\nu \in (0,1]$, $\theta_1,\theta_2,C_{10}>0$, and a non-negative Borel function $\F_0$ on $\bH_{-1}$   such that  for any $Q \in \partial D$ and    $x,y \in E^Q_\nu(\wh R/8)$ with $x=(\wt x,x_d)$ in CS$_Q$,
	\begin{align}\label{e:ass-B5-2}
			& \big|\sB(x,y)- \sB(x,x)\F_0((y-x)/x_d) \big| + \big|\sB(x,y)- \sB(y,y)\F_0((y-x)/x_d) \big|\nn\\	&\le C_{10} \bigg(\frac{ \delta_D(x)\vee\delta_D(y) \vee |x-y|}{ \delta_D(x) \wedge \delta_D(y) \wedge |x-y|} \bigg)^{\theta_1}
			\big( \delta_D(x)\vee\delta_D(y) \vee|x-y|\big)^{\theta_2}.
\end{align}
We allow that constants above depend on $\wh R$.
\bigskip

Under condition \hyperlink{Q5-I}{{\bf (B5-I)}}, we  define a  function $\F$ on $\bH_{-1}$ by 
	\begin{align}\label{e:def-F0-transform-2}
		\F(y) =\frac{\F_0(y) + \F_0(-y/(1+y_d))}{2}, \quad \;\;y  = (\wt y,y_d) \in \bH_{-1}.
	\end{align}
We will see in Lemma \ref{l:F-basic} that $\F$ is a bounded function. Moreover, we observe that
	\begin{align}\label{e:A5-F-2}
		\F(y)=\F(-y/(1+y_d)) \quad \text{for all} \;\, y \in \bH_{-1}.
	\end{align}
This property is in a crucial way related to the symmetry of $\sB$, see \eqref{e:F_0-symmetry}. 
With the function $\F$ above and $q \in [(\alpha-1)_+, \alpha+\beta_0)$, 
we associate a  constant $C(\alpha,q,\F)$ defined by
\begin{align}\label{e:def-killing-constant-2}
		&	C(\alpha,q,\F)\\
		&=\int_{\R^{d-1}}\frac{1}{(|\wt u|^2+1)^{(d+\alpha)/2}}\int_0^1  
		\frac{(s^q -1)(1-s^{\alpha-1-q})}{(1-s)^{1+\alpha}} \F\big(((s-1)\wt u, s-1)\big) ds \, d \wt u.\nn
\end{align}
We additionally assume that
\begin{align}\label{e:K4-2}	
	C_9 < \lim_{q \to \alpha+\beta_0} C(\alpha,q,\F).
\end{align}
		
We will show in Lemma \ref{l:constant} that	$q\mapsto C(\alpha,q,\F)$ is a well-defined 
strictly increasing continuous function on   
$[(\alpha-1)_+, \alpha+\beta_0)$ and $C(\alpha,(\alpha-1)_+,\F)=0$. 
Therefore, under \eqref{e:K4-2},  there exists a unique constant 
$p\in ((\alpha-1)_+, \alpha + \beta_0)$ such that 
	\begin{align}\label{e:C(alpha,p,F)-2}
		C_9=C(\alpha,p,\F).
	\end{align}	
The one-to-one correspondence between the  positive constants $C_9$ in \eqref{e:kappa-explicit-2} 
that multiply $\sB(x,x)\delta_D(x)^{-\alpha}$,  and the parameters 
$p\in ((\alpha-1)_+, \alpha + \beta_0)$ plays a fundamental role in this work.

The process $Y^D$ described above is a prime example of a process satisfying \hyperlink{Q5-I}{{\bf (B5-I)}}, \hyperlink{W3}{{\bf (K3)}} and \eqref{e:K4-2} (as well as the other assumptions on $\sB$ introduced before). This is shown in Subsection \ref{s:subordinate-killed}, which also contains two other examples satisfying all 
our assumptions. 

\medskip
	
\subsubsection{Case $C_9=0$ -- subcritical killing.}
	In this case, we assume the constant $C_9$ is zero.
 In this case,	 instead of \hyperlink{Q5-I}{{\bf (B5-I)}}, we will introduce a weaker assumption
	\hyperlink{Q5-II}{{\bf (B5-II)}}. The motivation for this assumption comes from the following 
	example.
	
	\begin{example}
	Assume that $\alpha \in (1,2)$ and 
	\begin{align}\label{e:example-censored-1-2}
	C^{-1}\le 	\sB(x,y)=\sB(y,x) \le C \quad \text{for all} \;\, x,y \in D
	\end{align}			
for some $C\ge 1$. 
When $\sB(x,y) \equiv c$ is a constant,  the operator 
$L^\sB_\alpha$  in \eqref{e:def-L-alpha-2}
is called the \textit{regional  (or censored)  fractional Laplacian} in $D$ and 
the process $Y^0$ corresponding to $L^\sB_\alpha$
 is called the \textit{censored $\alpha$-stable process} on $D$.

Let  $\theta \in (\alpha-1,1)$. 
Since $\wh R\le 1 \wedge (1/(2\Lambda))$,  for all $y\in D$ with $\delta_D(y)<\wh R/8$, there is 
a unique $Q_y \in \partial D$ 
such that $\delta_D(y)=|y-Q_y|$, see Lemma \ref{l:C11}(ii).  For $y\in D$ with $\delta_D(y)<\wh R/8$, let $\overline y$ be the reflection of $y$ with respect to $\partial D$, that is, 
$\overline y= 2Q_y -y$.

 Suppose that there exist $C>0$ and  
$\theta$-H\"older continuous functions $h_1:D \times D \to [0,\infty)$, $h_2:D \times D \to [0,\infty)$, and  $\Theta: [0,\infty) \to [0,\infty)$   such that $\sup_{x \in D}h_2(x,x)<\infty$ and  for all $x,y \in D$,
\begin{align}\label{e:censored-condition-2}
\!	\begin{cases}
			\left| \sB(x,x)-\sB(x,y) \right| \le C|x-y|^{\theta} &\mbox{if }\delta_D(x) \wedge \delta_D(y) >
			\wh R/16,\\[4pt]
			 \displaystyle 	\bigg| 	\sB(x,y) - h_1(x,y) - h_2(x,y) 
			 \Theta\bigg(  \frac{|x-y|}{|x-\overline y|}\bigg) \bigg| \le C|x-y|^{\theta}  &\mbox{if }\delta_D(x) \vee \delta_D(y) 
			 < \wh R/8.
	\end{cases}
\end{align}
In case  $\Theta(r)=r^{d+\alpha}$  and $h_1,h_2\in C^1(\overline{D}\times \overline{D})$, such a condition was considered in \cite{Gu07} to establish a unified framework that incorporates both the regional fractional Laplacian and the formal generator of \textit{subordinate reflected Brownian motions} on $D$. The main result of that paper was the boundary Harnack principle for non-negative harmonic functions with respect to $L^\sB_\alpha$.

From \eqref{e:censored-condition-2}, one can see that one function $\F_0$ is not enough to approximate $\sB(x,y)$ as in \eqref{e:ass-B5-2}, and that we need two functions. Indeed, by setting $\mu^1(x)=h_1(x,x)$ and $\mu^2(x)=h_2(x,x)$ for $x \in D$, and
$$
F^1_0(z)=1 \quad \text{and} \quad F^2_0(z)=\Theta(|z|/|(\wt z, - z_d-2)|) \quad \text{for}  \;\,z \in 
\bH_{-1},
$$
we show in Example \ref{ex:censored} that if $Q\in \partial D$ and $x,y\in E^Q_{1/2}(\wh{R}/8)$, then
\begin{align*}
&	\bigg|\sB(x,y)- \sum_{i=1}^{2}\mu^i(x)F_0^i((y-x)/x_d) \bigg| + \bigg|\sB(x,y)- \sum_{i=1}^{2}\mu^i(y)F_0^i((y-x)/x_d) \bigg|\\
&\le c \bigg(\frac{\delta_D(x)\vee\delta_D(y)\vee|x-y|}{\delta_D(x) \wedge \delta_D(y) \wedge |x-y|} \bigg)^{2\theta}
\big( \delta_D(x)\vee\delta_D(y)\vee|x-y|\big)^{\theta/3}.
\end{align*}
	\end{example}
This example motivates the following assumption:

\bigskip
	
	\noindent \hypertarget{Q5-II}{{\bf (B5-II)}} There  exist  constants $\nu \in (0,1]$, $\theta_1,\theta_2,C_{10}>0$, $C_{11}>1$, 
	$i_0 \in\bN$, and  non-negative  Borel functions $\F_0^i:\bH_{-1} \to [0,\infty)$ and $\mu^i:D \to (0,\infty)$, $1\le i \le i_0$, such that 
	\begin{align}\label{e:mu-bound-2}
	C_{11}^{-1} \le	\mu^i(x) \le C_{11} \quad \text{for all} \;\, x \in D,
	\end{align}
	and for any $Q \in \partial D$ and    $x,y \in E^Q_\nu(\wh R/8)$ with $x=(\wt x,x_d)$ in CS$_Q$,
	\begin{align}\label{e:ass-B5'-2}	
	\begin{split}
		&\bigg|\sB(x,y)- \sum_{i=1}^{i_0}\mu^i(x)\F_0^i((y-x)/x_d) \bigg| +  \bigg|\sB(x,y)- \sum_{i=1}^{i_0}\mu^i(y)\F_0^i((y-x)/x_d) \bigg| \\
		&\le C_{10} \bigg(\frac{ \delta_D(x)\vee\delta_D(y) \vee |x-y|}{ \delta_D(x) \wedge \delta_D(y) \wedge |x-y|} \bigg)^{\theta_1}
			\big( \delta_D(x)\vee\delta_D(y) \vee|x-y|\big)^{\theta_2}.
	\end{split}
	\end{align}
	For each $1\le i\le i_0$, we define $\F^i(y):=(\F_0^i(y)+\F_0^i(-y/(1+y_d)))/2$ and  $C(\alpha, q,\F^i)$ for 
	$q \in [(\alpha-1)_+, \alpha+\beta_0)$ analogously to \eqref{e:def-killing-constant-2}.
	
\bigskip
	
Note that if \hyperlink{Q5-I}{{\bf (B5-I)}} holds, then also \hyperlink{Q5-II}{{\bf (B5-II)}} holds with $i_0=1$, $\F_0^1=\F_0$ and $\mu^1(x)=\sB(x,x)$.
	
We combine the assumptions \hyperlink{Q5-I}{{\bf (B5-I)}} and \hyperlink{Q5-II}{{\bf (B5-II)}} in the assumption
	
\bigskip
	
\noindent \hypertarget{Q5}{{\bf (B5)}}  If $C_9>0$, then	\hyperlink{Q5-I}{{\bf (B5-I)}} and \eqref{e:K4-2} hold, and  if $C_9=0$, then \hyperlink{Q5-II}{{\bf (B5-II)}} holds. 
	
\bigskip
Also, we treat \hyperlink{Q5-I}{{\bf (B5-I)}} as a special case of \hyperlink{Q5-II}{{\bf (B5-II)}} with $i_0=1$.
In the remainder of this section, we assume that
$$
\textit{$\kappa$ satisfies \hyperlink{W3}{{\bf (K3)}};}
$$
$$
\textit{$\sB$ satisfies \hyperlink{Q1}{{\bf (B1)}}, \hyperlink{Q2-b}{{\bf (B2-b)}}, \hyperlink{Q3}{{\bf (B3)}}, \hyperlink{Q4-a}{{\bf (B4-a)}}, \hyperlink{Q4-b}{{\bf (B4-b)}} and \hyperlink{Q5}{{\bf (B5)}}. }
$$

\medskip
We explain now the key result that follows from the assumption \hyperlink{Q5}{{\bf (B5)}}. Recall from the introduction that in case of the 
half-space one can calculate the action of the operator $L^B_{\alpha}$ on the power of the distance function - see 
\eqref{e:intro-L-on-dist}. This calculation uses the scaling properties of the associated process in an  essential way. Instead of such an exact formula, assumption \hyperlink{Q5}{{\bf (B5)}} allows for a weaker, but sufficient, 
substitute. 
For $Q\in \partial D$ and 
$a,b \in (0, \wh{R}/2)$, let 
	$$
		U^Q(a,b)=\big\{x\in D: x=(\wt{x},x_d)\text{ in CS}_Q \mbox{ with } |\wt{x}|< a,\, 0<\rho_D(x)<b\big\}
	$$	
denote the box of width $a$ and height $b$ based at $Q$. 
Here $\rho_D(x)=\rho^Q_D(x)=x_d-\Psi(\wt{x})$ is the ``vertical distance" 
 of the point $x$ to the boundary in the local coordinate system CS$_Q$ with $C^{1,1}$ function $\Psi$. 
 (See \eqref{e:local-coordinate_2}.)
 We denote 
 $U^Q(r,r)$  as $U^Q(r)$. 
For $r< \wh{R}/8$, let $V$ be a Borel set satisfying $U^Q(3r)\subset V\subset 
B_D(Q, \wh R)$.
Let $h_{q,V}(y)=\1_V(y) \delta_D(y)^q$ be the $q$-th power of the cutoff distance function where 
$q\in [(\alpha-1)_+,\alpha+\beta_0) \cap (0,\infty)$. Then for any $x\in U^Q(r/4)$,
	\begin{align}\label{e:prop-barrier}
		\big|L_\alpha^\sB h_{q,V}(x)-\sum_{i=1}^{i_0} \mu^i(x) C(\alpha,q,\F^i)  \delta_D(x)^{q-\alpha}\big| 
		\le C (\delta_D(x)/r)^{\eta_1} \delta_D(x)^{q-\alpha},
	\end{align}
with constants $C>0$ and $\eta_1>0$ independent of $Q$, $r$ and $V$, see Proposition \ref{p:barrier}. This shows that the operator $L^{\sB}_{\alpha}$ essentially acts on the power of the cutoff distance function by decreasing the power by $\alpha$ (up to a lower order term). 

In Subsection \ref{s-refined-barriers} we construct more refined barrier functions by using combinations of cutoff functions of the type $h_{q, U(r)}(y)=\1_{U(r)}(y) \delta_D(y)^q$ and the already constructed barrier $\psi^{(r)}$. Estimates of the action of 
the operator $L^\kappa$  (and a related operator) on these barriers are based on the estimate \eqref{e:prop-barrier}.
Combined with 
the Dynkin-type formula  in Corollary \ref{c:Dynkin-local}, 
 these estimates lead in Subsection \ref{s:decay-spec-har}  to various exit probability estimates and decay rates of some special harmonic functions. Before describing these estimates, let us recall that a non-negative Borel function $f$ on $D$ is said to be \emph{harmonic} in an open set $V\subset D$ with respect to the process $Y^\kappa$ if for every open 
$U\subset \overline{U}\subset V$,
	$$
	f(x)=\E_x[f(Y^\kappa_{\tau_U})], \quad \text{for all } x\in U, 
	$$
where $\tau_U:=\inf\{t>0:Y^\kappa_t \notin U\}.$
Important examples of non-negative harmonic functions are 
$$
	x\mapsto \P_x(Y^\kappa_{\tau_{U(\epsilon_2 r)}}\in U(r)\setminus U(r,r/2)) \quad \text{and} 
	\quad x\mapsto 	\P_x(Y^\kappa_{\tau_{U(\epsilon_2 r)}}\in D).
$$
Here $\epsilon_2$ is some small constant, and $r\in (0, \wh{R}/24)$. These two harmonic functions continuously decay to zero at the boundary of $D$. The key result of Subsection \ref{s:decay-spec-har} is Theorem \ref{t:Dynkin-improve} stating that 
their decay rates are
 comparable to $(\delta_D(x)/r)^p$. Here $p\in [(\alpha-1)_+, \alpha + \beta_0)$ is the parameter corresponding to
$C_9$ through \eqref{e:C(alpha,p,F)-2} if $C_9>0$, and $p=\alpha-1$ if $C_9=0$. 

The exact decay rate of these two special harmonic functions is used in Section 
 \ref{ch-killed-potentials} to establish the Green function estimates of the process $Y^\kappa$ killed upon exiting $D\cap B(x_0, R_0)$ with $x_0\in \overline{D}$ and $R_0>0$. These estimates improve the ones from Subsection \ref{s-int-green} in the sense that the preliminary boundary decay is now included. The main result of the section is Proposition \ref{p:bound-for-integral-new} which gives sharp estimates of the Green potentials of powers of distance functions. To be more precise, for any $Q\in \partial D$, any $R\in (0,\wh{R}/24)$, and any Borel set $A$ satisfying $D\cap B(Q,R/4)\subset A \subset B(Q,R)$, we establish sharp bounds of $\int_A G^A (x,y)\delta_D(y)^{\gamma}\,dy$ for $\gamma >-p-1$. Here $G^A$ is the Green function of $Y^\kappa$ killed upon exiting $A$.

\subsection{Final assumption and main results}\label{s-main-results}

Our final assumption \hyperlink{Q4-c}{{\bf (B4-c)}} below 
 replaces \hyperlink{Q4-a}{{\bf (B4-a)}} and \hyperlink{Q4-b}{{\bf (B4-b)}}, and gives precise upper and lower bounds on the decay rate of $\sB$.

Let  $\Phi_1$ and $\Phi_2$ be Borel  functions on $(0,\infty)$ such that $\Phi_1(r)= \Phi_2(r)=1$ for $r \ge 1$
and that
	\begin{align}
		c_L' \bigg( \frac{r}{s}\bigg)^{\lb_1}\le 	\frac{\Phi_1(r)}{\Phi_1(s)} \le c_U' \bigg( \frac{r}{s}\bigg)^{\ub_1} 
		\quad \text{for all} \;\, 0<s\le r\le 1,\label{e:scale1new-2}
	\end{align}
and
	\begin{align}
		c_L''\bigg( \frac{r}{s}\bigg)^{\lb_2}\le 	\frac{\Phi_2(r)}{\Phi_2(s)} \le c_U'' \bigg( \frac{r}{s}\bigg)^{\ub_2} 
		\quad \text{for all} \;\, 0<s\le r\le 1\label{e:scale2-2}
	\end{align}
for some $\ub_1\ge \lb_1\ge 0$, $\ub_2\ge \lb_2\ge 0$ and $c_L', c_U', c_L'', c_U''>0$.
Let $\beta_1$ and $\beta_2$ be the lower Matuszewska indices of $\Phi_1$ and $\Phi_2$ respectively.

Let $\ell$ be a Borel  function on $(0,\infty)$ with the following properties: (i)   $\ell(r)=1$ for $r \ge 1$,  
and (ii)
for every $\eps>0$, there exists a constant  $c(\eps)>1$ such that 
	\begin{align}	
		c(\eps)^{-1} \bigg( \frac{r}{s}\bigg)^{-\eps  \wedge   \beta_1}\le	\frac{\ell(r)}{\ell(s)} 
		\le 	c(\eps) \bigg( \frac{r}{s}\bigg)^{\eps  		\wedge \beta_2} 
		\quad \text{for all} \;\, 0<s\le r\le 1.	\label{e:scale3-2}
	\end{align}
Note that $\ell$ is almost increasing if $\beta_1=0$, and $\ell$ is almost decreasing if $\beta_2=0$.

We consider the following assumption which should be compared with \eqref{e:intro-skst} and an analogous  assumption in the half-space case, see \cite[(1.2)]{KSV20} and Remark \ref{r:compare-KSV} below.

\bigskip

\noindent \hypertarget{Q4-c}{{\bf (B4-c)}} There exist  comparison constants such that for all $x,y \in D$,
\begin{align*}
	\sB(x,y) \asymp  \Phi_1\bigg(\frac{\delta_D(x) \wedge \delta_D(y)}{|x-y|}\bigg)\,
	\Phi_2\bigg(\frac{\delta_D(x) \vee \delta_D(y)}{|x-y|}\bigg)\,
	\ell\bigg(\frac{\delta_D(x)\wedge \delta_D(y)}{(\delta_D(x)\vee\delta_D(y))\wedge |x-y|}\bigg).
\end{align*}

\bigskip

Define a function $\Phi_0$ on $(0, \infty)$ by
\begin{align}\label{e:phi0new-2}
\Phi_0(r):=\Phi_1(r)\ell(r), \qquad r>0.
\end{align}
By \eqref{e:scale1new-2}, \eqref{e:scale3-2} and the definition of the lower Matuszewska index, since both $\Phi_1$ and $\ell$ are almost increasing if $\beta_1=0$, we see that for any $\eps>0$, there exists $\wt c(\eps)>1$ such that 
\begin{align}\label{e:scalenew-2}
	 \wt c(\eps)^{-1}\bigg( \frac{r}{s}\bigg)^{\beta_1-\eps\wedge\beta_1}
	\le \frac{\Phi_0(r)}{\Phi_0(s)}\le \wt c(\eps) \bigg( \frac{r}{s}\bigg)^{\ub_1+\eps \wedge\beta_2}
	\quad \mbox{for all } 0<s\le r\le 1.
\end{align}
Hence, the function $\Phi_0$ defined in \eqref{e:phi0new-2} satisfies \eqref{e:scale-2}
and is thus almost increasing.
We emphasize that by \eqref{e:scale3-2},
$$
	\text{the lower Matuszewska index of $\Phi_0$ equals to $\beta_1$,}
$$
which is the lower Matuszewska index of $\Phi_1$.

As will be proved in Lemma \ref{l:A3-c}, assumption \hyperlink{Q4-c}{{\bf (B4-c)}} implies 
\hyperlink{Q2-a}{{\bf (B2-a)}}, \hyperlink{Q2-b}{{\bf (B2-b)}}, \hyperlink{UBS2}{{\bf (UBS)}} and \hyperlink{Q4-a}{{\bf (B4-a)}}--\hyperlink{Q4-b}{{\bf (B4-b)}} (with function $\Phi_0$ defined in \eqref{e:phi0new-2}).
Hence, in the remainder of this section,  we assume that
$$
	\textit{$\sB$ satisfies \hyperlink{Q1}{{\bf (B1)}}, \hyperlink{Q3}{{\bf (B3)}}, \hyperlink{Q4-c}{{\bf (B4-c)}} and \hyperlink{Q5}{{\bf (B5)}} and $\kappa$ satisfies \hyperlink{W3}{{\bf (K3)}},}
$$
and that $\Phi_0$ will be the function defined in \eqref{e:phi0new-2} and so 
$$
	\beta_0=\beta_1.
$$
	
Under these assumptions we first prove Carleson's estimate, see Theorem \ref{t:Carleson}, which is used in the proof of our first main result -- the boundary Harnack principle.

Recall that 	the constant  $p \in [(\alpha-1)_+, \alpha+\beta_1) \cap (0,\infty)$ 
	denotes the constant satisfying \eqref{e:C(alpha,p,F)-2} if $C_9>0$ and  $p=\alpha-1$ if $C_9=0$ where $C_9$ is the constant in \hyperlink{W3}{{\bf (K3)}}. 
\begin{thm}\label{t:BHPnew-2}
 \!{\rm(Boundary Harnack principle)} 
 Suppose that $D$ is a $C^{1,1}$ open set 
 and that \hyperlink{Q1}{{\bf (B1)}}, \hyperlink{Q3}{{\bf (B3)}}, \hyperlink{Q4-c}{{\bf (B4-c)}},  \hyperlink{W3}{{\bf (K3)}} and  \hyperlink{Q5}{{\bf (B5)}} hold.
Suppose also that $p<\alpha+(\beta_1\wedge \beta_2)$.	
Then  for any $Q \in \partial D$, $0<r \le \wh R$, and any non-negative Borel function $f$ in $D$ which is harmonic in $D\cap B(Q,r)$ with respect to $Y^{\kappa}$ and vanishes continuously on 
	$\partial D\cap B(Q,r)$, 	we have 
	\begin{equation}\label{e:TAMSe1.8new-2}
		\frac{f(x)}{\delta_D(x)^p}\asymp 	\frac{f(y)}{\delta_D(y)^p} \quad \text{for} \;\, x,y\in D\cap B(Q,r/2),
	\end{equation}
where the comparison constants are independent of $Q,r$ and $f$.
\end{thm}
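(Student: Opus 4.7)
My approach is to prove Theorem~\ref{t:BHPnew-2} by establishing that for a suitable reference point $x_Q \in D$ with $\delta_D(x_Q) \asymp r$ lying in $D \cap B(Q,r)$ away from $Q$, one has
\begin{equation*}
f(x) \asymp (\delta_D(x)/r)^p f(x_Q), \qquad x \in D \cap B(Q, r/2),
\end{equation*}
from which \eqref{e:TAMSe1.8new-2} is immediate by taking ratios. The main ingredients are Carleson's estimate (Theorem~\ref{t:Carleson}) together with the interior Harnack inequality for $Y^\kappa$, the sharp bounds $\P_x(Y^\kappa_{\tau_{U^Q(\eps r)}} \in D) \asymp (\delta_D(x)/r)^p$ from Theorem~\ref{t:Dynkin-improve}, and the Green-potential estimates of Proposition~\ref{p:bound-for-integral-new}.

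For the upper bound, fix $Q\in\partial D$, $r\in(0,\wh R]$, let $U:=U^Q(\eps r)$ for a sufficiently small $\eps$, and choose a reference point $x_Q$. For $x\in D\cap B(Q,r/2)$ with $\delta_D(x)\asymp r$, interior Harnack immediately gives $f(x)\asymp f(x_Q)$, so it suffices to treat $x\in U$ with $\delta_D(x)$ small. Harmonicity and the L\'evy system yield
\begin{equation*}
f(x) = \int_U G^U(x,z) \int_{D\setminus U} f(w)\,\frac{\sB(z,w)}{|z-w|^{d+\alpha}}\,dw\,dz.
\end{equation*}
Split the inner integral at $B(Q,3r/4)$. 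On the near part Carleson's estimate gives $f\le Cf(x_Q)$, and the contribution is bounded by $Cf(x_Q)\P_x(Y^\kappa_{\tau_U}\in D)\le C'f(x_Q)(\delta_D(x)/r)^p$ by Theorem~\ref{t:Dynkin-improve}. On the far part $D\setminus B(Q,3r/4)$, harmonicity in the larger set $D\cap B(Q,r)$ allows us to re-express $f(w)$ via another L\'evy system integral back into the near region (where Carleson again applies), and \hyperlink{Q4-c}{{\bf (B4-c)}} produces a weight governed by $\Phi_1(\delta_D(z)/r)\Phi_2(\delta_D(z)\vee\delta_D(w)/|z-w|)$. Applying Proposition~\ref{p:bound-for-integral-new} with the appropriate exponent $\gamma$ controls $\int_U G^U(x,z)\delta_D(z)^\gamma\,dz$ and yields a bound of order $(\delta_D(x)/r)^p$.

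For the lower bound, pick a ball $B_0\subset D\cap B(Q,r)\setminus U$ of radius $\asymp r$ with $\delta_D\asymp r$ on $B_0$; interior Harnack for $Y^\kappa$ gives $f\ge cf(x_Q)$ on $B_0$. Then the L\'evy system gives
\begin{equation*}
f(x) \ge cf(x_Q) \int_U G^U(x,z)\int_{B_0}\frac{\sB(z,w)}{|z-w|^{d+\alpha}}\,dw\,dz.
\end{equation*}
Since $|z-w|\asymp r$ and $\delta_D(w)\asymp r$ for $z\in U$ and $w\in B_0$, the lower bound in~\hyperlink{Q4-c}{{\bf (B4-c)}} reduces to $\sB(z,w)\ge c\Phi_1(\delta_D(z)/r)$ up to constants, and the lower Green-potential bound of Proposition~\ref{p:bound-for-integral-new} (with $\gamma$ corresponding to the Matuszewska index of $\Phi_1$) produces the matching lower estimate $f(x)\ge c''f(x_Q)(\delta_D(x)/r)^p$.

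The main obstacle is matching the exact decay rate $p$ in the far-jump part of the upper bound. The Green potential $\int_U G^U(x,z)\delta_D(z)^\gamma\,dz$ behaves like $\delta_D(x)^p\,r^{\alpha+\gamma-p}$ when $\gamma+\alpha>p$ but like $\delta_D(x)^{\alpha+\gamma}$ when $\gamma+\alpha<p$. In the far-jump estimate, the exponent $\gamma$ harvested from \hyperlink{Q4-c}{{\bf (B4-c)}} is governed by the lower Matuszewska indices $\beta_1$ (at the close endpoint) and $\beta_2$ (at the far endpoint). For the near and far contributions to both produce the same order $(\delta_D(x)/r)^p$---and not a strictly smaller power that would force the upper bound below the lower bound---one needs $\alpha+\beta_2>p$. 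Combined with $p<\alpha+\beta_0=\alpha+\beta_1$ from the setup, this is precisely the hypothesis $p<\alpha+(\beta_1\wedge\beta_2)$, which therefore is sharp: if it fails, the barrier computation still gives the matching exit probability decay, but the far-jump contribution overwhelms the boundary decay and the BHP with exponent $p$ must break down.
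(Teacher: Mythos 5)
Your framework---a reference point, Carleson plus interior Harnack for the near contribution, exit-probability asymptotics from Theorem~\ref{t:Dynkin-improve}, and Green-potential estimates from Proposition~\ref{p:bound-for-integral-new}---is the correct one, and your L\'evy-system version of the lower bound is a legitimate variation on the paper's direct application of Theorem~\ref{t:Dynkin-improve}. You also correctly anticipate that the hypothesis $p<\alpha+(\beta_1\wedge\beta_2)$ enters through the far-jump estimate. However, the far-jump upper bound as you describe it contains a genuine gap.

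You propose to ``re-express $f(w)$ via another L\'evy system integral back into the near region'' using harmonicity in $D\cap B(Q,r)$. This cannot work as stated: the far region $D\setminus B(Q,3r/4)$ contains points $w\notin B(Q,r)$ where $f$ is not assumed harmonic, so no exit representation of $f(w)$ is available there; and even for $w\in B(Q,r)\setminus B(Q,3r/4)$, an exit representation from a neighborhood of $w$ lands predominantly near $w$, not in the near region, so Carleson gives no control. The paper's mechanism is the reverse: it applies harmonicity and the L\'evy system at the \emph{near} reference point $z_0$ (with $\delta_D(z_0)\asymp r$), giving the a priori bound
\[
\int_{D\setminus U(r/8)} k_{r/8}(w)\,f(w)\,dw \;\le\; c\,r^{-\alpha}\,f(z_0),
\]
where $k_r$ is the explicit reference kernel of \eqref{e:k}, and this uses Lemma~\ref{l:estimates-of-J-for-BHP}(i). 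The second, decisive ingredient is Lemma~\ref{l:estimates-of-J-for-BHP}(ii), which \emph{factors} the jump kernel from $z\in U$ to far $w$ as
\[
\frac{\sB(z,w)}{|z-w|^{d+\alpha}} \;\le\; C\,\Bigl(\frac{\delta_D(z)}{r}\Bigr)^{\beta_1-\eps}\,k_{r/8}(w),
\qquad \text{any }\eps>(\beta_1-\beta_2)_+.
\]
Your heuristic weight $\Phi_1(\delta_D(z)/r)\Phi_2((\delta_D(z)\vee\delta_D(w))/|z-w|)$ is not a function of $z$ alone, so Proposition~\ref{p:bound-for-integral-new} cannot be applied directly to it; the product must first be split as above, and the delicate case analysis behind that factorization is precisely where $(\beta_1-\beta_2)_+$ appears. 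Combined with the requirement $\beta_1-\eps>p-\alpha$ (so that the Green potential carries the $(\delta_D(x)/r)^p$ rate), this is what forces $p<\alpha+(\beta_1\wedge\beta_2)$. Without articulating this kernel comparison the far-jump bound cannot be completed. A secondary, minor issue: your opening identity $f(x)=\int_U G^U(x,z)\int_{D\setminus U}f(w)\sB(z,w)|z-w|^{-d-\alpha}\,dw\,dz$ captures only the jump exits and omits continuous exit through $\partial U\cap D$ (possible for $\alpha>1$); the paper sidesteps this by writing $f(x)=\E_x[f(Y^\kappa_{\tau_U})]$ and invoking the L\'evy system only for target sets disjoint from $U$.
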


Proof of Theorem \ref{t:BHPnew-2} uses the Harnack inequality, Carleson's estimate, some exit time estimates, Theorem \ref{t:Dynkin-improve} on the decay rate of some special harmonic functions, upper estimates of killed potentials from Proposition \ref{p:bound-for-integral-new}, and some delicate estimates of the jump kernel obtained in Lemma \ref{l:estimates-of-J-for-BHP}.

Under the setting of Theorem \ref{t:BHPnew-2}, there exists $C>0$ such that the following holds: For any $Q \in \partial D$ and $0<r \le \wh R$, if two Borel functions $f, g$ in $D$ are harmonic in $B_D(Q,r)$ with respect to $Y^\kappa$ and vanish continuously on $\partial D \cap B(Q,r)$, then
\begin{align}\label{e:def-BHP-2}
	\frac{f(x)}{f(y)}\,\le C\,\frac{g(x)}{g(y)}\quad \text{for all} \;\, x,y\in B_D(Q,r/2).
\end{align}
The inequality \eqref{e:def-BHP-2} is referred to as the \textit{scale-invariant boundary Harnack principle} for $Y^\kappa$.

We say that the  \textit{inhomogeneous non-scale-invariant boundary Harnack principle}  holds for $Y^\kappa$, if there is a constant $r_0\in (0,\wh R]$  such that for any $Q \in \partial D$ and 
$0<r\le r_0$, there exists  a constant $C=C(Q,r)\ge 1$ such that \eqref{e:def-BHP-2} holds for any two Borel functions $f, g$ in $D$ which are harmonic in $B_D(Q,r)$ with respect to $Y$ and vanish continuously on $\partial D \cap B(Q,r)$.

Note that Theorem \ref{t:BHPnew-2} is stated for $p<\alpha+(\beta_1\wedge \beta_2)$ only.  
In particular, if $\beta_1\le \beta_2$, then BHP holds for all admissible values of the parameter $p$, while if $\beta_2 < \beta_1$, it holds when $p<\alpha+\beta_2$.
We will show that without this extra condition, even inhomogeneous non-scale-invariant BHP may not hold for $Y^\kappa$.  Consider the following condition:

\medskip

\noindent \hypertarget{F2}{{\bf (F)}} For any   $0<r\le \wh R$, there exists a constant $C=C(r)$ such that
\begin{align}\label{e:condition-F-2}
	\liminf_{s\to 0} \frac{\Phi_2(b/r) \ell(s/b)}{\ell(s)} \ge C b^{p-\alpha} \quad \text{for all} \;\, 0<b \le r.
\end{align}

\medskip

\begin{thm}\label{t:BHPfail-general2}
	 Suppose that $D$ is a $C^{1,1}$ open set 
	 and  that \hyperlink{Q1}{{\bf (B1)}}, \hyperlink{Q3}{{\bf (B3)}}, \hyperlink{Q4-c}{{\bf (B4-c)}},  \hyperlink{W3}{{\bf (K3)}} and \hyperlink{Q5}{{\bf (B5)}} hold.  Suppose also that   \hyperlink{F2}{{\bf (F)}} holds.
	Then the inhomogeneous non-scale-invariant boundary Harnack principle fails for $Y^\kappa$.
\end{thm}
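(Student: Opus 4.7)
Proof proposal. The plan is to contradict \eqref{e:def-BHP-2} by constructing, for any candidate $r_0\in(0,\wh R]$, a pair of non-negative functions that are harmonic in a boundary ball $B_D(Q,r)$ with $r\le r_0$, that vanish continuously on $\partial D\cap B(Q,r)$, yet have asymptotically different boundary decay rates at $Q$. The baseline will be the harmonic function
\[
f(x) := \P_x\!\bigl(Y^\kappa_{\tau_{U^Q(\eps_2 r)}} \in D \setminus U^Q(r)\bigr),
\]
which by Theorem \ref{t:Dynkin-improve} satisfies $f(x)\asymp(\delta_D(x)/r)^p$ for $x$ near $Q$ inside a smaller box $U^Q(\eps_2 r/2)$. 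This captures the ``expected'' boundary decay rate $\delta_D(x)^p$ for harmonic functions of $Y^\kappa$.

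For the alternative, I would concentrate the exit distribution on a thin layer near the boundary far from $Q$. For small $s\in(0,r)$ set $A_s:=\{z\in D\setminus B(Q,r):s/2<\delta_D(z)\le s\}$ and define
\[
g_s(x) := \P_x\!\bigl(Y^\kappa_{\tau_{U^Q(\eps_2 r)}} \in A_s\bigr).
\]
Each $g_s$ is non-negative, harmonic in $U^Q(\eps_2 r)$, and vanishes continuously on $\partial D\cap B(Q,\eps_2 r)$. By the Ikeda--Watanabe formula,
\[
g_s(x) \;=\; \int_{U^Q(\eps_2 r)}\! G^{U^Q(\eps_2 r)}(x,y)\!\int_{A_s}\! \frac{\sB(y,z)}{|y-z|^{d+\alpha}}\,dz\,dy.
\]
When $x$ sits on the inward normal at $Q$ at depth $b=\delta_D(x)\ll r$, the dominant contribution to this integral is produced by pairs $(y,z)$ with $\delta_D(y)\asymp b$, $\delta_D(z)\asymp s$, $|y-z|\asymp r$; the lower bound in \hyperlink{Q4-c}{{\bf (B4-c)}} makes their jump-kernel factor proportional to $\Phi_1(s/r)\Phi_2(b/r)\ell(s/b)$. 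At an interior reference point $x_0\in U^Q(\eps_2 r)$ with $\delta_D(x_0)\asymp r$, the matching upper bound from \hyperlink{Q4-c}{{\bf (B4-c)}} together with the interior Green-function estimates of Section \ref{ch-killed-potentials} gives a factor of order $\Phi_1(s/r)\ell(s)$.

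Dividing and invoking condition \hyperlink{F2}{{\bf (F)}}, one obtains that along a subsequence $s_n\downarrow 0$ realizing the $\liminf$ in \eqref{e:condition-F-2}, the normalized ratio $g_{s_n}(x_n)/g_{s_n}(x_0)$ at a suitably matched test point $x_n$ with $b_n=\delta_D(x_n)\downarrow 0$ has an asymptotic behavior in $b_n$ strictly different from $b_n^p$. Consequently
\[
\frac{g_{s_n}(x_n)/g_{s_n}(x_0)}{f(x_n)/f(x_0)} \;\longrightarrow\; \infty \quad\text{or}\quad 0 \quad (n\to\infty),
\]
and after possibly interchanging the roles of $f$ and $g_{s_n}$, the ratio is unbounded. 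Given any candidate constant $C=C(Q,r)$, this allows one to pick $n$ large enough that the pair $(g_{s_n},f)$ (or $(f,g_{s_n})$) violates \eqref{e:def-BHP-2} at the points $(x_n,x_0)$, ruling out the inhomogeneous non-scale-invariant BHP. The main technical obstacle is the quantitative passage from the pointwise $\liminf$ in \hyperlink{F2}{{\bf (F)}} to a uniform (in $x$) two-sided bound on the Poisson-type integral defining $g_s$: this requires careful two-scale matching of $(b_n,s_n)$, precise use of the full Green-function estimates from Section \ref{ch-killed-potentials}, and control of the competing ``bulk'' contributions (from $y$ interior or $z$ far from the boundary) so that the anomalous boundary enhancement predicted by \hyperlink{F2}{{\bf (F)}} is not absorbed.
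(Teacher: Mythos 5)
Your plan is structurally the same as the paper's: fix a boundary point $Q$, build a family of harmonic functions whose exit mass is concentrated ever closer to the boundary far away from $Q$, and use condition \hyperlink{F2}{{\bf (F)}} to produce an anomalous enhancement in the decay rate at $Q$ that cannot coexist with the expected $\delta_D(x)^p$ rate. The paper (Lemma \ref{l:BHPfail} plus Theorem \ref{t:BHPfail-general}) likewise first converts the abstract two-function BHP into the one-function bound $f(x)/f(y)\lesssim(\delta_D(x)/\delta_D(y))^p$ via Theorem \ref{t:Dynkin-improve}, then compares against harmonic functions $F_n(x)=\E_x[\Xi_n(Y_{\tau_{U(3r)}})]$ with $\Xi_n$ supported on shrinking balls $B_D(P,r/n)$ at a boundary point $P$ far from $Q$.

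However, there is a genuine gap in your heuristic, and I believe it would lead you astray if carried out literally. You locate the ``dominant contribution'' to $g_s(x_b)$ in the single layer $\delta_D(y)\asymp b=\delta_D(x_b)$ and extract from it the factor $\Phi_2(b/r)\,\ell(s/b)/\ell(s)$, which condition \hyperlink{F2}{{\bf (F)}} lower-bounds by $C\,b^{p-\alpha}$ as $s\to 0$. But $p-\alpha\ge\beta_2\ge 0$ under \hyperlink{F2}{{\bf (F)}} (Remark \ref{r:condition-F}), so $b^{p-\alpha}$ is a \emph{decaying} correction: combining it naively with the expected $b^p$ Green-function decay does not produce an \emph{enhancement} over $b^p$, and one cannot conclude that the normalized ratio $g_{s_n}(x_n)/g_{s_n}(x_0)$ behaves ``strictly different from $b_n^p$.'' The mechanism the paper actually exploits is different: after applying Fatou's lemma over $n$ and then \hyperlink{F2}{{\bf (F)}}, the surviving quantity is the killed Green potential
\[
\int_{U(3r)}G^{U(3r)}(u\e_d,z)\,\delta_D(z)^{p-\alpha}\,dz,
\]
which sits exactly at the \emph{critical exponent} $\gamma=p-\alpha$ of Proposition \ref{p:bound-for-integral-new} and therefore equals $\asymp u^{p}\log(3r/u)$ rather than $\asymp u^p$. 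The logarithmic enhancement comes from integrating the Green function against the weight $\delta_D(y)^{p-\alpha}$ over the full logarithmic range of depths $\delta_D(y)\in(b,r)$, \emph{not} from the concentrated layer $\delta_D(y)\asymp b$; pinning the departure point to depth $\asymp b$ gives up the very divergence you need. You would also need to take care of the $\Phi_1$ dependence: the paper divides $\Xi_n$ by $\Phi_1(\delta_D(y)/(3r))$ precisely so that, in the region $\delta_D(y)<\delta_D(z)$ responsible for the lower bound, the $\Phi_1$ factor cancels and the $n\to\infty$ limit survives; your $g_s$ has no such normalization, so the surviving/canceling factors need to be tracked explicitly. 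To close the gap, replace the single-layer heuristic by: apply Fatou over $s\to 0$ after dividing out the $\Phi_1$-factor, invoke \hyperlink{F2}{{\bf (F)}}, and then cite Proposition \ref{p:bound-for-integral-new} with $\gamma=p-\alpha$ to obtain the $\log$ factor that actually drives the contradiction.
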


We will see in Remark \ref{r:condition-F} that \hyperlink{F2}{{\bf (F)}} implies that $p\ge \alpha+\beta_2$. Conversely, \hyperlink{F2}{{\bf (F)}} holds if (i) $p>\alpha+\ub_2$, or (ii) $p=\alpha+\beta_2$,  $\ell$ is  slowly varying at zero, and there exists $c_0>0$ such that  $\Phi_2(r) \ge c_0 \Phi_2(1)r^{\beta_2}$ for all $0<r\le 1$, see Lemma \ref{l:condition-F}. These two sufficient conditions for \hyperlink{F2}{{\bf (F)}} together with Remark \ref{r:compare-KSV}   show that Theorems \ref{t:BHPnew-2} and \ref{t:BHPfail-general2} completely cover the boundary Harnack principle results of \cite{KSV20}.

Suppose that
 $\Phi_2$ is regularly varying with index $\beta_2$. If either
 (1) $p>\alpha+\beta_2$, or (2) $p=\alpha+\beta_2$  
and the right-hand side inequality of \eqref{e:scale2-2} holds with $\beta_2$, 
then \hyperlink{F2}{{\bf (F)}} holds true. Hence, in this case we can completely determine the region of 
the parameters $\beta_1$, $\beta_2$ and $p$ for which BHP holds true. 
If $\Phi_2$ is \emph{not} regularly varying at zero, i.e., $\beta_2<\beta_2^*$ (where $\beta_2^*$ denotes the upper Matuszewska index),  the oscillation of $\Phi_2$ near zero is an obstacle to completely determining when the BHP holds.

\medskip 
The second main result is about sharp Green function estimates. We first introduce a positive function $\Upsilon$ on $(0,\infty)$ by
\begin{equation}\label{e:def-of-Theta-2}
\Upsilon	(t):= \int_{t\, \wedge\, 1}^2 u^{2\alpha-2p-1}\Phi_1(u)\Phi_2(u)\, du.
\end{equation}

\begin{thm}\label{t:Green-2}
Suppose that $D$ is a bounded $C^{1,1}$ open set 
and that \hyperlink{Q1}{{\bf (B1)}}, \hyperlink{Q3}{{\bf (B3)}}, \hyperlink{Q4-c}{{\bf (B4-c)}},   \hyperlink{W3}{{\bf (K3)}} and  \hyperlink{Q5}{{\bf (B5)}} hold.
Let $p \in [(\alpha-1)_+, \alpha+\beta_1) \cap (0,\infty)$
denote the constant satisfying \eqref{e:C(alpha,p,F)-2} if $C_9>0$ and let $p=\alpha-1$ if $C_9=0$ where $C_9$ is the 
constant in \hyperlink{W3}{{\bf (K3)}}.  
Then for all $x,y \in D$,
\begin{align*}
		G^\kappa(x, y)& \asymp  \left(\frac{\delta_D(x)\wedge \delta_D(y)}{|x-y|} \wedge 1\right)^p 
		\left(\frac{\delta_D(x)\vee \delta_D(y)}{|x-y|} \wedge 1\right)^p \\
		&\qquad \times \Upsilon\left(\frac{\delta_D(x)\vee \delta_D(y)}{|x-y|}\right)\frac1{|x-y|^{d-\alpha}}.
\end{align*}
\end{thm}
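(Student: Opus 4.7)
The plan is to reduce the estimate to three regimes based on the relative sizes of $r := |x-y|$, $\rho_1 := \delta_D(x)\wedge\delta_D(y)$, and $\rho_2 := \delta_D(x)\vee\delta_D(y)$: (i) the fully interior case $\rho_1 \gtrsim r$; (ii) the one-sided boundary case $\rho_1 \ll r \lesssim \rho_2$; and (iii) the two-sided boundary case $\rho_2 \ll r$. In regime (i) all three boundary prefactors equal $1$ and $\Upsilon(\rho_2/r)\asymp 1$, so the claim reduces to $G^\kappa(x,y)\asymp |x-y|^{-d+\alpha}$, which is Corollary \ref{c-int-green-bounded}. The work is then in regimes (ii) and (iii).

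For the upper bound I would combine the Ikeda--Watanabe formula with the decay of the specific harmonic functions from Theorem \ref{t:Dynkin-improve}. In regime (ii), fix a boundary point $Q_x\in\partial D$ closest to $x$, let $U := U^{Q_x}(\varepsilon_2 r)$, and use $y\notin U$ together with the strong Markov property
\begin{equation*}
G^\kappa(x,y) = \E_x\bigl[G^\kappa(Y^\kappa_{\tau_U}, y)\bigr].
\end{equation*}
Writing the right-hand side as an integral against the exit distribution and controlling $\P_x(Y^\kappa_{\tau_U}\in \cdot)$ by the decay $(\delta_D(x)/r)^p$ from Theorem \ref{t:Dynkin-improve}, together with the interior bound $G^\kappa(z,y)\lesssim |z-y|^{-d+\alpha}$ for $z$ away from $\partial D$, produces the $\delta_D(x)^p$ factor. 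In regime (iii) one iterates the same argument at $y$; the resulting double integral, when the kernel $\sB(z,w)$ is replaced by its upper bound from \hyperlink{Q4-c}{\textbf{(B4-c)}} and the inner integrals are evaluated using the sharp estimates on $\int G^A(\cdot,z)\delta_D(z)^\gamma dz$ of Proposition \ref{p:bound-for-integral-new}, collapses under a change of variables $u=s/r$ to $\int_{\rho_2/r}^{1} u^{2\alpha-2p-1}\Phi_1(u)\Phi_2(u)\,du$, which is exactly the nontrivial part of $\Upsilon(\rho_2/r)$.

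For the lower bound I would combine a Harnack-chain argument with the sharp lower decay of exit probabilities from Theorem \ref{t:Dynkin-improve}. In regime (ii), connect $x$ to an interior reference point $x^*$ with $\delta_D(x^*)\asymp r$ at distance $\asymp r$ from $x$ and obtain
\begin{equation*}
G^\kappa(x,y) \gtrsim \P_x\bigl(Y^\kappa_{\tau_{U^{Q_x}(\varepsilon_2 r)}}\in D\cap B(x^*, cr)\bigr)\cdot\inf_{z\in B(x^*,cr)}G^\kappa(z,y),
\end{equation*}
where the probability is $\gtrsim (\delta_D(x)/r)^p$ and the infimum is $\gtrsim r^{-d+\alpha}$ by Corollary \ref{c-int-green-bounded}. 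Iterating for $y$ covers regime (iii) up to the constant part of $\Upsilon$. To recover the full integral form of $\Upsilon(\rho_2/r)$, I would use the resolvent-type inequality
\begin{equation*}
G^\kappa(x,y) \geq \iint_{A\times A} G^\kappa(x,z)\,J^\kappa(z,w)\,G^\kappa(w,y)\,dz\,dw
\end{equation*}
for an annular region $A$ organized by scales $s\in(\rho_2, r)$, insert the already-established partial lower bounds on the two outer Green functions and the lower bound on $\sB$ from \hyperlink{Q4-c}{\textbf{(B4-c)}}, and integrate scale by scale.

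The hardest part will be the upper bound in regime (iii) in the range $p\geq \alpha+\beta_1\wedge\beta_2$, where the boundary Harnack principle of Theorem \ref{t:BHPnew-2} is unavailable. One cannot then pull $G^\kappa(\cdot,y)$ out of the exit integral as a single harmonic quantity, and the argument must work directly with the Ikeda--Watanabe representation and the explicit harmonic functions of Theorem \ref{t:Dynkin-improve}. A closely related subtlety is tracking the right form of $\Upsilon$: the integrand $u^{2\alpha-2p-1}\Phi_1(u)\Phi_2(u)$ is integrable at $0$ precisely when $2(p-\alpha)<\beta_1+\beta_2$, so $\Upsilon(\rho_2/r)$ may either remain bounded or blow up as $\rho_2/r\to 0$; keeping all intermediate constants uniform across these cases requires systematic use of the weak scaling hypotheses \eqref{e:scale1new-2}--\eqref{e:scale3-2} and of the admissibility range of $p$ encoded in \eqref{e:C(alpha,p,F)-2}.
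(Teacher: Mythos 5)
Your overall skeleton matches the paper's: upper bound via exit from $U^{Q_x}(\epsilon_2 r)$ plus the L\'evy system formula, lower bound via a dyadic/annular chain of boxes $V_x(n),V_y(n)$ combined with the resolvent-type inequality. Your three-regime decomposition is a refinement of the paper's case split (the paper splits only on whether $\delta_D(y)$ exceeds $2^{-4}\epsilon_2 r$, and absorbs your regimes (i)--(ii) into Propositions \ref{p:prelub} and Theorem \ref{t:GB}), and your observation that BHP is unusable for $p\geq\alpha+\beta_1\wedge\beta_2$ is handled in the paper precisely as you suggest --- the argument never invokes BHP but only the explicit exit-time decay of Theorem \ref{t:Dynkin-improve} and the preliminary estimates.

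There is, however, a genuine gap in your regime-(iii) upper bound. After the exit at both $x$ and $y$, the L\'evy system produces the double integral
\[
\int_{U^{Q_x}}\!\!\!\int_{U^{Q_y}} G^{U}(x,w)\,\frac{\sB(w,z)}{|w-z|^{d+\alpha}}\,G(z,y)\,dz\,dw,
\]
in which $\sB(w,z)$ carries, by \hyperlink{Q4-c}{\textbf{(B4-c)}}, the full product
$\Phi_1\big(\tfrac{\delta_D(w)\wedge\delta_D(z)}{r}\big)\Phi_2\big(\tfrac{\delta_D(w)\vee\delta_D(z)}{r}\big)\ell\big(\tfrac{\delta_D(w)\wedge\delta_D(z)}{\delta_D(w)\vee\delta_D(z)}\big)$.
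Proposition \ref{p:bound-for-integral-new} only controls potentials of pure powers $\delta_D^\gamma$, so it cannot process this coupled integrand: the $\Phi_1$-, $\Phi_2$-, and $\ell$-factors depend on $\min$ and $\max$ of $\delta_D(w),\delta_D(z)$ simultaneously, so you cannot separate the $w$- and $z$-integrations into two applications of Proposition \ref{p:bound-for-integral-new}. More subtly, note that the claimed $\Upsilon$ contains no $\ell$ at all --- the $\ell$-factor must disappear in the final bound, and your sketch gives no mechanism for that. The paper's proof handles this with the dedicated Lemma \ref{l:key-estimate}: one first bounds $\ell\big(\tfrac{\rho(w)\wedge\rho(z)}{\rho(w)\vee\rho(z)}\big)$ by $\big(\tfrac{\rho(w)\wedge\rho(z)}{\rho(w)\vee\rho(z)}\big)^{-\varepsilon}$ using \eqref{e:scale3-2}, absorbs the gained power into shifted weak-scaling functions $\wt\Phi_1(t)=\Phi_1(t)(t\wedge1)^{-\varepsilon}$, $\wt\Phi_2(t)=\Phi_2(t)(t\wedge1)^{\varepsilon}$ (with $\varepsilon$ small enough that the admissibility $p<\alpha+\wt\lb_1$ survives), then splits the domain on whether $\rho(z)<\rho(w)$ and applies the more flexible Lemma \ref{l:key-curved}(i)--(iii) --- not Proposition \ref{p:bound-for-integral-new} --- first to the inner, then to the outer integral. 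This is the technical crux; without it the upper bound does not close.

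On the lower bound your plan is essentially the paper's. At dyadic scales $2^{-n}r$ the points in $V_x(n)$ and $V_y(n)$ have comparable $\delta_D$, so the argument of $\ell$ is of order $1$ and $\ell$ contributes only a harmless constant; the sum $\sum_n 2^{-2(\alpha-p)n}\Phi_1(2^{-n})\Phi_2(2^{-n})$ then reproduces $\Upsilon(\delta_D(y)/r)$ via \eqref{e:scaling-Theta}. So the only place you need to repair the plan is the $\ell$-absorption step in the upper-bound double integral.
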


Theorem \ref{t:Green-2} covers all admissible values of the parameters involved so clearly it 
 includes 
the region of the parameters where the boundary Harnack principle may fail.
We note that the sharp bounds above involve the function $\Upsilon$ defined through an integral. For certain regions of the involved parameters, the integral can be estimated, leading to the following corollary.

\begin{corollary}\label{c:Green-2}
Under the setting of Theorem \ref{t:Green}, the following statements hold true.
	
	\noindent (i)  
	Suppose that $p<\alpha + (\beta_1+\beta_2)/2$. 
	Then  for all $x,y \in D$,
	\begin{align*}
		&G^\kappa(x, y) \asymp
		 \left(\frac{\delta_D(x)\wedge \delta_D(y)}{|x-y|} \wedge 1\right)^p 
		 \left(\frac{\delta_D(x)\vee \delta_D(y)}{|x-y|} \wedge1\right)^p\frac1{|x-y|^{d-\alpha}}.
	\end{align*}
	\noindent (ii)  
	Suppose that $\alpha + (\ub_1+\ub_2)/2<p<\alpha+\beta_1$. 
	Then for all $x,y \in D$,
	\begin{align*}
		G^\kappa(x, y)& \asymp
		\left(\frac{\delta_D(x)\wedge \delta_D(y)}{|x-y|} \wedge 1\right)^p 
		\left(
		\frac{\delta_D(x)\vee \delta_D(y)}{|x-y|} \wedge1\right)^{2\alpha-p} \\
		&\qquad \times  \Phi_1\left(\frac{\delta_D(x)\vee \delta_D(y)}{|x-y|}\right) 
		\Phi_2\left(\frac{\delta_D(x)\vee \delta_D(y)}{|x-y|}\right)\frac1{|x-y|^{d-\alpha}}.
	\end{align*}
\end{corollary}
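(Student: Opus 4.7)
The plan is to derive Corollary \ref{c:Green-2} directly from Theorem \ref{t:Green-2} by sharp-estimating the integral $\Upsilon(t)$ in \eqref{e:def-of-Theta-2} in each of the two parameter regimes, then substituting into the formula. For $t\ge 1$ we have $\Phi_1(u)=\Phi_2(u)=1$ on $[1,2]$, so $\Upsilon(t)=\int_1^{2}u^{2\alpha-2p-1}\,du$ is a fixed positive constant, and the identity $(t\wedge 1)^p\Upsilon(t)\asymp (t\wedge 1)^{2\alpha-p}\Phi_1(t)\Phi_2(t)$ holds trivially. It therefore suffices to treat $t=(\delta_D(x)\vee\delta_D(y))/|x-y|<1$, in which case $\Upsilon(t)$ decomposes into the bounded constant $\int_1^{2}u^{2\alpha-2p-1}\,du$ and the core piece $I(t):=\int_{t}^{1}u^{2\alpha-2p-1}\Phi_1(u)\Phi_2(u)\,du$.

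For part (i), I would show $\Upsilon(t)\asymp 1$ uniformly for $t\in(0,1]$. The lower bound is immediate since the $\int_1^2$ term is a strictly positive constant. For the upper bound, I would invoke the definition \eqref{e:Matuszewska} of the lower Matuszewska index: for any sufficiently small $\eps>0$ there exist $c_\eps>0$ with $\Phi_1(u)\le c_\eps u^{\beta_1-\eps}$ and $\Phi_2(u)\le c_\eps u^{\beta_2-\eps}$ for $u\in(0,1]$. The assumption $p<\alpha+(\beta_1+\beta_2)/2$ permits a choice of $\eps$ making $2\alpha-2p-1+\beta_1+\beta_2-2\eps>-1$, whence
\begin{align*}
I(t)\le c\int_t^1 u^{-1+(2\alpha-2p+\beta_1+\beta_2-2\eps)}\,du
\end{align*}
is bounded uniformly in $t\in(0,1)$. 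Substituting $\Upsilon(t)\asymp 1$ into Theorem \ref{t:Green-2} yields (i).

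For part (ii), I aim to establish $\Upsilon(t)\asymp t^{2\alpha-2p}\Phi_1(t)\Phi_2(t)$ on $(0,1]$. The lower bound is obtained by restricting $I(t)$ to the interval $u\in[t,(2t)\wedge 1]$: by \eqref{e:scale1new-2} and \eqref{e:scale2-2} one has $\Phi_i(u)\asymp \Phi_i(t)$ throughout, giving $I(t)\ge c\,\Phi_1(t)\Phi_2(t)\int_t^{2t\wedge 1} u^{2\alpha-2p-1}\,du\asymp \Phi_1(t)\Phi_2(t)\,t^{2\alpha-2p}$, where the last step uses $2\alpha-2p<-\ub_1-\ub_2\le 0$ from the hypothesis $p>\alpha+(\ub_1+\ub_2)/2$. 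For the upper bound, applying the upper scaling $\Phi_i(u)\le c(u/t)^{\ub_i}\Phi_i(t)$ valid on $t\le u\le 1$ gives
\begin{align*}
I(t)\le c\,t^{-\ub_1-\ub_2}\Phi_1(t)\Phi_2(t)\int_t^1 u^{2\alpha-2p-1+\ub_1+\ub_2}\,du\asymp \Phi_1(t)\Phi_2(t)\,t^{2\alpha-2p},
\end{align*}
the inner integral being of dominated-endpoint type since its exponent is strictly less than $-1$. The residual bounded constant $\int_1^2 u^{2\alpha-2p-1}\,du$ is absorbed because $\Phi_1(t)\Phi_2(t)\ge c t^{\ub_1+\ub_2}$, so $t^{2\alpha-2p}\Phi_1(t)\Phi_2(t)\to\infty$ as $t\to 0$. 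Feeding $\Upsilon(t)\asymp t^{2\alpha-2p}\Phi_1(t)\Phi_2(t)$ into Theorem \ref{t:Green-2} and using $t^p\cdot t^{2\alpha-2p}=t^{2\alpha-p}$ produces (ii).

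The hard point is that the two matchings are sharp at the opposite endpoints of the admissible range: part (i) uses the \emph{lower} Matuszewska indices $\beta_i$ to control integrability of $\Phi_1\Phi_2$ at zero, while part (ii) uses the \emph{upper} scaling exponents $\ub_i$ to control how quickly $\Phi_1\Phi_2$ can grow between $t$ and $1$. This asymmetry is the structural reason why the intermediate band $\alpha+(\beta_1+\beta_2)/2\le p\le\alpha+(\ub_1+\ub_2)/2$ is not covered by the closed-form corollary and can only be expressed through the integral $\Upsilon$ of Theorem \ref{t:Green-2} itself.
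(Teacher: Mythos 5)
Your proposal is correct and follows essentially the same route as the paper: estimate $\Upsilon(t)$ in the two parameter regimes and substitute into Theorem \ref{t:Green-2}. The only difference is that the paper imports the $\Upsilon$ asymptotics (the content of \eqref{e:Theta-explicit}) by citing \cite[Lemma 5.1]{CKSV22}, whereas you derive them directly from the Matuszewska and weak-scaling bounds on $\Phi_1,\Phi_2$; your inline computation is a correct proof of exactly that cited lemma in the form needed here.
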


\medskip
Examples form an important part of this work. We have already explained in Subsection \ref{s-key-assumptions} that a subordinate killed stable process, i.e.~the process with generator  
$-((-\Delta)^{\gamma/2}|_D)^{\beta}$, is one natural example satisfying all of the introduced assumptions. An independent sum of such processes is another example. To be more precise, 
let $\alpha \in (0,2)$, $m \ge 2$ and $0<\gamma_1< \cdots< \gamma_m \le 2$. Set $\beta_i:=\alpha/\gamma_i$ for $1\le i\le m$. Consider a process  $\wt Y$  corresponding to the generator $L=\sum_{i=1}^m-((-\Delta)^{\gamma_i/2}|_D)^{\beta_i}$. We show in Example \ref{ex:sum-of-processes} that all assumptions are 
satisfied.
Example \ref{ex:subordinator} gives another modification in which the L\'evy measure of the subordinator behaves  near zero as that of the $\beta$-subordinator, but may decay at infinity at  a  faster rate than polynomial. This family of subordinators contains relativistic $\beta$-stable subordinators. By subordinating the killed $\gamma$-stable process, and letting $\beta \gamma=\alpha$, we again arrive at a process satisfying all the assumptions. 
Another example that we have discussed in Subsection \ref{s-key-assumptions} is the censored process.

In Subsection \ref{s-example-general} we describe a different family of examples  motivated by assumption \hyperlink{Q4-c}{{\bf (B4-c)}}. Suppose that the function $\sB$ is defined by
\begin{align}\label{e:B-expression-2}
	\sB(x,y)&=a(x,y) \Phi_1\bigg(\frac{\delta_D(x) \wedge \delta_D(y)}{|x-y|}\bigg)
	\Phi_2\bigg(\frac{\delta_D(x) \vee \delta_D(y)}{|x-y|}\bigg)\\
	&\qquad  \times
	\ell\bigg(\frac{\delta_D(x)\wedge \delta_D(y)}{(\delta_D(x)\vee\delta_D(y))\wedge |x-y|}\bigg),\nn
\end{align}
where $a:D\times D\to (0,\infty)$ satisfies certain assumptions. For the time being, it suffices to note that these assumptions hold true provided that $a$ is the restriction to $D\times D$ of a $C^{\theta_0'}(\overline D \times \overline D)$ symmetric function bounded above and below by positive constants, where $\theta_0'>(\alpha-1)_+$. Then $\sB$ satisfies \hyperlink{Q1}{{\bf (B1)}}, \hyperlink{Q3}{{\bf (B3)}}, \hyperlink{Q4-c}{{\bf (B4-c)}} and \hyperlink{Q5}{{\bf (B5)}}.

%%%%%%%%%%%%%%%%%%%%%%%%%%%%%%%%%%%%%%%%%%%%%%%%%%%%%%%%%%%%%%%%%%%%%%%%%%%%%%%%%%%%%%%%%%%%%%%%%%%%%%%%%%%%%%%%%%%%%%%%%%%%%%%%%%%%%%%
%%%%%%%%%%%%%%%%%%%%%%%%%%         On Lipschitz and $C^{1,1}$ open sets              %%%%%%%%%%%%%%%%%%%%%%%%%%%%%%%%%%%%%%%%%%%%%%%%%%
%%%%%%%%%%%%%%%%%%%%%%%%%%%%%%%%%%%%%%%%%%%%%%%%%%%%%%%%%%%%%%%%%%%%%%%%%%%%%%%%%%%%%%%%%%%%%%%%%%%%%%%%%%%%%%%%%%%%%%%%%%%%%%%%%%%%%%%

\section{On Lipschitz and $C^{1,1}$ open sets}\label{ch:geometry}

Throughout this work we will need various, mostly elementary, properties of Lipschitz and $C^{1,1}$ opens sets, and their subsets. In this section we collect these properties. The reader may wish to skip the details and come back to them later when needed. We begin with the definition.

\begin{defn}\label{df:lipschitz}
{\rm
	Let   $D\subset \R^d$ be an open set  and let $\wh R, \Lambda_0$ and $\Lambda$ be positive constants.
		
	\noindent (i) We say that $D$ is a Lipschitz open set  with \textit{localization radius}  $\wh R$
		 and \textit{Lipschitz constant} $\Lambda_0$,
		 if for each $Q \in \partial D$, there exist a Lipschitz function $\Psi=\Psi^Q:\R^{d-1}\to \R$ with $\Psi(\wt 0)=0$ 
		 and $|\Psi(\wt y)-\Psi(\wt z)| \le \Lambda_0|\wt y - \wt z|$ for all $\wt y, \wt z\in \R^{d-1}$, 
		 and an orthonormal coordinate system CS$_{Q}$ with origin at $Q$ such that
		\begin{align}\label{e:local-coordinate}
		 B_D(Q, \wh R) = \left\{ y= (\wt y, y_d) \in B(0, \wh R) \text{ in CS$_{Q}$} : y_d>\Psi(\wt y) \right\}.
		\end{align}
	When $D$ is additionally assumed to be connected, then $D$ is called a \textit{Lipschitz domain}.

	\noindent (ii) We say that $D$ is a $C^{1,1}$ open set with characteristics  $(\wh R, \Lambda)$, if
	for each $Q \in \partial D$, there exist a $C^{1,1}$ function $\Psi=\Psi^Q:\R^{d-1} \to \R$ with 
	\begin{align}\label{e:local-map}
		\Psi(\wt 0)= |\nabla \Psi(\wt 0)|=0 \quad \text{and} \quad |\nabla \Psi(\wt y)-\nabla\Psi(\wt z)| \le \Lambda |\wt y-\wt z| \text{ for all }  \wt y, \wt z \in \R^{d-1},
	\end{align}
	 and an orthonormal coordinate system CS$_{Q}$ with origin at $Q$ such that \eqref{e:local-coordinate} holds.
}
\end{defn}

\subsection{Lipschitz open sets}
In this subsection, we assume that $D \subset \R^d$ is a Lipschitz  open set with localization radius $\wh R$ and Lipschitz constant $\Lambda_0$.
It is known that $D$ satisfies \textit{the measure density condition}, that is, there exists  $C>0$  
depending only  on $d$ and $\Lambda_0$
  such that 
	\begin{align}\label{e:VD}		
	m_d(	B_{ D}(x_0,r)) \ge Cr^d \quad \text{for all} \;\, x_0 \in \overline D, \;\, 0<r\le \wh R.		
	\end{align} 
	
	For $Q \in \partial D$ and  $x=(\wt x, x_d)\in B_D(Q, \wh{R})$ in CS$_Q$, we define
	$$\rho_D(x)=\rho^Q_{D}(x):=x_d-\Psi(\wt{x}),$$
where $\Psi=\Psi^Q$ is the function in \eqref{e:local-coordinate}.	For   $a,b\in (0, \wh{R}/(2+\Lambda_0)]$ and $Q \in \partial D$, we let
	\begin{align}\label{e:def-UQ}
		U^Q(a,b):=
		\big\{x\in D: x=(\wt{x},x_d)\text{ in CS}_Q \mbox{ with } |\wt{x}|< a,\, 0<\rho_D(x)<b\big\},
	\end{align}
and refer to $U^Q(a,b)$ as the box based at $Q$ of width $a$ and height $b$, see Figure \ref{f-Urho2}.
For the half space, the box is simply 
\begin{align}\label{e:def-UH}
	U_{\bH}(a,b):=\big\{y=(\wt{y}, y_d)\in \bH: |\wt{y}|<a,\, 0< y_d<b\big\}.
\end{align}
We write  $U_\bH(a)$ for $U_\bH(a,a)$ and $U^Q(a)$ for $U^Q(a,a)$.
\begin{figure}[!h]
	\centering
	
	\includegraphics[width=0.52\textwidth]{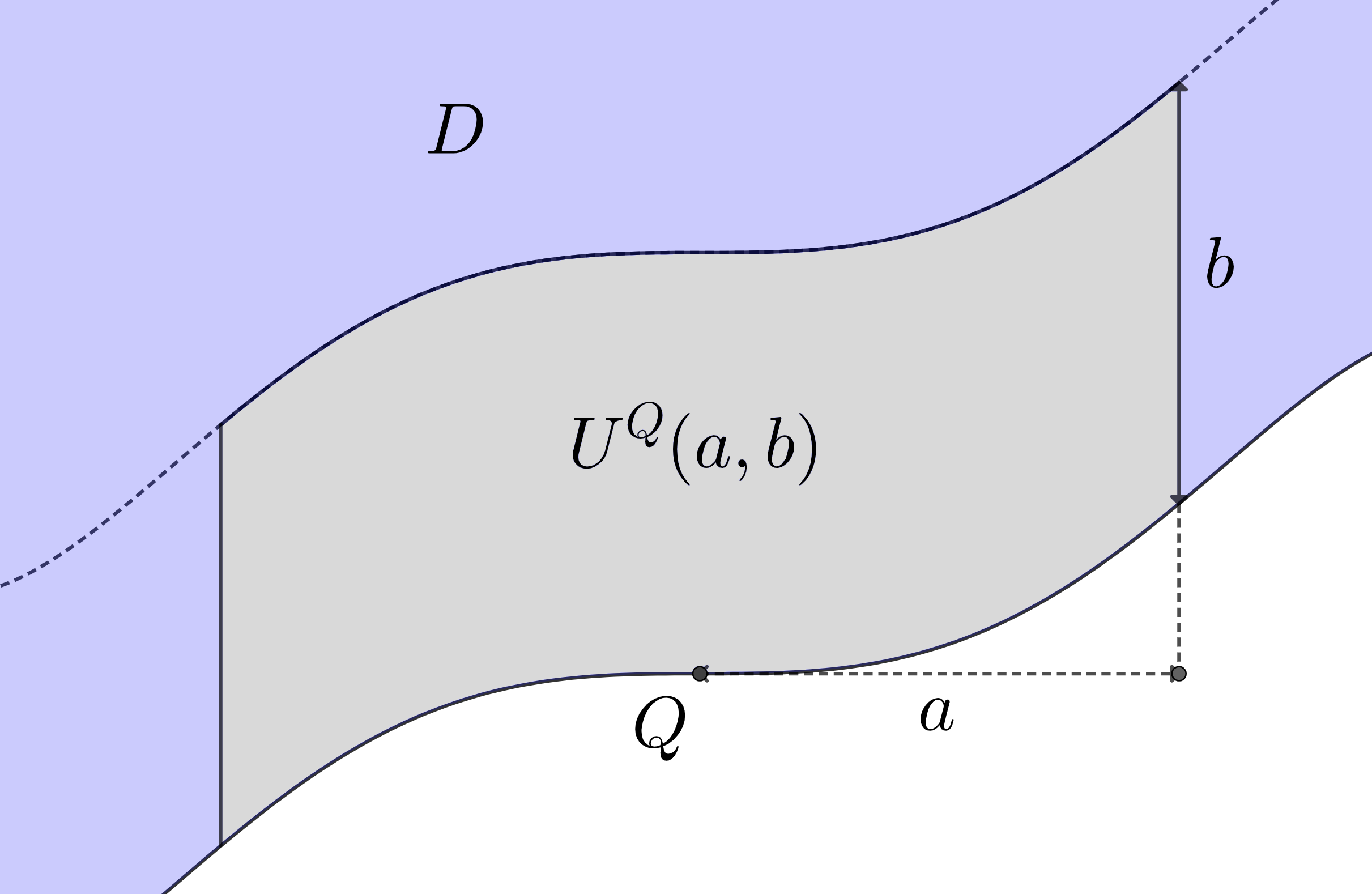}
	
	\vspace{-2mm}
	
	\caption{The set $U^Q(a,b)$}\label{f-Urho2}
\end{figure}

When we work with a fixed $Q\in \partial D$, we will
sometimes write $U(a, b)$ for $U^Q(a, b)$, and $U(a)$ for $U^Q(a)$.

\begin{lemma}\label{l:U-rho-Lipschitz}
	Let $Q \in\partial D$.  The following statements hold.
	
	\noindent (i) For any $0<r\le \wh R/(2+\Lambda_0)$, 
	$$	
	B_D(Q,(1+\Lambda_0)^{-1}r) \subset U(r) \subset B_D(Q,(\sqrt 2+\Lambda_0)r).
	$$
	
	\noindent	(ii) Set $\Lambda_1:=\Lambda_0 \vee (1/2)$.  For any $x \in U(\wh R/(6+4\Lambda_0))$, 
	$$
	(1+\Lambda_1^2)^{-1/2} \rho_D(x) \le \delta_D(x) \le \rho_D(x).
	$$
\end{lemma}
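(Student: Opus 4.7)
The plan is to reduce everything to elementary computations in the local chart $B(0, \wh R)$ at $Q$, using only $\Psi(\wt 0) = 0$ and the Lipschitz bound $|\Psi(\wt y) - \Psi(\wt z)| \le \Lambda_0 |\wt y - \wt z|$, which in particular gives $|\Psi(\wt x)| \le \Lambda_0 |\wt x|$. For (i), both inclusions amount to translating between $|x|$ and the pair $(|\wt x|, \rho_D(x))$. If $x \in B_D(Q, (1+\Lambda_0)^{-1} r)$, then $|\wt x| \le |x| < r$ and $\rho_D(x) \le |x_d| + |\Psi(\wt x)| \le (1+\Lambda_0)|x| < r$, so $x \in U(r)$; the hypothesis $r \le \wh R/(2+\Lambda_0)$ ensures $|x| < \wh R$, so the chart is valid. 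Conversely, for $x \in U(r)$, one has $|x_d| \le \rho_D(x) + |\Psi(\wt x)| \le (1+\Lambda_0) r$, hence $|x|^2 \le r^2 + (1+\Lambda_0)^2 r^2 \le (\sqrt 2 + \Lambda_0)^2 r^2$, the last inequality following from $\sqrt 2 \ge 1$.

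For (ii), the upper bound $\delta_D(x) \le \rho_D(x)$ is immediate: the candidate boundary point $z_0 := (\wt x, \Psi(\wt x))$ satisfies $|z_0|^2 \le (1 + \Lambda_0^2) |\wt x|^2 < \wh R^2$ under the hypothesis on $|\wt x|$, hence $z_0 \in \partial D$ and $\delta_D(x) \le |x - z_0| = \rho_D(x)$. For the lower bound, I fix any $z \in \partial D$ and split into two cases. If $|z| < \wh R$, then $z = (\wt z, \Psi(\wt z))$, and writing $t = |\wt x - \wt z|$, the bound $x_d - \Psi(\wt z) \ge \rho_D(x) - \Lambda_0 t$ gives
\begin{align*}
|x - z|^2 \ge t^2 + \max(\rho_D(x) - \Lambda_0 t,\, 0)^2.
\end{align*}
Minimizing over $t \ge 0$ (the interior minimizer being $t^* = \Lambda_0 \rho_D(x)/(1+\Lambda_0^2)$) yields $|x - z| \ge \rho_D(x)/\sqrt{1+\Lambda_0^2}$. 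If instead $|z| \ge \wh R$, then $|x - z| \ge \wh R - |x|$, and combining $|x| \le (\sqrt 2 + \Lambda_0) \wh R/(6+4\Lambda_0)$ (from the second inclusion in (i)) with $\rho_D(x) < \wh R/(6+4\Lambda_0)$ reduces the target bound $|x - z| \ge \rho_D(x)/\sqrt{1 + \Lambda_1^2}$ to the elementary inequality $(6 - \sqrt 2 + 3\Lambda_0)\sqrt{1 + \Lambda_1^2} \ge 1$, which is obvious. Taking the infimum over $z \in \partial D$ and using $\Lambda_1 \ge \Lambda_0$ (so that $1/\sqrt{1+\Lambda_1^2} \le 1/\sqrt{1+\Lambda_0^2}$) completes the proof.

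Nothing here is deep; the only real bookkeeping is in the second case of the lower bound, where the particular radius $\wh R/(6+4\Lambda_0)$ has been chosen precisely so that boundary points $z$ outside the local chart contribute at least $\rho_D(x)/\sqrt{1+\Lambda_1^2}$. The floor $\Lambda_1 \ge 1/2$ is not strictly needed for this lemma (taking $\Lambda_1 = \Lambda_0$ already works) and is presumably included for uniformity with constants that appear in subsequent sections.
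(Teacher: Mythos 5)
Your proof is correct and takes a genuinely different route from the paper's. For part~(ii), the paper constructs an explicit cone
$A=\{(\wt x + \wt z, z_d): \Lambda_1|\wt z|<z_d -\Psi(\wt x)<  \wh R/4\}$,
verifies $A\subset D$ (this is where the chart hypothesis and the bound on $|\wt x|$ are used), and then reads off
$\delta_D(x)\ge\delta_A(x)=(1+\Lambda_1^2)^{-1/2}\rho_D(x)$
from the exact distance to the cone boundary. You instead bound $|x-z|$ directly over all $z\in\partial D$, splitting into boundary points inside the chart (where the elementary minimization over $t=|\wt x-\wt z|$ gives $|x-z|\ge\rho_D(x)/\sqrt{1+\Lambda_0^2}$) and those outside (handled by the triangle inequality and a crude numerical estimate). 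The two routes give equivalent information, but yours actually produces the sharper constant $(1+\Lambda_0^2)^{-1/2}$ without the floor. One small correction to your closing remark: the floor $\Lambda_1\ge 1/2$ is not merely cosmetic for the paper's proof --- it is genuinely required there, since the cone condition $\Lambda_1|\wt z|<z_d-\Psi(\wt x)<\wh R/4$ forces $|\wt z|<\wh R/(4\Lambda_1)$, and the bound $|\wt z|\le\wh R/2$ (which keeps $A$ inside the coordinate chart) fails for small $\Lambda_0$ unless one replaces $\Lambda_0$ by $\Lambda_1=\Lambda_0\vee(1/2)$. Your case-splitting argument sidesteps this issue entirely, which is why you are able to drop the floor.
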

\begin{proof} (i) For any $x=(\wt x, x_d) \in B_D(Q,(1+\Lambda_0)^{-1}r)$ in CS$_Q$,  we have $|\wt x| \le |x|<r$ and $\rho_D(x) \le |x_d| + |\Psi (\wt x)| \le  |x_d| + \Lambda_0|\wt x| \le (1+\Lambda_0)|x|<r$.  Hence, $B_D(Q,(1+\Lambda_0)^{-1}r) \subset U(r)$. On the other hand, for any $x=(\wt x,\Psi(\wt x) + \rho_D(x)) \in U(r)$,  it holds that 
\begin{align*}
	|x|^2 \le |\wt x|^2 + ( \Lambda_0 |\wt x| + \rho_D(x) )^2< r^2 +  (1+\Lambda_0)^2 r^2< (\sqrt 2 + \Lambda_0)^2 r^2 .
\end{align*}
Thus $U(r) \subset B_D(Q,(\sqrt 2+\Lambda_0)r)$.

(ii) Let $x=(\wt x, \Psi(\wt x) + \rho_D(x)) \in U(\wh R/(6+4\Lambda_0))$  in CS$_Q$. It is clear that $\delta_D(x) \le \rho_D(x)$. 
To prove the other inequality, we  define
$$
	A=\{(\wt x + \wt z, z_d): \Lambda_1|\wt z|<z_d -\Psi(\wt x)<  \wh R/4\}.
$$
We claim that $A \subset D$. Indeed,  for any $(\wt x + \wt z, z_d) \in A$, since $|\wt z| \le \Lambda_1^{-1}\wh R/4 \le \wh R/2$ and $|\Psi(\wt x)| \le \Lambda_0 |\wt x|$, we have
\begin{align*}
	&|(\wt x + \wt z, z_d) | \le |\wt x| + |\wt z| + |z_d - \Psi(\wt x)| + |\Psi(\wt x)| \\
	&< ((6+4\Lambda_0)^{-1} +1/2 + 1/4 + \Lambda_0 (6+4\Lambda_0)^{-1})\wh R  < \wh R
\end{align*} 
and $\Psi(\wt x + \wt z) \le \Psi(\wt x) + \Lambda_0|\wt z| < z_d$. Therefore, $(\wt x + \wt z, z_d) \in D$ and the claim holds true. 
Using the claim, we obtain $\delta_D(x) \ge \delta_A(x) = (1+\Lambda_1^2)^{-1/2} \rho_D(x).$
\end{proof}

For $Q \in \partial D$ and  $r \in (0,\wh R/(6+3\Lambda_0)]$, we define $f^{(r)}=f^{(r)}_Q:U_\bH(3)\to U(3r)$ by
	\begin{equation}\label{e:def-fr}
		f^{(r)}(v)=f^{(r)}(\wt{v}, v_d):=(r\wt{v}, rv_d+\Psi(r\wt{v})).
	\end{equation}
Then $f^{(r)}$ is a diffeomorphism from $U_\bH(3)$ onto $U(3r)$ with Jacobian $|Df^{(r)}(v)|=r^d$ for every $v\in U_\bH(3)$. Moreover,  $\rho_{D}(f^{(r)}(v))=rv_d$ for all $v\in U_\bH(3)$.
	
	\begin{lemma}\label{l:diffeo}
		Let $Q \in \partial D$ and $0<r\le \wh R/(6+3\Lambda_0)$. For any  $v,w \in U_\bH(3)$, 
		\begin{equation}\label{e:diffeo}
			(1+\Lambda_0)^{-1} r |v-w|\le 
			|f^{(r)}(v)-f^{(r)}(w)|\le (1+\Lambda_0)r |v-w|.
		\end{equation}
	\end{lemma}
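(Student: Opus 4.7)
The plan is to write out $f^{(r)}(v)-f^{(r)}(w)$ explicitly and to reduce both bounds to a single elementary inequality, exploiting the Lipschitz property of $\Psi$. Set
\[
a:=r|\wt v-\wt w|,\qquad b:=r|v_d-w_d|,\qquad \xi:=\Psi(r\wt v)-\Psi(r\wt w),
\]
so that $r|v-w|=\sqrt{a^2+b^2}$ and $|\xi|\le \Lambda_0\,a$ by the Lipschitz property of $\Psi$. By the definition of $f^{(r)}$ in \eqref{e:def-fr},
\[
f^{(r)}(v)-f^{(r)}(w)=\bigl(r(\wt v-\wt w),\,r(v_d-w_d)+\xi\bigr),
\]
hence
\[
|f^{(r)}(v)-f^{(r)}(w)|^2=a^2+\bigl(r(v_d-w_d)+\xi\bigr)^2.
\]

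For the upper bound I will use $|r(v_d-w_d)+\xi|\le b+\Lambda_0 a$ and then verify the purely algebraic inequality
\[
a^2+(b+\Lambda_0 a)^2\ \le\ (1+\Lambda_0)^2(a^2+b^2).
\]
Expanding, this reduces to $2\Lambda_0(a^2-ab+b^2)+\Lambda_0^2 b^2\ge 0$, which is immediate since $a^2-ab+b^2=(a-b/2)^2+3b^2/4\ge 0$. Taking square roots gives $|f^{(r)}(v)-f^{(r)}(w)|\le (1+\Lambda_0)\,r\,|v-w|$.

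For the lower bound I will avoid a case analysis and instead apply the same estimate to the inverse map. A direct calculation shows that $f^{(r)}$ is a diffeomorphism with inverse
\[
g(x,y):=(f^{(r)})^{-1}(x,y)=\bigl(x/r,\,(y-\Psi(x))/r\bigr),
\]
which has exactly the same structural form as $f^{(r)}$ (with $-\Psi$ in place of $\Psi$, of Lipschitz constant $\Lambda_0$). Repeating the computation above for $g$ on $p_1,p_2\in U^Q(3r)$ yields $r\,|g(p_1)-g(p_2)|\le (1+\Lambda_0)|p_1-p_2|$. Substituting $p_i=f^{(r)}(v_i)$ (so that $g(p_i)=v_i$) and rearranging produces the lower bound in \eqref{e:diffeo}.

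The only genuine content is the elementary inequality $a^2+(b+\Lambda_0 a)^2\le(1+\Lambda_0)^2(a^2+b^2)$; everything else is bookkeeping. The inverse-map trick makes the lower bound essentially free once the upper bound is proved, so I expect no real obstacle beyond this one-line algebra. I will also note that the restriction $r\le \wh R/(6+3\Lambda_0)$ is used only to ensure $f^{(r)}$ is well-defined from $U_\bH(3)$ into $U^Q(3r)$ via Lemma \ref{l:U-rho-Lipschitz}(i), and plays no role in the Lipschitz estimate itself.
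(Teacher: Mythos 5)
Your proof is correct, and it differs from the paper's argument in a worthwhile way. For the upper bound, the paper simply writes $|f^{(r)}(v)-f^{(r)}(w)|\le r|v-w|+|\Psi(r\wt v)-\Psi(r\wt w)|\le r(1+\Lambda_0)|v-w|$; your square-and-expand route arrives at the same constant and is equally short. The real divergence is in the lower bound: the paper splits into two cases according to whether $|\wt v-\wt w|\ge |v-w|/(1+\Lambda_0)$ or not, using $|f^{(r)}(v)-f^{(r)}(w)|\ge r|\wt v-\wt w|$ in the first case and a reverse triangle inequality in the second, whereas you observe that the inverse map $(x,y)\mapsto (x/r,(y-\Psi(x))/r)$ is a shear of the same type (with the Lipschitz function $z\mapsto -\Psi(rz)/r$, still of constant $\Lambda_0$), so the one upper-bound computation applied to $g$ immediately yields the lower bound for $f^{(r)}$. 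That trick eliminates the case analysis and makes clear that the two-sided bound is a single statement about shear maps. One small imprecision in your write-up is the phrase that $g$ has ``exactly the same structural form as $f^{(r)}$ with $-\Psi$ in place of $\Psi$''—strictly speaking the scaling factor changes from $r$ to $1/r$ and the Lipschitz function is $z\mapsto -\Psi(rz)/r$, not $-\Psi$ itself—but this affects nothing since that function has the same Lipschitz constant $\Lambda_0$, which is all the computation uses. Your remark about the role of the restriction $r\le \wh R/(6+3\Lambda_0)$ is also accurate: it only guarantees $U^Q(3r)$ is well-defined (i.e.\ $3r\le \wh R/(2+\Lambda_0)$), and the Lipschitz estimate itself is scale-free.
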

	\begin{proof}  Let $v,w \in U_\bH(3)$. Using the Lipschitz property of $\Psi$, we have
	\begin{align*}
		& |f^{(r)}(v)-f^{(r)}(w)| \le r|v-w|+|\Psi(r\wt{v})-\Psi(r\wt{w})|\\
		& \le r|v-w|+r\Lambda_0 |\wt v - \wt w| \le r(1+\Lambda_0)|v-w|,
	\end{align*}
	which proves the second inequality in \eqref{e:diffeo}. 
	For the first inequality in \eqref{e:diffeo}, we note that if $|\wt v- \wt w| \ge |v-w|/(1+\Lambda_0)$, then
	\begin{align*}
			& |f^{(r)}(v)-f^{(r)}(w)| \ge r|\wt v-\wt w| \ge r|v-w|/(1+\Lambda_0)
	\end{align*}
	and if $|\wt v- \wt w| < |v-w|/(1+\Lambda_0)$, then
	\begin{align*}
		& |f^{(r)}(v)-f^{(r)}(w)| \ge r|v-w|-|\Psi(r\wt{v})-\Psi(r\wt{w})|\\
	& \ge r|v-w|-r\Lambda_0 |\wt v - \wt w| \ge r|v-w|/(1+\Lambda_0).
	\end{align*}
	The proof is complete.	
\end{proof}
	
A connected open set $A \subset \R^d$ is called a \textit{$c_0$-John domain}, $c_0 \ge 1$, if any $x,y \in A$ can be joined by a rectifiable curve $g:[0,l] \to A$ parameterized by arc length such that $\delta_A(g(s)) \ge (s \wedge (l-s))/c_0$ for all $s \in [0,l]$.

	\begin{lemma}\label{l:John}
		For any $x_0 \in \overline D$ and $0<r \le (2+\Lambda_0)^{-2}\wh R/3$,  there is a $ 2(1+\Lambda_0)^{4}$-John domain $A$ such that	$B_{D}(x_0,r) \subset A \subset B_{ D}(x_0, 2(2+\Lambda_0)^2r)$.
	\end{lemma}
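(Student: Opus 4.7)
My plan is a case split on the position of $x_0$ relative to $\partial D$, using the local Lipschitz flattening map of Lemma \ref{l:diffeo} in the boundary case. If $\delta_D(x_0)\ge r$, take $A:=B(x_0,r)=B_D(x_0,r)\subset B_D(x_0,2(2+\Lambda_0)^2 r)$. Being a Euclidean ball, $A$ is convex and 2-point John with constant at most $2$ via straight segments through its center $x_0$, well within $2(1+\Lambda_0)^{4}$.

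In the complementary case $\delta_D(x_0)<r$, pick $Q\in\partial D$ realizing $|Q-x_0|=\delta_D(x_0)<r$ and work in $\mathrm{CS}_Q$. Set $r_1:=2(1+\Lambda_0)r$ and $\rho:=r_1/3$; the hypothesis on $r$ yields $r_1\le\wh R/(2+\Lambda_0)$ and $\rho\le\wh R/(6+3\Lambda_0)$, so both the box $A:=U^Q(r_1)$ from \eqref{e:def-UQ} and the diffeomorphism $f^{(\rho)}:U_\bH(3)\to U^Q(3\rho)=A$ from \eqref{e:def-fr} are well defined. Lemma \ref{l:U-rho-Lipschitz}(i) together with $|x_0-Q|<r$ gives the chain
$$B_D(x_0,r)\subset B_D(Q,2r)\subset U^Q\bigl((1+\Lambda_0)\cdot 2r\bigr)=A\subset B_D(Q,(\sqrt 2+\Lambda_0)r_1)\subset B_D(x_0,2(2+\Lambda_0)^2 r),$$
the last step using $1+2(\sqrt 2+\Lambda_0)(1+\Lambda_0)\le 2(2+\Lambda_0)^2$, which is elementary.

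It remains to certify the John property of $A$. The reference cylinder $U_\bH(3)$ is bounded convex with inradius $3/2$, so the concatenated segment $\hat x\to(\tilde 0,3/2)\to\hat y$ yields a two-point John structure on $U_\bH(3)$ with some absolute constant $c_1$ (bounded by $\mathrm{diam}/\mathrm{inradius}$). Given $x,y\in A$, pull back via $(f^{(\rho)})^{-1}$ to $\hat x,\hat y\in U_\bH(3)$, take such a reference John curve $\hat g$ joining them, and set $g:=f^{(\rho)}\circ\hat g$. Lemma \ref{l:diffeo} provides the pointwise bi-Lipschitz bounds
$$(1+\Lambda_0)^{-1}\rho\,|\hat v-\hat w|\le |f^{(\rho)}(\hat v)-f^{(\rho)}(\hat w)|\le (1+\Lambda_0)\rho\,|\hat v-\hat w|,$$
which transfer both to $\delta_A(f^{(\rho)}(\hat v))\ge(1+\Lambda_0)^{-1}\rho\,\delta_{U_\bH(3)}(\hat v)$ and to comparable bounds for arc length. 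Re-parameterizing $g$ by its own arc length and combining these inequalities, $g$ satisfies the John condition in $A$ with final constant $c_1(1+\Lambda_0)^2$, which we absorb into $2(1+\Lambda_0)^4$.

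The delicate step, and the main obstacle, is producing a small enough John constant $c_1$ for the reference cylinder $U_\bH(3)$ so that after the $(1+\Lambda_0)^2$ distortion from the bi-Lipschitz transport the final constant fits within $2(1+\Lambda_0)^4$. A through-center straight segment suffices up to a small dimensional factor, but for point pairs crowding the corners of $U_\bH(3)$ one may need to first lift vertically in the $\e_d$-direction before translating at a safe height, in order to keep $\delta_{U_\bH(3)}$ comparable to arc length travelled along $\hat g$.
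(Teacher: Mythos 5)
Your skeleton is essentially the paper's: split on $\delta_D(x_0)\ge r$ vs.\ $\delta_D(x_0)<r$, take the ball in the first case, and in the second case take the box $A=U^Q(2(1+\Lambda_0)r)$, parameterize it by the flattening map $f^{(\cdot)}$ of \eqref{e:def-fr}, and use the bi-Lipschitz bounds of Lemma~\ref{l:diffeo}. The inclusions $B_D(x_0,r)\subset A\subset B_D(x_0,2(2+\Lambda_0)^2r)$ are verified correctly, and your $(1+\Lambda_0)^2$ transfer factor for the bi-Lipschitz pullback is right.

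The genuine gap is exactly the step you flag in your last paragraph: you reduce to ``the reference box has a John constant $c_1$ small enough that $c_1(1+\Lambda_0)^2 \le 2(1+\Lambda_0)^4$,'' but for $\Lambda_0$ close to $0$ this forces $c_1\le 2$, and the straight-through-center segment in $U_\bH(3)$ (or $U_\bH(2)$) only delivers roughly $\sqrt 5$, since the corners of the box lie at distance about $\sqrt 5/2$ times the inradius from the natural center. So the indirect route -- first compute a John constant for the reference box in the abstract, then transfer -- does not close. The paper avoids this by never isolating a ``John constant of the reference box'': it works with the explicit curve $\hat g_x(u)=f^{(r')}\big((1-u)v+u\e_d\big)$ and, after reparameterizing by arc length with the bound $h_x(s)\ge 2^{-1}(1+\Lambda_0)^{-2}r^{-1}s$, computes in display \eqref{e:John-property} the three signed distances of $(1-h_x(s))v+h_x(s)\e_d$ to the faces of $\partial U_\bH(2)$ and checks each is bounded below by $h_x(s)$: indeed $2-(1-h_x(s))|\wt v|\ge 2h_x(s)$, $2-(1-h_x(s))v_d-h_x(s)\ge h_x(s)$, $(1-h_x(s))v_d+h_x(s)\ge h_x(s)$. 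Pushing this through Lemma~\ref{l:diffeo} yields $\delta_A(g_x(s))\ge 2^{-1}(1+\Lambda_0)^{-4}s$, and the final estimate for the concatenated $x\to\e_d\to y$ curve follows. In other words, the missing idea is to bound $\delta_A$ of the image curve directly by a one-line algebraic inspection of the three linear constraints, rather than to prove and then transport an abstract John bound on the model box. If you replace the last paragraph of your sketch with this explicit computation, the argument goes through.
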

	\begin{proof}  
	When $\delta_D(x_0) \ge r$, one can simply take $A:= B(x_0,r)=B_{D}(x_0,r)$, which is  1-John domain. 
	
	Now, we assume  that $\delta_D(x_0)<r$. 
	 Let $Q_{x_0} \in \partial D$ be such that $\delta_D(x_0)=|x_0-Q_{x_0}|$. 
	Set  
	$$ r':=(1+\Lambda_0)r \in (0, \wh R/(6+3\Lambda_0)]\quad \text{and}\quad A:=U^{Q_{x_0}}(2r').$$ 
	Then  $A \subset B_D(Q_{x_0}, 2(\sqrt 2+\Lambda_0)r') \subset B_D(x_0, 2(2+\Lambda_0)^2r)$ and $A \supset B_D(Q_{x_0}, 2r) \supset B_D(x_0,r)$ by Lemma \ref{l:U-rho-Lipschitz}(i).

	 Let $x \in A$ and $v:=(f^{(r')})^{-1}(x) \in U_\bH(2)$, where $f^{(r')}$ is the function defined in \eqref{e:def-fr}. For every $u \in [0,1]$, by the convexity of $U_\bH(2)$,  we see that  $(1-u)v + u \e_d \in U_\bH(2)$.
	 Define a function $\wh g_x:[0,1] \to A$ by 
	 $$
	 \wh g_x(u) = f^{(r')} ( (1-u)v + u\e_d).
	 $$  
	 Let $g_x(s)=\wh g_x(h_x(s))$, $s \in [0,l_x]$, be the reparametrization of $\wh g_x$  by arc length. 
	 Note that $g_x(0)=\wh g_x(0)= x$ and $g_x(l_x)=\wh g_x(1)=\e_d$. Moreover, by Lemma \ref{l:diffeo},  
	 	$$ \sup_{ u \in [0,1]} |\nabla \wh g_x (u)| \le (1+\Lambda_0)|v-\e_d|\, r' \le 2(1+\Lambda_0)^2 r.$$ 
	 	Hence, we have
	\begin{align*}
		\inf_{s \in (0,l_x)}	h_x'(s) \ge \Big(\sup_{ u \in [0,1]}  |\nabla \wh g_x (u)|\Big)^{-1} 
		\ge 2^{-1}(1+\Lambda_0)^{-2}r^{-1}|v-v_0|^{-1},
	\end{align*}
which yields that  $h_x(s) \ge 2^{-1}(1+\Lambda_0)^{-2}r^{-1}s$ for all $s \in [0,l_x]$. 
Using this and Lemma \ref{l:diffeo}, we get that for all $s \in (0,l_x)$,
	\begin{align}\label{e:John-property}
		\delta_A(g_x(s))&= \inf\left\{|g_x(s)- f^{(r')}(w)|: w \in \partial U_\bH(2)\right\}\\
		& = \inf\left\{|f^{(r')} ( (1-h_x(s))v + h_x(s)\e_d )- f^{(r')}(w)|: w \in \partial U_\bH(2)\right\} \nn\\
		&\ge \frac{r}{(1+\Lambda_0)^{2}} \inf\big\{| (1-h_x(s))v + h_x(s)\e_d - w|: w \in \partial U_\bH(2)\big\} \nn\\
		&=\frac{r}{(1+\Lambda_0)^{2}}  \Big[ \big(2- (1-h_x(s))|\wt v| \big) \wedge \big(2 - (1-h_x(s))v_d - h_x(s) \big) \nn\\
		&\qquad \qquad \qquad \;\; \wedge \big((1-h_x(s))v_d + h_x(s)\big)\Big] \nn\\
			&\ge\frac{r}{(1+\Lambda_0)^{2}}  \big[  (2h_x(s))\wedge h_x(s)  \wedge h_x(s)\big]   \ge 2^{-1}(1+\Lambda_0)^{-4} s.\nn
	\end{align}

	Pick any $x,y \in A$. Define $g:[0, l_x +l_y] \to A$ by $g(s)=g_x(s)$ if $s \in [0,l_x]$ and $g(s)=g_y(l_x+l_y-s)$ if $s \in [l_x, l_x+l_y]$.  By \eqref{e:John-property}, we have
	$\delta_A(g(s)) \ge 2^{-1}(1+\Lambda_0)^{-4} (s \wedge (l_x+l_y-s))$ for all $ s \in [0,l_x+l_y]$. Thus, $A$ is a $2(1+\Lambda_0)^4$-John domain. 
	The proof is complete.
	 \end{proof}

	 \begin{lemma}\label{l:boundary-decomposition}
	 	There exists a family $\{A_i:i\ge 1\}$ of $2(1+\Lambda_0)^4$-John domains satisfying the following properties:
	 	\begin{gather}
	 		c_1 \wh R^d \le m_d(A_i) \le c_2 \wh R^d \quad \text{for all $i \ge 1$},\label{e:boundary-decomposition1}\\
	 		 \{x \in D: \delta_D(x)<(2+\Lambda_0)^{-2}\wh R/18\} \subset \cup_{i\ge 1} A_i \subset 	\{x \in D: \delta_D(x)<2\wh R/3\},\label{e:boundary-decomposition2}\\
	 		\sum_{i\ge1}\1_{A_i}\le c_3 \quad \text{on $D$},\label{e:boundary-decomposition3}
	 	\end{gather}where  $c_1,c_2,c_3>0$ are constants depending only on $d$ and $\Lambda_0$.
	 \end{lemma}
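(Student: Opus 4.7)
The plan is to construct the family $\{A_i\}$ from a maximal separated set of boundary points at a single scale proportional to $\wh R$, and to turn each such point into a John domain by direct application of Lemma~\ref{l:John}. No new geometric input is required beyond a Vitali-type separation/packing bookkeeping; the geometric hard work is already packaged in Lemma~\ref{l:John}.

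First I would fix the scale $r_0:=(2+\Lambda_0)^{-2}\wh R/6$, so that $2(2+\Lambda_0)^2 r_0=\wh R/3$ and Lemma~\ref{l:John} applies at every $Q\in\partial D$ with this radius. Then choose a maximal $(r_0/2)$-separated family $\{Q_i\}_{i\ge 1}\subset\partial D$, meaning $|Q_i-Q_j|\ge r_0/2$ for all $i\ne j$ and no extra point of $\partial D$ can be appended without violating this. For each $i$, applying Lemma~\ref{l:John} at $x_0=Q_i$ with radius $r_0$ yields a $2(1+\Lambda_0)^4$-John domain $A_i$ satisfying $B_D(Q_i, r_0)\subset A_i\subset B_D(Q_i, \wh R/3)$.

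For the first inclusion in \eqref{e:boundary-decomposition2}, take $x\in D$ with $\delta_D(x)<(2+\Lambda_0)^{-2}\wh R/18=r_0/3$, pick $Q\in\partial D$ with $|x-Q|=\delta_D(x)<r_0/3$, and use the maximality of $\{Q_i\}$ to find $Q_i$ with $|Q_i-Q|<r_0/2$; then $|x-Q_i|<5r_0/6<r_0$, whence $x\in B_D(Q_i,r_0)\subset A_i$. The second inclusion is immediate, since $x\in A_i$ forces $\delta_D(x)\le|x-Q_i|<\wh R/3<2\wh R/3$. The measure bound \eqref{e:boundary-decomposition1} follows from the two-sided inclusion $B_D(Q_i,r_0)\subset A_i\subset B(Q_i,\wh R/3)$ together with the measure density condition \eqref{e:VD}: from below, $m_d(A_i)\ge m_d(B_D(Q_i,r_0))\gtrsim r_0^d\asymp \wh R^d$, and from above, $m_d(A_i)\le m_d(B(Q_i,\wh R/3))\lesssim \wh R^d$.

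Finally, for the bounded overlap \eqref{e:boundary-decomposition3}, fix $x\in D$ and note that $x\in A_i$ forces $Q_i\in B(x,\wh R/3)$. Since the $Q_i$'s are $(r_0/2)$-separated, the balls $B(Q_i,r_0/4)$ are pairwise disjoint and all contained in $B(x,\wh R/3+r_0/4)$, so a standard volume comparison bounds the number of indices $i$ with $x\in A_i$ by a constant $c_3=c_3(d,\Lambda_0)$. I do not anticipate a serious obstacle in this lemma; the only delicate points are the arithmetic of radii (balancing the covering requirement $r_0/3+r_0/2<r_0$ against the inclusion $A_i\subset\{\delta_D<2\wh R/3\}$) and the observation that maximality of a boundary net automatically yields a cover of the entire boundary strip, not merely of $\partial D$ itself.
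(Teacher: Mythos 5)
Your proof is correct and follows essentially the same route as the paper's: select a Vitali-type packing of boundary points at scale $\asymp\wh R$, turn each into a John domain via Lemma~\ref{l:John}, and verify the three properties by the measure density condition and a volume-packing count. The only cosmetic difference is that you use a maximal $(r_0/2)$-separated net directly (with $r_0=(2+\Lambda_0)^{-2}\wh R/6$) where the paper invokes the Vitali covering lemma with a $5$-fold inflation (and $r_0=(2+\Lambda_0)^{-2}\wh R/18$); the resulting radii bookkeeping differs by constants but the argument is the same.
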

	 \begin{proof}
	 	Let $r_0:= (2+\Lambda_0)^{-2}\wh R/18$.	By the Vitali covering lemma, there exists a family of disjoint open balls  $\{B(Q_i,r_0):i \ge 1\}$ with $Q_i \in \partial D$ for all $i \ge 1$ such that  $\partial D \subset \cup_{i\ge 1} B(Q_i,5r_0)$. For each $i \ge 1$, by Lemma \ref{l:John}, there exists a $ 2(1+\Lambda_0)^{4}$-John domain $A_i$ such that	$B_{D}(Q_i,6r_0) \subset A_i \subset B_{ D}(Q_i, 12(2+\Lambda_0)^2r_0)$.

	\eqref{e:boundary-decomposition1}  follows from \eqref{e:VD}. 
	We have $\cup_{i\ge 1} A_i \subset 	\{x \in D: \delta_D(x)<12(2+\Lambda_0)^2r_0\}$.
	Let $x \in D$ be such that  $\delta_D(x)<r_0$ and $Q_x \in \partial D$ be such that $|x-Q_x| = \delta_D(x)$. Since $\partial D \subset \cup_{i\ge 1} B(Q_i,5r_0)$, $Q_x \in B(Q_i,5r_0)$ for some $i \ge 1$. Then  $x \in B_D(Q_i,6r_0) \subset A_i$ so that  \eqref{e:boundary-decomposition2} holds. For \eqref{e:boundary-decomposition3}, suppose that $y \in D$ is in $N$ of the sets $A_i$, $i \ge1$. Then $y$ is in at least $N$ of the sets $B_{ D}(Q_i, 12(2+\Lambda_0)^2r_0)$. Consequently, $B(y,  (12(2+\Lambda_0)^2 + 1) r_0)$ contains at least  $N$ of the sets $B_{D}(Q_i,r_0)$. Since $B_{D}(Q_i,r_0)$, $i \ge 1$, are disjoint, using \eqref{e:VD}, we get that 
	 	\begin{align*}
	 		c_3 N r_0^d \le  	\sum_{i: y \in A_i}m_d(B_{D}(Q_i,r_0)) \le m_d (B(y,  (12(2+\Lambda_0)^2 + 1) r_0)) \le c_4 r_0^d.
	 	\end{align*}
	 	Hence  $N \le c_4/c_3$, proving that \eqref{e:boundary-decomposition3} holds.
	 \end{proof}

	 \begin{lemma}\label{l:interior-decomposition}
	 	There exists a family $\{B_i:i\ge 1\}$ of open balls of radius $ (2+\Lambda_0)^{-2}\wh R/36$ satisfying the following properties:
	 	\begin{gather}
	 		\{x \in D: \delta_D(x)\ge (2+\Lambda_0)^{-2}\wh R/36\} \subset \cup_{i\ge 1} B_i \subset D,\label{e:interior-decomposition1}\\
	 		\sum_{i\ge1}\1_{B_i}\le c_1\quad \text{on $D$},\label{e:interior-decomposition2}
	 	\end{gather} where  $c_1>0$ is a constant depending only on $d$ and $\Lambda_0$.
	 \end{lemma}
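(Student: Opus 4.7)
The plan is to mimic the Vitali-based construction used in Lemma \ref{l:boundary-decomposition}, but applied directly to interior points of $D$ rather than boundary points. Set $r_0:=(2+\Lambda_0)^{-2}\wh R/36$ and let $E:=\{x\in D:\delta_D(x)\ge r_0\}$. The goal is to produce countably many centers $x_i\in E$ so that the balls $B_i:=B(x_i,r_0)$ cover $E$, sit inside $D$, and have controlled overlap.

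First I would extract the centers. Consider the collection $\{B(x,r_0/5):x\in E\}$ and, via Zorn's lemma (or a greedy procedure), pick a maximal pairwise disjoint subcollection $\{B(x_i,r_0/5)\}_{i\ge 1}$. By the standard maximality argument, for any $x\in E$ the ball $B(x,r_0/5)$ must meet some $B(x_i,r_0/5)$, hence $|x-x_i|<2r_0/5<r_0$. This gives the covering property $E\subset\bigcup_{i\ge 1} B(x_i,r_0)$. Since $x_i\in E$, we have $\delta_D(x_i)\ge r_0$, so each $B_i=B(x_i,r_0)$ is contained in $D$. That establishes \eqref{e:interior-decomposition1}. (Because $E$ is a separable subset of $\R^d$, the collection of disjoint balls $B(x_i,r_0/5)$ is automatically at most countable, so we may index by $\mathbb N$.)

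It remains to bound the multiplicity. Fix $y\in D$ and let $I(y):=\{i\ge 1:y\in B_i\}$. If $i\in I(y)$ then $|x_i-y|<r_0$, so $B(x_i,r_0/5)\subset B(y,6r_0/5)$. Since the balls $\{B(x_i,r_0/5)\}_{i\in I(y)}$ are pairwise disjoint, comparing Lebesgue measures yields
\begin{equation*}
\#I(y)\cdot\omega_d(r_0/5)^d=\sum_{i\in I(y)} m_d\bigl(B(x_i,r_0/5)\bigr)\le m_d\bigl(B(y,6r_0/5)\bigr)=\omega_d(6r_0/5)^d,
\end{equation*}
so $\#I(y)\le 6^d$. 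This gives \eqref{e:interior-decomposition2} with $c_1=6^d$, which in fact depends only on $d$.

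There is no real obstacle in this lemma; the only small point requiring care is ensuring that the centers picked by the maximal disjoint procedure lie in $E$ (so that each $B_i$ sits inside $D$) while still covering all of $E$, and this is handled by starting from balls centered precisely at points of $E$ and invoking maximality. The rest is the routine volume-packing overlap estimate.
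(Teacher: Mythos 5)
Your proof is correct and follows essentially the same Vitali/greedy--plus--volume--packing approach as the paper: extract a maximal disjoint family of small balls centered at interior points, observe the dilated balls cover $E$ and stay inside $D$, and bound the multiplicity by comparing Lebesgue measures of disjoint small balls in a fixed enlarged ball. The only (harmless) difference is that you write out the packing argument explicitly—where the paper simply points back to Lemma \ref{l:boundary-decomposition}—and you note that, because the small balls lie entirely in $D$, no appeal to the measure density condition \eqref{e:VD} is needed, which yields the slightly sharper observation that $c_1=6^d$ depends only on $d$.
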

	 \begin{proof}
	 	Let $r_0:= (2+\Lambda_0)^{-2}\wh R/18$ and $D_0:=	\{x \in D: \delta_D(x)\ge r_0/2\}$.	By the Vitali covering lemma, there exists a family of disjoint open balls  $\{B(x_i,r_0/10):i\ge 1\}$ with $x_i \in D_0$ for all $i \ge 1$ such that  $D_0 \subset \cup_{i\ge 1} B(x_i,r_0/2)$.  Let $B_i:=B(x_i,r_0/2)$ for $i\ge 1$. Then  \eqref{e:interior-decomposition1} holds. Moreover,  by repeating the argument for \eqref{e:boundary-decomposition3} in the proof of Lemma \ref{l:boundary-decomposition}, we deduce that \eqref{e:interior-decomposition2} holds. 
	 \end{proof}

\subsection{$C^{1,1}$ open sets}

In this subsection, we assume that $D \subset \R^d$ is a $C^{1,1}$ open set with characteristics $(\wh R,\Lambda)$ 
such that $\wh R\le 1 \wedge (1/(2\Lambda))$. 
	See Definition \ref{df-c11}.
Note that
\begin{align}\label{e:Lipschitz-constant}
\text{the Lipschitz constant $\Lambda_0$ of $\partial D$ is at most }	\Lambda \wh R\le 1/2.
\end{align}
 
It follows from  Lemma \ref{l:U-rho-Lipschitz} that for any $Q \in \partial D$ and $0<r\le \wh R/8$,
\begin{align}\label{e:U-rho-C11-1}
	B_D(Q,2r/3) \subset U^Q(r) \subset B_D(Q,2r)
\end{align}
and
\begin{align}\label{e:U-rho-C11-2}
	(2/\sqrt 5) \rho_D(x) \le \delta_D(x) \le \rho_D(x) \quad \text{for all} \;\, x \in U^Q(\wh R/8).
\end{align}

Let  $Q \in \partial D$. Let   $\Psi=\Psi^Q:\R^{d-1} \to \R$ be a $C^{1,1}$ function and CS$_Q$ be an orthonormal coordinate system with origin at $Q$ such that \eqref{e:local-coordinate} and  \eqref{e:local-map} hold. 
Note that $|\nabla \Psi(\wt y)| \le \Lambda   |\wt y| \le |\wt y|/\wh R$ for any $\wt y \in \R^{d-1}$. Hence,   we have
\begin{align}\label{e:Psi-bound}
	|\Psi (\wt y)| \le |\wt y| \sup_{|\wt z| \le |\wt y|} | \nabla \Psi(\wt z)| 
	\le  \wh R^{-1}|\wt y|^2 \quad \text{for all} \;\, \wt y \in \R^{d-1}.
\end{align}
Let $\nu\in (0,1]$. We define for $r \in (0, \wh R/4]$,
\begin{equation}\label{e:def-Enu}
	\begin{split} 
	E^Q_\nu(r)
	&:= \left\{ y=(\wt y, y_d) \text{ in CS$_{Q}$}:  
	|\wt y|<r/4, \,    4r^{-\nu}|\wt y|^{1+\nu} <y_d < r/2 \right\},\\	
	\wt E^Q_\nu(r)
	&:= \left\{ y=(\wt y, y_d) \text{ in CS$_{Q}$}: |\wt y|<r/4, \,  4r^{-\nu}|\wt y|^{1+\nu} <-y_d < r/2 \right\},\\
	S^Q(r)
	&:=\big\{y = (\wt y, y_d) \text{ \rm  in CS$_{Q}$}: 
	|(\wt y,y_d) - r\e_d |<r  \big\},\\
	\wt S^Q(r)
	&:=\big\{y = (\wt y, y_d) \text{ \rm  in CS$_{Q}$}: |(\wt y,y_d) +r \e_d |<r  \big\},
	\end{split}
\end{equation}
see Figure \ref{f-Enu-3}.
For any $0<\nu\le \nu'\le 1$, $r \in (0,\wh R/4]$ and $\wt y \in \R^{d-1}$ with $|\wt y|<r/4$, by \eqref{e:Psi-bound}, 
$$
4r^{-\nu}|\wt y|^{1+\nu} \ge 4r^{-\nu'}|\wt y|^{1+\nu'} \ge 4r^{-1}|\wt y|^{2} \ge |\Psi(\wt y)|.
$$
Hence, we have	
\begin{align}\label{e:E-nu-inclusion}
	E_\nu^{Q}(r) \subset 	E_{\nu'}^{Q}(r) \subset 	E_{1}^{Q}(r) \subset D.
\end{align}
 When we work with a fixed $Q\in \partial D$,   
 we write $E_\nu(r)$,  $\wt E_\nu(r)$, $S(r)$ and $\wt S(r)$  
 instead of $E^Q_\nu(r)$, $\wt E^Q_\nu(r)$, $S^Q(r)$ and $\wt S^Q(r)$ respectively. 
\begin{figure}[!h]
	\begin{tabular}{cc}
	\includegraphics[width=0.45\textwidth]{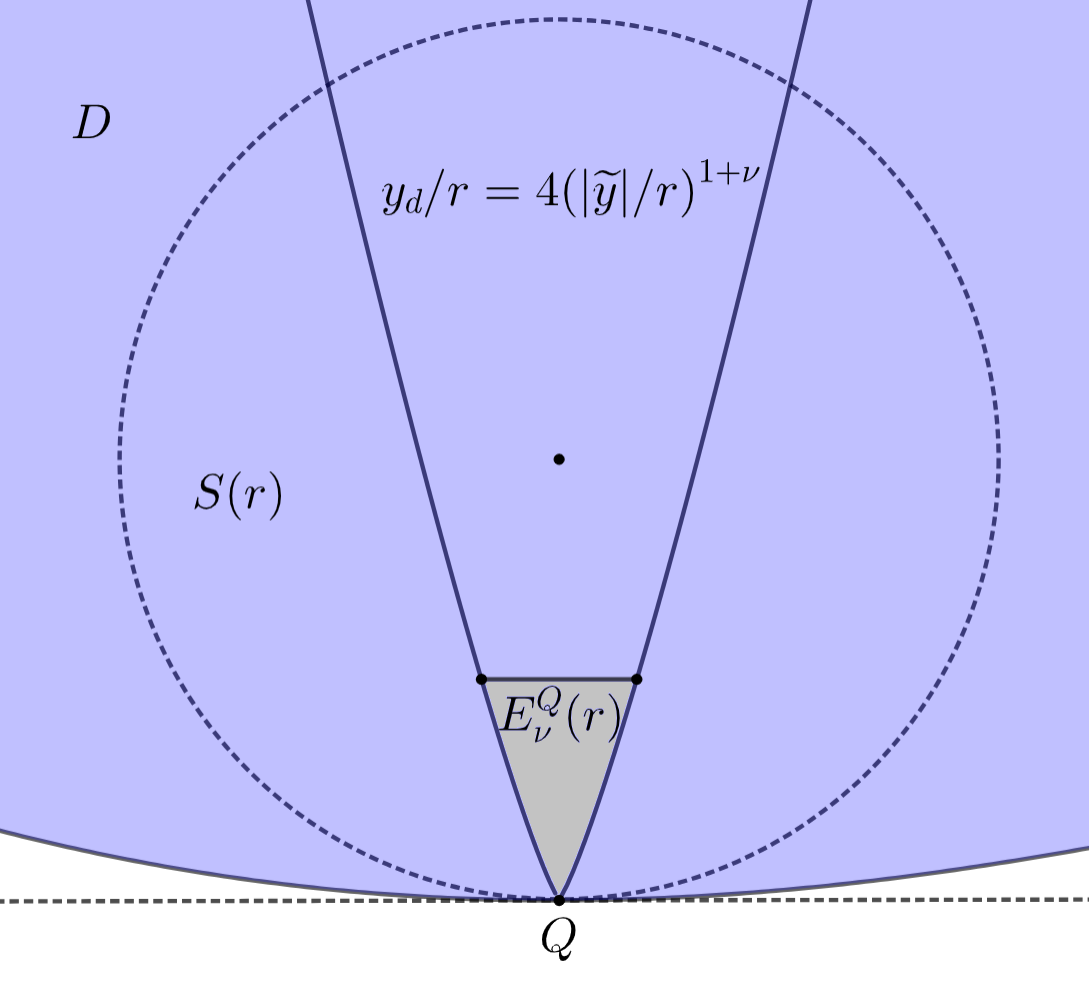}\;\;&\includegraphics[width=0.45\textwidth]{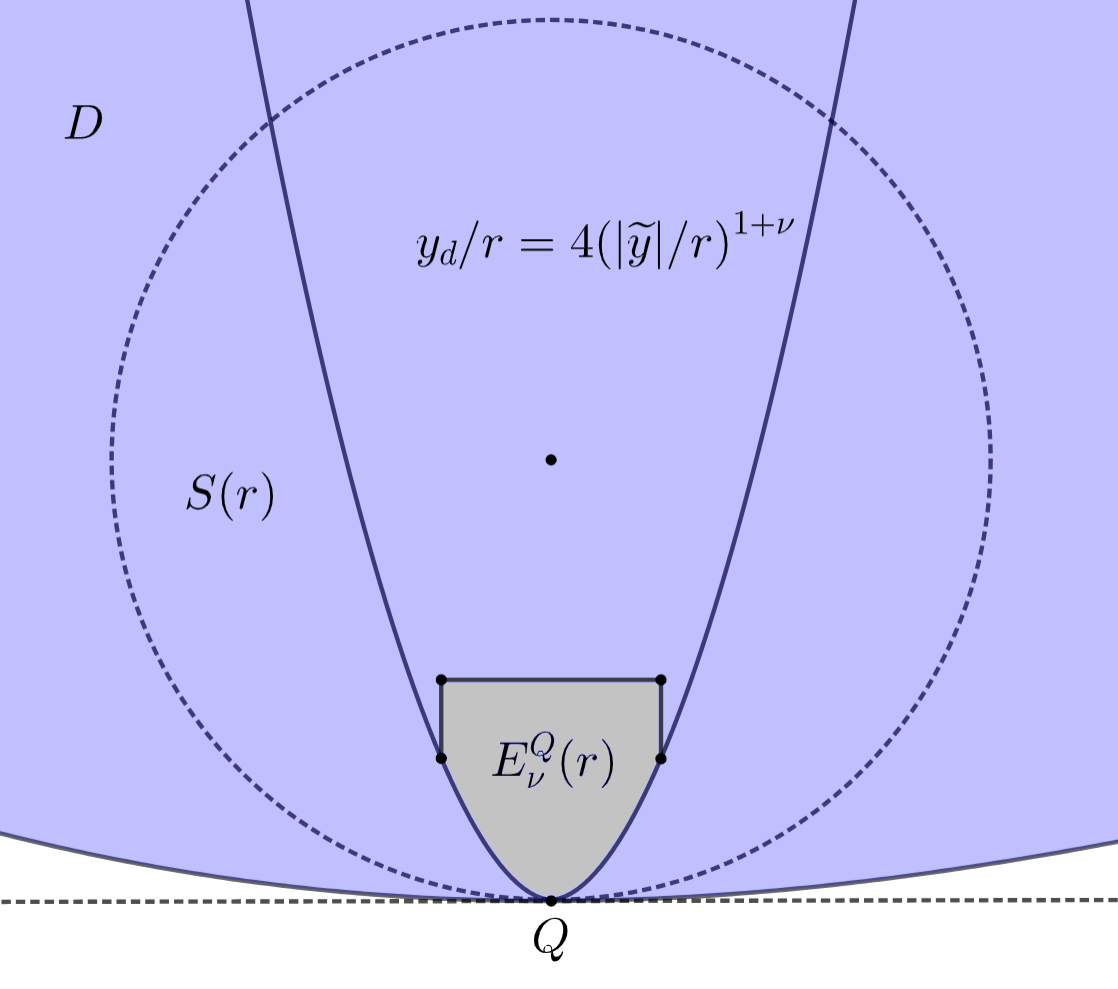}
	\end{tabular}
		
	\caption{The set $E_\nu^Q(r)$. Left $\nu=0.2$; Right $\nu=0.8$.}\label{f-Enu-3}
	
	\vspace{-1mm}
\end{figure}

\begin{lemma}\label{l:C11}
	Let $Q \in \partial D$, $\nu \in (0,1]$ and $r \in (0, \wh R/4]$.   The following statements hold in the coordinate system CS$_Q$.
	
	\noindent (i) For any $y=(\wt y, y_d)\in B_D(Q, \wh R)$, 
	we have $\delta_D(y) \le y_d+\wh R^{-1}|\wt y|^2$.
	
	\noindent (ii)  $E_\nu(r) \subset S(r) \subset D$ and $\wt E_\nu(r) \subset \wt S(r) \subset \R^d \setminus D$.
	
	\noindent (iii) For any  $y=(\wt y, y_d) \in E_\nu(r)$, we have 
	$$
	3y_d/4 \le y_d-r^{-1}|\wt y|^2\le \delta_{S(r)}(y) \le 	\delta_D(y)\le  2y_d.
	$$
	In particular, 
	$$
	\delta_D(y) \asymp \delta_{S(r)}(y) \asymp y_d \quad \text{for} \;\; y \in E_\nu(r).
	$$
\end{lemma}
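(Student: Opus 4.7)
The proof splits naturally by part, with most of the technical work concentrated in a single interpolation estimate that gets reused throughout.

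For part (i), my plan is to use the local graph representation: in the coordinate system CS$_Q$, the point $(\wt y, \Psi(\wt y))$ lies on $\partial D \cap B(0,\wh R)$, so
\[\delta_D(y) \le |y - (\wt y,\Psi(\wt y))| = y_d - \Psi(\wt y) \le y_d + |\Psi(\wt y)|.\]
The bound \eqref{e:Psi-bound} then gives $|\Psi(\wt y)| \le \wh R^{-1}|\wt y|^2$, finishing (i).

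The key technical ingredient for parts (ii) and (iii) is the interpolation inequality
\[|\wt y|^2 = |\wt y|^{1+\nu}\,|\wt y|^{1-\nu} \le \Big(\tfrac{r^\nu y_d}{4}\Big)\Big(\tfrac{r}{4}\Big)^{1-\nu} \le \frac{r y_d}{4}, \qquad y=(\wt y,y_d)\in E_\nu(r),\]
which combines the two defining inequalities $|\wt y|<r/4$ and $4r^{-\nu}|\wt y|^{1+\nu}<y_d$ by interpolating between exponents $1+\nu$ and $2$. Given this bound, $E_\nu(r)\subset S(r)$ reduces to the algebraic check $|y-r\e_d|^2 = |\wt y|^2+y_d^2-2ry_d+r^2 < r^2$, which holds because $|\wt y|^2 \le ry_d/4$ and $y_d^2 < ry_d/2$. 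For $S(r)\subset D$, any $y\in S(r)$ satisfies $|\wt y|^2 < 2ry_d$ and lies in $B(0,2r) \subset B(0, \wh R)$ since $r \le \wh R/4$; then \eqref{e:Psi-bound} yields $|\Psi(\wt y)|\le \wh R^{-1}|\wt y|^2 \le 2ry_d/\wh R \le y_d/2 < y_d$, placing $y$ above the graph of $\Psi$ and hence in $D$. The corresponding statements for $\wt E_\nu(r)\subset \wt S(r)\subset \R^d\setminus D$ follow by the symmetric argument after replacing $y_d$ by $-y_d$.

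For part (iii), the upper bound $\delta_D(y)\le 2y_d$ combines part (i) with the interpolation estimate: $\wh R^{-1}|\wt y|^2 \le \wh R^{-1} r y_d/4 \le y_d/16$, so $\delta_D(y)\le y_d+y_d/16 \le 2y_d$. The middle inequality $\delta_{S(r)}(y)\le \delta_D(y)$ is immediate from $S(r)\subset D$. For the leftmost substantive estimate $y_d-r^{-1}|\wt y|^2\le \delta_{S(r)}(y) = r-|y-r\e_d|$, note that both sides of the equivalent inequality $|y-r\e_d| \le r - y_d + r^{-1}|\wt y|^2$ are non-negative (the right side since $y_d<r/2<r$), and squaring reduces it to $2y_d\le r+r^{-1}|\wt y|^2$, which follows from $y_d<r/2$. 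The bound $3y_d/4 \le y_d - r^{-1}|\wt y|^2$ is then a final direct application of the interpolation estimate. There is no real obstacle in this lemma; the only delicate point is tracking the constant $4^{-(1-\nu)} \le 1$ in the interpolation so that $|\wt y|^2 \le ry_d/4$ holds uniformly in $\nu\in(0,1]$.
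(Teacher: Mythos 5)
Your proof is correct and follows essentially the same route as the paper: part (i) is identical, and parts (ii)--(iii) rest on the same pair of ingredients, namely the bound $r^{-1}|\wt y|^2 \le r^{-\nu}|\wt y|^{1+\nu} < y_d/4$ (your interpolation estimate) and the explicit distance formula for the inscribed ball $S(r)=B(r\e_d,r)$. Your verification of $S(r)\subset D$ (expanding the ball inequality to $|\wt y|^2<2ry_d$ before applying \eqref{e:Psi-bound}) avoids the paper's square-root/completing-the-square manipulation, but this is a cosmetic reorganization rather than a genuinely different argument.
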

\begin{proof} (i) Since $\delta_D(y)\le y_d + |\Psi(\wt y)|$ for all $y=(\wt y,y_d) \in B_D(Q,\wh R)$, we get the result from \eqref{e:Psi-bound}.
	
	(ii) For any $y \in E_\nu(r)$, since $2r^{-1}(r-y_d)\ge 1$, we have
	\begin{equation*}
	\begin{split}
		&\delta_{S(r)}(y) = r- \sqrt{|\wt y|^2 + (r-y_d)^2} \ge r- \sqrt{(r-y_d+r^{-1}|\wt y|^2)^2}\\ 
		& = y_d - r^{-1}|\wt y|^2\ge y_d - r^{-\nu}|\wt y|^{1+\nu} \ge 3y_d/4 >0.
	\end{split}
	\end{equation*}
	Hence, $E_\nu(r) \subset S(r)$. 
	Besides, by \eqref{e:Psi-bound}, we see  that for any $\wt y \in \R^{d-1}$ with $|\wt y| <r$, 
	\begin{align*}
		r- \sqrt{r^2-|\wt y|^2} \ge  r- \sqrt{(r-r^{-1} |\wt y|^2/2)^2} =r^{-1}  |\wt y|^2 /2\ge 2\wh R^{-1} |\wt y|^2 
		\ge  |\Psi(\wt y)|.
	\end{align*}
	Hence,  $E_\nu(r) \subset S(r)\subset D$.  Since $\R^d \setminus D$ is also a $C^{1,1}$ open set with characteristics 
	$(\wh R, \Lambda)$, we also get that $\wt E_\nu(r) \subset \wt S(r)\subset \R^d \setminus D$.
	
	(iii) For $y \in E_\nu(r)$, we have $\wh R^{-1}|\wt y|^2 \le 4r^{-1} |\wt y|^2 \le 4r^{-\nu}|\wt y|^{1+\nu}<y_d$.
	Now we get the result  from (i) and (ii).\end{proof}

%%%%%%%%%%%%%%%%%%%%%%%%%%%%%%%%%%%%%%%%%%%%%%%%%%%%%%%%%%%%%%%%%%%%%%%%%%%%%%%%%%%%%%%%%%%%%%%%%%%%%%%%%%%%%%%%%%%%%%%%%%%%%%%%%%%%%%%
%%%%%%%%%%%%%%%%%%%%%%%%%% Properties of processes $\overline{Y}$ and $Y^{\kappa}$   %%%%%%%%%%%%%%%%%%%%%%%%%%%%%%%%%%%%%%%%%%%%%%%%%%
%%%%%%%%%%%%%%%%%%%%%%%%%%%%%%%%%%%%%%%%%%%%%%%%%%%%%%%%%%%%%%%%%%%%%%%%%%%%%%%%%%%%%%%%%%%%%%%%%%%%%%%%%%%%%%%%%%%%%%%%%%%%%%%%%%%%%%%

\section{Properties of processes $\overline{Y}$ and $Y^{\kappa}$}\label{ch:processes}

The following are our standing assumptions 
on $\sB(x,y)$ in Sections \ref{ch:processes} through \ref{ch:green} of this work:

\medskip

\noindent \hypertarget{B1}{{\bf (B1)}} $\sB(x,y)=\sB(y,x)$ for all $x,y \in D$.

\smallskip

\noindent \hypertarget{B2-a}{{\bf (B2-a)}}  There exists a constant $C_1>0$ such that $\sB(x,y) \le C_1$ for all $x,y \in D$.

\smallskip

\noindent \hypertarget{B2-b}{{\bf (B2-b)}} For any $a\in (0,1]$, there exists a constant $C_2=C_2(a)>0$ such that
	\begin{align*}	\sB(x,y) \ge C_2 \quad \text{for all }  x,y \in D \text{ with } \delta_D(x) \wedge \delta_D(y) \ge a |x-y|.
	\end{align*}

\medskip

In this section  we assume that  $D \subset \R^d$ is a Lipschitz  open set with localization radius $\wh R$ and Lipschitz constant $\Lambda_0$ and we study the  processes $\overline{Y}$ and $Y^{\kappa}$ in  $D$.

 For the process $Y^{\kappa}$, we introduce the  conditions \hyperlink{K1}{{\bf (K1)}} and \hyperlink{K2}{{\bf (K2)}} 
 on the killing potential $\kappa$
 and work under these conditions. 
 The main goal is to establish 
 the parabolic H\"older regularity and parabolic Harnack inequality for these processes, 
and interior estimates  of the Green function of $Y^\kappa$.

\subsection{Analysis and properties of $\overline Y$}\label{s-processes-Y-bar}

Recall from Section \ref{ch:set-up} that $\overline{Y}$ is a Hunt process in $\overline{D}$ associated with the regular Dirichlet form $(\EE^0, \overline{\FF})$ and 
the exceptional 
set $\sN'$.  
Since the jump kernel $\sB(x,y)|x-y|^{-d-\alpha}dxdy$ of $(\sE^0, \overline \sF)$ is absolutely continuous with respect to $m_d \otimes m_d$, by using \cite[(5.3.15)]{FOT} and repeating the arguments in \cite[p.~40]{CK03}, one sees that  $\overline Y$ satisfies the following L\'evy system formula: For any $x \in  
\overline D
$, any non-negative Borel function $f$ on 
$\overline D \times \overline D$
vanishing on the diagonal, and any stopping time $\tau$,
\begin{align}\label{e:Levysystem-Y-bar}
	\E_x\bigg[  \sum_{s \le \tau} f( \overline Y_{s-},  \overline Y_s)  \bigg] 
	= \E_x \left[ \int_0^\tau   \int_{D} \frac{f( \overline Y_s, y) 
		\sB( \overline Y_s, y)}{| \overline Y_s-y|^{d+\alpha}} dy  \, ds \right].
\end{align}

For each $\rho>0$, define a bilinear form $(\sE^{0,(\rho)}, \overline\sF)$ by
\begin{align}\label{e:def-Erho}
	\sE^{0,(\rho)}(u,v)= \frac{1}{2}\iint_{D \times D, \, |x-y|<\rho}  (u(x)-u(y))(v(x)-v(y)) \frac{\sB(x,y)}{|x-y|^{d+\alpha}}dxdy.
\end{align}
By \hyperlink{B2-a}{{\bf (B2-a)}},    for all $\rho>0$  and $u \in \overline \sF$, we have
\begin{align}\label{e:E-and-Erho} 
	\begin{split} 
		\sE^{0}(u,u)-	\sE^{0,(\rho)}(u,u)	& \le  C_1 \int_D u(x)^2 \int_{D, \, |x-y|\ge \rho} \frac{dy}{|x-y|^{d+\alpha}}  dx  \le 	 \frac{c_1}{\rho^\alpha}\lVert u \rVert_{L^2(D)}^2.
	\end{split}
\end{align}
In particular, we have,
\begin{align*}
	\sE_1^{0,(\rho)}(u,u)\le \sE_1^{0}(u,u)\le 		(1 + c_1\rho^{-\alpha})	\sE_1^{0,(\rho)}(u,u),\end{align*}implying that $\sE^0$ and $\sE^{0,(\rho)}$ have same sets of capacity zero, and therefore, by \cite[Theorem 4.2.1(ii)] {FOT}, same exceptional sets.

For a Borel set $A \subset \R^d$ with $m_d(A) \in (0,\infty)$ and $u \in L^1(A)$, we let 
$$
\overline u_A:=\frac{1}{m_d(A)} \int_A u\,dx.
$$

In the following two propositions we establish a Nash-type inequality and, consequently, the existence and a preliminary upper bound of the transition densities of $\overline{Y}$ (or the heat kernel of the corresponding semigroup).

\begin{prop}\label{p:Nash}
	There exists $C>0$ depending only on $d,\alpha,\wh R$ and $\Lambda_0$ such that
	\begin{equation}\label{e:Nash}
		\lVert u \rVert_{L^{2}(D)}^{2+2\alpha/d} 	\le C   	\sE_1^0 (u,u) \quad \text{for all $u \in \overline \sF$ with $\lVert u \rVert_{L^{1}(D)}=1$}.
	\end{equation}
\end{prop}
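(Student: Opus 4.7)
My plan is to deduce \eqref{e:Nash} from the classical Nash inequality for the fractional Laplacian on $\R^d$, namely
\begin{equation*}
\lVert v\rVert_{L^2(\R^d)}^{2+2\alpha/d}\le c_*\lVert v\rVert_{L^1(\R^d)}^{2\alpha/d}\iint_{\R^d\times\R^d}\frac{(v(x)-v(y))^2}{|x-y|^{d+\alpha}}\,dx\,dy, \qquad v\in L^1\cap L^2(\R^d),
\end{equation*}
which follows by Plancherel's theorem with a low/high-frequency split in $|\xi|$ and an optimized cutoff. By density of $\mathrm{Lip}_c(\overline D)$ in $\overline \sF$, it suffices to prove \eqref{e:Nash} for $u\in\mathrm{Lip}_c(\overline D)$. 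For such $u$, the plan is to construct a compactly supported Lipschitz extension $\tilde u:\R^d\to\R$ with $\lVert \tilde u\rVert_{L^p(\R^d)}\le C\lVert u\rVert_{L^p(D)}$ for $p=1,2$ and
\begin{equation*}
\iint_{\R^d\times\R^d}\frac{(\tilde u(x)-\tilde u(y))^2}{|x-y|^{d+\alpha}}\,dx\,dy\le C\bigl(\sE^0(u,u)+\lVert u\rVert^2_{L^2(D)}\bigr).
\end{equation*}
Inserting $\tilde u$ into the $\R^d$-Nash inequality then yields \eqref{e:Nash}.

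The extension will be built from a locally finite partition of unity subordinate to the coordinate charts of Definition \ref{df:lipschitz}: near each $Q\in\partial D$, reflect $u$ across the graph of $\Psi^Q$ within $B(0,\wh R)$ in CS$_{Q}$, then glue to zero outside a small neighborhood of $\overline D$ by a smooth cutoff. The Lipschitz control on $\Psi^Q$ in the spirit of Lemma \ref{l:diffeo} makes the reflection bi-Lipschitz with constants depending only on $\Lambda_0$, which delivers the $L^p$-norm comparison. For the energy estimate, I would split the pairs $(x,y)\in\R^d\times\R^d$ into (i) pairs with $x,y\in D$ and $\delta_D(x)\wedge \delta_D(y)\ge |x-y|$, and (ii) all remaining pairs. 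On region (i), \hyperlink{B2-b}{{\bf (B2-b)}} with $a=1$ gives $\sB(x,y)\ge C_2$, so the $\R^d$-integrand is bounded by $C_2^{-1}$ times the $\sE^0$-integrand. On region (ii), using $(\tilde u(x)-\tilde u(y))^2\le 2(\tilde u(x)^2+\tilde u(y)^2)$ and integrating $|x-y|^{-d-\alpha}$ over $\{|x-y|\ge\delta_D(x)\}$ produces a Hardy-type integrand $u(x)^2\delta_D(x)^{-\alpha}$, with pairs involving $y\in D^c$ reduced to analogous integrals inside $D$ via the bi-Lipschitz reflection.

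The main obstacle is then the Hardy-type bound
\begin{equation*}
\int_D u(x)^2\,\delta_D(x)^{-\alpha}\,dx\le C\bigl(\sE^0(u,u)+\lVert u\rVert^2_{L^2(D)}\bigr).
\end{equation*}
The contribution from $\{\delta_D\ge 1\}$ is immediately controlled by $\lVert u\rVert^2_{L^2(D)}$. On $\{\delta_D<1\}$, for any $x\in D$ the ball $B(x,\delta_D(x)/2)$ lies in $D$, and by \hyperlink{B2-b}{{\bf (B2-b)}} the function $\sB$ is bounded below there by a universal constant; a local Poincar\'e-type inequality then controls $u(x)^2$ by $\overline u_{B(x,\delta_D(x)/2)}^2$ plus $\delta_D(x)^\alpha$ times the $\sE^0$-energy restricted to that ball. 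Summing over a Whitney-type covering with bounded overlap, whose existence is guaranteed by the measure density condition \eqref{e:VD}, bounds the Hardy integral on $\{\delta_D<1\}$ by $C(\sE^0(u,u)+\lVert u\rVert^2_{L^2(D)})$. The delicate point is that \hyperlink{B2-b}{{\bf (B2-b)}} only provides a lower bound on $\sB$ in the strictly interior regime, so bridging the scales of $\delta_D(x)$ using the Lipschitz geometry of $\partial D$ and \eqref{e:VD} without any pointwise global lower bound on $\sB$ is the core technical step.
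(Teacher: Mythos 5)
Your route breaks down for $\alpha\ge 1$. The Hardy-type inequality you isolate as "the main obstacle",
\begin{equation*}
\int_D u(x)^2\,\delta_D(x)^{-\alpha}\,dx\le C\bigl(\sE^0(u,u)+\lVert u\rVert^2_{L^2(D)}\bigr),
\end{equation*}
is simply false for $u\in\mathrm{Lip}_c(\overline D)$ when $\alpha\ge 1$. Take any $u\in\mathrm{Lip}_c(\overline D)$ with $u\equiv 1$ on a neighborhood of a piece of $\partial D$: then $\sE^0_1(u,u)<\infty$ (boundedness of $\sB$ plus the Lipschitz bound on $u$ makes the double integral converge), yet $\int_D u^2\delta_D^{-\alpha}\,dx=\infty$ because $\int\delta_D^{-\alpha}$ diverges near a Lipschitz boundary when $\alpha\ge 1$. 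This is not incidental; it is precisely the phenomenon behind the paper's Lemma~\ref{l:Hardy2} and Proposition~\ref{p:alpha>1}, which show $\overline\sF\ne\sF^0$ for $\alpha>1$ because such $u$ have infinite Hardy energy. Your step ``region (ii) $\Rightarrow$ Hardy integral'' uses the crude bound $(\tilde u(x)-\tilde u(y))^2\le 2(\tilde u(x)^2+\tilde u(y)^2)$ and thereby discards cancellation that is essential near $\partial D$; the Gagliardo seminorm of a bi-Lipschitz extension is finite, but bounding it by the Hardy integral is a dead end.

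The paper's proof instead sidesteps extension and the full Gagliardo seminorm entirely. It covers $D$ by a uniformly bounded family of John domains $\{D_i\}$ of comparable size (Lemmas~\ref{l:boundary-decomposition}--\ref{l:interior-decomposition}), then applies on each $D_i$ the fractional Sobolev--Poincar\'e inequality of Hurri-Syrj\"anen--V\"ah\"akangas (cited as \eqref{e:HV2thm3.1}), which has the crucial feature that its right-hand side involves only the ``interior'' part $\int_{D_i}\int_{B(z,\delta_{D_i}(z)/2)}$ of the Gagliardo energy. That interior piece is exactly what \hyperlink{B2-b}{{\bf (B2-b)}} controls, with no assumption on $\sB$ near the boundary and no Hardy inequality. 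If you want to salvage your plan, you would need to replace the Hardy step by something in this spirit: prove that the interior Gagliardo seminorm alone controls a Sobolev or Nash inequality, which in effect re-proves the John-domain Sobolev--Poincar\'e estimate that the paper invokes.
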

\begin{proof}
	By Lemma \ref{l:boundary-decomposition}, there exists a family $\{A_i:i\ge 1\}$ of $2(1+\Lambda_0)^4$-John domains satisfying 
	\eqref{e:boundary-decomposition1}--\eqref{e:boundary-decomposition3}, and by Lemma \ref{l:interior-decomposition}, there exists a family $\{B_i:i\ge 1\}$ of open balls of radius  $(2+\Lambda_0)^{-2}\wh R/36$ satisfying \eqref{e:interior-decomposition1} and \eqref{e:interior-decomposition2}. Write $\{D_i:i\ge 1\}:=\{A_i:i\ge 1\} \cup \{B_i:i\ge 1\}$. Then $\{D_i:i\ge 1\}$ is an open covering of $D$, and by \eqref{e:boundary-decomposition3} and \eqref{e:interior-decomposition2},
	\begin{align}\label{e:Sobolev-overlap}
		\sum_{i\ge 1} \1_{D_i}\le c_1 \quad \text{on $D$}.
	\end{align} 
	Moreover, since every open ball in $\R^d$ is a $1$-John domain, $D_i$ are $2(1+\Lambda_0)^4$-John domains.
	
	Let  $u \in \overline \sF$ be such that  $\lVert u \rVert_{L^{1}(D)}=1$.
	By \eqref{e:boundary-decomposition2}, for all $i \ge 1$ and $z \in D_i$,  we have $\delta_{D_i}(z)/2 < \wh R$. Hence, by using
	\hyperlink{B2-b}{{\bf (B2-b)}} and \eqref{e:Sobolev-overlap}, we see that
	\begin{align}\label{e:interior-1}
		\begin{split}
			\sE^{0,(\wh R)}(u,u)&\ge \frac{C_2}{2} \int_{D}\int_{B(z,   (\delta_D(z)/2) \wedge \wh R)}\frac{(u(z)-u(y))^2}{|z-y|^{d+\alpha}} dydz\\
			&\ge c_2 \sum_{i=1}^\infty \int_{D_i}\int_{B(z,   (\delta_D(z)/2) \wedge \wh R)}\frac{(u(z)-u(y))^2}{|z-y|^{d+\alpha}} dydz\\
			&\ge c_2 \sum_{i=1}^\infty \int_{D_i}\int_{B(z, \delta_{D_i}(z)/2)}\frac{(u(z)-u(y))^2}{|z-y|^{d+\alpha}} dydz.
		\end{split}
	\end{align}
	Observe that 
	\begin{equation}\label{e:Sobolev-1}
		\begin{split}
			\lVert u \rVert_{L^{2}(D)}^{2}  &\le 
	\sum_{i=1}^\infty	\lVert u \rVert_{L^{2}(D_i)}^{2}
			\le 2\sum_{i=1}^\infty (\overline u_{D_i})^2 m_d(D_i)+ 2\sum_{i=1}^\infty \lVert u - \overline u_{D_i} \rVert_{L^2(D_i)}^2.
		\end{split}
	\end{equation}
	By  \eqref{e:Sobolev-overlap},	for all $i \ge 1$,
	\begin{align}\label{e:Sobolev-overlap-L1}
	\lVert u \rVert_{L^1(D_i)}\le \sum_{j=1}^\infty 	\lVert u \rVert_{L^1(D_j)} \le c_1 \lVert u \rVert_{L^1(D)}= c_1.
	\end{align}
	Using H\"older's  inequality in the second line below, and  \eqref{e:Sobolev-overlap-L1} in the third, we get
	\begin{equation}\label{e:Sobolev-first}		\begin{split}
				&\sum_{i=1}^\infty (\overline u_{D_i})^2 m_d(D_i)  \le  \sum_{i=1}^\infty m_d(D_i)^{-1}\lVert u \rVert_{L^1(D_i)}^2 \\		
				&  \le \left[\sum_{i=1}^\infty m_d(D_i)^{-(d+\alpha)/d} \lVert u \rVert_{L^1(D_i)}^{(2d+\alpha)/d}   \right] ^{d/(d+\alpha)}\left[	\sum_{i=1}^\infty \lVert u \rVert_{L^1(D_i)} \right] ^{\alpha/(d+\alpha)}\\		
					&  \le c_1 \left[	\sum_{i=1}^\infty m_d(D_i)^{-(d+\alpha)/d} \lVert u \rVert_{L^1(D_i)}^{2}   \right] ^{d/(d+\alpha)}\\&  \le c_1\left[\sum_{i=1}^\infty m_d(D_i)^{-\alpha/d} \lVert u \rVert_{L^2(D_i)}^2 \right] ^{d/(d+\alpha)}.		\end{split}	
	\end{equation}
	By \eqref{e:boundary-decomposition1} and since $B_i$ are open balls of radius $(2+\Lambda_0)^{-2}\wh R/36$, we have  $m_d(D_i) \ge c_3 \wh R^d$ for all $i \ge 1$. Hence,  it follows from \eqref{e:Sobolev-first} that
	\begin{equation}\label{e:Sobolev-2}
		\begin{split} 
			&\sum_{i=1}^\infty (\overline u_{D_i})^2 m_d(D_i) \le \frac{c_1}{(c_3\wh R^d)^{\alpha/(d+\alpha)} }\left[ \sum_{i=1}^\infty \lVert u \rVert_{L^2(D_i)}^2  \right] ^{d/(d+\alpha)}\le  c_4 \lVert u \rVert_{L^2(D)}^{2d/(d+\alpha)},
		\end{split} 
	\end{equation}
	where we used  \eqref{e:Sobolev-overlap} in the second inequality above.
	Since $D_i$ are $2(1+\Lambda_0)^4$-John domains, by  \cite[Theorem 3.1]{HV2}, there exists $c_5>0$ such that for all $i \ge 1$,
	\begin{align}\label{e:HV2thm3.1}
	 \lVert u - \overline u_{D_i} \rVert_{L^{2d/(d-\alpha)}(D_i)}^2 \le c_5 \int_{D_i} \int_{B(z,  \delta_{D_i}(z)/2)}\frac{(u(z)-u(y))^2}{|z-y|^{d+\alpha}} dydz.
	\end{align}
	Using  H\"older's inequality in the first and the third inequalities below, and \eqref{e:HV2thm3.1} in the second, we obtain
		\begin{equation*}		\begin{split}			&\sum_{i=1}^\infty \lVert u - \overline u_{D_i} \rVert_{L^2(D_i)}^2\\& \le 	\sum_{i=1}^\infty \lVert u - \overline u_{D_i} \rVert_{L^1(D_i)}^{2\alpha/(d+\alpha)}   	 \lVert u - \overline u_{D_i} \rVert_{L^{2d/(d-\alpha)}(D_i)}^{2d/(d+\alpha)} \\		&\le 	c_6\sum_{i=1}^\infty \left( 2	\lVert u \rVert_{L^1(D_i)}	\right)^{2\alpha/(d+\alpha)} \bigg( \int_{D_i} \int_{B(z,  \delta_{D_i}(z)/2)}\frac{(u(z)-u(y))^2}{|z-y|^{d+\alpha}} dydz\bigg)^{d/(d+\alpha)}   \\	
					&\le c_6 \left[\sum_{i=1}^\infty   \left( 2		\lVert u \rVert_{L^1(D_i)} \right)^{2} \right]^{\alpha/(d+\alpha)} \left[\sum_{i=1}^\infty\int_{D_i} \int_{B(z,  \delta_{D_i}(z)/2)}\frac{(u(z)-u(y))^2}{|z-y|^{d+\alpha}} dydz \right]^{d/(d+\alpha)}.	\end{split}	
		\end{equation*}
By \eqref{e:Sobolev-overlap-L1},
	\begin{equation*}
		\begin{split}
			\sum_{i=1}^\infty \left( 2		\lVert u \rVert_{L^1(D_i)} \right)^{2}\le 4c_1 \sum_{i\ge 1} 	\lVert u \rVert_{L^1(D_i)}  \le 4c_1^2.  
		\end{split}
	\end{equation*}
	Therefore,  it holds that 
	\begin{equation}\label{e:Sobolev-3}
		\begin{split} 
			\sum_{i=1}^\infty \lVert u - \overline u_{D_i} \rVert_{L^2(D_i)}^2 \le c_7 \left[\sum_{i=1}^\infty \int_{D_i} \int_{B(z,  \delta_{D_i}(z)/2)}\frac{(u(z)-u(y))^2}{|z-y|^{d+\alpha}} dydz \right]^{d/(d+\alpha)}.
		\end{split}
	\end{equation}
	Combining  \eqref{e:Sobolev-1}, \eqref{e:Sobolev-2}, \eqref{e:Sobolev-3} and  \eqref{e:interior-1}, and using \eqref{e:E-and-Erho}, we arrive at
	\begin{align*}
		\lVert u \rVert_{L^{2}(D)}^{2+2\alpha/d} & \le c_7 \sum_{i=1}^\infty \int_{D_i} \int_{B(z,  \delta_{D_i}(z)/2)}\frac{(u(z)-u(y))^2}{|z-y|^{d+\alpha}} dydz  +  c_7  \lVert u \rVert_{L^{2}(D)}^{2}  \\
		&\le c_8 \sE^{0,(\wh R)}(u,u)  +  c_7 \lVert u \rVert_{L^{2}(D)}^{2}\le c_8 \sE^{0}(u,u)  +  c_9(1+ \wh R^{-\alpha}) \lVert u \rVert_{L^{2}(D)}^{2}.
	\end{align*}
	The proof is complete.
\end{proof}

Denote by $(\overline P_t)_{t \ge 0}$ the semigroup  of $\overline Y$.

\begin{prop}\label{p:upper-heatkernel}
	The process $\overline Y$ has a transition density  $\overline p(t,x,y)$ defined on 
	$(0,\infty) \times (\overline D \setminus \sN) \times (\overline D \setminus \sN)$, where
	$\sN \supset \sN'$ 
	is a properly exceptional set for $\overline Y$.
	Moreover, for any $T>0$, there exists a constant $C=C(T)>0$ such that
	\begin{equation}\label{e:upper-heatkernel}
		\overline	p(t,x,y) \le C \left( t^{-d/\alpha} \wedge \frac{t}{|x-y|^{d+\alpha}}\right), 
		\quad 0<t\le T, \;\, x,y \in \overline D \setminus \sN.
	\end{equation}
\end{prop}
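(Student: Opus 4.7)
The plan is to combine two standard ingredients: the Carlen--Kusuoka--Stroock passage from a Nash inequality to ultracontractivity, which gives the existence of $\overline p$ together with the on-diagonal bound, and Meyer's decomposition, which upgrades this to the off-diagonal polynomial tail.

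\emph{Stage 1 (existence and on-diagonal bound).} For $f\in L^1\cap L^2(\overline D)$ with $f\ge 0$ and $\|f\|_{L^1}=1$, set $u_t:=\overline P_t f$. Since $\|u_t\|_{L^1}\le 1$, applying \eqref{e:Nash} to $u_t$ gives $\|u_t\|_{L^2}^{2+2\alpha/d}\le C(\sE^0(u_t,u_t)+\|u_t\|_{L^2}^2)$. Combined with $\frac{d}{dt}\|u_t\|_{L^2}^2=-2\sE^0(u_t,u_t)$, this yields a differential inequality for $F(t):=\|u_t\|_{L^2}^2$ that integrates to $F(t)\le c_1 e^{c_1 t}t^{-d/\alpha}$; duality and symmetry then upgrade this to $\|\overline P_t\|_{L^1\to L^\infty}\le c_2 e^{c_2 t}t^{-d/\alpha}$. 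Because $(\sE^0,\overline\sF)$ is a regular symmetric Dirichlet form, the standard theory (cf.~\cite[Chapter 4]{FOT}) then provides a symmetric, jointly measurable transition density $\overline p(t,x,y)$ on $(0,\infty)\times(\overline D\setminus\sN)\times(\overline D\setminus\sN)$ for some properly exceptional set $\sN\supset\sN'$, with $\overline p(t,x,y)\le c_2 e^{c_2 t}t^{-d/\alpha}$. Restricted to $t\in(0,T]$ this supplies the diagonal part of \eqref{e:upper-heatkernel}.

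\emph{Stage 2 (off-diagonal bound).} Note that $t^{-d/\alpha}\le t/|x-y|^{d+\alpha}$ whenever $|x-y|^\alpha\le t$, so only the case $|x-y|^\alpha>t$ needs attention. Set $\rho:=|x-y|/3$ and let $\overline Y^{(\rho)}$ be the Hunt process associated with the truncated form $(\sE^{0,(\rho)},\overline\sF)$ from \eqref{e:def-Erho}, whose jumps are bounded in size by $\rho$. Since by \eqref{e:E-and-Erho} the truncated form is comparable to $\sE^0_1$, Stage 1 applies verbatim to $\overline Y^{(\rho)}$ and produces a density $\overline p^{(\rho)}$ with the same on-diagonal bound. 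Meyer's decomposition (see, e.g., the argument in \cite{CK08}) then gives
\begin{equation*}
\overline p(t,x,y)\le \overline p^{(\rho)}(t,x,y)+\int_0^t\!\!\int_{\overline D}\!\int_{\overline D}\overline p^{(\rho)}(s,x,z)\,J^l_\rho(z,w)\,\overline p(t-s,w,y)\,dw\,dz\,ds,
\end{equation*}
where $J^l_\rho(z,w):=\sB(z,w)|z-w|^{-d-\alpha}\mathbf 1_{\{|z-w|\ge\rho\}}$. Using only \hyperlink{B2-a}{{\bf (B2-a)}}, the bound $|z-w|\ge\rho\asymp|x-y|$ on the support of $J^l_\rho$, and the $L^1$-mass estimates $\int \overline p^{(\rho)}(s,x,z)\,dz\le 1$ and $\int \overline p(t-s,w,y)\,dw\le 1$, the big-jump integral is controlled by $c_3 t/|x-y|^{d+\alpha}$. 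The residual term $\overline p^{(\rho)}(t,x,y)$ is driven to the same order by a finite iteration of Meyer's decomposition: each iteration reduces the maximal reachable distance by $\rho$, and only a bounded number of iterations is needed since $|x-y|=3\rho$.

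The main obstacle is the bookkeeping in Stage 2, in particular the treatment of the small-jump residue $\overline p^{(\rho)}(t,x,y)$. This type of argument is by now standard for symmetric pure-jump processes satisfying a Nash inequality and a pointwise upper bound $J(x,y)\le C|x-y|^{-d-\alpha}$ on the jump kernel, and I would follow the template of \cite{CK08}. Note that only the upper bound \hyperlink{B2-a}{{\bf (B2-a)}} on $\sB$ is used at this stage; \hyperlink{B2-b}{{\bf (B2-b)}} plays no role here.
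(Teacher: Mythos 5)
Stage 1 of your plan (Nash inequality $\Rightarrow$ ultracontractivity $\Rightarrow$ on-diagonal bound plus existence via \cite[Theorem~3.1]{BBCK}) matches the paper's argument.

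Stage 2 has a genuine gap. After the Meyer decomposition you correctly control the big-jump integral by $ct/|x-y|^{d+\alpha}$ using \hyperlink{B2-a}{{\bf (B2-a)}} and mass conservation. But the residual $\overline p^{(\rho)}(t,x,y)$ for $\rho\asymp|x-y|$ and $|x-y|^\alpha>t$ is {\em not} handled by ``a finite iteration of Meyer's decomposition.'' Once the jumps of size $>\rho$ are removed there are no further big jumps to strip off, and the truncated process $\overline Y^{(\rho)}$ is not confined to a ball of radius $\rho$: it can accumulate arbitrarily many small jumps, so there is no ``maximal reachable distance'' that shrinks by $\rho$ per iteration. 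One does not get the needed smallness of $\overline p^{(\rho)}(t,x,y)$ from iterating the decomposition or from $L^1$-mass estimates. What the paper actually does here is Davies' method: it introduces exponential tilting functions $\psi^{x_1,x_2}_s$ and the quantities $\Gamma_\rho[\psi]$ and $H_\rho(\psi)$, uses the on-diagonal Nash bound for $\sE^{0,(\rho)}$ together with \cite[Theorem~1.2]{CKKW} to get
\begin{align*}
\overline p^{(\rho)}(t,x,y)\le c\,t^{-d/\alpha}\exp\!\Big(t+\tfrac{c_3 t}{\rho^\alpha}-\sup_{s>0}\big[\tfrac{s|x-y|}{3}-\tfrac{2c_5 te^{s\rho}}{\rho^\alpha}\big]\Big),
\end{align*}
and then {\em optimizes jointly} over $s$ and $\rho$ (with $\rho=\delta q_{x,y}$ for a small $\delta=\alpha/(3(d+\alpha))$, not $\rho=|x-y|/3$) to force the exponential factor down to $(t/|x-y|^\alpha)^{1/(3\delta)}\asymp t/|x-y|^{d+\alpha}\cdot t^{d/\alpha}$. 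Only after this is the one-step Meyer bound \cite[Lemma~3.1(c)]{BGK09} applied, and it is applied in the opposite order from what you propose: the big-jump term absorbs the discrepancy between $\overline p$ and the already-controlled $\overline p^{(\rho)}$. Without an exponential moment or Davies-type estimate for the truncated form your Stage 2 does not close, and the specific choice $\rho=|x-y|/3$ is too large for the Davies optimization to work (the tilting would spoil the on-diagonal constant).
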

\begin{proof}  
	By Proposition \ref{p:Nash} and \cite[Theorem 2.1]{CKS87}, there exists  $c_1>0$ such that for any  $t>0$ and $f \in L^1(D)$,
	\begin{align}\label{e:Nash-CKS}
		\lVert \overline P_t f \rVert_{L^\infty(D)} \le c_1 t^{-d/\alpha} e^{t} 
		\lVert f \rVert_{L^1(D)}.
	\end{align}
	By \eqref{e:Nash-CKS} and	 \cite[Theorem 3.1]{BBCK},  
	one sees that $\overline Y$ has a transition density  $\overline p(t,x,y)$ on 
	$(0,\infty) \times (\overline D \setminus \sN'') \times (\overline D \setminus \sN'')$ for a properly exceptional set $\sN'' \supset \sN'$  and
	\begin{equation}\label{e:upper-heatkernel0}
 		\overline	p(t,x,y) \le  c_1 t^{-d/\alpha}e^{t},
 \quad  t>0, \; x,y \in 	\overline D \setminus \sN''.
	\end{equation} 
	Further, $\overline p(t,\cdot,y)$ and $\overline p(t,y,\cdot)$ are quasi-continuous in $\overline D$ for every $t>0$ and 
	$y\in \overline D \setminus \sN''$.

	To obtain the off-diagonal upper bounds for $\overline p(t,x,y)$, we follow the arguments given in \cite[Example 5.5]{CKKW}. Let  
	$$
	\delta=\frac{\alpha}{3(d+\alpha)}.
	$$ 
	By   Proposition \ref{p:Nash} and \eqref{e:E-and-Erho},  there exist $c_2,c_3>0$  such that for all $\rho\in \delta\mathbb{Q}_+$ and $u \in \overline \sF$ with $\lVert u \rVert_{L^1(D)}\le 1$,	
	\begin{equation}\label{e:p4.2-1}
		c_2	\lVert u \rVert_{L^{2}(D)}^{2+2\alpha/d} 
		\le  
		\sE^{0,(\rho)} (u,u) + (1 + c_3\rho^{-\alpha}) \lVert u \rVert_{L^2(D)}^2.
	\end{equation}
	Using the same argument as in \eqref{e:Nash-CKS} and \eqref{e:upper-heatkernel0},
	and \cite[Theorem 4.1.1]{FOT}, it follows from \eqref{e:p4.2-1} that   there exists a properly exceptional set $\sN_\rho$ (with respect to both $(\sE^0, \overline{\FF})$ and $(\sE^{0,(\rho)}, \overline{\FF})$)  contained in $\overline D$ such that  the Hunt process associated with $(\sE^{0,(\rho)}, \overline \sF)$ has a transition  density $\overline p^{(\rho)}(t,x,y)$ defined on 	 $(0,\infty) \times (\overline D \setminus \sN_\rho) \times (\overline D \setminus \sN_\rho)$ satisfying the following estimate:  There exists $c_4>0$ independent of  $\rho \in \delta\mathbb{Q}_+$ such that, for all $t>0$ and $x,y \in \overline D \setminus \sN_{\rho}$, 
	$$
	\overline p^{(\rho)}(t,x,y) \le c_4 t^{-d/\alpha}  \exp \left(  t + \frac{c_3t}{\rho^\alpha} \right).
	$$
	Let
	$$
	\sN:=\bigg(\bigcup_{\rho\in \delta\mathbb{Q}_+}\sN_{\rho}\bigg) \cup \sN''. 
	$$
	Then $\sN$ is a properly exceptional set.
	For $x_1,x_2\in D$ and $s>0$, define
	$$
	\psi^{x_1,x_2}_s(z):=\frac{s}{3}(|z-x_1|\wedge|x_1-x_2|), \quad\;\; z\in D
	$$
	and
	$$
	\Gamma_\rho[
	\psi ](z):=\frac12\int_{D, \,|z-y|<\rho}(e^{\psi(z)-\psi(y)}-1)^2\frac{\sB(z,y)}{|z-y|^{d+\alpha}}dy.
	$$
	Using \hyperlink{B2-a}{{\bf (B2-a)}} and repeating the elementary argument of \cite[p. 36]{CKKW}, we see that  for all  $\rho \in \delta\mathbb{Q}_+$,
	$x_1,x_2\in D$ and $s>0$, 
	\begin{align}\label{e:p4.2rs}
		H_\rho(\psi^{x_1,x_2}_s):=	\|\Gamma_\rho[\psi^{x_1,x_2}_s]\|_{L^\infty(D)}\vee \|\Gamma_\rho[-\psi^{x_1,x_2}_s]\|_{L^\infty(D)} \le  \frac{c_5e^{s\rho}}{\rho^\alpha}.
	\end{align}
	Hence, by \eqref{e:p4.2-1} and    \cite[Theorem 1.2]{CKKW},  there exists $c_6>0$ independent of $\rho$ such that, for all $t>0$ and $x,y \in \overline D \setminus \sN$, 
	\begin{align}\label{e:UHK-truncated}
		\begin{split}
			\overline p^{(\rho)}(t,x,y)& \le c_6t^{-d/\alpha}  \exp \left(  t + \frac{c_3t}{\rho^\alpha}  - \sup_{s>0} \bigg[  |\psi_s^{x,y}(y) -\psi_s^{x,y}(x) |+2t H_\rho(\psi^{x_1,x_2}_s) \bigg]  \right)\\
			& \le c_6 t^{-d/\alpha} \exp \left(t + \frac{c_3t}{\rho^\alpha}   - \sup_{s>0} \bigg[  \frac{s|x-y|}{3} - \frac{2c_5 te^{s\rho}}{\rho^\alpha} \bigg]  \right),
		\end{split}
	\end{align} 
	where   we used \eqref{e:p4.2rs}  in the second inequality above. 
	
	Let $t>0$
	and $x,y \in \overline D\setminus \sN$ with $|x-y|>2t^{1/\alpha}$.
	Let $q_{x,y}\in \mathbb{Q}_+$ such that $|x-y|/2\le q_{x,y}\le |x-y|$.
	By taking
	$$
	\rho = \delta q_{x,y}  \quad \text{ and } \quad s = \frac{1}{\delta  q_{x,y}} \log \bigg( \frac{q_{x,y}^\alpha}{t}\bigg),
	$$
	we get from \eqref{e:UHK-truncated} that
	\begin{align}\label{e:UHK-offdigonal}
		\begin{split}
			\overline p^{(\rho)}(t,x,y) &\le c_6 t^{-d/\alpha} \exp \left(  t + \frac{c_3t}{\delta^\alpha q_{x,y}^\alpha}    -  \frac{1}{3\delta}\frac{|x-y|}{q_{x,y}} \log \bigg( \frac{q_{x,y}^\alpha}{t}\bigg) +\frac{2c_5}{\delta^\alpha}  \right)\\
			&\le c_6 t^{-d/\alpha} \exp \left( t + \frac{c_3t}{\delta^\alpha q_{x,y}^\alpha}    -  \frac{1}{3\delta} \log \bigg( \frac{q_{x,y}^\alpha}{t}\bigg) +\frac{2c_5}{\delta^\alpha}  \right)\\
			&\le c_6e^{ t + (c_3+2c_5)/\delta^\alpha} t^{-d/\alpha} \bigg( \frac{t}{q_{x,y}^\alpha}\bigg)^{1/(3\delta)}= \frac{
				c_7e^{t}  t}{q_{x,y}^{d+\alpha}}
			\le  \frac{
				2^{d+\alpha}c_7e^{t}  t}{|x-y|^{d+\alpha}}.
		\end{split}
	\end{align}
	By \cite[Lemma 3.1(c)]{BGK09} and  the quasi-continuity of $\overline p(t,x, \cdot)$, using \eqref{e:UHK-offdigonal} 
	and \hyperlink{B2-a}{{\bf (B2-a)}},  we arrive at
	\begin{align*}
		\overline p(t,x,y) \le 	\overline p^{(\delta q_{x,y})}(t,x,y) + \sup_{z,w\in D:|z-w|>\delta q_{x,y}} \frac{t\sB(z,w)}{|z-w|^{d+\alpha}} \le  \frac{c_8	e^{t}  t}{|x-y|^{\alpha}}.
	\end{align*}
	Combining  this with \eqref{e:upper-heatkernel0}, we get the desired result.
\end{proof}

For an open set $U \subset \overline D$ relative to the  topology on $\overline D$, we let 
$$
\bar \tau_U:=\inf\{t>0:\overline Y_t \notin U\}.
$$ 
By a standard argument, since $(\sE^0,\overline \sF)$ is conservative, we get the following result from 
Proposition \ref{p:upper-heatkernel}, see, e.g., the proof of \cite[Lemma 2.7]{CKW21}.

\begin{lemma}\label{l:EP}
For any $T>0$,	there exists  $C=C(T)>0$ such that for all $x_0 \in \overline D \setminus \sN$,  $r>0$ and $0<t\le T$,
	\begin{align*}
		\P_{x_0}( \bar \tau_{B_{\overline D}(x_0,r)} \le t ) \le Ct r^{-\alpha}.
	\end{align*}
\end{lemma}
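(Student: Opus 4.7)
\medskip
\noindent\textbf{Proof proposal.} The plan is to combine the heat kernel upper bound of Proposition \ref{p:upper-heatkernel} with conservativeness and the strong Markov property in a standard two-step argument. Fix $T>0$, $0<t\le T$, $r>0$, and $x_0\in \overline D\setminus \sN$, and abbreviate $B=B_{\overline D}(x_0,r)$, $B'=B_{\overline D}(x_0,r/2)$. It will be convenient to work at the time horizon $2t\le 2T$, to which Proposition \ref{p:upper-heatkernel} still applies with a constant depending on $T$.

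First I would establish the following uniform hitting estimate: for every $z\in \overline D\setminus \sN$ with $|z-x_0|\ge r$ and every $s\in[0,t]$,
\begin{equation*}
\P_z\bigl(\overline Y_{2t-s}\in B'\bigr)\,=\,\int_{B'}\overline p(2t-s,z,y)\,dy\,\le\,\int_{|y-x_0|<r/2}\frac{C(2T)(2t)}{|y-z|^{d+\alpha}}\,dy\,\le\,\frac{C_0\,t}{r^{\alpha}},
\end{equation*}
where the first inequality uses the off-diagonal part of \eqref{e:upper-heatkernel} together with $|y-z|\ge r/2$ on the domain of integration, and the second uses $m_d(B(x_0,r/2))\asymp r^d$. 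The same computation, now for the starting point $x_0$ and $z:=x_0$ replaced by an arbitrary point outside $B'$, also gives
\begin{equation*}
\P_{x_0}\bigl(\overline Y_{2t}\notin B'\bigr)\,\le\,\int_{|y-x_0|\ge r/2}\frac{C(2T)(2t)}{|y-x_0|^{d+\alpha}}\,dy\,\le\,\frac{C_1\,t}{r^{\alpha}}.
\end{equation*}

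Next I would reduce to the regime where $r$ is not too small compared with $t^{1/\alpha}$. Assume first that $C_0 t/r^{\alpha}\le 1/2$. Because $(\sE^0,\overline\sF)$ is conservative, the estimate of the previous paragraph yields $\P_z(\overline Y_{2t-s}\notin B')\ge 1/2$ uniformly over $z\in \overline D\setminus\sN$ with $|z-x_0|\ge r$ and $s\in[0,t]$. Since $\overline Y$ is a pure-jump process, $\overline Y_{\bar\tau_B}\in \overline D\setminus B$ on the event $\{\bar\tau_B<\infty\}$, and, because $\sN$ is properly exceptional, $\overline Y_{\bar\tau_B}\in \overline D\setminus\sN$ $\P_{x_0}$-a.s. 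Applying the strong Markov property at $\bar\tau_B$ therefore gives
\begin{equation*}
\P_{x_0}\bigl(\overline Y_{2t}\notin B'\bigr)\,\ge\,\E_{x_0}\Bigl[\mathbf{1}_{\{\bar\tau_B\le t\}}\,\P_{\overline Y_{\bar\tau_B}}\bigl(\overline Y_{2t-\bar\tau_B}\notin B'\bigr)\Bigr]\,\ge\,\tfrac12\,\P_{x_0}(\bar\tau_B\le t),
\end{equation*}
and combining with the previous display yields $\P_{x_0}(\bar\tau_B\le t)\le 2C_1 t/r^{\alpha}$. In the complementary regime $C_0 t/r^{\alpha}>1/2$ the bound $\P_{x_0}(\bar\tau_B\le t)\le 1\le 2C_0 t/r^{\alpha}$ is trivial, so a single constant $C=C(T)$ works in both cases.

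There is no serious obstacle here; the main points of care are (i) choosing the correct time horizon $2t$ so that the inner heat-kernel estimate can itself deliver a constant bounded away from $1$, (ii) verifying via conservativeness that the off-diagonal heat-kernel bound translates into a uniform lower bound on $\P_z(\overline Y_u\notin B')$, and (iii) using proper exceptionality of $\sN$ to justify the strong Markov step, so that the estimate from step one applies at the random point $\overline Y_{\bar\tau_B}$.
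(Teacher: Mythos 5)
Your proof is correct and follows the same standard argument the paper is invoking: the paper dispatches this lemma by citing the proof of \cite[Lemma 2.7]{CKW21}, which is precisely the two-step heat-kernel-plus-strong-Markov argument you reproduce (off-diagonal bound gives smallness of $\P_{x_0}(\overline Y_{2t}\notin B')$; conservativeness and the off-diagonal bound give a uniform lower bound for re-entry probability from outside $B$; strong Markov property at $\bar\tau_B$ combines the two; the small-$r$ regime is trivial). The only cosmetic point worth noting is that the conclusion $\overline Y_{\bar\tau_B}\in\overline D\setminus B$ follows already from right-continuity of the Hunt process together with $B$ being relatively open (so $\overline D\setminus B$ is closed), without any appeal to the pure-jump nature of $\overline Y$; but that does not affect the validity of the argument.
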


A consequence of this lemma is the following statement: 
For any $T>0$, there exists $c=c(T)>0$ such that for all 
$0<r\le (T/c)^{1/\alpha}$, $\P_{x_0}(\bar \tau_{B_{\overline D}(x_0,r)}\le c r^{\alpha})\le 1/2$.

In the next proposition, we obtain a local fractional Poincar\'e inequality for $\sE^0$. This inequality will be used to obtain a near diagonal lower estimate for Dirichlet heat kernels.

Recall that $\overline u_A:=\frac{1}{m_d(A)} \int_A u\,dx$.

\begin{prop}\label{p:PI}Set $k_0:=3(2+\Lambda_0)^2$.  There exists $C>0$ such that for all $x_0 \in \overline D$,  $0<r \le \wh R/k_0$ and any $u \in \overline \sF$,
	\begin{align*}
	&\int_{	B_{ D}(x_0,r)} (u(z)-\overline u_{	B_{ D}(x_0,r)})^2 dz\nn\\
	&\le	Cr^\alpha \iint_{	B_{ D}(x_0,k_0r) \times	B_{ D}(x_0,k_0r) } (u(z)-u(y))^2 \frac{\sB(z,y)}{|z-y|^{d+\alpha}}dzdy.
	\end{align*}
\end{prop}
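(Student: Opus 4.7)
My plan is to reduce the statement to the fractional Sobolev-type inequality on John domains used in the proof of Proposition \ref{p:Nash}, by trapping $B_D(x_0,r)$ inside a single John domain.

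First, given $0<r\le \wh R/k_0=(2+\Lambda_0)^{-2}\wh R/3$, I would invoke Lemma \ref{l:John} to produce a $2(1+\Lambda_0)^4$-John domain $A$ with
\[
B_D(x_0,r)\ \subset\ A\ \subset\ B_D(x_0,2(2+\Lambda_0)^2 r)\ \subset\ B_D(x_0,k_0r).
\]
Writing $B:=B_D(x_0,r)$, the standard variance-minimization inequality yields
\[
\int_B(u-\overline u_B)^2\,dz\ \le\ \int_B(u-\overline u_A)^2\,dz\ \le\ \int_A(u-\overline u_A)^2\,dz,
\]
so it suffices to bound the right-hand integral.

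Next, by H\"older's inequality with exponents $d/(d-\alpha)$ and $d/\alpha$,
\[
\int_A(u-\overline u_A)^2\,dz\ \le\ m_d(A)^{\alpha/d}\,\|u-\overline u_A\|_{L^{2d/(d-\alpha)}(A)}^2,
\]
and since $A\subset B(x_0,k_0r)$, $m_d(A)^{\alpha/d}\le c_1 r^\alpha$. Applying \cite[Theorem 3.1]{HV2} (the John-domain fractional Sobolev inequality already used in \eqref{e:HV2thm3.1}) gives
\[
\|u-\overline u_A\|_{L^{2d/(d-\alpha)}(A)}^2\ \le\ c_2\int_A\int_{B(z,\delta_A(z)/2)}\frac{(u(z)-u(y))^2}{|z-y|^{d+\alpha}}\,dy\,dz.
\]

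The last step inserts the kernel $\sB(z,y)$. For $z\in A$ and $y\in B(z,\delta_A(z)/2)$ we have $|z-y|\le \delta_A(z)/2\le \delta_D(z)/2$ (since $A\subset D$ gives $\delta_A\le\delta_D$), hence $\delta_D(y)\ge \delta_D(z)/2\ge |z-y|$ and $\delta_D(z)\ge |z-y|$ as well. Consequently $\delta_D(z)\wedge\delta_D(y)\ge |z-y|$, so \hyperlink{B2-b}{{\bf (B2-b)}} with $a=1$ gives $\sB(z,y)\ge C_2(1)>0$ on this set. Together with $B(z,\delta_A(z)/2)\subset A\subset B_D(x_0,k_0r)$, this upgrades the previous inequality to
\[
\int_A\int_{B(z,\delta_A(z)/2)}\frac{(u(z)-u(y))^2}{|z-y|^{d+\alpha}}\,dy\,dz\ \le\ \frac{1}{C_2(1)}\iint_{B_D(x_0,k_0r)\times B_D(x_0,k_0r)}(u(z)-u(y))^2\frac{\sB(z,y)}{|z-y|^{d+\alpha}}\,dy\,dz,
\]
and chaining the three displays yields the claim.

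Overall, no step looks particularly hard: the geometric input is Lemma \ref{l:John}, the analytic input is \cite[Theorem 3.1]{HV2}, and the probabilistic input \hyperlink{B2-b}{{\bf (B2-b)}} is exactly what allows us to replace the constant-coefficient kernel with $\sB(z,y)/|z-y|^{d+\alpha}$. The only mildly delicate point is ensuring that the auxiliary ball $B(z,\delta_A(z)/2)$ lies inside $D$ (so that $\sB$ makes sense there) and that both $z$ and $y$ are in $B_D(x_0,k_0r)$; both follow directly from $A\subset B_D(x_0,k_0r)$ and $\delta_A\le\delta_D$.
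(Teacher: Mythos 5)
Your proposal is correct and takes essentially the same route as the paper: the paper writes exactly this argument but in reverse order, starting from the double integral against $\sB$, applying \hyperlink{B2-b}{{\bf (B2-b)}}, then \cite[Theorem 3.1]{HV2}, then Hölder, and finishing with the variance-minimization identity $\inf_{a}\int_B(u-a)^2=\int_B(u-\overline u_B)^2$. The only cosmetic differences are the direction of the chain and that you apply Hölder on $A$ (bounding $m_d(A)^{\alpha/d}\le c\,r^\alpha$ from the inclusion $A\subset B(x_0,k_0 r)$) rather than on $B$, both of which yield the same $r^\alpha$ factor.
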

\begin{proof} 
 Let $x_0 \in \overline D$ and $0<r \le \wh R/k_0$. We write $B:=B_D(x_0,r)$ and $B':=B_D(x_0,k_0r)$.	By Lemma \ref{l:John}, there is a $2(1+\Lambda_0)^4$-John domain $A$ such that $B \subset A \subset	B'$. Using \hyperlink{B2-b}{{\bf (B2-b)}} 
  in the second line,  \cite[Theorem 3.1]{HV2} in the third, 
  H\"older's  inequality in the fifth and  \eqref{e:VD} in the sixth, we get that for all $u \in \overline \sF$,
\begin{align*}
	\iint_{B' \times  B'} (u(z)-u(y))^2  \frac{\sB(z,y)}{|z-y|^{d+\alpha}}dzdy
	& \ge \iint_{A \times  A} (u(z)-u(y))^2  \frac{\sB(z,y)}{|z-y|^{d+\alpha}}dzdy\\
    &\ge C_2\int_{A} \int_{B(z,  \delta_{A}(z)/2)}\frac{(u(z)-u(y))^2}{|z-y|^{d+\alpha}} dydz\nn\\
	& \ge c_1\inf_{a \in \R}\bigg(\int_A |u(z) -a|^{2d/(d-\alpha)} dz \bigg)^{(d-\alpha)/d} \nn\\
	&\ge c_1\inf_{a \in \R}\bigg(\int_{B} |u(z) -a|^{2d/(d-\alpha)} dz \bigg)^{(d-\alpha)/d} \nn\\
	&\ge \frac{c_1}{m_d(B)^{\alpha/d}}\inf_{a \in \R}\int_{B} |u(z) -a|^{2} dz  \nn\\
	&\ge \frac{c_2}{r^\alpha}\inf_{a \in \R}\int_{B} |u(z) -a|^{2} dz.
\end{align*}
Since $\inf_{a \in \R}\int_{B} |u(z) -a|^{2} dz = \int_{B} (u(z)-\overline u_B)^2 dz$, we arrive at the result.
\end{proof}

Denote by $\overline{Y}^{U}$  the part of the process $\overline{Y}$ killed upon exiting $U$. By Proposition \ref{p:Nash} and \cite[Theorem 3.1]{BBCK}, $\overline Y^U$ has a transition density 
$\overline p^{U}(t,x,y)$ with respect to the Lebesgue measure on $U$.

Now we establish a near diagonal lower estimate on $\overline p^{B(x_0,r)}$ for $x_0 \in \overline D$ and $0<r \le R_0$. This estimate plays a crucial role in the probabilistic arguments for establishing parabolic H\"older regularity and parabolic Harnack inequality.

 \begin{prop}\label{p:ndl}
 Let $R_0>0$ and  $b \in (0,1)$. There exists  $C=C(R_0,b)>0$  such that for any $x_0 \in \overline D$, $0<r\le R_0$ and 
 $0< t\le (b r)^\alpha$, it holds that
	\begin{equation*}
		\overline	p^{	B_{\overline D}(x_0, r)}(t,z,y)  \ge  C t^{-d/\alpha} 
		\quad  \text{for all } y,z \in 	B_{\overline D}(x_0, b t^{1/\alpha}) \setminus \sN.
	\end{equation*}
\end{prop}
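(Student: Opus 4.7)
The proof follows the standard scheme for near-diagonal lower bounds, using the three ingredients already assembled: the $L^\infty$ upper bound of Proposition \ref{p:upper-heatkernel}, the exit time estimate of Lemma \ref{l:EP} (and its consequence quoted just after the lemma), and the L\'evy system formula \eqref{e:Levysystem-Y-bar} combined with \hyperlink{B2-b}{{\bf (B2-b)}}. I would proceed in two main steps: first a diagonal lower bound, then extension to the off-diagonal near-diagonal regime.

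\emph{Step 1 (Diagonal bound).} I first aim to show that there exists $c_0 = c_0(R_0, b) > 0$ such that $\overline p^{B_{\overline D}(x_0,r)}(2t, z, z) \ge c_0 \, t^{-d/\alpha}$ for all $z \in B_{\overline D}(x_0, b t^{1/\alpha}) \setminus \sN$ and $0 < t \le (br)^\alpha$. By the consequence of Lemma \ref{l:EP}, there is a constant $c_* = c_*(R_0) > 0$ such that $\P_w(\bar\tau_{B_{\overline D}(w, c_* s^{1/\alpha})} \le s) \le 1/2$ for $s$ small enough. When $b$ is small enough that $(b + c_*) b \le 1$, the ball $B^* := B_{\overline D}(z, c_* t^{1/\alpha})$ satisfies $B^* \subset B_{\overline D}(x_0, r)$ whenever $z \in B_{\overline D}(x_0, bt^{1/\alpha})$ and $t \le (br)^\alpha$, because $|z - x_0| + c_* t^{1/\alpha} \le (b + c_*) t^{1/\alpha} \le (b + c_*) b r \le r$. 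Monotonicity of Dirichlet heat kernels then yields
\[
\int_{B^*} \overline p^{B_{\overline D}(x_0, r)}(t, z, w)\, dw \;\ge\; \P_z(\bar\tau_{B^*} > t) \;\ge\; \tfrac{1}{2}.
\]
Cauchy--Schwarz combined with the identity $\int (\overline p^{B_{\overline D}(x_0,r)}(t, z, w))^2 dw = \overline p^{B_{\overline D}(x_0,r)}(2t, z, z)$ (semigroup plus symmetry) and the measure density estimate \eqref{e:VD} giving $m_d(B^*) \le c (c_* t^{1/\alpha})^d$ yields $\overline p^{B_{\overline D}(x_0,r)}(2t, z, z) \ge (4 m_d(B^*))^{-1} \ge c' t^{-d/\alpha}$.

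\emph{Step 2 (General $b$ via chaining; off-diagonal extension).} For $b$ close to $1$, the containment $(b+c_*)b \le 1$ fails, and the previous argument does not directly apply. I would cover $B_{\overline D}(x_0, b t^{1/\alpha})$ by $N = N(b, R_0)$ balls of radius $b_0 t^{1/\alpha}$ with $b_0$ small enough for Step 1 to apply, and propagate the diagonal bound between neighbouring balls using the L\'evy system formula \eqref{e:Levysystem-Y-bar} together with the lower bound \hyperlink{B2-b}{{\bf (B2-b)}} on $\sB$: starting from any $z$ in the enlarged ball, the probability of making a jump into a fixed ball of radius $\asymp t^{1/\alpha}$ at distance $\asymp t^{1/\alpha}$ within a time interval of length $\asymp t$ is bounded below by a positive constant (the factor $t$, the volume $\asymp t^{d/\alpha}$, and $|x-y|^{-d-\alpha} \asymp t^{-(d+\alpha)/\alpha}$ combine to a constant). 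For the off-diagonal bound with $y, z \in B_{\overline D}(x_0, bt^{1/\alpha})$, I would use Chapman--Kolmogorov
\[
\overline p^{B_{\overline D}(x_0,r)}(2t, z, y) \;=\; \int \overline p^{B_{\overline D}(x_0,r)}(t, z, w)\, \overline p^{B_{\overline D}(x_0,r)}(t, w, y)\, dw
\]
and integrate over a set $A$ of measure $\asymp t^{d/\alpha}$ where both factors enjoy pointwise lower bounds of order $t^{-d/\alpha}$. Such a set is produced by combining the $L^1$ mass bound from Step 1 with the $L^\infty$ upper bound of Proposition \ref{p:upper-heatkernel} via a Chebyshev-type argument applied to $w \mapsto \overline p^{B_{\overline D}(x_0,r)}(t, z, w)$ (and similarly for $w \mapsto \overline p^{B_{\overline D}(x_0,r)}(t, w, y)$).

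\emph{Main obstacle.} The central technical point is the tension between the two constraints on the safety radius $c_* t^{1/\alpha}$ in Step 1: it must be large enough for the exit time estimate to yield survival probability $\ge 1/2$, yet small enough that the ball sits inside $B_{\overline D}(x_0, r)$. These are simultaneously satisfiable only when $b$ is bounded away from $1$, which forces the chaining/propagation argument via the L\'evy system and \hyperlink{B2-b}{{\bf (B2-b)}} for general $b \in (0, 1)$. A secondary subtlety is that $B_{\overline D}(z, r)$ can be strictly smaller than the Euclidean ball $B(z, r)$ near $\partial D$, so the measure density estimate \eqref{e:VD} is essential to preserve the correct power $t^{-d/\alpha}$ on both sides of Cauchy--Schwarz.
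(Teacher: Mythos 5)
The global architecture you sketch (a base near-diagonal estimate followed by semigroup chaining and a one-step jump via the L\'evy system with \hyperlink{B2-b}{\textbf{(B2-b)}}) matches the paper's, but the way you produce the base estimate has two real gaps. First, your Step 1 yields only the \emph{diagonal} bound $\overline p^{B}(2t,z,z)\gtrsim t^{-d/\alpha}$, whereas the chaining step needs the \emph{off-diagonal} near-diagonal bound $\overline p^{B_{\overline D}(x_0,r)}(t,z,y)\gtrsim t^{-d/\alpha}$ for $z,y$ in a small ball. The paper gets exactly this as its base case \eqref{e:ndl-step1}, not by an elementary Cauchy--Schwarz argument, but by verifying local versions of the conditions CSJ$(\phi)$ and PI$(\phi)$ (the latter via Proposition \ref{p:PI}) and then quoting the lower-bound machinery of \cite{CKW20,CKW21}; the ensuing $N$-fold Chapman--Kolmogorov chain then applies \eqref{e:ndl-step1} off-diagonally at every intermediate factor. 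Your Chebyshev-based upgrade from diagonal to off-diagonal does not close this gap: Chebyshev plus the $L^\infty$ bound gives sets $A_z$ (near $z$) and $A_y$ (near $y$) of measure $\gtrsim t^{d/\alpha}$ on which $p(t,z,\cdot)$, resp.\ $p(t,\cdot,y)$, are $\gtrsim t^{-d/\alpha}$, but nothing forces $m_d(A_z\cap A_y)\gtrsim t^{d/\alpha}$; the two lower measure bounds involve the constant from the $L^\infty$ estimate and the exit-time constant, and need not sum to more than the ambient volume. The paper sidesteps this by proving an $L^1$-mass bound near $y$ instead (claim \eqref{e:ndl-claim}: $\int_{B_{\overline D}(y,l_t)}\overline p^B(\eps t,z,w)\,dw\ge c_4$) and only requires pointwise lower bounds along the remaining links of the chain.

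Second, the L\'evy-system jump step as you state it would fail near $\partial D$. You assert that ``starting from any $z$ in the enlarged ball, the probability of making a jump into a fixed ball of radius $\asymp t^{1/\alpha}$ ... is bounded below by a positive constant,'' but \hyperlink{B2-b}{\textbf{(B2-b)}} only gives a lower bound on $\sB$ when $\delta_D(\cdot)\wedge\delta_D(\cdot)\gtrsim$ the distance. When $\delta_D(z)\ll t^{1/\alpha}$ this does not apply. The paper handles this explicitly: in the proof of \eqref{e:ndl-claim} it first replaces $z$ and $y$ by nearby interior points $z_0,y_0$ with $\delta_D(z_0),\delta_D(y_0)\gtrsim t^{1/\alpha}$ (using the Lipschitz character of $\partial D$), and only then applies the jump estimate between Euclidean balls sitting strictly inside $D$. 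A minor point of notation: \eqref{e:VD} is a measure-density \emph{lower} bound $m_d(B_D(x_0,r))\ge Cr^d$; the inequality $m_d(B^*)\le c\,t^{d/\alpha}$ needed in your Cauchy--Schwarz step is just the trivial bound by the Euclidean ball, while \eqref{e:VD} is used elsewhere (e.g.\ in the $m_d(B_{\overline D}(y,l_t))^{N-1}$ factor of the chain).
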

\begin{proof}   
Using  \eqref{e:VD}, \eqref{e:stochastic-complete},  Proposition  \ref{p:upper-heatkernel}, \cite[Remark 1.19]{CKW20} and 
\cite[Theorem 1.15]{CKW21}, 
we see that a local version of  the condition CSJ$(\phi)$ in \cite{CKW20} holds with $\phi(r)=r^\alpha$. Proposition \ref{p:PI} says that a local version of the Poincar\'e inequality PI$(\phi)$  in \cite{CKW20} holds for $\sE^0$
with $\phi(r)=r^\alpha$.  Now using  \hyperlink{B2-a}{{\bf (B2-a)}}, the local  CSJ$(\phi)$ condition, Proposition \ref{p:PI} and \cite[Remark 1.19]{CKW20}, we get the following near diagonal lower estimates:
There exist constants $c_0>0$, $c_1,c_2\in(0,1)$  such that  for any $x_0\in \overline D$,   $0<r\le c_1\wh R$ and $0< t\le (c_2 r)^\alpha$, it holds that
\begin{align}\label{e:ndl-step1}
		\overline	p^{	B_{\overline D}(x_0, r)}(t,z,y)  \ge  c_0 t^{-d/\alpha} 
		\quad \text{for all } y,z \in 	 B_{\overline D}(x_0, c_2t^{1/\alpha}) \setminus \sN.
\end{align}
By taking $c_1$ smaller if necessary, we assume that $c_1\wh R < R_0$.

It suffices to prove the 
proposition for $b\in ((1-c_1\wh R/R_0)^{1/2},1)$. 
Fix $x_0\in \overline D$ and  $0<r\le R_0$, and  write $B:=B_{\overline D}(x_0,r)$. 
 Let $b \in ((1-c_1\wh R/R_0)^{1/2},1)$, then
 $(1-b^2)r <(1-b^2)R_0 < c_1 \wh R$.
 Let
     $0< t\le (b r)^\alpha$ and $y,z \in B_{\overline D}(x_0, b t^{1/\alpha}) \setminus \sN$.  
Fix a constant $N \in \N$ such that  
 $$
 N^{-1/\alpha} < c_2(1-b^2)/b.
 $$
 Using this and the fact that $t\le (b r)^\alpha$, 
we have 
\begin{align}
		B_{\overline D}(y, (1-b^2)r)  \subset
		B \quad \text{and} \quad  (\eps t)^{1/\alpha} < b r N^{-1/\alpha} < c_2 (1-b^2)r \label{e:ndl-range-1}
\end{align}
where $\eps:=1/(N+1)$. Set $l_t=c_2(\eps t)^{1/\alpha}$, then
by the semigroup property, \eqref{e:ndl-range-1},  \eqref{e:ndl-step1} and \eqref{e:VD}, we obtain
\begin{align*}
	\overline	p^{B}(t,z,y) 
	&\ge  \int_{B_{\overline D}(y, l_t)}  \cdots \int_{B_{\overline D}(y, l_t)} \,	\overline p^{B}(\eps t,z,w_1) \,
	\overline	p^{	B_{\overline D}(y, (1-b^2)r)}(\eps t,w_1,w_2) \, \times  \nn\\
	& \;  \cdots  \times \overline	p^{	B_{\overline D}(y, (1-b^2)r)}(\eps t,w_{N-1},w_N) \,
	\overline	p^{	B_{\overline D}(y, (1-b^2)r)}(\eps t,w_N,y) \,dw_1 \cdots dw_N \nn\\
	&\ge ( c_0 (\eps t)^{-d/\alpha})^N m_d(B_{\overline D}(y, l_t))^{N-1}  
	\int_{B_{\overline D}(y, l_t)} \overline p^{	B}(\eps t,z,w_1) dw_1 \nn\\
	&\ge c_3 t^{-d/\alpha}\, \int_{B_{\overline D}(y, l_t)} \overline p^{B}(\eps t,z,w_1) dw_1.
\end{align*} 

Therefore, to obtain the desired result, it suffices to show that there exists a constant $c_4>0$ independent of 
$x_0\in \overline D$, $0<r\le R_0$,  $0< t\le (b r)^\alpha$ and  $y, z \in B_{\overline D}(x_0, b t^{1/\alpha}) \setminus \sN$ such that
\begin{align}\label{e:ndl-claim}
	\int_{B_{\overline D}(y, l_t)} \overline p^{B}(\eps t,z,w_1) dw_1 \ge c_4.
\end{align}

If $|y-z|< l_t$, then by using \eqref{e:ndl-step1} and \eqref{e:VD},  we get
\begin{align*}
		\int_{B_{\overline D}(y, l_t)}\overline  p^{	B}(\eps t,z,w_1) dw_1 
		&\ge 	\int_{B_{\overline D}(y, l_t)}\overline  p^{	B_{\overline D}(y, (1-b^2)r)}(\eps t,z,w_1) dw_1\\
		& \ge c_0(\eps t)^{-d/\alpha} m_d(B_{\overline D}(y, l_t)) \ge c_5.
\end{align*}
Hence, \eqref{e:ndl-claim} holds true in this case.

We now assume that $|y-z|\ge  l_t$.  Since $D$ a Lipschitz open set, there exist 
$y_0 \in  B_{\overline D}(y, 2^{-2/\alpha-2} \, l_t)$ and $c_6\in (0,1/4)$, depending only on 
$\Lambda_0$, $z_0 \in B_{\overline D}(z, 2^{-2/\alpha-2} \, l_t)$,
such that for $k_t:=c_6 2^{-2/\alpha-2} \,l_t$, it holds that
	$$
		B(z_0, 4k_t) \cup B(y_0,4k_t)  \subset D.
	$$
Note that 
	\begin{align}\label{e:ndl-range-2}
		B(z_0,2k_t) \subset B_{\overline D}(z, 2^{-2/\alpha-1}\, l_t) \quad \text{and} 
		\quad B(y_0,2k_t) \subset B_{\overline D}(y, 2^{-2/\alpha-1}\, l_t).
	\end{align}
In particular, $B(z_0,2k_t) \cap B(y_0,2k_t)=\emptyset$ since $|y-z| \ge l_t$. Set $c_7:=1\vee C$ where $C>0$ is the constant in Lemma \ref{l:EP} (with $T=R_0^\alpha$) and let $c_8:=2^{-3-2\alpha}(c_2c_6)^\alpha/c_7$.  
Since $c_8<1/2$, we have
	$$
		2^{-2/\alpha-1}\, l_t<2^{-1/\alpha}(1-c_8)^{1/\alpha}l_t= 
		c_2(2^{-1}(1-c_8)\eps t)^{1/\alpha}. 
	$$
Thus by \eqref{e:ndl-step1}, we have that, for all $v \in B(z_0, 2k_t) \setminus \sN$,
	\begin{align}\label{e:ndl-slide-1}
		\overline p^{B}(2^{-1}(1-c_8)\eps t,z,v) \ge \overline p^{	B_{\overline D}(z, (1-b^2)r)}(2^{-1}(1-c_8)\eps t,z,v) 
		\ge c_9 t^{-d/\alpha}
	\end{align}
and for all $w,w_1 \in B(y_0,2k_t)  \setminus \sN$,
	\begin{align}\label{e:ndl-slide-2}
		\overline p^{B}(2^{-1}(1-c_8)\eps t,w,w_1) 
		\ge \overline p^{	B_{\overline D}(y, (1-b^2)r)}(2^{-1}(1-c_8)\eps t,w,w_1) 
		\ge c_9 t^{-d/\alpha}.
	\end{align}

On the other hand, by the strong Markov property, we see that  for any $v \in B(z_0, 2k_t) \setminus \sN$,
\begin{align*}
	&\int_{B(y_0, 2k_t)} \overline p^{B}( c_8\eps t,v,w)dw =\P_v \left( \overline Y^{B}_{c_8\eps t} \in B(y_0, 2k_t) \right) \\
	&\ge \P_v \left(
	\overline Y^{B}_{ \bar \tau_{B(v, k_t)}} \in B(y_0, k_t),\;\;	|\overline Y^{B}_{c_8\eps t} - 
	\overline Y^{B}_{ \bar \tau_{B(v, k_t)}} |<k_t, \;\;   \bar \tau_{B(v, k_t)} \le c_8\eps t \right)\\
	&\ge  \P_v \left(
	\overline Y^{B}_{c_8\eps t \wedge  \bar \tau_{B(v,k_t)}} \in B(y_0, k_t) \right) 
	\inf_{w \in B_{\overline D}(y, k_t)}\P_w \left(\bar\tau_{B_{\overline D}( w,k_t)} \ge c_8\eps t\right) \\
	&=:I_1 \times I_2.
\end{align*}
By Lemma \ref{l:EP}, since $k_t=2^{-2/\alpha-2}c_2c_6(\eps t)^{1/\alpha}$ and  $ c_7 c_8 (c_2c_6)^{-\alpha} = 2^{-3-2\alpha}$, we get
	\begin{align}\label{e:ndl-improve-I2}
	I_2 \ge 1- c_7 c_8\eps t  k_t^{-\alpha}  = 1- 2^{2+2\alpha} c_7 c_8 (c_2c_6)^{-\alpha} = 2^{-1}.
	\end{align}
For $I_1$, note that  for any  $v \in B(z_0, 2k_t)$, $v' \in B(v,k_t)$ and $w \in B(y_0, k_t)$, we have 
$\delta_D(v') \ge \delta_D(z_0) - 3k_t \ge k_t$, $\delta_D(w) \ge \delta_D(y_0)  - k_t \ge 3k_t$ and 
$|v'-w| \le |z_0-y_0| + 4k_t \le |z-y| + 4k_t + l_t <6 t^{1/\alpha}$. Thus by \hyperlink{B2-b}{{\bf (B2-b)}}, 
\begin{align*}
	\sB(v',w)|v'-w|^{-d-\alpha} \ge  c |v'-w|^{-d-\alpha} \ge c t^{-1-d/\alpha}.
\end{align*}
Using this and the L\'evy system formula
\eqref{e:Levysystem-Y-bar}, since  $\overline Y^{B}_s=\overline Y^{B(v,k_t)}_s$ for  $s<\bar \tau_{B(v,k_t)}$,  we obtain
	\begin{align*}
		I_1&=  \E_v\bigg[ \int_0^{c_8\eps t \wedge \bar \tau_{B(v,k_t)}} \int_{B(y_0, k_t)}  
			\frac{\sB(\overline Y^{B(v,k_t)}_s, w)}{|\overline Y^{B(v,k_t)}_s - w |^{d+\alpha}} dw ds \bigg]\\
		&\ge c_{10} t^{-1-d/\alpha}  k_t^d\, \E_v[(c_8\eps t) \wedge \bar \tau_{B(v,k_t)}] = c_{11} t^{-1} \, 
			\E_v[(c_8\eps t) \wedge \bar \tau_{B(v,k_t)}].
	\end{align*}
By  Lemma \ref{l:EP} and \eqref{e:ndl-improve-I2}, it holds that
	\begin{align*}
	\E_v[(c_8\eps t) \wedge \bar \tau_{B(v,k_t)}] \ge  c_8\eps t \, \P_v ( \bar \tau_{B(v,k_t)} \ge c_8\eps t)  
	\ge c_8\eps t (1-c_7c_8\eps t k_t^{-\alpha}) = c_8\eps t/2.
	\end{align*}
Therefore, $I_1 \ge c_{12}$. Combining this with   \eqref{e:ndl-improve-I2}, \eqref{e:ndl-slide-1} and \eqref{e:ndl-slide-2}, and using the semigroup property, we arrive at
	\begin{align*}
	&	\int_{B_{\overline D}(y, l_t)}\overline  p^{	B}(\eps t,z,w_1) dw_1 
	\ge 	\int_{B(y_0,2k_t)} \int_{B(y_0,2k_t)}\int_{B(z_0,2k_t)} \overline  p^{	B}(2^{-1}(1-c_8)\eps t,z,v) \\
	&\qquad \qquad \qquad \qquad \qquad \quad	\times \, \overline  p^{	B}(c_8\eps t,v,w) \, 
	\overline p^{B}(2^{-1}(1-c_8)\eps t,w,w_1) dv\, dw\, dw_1\\
	&\ge (c_9 t^{-d/\alpha})^2 m_d(B(y_0,2k_t)) \int_{B(y_0,2k_t)}\int_{B(z_0,2k_t)} \overline  p^{B}(c_8\eps t,v,w) dv\, dw\\
	&\ge 2^{-1}c_{12}(c_9 t^{-d/\alpha})^2  m_d(B(y_0,2k_t)) \, m_d(B(z_0,2k_t)) = c_{13},
\end{align*}
which proves \eqref{e:ndl-claim}. The proof is complete. 
\end{proof}

By using \eqref{e:VD} and Proposition \ref{p:ndl}, one can follow the proof of  \cite[Proposition 3.5(ii)]{CKW20}  and obtain the following proposition.
\begin{prop}\label{p:E}
For any $R_0>0$,  there exists $C=C(R_0)>1$ such that
	\begin{align}\label{e:E}
		C^{-1}r^\alpha \le 	\E_{x_0}[\bar\tau_{	B_{\overline D}(x_0,r)}] \le C r^\alpha 
		\quad \text{for all} \;\, x_0 \in \overline D\setminus \sN, \; 0<r\le R_0.
	\end{align}
\end{prop}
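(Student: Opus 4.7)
The plan is to establish matching lower and upper bounds for $\E_{x_0}[\bar\tau_B]$ where $B := B_{\overline D}(x_0, r)$. The lower bound follows from Proposition~\ref{p:ndl} together with \eqref{e:VD} by direct integration; the upper bound exploits the submultiplicativity of a survival function together with an enlarged-ball application of Proposition~\ref{p:ndl}.

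For the lower bound, I fix $b \in (0,1)$ (say $b = 1/2$) and set $t_0 := (br)^\alpha$. Proposition~\ref{p:ndl} supplies a constant $c_1 > 0$ with $\overline p^B(t_0, x_0, y) \ge c_1 t_0^{-d/\alpha}$ for all $y \in B_{\overline D}(x_0, b t_0^{1/\alpha}) \setminus \sN$. Integrating in $y$ and invoking \eqref{e:VD} gives
\[\P_{x_0}(\bar\tau_B > t_0) \ge c_1 t_0^{-d/\alpha} \cdot c_2 (b t_0^{1/\alpha})^d = c_3 > 0,\]
and the identical bound holds for any $t \in (0, t_0]$ in place of $t_0$. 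Integrating in $t$ then yields $\E_{x_0}[\bar\tau_B] \ge c_3 t_0 = c_4 r^\alpha$, valid for $r$ with $b^2 r \le \wh R$; the remaining range $r \in (\wh R / b^2, R_0]$ is handled by monotonicity in $r$ (with constants depending on $R_0$).

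For the upper bound, I introduce the survival function $h(t) := \sup_{y \in B \setminus \sN} \P_y(\bar\tau_B > t)$. The strong Markov property at time $s$ gives $h(s+t) \le h(s) h(t)$, so it suffices to produce $T_0 = C_1(R_0) r^\alpha$ with $h(T_0) \le 1/2$: then $h(n T_0) \le 2^{-n}$ and
\[\E_{x_0}[\bar\tau_B] \le \int_0^\infty h(t)\,dt \le \sum_{n=0}^\infty T_0 \, h(n T_0) \le 2 T_0 \le C r^\alpha.\]
To produce $T_0$, for each $y \in B$ I apply Proposition~\ref{p:ndl} to the ball $B'' := B_{\overline D}(y, 3r)$ (invoking that proposition with $3 R_0$ in place of $R_0$, so the constants depend only on $R_0$). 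Fixing $b \in (1/\sqrt{3}, 1)$ and taking $T_0 := (3br)^\alpha$, the proposition yields $\overline p^{B''}(T_0, y, z) \ge c\, T_0^{-d/\alpha}$ for all $z \in B_{\overline D}(y, 3 b^2 r) \setminus \sN$. Since $3b^2 > 1$, the ball $B_{\overline D}(y, 3b^2 r)$ strictly contains $B$, and \eqref{e:VD} ensures the annulus $B_{\overline D}(y, 3b^2 r) \setminus B$ has volume $\gtrsim r^d$. Integrating the heat-kernel lower bound over this annulus gives $\P_y(\overline Y^{B''}_{T_0} \notin B) \ge c_5 > 0$, and since this event forces $\bar\tau_B \le T_0$ (either the process exited $B''$ before $T_0$, in which case $\bar\tau_B \le \bar\tau_{B''} \le T_0$, or it lies in $B'' \setminus B$ at time $T_0$ having started in $B$), we obtain $h(T_0) \le 1 - c_5$. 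Finitely many submultiplicative iterations then reduce $h$ below $1/2$.

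The main obstacle is arranging that the NDL-accessible region around $y$ protrudes outside $B$ \emph{uniformly} in $y \in B$, in particular for $y$ near the center $x_0$. The resolution is to choose the auxiliary ball $B''$ much larger than $B$ (radius $3r$ in place of $r$) and to take the NDL parameter $b$ close to $1$ (specifically $b > 1/\sqrt{3}$); together these arrange $b T_0^{1/\alpha} > r$, which makes the annulus $B_{\overline D}(y, b T_0^{1/\alpha}) \setminus B$ have volume $\gtrsim r^d$ by \eqref{e:VD}. This geometric input is precisely what drives the argument of \cite[Proposition~3.5(ii)]{CKW20}.
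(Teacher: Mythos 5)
Your approach---direct integration of the near-diagonal lower bound for the lower estimate, and submultiplicativity of the survival function together with the near-diagonal lower bound for the upper estimate---is precisely the route the paper takes, namely following \cite[Proposition~3.5(ii)]{CKW20} using Proposition~\ref{p:ndl} and \eqref{e:VD}. The lower-bound half of your argument is correct.

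The upper-bound half has a genuine gap at the annulus-volume step. You assert that, with $b>1/\sqrt3$, the set $B_{\overline D}(y,3b^2r)\setminus B$ has $m_d$-measure comparable to $r^d$ ``by \eqref{e:VD}.'' This does not follow from what is stated. First, the inclusion $B\subset B_{\overline D}(y,3b^2r)$ fails for $y$ near $\partial B$ unless $3b^2\ge 2$ (one needs $|y-x_0|+r\le 3b^2r$ with $|y-x_0|$ close to $r$), so $b>1/\sqrt3$ is not enough even for the containment. More seriously, even granting containment, \eqref{e:VD} only gives $m_d(B_{\overline D}(y,3b^2r))\ge C_{\mathrm{VD}}(3b^2r)^d$ while $m_d(B)\le m_d(B(x_0,r))$, so the best one can extract is $m_d(B_{\overline D}(y,3b^2r)\setminus B)\ge \bigl(C_{\mathrm{VD}}(3b^2)^d-m_d(B(0,1))\bigr)r^d$. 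Since $3b^2<3$ is a fixed factor and the measure-density constant $C_{\mathrm{VD}}$ in \eqref{e:VD} depends on the Lipschitz constant $\Lambda_0$ and degenerates (roughly like $\Lambda_0^{-(d-1)}$, from the cone condition) as $\Lambda_0$ grows, the prefactor $C_{\mathrm{VD}}(3b^2)^d-m_d(B(0,1))$ need not be positive for general Lipschitz $D$. The correct fix is to take the auxiliary ball of radius $Kr$ for a constant $K=K(d,\Lambda_0)$ chosen large enough that $C_{\mathrm{VD}}(Kb^2)^d>2\,m_d(B(0,1))$, and then set $T_0:=(Kbr)^\alpha$; the rest of your argument then goes through (modulo the scale restriction $Kb^2r\le\wh R$, with the usual $R_0$-dependent extension of \eqref{e:VD} for larger $r$, and modulo the tacit assumption that $B_{\overline D}(x_0,r)\ne\overline D$---without which $\bar\tau_B=\infty$ and no upper bound is possible---a restriction that the proposition's statement itself glosses over).
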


Let $
Z := (V_s, \overline Y_s)_{s\ge 0}$ be the  time-space process  where $V_s = V_0 -s$. 
The law of the time-space process $s\mapsto 
Z_s$ starting from $(t, x)$ will be denoted by 
$\P_{(t,x)}$. 
For an open subset $U$ of $[0,\infty) \times \R^d$, define $
\tau^Z_{U} = \inf\{s > 0:\, 
Z_s\notin U\}$.

For $x_0 \in \overline D$ and $0\le a<b<\infty$, a Borel  function $u:[0,\infty)\times \overline D\to \R$ is said to be \textit{caloric} in $(a,b] \times B_{\overline D}(x_0,r)$ 
with respect to $\overline Y$ if for every relatively compact open set $U\subset 
(a,b] \times B_{\overline D}(x_0,r)$ with respect to the  topology on $[0,\infty) \times \overline D$,
it holds that 
$u(t,z)=\E_{(t,z)}u(
Z_{
\tau^Z_U})$ for all 
$(t,z)\in U$ with $z\notin \sN$.

By  \eqref{e:stochastic-complete}, \eqref{e:VD}  and Propositions \ref{p:ndl} and \ref{p:E}, we deduce the following joint H\"older regularity of bounded caloric functions from \cite[Proposition 3.8]{CKW20}.
\begin{thm}\label{t:phr}
 Let $R_0>0$ and  $b \in (0,1)$. 
	There exist constants $C=C(R_0,b)>0$ and  $\lambda=\lambda(R_0) \in (0,1]$ such that
	for all  $x \in \overline D$,  
	$0<r\le R_0$,  $t_0\ge 0$, and any bounded caloric function $u$ in $(t_0, 
	t_0+ r^\alpha]\times B_{\overline D}(x, r)$ with respect to $\overline Y$, there is a properly exceptional set $\sN_u \supset \sN$ such that 
	\begin{equation}\label{e:phr}
		|u(s,y) - u(t,z)| \le C \bigg(\frac{|s-t|^{1/\alpha} + |y-z|}{r}\bigg)^{\lambda } \esssup_{[t_0, t_0 + r^\alpha] \times D}\,|u|,
	\end{equation}
	for all $s,t \in (t_0 + (1-b^\alpha)r^\alpha, 
	t_0+ r^\alpha]$ and $y,z \in B_{\overline D}(x, b r)  \setminus \sN_u$.
\end{thm}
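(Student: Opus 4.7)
The plan is to deduce this theorem by verifying the hypotheses of the abstract parabolic Hölder regularity result \cite[Proposition 3.8]{CKW20}. The four ingredients I would collect are: (i) the jump-tail bound \eqref{e:stochastic-complete}, which controls the contribution of long jumps; (ii) the volume doubling \eqref{e:VD} of the trace measure on $\overline D$ up to scale $\wh R$; (iii) the two-sided mean exit-time estimate $\E_{x_0}[\bar\tau_{B_{\overline D}(x_0,r)}]\asymp r^\alpha$ from Proposition \ref{p:E}; and (iv) the near-diagonal lower heat-kernel bound $\bar p^{B_{\overline D}(x_0,r)}(t,z,y)\ge C t^{-d/\alpha}$ on $B_{\overline D}(x_0,bt^{1/\alpha})$ from Proposition \ref{p:ndl}. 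These are exactly the ingredients driving the oscillation-reduction mechanism in that abstract framework.

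The core step is to extract from (iii)–(iv) a hitting-probability estimate: there exist $\kappa>0$ and a function $\theta\mapsto c(\theta)>0$ such that for any $x_0\in\overline D$, $0<r\le R_0$, and any Borel set $A\subset B_{\overline D}(x_0,r/2)$ with $m_d(A)\ge \theta\, m_d(B_{\overline D}(x_0,r/2))$,
\[
	\P_y\bigl(\,\bar Y\text{ hits }A\text{ before time }\kappa r^\alpha\bigr)\ge c(\theta) \quad\text{for all } y\in B_{\overline D}(x_0,r/4)\setminus\sN.
\]
This follows by integrating the lower bound of Proposition \ref{p:ndl} against $\mathbf 1_A$ together with \eqref{e:VD}. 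Given this hitting bound, one obtains the parabolic oscillation reduction: for $u$ bounded and caloric on the space-time cylinder $Q_r=(t_0,t_0+r^\alpha]\times B_{\overline D}(x_0,r)$, one shows
\[
	\mathrm{osc}_{Q_{\eta r}} u \;\le\; \rho\,\mathrm{osc}_{Q_r} u \;+\; \varepsilon(r)\,\operatorname*{ess\,sup}_{[t_0,t_0+r^\alpha]\times D}|u|,
\]
for some $\eta\in(0,1)$ and $\rho\in(0,1)$ independent of $r$, where the error $\varepsilon(r)$ accounts for jumps escaping the reference ball. The latter is controlled by combining \eqref{e:stochastic-complete} (which gives $\sup_x\int_{|y-x|\ge r}\sB(x,y)|x-y|^{-d-\alpha}dy\le c r^{-\alpha}$) with the upper exit-time bound of Proposition \ref{p:E}, yielding an error that is geometrically summable across scales.

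Iterating this reduction along a geometric sequence of scales $r_k=b^k r$ gives $\mathrm{osc}_{Q_{r_k}}u\le C b^{k\lambda}\,\|u\|_\infty$ for some $\lambda=\lambda(R_0)\in(0,1]$, which is equivalent to \eqref{e:phr} upon rewriting the decay in terms of the parabolic distance $|s-t|^{1/\alpha}+|y-z|$. The quasi-everywhere issues are handled by first proving \eqref{e:phr} for a quasi-continuous refinement of $u$ and then enlarging $\sN$ to a properly exceptional set $\sN_u\supset\sN$ on which one has no pointwise control; this enlargement is harmless because the transition density $\bar p(t,x,y)$ itself is only defined off $\sN$.

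The main obstacle I anticipate is not the iteration scheme, which is routine once the hitting estimate is in hand, but the careful bookkeeping of the hitting estimate near $\partial D$: one needs volume doubling, exit-time comparability, and the near-diagonal heat-kernel lower bound to all be valid \emph{uniformly up to the boundary} and with respect to the trace Lebesgue measure on $\overline D$ (not on $\R^d$). The Lipschitz structure of $D$ is what makes this uniformity hold at all scales $r\le \wh R$, through \eqref{e:VD} and Propositions \ref{p:ndl}–\ref{p:E}. Once these are verified, invoking \cite[Proposition 3.8]{CKW20} closes the argument.
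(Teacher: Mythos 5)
Your proposal is correct and takes essentially the same route as the paper: the paper's proof of this theorem is a one-sentence deduction from \cite[Proposition 3.8]{CKW20}, citing exactly the four ingredients you identify (\eqref{e:stochastic-complete}, \eqref{e:VD}, Proposition \ref{p:ndl}, Proposition \ref{p:E}). Your expanded discussion of the hitting estimate, oscillation reduction, and geometric iteration correctly unpacks what \cite[Proposition 3.8]{CKW20} does internally, but the invocation pattern matches the paper.
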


\begin{corollary}\label{c:phr-semigroup}
	Let $f \in L^1(D)$ and define $u(t,x)=\overline P_tf(x)$ for $t>0$ and $x \in \overline D  \setminus \sN$. Then  $u$ has a jointly  continuous version $\wt u$ in $(0,\infty) \times \overline D$ satisfying the following estimates: For any $T>0$, there exist constants  $C=C(T)>0$ and $\lambda=\lambda(T)\in (0,1]$ such that for all $0<t\le T$,
	\begin{equation}\label{e:semigroup-Linfty}
	\sup_{x \in \overline D}| \wt u(t,x) |  \le C t^{-d/\alpha}\lVert f \rVert_{L^1(D)}
	\end{equation}
	 and for any $y,z \in \overline D$ with $|y-z|\le (t/2)^{1/\alpha}/2$, 
		\begin{equation}\label{e:phr-semigroup}
		|\wt u(t,y) - \wt u(t,z)| \le \frac{C}{t^{d/\alpha}} \bigg(\frac{ |y-z|}{t^{1/\alpha}}\bigg)^{\lambda }\lVert f \rVert_{L^1(D)}.
	\end{equation}
\end{corollary}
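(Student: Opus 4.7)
The plan is to derive both estimates in Corollary \ref{c:phr-semigroup} directly from the on-diagonal heat kernel bound of Proposition \ref{p:upper-heatkernel} together with the parabolic H\"older regularity of Theorem \ref{t:phr}. The $L^\infty$-bound \eqref{e:semigroup-Linfty} is the starting point: writing $u(t,x) = \int_D \overline p(t,x,y) f(y)\,dy$ for $x \in \overline D \setminus \sN$ and inserting $\overline p(t,x,y) \le Ct^{-d/\alpha}$ from Proposition \ref{p:upper-heatkernel} yields $|u(t,x)| \le Ct^{-d/\alpha} \lVert f \rVert_{L^1(D)}$ on $(0,T] \times (\overline D \setminus \sN)$, which will transfer to the continuous version once it is constructed.

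With this $L^\infty$-bound in hand, I would apply Theorem \ref{t:phr} to $u$ viewed as a bounded caloric function for $\overline Y$ on the strip $[t/2, t] \times \overline D$ (caloricity is immediate from the semigroup property and the strong Markov property of $\overline Y$). Fixing $t \in (0,T]$, I take $r := (t/2)^{1/\alpha}$, $b := 1/2$, and $R_0 := (T/2)^{1/\alpha}$; applying the theorem on $(t/2, t] \times B_{\overline D}(x, r)$ for arbitrary $x \in \overline D$ and using the bound $\esssup_{[t/2, t] \times D} |u| \le C(t/2)^{-d/\alpha} \lVert f \rVert_{L^1}$ from the previous step, I obtain a properly exceptional set $\sN_u \supset \sN$ (possibly depending on $f$) such that, for all $y, z \in B_{\overline D}(x, r/2) \setminus \sN_u$,
\[
|u(t,y) - u(t,z)| \le C \bigg(\frac{|y-z|}{r}\bigg)^{\lambda} \esssup_{[t/2, t] \times D} |u| \le C'\, t^{-d/\alpha} \bigg(\frac{|y-z|}{t^{1/\alpha}}\bigg)^{\lambda} \lVert f \rVert_{L^1(D)}.
\]
Since $x$ is arbitrary and the condition $|y-z| \le (t/2)^{1/\alpha}/2 = r/2$ in the statement matches the ball $B_{\overline D}(x, r/2)$ after taking $x = y$, this is the desired H\"older bound everywhere on $\overline D \setminus \sN_u$. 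The full joint form $|u(s,y)-u(t,z)| \le C\big((|s-t|^{1/\alpha} + |y-z|)/r\big)^{\lambda} \esssup|u|$ from \eqref{e:phr} simultaneously furnishes local joint H\"older continuity off $\sN_u$.

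To produce the continuous version $\wt u$, I would set $\wt u(t,x) := u(t,x)$ for $x \in \overline D \setminus \sN_u$ and, for $x \in \sN_u$, define $\wt u(t,x) := \lim_n u(t, y_n)$ for any sequence $y_n \in \overline D \setminus \sN_u$ converging to $x$. The uniform H\"older estimate above guarantees that the limit exists and is independent of the chosen sequence, and the measure density condition \eqref{e:VD} ensures that $\overline D \setminus \sN_u$ is dense in $\overline D$ (since $\sN_u$ is $m_d$-null). Both \eqref{e:semigroup-Linfty} and \eqref{e:phr-semigroup} then pass to $\wt u$ by continuity, and local joint H\"older continuity off $\sN_u$ upgrades to joint continuity of $\wt u$ on $(0,\infty) \times \overline D$ by covering with cylinders of the form $(t/2, t] \times B_{\overline D}(x,r)$. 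The main technical point — not a deep obstacle, but the one requiring care — is the bookkeeping around $\sN_u$: verifying that the extension is well-defined at every point of $\sN_u$, and that the exceptional set one gets from Theorem \ref{t:phr} (a priori dependent on the chosen base cylinder) can be taken uniformly in $(t,x)$ by working with a countable exhaustion, so that a single $\sN_u$ suffices for the construction of $\wt u$.
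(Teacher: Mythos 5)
Your proof follows the same approach as the paper: the $L^\infty$-bound \eqref{e:semigroup-Linfty} comes from the on-diagonal estimate of Proposition \ref{p:upper-heatkernel}, caloricity of $u$ follows from the Markov property, and the H\"older bound \eqref{e:phr-semigroup} is obtained by applying Theorem \ref{t:phr} with exactly the parameters the paper uses ($t_0=t/2$, $r=(t/2)^{1/\alpha}$, $b=1/2$, $R_0=(T/2)^{1/\alpha}$). The paper merely asserts that local boundedness of $\lVert u(t,\cdot)\rVert_{L^\infty(D)}$ and Theorem \ref{t:phr} produce a jointly continuous version $\wt u$; you spell out the exceptional-set bookkeeping and the extension by continuity, which is the correct way to fill in that unwritten step.
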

\begin{proof}
Note that $u$ is caloric in $(0,\infty) \times \overline D$ by the Markov property. For any $T>0$, by Proposition \ref{p:upper-heatkernel}, there exists $c_1=c_1(T)>0$ such that for all $t>0$,
\begin{align}\label{e:semigroup-Linfty-u}
	\lVert u(t,\cdot) \rVert_{L^\infty(D)}  \le c_1 t^{-d/\alpha}\lVert f \rVert_{L^1(D)}. 
\end{align}  
In particular, $\lVert u(t,\cdot) \rVert_{L^\infty(D)}$ is locally bounded in $(0,\infty)$ as a function of $t$.  Thus, by Theorem \ref{t:phr}, one can deduce that  $u$ has a jointly  continuous version $\wt u$ in $(0,\infty) \times \overline D$. 
By \eqref{e:semigroup-Linfty-u}, $\wt u$ satisfies \eqref{e:semigroup-Linfty}.  Moreover, for each fixed $T>0$ and any $0<t\le T$ and $y,z \in \overline D$ with $|y-z|\le (t/2)^{1/\alpha}/2$, by applying \eqref{e:phr} with $u=\wt u$, $t_0=t/2$, $R_0=(T/2)^{1/\alpha}$,  $r=(t/2)^{1/\alpha}$ and $b=1/2$, we see from  \eqref{e:semigroup-Linfty} that \eqref{e:phr-semigroup} holds.
\end{proof}

\begin{remark} \label{r:conti}
By Corollary  \ref{c:phr-semigroup}, the transition density $\overline p(t,x,y)$ can be extended continuously to  $(0,\infty) \times \overline D \times \overline D$  by a standard argument. 
 See the proof of 
 \cite[Lemma 5.13]{GHH}  (although conditions (J) and (AB) are assumed in \cite{GHH},
 the arguments in the proof there only use \cite[(5.28) and (5.29)]{GHH} which can be replaced by \eqref{e:phr-semigroup} and \eqref{e:semigroup-Linfty}, respectively). 
Consequently, $\overline Y$ can be refined to be 
a strongly Feller process starting from every point in $\overline D$  and  the exceptional set $\sN$ in Propositions \ref{p:upper-heatkernel}, \ref{p:ndl} and \ref{p:E}, and  Lemma \ref{l:EP} can be taken to be  the empty set.
\end{remark}

For a closed subset $E\subset \overline D$, let 
	$$
	\bar \sigma_E:=\inf\{t>0:\overline Y_t \in E\}.
	$$

\begin{lemma}\label{l:ckw-3-7} 
 Let $R_0>0$ and $b \in (0,1)$.	There exists  $C=C(R_0, b)>0$  such that for all $x_0 \in \overline D$, $0<r \le  R_0$ and any compact set $K\subset B_{\overline D}(x_0,b r)$, 
	\begin{equation*}
		\P_{x_0}(
		\bar\sigma_K < \bar\tau_{	B_{\overline D}(x_0,r)}) 
		\ge  Cr^{-d} m_{d}(K).
	\end{equation*}
\end{lemma}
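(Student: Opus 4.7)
\medskip

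\noindent\emph{Proof plan.} Write $B:=B_{\overline D}(x_0,r)$, and let $\bar G^B$ denote the Green function associated with the subprocess $\overline{Y}^B$, i.e. $\bar G^B(x,y)=\int_0^\infty \bar p^B(s,x,y)\,ds$. By Remark \ref{r:conti} we may take $\sN=\emptyset$, so every starting point is admissible. The strategy is the by-now-standard ``Green function sandwich'': bound $\int_K \bar G^B(x_0,y)\,dy$ from above using the strong Markov property at $\bar\sigma_K$, and from below using the near-diagonal lower bound of Proposition \ref{p:ndl}.

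For the upper bound, the strong Markov property at $\bar\sigma_K$ gives
\formula{
 \int_K \bar G^B(x_0,y)\,dy &= \E_{x_0}\!\left[\int_0^{\bar\tau_B}\mathbf{1}_K(\overline Y_s)\,ds\right] \\
 &= \E_{x_0}\!\left[\mathbf{1}_{\{\bar\sigma_K<\bar\tau_B\}}\,\E_{\overline Y_{\bar\sigma_K}}\!\left[\int_0^{\bar\tau_B}\mathbf{1}_K(\overline Y_s)\,ds\right]\right] \\
 &\le \P_{x_0}(\bar\sigma_K<\bar\tau_B)\,\sup_{z\in K}\E_z[\bar\tau_B].
}
For any $z\in K\subset B_{\overline D}(x_0,br)$ we have $B\subset B_{\overline D}(z,(1+b)r)$, so by Proposition \ref{p:E} (applied with localization radius $2R_0$) there is a constant $C_1=C_1(R_0,b)$ with $\E_z[\bar\tau_B]\le C_1 r^\alpha$.

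For the lower bound, I plan to fix an auxiliary parameter $b''\in(\sqrt b,1)$ and apply Proposition \ref{p:ndl} with that parameter: for $0<t\le(b''r)^\alpha$ and $y\in B_{\overline D}(x_0,b''t^{1/\alpha})$,
\formula{
 \bar p^B(t,x_0,y)\ge c_0 t^{-d/\alpha}.
}
Because $(b'')^2>b$, every $y\in K\subset B_{\overline D}(x_0,br)$ satisfies $|x_0-y|/b''\le (b''r)$, so the time-interval $I_y:=[(|x_0-y|/b'')^\alpha,\,(b''r)^\alpha]$ is nonempty and lies in the range where the NDL applies. Since $d\ge2>\alpha$, integrating $t^{-d/\alpha}$ over $I_y$ yields
\formula{
 \bar G^B(x_0,y)\ge c_0\int_{I_y}t^{-d/\alpha}\,dt \ge c_1\bigl(|x_0-y|^{\alpha-d}-(b''r)^{\alpha-d}\bigr),
}
and the ratio $(|x_0-y|/((b'')^2 r))^{d-\alpha}\le (b/(b'')^2)^{d-\alpha}<1$ is bounded away from $1$, so $\bar G^B(x_0,y)\ge c_2|x_0-y|^{\alpha-d}$ with $c_2=c_2(R_0,b)$. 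Since $|x_0-y|\le br\le r$ and $\alpha-d<0$,
\formula{
 \int_K \bar G^B(x_0,y)\,dy\ge c_2\int_K |x_0-y|^{\alpha-d}\,dy\ge c_2 r^{\alpha-d} m_d(K).
}
Combining the two bounds gives $\P_{x_0}(\bar\sigma_K<\bar\tau_B)\ge (c_2/C_1)\,r^{-d} m_d(K)$, as desired.

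The only delicate step is the lower bound on $\bar G^B(x_0,y)$, which would fail to give the right rate if one tried to apply Proposition \ref{p:ndl} at a single time scale: at time $t=r^\alpha$ the NDL only reaches points within a small ball around $x_0$, not the full distance $br$. The trick of integrating over the full range of admissible $t$ and exploiting $d>\alpha$ (a genuine dimension condition that holds throughout this work, since $d\ge 2>\alpha$) is what converts NDL into the Riesz-type lower bound $|x_0-y|^{\alpha-d}$ needed for the argument.
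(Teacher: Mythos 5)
Your proof is correct, but it uses a more elaborate route than the paper and is built on a misconception about the near-diagonal lower bound. You assert that applying Proposition~\ref{p:ndl} at a single time ``only reaches points within a small ball around $x_0$, not the full distance $br$'' --- but this is not so, because the parameter $b$ in Proposition~\ref{p:ndl} is free and can be tuned. The paper's proof applies Proposition~\ref{p:ndl} with $b$ replaced by $b^{1/2}$ at the single time $t=(b^{1/2}r)^\alpha$: then the admissible ball is $B_{\overline D}(x_0,\,b^{1/2}\,t^{1/\alpha})=B_{\overline D}(x_0,\,br)\supset K$, so
\begin{align*}
\P_{x_0}(\bar\sigma_K<\bar\tau_{B_{\overline D}(x_0,r)})
\;\ge\; \P_{x_0}\big(\overline Y^{B_{\overline D}(x_0,r)}_{(b^{1/2}r)^\alpha}\in K\big)
\;=\;\int_K \overline p^{B_{\overline D}(x_0,r)}\big((b^{1/2}r)^\alpha,x_0,y\big)\,dy
\;\ge\; c\,(b^{1/2}r)^{-d}\,m_d(K),
\end{align*}
which is the claimed bound in one step. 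Your alternative --- the Green-function sandwich, with the strong Markov property at $\bar\sigma_K$ giving $\int_K\bar G^B(x_0,y)\,dy\le \P_{x_0}(\bar\sigma_K<\bar\tau_B)\sup_{z\in K}\E_z[\bar\tau_B]$, combined with a Riesz-type lower bound $\bar G^B(x_0,y)\ge c|x_0-y|^{\alpha-d}$ obtained by integrating the NDL in $t$ and using $d>\alpha$ --- is sound (it correctly needs Proposition~\ref{p:E} for the exit-time upper bound, and your bookkeeping with $b''\in(\sqrt b,1)$ to keep $I_y$ nonempty and the tail term subdominant checks out), but it invokes more machinery. It would, however, have the advantage of surviving in settings where an exact single-scale NDL is awkward to formulate, e.g.\ for non-polynomial time scalings where the relation between the ball radius and the time variable is implicit.
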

\begin{proof}  Using Proposition \ref{p:ndl} (with $b$ replaced by $b^{1/2}$), we get that for any compact set $K\subset 	B_{\overline D}(x_0, b r)$, 
\begin{align*}
	&\P_{x_0}(
	\bar\sigma_K < \bar\tau_{	B_{\overline D}(x_0,r)}) \ge	\P_{x_0}\big (\overline Y^{	B_{\overline D}(x_0,r)}_{(b^{1/2}r)^\alpha} \in K \big)\\
	& = \int_{K} \overline p^{	B_{\overline D}(x_0,r)} ( (b^{1/2}r)^\alpha, x_0, y) dy \ge c (b^{1/2}r)^{-d} m_d(K).
\end{align*}\end{proof}

For the next result, we need an additional assumption that implies the well-known condition \textbf{(UJS)}.

\medskip
\noindent \hypertarget{UBS}{{\bf (UBS)}} There exists $C>0$ such that  for  a.e. $x,y\in  D$,
	\begin{align}\label{e:UBS}
	\sB(x,y)\le \frac{C}{r^d}\int_{	B_{\overline D}(x,r)} \sB(z,y)dz 
	\quad  \text{whenever} \;\, 0< r \le \frac12 ( |x-y| \wedge \wh R).
	\end{align}
\medskip

Condition \hyperlink{UBS}{{\bf (UBS)}} implies 
the following local \textbf{(UJS)} condition:
for a.e. $x,y\in  D$  and $0< r \le 2^{-1}(|x-y| \wedge \wh R)$, 
\begin{align}\label{e:UJS}
	\frac{\sB(x,y)}{|x-y|^{d+\alpha}} \le \frac{
	C 2^{d+\alpha}}{r^d} \int_{	B_{\overline D}(x,r)} \frac{\sB(z,y)}{|z-y|^{d+\alpha}}dy,
\end{align}
since $|z-y| \le |x-y|+r <2|x-y|$ for all $ z \in 	B_{\overline D}(x,r)$. 
\begin{thm}\label{t:phi}
Suppose that $\sB$ 	satisfies \hyperlink{UBS}{{\bf (UBS)}}.  Let $R_0>0$. There exist constants $\eps>0$ and $C,K\ge1$  
depending 
on $R_0$ such that for all  $x\in \overline D$, $0<r\le R_0$, $t_0\ge 0$, and any non-negative function $u$ on $(0,\infty)\times \overline D$ which is caloric on 
	$(t_0, t_0+4\eps  r^\alpha] \times 	B_{\overline D}(x, r)$ 
	with respect to $\overline Y$, we have	
	\begin{equation*}
		\sup_{(t_1,y_1)\in Q_{-}}u(t_1,y_1)\le C \inf_{(t_2, y_2)\in Q_{+}}u(t_2, y_2),	
	\end{equation*}	
	where $Q_-=[t_0+\eps r^\alpha, t_0+2\eps r^\alpha]\times 	B_{\overline D}(x, r/K)$ and $Q_+=[t_0+3\eps  r^\alpha, t_0+4\eps  r^\alpha]\times 	B_{\overline D}(x, r/K)$.
\end{thm}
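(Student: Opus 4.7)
The plan is to invoke the general parabolic Harnack inequality (PHI) machinery of Chen--Kumagai--Wang, specifically the localized version developed in \cite{CKW20, CKW21}, on the metric measure space $(\overline D, |\cdot|, m_d)$. By Remark \ref{r:conti} we may assume $\sN = \emptyset$, so that all the heat kernel and exit time estimates already obtained hold pointwise for every starting point.

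The strategy is to verify, up to the localization scale $R_0$, the hypotheses of the CKW framework and then quote their theorem. Concretely, I would check the following seven ingredients: (i) volume doubling on $\overline D$ up to scale $R_0$, which follows from the measure density condition \eqref{e:VD} for Lipschitz open sets together with the trivial doubling of Euclidean balls; (ii) the local cutoff Sobolev inequality CSJ$(\phi)$ with $\phi(r)=r^\alpha$, noted already in the proof of Proposition \ref{p:ndl} and coming from \hyperlink{B2-a}{{\bf (B2-a)}} via \eqref{e:stochastic-complete}; (iii) the local Poincar\'e inequality PI$(\phi)$, which is exactly Proposition \ref{p:PI}; (iv) the upper heat kernel bound \eqref{e:upper-heatkernel}; (v) the matching near-diagonal lower bound in Proposition \ref{p:ndl}; (vi) the two-sided mean exit time estimate \eqref{e:E} and the hitting probability estimate Lemma \ref{l:ckw-3-7}; and (vii) the local \textbf{(UJS)} condition \eqref{e:UJS}, which was derived from \hyperlink{UBS}{{\bf (UBS)}} right before the statement of the theorem. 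Together with the L\'evy system formula \eqref{e:Levysystem-Y-bar}, these are precisely the inputs needed for the scale-invariant PHI on the scales $r \le R_0$.

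With these ingredients in place, the PHI follows from the proof of \cite[Theorem 1.20]{CKW20} (or the localized version in \cite[Theorem 1.15]{CKW21}). The argument splits as usual into two parts: a weak lower bound obtained by combining the near-diagonal lower heat kernel estimate (Proposition \ref{p:ndl}) with the hitting probability estimate (Lemma \ref{l:ckw-3-7}) to propagate positivity along space-time chains; and a mean value (upper) estimate obtained via a Moser-type iteration using CSJ, PI and the upper heat kernel bound, in which the contribution of long jumps out of the working ball is controlled by \textbf{(UJS)}. The constants $\eps$, $C$, and $K$ come out depending on $R_0$ because the constants in (i)--(vii) do, which is consistent with the statement.

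The main obstacle is bookkeeping rather than new analysis: \cite{CKW20, CKW21} work on general metric measure spaces with a fixed ``metric doubling'' scale, while our state space is $(\overline D, |\cdot|, m_d)$ whose geometry is only uniformly Lipschitz up to scale $\wh R$. One therefore needs to track that each hypothesis is used only at scales $r \le R_0 \wedge \wh R$, that the doubling, chaining and reverse doubling constants may be taken uniform on such scales by the Lipschitz structure, and that the localized \textbf{(UJS)} \eqref{e:UJS} is exactly the form required for the tail estimate in the Moser iteration. Once this transcription is carried out, no further analytic input is needed, and the conclusion of the theorem follows in the stated form with $Q_-$ and $Q_+$.
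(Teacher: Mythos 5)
Your proposal is correct and follows essentially the same route as the paper: verify the local hypotheses of the Chen--Kumagai--Wang framework and then quote their parabolic Harnack stability theorem. The paper's version is a bit more economical --- it goes directly from the already-established near-diagonal lower bound (Proposition \ref{p:ndl}, which is a local NDL$(\phi)$), volume doubling \eqref{e:VD}, and the local \textbf{(UJS)} condition \eqref{e:UJS} to PHI via the equivalence between statements (1) and (4) in \cite[Theorem 1.18]{CKW20}, whereas you re-list the lower-level ingredients (CSJ, PI, upper heat kernel bound) that were used to prove NDL rather than invoking NDL itself.
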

\begin{proof} 
 Proposition \ref{p:ndl} says that   a local version of the condition NDL$(\phi)$  in \cite{CKW20} holds with $\phi(r)=r^\alpha$. Hence, by  \eqref{e:VD}, the local \textbf{(UJS)} condition \eqref{e:UJS}, \cite[Remark 1.19]{CKW20}, and the equivalence between statements (1) and (4) in \cite[Theorem 1.18]{CKW20}, we immediately get our result (see also the proof of \cite[Theorem 5.2]{CKK09}). 
\end{proof}

	\subsection{Analysis and properties of  $Y^\kappa$}\label{s-processes-Y-0}

	Let $\sF^0$ be the closure of  Lip$_c(D)$ in $L^2(D)$ under $\sE^0_1$. Then $(\sE^0,\sF^0)$ is a regular Dirichlet form. Let $Y^0=(Y^0_t,t \ge 0; \P_x,x \in \overline D\setminus \sN_0)$ be the Hunt process associated with $(\sE^0,\sF^0)$, where  $\sN_0$ is an  exceptional set for $Y^0$.
	
	\begin{lemma}\label{l:Hardy}
Suppose that $\alpha>1$.  There exist constants $C>0$ and $M_0>1$ such that for any $Q \in \partial D$,  $0<r<\wh R$ and any 
$u \in C_c(B_{D} (Q,r/M_0))$,
\begin{align*}
	\int_{	B_{D}(Q,r/M_0)} u(z)^2 \delta_D(z)^{-\alpha} dz
	\le	C \iint_{	B_{D}(Q,r) \times 	B_{D}(Q,r)} (u(z)-u(y))^2 \frac{\sB(z,y)}{|z-y|^{d+\alpha}}dzdy.
\end{align*}
	\end{lemma}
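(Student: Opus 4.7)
The plan is to reduce the inequality to a classical fractional Hardy inequality on the Lipschitz open set $V:=B_D(Q,r)$ and then absorb the ``shallow'' part of the integration region into the left-hand side, so that the remaining ``deep'' part picks up the factor $\sB$ via \hyperlink{B2-b}{{\bf (B2-b)}}.

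First, since $D$ is Lipschitz with characteristics $(\wh R,\Lambda_0)$ and $r<\wh R$, the set $V=B(Q,r)\cap D$ is itself a Lipschitz open set with Lipschitz constant controlled by $\Lambda_0$. For $\alpha>1$ the classical fractional Hardy inequality for Lipschitz open sets (Dyda; Bogdan--Dyda) produces a constant $c_1=c_1(d,\alpha,\Lambda_0)$ such that
\begin{equation*}
\int_V u(z)^2\,\delta_V(z)^{-\alpha}\,dz \,\le\, c_1\iint_{V\times V}\frac{(u(z)-u(y))^2}{|z-y|^{d+\alpha}}\,dz\,dy, \qquad u\in C_c(V).
\end{equation*}
Taking $M_0\ge 3$, for $u\in C_c(B_D(Q,r/M_0))$ and $z$ in the support of $u$ one has $\delta_D(z)\le r/M_0<r-r/M_0\le \dist(z,\partial B(Q,r))$, hence $\delta_V(z)=\delta_D(z)$ on $\mathrm{supp}(u)$, so that the left-hand side above agrees with the left-hand side of the lemma.

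Next, fix a parameter $a\in(0,1/2]$ to be chosen later and split $V\times V$ into the ``deep'' set $\Delta_a=\{(z,y):\delta_D(z)\wedge\delta_D(y)\ge a|z-y|\}$ and its complement. On $\Delta_a$, assumption \hyperlink{B2-b}{{\bf (B2-b)}} yields $\sB(z,y)\ge C_2(a)$, so the portion of the displayed right-hand side that lies over $\Delta_a$ is at most $C_2(a)^{-1}$ times the right-hand side of the lemma. On the complement I would use $(u(z)-u(y))^2\le 2u(z)^2+2u(y)^2$ together with the pointwise estimate
\begin{equation*}
\int_V \mathbf{1}_{\{\delta_D(z)\wedge\delta_D(y)<a|z-y|\}}\,|z-y|^{-d-\alpha}\,dy \,\le\, c_2\cdot a\cdot\delta_D(z)^{-\alpha},
\end{equation*}
which, after the symmetric treatment of the $u(y)^2$ term via Fubini, bounds the shallow contribution by $4c_2 a$ times the left-hand side of the lemma. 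Picking $a$ so small that $4c_1c_2 a\le 1/2$ and absorbing completes the proof with $C=2c_1/C_2(a)$.

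The main obstacle I expect is the pointwise estimate above. The subregion $\{y:\delta_D(z)<a|z-y|\}$ is easy since it forces $|z-y|>\delta_D(z)/a$, giving a direct contribution of order $a^\alpha\delta_D(z)^{-\alpha}$. The ``mixed'' subregion $\{y:\delta_D(y)<a|z-y|,\ \delta_D(z)\ge a|z-y|\}$ is more subtle: I would parameterize $y\in V$ through its nearest boundary point $Q_y\in\partial D$ and depth $s=\delta_D(y)$, and invoke the standard Lipschitz surface bound $\sigma(\partial D\cap B(z,R))\le C(\Lambda_0)R^{d-1}$ (which by dyadic summation yields $\int_{|z-Q|\ge R}|z-Q|^{-d-\alpha}\,d\sigma(Q)\le CR^{-1-\alpha}$). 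Using $|z-y|\ge|z-Q_y|-s$ and splitting the $s$-integral at $s=ad_z/(1-a)$ produces the required $c\,a\,\delta_D(z)^{-\alpha}$ bound, and since $\alpha>1$ the overall shallow factor $a+a^\alpha$ stays $\le 2a$ for $a\le 1$.
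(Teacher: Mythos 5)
Your overall strategy is sound and genuinely different from the paper's. The paper applies Dyda's fractional Hardy inequality on an intermediate Lipschitz domain $A$ and then invokes the localization estimate \cite[(13)]{Dyda06} (which says the full Gagliardo seminorm over $A\times A$ is comparable to the one restricted to $y\in B(z,\delta_A(z)/2)$); on that localized region $\delta_D(z)\wedge\delta_D(y)\ge |z-y|$, so \hyperlink{Q2-b}{{\bf (B2-b)}} inserts $\sB$ directly with no absorption. You instead re-derive the needed localization by splitting into a deep set $\{\delta_D(z)\wedge\delta_D(y)\ge a|z-y|\}$, where \hyperlink{Q2-b}{{\bf (B2-b)}} applies, and a shallow set, which you absorb into the left-hand side after proving a $c\,a\,\delta_D(z)^{-\alpha}$ pointwise bound. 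This is a valid alternative: it trades the citation of \cite[(13)]{Dyda06} for a self-contained dyadic-shell estimate based on the $(d-1)$-Ahlfors regularity of $\partial D$ (your proposed parametrization by nearest boundary points is shaky for a merely Lipschitz $\partial D$, since the nearest-point map is not well defined, but a dyadic decomposition in $|z-y|$ combined with the bound $m_d(\{y:\delta_D(y)<t\}\cap B(z,\rho))\le C(\Lambda_0)\,t\,\rho^{d-1}$ gives exactly what you want, and the absorption is legitimate because $u\in C_c(B_D(Q,r/M_0))$ makes the left-hand side finite).

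The genuine gap is your first sentence: it is not true in general that $V=B(Q,r)\cap D$ is a Lipschitz open set with constant controlled by $\Lambda_0$. Even for $Q\in\partial D$ and $r<\wh R$, the sphere $\partial B(Q,r)$ can meet the Lipschitz graph $\{y_d=\Psi(\wt y)\}$ tangentially, producing outward cusps at the edge $\partial B(Q,r)\cap\partial D$ that destroy the Lipschitz property. Dyda's Hardy inequality is stated for Lipschitz domains, so you cannot apply it to $V$ directly. This is precisely the issue that the paper's citation of \cite[p.\,45(4)]{JK82} is there to resolve: one constructs a genuine Lipschitz domain $A$ with $B_D(Q,r/M_0)\subset A\subset B_D(Q,r)$ and controlled Lipschitz character, applies Dyda on $A$ (noting $\delta_A\le\delta_D$ so $\delta_A^{-\alpha}\ge\delta_D^{-\alpha}$), and then enlarges the double integral from $A\times A$ to $B_D(Q,r)\times B_D(Q,r)$. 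If you replace your $V$ with this $A$, the rest of your argument — including the check that $\delta_A$ can be replaced by $\delta_D$ on $\mathrm{supp}(u)$, the deep/shallow split, and the absorption — goes through.
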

\begin{proof} Let $Q \in \partial D$ and $0<r<\wh R$.  By \cite[p.45 (4)]{JK82}, there exist a constant $M_0>1$ independent of $Q$ and $r$, and a Lipschitz domain $A$   such that $B_{D}(Q,r/M_0) \subset A \subset B_{D}(Q,r)$.  Using \cite[Theorem 1.1]{Dyda04} in the second inequality,  \cite[(13)]{Dyda06} in the third and \hyperlink{B2-b}{{\bf (B2-b)}} in the fifth, we get that for any $u \in C_c(B_{D} (Q,r))$,
\begin{align*}		
	 &\int_{	B_{D}(Q,r/M_0)} u(z)^2 \delta_D(z)^{-\alpha} dz\le 	\int_{A} u(z)^2 \delta_A(z)^{-\alpha} dz\\
	 &\le c_1\int_{A}\int_A  \frac{(u(z)-u(y))^2}{|z-y|^{d+\alpha}}dydz \\	
	 	&\le 	c_2	\int_{A}  	\int_{B(z, \delta_A(z)/2)}  \frac{(u(z)-u(y))^2}{|z-y|^{d+\alpha}}dydz \\
	 	&\le	c_2	\int_{B_{D}(Q,r)}  	\int_{A \cap B(z, \delta_D(z)/2)}  \frac{(u(z)-u(y))^2}{|z-y|^{d+\alpha}}dydz\\				&\le 	c_2C_2^{-1}	\int_{B_{D}(Q,r)}  	\int_{A \cap B(z, \delta_D(z)/2)} (u(z)-u(y))^2 \frac{\sB(z,y)}{|z-y|^{d+\alpha}}dydz\\	& \le c_2C_2^{-1} \int_{	B_{D}(Q,r)} \int_{	B_{ D}(Q,r)} (u(z)-u(y))^2 \frac{\sB(z,y)}{|z-y|^{d+\alpha}}dydz.\end{align*} \end{proof}

\begin{lemma}\label{l:Hardy2}
	Suppose that $\alpha>1$. Let $Q \in \partial D$, $0<r< \wh R$ and $u \in \text{\rm Lip}_c(\overline D)$ be such that $u \ge 1$ on $B_{D}(Q,r)$. Then $u \in \overline \sF \setminus \sF^0$.
\end{lemma}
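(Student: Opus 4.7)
The plan is to first note that $u\in\overline{\sF}$ is immediate from the definition, since $\overline{\sF}$ is the $\sE^0_1$-closure of $\mathrm{Lip}_c(\overline{D})$ and $u\in\mathrm{Lip}_c(\overline{D})$. The substantive point is to show $u\notin\sF^0$, which I would establish by contradiction using the fractional Hardy inequality of Lemma~\ref{l:Hardy}.

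Suppose for contradiction that $u\in\sF^0$. Then there exists a sequence $u_n\in\mathrm{Lip}_c(D)$ with $u_n\to u$ in the $\sE^0_1$-norm, and after passing to a subsequence we may additionally assume $u_n\to u$ almost everywhere. Let $M_0$ be the constant from Lemma~\ref{l:Hardy} and fix a Lipschitz cutoff $\eta:\R^d\to[0,1]$ with $\eta\equiv 1$ on $B(Q,r/(4M_0))$ and $\mathrm{supp}(\eta)\subset B(Q,r/(2M_0))$. Since each $u_n$ has compact support in $D$, the product $\eta u_n$ lies in $C_c(B_D(Q,r/M_0))$, so Lemma~\ref{l:Hardy} applied to $\eta u_n$ yields
\[
\int_{B_D(Q,r/M_0)} (\eta u_n)^2\,\delta_D(x)^{-\alpha}\,dx
\le C \iint_{B_D(Q,r)\times B_D(Q,r)} (\eta u_n(z)-\eta u_n(y))^2 \frac{\sB(z,y)}{|z-y|^{d+\alpha}}\,dz\,dy.
\]

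The right-hand side will be controlled uniformly in $n$ via the standard Leibniz-type splitting $\eta(z)u_n(z)-\eta(y)u_n(y)=\eta(z)(u_n(z)-u_n(y))+u_n(y)(\eta(z)-\eta(y))$. Combined with $\|\eta\|_\infty\le 1$, the Lipschitz bound on $\eta$, \hyperlink{B2-a}{{\bf (B2-a)}}, and the finiteness of $\int_{\R^d}(|z-y|^2\wedge 1)|z-y|^{-d-\alpha}\,dz$, this bounds the double integral by $c_1(\sE^0(u_n,u_n)+\|u_n\|_{L^2(D)}^2)$, which stays bounded as $n\to\infty$ by the convergence of $u_n$ in $\sE^0_1$-norm. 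Applying Fatou on the left, and using $\eta\equiv 1$ on $B_D(Q,r/(4M_0))$ together with $u\ge 1$ on $B_D(Q,r)\supset B_D(Q,r/(4M_0))$, this yields
\[
\int_{B_D(Q,r/(4M_0))} \delta_D(x)^{-\alpha}\,dx
\le \liminf_n \int_{B_D(Q,r/M_0)}(\eta u_n)^2\,\delta_D(x)^{-\alpha}\,dx<\infty.
\]

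The contradiction will come from the divergence of the left-hand side. In the local coordinate system $\mathrm{CS}_Q$, choosing $s>0$ small enough so that the box $U^Q(s)=\{(\wt x,x_d):|\wt x|<s,\ 0<x_d-\Psi(\wt x)<s\}$ is contained in $B_D(Q,r/(4M_0))$ (via Lemma~\ref{l:U-rho-Lipschitz}(i)), and observing that $\delta_D(x)\le x_d-\Psi(\wt x)$ on $U^Q(s)$ since $(\wt x,\Psi(\wt x))\in\partial D$, Fubini gives
\[
\int_{B_D(Q,r/(4M_0))}\delta_D(x)^{-\alpha}\,dx \ge \int_{|\wt x|<s}\int_0^s t^{-\alpha}\,dt\,d\wt x=\infty
\]
because $\alpha>1$. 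This contradiction forces $u\notin\sF^0$. The only mildly technical ingredient is the Leibniz-type splitting used to bound the nonlocal energy of $\eta u_n$ uniformly in $n$; everything else follows directly from Lemma~\ref{l:Hardy} and elementary considerations, so this is where I expect most of the care to go.
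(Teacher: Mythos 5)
Your argument is correct and follows the same basic strategy as the paper: assume $u\in\sF^0$, combine the fractional Hardy inequality of Lemma~\ref{l:Hardy} applied to the approximating sequence with Fatou's lemma, and derive a contradiction from the divergence of $\int \delta_D(z)^{-\alpha}\,dz$ near a boundary point when $\alpha>1$. The one genuine difference is that you introduce a Lipschitz cutoff $\eta$ supported in $B(Q,r/(2M_0))$ and apply Lemma~\ref{l:Hardy} to $\eta u_n$ rather than to $u_n$ directly. This is a more careful reading of Lemma~\ref{l:Hardy}: its stated hypothesis is $u\in C_c(B_D(Q,r/M_0))$, which a generic $u_n\in\mathrm{Lip}_c(D)$ (compactly supported in $D$ but not necessarily in $B_D(Q,r/M_0)$) need not satisfy, whereas $\eta u_n$ does. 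The paper applies Lemma~\ref{l:Hardy} directly to $u_n$, which is only legitimate if one is implicitly reading the Hardy inequality as extending to all of $\mathrm{Lip}_c(D)$ restricted to $B_D(Q,r/M_0)$ — a step the proof of Lemma~\ref{l:Hardy} (via the Dyda inequality on a Lipschitz domain $A$ sandwiched between $B_D(Q,r/M_0)$ and $B_D(Q,r)$, which requires $u$ to vanish near all of $\partial A$, not just $\partial A\cap\partial D$) does not literally cover. Your cutoff avoids this, at the cost of the Leibniz-splitting estimate $\sE^0(\eta u_n,\eta u_n)\lesssim \sE^0(u_n,u_n)+\|u_n\|_{L^2(D)}^2$, which you execute correctly using \hyperlink{B2-a}{{\bf (B2-a)}} and the integrability of $(|z-y|^2\wedge 1)|z-y|^{-d-\alpha}$. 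In short, both proofs rest on exactly the same two pillars (Hardy inequality plus divergence of the weight), but your version is a tighter argument that closes a small hypothesis mismatch that the paper's proof leaves implicit.
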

\begin{proof}  Clearly, $u \in \overline \sF$ since $\text{\rm Lip}_c(\overline D) \subset \overline \sF$. Suppose that $u \in \sF^0$. Then there exists an $\sE^0_1$-Cauchy sequence $(u_n)_{n \ge 1}$  of functions in $\text{\rm Lip}_c(D)$ such that $\lim_{n \to \infty} \sE^0_1(u-u_n,u-u_n)=0$.  Since  $\sup_{n \ge 1} \sE^0_1(u_n,u_n)<\infty$, by Lemma \ref{l:Hardy}, we have
 \begin{align}\label{e:Hardy2}
 \limsup_{n \to \infty}	\int_{	B_{D}(Q,r/M_0)} u_n(z)^2 \delta_D(z)^{-\alpha} dz\le	c_1 \limsup_{n \to \infty} \sE^0(u_n,u_n)<\infty,
 \end{align}
where $M_0>1$ is the constant in Lemma \ref{l:Hardy}. Note that $u_n$ converges to $u$ in $L^2(D)$. Thus, there is a subsequence $(u_{s_n})_{n \ge 1}$ such that $\lim_{n \to \infty}u_{s_n}(z)^2=u(z)^2$  for a.e. $z \in B_{D}(Q,r/M_0)$. Using Fatou's lemma and the facts that $u \ge 1$ on $B_{D}(Q,r)$, $D$ is a Lipschitz open set, and $\alpha >1$, we obtain
\begin{align*}
&\liminf_{n \to \infty} \int_{	B_{D}(Q,r/M_0)} u_{s_n}(z)^2 \delta_D(z)^{-\alpha} dz \\
&\ge  \int_{	B_{D}(Q,r/M_0)} u(z)^2 \delta_D(z)^{-\alpha} dz \ge \int_{	B_{ D}(Q,r/M_0)} \delta_D(z)^{-\alpha} dz=\infty.
\end{align*}
This contradicts  \eqref{e:Hardy2}. The proof is complete. \end{proof}

\begin{prop}\label{p:alpha>1}
		$\sF^0=\overline \sF$ if and only if $\alpha \le 1$.
\end{prop}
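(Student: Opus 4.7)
The plan is to prove the two implications separately. For the \emph{only if} direction, assuming $\alpha > 1$, Lemma~\ref{l:Hardy2} essentially already gives the conclusion: fix any $Q \in \partial D$ and $r \in (0, \wh R)$, and take an explicit cutoff $u \in \mathrm{Lip}_c(\overline D)$ with $u \equiv 1$ on $B_D(Q, r)$ (for instance $u(x) = \bigl((2 - |x-Q|/r)_+\bigr) \wedge 1$ supported on $B(Q, 2r) \cap \overline D$). Lemma~\ref{l:Hardy2} then immediately yields $u \in \overline\sF \setminus \sF^0$, so the two spaces are distinct.

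For the \emph{if} direction, assuming $\alpha \le 1$, the inclusion $\sF^0 \subset \overline\sF$ is immediate since $\mathrm{Lip}_c(D) \subset \mathrm{Lip}_c(\overline D)$ and both are closures in the same norm. Since $\sF^0$ is $\sE^0_1$-closed, for the reverse inclusion it suffices to approximate any $u \in \mathrm{Lip}_c(\overline D)$ by a sequence $u_n \in \mathrm{Lip}_c(D)$ in the $\sE^0_1$-norm. The natural choice is $u_n := \phi_n u$, where $\phi_n$ is a Lipschitz cutoff with $0 \le \phi_n \le 1$, $\phi_n \equiv 0$ on $\{\delta_D \le \varepsilon_n\}$ for some $\varepsilon_n \downarrow 0$, and $\phi_n \to 1$ pointwise on $D$. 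The $L^2$-convergence $u_n \to u$ is immediate by dominated convergence. For the form convergence, I would use \hyperlink{B2-a}{{\bf (B2-a)}} to bound $\sB \le C_1$ and reduce to the free kernel $|x-y|^{-d-\alpha}$, then decompose
\[
(u - u_n)(x) - (u - u_n)(y) = (1 - \phi_n(x))\bigl(u(x)-u(y)\bigr) + u(y)\bigl(\phi_n(y)-\phi_n(x)\bigr).
\]
The first piece contributes zero in the limit by dominated convergence, the Lipschitz property of $u$, the compactness of $\mathrm{supp}\,u$, and \eqref{e:stochastic-complete}. The crux is the second piece
\[
I_n := \iint_{D \times D} u(y)^2 \bigl(\phi_n(x) - \phi_n(y)\bigr)^2 |x-y|^{-d-\alpha}\,dx\,dy.
\]

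For $\alpha < 1$ the linear cutoff $\phi_n(x) = (n\,\delta_D(x)) \wedge 1$ is enough: splitting the inner integral at $|x-y| = 1/n$ gives $\int \bigl(\phi_n(x)-\phi_n(y)\bigr)^2 |x-y|^{-d-\alpha}\,dx \le Cn^\alpha$ uniformly in $y$, and the Lipschitz structure of $\partial D$ (Definition~\ref{df:lipschitz}) supplies the boundary-layer estimate $m_d(\{\delta_D < 2/n\} \cap \mathrm{supp}\,u) \le C/n$, whence $I_n \le C n^{\alpha-1}\|u\|_\infty^2 \to 0$. The main obstacle is the borderline case $\alpha = 1$, where this linear scheme only yields $I_n = O(1)$. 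I would handle it by replacing the linear cutoff with one that is affine in $\log \delta_D$, namely $\phi_n = 0$ on $\{\delta_D \le e^{-n}\}$, $\phi_n = 1$ on $\{\delta_D \ge 1\}$, and $\phi_n(x) = (\log(\delta_D(x)) + n)/n$ on the transition region, so that $|\nabla \phi_n(x)| \lesssim 1/(n\,\delta_D(x))$. Splitting $I_n$ by the dyadic annuli $\{e^{-k-1} < \delta_D \le e^{-k}\}$ for $k = 0, \ldots, n-1$ and bookkeeping both the pointwise cost $\bigl(\phi_n(x)-\phi_n(y)\bigr)^2$ on each annular pair against the codimension-one measure of the boundary layer, one gains a logarithmic saving yielding $I_n = O(1/n)$. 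Making this logarithmic bookkeeping work cleanly in a general Lipschitz domain—matching the codimension-one boundary geometry to the critical exponent $\alpha = 1$—is the main technical hurdle.
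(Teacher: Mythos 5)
Your ``only if'' direction ($\alpha>1$) is the same as the paper's: both invoke Lemma~\ref{l:Hardy2} applied to a Lipschitz cutoff equal to $1$ near a boundary ball, and that disposes of it immediately.

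Your ``if'' direction ($\alpha\le 1$) is a genuinely different route from the paper. The paper introduces the auxiliary form $\widetilde{\sC}(u,v)=\iint_{D\times D}(u(x)-u(y))(v(x)-v(y))|x-y|^{-d-\alpha}\,dx\,dy$ on the closure of $\mathrm{Lip}_c(\overline D)$, observes via \hyperlink{B2-a}{{\bf (B2-a)}} that $\sE^0\le c\,\widetilde{\sC}$, cites the result from Bogdan--Burdzy--Chen (\cite{BBC}, Theorem~2.5(i) and Remark~2.2(1)) that $\partial D$ is $\widetilde{\sC}$-polar when $\alpha\le 1$, concludes that $\partial D$ is $\sE^0$-polar, hence $\overline Y$ started in $D$ never hits $\partial D$, hence $\overline Y=Y^0$, hence $\overline\sF=\sF^0$. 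This buys a short proof at the cost of importing the polarity result; your approach attempts to reprove the essential content (vanishing of the boundary's fractional capacity for $s=\alpha/2\le 1/2$) from scratch by approximating with cutoffs vanishing near $\partial D$.

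There is, however, a real gap in your argument as written: the borderline case $\alpha=1$ is not actually proved. You correctly observe that the linear cutoff only yields $I_n=O(1)$ there, propose the standard fix with a cutoff affine in $\log\delta_D$, and then state that the dyadic bookkeeping in a Lipschitz domain ``is the main technical hurdle'' --- i.e.\ you stop before doing it. The plan is sound (a dyadic decomposition over the annuli $\{e^{-k-1}<\delta_D\le e^{-k}\}$, using the codimension-one measure bound $m_d(A_k\cap K)\lesssim e^{-k}$ and $|\nabla\phi_n|\lesssim (n\,\delta_D)^{-1}$, gives $O(n^{-2})$ per pair of comparable indices and exponential off-diagonal decay, so $I_n=O(n^{-1})$), but as a proof it is incomplete precisely where the result is most delicate. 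In addition, for unbounded $D$ the cross-term estimate needs a spatial truncation argument: the set $\{\delta_D<1/n\}$ has infinite measure, so the inner-integral bound $Cn^\alpha$ cannot simply be multiplied by the boundary-layer measure; one must use that $|x-y|^{-d-\alpha}$ forces $x$ to stay near the compact $\mathrm{supp}\,u$ and control the far tail separately. Both points are fixable, but they are not in the proposal as written.
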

\begin{proof} 
Suppose $\alpha \le 1$. Define 
\begin{align*}
\widetilde{\sC}(u, v)&:=
\int_D\int_D
\frac{(u(x)-u(y))(v(x)-v(y))}{|x-y|^{d+\alpha}}dxdy,\\
{\mathcal D}(\widetilde{\sC})&:=\mbox{closure of Lip}_c(\overline{D}) \mbox{ in } L^2(D) \mbox{ under } \widetilde{\sC}+
(\cdot, \cdot)_{L^2(D)}. 
\end{align*}
Then $(\widetilde{\sC}, {\mathcal D}(\widetilde{\sC}))$ is a regular Dirichlet form associated with the reflected $\alpha$-stable process in $\overline{D}$ in the sense of \cite{BBC}. By \hyperlink{B2-a}{{\bf (B2-a)}}, there exists a constant $c>0$ such that  ${\mathcal E}^0(u, u)\le c\, \widetilde{\sC}(u, u)$ for all $u\in {\rm Lip}_c(\overline{D})$ and hence ${\mathcal D}(\widetilde{\sC})\subset \overline{\mathcal F}$.  By \cite[Theorem 2.5(i) and Remark 2.2(1)]{BBC}, since $\alpha \le 1$, $\partial D$ is 
$(\widetilde{\sC}, {\mathcal D}(\widetilde{\sC}))$-polar and hence is $({\mathcal E}^0, \overline{\mathcal F})$-polar. Therefore, when starting from $D$, $\overline{Y}$ will never exit $D$.  Hence $\overline{Y}$ and $Y^0$ are the same when they start from $x\in D$ and $\sF^0=\overline \sF$.
Combining this with Lemma \ref{l:Hardy2}, we arrive at the desired conclusion. \end{proof}

In the remainder of this work, we let $\kappa$ be a  non-negative Borel  function on $D$ with the following property:
	
	\medskip

	\noindent \hypertarget{K1}{{\bf (K1)}}  There exists a constant $C_3>0$ such that
	\begin{equation*}
		\kappa(x) \le C_3 (\delta_D(x) \wedge 1)^{-\alpha}.
	\end{equation*}
	 If $\alpha \le 1$, then we also assume that $\kappa$ is non-trivial, namely,
		\begin{equation}\label{e:kappa-non-trivial}m_d(\{x\in D:\kappa(x) >0\})>0.\end{equation}

\medskip

We consider a symmetric form $(\sE^\kappa, \sF^\kappa)$ defined by
	\begin{align*}
		\sE^\kappa(u,v)&=\sE^0(u,v) + 
		\int_{D} u(x)v(x)\kappa(x)dx, \\
		\sF^\kappa&= 	\wt \sF^0 \cap L^2(D, 
			\kappa(x) dx),\nn
	\end{align*}
where $\wt \sF^0$ is the family of all $\sE^0_1$-quasi-continuous functions in $\sF^0$. Then $(\sE^\kappa,\sF^\kappa)$ is a regular Dirichlet form on $L^2(D)$ with Lip$_c(D)$ as a special standard core,  see \cite[Theorems 6.1.1 and 6.1.2]{FOT}.
Let  $Y^\kappa=(Y^\kappa_t,t \ge 0; \P_x,x \in D \setminus  \sN_\kappa)$ be the Hunt process associated with $(\sE^\kappa, \sF^\kappa)$ 
where $\sN_\kappa$ is an exceptional set for $Y^\kappa$.
We denote by $\zeta^\kappa$ the lifetime of $Y^\kappa$. Define $Y^\kappa_t=\partial$ for $t \ge \zeta^\kappa$, where $\partial$ is a cemetery point added to the state space $D$. 
Since 	the jump kernel $\sB(x,y)|x-y|^{-d-\alpha}dxdy$ and the killing measure $\kappa(x)dx$ of  $(\sE^\kappa,  \sF^\kappa)$ are absolutely continuous with respect to  $m_d \otimes m_d$ and  $m_d$ respectively,   $Y^\kappa$ satisfies the following L\'evy system formula (cf. \eqref{e:Levysystem-Y-bar}): For any $x \in  D $, any non-negative Borel function $f$ on $D \times D_\partial$ vanishing on the diagonal, and any stopping time $\tau$,	
	\begin{align}\label{e:Levysystem-Y-kappa}	
		\E_x\bigg[  \sum_{s \le \tau} f( Y^\kappa_{s-},  Y^\kappa_s)  \bigg] 
		= \E_x \left[ \int_0^\tau \bigg(  \int_{D} 
		\frac{f( Y^\kappa_s, y) \sB( Y^\kappa_s, y)}{| Y^\kappa_s-y|^{d+\alpha}} dy  
		+ \kappa(Y^\kappa_s)f(Y_s^\kappa, \partial) \bigg) \,ds  \right].	
	\end{align}

The process $Y^\kappa$ can be regarded as the part process of $\overline Y$ killed at $\zeta^\kappa$. Hence, by Remark \ref{r:conti}, $Y^\kappa$ can be refined to be a Hunt process starting from every point $D$.
	Moreover, by Proposition \ref{p:upper-heatkernel}, we obtain the following result.
	
	\begin{prop}\label{p:upper-heatkernel-2}
		The process $Y^\kappa$ has a 	transition density  $p^\kappa(t,x,y)$ defined on $(0,\infty) \times D \times D$. 
		Moreover, for any $T>0$, there exists a constant $C=C(T)>0$ such that
		\begin{equation*}
			p^\kappa(t,x,y) \le C \left( t^{-d/\alpha} \wedge \frac{t}{|x-y|^{d+\alpha}}\right), \quad 0<t\le T, \; x,y \in D.
		\end{equation*}
	\end{prop}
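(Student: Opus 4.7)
The plan is to deduce Proposition \ref{p:upper-heatkernel-2} from Proposition \ref{p:upper-heatkernel} via the subprocess relationship between $Y^\kappa$ and $\overline Y$ recorded immediately before the statement: $Y^\kappa$ is obtained from $\overline Y$ by part-killing on $D$ (operative only when $\alpha\in(1,2)$) together with Feynman--Kac killing at rate $\kappa$.

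First, extending any non-negative Borel function $f\colon D\to[0,\infty)$ by zero to $\overline D$, the subprocess relation gives
\[
P^\kappa_t f(x)=\E_x\bigl[f(Y^\kappa_t)\,;\,t<\zeta^\kappa\bigr]\le \E_x[f(\overline Y_t)]=\overline P_t f(x),\qquad x\in D.
\]
By Remark \ref{r:conti}, $\overline p$ admits a jointly continuous, everywhere-defined version for which Proposition \ref{p:upper-heatkernel} holds pointwise. Consequently $P^\kappa_t(x,\cdot)\ll m_d$ for every $x\in D$, with an $m_d$-a.e.\ defined density $p^\kappa(t,x,\cdot)$ satisfying $p^\kappa(t,x,y)\le \overline p(t,x,y)$ for a.e.\ $y\in D$. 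This already yields the claimed upper bound for a.e.\ $y\in D$ and every $x\in D$.

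To obtain a version of $p^\kappa$ defined pointwise on $(0,\infty)\times D\times D$, I would use the semigroup identity $p^\kappa(t,x,y)=\int_D p^\kappa(t/2,x,z)\,p^\kappa(t/2,z,y)\,dz$ together with the symmetry $p^\kappa(t,x,y)=p^\kappa(t,y,x)$. The on-diagonal bound $\|p^\kappa(t,x,\cdot)\|_{L^\infty(D)}\lesssim t^{-d/\alpha}$ and parabolic H\"older regularity for $P^\kappa_t$ (which follows by the argument of Theorem \ref{t:phr}, since $\sE^\kappa\ge \sE^0$, $\sF^\kappa\subset \overline\sF$, and $Y^\kappa$ is already refined to a Hunt process starting from every point of $D$) imply, exactly as in Corollary \ref{c:phr-semigroup}, that $P^\kappa_{t/2}$ maps $L^1(D)$ into $C(D)$. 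Then $p^\kappa(t,x,y)=P^\kappa_{t/2}p^\kappa(t/2,x,\cdot)(y)$ produces a jointly continuous representative; extending the a.e.\ inequality $p^\kappa\le\overline p$ by continuity and applying Proposition \ref{p:upper-heatkernel} completes the proof with the same $T$-dependent constant.

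The only genuine work is the pointwise refinement in the second step, but this is a routine transfer of the machinery of Subsection \ref{s-processes-Y-bar}: every ingredient (the Nash inequality, the caloric regularity, the strong Feller property) either applies verbatim to $Y^\kappa$ because the killing term in $\sE^\kappa$ only makes the form larger, or follows at once from the domination $p^\kappa\le\overline p$ established above. Hence there is no serious obstacle.
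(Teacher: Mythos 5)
Your first paragraph is exactly the paper's proof: $Y^\kappa$ is a subprocess of $\overline Y$, so $P_t^\kappa(x,dy)\le\overline P_t(x,dy)=\overline p(t,x,y)\,dy$, where $\overline p$ is defined on all of $(0,\infty)\times\overline D\times\overline D$ by Remark~\ref{r:conti}; the estimate then follows from Proposition~\ref{p:upper-heatkernel}. The paper cites Proposition~\ref{p:upper-heatkernel} in precisely this way, so the core argument agrees.

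Your second paragraph over-delivers and contains a mild circularity you should be careful about. Proposition~\ref{p:upper-heatkernel-2} only asserts existence of a density with the stated bound; joint continuity of $p^\kappa$ is not claimed here, and the paper defers it to Remark~\ref{r:strong-Feller}. The reason for the deferral is that the parabolic H\"older regularity you invoke (the $Y^\kappa$-analogue of Theorem~\ref{t:phr}, i.e.\ Theorem~\ref{t:phr2}) is itself proved in the paper \emph{after} and \emph{using} Proposition~\ref{p:upper-heatkernel-2}, via the near-diagonal lower bound Proposition~\ref{p:ndl2} and the exit-time estimate Proposition~\ref{p:E2}. So you cannot quote it at this stage to produce a continuous version. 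Fortunately you do not need it: the domination alone gives, for each $(t,x)$, a density $p^\kappa(t,x,\cdot)$ a.e.\ in $y$ with the right bound, and if a canonical pointwise representative is wanted one can simply set $p^\kappa(t,x,y):=\int_D p^\kappa(t/2,x,z)\,p^\kappa(t/2,z,y)\,dz$, which is well-defined for every $(t,x,y)$ and inherits the bound. Dropping the appeal to caloric regularity makes the argument self-contained and matches the paper's logical order.
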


	For an open set $U \subset  D$, we let $\tau_U:=\inf\{t>0: Y^\kappa_t \notin U\}$ and 
	we  denote by $Y^{\kappa,U}$ and $(P^{\kappa,U}_t)_{t \ge 0}$  the part of the process $Y^\kappa$ killed upon exiting $U$ and its semigroup, respectively.
 We denote   $(P^{\kappa,D}_t)_{t \ge 0}$ by $(P^{\kappa}_t)_{t \ge 0}$. By \cite[Theorem 6.1.1]{FOT}, the semigroup $(P^{\kappa,U}_t)_{t \ge 0}$ can be represented by
	\begin{align}\label{e:Feymann-Kac}
		P^{\kappa,U}_t f(x)= \E_x \left[\,\exp\bigg( - \int_0^t \kappa(\overline Y^{U}_s) ds \bigg)\, f(\overline Y^{U}_t) \right].
	\end{align}
	Denote by $p^{\kappa,U}(t,x,y)$ a transition density  of $Y^{\kappa,U}$.

	\begin{prop}\label{p:ndl2}
	Let $R_0>0$ and  $b \in (0,1)$. There exists  $C=C(R_0,b)>0$ such that for any $x_0 \in D$, $0<r<\delta_D(x_0) \wedge R_0$ and  $0< t\le (b r)^\alpha$, it holds that
		\begin{equation}\label{e:ndl-2}
			p^{\kappa,B(x_0, r)}(t,z,y)  \ge  C t^{-d/\alpha}\quad  \text{for all } z \in 	B_{\overline D}(x_0, b t^{1/\alpha}) \text{ and a.e. } y \in 	B_{\overline D}(x_0, b t^{1/\alpha}).
		\end{equation}
	\end{prop}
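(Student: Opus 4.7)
The plan is to reduce the desired near-diagonal lower bound to Proposition \ref{p:ndl} for $\overline{Y}$, by controlling the Feynman-Kac killing exponential using the fact that the killing potential $\kappa$ is bounded on a suitably smaller concentric ball. Since $r < \delta_D(x_0)$, we have $B(x_0,r)\subset D$, so $B_{\overline{D}}(x_0,r) = B(x_0,r)$, and the two killed processes $\overline{Y}^{B(x_0,r)}$ and $Y^{\kappa,B(x_0,r)}$ both live on $B(x_0,r)$.

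The key step is to choose a radius $\rho \in (b,1)$ with $\rho > b^2$ (for instance, $\rho = (1+b)/2$) and consider the event $A=\{\bar\tau_{B(x_0,\rho r)}>t\}$. On $A$, $\overline{Y}_s \in B(x_0, \rho r)$ for every $s\in[0,t]$, and combined with $r<\delta_D(x_0)$ this gives $\delta_D(\overline{Y}_s) \ge \delta_D(x_0)-\rho r > (1-\rho)r$. By \hyperlink{K1}{{\bf (K1)}} and the hypothesis $t\le (br)^\alpha$, it follows that
$$\int_0^t\kappa(\overline{Y}_s)\,ds \le (br)^\alpha \cdot C_3\bigl(((1-\rho)r)\wedge 1\bigr)^{-\alpha}\le M$$
for a constant $M=M(R_0,b,\alpha,C_3)$ (considering separately the cases $(1-\rho)r\ge 1$ and $(1-\rho)r<1$). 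On $A$, one has $\overline{Y}^{B(x_0,r)}_s = \overline{Y}^{B(x_0,\rho r)}_s$ for $s\le t$, so the Feynman-Kac formula \eqref{e:Feymann-Kac} yields, for every non-negative Borel function $f$ supported in $B(x_0,\rho r)$,
$$P^{\kappa,B(x_0,r)}_t f(z) \ge e^{-M}\,\E_z\bigl[f(\overline{Y}^{B(x_0,\rho r)}_t)\bigr],$$
which translates into the density bound $p^{\kappa,B(x_0,r)}(t,z,y)\ge e^{-M}\,\overline{p}^{B(x_0,\rho r)}(t,z,y)$ for $z\in B(x_0,\rho r)$ and a.e. $y\in B(x_0,\rho r)$.

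Since $t\le(br)^\alpha$ forces $bt^{1/\alpha}\le b^2 r\le \rho r$, the inclusion $B(x_0,bt^{1/\alpha})\subset B(x_0,\rho r)$ holds. Choose any $b^*\in(b/\rho,1)$ (possible because $b<\rho<1$); then $b^*\ge b$ and $t\le(b^*\rho r)^\alpha$, so Proposition \ref{p:ndl} applied to $B(x_0,\rho r)$ with parameter $b^*$ and localization radius $R_0$ gives $\overline{p}^{B(x_0,\rho r)}(t,z,y) \ge c(R_0,b)\, t^{-d/\alpha}$ for all $z,y\in B(x_0,b^* t^{1/\alpha})\supset B(x_0,bt^{1/\alpha})$. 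Combining with the Feynman-Kac bound establishes \eqref{e:ndl-2}.

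The main obstacle is that $\kappa$ may blow up like $\delta_D^{-\alpha}$ near $\partial D$, so that when $r$ is close to $\delta_D(x_0)$ the process $\overline{Y}^{B(x_0,r)}$ can visit regions where $\kappa$ is arbitrarily large, even though $B(x_0,r)\subset D$. The resolution is to restrict attention to the stopping event $A$ that the process stays inside the strictly smaller ball $B(x_0,\rho r)$; the strict inequality $r<\delta_D(x_0)$ then translates into the uniform lower bound $\delta_D(\overline Y_s)\ge(1-\rho)r$, and the scaling $t\le(br)^\alpha$ balances against this to produce a bounded Feynman-Kac exponent.
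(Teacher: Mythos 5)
Your proof is correct and follows essentially the same strategy as the paper's: restrict to a strictly smaller concentric ball where $\kappa$ is bounded by a multiple of $r^{-\alpha}$, use the Feynman--Kac representation \eqref{e:Feymann-Kac} to peel off a uniformly bounded exponential factor, and then invoke Proposition \ref{p:ndl} on the smaller ball with a bigger $b$-parameter. The only cosmetic difference is the choice of intermediate radius: the paper takes $\rho = b^{1/2}$, which makes the inclusions $b t^{1/\alpha} \le b^{1/2} t^{1/\alpha}$ and $b \le b^{1/2}\cdot b^{1/2}$ automatic and lets it pass directly through the domain monotonicity $p^{\kappa,B(x_0,r)} \ge p^{\kappa,B(x_0,b^{1/2}r)} \ge e^{-c_1 b^\alpha}\,\overline{p}^{B(x_0,b^{1/2}r)}$; your $\rho=(1+b)/2$ together with the auxiliary $b^* \in (b/\rho,1)$ accomplishes the same thing via the stopping event $A=\{\bar\tau_{B(x_0,\rho r)}>t\}$, which is just a slightly more explicit rendering of the same monotonicity step.
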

	\begin{proof}  Let $x_0 \in D$ and $0<r<\delta_D(x_0) \wedge R_0$. 
	For all $x \in B(x_0,b^{1/2}r)$, we have $\delta_D(x) \ge \delta_D(x_0) - b^{1/2}r > (1-b^{1/2})r$.  
	Thus, by \hyperlink{K1}{{\bf (K1)}}, we get that 
	\begin{align}\label{e:kappa-bound}
		\kappa(x) 
		 \le C_3(1-b^{1/2})^{-\alpha}(1+R_0)^\alpha r^{-\alpha}  \quad \text{for all} \;\, x \in B(x_0, b^{1/2}r).
	\end{align}
Using \eqref{e:Feymann-Kac}, \eqref{e:kappa-bound} and Proposition \ref{p:ndl} with $b^{1/2}$ (see Remark \ref{r:conti}),  we get that for all $0< t\le (b r)^\alpha$,  $z \in  B(x_0, b t^{1/\alpha}) \subset B(x_0, b^{1/2} t^{1/\alpha})$ and a.e. $y \in  B(x_0, b t^{1/\alpha})$,
\begin{align*}
	&p^{\kappa, B(x_0,r)}(t,z,y) \ge 	p^{\kappa, B(x_0,b^{1/2}r)}(t,z,y)  \\
	&\ge e^{-c_1r^{-\alpha}t}\,\overline p^{B(x_0,b^{1/2}r)}(t,z,y) \ge c_2 e^{-c_1b ^\alpha} t^{-d/\alpha}.
\end{align*}
 \end{proof}

	\begin{prop}\label{p:E2}
 For any $R_0>0$, 	there exists $C=C(R_0)>1$ such that
		\begin{align}\label{e:E2}
			C^{-1}r^\alpha \le 	\E_{x_0}[		\tau_{B(x_0,r)}] \le C r^\alpha \quad \text{for all} \;\, x_0 \in  D, \; 0<r<\delta_D(x_0) \wedge R_0.
		\end{align}
	\end{prop}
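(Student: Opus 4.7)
The plan is to obtain the two bounds separately, with the upper bound coming from domination by $\overline{Y}$ and the lower bound following directly from the near-diagonal heat kernel estimate in Proposition \ref{p:ndl2}.

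For the upper bound, I would observe that since $B(x_0,r) \subset D \subset \overline{D}$, the set $B(x_0,r) = B_{\overline{D}}(x_0,r)$ and in particular Proposition \ref{p:E} (combined with Remark \ref{r:conti}) gives $\E_{x_0}[\bar\tau_{B(x_0,r)}] \le C r^\alpha$. Next, the Feynman-Kac representation \eqref{e:Feymann-Kac} with $U=B(x_0,r)$ and $f\equiv 1$ shows that
\formula{
	\E_{x_0}[\tau_{B(x_0,r)}]
	= \int_0^\infty P_t^{\kappa,B(x_0,r)} 1(x_0)\,dt
	\le \int_0^\infty \E_{x_0}\Big[ \mathbf{1}_{\{\bar\tau_{B(x_0,r)}>t\}}\Big]\,dt
	= \E_{x_0}[\bar\tau_{B(x_0,r)}]\le Cr^\alpha,
}
where the inequality uses $\exp(-\int_0^t\kappa(\overline{Y}_s^{B(x_0,r)})ds)\le 1$. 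This handles the upper bound with $C=C(R_0)$.

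For the lower bound, I would apply Proposition \ref{p:ndl2} with, say, $b=1/2$, and choose $t := (r/2)^\alpha$, which clearly satisfies $t\le (br)^\alpha$. Since $x_0 \in B(x_0, bt^{1/\alpha}) = B(x_0,r/4)$, the proposition yields $p^{\kappa,B(x_0,r)}(t,x_0,y) \ge C_0 t^{-d/\alpha}$ for a.e. $y \in B(x_0,r/4)$. Integrating over $y$,
\formula{
	\P_{x_0}\big(\tau_{B(x_0,r)}>t\big)
	= \int_{B(x_0,r)} p^{\kappa,B(x_0,r)}(t,x_0,y)\,dy
	\ge C_0 t^{-d/\alpha}\, m_d\!\big(B(x_0,r/4)\big) \ge c_1,
}
for a constant $c_1$ depending only on $d, \alpha, R_0$. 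Then by Markov's inequality,
\formula{
	\E_{x_0}[\tau_{B(x_0,r)}] \ge t\,\P_{x_0}\big(\tau_{B(x_0,r)}>t\big) \ge c_1 (r/2)^\alpha,
}
which is the desired lower bound.

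There is no real obstacle here: both estimates are direct consequences of the corresponding results already established for $\overline{Y}$ and for the killed heat kernel $p^{\kappa,B(x_0,r)}$. The only points requiring some care are (i) ensuring the constant in Proposition \ref{p:ndl2} depends only on $R_0$ once $b$ is fixed to a universal value, and (ii) noting that the Feynman-Kac comparison in the upper bound is valid because $\kappa\ge 0$, so the multiplicative functional is bounded by $1$.
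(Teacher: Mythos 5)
Your proposal is correct. The upper bound is exactly the paper's argument: since $\kappa\ge 0$, the Feynman--Kac representation \eqref{e:Feymann-Kac} gives $\tau_{B(x_0,r)}\le\bar\tau_{B(x_0,r)}$ in distribution, and Proposition \ref{p:E} together with Remark \ref{r:conti} finishes. For the lower bound your route differs slightly from the paper's: the paper invokes Lemma \ref{l:EP} to obtain $\P_{x_0}(\bar\tau_{B(x_0,r/2)}>c_1r^\alpha)\ge 1/2$ and then transfers this to $Y^\kappa$ via the Feynman--Kac formula together with the bound $\kappa\le c_2 r^{-\alpha}$ on $B(x_0,r/2)$, whereas you invoke Proposition \ref{p:ndl2} directly and integrate the near-diagonal heat-kernel lower bound over $B(x_0,r/4)$. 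Both approaches arrive at a uniform lower bound on the survival probability $\P_{x_0}(\tau_{B(x_0,r)}>cr^\alpha)\ge c'$, after which the Markov-inequality step is identical. Your version is a bit slicker to state because Proposition \ref{p:ndl2} already packages the Feynman--Kac comparison internally, but it calls on a strictly stronger estimate (a pointwise heat-kernel lower bound) than the exit-time tail bound from Lemma \ref{l:EP} that the paper uses; since Proposition \ref{p:ndl2} is in turn proved via Proposition \ref{p:ndl} plus the same Feynman--Kac trick, the two proofs are logically equivalent, just factored through different intermediate results.
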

	\begin{proof} Let $x_0 \in D$ and $0<r<\delta_D(x_0) \wedge R_0$. Since $\tau_{B(x_0,r)} \le \bar \tau_{B(x_0,r)}$, the upper bound in \eqref{e:E2} follows from Proposition \ref{p:E}. On the other hand, by Lemma \ref{l:EP}, there exists $c_1>0$ independent of $x_0$ and $r$ such that 
	\begin{align}\label{e:E2-1}
	 \P_{x_0}(\overline Y^{B(x_0,r/2)}_{c_1r^\alpha} \in B(x_0,r/2) )=	\P_{x_0}(\bar \tau_{B(x_0,r/2)} > c_1r^\alpha) \ge 1/2.
	\end{align}
By \eqref{e:kappa-bound}, $\kappa(x) \le c_2r^{-\alpha}$ for all $x \in B(x_0, r/2)$.
Using this, \eqref{e:Feymann-Kac}  and \eqref{e:E2-1}, we obtain
\begin{align*}	
	&\P_{x_0}(\tau_{B(x_0,r/2)} > c_1r^\alpha)=	\P_{x_0}(Y^{\kappa, B(x_0,r/2)}_{c_1r^\alpha} \in B(x_0,r/2) )\\
		&\ge e^{- c_1r^\alpha ( c_2r^{-\alpha} )} \P_{x_0}(\overline Y^{B(x_0,r/2)}_{c_1r^\alpha} \in B(x_0,r/2) ) 
		  \ge e^{-c_1c_2}/2.	
\end{align*}
 Hence, $\E_{x_0} [
		\tau_{B(x_0,r)}] \ge  c_1r^\alpha	\P_{x_0}(
		\tau_{B(x_0,r/2)} > c_1r^\alpha) \ge c_3r^\alpha$. \end{proof}

Using \eqref{e:stochastic-complete},  \eqref{e:VD} and Propositions \ref{p:upper-heatkernel-2},  \ref{p:ndl2} and \ref{p:E2}, one obtains the following theorem by a standard argument. See the proof of \cite[Theorem 4.14]{CK03}.
We emphasize that conservativeness is not used in the proof of \cite[Theorem 4.14]{CK03}.
Caloric functions with respect to $Y^\kappa$ are defined analogously to those with respect to $\overline{Y}$.
	
	\begin{thm}\label{t:phr2}
	Let $R_0>0$ and $b \in (0,1)$. There exist constants  $\lambda \in (0,1]$ and $C=C(R_0,b)>0$
	such that
		for all  $x \in  D$,  
		$0<r<\delta_D(x) \wedge R_0$,  $t_0\ge 0$, and any bounded caloric function $u$ in $(t_0, 
		t_0+ r^\alpha]\times B(x, r)$ with respect to $Y^\kappa$, there is a properly exceptional set $\sN_u$ such that 
		\begin{equation*}
			|u(s,y) - u(t,z)| \le C \bigg(\frac{|s-t|^{1/\alpha} + |y-z|}{r}\bigg)^{\lambda } 
			\esssup_{[t_0, t_0 + r^\alpha] \times D}\,|u|,
		\end{equation*}
		for every $s,t \in (t_0 + (1-b^\alpha)r^\alpha, 
		t_0+ r^\alpha]$ and $y,z \in B(x,  br)  \setminus \sN_u$.
	\end{thm}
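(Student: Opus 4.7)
The plan is to apply the standard probabilistic scheme of \cite[Theorem 4.14]{CK03} for parabolic H\"older regularity of symmetric jump processes, fed with the three ingredients already assembled for $Y^\kappa$: Proposition \ref{p:upper-heatkernel-2} (heat kernel upper bound), Proposition \ref{p:ndl2} (near-diagonal lower bound on $p^{\kappa,B(x_0,r)}$), and Proposition \ref{p:E2} (exit-time estimate), supplemented by \eqref{e:stochastic-complete} and \eqref{e:VD}. As noted just above the statement, conservativeness is not used in the proof of \cite[Theorem 4.14]{CK03}, so the killing term causes no structural obstruction. The restriction to interior balls $0<r<\delta_D(x)\wedge R_0$ is precisely what is needed to ensure that $\kappa\le c(R_0)r^{-\alpha}$ on $B(x,r)$ (as already used in \eqref{e:kappa-bound}), which is all that the argument requires.

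First I would convert Proposition \ref{p:E2} into the tail estimate $\P_{x_0}(\tau_{B(x_0,r)}\le t)\le C t r^{-\alpha}$ by the usual strong-Markov bootstrap (the $Y^\kappa$-analog of Lemma \ref{l:EP}), and then integrate Proposition \ref{p:ndl2} in the spatial variable to obtain the hitting lower bound $\P_{x_0}(\sigma_K<\tau_{B(x_0,r)})\ge C r^{-d}m_d(K)$ for compact $K\subset B(x_0,br)$, i.e.\ the $Y^\kappa$-analog of Lemma \ref{l:ckw-3-7}. With these inputs the iteration in \cite{CK03} proceeds on nested parabolic cylinders $Q_n=(t_0+r^\alpha-r_n^\alpha,\,t_0+r^\alpha]\times B(x,r_n)$ with $r_n=\eta^n r$ for a suitable $\eta\in(0,1)$. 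Given a bounded caloric $u$ with $M:=\esssup_{[t_0,t_0+r^\alpha]\times D}|u|$, applying the caloric identity to the exit from $Q_n$, and using the hitting estimate on whichever of the upper or lower half-level sets of $u$ carries at least half the time-space measure on $Q_{n+1}$, forces $\mathrm{osc}_{Q_{n+1}}u\le \theta\,\mathrm{osc}_{Q_n}u+E_n$ for some $\theta\in(0,1)$. The error $E_n$ encodes the contribution of jumps out of $B(x,r_n)$ and, via the L\'evy system \eqref{e:Levysystem-Y-kappa} together with \hyperlink{B2-a}{{\bf (B2-a)}} and \eqref{e:stochastic-complete}, is controlled by a geometric tail in $n$. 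Summing yields $\mathrm{osc}_{Q_n}u\le c\theta^n M$, which is equivalent to the claimed H\"older estimate.

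The main technical point to check is that all iteration constants remain uniform as $r$ varies. This is exactly where the interior-ball restriction is essential: on $B(x,r)$ the bound $\kappa\le c(R_0)r^{-\alpha}$ makes every Feynman-Kac factor $\exp(-\int_0^{cr^\alpha}\kappa(\overline Y_s^{B(x,r)})\,ds)$ entering the derivations of Propositions \ref{p:ndl2} and \ref{p:E2} and the hitting-time estimate bounded below by a universal constant depending only on $R_0$. Consequently $\lambda$ depends only on $R_0$ (and on the fixed structural data $d,\alpha,\wh R,\Lambda_0$ and the constants in \hyperlink{B1}{{\bf (B1)}}--\hyperlink{B2-b}{{\bf (B2-b)}}, \hyperlink{K1}{{\bf (K1)}}), while $C$ also depends on $b$; both are independent of $x,r,t_0,u$. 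The exceptional set $\sN_u$ is inherited from the quasi-continuous version of $u$ used in the caloric identity. No new ideas beyond \cite{CK03} are required; the only care needed is to track the uniformity of the Feynman-Kac factor, which is immediate from the interior-ball hypothesis.
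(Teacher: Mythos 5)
Your proposal is correct and follows essentially the same route as the paper, which also just invokes the standard argument of \cite[Theorem 4.14]{CK03} with Propositions \ref{p:upper-heatkernel-2}, \ref{p:ndl2}, \ref{p:E2}, together with \eqref{e:stochastic-complete} and \eqref{e:VD}, and notes that conservativeness is not used. Your expanded discussion of the oscillation iteration and of why the interior-ball restriction keeps the Feynman--Kac factor uniform is a faithful unpacking of what the paper leaves implicit under ``standard argument.''
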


	\begin{remark}\label{r:strong-Feller}
			By Proposition \ref{p:upper-heatkernel-2} and  Theorem  \ref{t:phr2}, for any $f \in L^1(D)$, the result of
			Corollary \ref{c:phr-semigroup} holds for $u(t,x)=P^\kappa_t f(x)$. Thus,	the 	transition density  $p^\kappa(t,x,y)$  can be extended continuously to  $(0,\infty) \times  D \times  D$  by a standard argument (see Remark \ref{r:conti}). Similarly,
			for any open set $A \subset D$, the  transition density  $p^{\kappa,A}(t,x,y)$ can be extended continuously 
			to  $(0,\infty) \times  A \times  A$.  Consequently,   \eqref{e:ndl-2} holds for all 
			$z,y \in 	B_{\overline D}(x_0, b t^{1/\alpha})$ and $Y^\kappa$ and $Y^{\kappa, A}$ are strongly Feller.  
	\end{remark}

	In the remainder of this  work, we always take jointly continuous versions of $p^\kappa(t,x,y)$ and $p^{\kappa,A}(t,x,y)$.

	\begin{prop}\label{p:notconservative}
		The process $Y^\kappa$ is not conservative  in the sense that 
		$$
		\lVert \1_D - P^\kappa_t \1_D \rVert_{L^2(D)}>0 \quad \text{for all} \;\, t>0.
		$$
	\end{prop}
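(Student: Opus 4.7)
Fix an arbitrary $t_0>0$ and argue by contradiction, assuming $P^\kappa_{t_0}\1_D = \1_D$ almost everywhere on $D$. Since $\1_D - P^\kappa_{t_0}\1_D\ge 0$, this is precisely the negation of the claim. I split according to whether $\alpha\le 1$ or $\alpha>1$, matching the dichotomy in Proposition~\ref{p:alpha>1}.

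If $\alpha\le 1$, Proposition~\ref{p:alpha>1} gives $Y^0=\overline Y$, so the Feynman--Kac representation \eqref{e:Feymann-Kac} reads $P^\kappa_{t_0}\1_D(x)=\E_x\big[\exp\big(-\int_0^{t_0}\kappa(\overline Y_s)\,ds\big)\big]$. The standing hypothesis forces $\int_0^{t_0}\kappa(\overline Y_s)\,ds=0$ $\P_x$-almost surely for a.e.\ $x\in D$, and integrating over $x$ produces $0$. On the other hand, Fubini together with the symmetry of $\overline p$ and the conservativeness of $\overline Y$ (giving $\int_D \overline p(s,x,y)\,dx=1$) yields
\begin{align*}
\int_D \E_x\bigg[\int_0^{t_0}\kappa(\overline Y_s)\,ds\bigg]dx
=\int_0^{t_0}\int_D \kappa(y)\bigg(\int_D \overline p(s,x,y)\,dx\bigg)dy\,ds
= t_0\int_D \kappa(y)\,dy,
\end{align*}
which is strictly positive by the non-triviality assumption~\eqref{e:kappa-non-trivial}. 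This contradiction closes the case.

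If $\alpha>1$, Proposition~\ref{p:alpha>1} gives $\sF^0\subsetneq\overline\sF$, and Lemma~\ref{l:Hardy2} produces $u\in\mathrm{Lip}_c(\overline D)\cap(\overline\sF\setminus\sF^0)$. Since $0\le e^{-\int_0^{t_0}\kappa(Y^0_s)\,ds}\le 1$, the hypothesis upgrades to $P^0_{t_0}\1_D = \1_D$ a.e., i.e.\ $\P_x(\zeta^0>t_0)=1$ for a.e.\ $x\in D$. Because the law of $Y^0_{t_0}$ under $\P_x$ is absolutely continuous with respect to Lebesgue measure (inherited from $\overline Y$, see Proposition~\ref{p:upper-heatkernel}), iterating the Markov property yields $\P_x(\zeta^0=\infty)=1$ for a.e.\ $x\in D$; thus $\overline Y$ and $Y^0$ have the same law from Lebesgue-a.e.\ starting point in $D$, and consequently $\overline P_t u = P^0_t u$ in $L^2(D)$ for every $t>0$.

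To close the contradiction via the Dirichlet form, note that $u\in\overline\sF$ and the $\sE^0_1$-contractivity of $\overline P_t$ imply $\sup_{t>0}\sE^0_1(P^0_t u, P^0_t u)\le \sE^0_1(u,u)<\infty$, so $\{P^0_t u\}_{t>0}$ is a bounded family in the Hilbert space $(\overline\sF,\sE^0_1)$ and converges to $u$ in $L^2(D)$ as $t\downarrow 0$. Weak compactness yields a subsequence $P^0_{t_k}u$ converging weakly in $\overline\sF$, and the continuous inclusion $\overline\sF\hookrightarrow L^2(D)$ together with uniqueness of $L^2$-limits identifies the weak limit as $u$; since $\sF^0$ is norm-closed and hence weakly closed in $\overline\sF$, and each $P^0_{t_k}u\in\sF^0$, this forces $u\in\sF^0$, contradicting the choice of $u$. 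The main technical hurdle is precisely this case $\alpha>1$: bootstrapping the Lebesgue-a.e.\ identity into an a.s.\ coincidence of $\overline Y$ and $Y^0$ from a.e.\ starting point, and then converting this process-level identification into the Dirichlet-form-level statement $u\in\sF^0$ via weak compactness.
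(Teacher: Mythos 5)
Your argument is correct, and it takes a genuinely different path from the paper's. The paper splits on whether $\kappa$ is nontrivial or identically zero a.e.; you split on $\alpha\le 1$ versus $\alpha>1$. Since \hyperlink{K1}{{\bf (K1)}} forces nontriviality of $\kappa$ only when $\alpha\le 1$, your $\alpha>1$ branch must also handle nonzero $\kappa$, which you do cleanly by first using $e^{-\int_0^{t_0}\kappa\,ds}\le 1$ to reduce the hypothesis to $P^0_{t_0}\1_D=\1_D$, decoupling the conclusion from $\kappa$. For $\alpha\le 1$ the paper simply cites irreducibility of $Y^\kappa$ together with \eqref{e:Feymann-Kac}; your Fubini argument using the trace identity $\int_D\overline p(s,x,y)\,dx=1$ and conservativeness of $\overline Y$ makes the same contradiction explicit and is more self-contained. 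For $\alpha>1$ the paper invokes the one-to-one correspondence between regular Dirichlet forms and symmetric Hunt processes to conclude $\overline\sF=\sF^0$ outright; your substitute is an elementary Hilbert-space argument: $P^0_t u\in\sF^0$ is $\sE^0_1$-bounded (by the $\sE^0_1$-contractivity of $\overline P_t$ and the identification $\overline P_t u = P^0_t u$), it converges to $u$ in $L^2(D)$, and weak closedness of the closed subspace $\sF^0$ forces $u\in\sF^0$, contradicting Lemma~\ref{l:Hardy2}. This replaces a black-box uniqueness theorem with a transparent weak-compactness argument. The only step worth scrutinizing is the bootstrap from $\P_x(\zeta^0>t_0)=1$ for a.e.\ $x$ to $\P_x(\zeta^0=\infty)=1$ for a.e.\ $x$: it requires absolute continuity of the law of $Y^0_{t_0}$, which you correctly invoke via Proposition~\ref{p:upper-heatkernel}. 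Both proofs are sound; yours trades one deep theorem for two elementary but slightly longer arguments.
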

	\begin{proof}   Suppose that \eqref{e:kappa-non-trivial} holds. Since $Y^\kappa$ 
		is an irreducible Hunt process, by using \eqref{e:Feymann-Kac}, we get the result in this case.
		Suppose that $\kappa=0$ a.e. Then  $\alpha>1$  by 
		\hyperlink{K1}{{\bf (K1)}}.      
		Since $Y^0$ can be regarded as a part process of $\overline Y$ killed at $\zeta^0$, if $Y^0$ is conservative, then the process $\overline Y$ started from $D$ is equal to $Y^0$. By the one-to-one correspondence between regular Dirichlet forms and symmetric Hunt processes, it follows that $\overline \sF = \sF^0$. By Proposition \ref{p:alpha>1},  this is a contradiction and we conclude the desired result. \end{proof}

	\begin{lemma}\label{l:finite-lifetime} 
		For any $t>0$ and $x \in D$, we have
		\begin{align*}
			\int_D p^{\kappa}(t,x,y)dy <1.
		\end{align*}
	\end{lemma}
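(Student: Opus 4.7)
The plan is to argue by contradiction. Assume $f_t(x_0) := \int_D p^\kappa(t,x_0,y)\,dy = P^\kappa_t \1_D(x_0) = 1$ for some $x_0 \in D$ and $t>0$. Note $f_t(x)=\P_x(\zeta^\kappa>t) \le 1$ and is non-increasing in $t$. By Remark \ref{r:strong-Feller} (joint continuity of $p^\kappa$) together with the upper bound from Proposition \ref{p:upper-heatkernel-2} (which supplies a locally uniform integrable majorant in $y$), dominated convergence shows that $f_t$ is continuous on $D$.

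By the semigroup property, for every $s \in (0, t/2]$,
\begin{align*}
1 \;=\; f_t(x_0) \;=\; P^\kappa_s f_{t-s}(x_0) \;=\; \E_{x_0}\bigl[f_{t-s}(Y^\kappa_s);\, s<\zeta^\kappa\bigr] \;\le\; \P_{x_0}(s<\zeta^\kappa) \;\le\; 1.
\end{align*}
Equality throughout forces $\P_{x_0}(\zeta^\kappa>s)=1$ and $f_{t-s}(Y^\kappa_s)=1$ $\P_{x_0}$-a.s. Since $u \mapsto f_u(y)$ is non-increasing and $t-s \ge t/2$, one gets $Y^\kappa_s \in K' := \{y \in D : f_{t/2}(y)=1\}$ $\P_{x_0}$-a.s.\ for every $s \in (0, t/2]$. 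Because $f_{t/2}$ is continuous, $K'$ is closed in $D$ and $D \setminus K'$ is open. Fubini gives $\int_0^{t/2}\P_{x_0}(Y^\kappa_s \notin K')\,ds = 0$, and combining this with the right-continuity of $Y^\kappa$ and the openness of $D \setminus K'$ upgrades the conclusion to: $\P_{x_0}$-a.s., $Y^\kappa_s \in K'$ for \emph{every} $s \in (0, t/2]$.

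Now apply the L\'evy system formula \eqref{e:Levysystem-Y-kappa} with $f(x,y) = \1_{D \setminus K'}(y)$ (note $f(\cdot, \partial) = 0$) and the deterministic time $\tau = t/2$. The left-hand side vanishes since $Y^\kappa$ never visits $D \setminus K'$, while the right-hand side equals
\begin{align*}
\E_{x_0}\int_0^{t/2}\int_{D \setminus K'} \frac{\sB(Y^\kappa_s, y)}{|Y^\kappa_s - y|^{d+\alpha}}\,dy\,ds.
\end{align*}
The key fact is that for any $x, y \in D$, applying \hyperlink{B2-b}{{\bf (B2-b)}} with $a := \min\{1,\, (\delta_D(x) \wedge \delta_D(y))/|x-y|\} \in (0,1]$ yields $\sB(x,y) \ge C_2(a) > 0$, so the jump kernel is strictly positive everywhere on $D \times D$. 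Hence if $m_d(D \setminus K') > 0$, the right-hand side would be strictly positive for $Y^\kappa_s \in D$, a contradiction. Therefore $m_d(D \setminus K') = 0$, and openness of $D \setminus K'$ forces $D \setminus K' = \emptyset$, i.e., $f_{t/2} \equiv 1$ on $D$. This contradicts Proposition \ref{p:notconservative} applied at time $t/2$.

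The main obstacle is promoting the a.e.\ statement ``$Y^\kappa_s \in K'$ for each fixed $s$ $\P_{x_0}$-a.s.'' to the pathwise statement ``$Y^\kappa_s \in K'$ for every $s \in (0, t/2]$ $\P_{x_0}$-a.s.''; I handle this by Fubini together with right-continuity of sample paths and openness of the complement. A secondary technical point is the strict positivity of the jump kernel throughout $D \times D$, which is not immediate from \hyperlink{B2-a}{{\bf (B2-a)}} alone but does follow from the boundary-sensitive form of \hyperlink{B2-b}{{\bf (B2-b)}} by choosing the constant $a$ in a point-dependent manner.
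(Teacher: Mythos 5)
Your proof is correct and reaches the conclusion by a genuinely different route from the paper. The paper argues directly: from Proposition \ref{p:notconservative} and the joint continuity of $p^\kappa$, it extracts a ball $B_D(x_0,r_0)$ and an $\eps_0\in(0,1)$ with $\sup_{z\in B_D(x_0,r_0)}\int_D p^\kappa(t/2,z,y)\,dy\le 1-\eps_0$; combining this with irreducibility, i.e.\ strict positivity of $p^\kappa(t,x,y)$ (which also follows from \hyperlink{B2-b}{{\bf (B2-b)}}), and Chapman--Kolmogorov yields $\int_D p^\kappa(t,x,y)\,dy<1$ for every $x$ in a short computation. You instead argue by contradiction, propagating the equality $f_t(x_0)=1$ along paths to localize the process in $K'=\{f_{t/2}=1\}$, then invoke the L\'evy system formula together with strict positivity of the \emph{jump kernel} to force $K'=D$, contradicting Proposition \ref{p:notconservative}. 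Both proofs rest on the same two inputs (the $L^2$ non-conservativeness of Proposition \ref{p:notconservative} and a positivity statement), but the paper's $\eps_0$-bound avoids the somewhat delicate Fubini-plus-right-continuity upgrade from the pointwise statement ``for each fixed $s$, $Y^\kappa_s\in K'$ a.s.'' to the pathwise statement ``a.s., $Y^\kappa_s\in K'$ for all $s$''; your upgrade is correct as written and is genuinely needed, since the random jump times form an uncountable family issue that the pointwise statement alone cannot handle.

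One technical slip worth fixing: the L\'evy system formula \eqref{e:Levysystem-Y-kappa} requires the test function to vanish on the diagonal, and your choice $f(x,y)=\1_{D\setminus K'}(y)$ does not. Replace it by $f(x,y)=\1_{D\setminus K'}(y)\,\1_{\{x\ne y\}}$ (or by $\1_{K'}(x)\1_{D\setminus K'}(y)$). This changes nothing in the computation: the right-hand side is unaffected because the diagonal has Lebesgue measure zero, $f(\cdot,\partial)=0$ still holds, and the left-hand side is still zero because you have already shown $Y^\kappa_s\in K'$ for every $s\le t/2$ a.s.
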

	\begin{proof} 	By Proposition \ref{p:notconservative} and symmetry,  we see that for any $t>0$,
		\begin{align*}\Big	\lVert \1_D(\cdot) - \int_D p^\kappa(t/2,y,\cdot)dy \Big\rVert_{L^2(D)} = \Big \lVert \1_D(\cdot) - \int_D p^\kappa(t/2,\cdot,y)dy \Big\rVert_{L^2(D)}>0.\end{align*}
		Therefore, since $p^\kappa(t/2,\cdot,\cdot)$ is jointly continuous, for any $t>0$,   there exist $x_0 \in D$ and constants $r_0>0$, $\eps_0\in (0,1)$  such that
		\begin{align}\label{e:notconserv}\sup_{z \in B_D(x_0, r_0)}	\int_D p^{\kappa}(t/2, z, y)dy \le 1-\eps_0.\end{align}
		Note that the semigroup $(P^{\kappa}_t)_{t>0}$ is irreducible by  \hyperlink{B2-b}{{\bf (B2-b)}}.
		 See Section \ref{ch:set-up}, the paragraph below \eqref{e:B(x,x)}.
		   Hence, we have
		\begin{align}\label{e:irreducible}
			p^{\kappa}(t,x,y)>0 \quad \text{for all} \;\, t>0, \, x,y \in D.
		\end{align}
		By the semigroup property,  \eqref{e:notconserv} and \eqref{e:irreducible}, we get that  for all $t>0$ and $x \in D$,
		\begin{align*}
			&\int_D	p^\kappa(t,x,y)dy= \int_D p^\kappa(t/2,x,z) \int_Dp^\kappa(t/2,z,y)dydz\\
			&\!\! \le  \int_{D}  p^\kappa(t/2,x,z)dz  +  \bigg( \! \sup_{z \in B_D(x_0, r_0)}	\int_D p^{\kappa}(t/2, z, y)dy -1\bigg)\int_{ B_D(x_0,r_0)} \! p^\kappa(t/2,x,z)dz \\
			&\!\! \le  \int_{D }  p^\kappa(t/2,x,z)dz  -\eps_0\int_{ B_D(x_0,r_0)} p^\kappa(t/2,x,z)dz< \int_{D}  p^\kappa(t/2,x,z)dz \le 1.
		\end{align*}
	\end{proof}
	
	\begin{prop}\label{p:largetime-1}
		There exists  $C>0$  such that for any bounded open subset $A$ of $D$,
		\begin{equation}\label{e:largetime-1}
			p^{\kappa,  A}(t,x,y) \le Cm_d(A)e^{-\lambda_1(t-2)}, \quad\; t\ge3, \; x,y \in A,
		\end{equation}
		where 
		\begin{align}\label{e:eigenvalue}
			\lambda_1:=\inf\left\{ \sE^\kappa(u,u):  u \in \text{\rm Lip}_c(A), \, \lVert u \rVert_{L^2(A)}=1\right\}.
		\end{align}
		Moreover, $\lambda_1$ is strictly positive and  there exists $C'>0$ depending on $A$ such that
		\begin{equation}\label{e:largetime-2}
			\sup_{x,y\in A}	p^{\kappa,  A}(t,x,y) \ge C'e^{-\lambda_1t}, \quad \; t>0.
		\end{equation}
	\end{prop}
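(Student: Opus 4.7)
The plan is to prove all three claims using spectral theory for the self-adjoint semigroup $(P^{\kappa,A}_t)_{t\ge0}$ together with the preliminary on-diagonal upper bound already established in Proposition \ref{p:upper-heatkernel-2}. Since the generator associated with the restricted Dirichlet form (the closure of $\mathrm{Lip}_c(A)$) is self-adjoint with spectrum bounded below by $\lambda_1$, the spectral theorem yields the $L^2$-contraction
$$\|P^{\kappa,A}_t f\|_{L^2(A)}\le e^{-\lambda_1 t}\|f\|_{L^2(A)}, \quad f\in L^2(A),\; t\ge0.$$

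For the upper bound \eqref{e:largetime-1}, I would split the time interval as $t=1+(t-2)+1$ and apply Chapman--Kolmogorov twice to write, for $f_x(z):=p^{\kappa,A}(1,x,z)$ and $g_y(z):=p^{\kappa,A}(1,z,y)$,
$$p^{\kappa,A}(t,x,y)=\langle f_x,\, P^{\kappa,A}_{t-2}g_y\rangle_{L^2(A)}.$$
Cauchy--Schwarz combined with the spectral contraction gives
$$p^{\kappa,A}(t,x,y)\le e^{-\lambda_1(t-2)}\|f_x\|_{L^2(A)}\|g_y\|_{L^2(A)}.$$
Proposition \ref{p:upper-heatkernel-2} with $T=1$ yields $p^{\kappa,A}(1,\cdot,\cdot)\le p^\kappa(1,\cdot,\cdot)\le C$, hence $\|f_x\|_{L^2(A)}^2\le C^2 m_d(A)$, and likewise for $g_y$. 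This delivers the constant $Cm_d(A)$ in \eqref{e:largetime-1}.

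For the positivity $\lambda_1>0$, Proposition \ref{p:upper-heatkernel-2} shows that $\int_A\!\int_A p^{\kappa,A}(t,x,y)^2 dxdy<\infty$, so $P^{\kappa,A}_t$ is Hilbert--Schmidt, hence compact, and the spectrum is discrete. Assuming $\lambda_1=0$ yields a non-zero $\phi_1\in\sF^\kappa$ supported in $A$ with $\sE^\kappa(\phi_1,\phi_1)=0$, which forces both $\sE^0(\phi_1,\phi_1)=0$ and $\int_D\phi_1^2\kappa\, dx=0$. Since $\sB>0$ on $D\times D$, the first equality forces $\phi_1\cdot\mathbf{1}_A$ to be a.e.\ constant on each connected component of $D$; if $A$ is a proper subset of some component of $D$, this constant must be zero on $A$, a contradiction. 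Otherwise we may reduce to $A=D$ with $D$ bounded and $\phi_1\equiv c\ne 0$ on $D$. The second equality then gives $\kappa\equiv 0$ a.e., which by \hyperlink{K1}{{\bf(K1)}} forces $\alpha>1$; but Proposition \ref{p:alpha>1} and Lemma \ref{l:Hardy2} together exclude a non-zero constant from $\sF^0=\sF^\kappa$ in that range, yielding the final contradiction.

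For the lower bound \eqref{e:largetime-2}, I would use the spectral expansion
$$p^{\kappa,A}(t,x,y)=\sum_{n\ge1} e^{-\lambda_n t}\phi_n(x)\phi_n(y),$$
where $(\phi_n)$ is an orthonormal basis of $L^2(A)$ of eigenfunctions with $0<\lambda_1\le\lambda_2\le\ldots$, and the ground state $\phi_1$ can be chosen strictly positive by an irreducibility (Perron--Frobenius) argument based on $\sB>0$ (cf.\ the remark in Section \ref{ch:set-up}). Averaging over $A\times A$ and dropping all terms with $n\ge 2$,
$$\iint_{A\times A} p^{\kappa,A}(t,x,y)\,dxdy \ge e^{-\lambda_1 t}\Big(\int_A\phi_1\Big)^2,$$
and since $p^{\kappa,A}$ is jointly continuous, dividing by $m_d(A)^2$ gives
$$\sup_{x,y\in A} p^{\kappa,A}(t,x,y)\ge \frac{(\int_A\phi_1)^2}{m_d(A)^2}\,e^{-\lambda_1 t}=:C'e^{-\lambda_1 t},$$
with $C'>0$ depending on $A$. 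The hardest step is clearly the positivity argument: it is where the assumption \hyperlink{K1}{{\bf(K1)}} (including the non-triviality of $\kappa$ when $\alpha\le1$) and the structural dichotomy provided by Proposition \ref{p:alpha>1} must be combined carefully, since the reflected form $\sE^0$ has a one-dimensional kernel spanned by constants on each component of $D$.
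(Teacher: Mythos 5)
Your proof is correct and, for the most part, follows the spirit of the paper's argument (Hilbert--Schmidt compactness, eigenfunction expansion, averaging over $A\times A$ for the lower bound), but it genuinely diverges from the paper in two places. For the upper bound you use a three-fold Chapman--Kolmogorov split plus the $L^2$-spectral contraction $\lVert P^{\kappa,A}_{t-2}\rVert_{2\to2}\le e^{-\lambda_1(t-2)}$ and Cauchy--Schwarz; the paper instead uses a five-fold split and manipulates the eigenfunction expansion together with Parseval. Your version is shorter and avoids Fubini-type bookkeeping with the $v_n$'s, while the paper's version is arranged so that the same expansion is reused for the lower bound. The real divergence is in the proof of $\lambda_1>0$. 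The paper deduces it indirectly from Lemma \ref{l:finite-lifetime} (which rests on Proposition \ref{p:notconservative}), via a clever Cauchy--Schwarz chain that produces $1<1$. You instead argue directly at the level of the Dirichlet form: a zero-energy eigenfunction would have $\sE^0(\phi_1,\phi_1)=0$, and since $\sB>0$ on all of $D\times D$ and the integral in $\sE^0$ ranges over the entire product, this forces $\phi_1$ to be a.e.\ constant on all of $D$ (not merely on each connected component, as you write --- the non-local form couples the components, which actually makes the step cleaner than you state). The support constraint then forces $c=0$ unless $m_d(D\setminus A)=0$, in which case $D$ is bounded, $\kappa\equiv 0$, hence $\alpha>1$ by \hyperlink{K1}{{\bf(K1)}}, and Lemma \ref{l:Hardy2} rules out constants in $\sF^0$. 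This is a legitimate and arguably more transparent route, since it makes explicit where the structural assumption \hyperlink{K1}{{\bf(K1)}} and the dichotomy of Proposition \ref{p:alpha>1}/Lemma \ref{l:Hardy2} enter; the paper's route hides the same ingredients one level down inside Proposition \ref{p:notconservative}. Finally, for the lower bound you only need $\int_A\phi_1>0$, which follows from Krein--Rutman non-negativity together with $\lVert\phi_1\rVert_{L^2}=1$; invoking \emph{strict} positivity via irreducibility is more than is required, though it is also available.
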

	\begin{proof}
		By Proposition \ref{p:upper-heatkernel-2}, 
		the semigroup $(P^{\kappa, A}_t)_{ t> 0}$ consists of Hilbert-Schmidt operators, and hence compact operators in $L^2(A)$.
		Thus, since $(P^{\kappa,A}_t)_{t>0}$ is an  $L^2(A)$-contraction   symmetric semigroup, for each $t>0$,  $P^{\kappa,A}_t$ has discrete spectrum $(e^{-\lambda_nt})_{n \ge 1}$,  repeated according to their multiplicity, where $(\lambda_n)_{n \ge 1}$ is a non-decreasing non-negative sequence independent of $t$. By \cite[Theorem 4.4.3]{FOT}, $\text{\rm Lip}_c(A)$ is a core of the Dirichlet form associated with the semigroup $(P^{\kappa,A}_t)_{t>0}$. Hence, the bottom of the spectrum $\lambda_1$ is equal to the right-hand side of \eqref{e:eigenvalue}. Let $(v_n)_{n \ge 1}$ be eigenfunctions corresponding to $(\lambda_n)_{n \ge 1}$, constituting an orthonormal basis for $L^2(A)$. 
		For each  $t>0$ and $x \in A$, consider the eigenfunction expansion 
		$p^{\kappa,A}(t,x,\cdot )  =\sum_{k=1}^\infty a_{t,n}(x)v_n(\cdot)$ in $L^2(A)$. Since $(v_n)_{n \ge 1}$ is an  orthonormal basis for $L^2(A)$, for all $t>0$ and $n \ge 1$, we have
		\begin{align}\label{e:heatkernel-spectral-1}
				a_{t,n}(\cdot )=	\int_A	p^{\kappa,A}(t,\cdot ,y) v_n(y)  dy =  P^{\kappa,A}_t v_n(\cdot)= e^{-\lambda_n t} v_n(\cdot)\quad \text{in $L^2(A)$}.
		\end{align}
		Hence, since	the map $x\mapsto p^{\kappa,A}(t, x, y)$ is continuous and is bounded by Proposition \ref{p:upper-heatkernel-2}, we can assume that  $v_n(x) = e^{\lambda_nt} \int_A	p^{\kappa,A}(t,x,y) v_n(y)  dy$ are continuous functions on $A$ for all $n \ge 1$.	Consequently, we obtain
		\begin{align}\label{e:heatkernel-spectral-2}
			p^{\kappa,A}(t,x,y) = \sum_{n=1}^\infty e^{-\lambda_n t} v_n(x)v_n(y) \quad \text{for all} \;\, (t,x,y) \in (0,\infty) \times A \times A.
		\end{align}
		Using  the semigroup property  in the first equality below, Proposition \ref{p:upper-heatkernel-2}  and \eqref{e:heatkernel-spectral-2} in the first inequality, Fubini's theorem and the symmetry of $p^{\kappa,A}$ in the second equality, 
		and the fact that $\lVert f \rVert_{L^2(A)}^2 = \sum_{n=1}^\infty ( \int_A f(z)v_n(z) dz)^2$ in the third equality, we get that for all $t\ge3$ and $x,y \in A$,
		\begin{align*}
			p^{\kappa,A}(t,x,y)&=\int_{A\times A\times A \times A} p^{\kappa,A}(1/2,x,z_1)\,p^{\kappa,A}(1/2,z_1,z_2)\,p^{\kappa,A}(t-2,z_2,z_3)\nn\\
			&\qquad\qquad\qquad \;\;    \times p^{\kappa,A}(1/2,z_3,z_4)\,p^{\kappa,A}(1/2,z_4,y)\,dz_1\,dz_2\,dz_3\,dz_4 \nn\\
			&\le c_1^2\sum_{n=1}^\infty e^{-\lambda_n(t-2)}\int_{A\times A\times A \times A}  p^{\kappa,A}(1/2,z_1,z_2)\,v_n(z_2) v_n(z_3)\\
			&\qquad\qquad\qquad \qquad \qquad \qquad\qquad  \times  p^{\kappa,A}(1/2,z_3,z_4)\,dz_1\,dz_2\,dz_3\,dz_4 \\
			&= c_1^2\sum_{n=1}^\infty  e^{-\lambda_n(t-2)} \bigg( \int_{A} v_n(z_2)\int_A p^{\kappa,A}(1/2,z_2,z_1) \, dz_1\,dz_2 \bigg)^2 \\
			&\le c_1^2 e^{-\lambda_1(t-2)} \sum_{n=1}^\infty  \bigg( \int_{A} v_n(z_2) \int_A p^{\kappa,A}(1/2,z_2,z_1) \, dz_1 \, dz_2 \bigg)^2\\
			&=  c_1^2 e^{-\lambda_1(t-2)} \int_{A}  \bigg(\int_A p^{\kappa,A}(1/2,z_2,z_1) \, dz_1 \bigg)^2 \, dz_2 \le c_1^2\, m_d(A) e^{-\lambda_1(t-2)}.
		\end{align*}
		Therefore, \eqref{e:largetime-1} holds true.
		
		Next, we show that $\lambda_1>0$. 
		Suppose that $\lambda_1=0$. Then by H\"older's  inequality, symmetry, Fubini's theorem and Lemma \ref{l:finite-lifetime},  it holds that
		\begin{align*}
			1&=	\int_A v_1(x)^2dx =\int_A \bigg(\int_A p^{\kappa,A}(1,x,y)v_1(y)dy \bigg)^2 dx\\
			&\le \int_A \bigg(\int_A p^{\kappa,A}(1,x,y)dy \bigg) \bigg( \int_A p^{\kappa,A}(1,y,x)v_1(y)^2dy\bigg)dx\\
			&< \int_A v_1(y)^2  \int_A p^{\kappa,A}(1,y,x)dxdy <\int_D v_1(y)^2dy =1,
		\end{align*}
		which is a contradiction. Hence, $\lambda_1>0$.
		
		By Krein–Rutman theorem, we can assume that the eigenfunction $v_1$ is non-negative on $A$. Then from \eqref{e:heatkernel-spectral-1}, we obtain
		\begin{align*}
			\sup_{x,y\in A}	p^{\kappa,  A}(t,x,y) \ge e^{-\lambda_1 t}\frac{\sup_{x\in A} v_1(x)}{\int_A v_1(y)dy}=c_2e^{-\lambda_1t},
		\end{align*}
		proving that \eqref{e:largetime-2} holds. The proof is complete. 		
	\end{proof}

	\begin{lemma}\label{l:largetime-2}
		Suppose that $\alpha>1$.	For every $R_0>0$, there exists  a constant  $\lambda_0=\lambda_0(R_0)>0$  such that if $D_0$ is a bounded connected component of $D$ with $\text{\rm diam}(D_0) \le R_0$, then for all $u \in$ \text{\rm Lip}$_c(D_0)$,
		\begin{align}\label{e:eigenvalue-lowerbound}
		\sE^\kappa(u,u)\ge	\sE^0(u,u) \ge \lambda_0 \lVert u \rVert_{L^2(D_0)}^2.
		\end{align}
	\end{lemma}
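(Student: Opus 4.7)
\smallskip

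The first inequality is immediate since $\sE^\kappa(u,u)-\sE^0(u,u)=\int_D u^2 \kappa\, dm_d\ge 0$, so the work lies entirely in the second inequality.

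My plan is to mimic the proof of Lemma \ref{l:Hardy}, but applied globally to $D_0$ rather than to a piece of a ball. Since $D$ is a Lipschitz open set and $D_0$ is a bounded connected component, $D_0$ is itself a bounded Lipschitz domain with Lipschitz constant at most $\Lambda_0$. Consequently, the fractional Hardy inequality \cite[Theorem 1.1]{Dyda04} (which requires $\alpha>1$, available by assumption) applies to $D_0$ and yields
\formula[e:plan-Hardy]{
\int_{D_0} u(z)^2\,\delta_{D_0}(z)^{-\alpha}\,dz \;\le\; c_1 \int_{D_0}\!\int_{D_0} \frac{(u(z)-u(y))^2}{|z-y|^{d+\alpha}}\,dy\,dz
}
for every $u\in \mathrm{Lip}_c(D_0)$, with a constant $c_1$ depending only on $d,\alpha$ and $\Lambda_0$.

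Next, I would localize the right-hand side. By \cite[(13)]{Dyda06}, the double integral in \eqref{e:plan-Hardy} is controlled by
\formula{
c_2\int_{D_0}\!\int_{B(z,\delta_{D_0}(z)/2)}\frac{(u(z)-u(y))^2}{|z-y|^{d+\alpha}}\,dy\,dz.
}
For $(z,y)$ in this truncated region, $\delta_D(z)\wedge\delta_D(y) = \delta_{D_0}(z)\wedge\delta_{D_0}(y)\ge \delta_{D_0}(z)/2 > |z-y|$, so assumption \hyperlink{B2-b}{{\bf (B2-b)}} (with $a=1$) gives $\sB(z,y)\ge C_2$ there. Hence the truncated integral is bounded by $2 C_2^{-1}\sE^0(u,u)$, and combining with \eqref{e:plan-Hardy} produces
\formula[e:plan-intermediate]{
\int_{D_0} u(z)^2\,\delta_{D_0}(z)^{-\alpha}\,dz \;\le\; c_3\,\sE^0(u,u).
}

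Finally, since $\delta_{D_0}(z)\le \mathrm{diam}(D_0)\le R_0$ for every $z\in D_0$, we have $\delta_{D_0}(z)^{-\alpha}\ge R_0^{-\alpha}$, and \eqref{e:plan-intermediate} gives $R_0^{-\alpha}\lVert u\rVert_{L^2(D_0)}^2 \le c_3\,\sE^0(u,u)$. Taking $\lambda_0:=R_0^{-\alpha}/c_3$ (which depends only on $d,\alpha,\Lambda_0,\wh R,C_2$ and $R_0$) completes the argument.

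I do not foresee a genuine obstacle: the only point that requires care is verifying that Dyda's Hardy inequality applies uniformly to $D_0$ with constants depending only on its Lipschitz characteristic (not on the individual component), which is standard since the Lipschitz constant of any component is bounded by $\Lambda_0$. Everything else is a direct transcription of the local argument in Lemma \ref{l:Hardy}, with the global diameter bound $R_0$ replacing the local scale $r$ in the final step.
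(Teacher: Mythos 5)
Your argument is correct in structure and takes a genuinely different route from the paper. Both proofs dispatch the first inequality trivially from $\kappa \ge 0$, and both use the same truncation step --- namely \cite[(13)]{Dyda06} together with assumption \hyperlink{B2-b}{\textbf{(B2-b)}} --- to control the full double integral $\iint_{D_0\times D_0}(u(x)-u(y))^2|x-y|^{-d-\alpha}\,dx\,dy$ by a multiple of $\sE^0(u,u)$. Where you then invoke Dyda's fractional Hardy inequality \cite[Theorem 1.1]{Dyda04} and the bound $\delta_{D_0}(z)^{-\alpha}\ge R_0^{-\alpha}$, the paper instead uses the Loss--Sloane Hardy inequality \cite[Theorem 1.1]{LS10}, which bounds the double integral below by $c_2\int_{D_0} u(x)^2 M_\alpha(x)^{-\alpha}\,dx$, where $M_\alpha$ is a pseudo-distance built from the directional quantities $d^w_{D_0}(x)$ and $\delta^w_{D_0}(x)$, and $c_2$ is an \emph{explicit} constant depending only on $d$ and $\alpha$; the diameter bound $d^w_{D_0}(x)\le R_0$ then directly gives $M_\alpha(x)^{-\alpha}\ge c_3 R_0^{-\alpha}$. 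The practical advantage of Loss--Sloane is precisely that its constant is domain-independent, so no uniformity over the family of components $D_0$ needs to be verified at the Hardy-inequality step. For your route to close, you must verify that Dyda's constant is uniform over all bounded Lipschitz domains with Lipschitz constant at most $\Lambda_0$, localization radius at least $\wh R$, and diameter at most $R_0$. You acknowledge this and call it standard; it is plausible, but the constant would also depend on $\wh R$ and $R_0$, not ``only on $\Lambda_0$'' as you state (harmless here since $\lambda_0$ is permitted to depend on those quantities). The paper's choice of Loss--Sloane elegantly sidesteps the very issue you flag at the end.
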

	\begin{proof}
		  The first inequality in \eqref{e:eigenvalue-lowerbound} is evident. 		 According to  \cite[(13)]{Dyda06} and its proof, there exists $c_1>0$ depending only on $d,\alpha,\Lambda_0, \wh R$ and $R_0$ such that for all  $u \in$ \text{\rm Lip}$_c(D_0)$,
		\begin{align*}
			\int_{D_0}\int_{D_0}  \frac{(u(x)-u(y))^2}{|x-y|^{d+\alpha}} dydx \le c_1 	\int_{D_0}\int_{B(x, \delta_{D_0}(x)/2)}  \frac{(u(x)-u(y))^2}{|x-y|^{d+\alpha}} dydx.
		\end{align*} 
		Thus, by  \hyperlink{B2-b}{{\bf (B2-b)}}, we have
		\begin{equation}\label{e:eigen-1}
			\begin{split}
				\sE^0(u,u) &\ge 	\frac12\iint_{D_0 \times D_0} (u(x)-u(y))^2 \frac{\sB(x,y)}{|x-y|^{d+\alpha}} dxdy \\
				&\ge  C_2c_1^{-1} 	\iint_{D_0 \times D_0}  \frac{(u(x)-u(y))^2}{|x-y|^{d+\alpha}} dxdy.
			\end{split}	
		\end{equation}
		For $x \in D_0$ and  $w \in \R^d$ with $|w|=1$,  define
		\begin{align*}
			d^w_{D_0}(x):=\min\left\{|t|: x + tw \notin D_0\right\} \quad \text{and} \quad 	\delta^w_{D_0}(x):=\sup\left\{|t|: x + tw \in D_0\right\}.
		\end{align*}
		By \cite[Theorem 1.1]{LS10}, we have for all  $u \in$ \text{\rm Lip}$_c(D_0)$,
		\begin{align}\label{e:eigen-2}
			\frac12	\iint_{D_0 \times D_0}  \frac{(u(x)-u(y))^2}{|x-y|^{d+\alpha}} dxdy \ge c_2 \int_{D_0} \frac{u(x)^2}{M_\alpha(x)^\alpha} dx,
		\end{align}
		where 
		$$
		c_2:= \frac{\pi^{(d-1)/2}\Gamma((1+\alpha)/2)}{\alpha\Gamma((d+\alpha)/2)}
		\left[\frac{2^{1-\alpha}}{\sqrt \pi}  \Gamma\bigg( \frac{2-\alpha}{2}\bigg)  \Gamma\bigg( \frac{2-\alpha}{2}\bigg)  -1 \right]
		$$
		and
		$$
		\frac{1}{M_\alpha(x)^\alpha}:= \frac{\Gamma((d+\alpha)/2)}{2\pi^{(d-1)/2} \Gamma((1+\alpha)/2)} \int_{w \in \R^d: |w|=1}  \left[ \frac{1}{d^w_{D_0}(x)} +  \frac{1}{\delta^w_{D_0}(x)}  \right]^\alpha m_{d-1}(dw).
		$$
		Note that while \eqref{e:eigen-2} is proven for $u \in C_c^\infty(D_0)$ in \cite{LS10}, its extension to $\text{Lip}_c(D_0)$ is straightforward. 
		For all $x \in D_0$ and $w\in \R^d$ with $|w|=1$, we have $d^w_{D_0}(x) \le \text{\rm diam}(D_0) \le R_0$ so that $1/M_\alpha(x)^\alpha \ge c_3 R_0^{-\alpha}$ for $c_3>0$ depending only on $d$ and $\alpha$. Therefore, by \eqref{e:eigen-2}, we deduce that there exists $c_4>0$ depending only on $d,\alpha$ and $R_0$ such that for all  $u \in$ \text{\rm Lip}$_c(D_0)$,
		\begin{align*}
			\frac12	\iint_{D_0 \times D_0} \frac{(u(x)-u(y))^2}{|x-y|^{d+\alpha}} dxdy \ge c_4  \lVert u \rVert_{L^2(D_0)}^2.
		\end{align*}
		Combining this with \eqref{e:eigen-1}, we arrive at the desired result. 
	\end{proof} 
	
	\begin{example}\label{ex:alpha<1}
	 In this example, we provide a counterexample  showing that the conclusion of Lemma \ref{l:largetime-2} is not applicable when $\alpha\le 1$. Suppose that $\alpha \le 1$, $D=\cup_{n\ge 4} B(2^n \e_d,2)$, $\kappa(x)=|x|^{-1}$ and $\sB(x,y)=1$ for $x,y \in D$. By \cite[Theorems 1.1 and  2.4]{BBC}, there exists a sequence $(f_n)_{n \ge 4}$ in  $C_c^\infty(B(0,2))$ such that
	\begin{align}\label{e:censored-1}
	&	\lim_{n\to \infty}\left[  \iint_{B(0,2) \times B(0,2)} \frac{((f_n(x)-1)-(f_n(y)-1))^2}{|x-y|^{d+\alpha}} dxdy  + \lVert f_n - \1_{B(0,2)} \rVert_{L^2(B(0,2))}^2 \right]\nn\\
		&=	\lim_{n\to \infty}\left[  \iint_{B(0,2) \times B(0,2)} \frac{(f_n(x)-f_n(y))^2}{|x-y|^{d+\alpha}} dxdy  + \lVert f_n - \1_{B(0,2)} \rVert_{L^2(B(0,2))}^2 \right]=0.
	\end{align}
	Define for $n \ge 4$,
	\begin{align*}
		 u_n(x) =1 \wedge (f_n(x-2^n\e_d) \vee 0).
	\end{align*}
	Note that $u_n \in \text{\rm Lip}_c(B(2^n\e_d,2))$ and  by \eqref{e:censored-1},  
	\begin{align}\label{e:counterexample-1}
	\liminf_{n \to \infty} 	\,\lVert u_n \rVert_{L^2(B(2^n\e_d,2))}^2 = \lim_{n\to \infty} \lVert 1 \wedge (f_n \vee 0) \rVert_{L^2(B(0,2))}^2 = m_d(B(0,2)).
	\end{align}
Further, for all $n \ge 4$, we have 
	\begin{align*}
		\sE^\kappa(u_n,u_n) & \le \frac12\iint_{B(2^n\e_d,2) \times B(2^n\e_d,2)} \frac{(u_n(x)-u_n(y))^2}{|x-y|^{d+\alpha}} dxdy \\
		&\quad +  \iint_{B(2^n\e_d,2) \times B(2^n\e_d,2^{n-1}-4)^c} \frac{u_n(x)^2}{|x-y|^{d+\alpha}} dxdy\\
		&\quad + \int_{B(2^n\e_d,2)} u_n(x)^2 |x|^{-1} dx\\
		&=:I_{n,1}+I_{n,2}+I_{n,3}.
	\end{align*}
	Since $u_n^2 \le 1$, we have $I_{n,2}\le  \int_{B(0,2)}dx \int_{B(0, 2^{n-1}-4)^c} (|y|/2)^{-d-\alpha}dy \to 0 $  and  $	I_{n,3} \le  (2^n-2)^{-1}\int_{B(0,2)}dx \to 0$ as $n \to \infty$.
	Moreover, by using \eqref{e:censored-1}, we see that
	\begin{align*}
		I_{n,1} & = \iint_{B(0,2) \times B(0,2)} \frac{((1 \wedge (f_n(x) \vee 0))-(1 \wedge (f_n(y) \vee 0)))^2}{|x-y|^{d+\alpha}} dxdy \\
		& \le \iint_{B(0,2) \times B(0,2)} \frac{(f_n(x)-f_n(y))^2}{|x-y|^{d+\alpha}} dxdy \to 0 \quad \text{as $n \to \infty$.}
	\end{align*}
	Hence, $\lim_{n\to \infty} \sE^\kappa(u_n,u_n)=0$. By combining this with \eqref{e:counterexample-1}, we deduce that  $\inf_{n\ge 1}\{\sE^\kappa(u_n,u_n)/ \lVert u_n \rVert^2_{L^2(B(2^n\e_d,2))}\} =0$, leading to the failure of the conclusion of Lemma \ref{l:largetime-2}.
	\end{example}

	In order to get a counterpart of Lemma \ref{l:largetime-2} in case $\alpha\le 1$, we consider the following 
 additional condition on $\kappa$:
	
	\medskip

	\noindent \hypertarget{K2}{{\bf (K2)}} If $\alpha\le 1$, then there exist constants $\wh r \in(0, \wh{R})$ and $C_4>0$ such that for every bounded connected component $D_0$ of $D$,
	\begin{align*}
		\kappa(x) \ge C_4 \quad \text{for all $x \in D_0$ with $\delta_{D_0}(x)<\wh r$.}
	\end{align*} 
	
	\begin{lemma}\label{l:largetime-3}
		Suppose that $\alpha\le 1$ and  \hyperlink{K2}{{\bf (K2)}} holds.	For every $R_0>0$, there exists  a constant  $\lambda_0=\lambda_0(R_0)>0$   such that if $D_0$ is a bounded connected component of $D$ with $\text{\rm diam}(D_0) \le R_0$, then for all $u \in$ \text{\rm Lip}$_c(D_0)$,
		\begin{align*}
			\sE^\kappa(u,u) \ge \lambda_0 \lVert u \rVert_{L^2(D_0)}^2.
		\end{align*}
	\end{lemma}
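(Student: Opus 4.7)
My proof plan follows the scheme of Lemma \ref{l:largetime-2}, substituting the Loss--Sloane fractional Hardy inequality (which has the wrong sign when $\alpha\le 1$) by the boundary killing condition \hyperlink{K2}{{\bf (K2)}}. Write $E:=\{x\in D_0:\delta_{D_0}(x)<\wh r\}$ for the boundary strip from \hyperlink{K2}{{\bf (K2)}}, so immediately
\begin{align*}
\int_{D_0} u^2\,\kappa\, dx \;\ge\; C_4\,\|u\|_{L^2(E)}^2.
\end{align*}
The lemma then reduces to establishing a Poincar\'e-type inequality of the form
\begin{align*}
\|u\|_{L^2(D_0)}^2 \;\le\; C\bigl(\|u\|_{L^2(E)}^2 + \sE^0(u,u)\bigr), \qquad u\in \mathrm{Lip}_c(D_0),
\end{align*}
with $C=C(R_0)$ uniform in the component $D_0$.

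First, I bound the raw fractional seminorm by $\sE^0$ exactly as in the opening of the proof of Lemma \ref{l:largetime-2}: Dyda's inequality \cite[(13)]{Dyda06} applied to $D_0$ (which inherits the Lipschitz characteristics of $D$), together with the local lower bound \hyperlink{B2-b}{{\bf (B2-b)}} on $\sB$ for nearby pairs, yields
\begin{align*}
\iint_{D_0\times D_0} \frac{(u(x)-u(y))^2}{|x-y|^{d+\alpha}}\,dx\,dy \;\le\; c_1\,\sE^0(u,u).
\end{align*}
This reduction is valid for all $\alpha\in(0,2)$; the hypothesis $\alpha>1$ in Lemma \ref{l:largetime-2} was used only in the subsequent Loss--Sloane step, which I bypass.

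Next, the elementary bound $u(x)^2\le 2u(y)^2 + 2(u(x)-u(y))^2$, integrated first in $y$ over $E$ and then in $x$ over $D_0$, gives
\begin{align*}
m_d(E)\,\|u\|_{L^2(D_0)}^2 \;\le\; 2\,m_d(D_0)\,\|u\|_{L^2(E)}^2 + 2\iint_{D_0\times E}(u(x)-u(y))^2\,dy\,dx.
\end{align*}
Since $|x-y|\le \mathrm{diam}(D_0)\le R_0$ in the last integral, it is dominated by $R_0^{d+\alpha}$ times the fractional seminorm controlled in the previous step. Because $D_0$ is itself a bounded Lipschitz open set with characteristics inherited from $D$, we have $m_d(D_0)\le c\,R_0^d$, and applying the volume density bound \eqref{e:VD} at any $Q\in\partial D_0$ yields
\begin{align*}
m_d(E) \;\ge\; m_d(B_{D_0}(Q,\wh r)) \;\ge\; c\,\wh r^{\,d},
\end{align*}
uniformly in $D_0$ (since $\wh r<\wh R$). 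Dividing by $m_d(E)$ produces the desired Poincar\'e-type inequality with $C=C(R_0)$.

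Combining all three ingredients gives $\|u\|_{L^2(D_0)}^2 \le C'(R_0)\,\sE^\kappa(u,u)$, so $\lambda_0=1/C'(R_0)$ works. I don't anticipate a substantive obstacle: each ingredient (Dyda's reduction, the boundary control from \hyperlink{K2}{{\bf (K2)}}, the elementary Poincar\'e trick, and the uniform volume lower bound) is standard, and all constants depend only on $R_0$ and the admissible structural parameters. The only mildly delicate point is the uniformity of $m_d(E)\ge c\wh r^{\,d}$ across all admissible $D_0$, which rests on $D_0$ inheriting the Lipschitz characteristics of $D$ so that \eqref{e:VD} applies with a single constant.
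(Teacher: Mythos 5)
Your proof is correct, and it takes a genuinely different route from the paper. The paper's argument for Lemma \ref{l:largetime-3} is probabilistic: it lower-bounds $m_d(A_0)$ via the isoperimetric inequality, uses the near-diagonal heat kernel lower bound of Proposition \ref{p:ndl} together with the Feynman--Kac representation \eqref{e:Feymann-Kac} to show that $\sup_{x\in D_0}P^{\kappa,D_0}_{t_0}\1_{D_0}(x)\le \eps<1$ for a uniform $t_0$ and $\eps$, iterates the semigroup to obtain exponential decay of $p^{\kappa,D_0}$, and then reads off the spectral gap from the eigenvalue characterization in Proposition \ref{p:largetime-1}. Your argument bypasses all of that machinery: it is a direct analytic Poincar\'e-type inequality that uses \hyperlink{K2}{{\bf (K2)}} to anchor the $L^2$-norm on the boundary strip $E$, Dyda's comparability \cite[(13)]{Dyda06} to convert $\sE^0$ into a lower bound for the full fractional seminorm, and the elementary pointwise bound $u(x)^2\le 2u(y)^2+2(u(x)-u(y))^2$ to propagate control from $E$ to all of $D_0$. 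The one point worth noting (and which you handled correctly, perhaps without spelling it out) is that for $Q\in\partial D_0\subset\partial D$ the set $B_D(Q,r)$ is connected and hence lies entirely inside the single component $D_0$ for $r\le\wh R$, so \eqref{e:VD} indeed applies with $B_{D_0}(Q,\wh r)=B_D(Q,\wh r)$ and gives $m_d(E)\ge c\,\wh r^{\,d}$ uniformly over admissible $D_0$. Your route is shorter and more self-contained; the paper's route has the advantage of reusing infrastructure (heat kernel estimates, Feynman--Kac, the spectral characterization) that is needed elsewhere anyway, and of giving quantitative exponential decay of $p^{\kappa,D_0}(t,x,y)$ along the way rather than only the eigenvalue lower bound.
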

	\begin{proof}  Let 
		$$
		A_0:=\left\{ x \in D_0: \delta_{D_0}(x)<\wh r\right\},
		$$
		where $\wh r \in (0,\wh R)$ is the constant in  \hyperlink{K2}{{\bf (K2)}}.		Since $D_0$ is a bounded Lipschitz domain, $m_d(D_0) \ge c_1 \wh R^d$ and $m_d(A_0) \ge c_2\wh rm_{d-1}(\partial D_0)$. Using this and the  isoperimetric inequality, we get that 
		\begin{align}\label{e:eigen-area}
			m_d(A_0) \ge c_3\wh r m_{d}(D_0)^{(d-1)/d} \ge c_4 \wh r\wh R^{d-1}
		\end{align}
		for a constant $c_4>0$ depending only on $d$ and $\Lambda_0$. 
		Let $x_0 \in \overline D$ be such that $D_0 \subset B_{\overline D}(x_0, 2R_0)$. By Proposition \ref{p:ndl} (with $b=1/2$), there exists $c_5>0$ depending on $R_0$ such that for all $t \in [(4R_0)^\alpha, (8R_0)^\alpha ]$ and $y,z \in D_0$,
		\begin{equation}\label{e:largetime-onestep}
			\overline	p(t,z,y)  \ge 	\overline	p^{B_{\overline D}(x_0,16R_0)}(t,z,y)  \ge  c_5t^{-d/\alpha} .
		\end{equation}
		
		Set $t_0:= (8R_0)^\alpha$. By \eqref{e:Feymann-Kac},  \hyperlink{K2}{{\bf (K2)}}, \eqref{e:largetime-onestep} and \eqref{e:eigen-area}, we have for all $x \in D_0$,
		\begin{equation}\label{e:largetime-onestep-1}
			\begin{split} 
				P^{\kappa,D_0}_{t_0} \1_{D_0}(x)&=  \E_x \left[\,\exp\bigg( - \int_0^{t_0} \kappa(\overline Y_s) ds \bigg) : t_0< 
				\bar \tau_{D_0} \right]\\
				&\le  \E_x \left[\,\exp\bigg( - C_4\int_0^{t_0} \1_{A_0}(\overline Y_s) ds \bigg)   \right]\\
				&\le   \exp\bigg( - C_4\int_{t_0/2^\alpha}^{t_0}  \int_{A_0} \overline p(s, x, y )dy ds \bigg)  \\
				&\le   \exp\bigg( -(1-2^{-\alpha}) C_4c_4c_5 t_0^{1-d/\alpha } 
				\wh r \wh R^{d-1}
				 \bigg)  =: \eps \in (0,1).
			\end{split} 
		\end{equation}
		Let $t > 2t_0$ and  $n \ge 1$ be such that   $t \in ((n+1)t_0, (n+2)t_0)$. Using the semigroup property, Proposition \ref{p:upper-heatkernel} and \eqref{e:largetime-onestep-1}, we get that  for all  $x,y \in D_0$,
		\begin{align*}
			p^{\kappa, D_0}(t,x,y) &=	\int_{D_0} \cdots \int_{D_0}	p^{\kappa, D_0}(t- nt_0,x,z_1) \\
			&\qquad\qquad \qquad  \times 	p^{\kappa, D_0}(t_0,z_1,z_2) \cdots p^{\kappa, D_0}(t_0,z_{n},y)\,dz_1 \cdots dz_{n} \\
			&\le c_6 t_0^{-d/\alpha} \bigg( \sup_{v \in D_0} 	\int_{D_0}p^{\kappa,D_0}(t_0,v,z)dz \bigg)^{n}\\
			& \le c_6 t_0^{-d/\alpha} \eps^{n} \le  c_6 t_0^{-d/\alpha}\eps^{-2} e^{-|\log \eps|\, t/t_0}.
		\end{align*}
		Comparing this with  \eqref{e:largetime-2} and letting $t \to \infty$, we conclude that 
		\begin{align*}
			\inf\left\{ \sE^\kappa(u,u):  u \in \text{\rm Lip}_c(D_0), \, \lVert u \rVert_{L^2(D_0)}=1\right\} \ge |\log \eps|/t_0,
		\end{align*}
		which yields the desired result.
	\end{proof}

	\begin{prop}\label{p:upper-heatkernel-3}
		 In addition to 
		 \hyperlink{B1}{{\bf (B1)}}, \hyperlink{B2-a}{{\bf (B2-b)}}, \hyperlink{B2-b}{{\bf (B2-b)}} and \hyperlink{K1}{{\bf (K1)}}, 
		we assume that, when  $\alpha\le 1$, 
		 \hyperlink{K2}{{\bf (K2)}}  holds.  
			Let $x_0 \in \overline D$ and $R_0>0$. There exist constants   $C=C(R_0)>0$ and $\lambda=\lambda(R_0)>0$ independent of $x_0$ such that
		\begin{equation}\label{e:upper-heatkernel-3}
			p^{\kappa,  B_D(x_0, R_0)}(t,x,y) \le Ce^{-\lambda t}, \quad t\ge3, \; x,y \in B_D(x_0,R_0).
		\end{equation}
		In particular, when $D$ is bounded, there exist constants  $C=C(\text{\rm diam}(D))>0$ and $\lambda=\lambda(\text{\rm diam}(D))>0$ such that
		\begin{equation}\label{e:upper-heatkernel-bounded}
			p^\kappa(t,x,y) \le Ce^{-\lambda t}, \quad t\ge3, \; x,y \in D.
		\end{equation}
	\end{prop}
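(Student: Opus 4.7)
The plan is to reduce to Proposition~\ref{p:largetime-1} by taking $A = B_D(x_0, R_0)$. That proposition yields $p^{\kappa, A}(t,x,y) \le C m_d(A) e^{-\lambda_1 (t-2)}$ for $t \ge 3$, where $\lambda_1$ is the bottom of the spectrum of $(\sE^\kappa, \mathrm{Lip}_c(A))$. Since $m_d(A) \le c R_0^d$, it suffices to show that $\lambda_1 \ge \lambda_0(R_0) > 0$ with a constant depending only on $R_0$ and independent of $x_0 \in \overline{D}$. The second assertion \eqref{e:upper-heatkernel-bounded} then follows by specializing to $R_0 = \mathrm{diam}(D)$ with an arbitrary $x_0 \in \overline{D}$, since then $D \subset B_D(x_0, R_0)$.

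To establish the uniform spectral bound, I would decompose any $u \in \mathrm{Lip}_c(A)$ along the connected components of $D$. Since $\mathrm{supp}(u)$ is compact in $D$, only finitely many components $\{D_i\}_{i\in I}$ of $D$ meet it. Writing $u_i := u \mathbf{1}_{D_i} \in \mathrm{Lip}_c(D_i)$ and using that the cross-component terms
$\iint_{D_i \times D_j}(u_i(x)-u_j(y))^2 \sB(x,y)|x-y|^{-d-\alpha}\,dx\,dy$ are nonnegative, one obtains $\sE^\kappa(u,u) \ge \sum_i \sE^\kappa_{D_i}(u_i,u_i)$ and $\|u\|^2_{L^2(A)} = \sum_i \|u_i\|^2_{L^2(D_i)}$, where $\sE^\kappa_{D_i}$ denotes the form $\sE^\kappa$ with $D$ replaced by $D_i$ in the integration and killing terms. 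So the task reduces to a componentwise Poincar\'e-type bound $\sE^\kappa_{D_i}(u_i, u_i) \ge \lambda_0(R_0) \|u_i\|^2_{L^2(D_i)}$.

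For components $D_i$ that are bounded with $\mathrm{diam}(D_i) \le 2R_0$, this bound is exactly the content of Lemma~\ref{l:largetime-2} (when $\alpha > 1$) or Lemma~\ref{l:largetime-3} (when $\alpha \le 1$, where~\hyperlink{K2}{{\bf (K2)}} is invoked), applied to $D_i$ with $R_0$ there replaced by $2R_0$. For the remaining components---unbounded, or of diameter exceeding $2R_0$---the set $D_i \setminus \overline{B(x_0, R_0)}$ is nonempty, and by connectedness of $D_i$ together with the Lipschitz regularity of $D$ one can locate a ball $B(y_\ast, c_1 R_0) \subset D_i \setminus \overline{B(x_0, R_0)}$ with $\delta_D(y_\ast) \ge c_1 R_0$ and $|y_\ast - x_0|$ of order $R_0$. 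Since $u_i$ vanishes on this exterior ball, dropping cross terms and invoking~\hyperlink{B2-b}{{\bf (B2-b)}} on the pair $(x,y)$ with $|x-y| = O(R_0)$ and both $\delta_D(x), \delta_D(y) \gtrsim R_0$ yields
\[
\sE^0_{D_i}(u_i, u_i) \;\ge\; c R_0^{-\alpha} \int_{A \cap D_i \cap \{\delta_D \ge R_0/2\}} u_i(x)^2\, dx.
\]
The complementary boundary contribution $\int_{A \cap D_i \cap \{\delta_D < R_0/2\}} u_i^2$ is absorbed by the Hardy inequality of Lemma~\ref{l:Hardy} when $\alpha > 1$ (using $\delta_D^{-\alpha} \ge (R_0/2)^{-\alpha}$ on that strip, after a boundary covering and partition of unity), and by the jump-out/killing terms combined with Nash's inequality (Proposition~\ref{p:Nash}) applied on the bounded support of $u_i$ when $\alpha \le 1$, also using Proposition~\ref{p:alpha>1} to exploit the polarity of $\partial D$.

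The main obstacle I expect is this last sub-case: $\alpha \le 1$ with an unbounded component $D_i$ on which $\kappa$ may vanish, so that neither the Hardy inequality nor~\hyperlink{K2}{{\bf (K2)}} is available. Resolving it will require carefully combining the jump-out contribution from $\mathrm{supp}(u_i)$ across to $B(y_\ast, c_1 R_0)$ with a Faber--Krahn-type bound derived from Nash's inequality on the bounded support of $u_i$, to produce a uniform positive lower bound on $\sE^0_{D_i}(u_i, u_i)/\|u_i\|^2$. Once all cases yield the componentwise bound, summing over $i$ delivers $\lambda_1 \ge \lambda_0(R_0)$ and the proposition follows.
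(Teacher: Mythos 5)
Your reduction to Proposition~\ref{p:largetime-1} and the split into bounded small-diameter components versus the rest is a genuinely different route from the paper. The paper does not decompose by components of $D$ at all; instead it dichotomizes on whether $\partial B(x_0,R_0+\wh R)\cap\overline D$ is empty. When it is empty, $B_D(x_0,R_0+\wh R)$ is a finite union of \emph{whole} bounded components of diameter $\le 2(R_0+\wh R)$, and Lemmas~\ref{l:largetime-2}/\ref{l:largetime-3} give the spectral gap directly, as in your easy case. When it is nonempty, the paper does \emph{not} try to prove a form-level Poincar\'e inequality; it argues probabilistically: there is a point $z_0\in\partial B(x_0,R_0+\wh R)\cap\overline D$, hence a ball $B_D(z_0,\wh R)$ disjoint from $B_D(x_0,R_0)$, and the near-diagonal lower bound (Proposition~\ref{p:ndl}, valid for all $x\in\overline D$ with constants independent of $\delta_D(x)$ and of $\alpha$) shows $\sup_{v}\int_{B_D(x_0,R_0)}p^{\kappa,B_D(x_0,R_0)}((2r_0)^\alpha,v,z)\,dz\le c_2<1$; iterating via the semigroup property gives the exponential decay with no case distinction on $\alpha$.

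The place where your approach has a genuine gap is precisely the sub-case you flag: $\alpha\le 1$ with an unbounded component $D_i$ where \hyperlink{K2}{{\bf (K2)}} is unavailable. Your plan is jump-out energy to a distant interior ball plus a Faber--Krahn bound from Nash on the bounded support. Two problems. First, the ball you can place in $D_i\setminus\overline{B(x_0,R_0)}$ via the Lipschitz geometry has radius comparable to $\wh R$, not $R_0$, so the claim $\delta_D(y_\ast)\gtrsim R_0$ is not justified for $R_0\gg\wh R$; this affects constants but can be repaired. Second, and more seriously, the jump-out lower bound via \hyperlink{B2-b}{{\bf (B2-b)}} only controls $\int u_i^2$ over the region $\{\delta_D\gtrsim \wh R\}$, leaving the mass near $\partial D$ uncontrolled, and for $\alpha\le 1$ on an unbounded component neither Hardy (which needs $\alpha>1$) nor \hyperlink{K2}{{\bf (K2)}} is available to absorb it. The Nash inequality (Proposition~\ref{p:Nash}) on the support yields $\|u_i\|_2^2\le C\,m_d(\operatorname{supp} u_i)^{2\alpha/d}\bigl(\sE^0(u_i,u_i)+\|u_i\|_2^2\bigr)$, which produces a spectral gap only when $m_d(\operatorname{supp} u_i)$ is small; here the support can have measure $\asymp R_0^d$, so the $\|u_i\|_2^2$ term on the right cannot be absorbed. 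So the sub-case is not merely ``requiring care''---the proposed tools do not close it, and the paper's resolution (the probabilistic exit argument in its Case~1, which never passes through a form-level Poincar\'e inequality and works uniformly over $\delta_D(x)$ and over $\alpha$) is not a cosmetic choice but essential.
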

	\begin{proof}
 	\eqref{e:upper-heatkernel-bounded} directly follows from \eqref{e:upper-heatkernel-3} by setting $R_0 = 2\,\text{diam}(D)$. We prove \eqref{e:upper-heatkernel-3}. Set $B:=B_D(x_0,R_0)$ and  $B':=B_D(x_0,R_0+\wh R)$. 
		We consider the following two cases separately.

		\smallskip
		
		Case 1:   $\partial B(x_0,R_0+\wh R) \cap \overline D \ne \emptyset$.   
	Let $z_0 \in 	\partial B(x_0,R_0+\wh R) \cap \overline D$. 	For all $x \in B$ and $z \in B_D(z_0,  \wh R)$, we have  $|x-z| \le
2(R_0+\wh R)=:r_0$. Hence, using Proposition \ref{p:ndl} (with $b=1/2$) and \eqref{e:VD}, we get that for all $x \in B$,
		\begin{align}\label{e:HKE-largetime}
			\begin{split} 
			\int_{B}	p^{\kappa, B}((2r_0)^\alpha,x,z)dz & \le  \int_{B}	\overline p((2r_0)^\alpha,x,z)dz\\
			& \le 1-   \int_{B_D(z_0,  \wh R)}	\overline p^{B_{\overline D}(x, 4r_0)}((2r_0)^\alpha,x,z)dz  \\
			& \le 1-c_1 (2r_0)^{-d}m_d(B_D(z_0,  \wh R))\le c_2,
			\end{split} 
		\end{align}
		where $c_2\in (0,1)$ is a constant independent of $x_0$.
		
		Let $t>2(2r_0)^\alpha$ and $n_0\ge 1$ be such that $t/(2r_0)^\alpha \in [n_0+1, n_0+2)$. By using the semigroup property, Proposition \ref{p:upper-heatkernel-2} and \eqref{e:HKE-largetime}, we get that for all  $x,y \in B$,
		\begin{align*}
			p^{\kappa, B}(t,x,y) &=	\int_{B} \cdots \int_{B}	p^{\kappa, B}(t- n_0(2r_0)^\alpha,x,z_1) \\
			&\qquad\qquad\qquad  \times 	p^{\kappa, B}((2r_0)^\alpha,z_1,z_2) \cdots p^{\kappa, B}((2r_0)^\alpha,z_{n_0},y)\,dz_1 \cdots dz_{n_0} \\
			&\le c_3 r_0^{-d} \bigg( \sup_{v \in B} 	\int_{B}p^{\kappa, B}((2r_0)^\alpha,v,z)dz \bigg)^{n_0}\\
			& \le c_3 r_0^{-d} c_2^{n_0} \le c_4 e^{-|\log c_2|\, t/(2r_0)^\alpha}.
		\end{align*}
		Since $c_2,c_4$ and $r_0$ are independent of $x_0$, combining this with Proposition \ref{p:upper-heatkernel-2}, we arrive at the result.	
		
		\smallskip

		Case 2:  $\partial B(x_0,R_0+\wh R) \cap \overline D = \emptyset$.
		 Since  $\partial B(x_0,R_0+\wh R) \cap \overline D = \emptyset$, we have	 $B' = \cup_{i=1}^N D_i$ for some bounded connected components $D_i$, $1\le i\le N$, of $D$. Note that  $m_d(D_i) \ge c_5 \wh R^d$ for all $ 1\le i \le N$. Hence,
		\begin{align}\label{e:largetime-N-bound}
			N \le \frac{m_d(B)}{\min\{m_d(D_i): 1\le i\le N\}} \le c_6 (1+R_0/\wh R)^d.
		\end{align}
		Furthermore, since we have assumed that \hyperlink{K2}{{\bf (K2)}}  holds if $\alpha\le 1$, by Lemmas \ref{l:largetime-2} and \ref{l:largetime-3}, 
		 there exists $\lambda_0=\lambda_0(R_0+\wh R)>0$  such that for all $u \in$ \text{\rm Lip}$_c( B')$,
		\begin{align*}
			\lambda_0 \lVert u  \rVert_{L^2(D_i)}^2 \le \sE^\kappa(u,u) \quad \text{for all} \;\, 1\le i\le N. 
		\end{align*}
		By \eqref{e:largetime-N-bound}, it follows that for all $u \in$ \text{\rm Lip}$_c( B')$,
		\begin{align*}
			\sE^\kappa(u,u) \ge N^{-1}\lambda_0\sum_{i=1}^N \lVert u  \rVert_{L^2(D_i)}^2 = N^{-1}\lambda_0 \lVert u  \rVert_{L^2(B')}^2 \ge c_6^{-1} (1+R_0/\wh R)^{-d}\lambda_0 \lVert u  \rVert_{L^2(B')}^2.
		\end{align*}		
		Using this, from Proposition \ref{p:largetime-1}, we conclude  that for all $t \ge 3$ and $x,y\in B$,
		\begin{align*}
			p^{\kappa,  B}(t,x,y) \le p^{\kappa,  B'}(t,x,y) \le c_7m_d(B')e^{- c_6^{-1} (1+R_0/\wh R)^{-d}\lambda_0 (t-2)}.
		\end{align*}
		Since $c_6,c_7$ and $\lambda_0$ are independent of $x_0$, we get \eqref{e:upper-heatkernel-3}.
		
		\smallskip

		The proof is complete. 
	\end{proof}
	
	\begin{remark}
The additional assumption \hyperlink{K2}{{\bf (K2)}} is only used in Case 2 in the proof of Proposition \ref{p:upper-heatkernel-3},
 whereas Case 1 remains valid independently of it.
	\end{remark}

	When $D$ has only finitely many components, we can drop the assumption 	\hyperlink{K2}{{\bf (K2)}} from Proposition \ref{p:upper-heatkernel-3}.
	
	\begin{prop}\label{p:upper-heatkernel-3+}
		 Suppose that $D$ has only finitely many components.	Let $x_0 \in \overline D$ and $R_0>0$. There exist constants   $C=C(D,R_0)>0$ and $\lambda=\lambda(D,R_0)>0$ independent of $x_0$ such that
		\begin{equation}\label{e:upper-heatkernel-3+}
			p^{\kappa,  B_D(x_0, R_0)}(t,x,y) \le Ce^{-\lambda t}, \quad t\ge3, \; x,y \in B_D(x_0,R_0).
		\end{equation}
		In particular, when $D$ is bounded, there exist constants  $C=C(D)>0$ and $\lambda=\lambda(D)>0$ such that
		\begin{equation}\label{e:upper-heatkernel-bounded+}
			p^\kappa(t,x,y) \le Ce^{-\lambda t}, \quad t\ge3, \; x,y \in D.
		\end{equation}
	\end{prop}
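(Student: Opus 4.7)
The plan is to follow the proof of Proposition \ref{p:upper-heatkernel-3} step by step and replace only the Case 2 argument, since (as noted in the remark preceding) the hypothesis \hyperlink{K2}{{\bf (K2)}} was used solely there. Case 1, which applies when $\partial B(x_0, R_0 + \wh R) \cap \overline D \ne \emptyset$, produces constants independent of $x_0$ without any appeal to \hyperlink{K2}{{\bf (K2)}} and carries over verbatim to the present setting.

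For Case 2, where $\partial B(x_0, R_0 + \wh R) \cap \overline D = \emptyset$, write $D = \bigsqcup_{i=1}^m D_i$ for the finitely many connected components $D_1, \dots, D_m$ of $D$, and let $\mathcal I_b \subset \{1, \dots, m\}$ index the bounded components. Any component of $D$ meeting $B(x_0, R_0 + \wh R)$ must be contained in it (otherwise it would meet the sphere $\partial B(x_0, R_0 + \wh R)$, contradicting the Case 2 assumption), so in particular it is bounded, and
\begin{align*}
B' := B_D(x_0, R_0 + \wh R) = \bigcup_{i \in \mathcal I} D_i \quad \text{for some } \mathcal I \subset \mathcal I_b.
\end{align*}
As $x_0$ ranges over $\overline D$, $B'$ therefore belongs to the finite family
\begin{align*}
\mathcal C := \Big\{\bigcup_{i \in \mathcal I} D_i : \emptyset \ne \mathcal I \subset \mathcal I_b\Big\}.
\end{align*}
For each $A \in \mathcal C$, Proposition \ref{p:largetime-1} supplies constants $\lambda_1(A) > 0$ and $C(A) > 0$ so that $p^{\kappa, A}(t,x,y) \le C(A)\, m_d(A)\, e^{-\lambda_1(A)(t-2)}$ for $t \ge 3$ and $x, y \in A$. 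Taking $\lambda := \min_{A \in \mathcal C} \lambda_1(A) > 0$ and $C_* := \max_{A \in \mathcal C} C(A) m_d(A)$, and using $B \subset B'$ together with $p^{\kappa, B} \le p^{\kappa, B'}$, yields \eqref{e:upper-heatkernel-3+} in Case 2 (with the trivial bound when $B' = \emptyset$). The estimate \eqref{e:upper-heatkernel-bounded+} for bounded $D$ follows by choosing $R_0 = 2\,\text{\rm diam}(D)$.

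The main delicate point is the strict positivity of $\lambda_1(A)$ when $\kappa$ might vanish identically on some of the components comprising $A$; here there is no component-wise Hardy-type estimate available (compare Example \ref{ex:alpha<1}), so Lemmas \ref{l:largetime-2} and \ref{l:largetime-3} cannot be invoked directly. The resolution is that the proof of Proposition \ref{p:largetime-1} requires only the inequality $\int_A p^{\kappa, A}(1, y, \cdot)\, dm_d < 1$, which follows from Lemma \ref{l:finite-lifetime} via $p^{\kappa, A} \le p^\kappa$. Lemma \ref{l:finite-lifetime}, in turn, relies only on irreducibility of $Y^\kappa$ on all of $D$ (guaranteed by \hyperlink{B2-b}{{\bf (B2-b)}}, since the jump kernel is strictly positive even between distinct components) combined with the global non-triviality of $\kappa$ from \hyperlink{K1}{{\bf (K1)}}. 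Thus no component-wise assumption on $\kappa$ is needed, and the finiteness of the number of components serves only to reduce the supremum over $x_0$ to a minimum over the finite family $\mathcal C$.
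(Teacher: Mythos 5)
Your proof is correct, but it takes a genuinely different route from the paper's. In the remaining case (when $\partial B(x_0, R_0 + \wh R)\cap\overline D=\emptyset$ and $B'\neq D$), the paper does \emph{not} invoke Proposition~\ref{p:largetime-1}; instead, it finds a component $D_{i_0}$ disjoint from $B(x_0,R_0+\wh R)$, picks $z_0\in\partial D_{i_0}$ with $|x_0-z_0|=\mathrm{dist}(x_0,D_{i_0})$, and shows that the finite number of components forces $|x_0-z_0|<c_1(D,R_0)$ uniformly in $x_0$; it then reruns the Case~1 exit-probability argument of Proposition~\ref{p:upper-heatkernel-3} with this farther-away $z_0$. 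Your argument instead exploits the combinatorial observation that $B'$ must be a union of bounded components, so $B'$ ranges over the finite family $\mathcal C$; you then quote Proposition~\ref{p:largetime-1} for each $A\in\mathcal C$ and take the worst constant. The key insight you identify — and it is the right one — is that Proposition~\ref{p:largetime-1} \emph{already} delivers $\lambda_1(A)>0$ without \hyperlink{K2}{{\bf (K2)}}, because its proof of $\lambda_1>0$ goes through Lemma~\ref{l:finite-lifetime} (non-conservativeness plus global irreducibility of $Y^\kappa$), whereas \hyperlink{K2}{{\bf (K2)}} was only needed in the proof of Proposition~\ref{p:upper-heatkernel-3} to get a \emph{quantitative}, $x_0$-uniform lower bound on $\lambda_1$ via Lemmas~\ref{l:largetime-2} and~\ref{l:largetime-3}; with finitely many components, quantitative uniformity is free since the family $\mathcal C$ is finite. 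Your route is arguably cleaner — it avoids rederiving the exit estimate for a new $z_0$ — while the paper's route is slightly more self-contained in that it reuses a single concrete mechanism (escape to the complement from a nearby boundary patch). One minor attribution note: the non-conservativeness underlying Lemma~\ref{l:finite-lifetime} comes from Proposition~\ref{p:notconservative}, which when $\alpha>1$ and $\kappa\equiv 0$ uses Proposition~\ref{p:alpha>1} rather than \hyperlink{K1}{{\bf (K1)}} per se; and irreducibility follows from $\sB>0$ on $D\times D$ (part of the standing assumption on $\sB$), not specifically from \hyperlink{B2-b}{{\bf (B2-b)}}. Neither point affects the validity of your argument. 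The parenthetical about $B'=\emptyset$ is vacuous (since $x_0\in\overline D$, its component meets $B(x_0,R_0+\wh R)$ and is contained in it, so $B'\neq\emptyset$), but it is harmless.
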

	\begin{proof} 
		\eqref{e:upper-heatkernel-bounded+} is a direct consequence of \eqref{e:upper-heatkernel-3+}. We prove \eqref{e:upper-heatkernel-3+}. Set $B:=B_D(x_0,R_0)$ and  $B':=B_D(x_0,R_0+\wh R)$.  If $B'=D$, then $D$ is bounded so that the result follows from Proposition \ref{p:largetime-1} (with $A=D$). 
		If   $\partial B(x_0,R_0+\wh R) \cap \overline D \ne \emptyset$, then  by applying the arguments for Case 1 in  Proposition \ref{p:upper-heatkernel-3}, we get the result.

		Suppose that $B'\neq D$ and   $\partial B(x_0, R_0+\wh R) \cap \overline D = \emptyset$. Write		 $D= \cup_{i=1}^N D_i$ where $D_i$, $1\le i\le N$, are connected components of $D$. For each $i$, 
		we either have $D_i \subset B(x_0, R_0+\wh R)$ or $D_i \cap B(x_0, R_0+\wh R) = \emptyset$. Since $ B'\ne D$, there exists at least one component $D_{i_0}$ such that $D_{i_0} \cap B(x_0, R_0+\wh R) = \emptyset$. Pick such an $i_0$ and let $z_0 \in \partial D_{i_0}$ be such that $|x_0-z_0|=$ dist$(x_0, D_{i_0})$.  Note that  $x_0$ belongs to a bounded component $D_{i_1}$, $i_1 \ne i_0$, in this case and the distance between $D_{i_0}$ and $D_{i_1}$ is bounded above by a positive constant  since $D$ has a finite number of connected components.	Hence,  there exists $c_1=c_1(D,R_0)>0$ independent of $x_0$ such that  $|x_0-z_0| <c_1$. Now, by repeating the arguments for Case 1 in Proposition \ref{p:upper-heatkernel-3}, we obtain the desired result. 
	\end{proof}

	For the last result in this subsection,
	we need a weaker form of  \hyperlink{UBS}{{\bf (UBS)}}:
	
	\medskip
	
	\noindent \hypertarget{IUBS}{{\bf (IUBS)}} There exists  $C>0$ such that  for  a.e. $x,y\in  D$,
	\begin{align*}
		\sB(x,y)\le \frac{C}{r^d}\int_{B(x,r)} \sB(z,y)dz \quad 
		\text{whenever}\;\, 0< r \le \frac12 (|x-y| \wedge \delta_D(x) \wedge \wh R).
	\end{align*}

	Condition \hyperlink{IUBS}{{\bf (IUBS)}} implies that \eqref{e:UJS} holds for a.e. $x,y\in  D$  and $0<r \le  2^{-1}(|x-y|\wedge \delta_D(x)\wedge \wh R)$. Using this, \eqref{e:stochastic-complete},  \eqref{e:VD},   the L\'evy system formula
	\eqref{e:Levysystem-Y-kappa} and Propositions \ref{p:upper-heatkernel-2},  \ref{p:ndl2} and \ref{p:E2}, one can repeat the arguments in the proof of \cite[Theorem 4.3]{CKSV23} and obtain 
	
	\begin{thm}\label{t:phi2}
		Suppose that $\sB$ 	satisfies \hyperlink{IUBS}{{\bf (IUBS)}}.  
		For every $R_0>0$,	there exist constants $\eps>0$ and $C,K\ge1$ depending on $R_0$
		such that for all  $x\in  D$, $0<r< \delta_D(x)\wedge R_0$, $t_0\ge 0$, and any non-negative function $u$ 
		on $(0,\infty)\times D$ which is caloric on $(t_0, t_0+4\eps  r^\alpha] \times B(x, r)$ with respect to $Y^\kappa$, we have	
		\begin{equation*}
			\sup_{(t_1,y_1)\in Q_{-}}u(t_1,y_1)\le C \inf_{(t_2, y_2)\in Q_{+}}u(t_2, y_2),	
		\end{equation*}	
		where $Q_-=[t_0+\eps r^\alpha, t_0+2\eps  r^\alpha]\times B(x, r/K)$ and 
		$Q_+=[t_0+3\eps  r^\alpha, t_0+4\eps r^\alpha]\times B(x, r/K)$.
	\end{thm}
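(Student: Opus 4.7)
The plan is to follow the probabilistic template already used for Theorem \ref{t:phi}, but localized to interior balls. Since the ball $B(x,r)$ satisfies $r<\delta_D(x)\wedge R_0$, the process $Y^\kappa$ started inside $B(x,r)$ cannot reach $\partial D$ before exiting the ball, and by \hyperlink{K1}{{\bf (K1)}} the killing potential is bounded on $B(x,r)$ by $cr^{-\alpha}$. Hence, on the scale $r^\alpha$, the Feynman–Kac factor in \eqref{e:Feymann-Kac} is bounded below by a positive constant, so up to inessential constants the process $Y^\kappa$ behaves like $\overline Y$. This is precisely why the weaker interior condition \hyperlink{IUBS}{{\bf (IUBS)}} (rather than the full \hyperlink{UBS}{{\bf (UBS)}}) is enough.

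First I would observe that \hyperlink{IUBS}{{\bf (IUBS)}} implies the interior (UJS)
\begin{align*}
	\frac{\sB(x,y)}{|x-y|^{d+\alpha}}\le \frac{C}{r^d}\int_{B(x,r)}\frac{\sB(z,y)}{|z-y|^{d+\alpha}}\,dz
	\quad\text{for a.e.\ } x,y\in D \text{ and } 0<r\le \tfrac12(|x-y|\wedge\delta_D(x)\wedge \wh R),
\end{align*}
by the same computation used to derive \eqref{e:UJS} from \hyperlink{UBS}{{\bf (UBS)}}. This is the jump-kernel regularity condition needed to ``spread'' jump-based information. Next I would verify, on interior balls $B(x_0,r)$ with $r<\delta_D(x_0)\wedge R_0$, the remaining hypotheses of the general parabolic Harnack framework of \cite{CKW20, CKW21, CKSV23}: stochastic completeness on the time scale $r^\alpha$ from \eqref{e:stochastic-complete}; volume doubling from \eqref{e:VD}; the near-diagonal lower bound $p^{\kappa,B(x_0,r)}(t,z,y)\ge ct^{-d/\alpha}$ for $t\le (br)^\alpha$ from Proposition \ref{p:ndl2}; mean exit time $\E_{x_0}\tau_{B(x_0,r)}\asymp r^\alpha$ from Proposition \ref{p:E2}; and the off-diagonal upper heat kernel bound from Proposition \ref{p:upper-heatkernel-2}. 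These together constitute the interior versions of NDL$(\phi)$, E$(\phi)$ and UHK$(\phi)$ with $\phi(r)=r^\alpha$.

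With these ingredients, the argument of \cite[Theorem 4.3]{CKSV23} applies verbatim, using the L\'evy system formula \eqref{e:Levysystem-Y-kappa} in place of \eqref{e:Levysystem-Y-bar}. The core of that argument runs as follows: given a candidate point $(t_2,y_2)\in Q_+$ where one wishes to bound $u$ from below, one writes $u(t_2,y_2)=\E_{(t_2,y_2)}[u(Z_{\tau^Z_V})]$ for an appropriately chosen space-time cylinder $V$ contained in the domain of caloricity. The near-diagonal lower bound forces the event $\{Y^\kappa$ stays close to $y_2$ for a fixed fraction of time$\}$ to have probability bounded below. Applying the L\'evy system and the interior (UJS) above to the jump at $\tau^Z_V$, the probability of jumping into a small spatial ball around a given point is bounded below by the \emph{mean} jump rate into that ball, which in turn is controlled from below by the expected occupation time of a set where $u$ is already large. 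Iterating this one-step estimate over a finite parabolic chain (whose length depends only on $K$) gives $u(t_2,y_2)\ge c\sup_{Q_-} u$.

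The main technical obstacle, as always in this framework, is the iteration that upgrades the one-step jump comparison to a genuine supremum--infimum comparison between $Q_-$ and $Q_+$. The delicate point is to show that the sublevel set $\{u\ge \lambda\}$ occupies a nontrivial portion of a slightly enlarged parabolic cylinder whenever $\sup_{Q_-}u>\lambda$: one uses the near-diagonal lower bound for $p^{\kappa,B(x,r)}$ to control how much this set can shrink under the heat semigroup, and then combines this with the interior (UJS) to transport it to $(t_2,y_2)$. Because all of this takes place inside $B(x,r)\subset D$ with $r<\delta_D(x)\wedge R_0$, boundary effects play no role, and the argument is insensitive to the behaviour of $\sB$ or $\kappa$ near $\partial D$. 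The resulting constants $\eps, C, K$ depend on $R_0$ through the constants in Propositions \ref{p:upper-heatkernel-2}--\ref{p:E2}, but not on $x$, $r$ or $t_0$.
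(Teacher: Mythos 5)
Your proposal is correct and takes essentially the same approach as the paper: verify that \hyperlink{IUBS}{{\bf (IUBS)}} yields the interior (UJS) condition, collect the interior versions of NDL, E, UHK and volume doubling from \eqref{e:stochastic-complete}, \eqref{e:VD} and Propositions \ref{p:upper-heatkernel-2}, \ref{p:ndl2}, \ref{p:E2}, and then repeat the argument of \cite[Theorem 4.3]{CKSV23} with the L\'evy system formula \eqref{e:Levysystem-Y-kappa}. The paper's proof is exactly this citation-based argument; your additional opening paragraph (Feynman--Kac heuristic for why interior (UJS) suffices) and the sketch of the iteration are accurate elaborations but not a different route.
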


	\subsection{Interior estimates of the Green function of $Y^\kappa$}\label{s-int-green}
	For an open set $A \subset D$, we define	\begin{align*}		G^{\kappa,A}(x,y)=\int_0^\infty p^{\kappa,A}(t,x,y)  dt, \quad x,y \in A.	\end{align*}
	When $G^{\kappa, A}(\cdot, \cdot)$ is not identically infinite, it is called the Green function of $Y^\kappa$ in $A$. 	Note that	by  Propositions \ref{p:upper-heatkernel-2} and \ref{p:largetime-1}, for any bounded open subset $A$ of $D$,  $G^A(x,y)<\infty$ for all $x,y \in A$, $x\ne y$. 
	We extend $G^{\kappa, A}$ to a function on $(D \cup \{\partial\})  \times (D \cup \{\partial\}) $ by letting $G^{\kappa,A}(x,y)=0$ if $x \in (D \cup \{\partial\})  \setminus A$ or $y \in (D \cup \{\partial\})  \setminus A$. We  denote  $G^{\kappa,D}(x,y)$ by  $G^\kappa(x,y)$.

	The proof of the next proposition uses the notion of  harmonic  and regular harmonic functions so we recall these definitions.
	
	\begin{defn}\label{df:harmonic}
	A Borel function $f:D\to [0,\infty]$ is said to be \emph{harmonic} in an open set $V\subset D$ with respect to the process $Y^{\kappa}$ if 
$f$ is finite on $V$	and, 
	for every open $U\subset \overline{U}\subset V$,	$$		f(x)=\E_x[f(Y^{\kappa}_{\tau_U})], \quad \text{for all } x\in U.	$$
		The function $f$ is said to be \emph{regular harmonic} in $V$ with respect to $Y^{\kappa}$ if
		 $f$ is finite on $V$	and, 	$$		f(x)=\E_x[f(Y^{\kappa}_{\tau_V})], \quad \text{for all } x\in V.	$$ \end{defn}
	
	It follows from the strong Markov property that a regular harmonic function is harmonic. Further, if $A\subset D$ is an open set, then  $G^{\kappa, A}(\cdot, y)$ is harmonic in $A\setminus \{y\}$ and regular harmonic in $A\setminus B(y, \delta)$ for any $\delta>0$, see e.g.~\cite[Section 2, Remark 2.3]{KSV20}.
	
	\begin{prop}\label{p:green-lower-bound}
		Let 
		$R_0>0$. For any $\eps \in (0,1)$,	there exists  $C=C(R_0,\eps)>0$ 
		such that for all $x_0 \in \overline D$, $R \in (0,R_0]$ and $x,y\in B_D(x_0, R/8)$ with $|x-y|\le \eps^{-1}(\delta_D(x)\wedge \delta_D(y))$,
		$$
		G^{\kappa, B_D(x_0,R)}(x,y)\ge C |x-y|^{-d+\alpha}.
		$$
	\end{prop}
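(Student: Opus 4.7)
The plan is to obtain the lower bound by a single-jump argument that combines the L\'evy system formula \eqref{e:Levysystem-Y-kappa} with the interior exit-time estimate of Proposition \ref{p:E2}, exploiting that the jump kernel is uniformly bounded below on pairs of points lying sufficiently deep inside $D$. Heuristically, the process starting from $x$ stays in a small ball around $x$ for a time of order $|x-y|^\alpha$, then jumps into a small ball around $y$, and spends there a time of order $|x-y|^\alpha$ before leaving; combining the three contributions yields the desired exponent $\alpha-d$.

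First I would set $r := \eps|x-y|/2$ and verify the elementary geometric facts that $B(x,r)$ and $B(y,r/2)$ are disjoint subsets of $B_D(x_0,R)$, that $r < \delta_D(x)\wedge R_0$ and $r/2 < \delta_D(y)\wedge R_0$, and that for all $z \in B(x,r)$ and $w\in B(y,r/2)$ one has
\begin{align*}
\delta_D(z)\wedge \delta_D(w) \ge (\eps/2)|x-y|
\qquad\text{and}\qquad
|x-y|/4 \le |z-w| \le 2|x-y|.
\end{align*}
These are immediate from $|x-x_0|\vee |y-x_0|<R/8$, $R \le R_0$, and the hypothesis $\delta_D(x)\wedge \delta_D(y)\ge \eps|x-y|$. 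Consequently, \hyperlink{B2-b}{{\bf (B2-b)}} applied with $a=\eps/4$ gives a constant $c_1=c_1(\eps)$ such that
\begin{align*}
\sB(z,w)|z-w|^{-d-\alpha} \ge c_1\,|x-y|^{-d-\alpha}
\qquad\text{for all } z\in B(x,r),\ w\in B(y,r/2).
\end{align*}

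Next, since $G^{\kappa,B_D(x_0,R)}(\cdot,y)$ is regular harmonic in $B(x,r)\subset B_D(x_0,R)\setminus\{y\}$ with respect to $Y^\kappa$ and vanishes off $B_D(x_0,R)$, I would apply the L\'evy system \eqref{e:Levysystem-Y-kappa} with stopping time $\tau := \tau_{B(x,r)}$ to the non-negative function $(u,v)\mapsto G^{\kappa,B_D(x_0,R)}(v,y)\,\1_{B(y,r/2)}(v)$; discarding the contribution from jumps outside $B(y,r/2)$ and using the domain monotonicity $G^{\kappa,B_D(x_0,R)}(w,y)\ge G^{\kappa,B(y,r)}(w,y)$, this gives
\begin{align*}
G^{\kappa,B_D(x_0,R)}(x,y)
&\ge \E_x\bigg[\int_0^{\tau} \int_{B(y,r/2)} \frac{\sB(Y^\kappa_s,w)}{|Y^\kappa_s-w|^{d+\alpha}}
G^{\kappa,B(y,r)}(w,y)\,dw\,ds\bigg]\\
&\ge c_1\,|x-y|^{-d-\alpha}\, \E_x[\tau] \int_{B(y,r/2)} G^{\kappa,B(y,r)}(w,y)\,dw.
\end{align*}

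Finally, Proposition \ref{p:E2} applied to $B(x,r)$ gives $\E_x[\tau]\ge c_2(\eps)\, |x-y|^\alpha$, while the symmetry of $p^{\kappa,B(y,r)}$ and Fubini yield
\begin{align*}
\int_{B(y,r/2)}G^{\kappa,B(y,r)}(w,y)\,dw
=\int_0^\infty \P_y\bigl(Y^{\kappa,B(y,r)}_t\in B(y,r/2)\bigr)\,dt
\ge \E_y[\tau_{B(y,r/2)}] \ge c_3(\eps)\,|x-y|^\alpha,
\end{align*}
by another application of Proposition \ref{p:E2}. Multiplying the three estimates delivers the claimed bound $G^{\kappa,B_D(x_0,R)}(x,y)\ge C(R_0,\eps)|x-y|^{-d+\alpha}$. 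The argument is essentially routine; the only mild technical point is to keep track of the constants in the geometric set-up so that they depend only on $R_0$ and $\eps$.
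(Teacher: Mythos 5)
Your argument is correct, and it is genuinely different from the proof in the paper. The paper splits into two cases according to whether $|x-y|$ is smaller than $\delta_D(y)/2$ or not: in the near-diagonal case it invokes the heat-kernel lower estimate of Proposition \ref{p:ndl2} and integrates $p^{\kappa,B(y,2|x-y|)}(t,x,y)\ge ct^{-d/\alpha}$ over a window of $t$ of length $\asymp|x-y|^\alpha$; in the other case it performs a single-jump argument feeding into Case~1. You instead run a single-jump argument in all cases, replacing the near-diagonal heat-kernel bound by the identity
$$
\int_{B(y,r/2)}G^{\kappa,B(y,r)}(w,y)\,dw=\int_0^\infty \P_y\bigl(Y^{\kappa,B(y,r)}_t\in B(y,r/2)\bigr)\,dt\ge \E_y[\tau_{B(y,r/2)}],
$$
which reduces everything to the exit-time estimate of Proposition \ref{p:E2}. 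The geometric bookkeeping with $r=\eps|x-y|/2$ is consistent: $B(x,r)$ and $B(y,r/2)$ are disjoint, lie inside $B_D(x_0,R)$, and Proposition \ref{p:E2} is applicable to both since $r<\delta_D(x)\wedge R_0$ and $r/2<\delta_D(y)\wedge R_0$; the jump-kernel lower bound from \hyperlink{B2-b}{{\bf (B2-b)}} with $a=\eps/4$ is correct given $\delta_D(z)\wedge\delta_D(w)\ge\eps|x-y|/2$ and $|z-w|\le 2|x-y|$; and the regular harmonicity of $G^{\kappa,B_D(x_0,R)}(\cdot,y)$ in $B(x,r)$ together with the L\'evy system formula \eqref{e:Levysystem-Y-kappa} gives the displayed inequality as claimed (the killing term drops out since your test function vanishes at $\partial$). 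What your route buys is the avoidance of Proposition \ref{p:ndl2} and the case split, making the proof marginally more elementary and uniform; the paper's route is equally valid but leans on the heavier near-diagonal lower bound that it has already established for other purposes.
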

	\begin{proof} 
		Let $x_0 \in \overline D$, $R \in (0,R_0]$ and
		$x,y\in B_D(x_0, R/8)$ with $|x-y|\le \eps^{-1}(\delta_D(x)\wedge \delta_D(y))$. Without loss of generality, we assume that $\delta_D(x) \le \delta_D(y)$. Write $B:=B_D(x_0,R)$. We consider two different cases separately.
		
		\smallskip

		\noindent Case 1: $|x-y| < \delta_D(y)/2$. Since $|x-y|<R_0/4$,  using Proposition \ref{p:ndl2} with $b=5/6$ (see Remark \ref{r:strong-Feller}), we obtain
		\begin{align*}
			&	G^{\kappa, B} (x,y) \ge 	G^{\kappa, B(y,2|x-y|)} (x,y) \\
			&\ge \int_{(6/5)^{\alpha}|x-y|^\alpha}^{(5/3)^{\alpha}|x-y|^\alpha} p^{\kappa, B(y,2|x-y|)} (t,x,y)dt\ge c_1|x-y|^{-d+\alpha}. 
		\end{align*}
		
		\noindent Case 2: $\delta_D(y)/2 \le |x-y| \le  \eps^{-1}\delta_D(x)$. Then  $y \notin B(x,\delta_D(x)/4)$ since $\delta_D(x) \le \delta_D(y)$. Hence  $G^{\kappa, B}(\cdot, y)$ is regular harmonic in $B(x, \delta_D(x)/4)$ and we get
		\begin{align}\label{e:interior-green-1}
			G^{\kappa, B} (x,y) &\ge \E_x \left[ 	G^{\kappa, B} (Y^\kappa_{
				\tau_{B(x,\delta_D(x)/4)}},y) : Y^\kappa_{
				\tau_{B(x,\delta_D(x)/4)}} \in B(y, \delta_D(y)/4) \right]\\
			& \ge \P_x \left(Y^\kappa_{
				\tau_{B(x,\delta_D(x)/4)}} \in B(y, \delta_D(y)/4) \right)   \inf_{w \in B(y, \delta_D(y)/4)} G^{\kappa, B}(w,y).\nn
		\end{align}
		By Case 1, we have
		\begin{align}\label{e:interior-green-2}
			\inf_{w \in B(y, \delta_D(y)/4)} G^{\kappa, B}(w,y) \ge c_1 (\delta_D(y)/4)^{-d+\alpha} \ge c_1 (|x-y|/2)^{-d+\alpha}.
		\end{align}
		On the other hand, note that for any $z\in B(x, \delta_D(x)/4)$ and $w \in B(y, \delta_D(y)/4)$, we have 
		$|z-w|< |x-y| + (\delta_D(x)+\delta_D(y))/4 \le 2|x-y|$ and $\delta_D(z) \wedge \delta_D(w) \ge 3\delta_D(x)/4 \ge 3\eps |x-y|/4$.
		Hence, by \hyperlink{B2-b}{{\bf (B2-b)}}, there exists $c_2>0$ depending only on $\eps$ such that  for all $z\in B(x, \delta_D(x)/4)$ and $w \in B(y, \delta_D(y)/4)$, 
		\begin{align}\label{e:interior-green-3}
			\sB(z, w) |z-w|^{-d-\alpha} \ge c_2 |x-y|^{-d-\alpha}.
		\end{align} 
		Using the L\'evy system formula \eqref{e:Levysystem-Y-kappa}, \eqref{e:interior-green-3} and Proposition \ref{p:E2}, 
		since $\delta_D(y) \ge \delta_D(x) \ge \eps |x-y|$, we obtain
		\begin{equation}\label{e:interior-green-4}
			\begin{split}
				&	\P_x \left(Y^\kappa_{
					\tau_{B(x,\delta_D(x)/4)}} \in B(y, \delta_D(y)/4) \right) \\
				& = \E_x \left[ \int_0^{
					\tau_{B(x,\delta_D(x)/4)}} \int_{B(y, \delta_D(y)/4)} \frac{\sB(Y^\kappa_s, w)}{|Y^\kappa_s - w|^{d+\alpha}} dw ds \right] \\
				&\ge c_2|x-y|^{-d-\alpha} m_d(B(y,\delta_D(y)/4)) \E_x\big[ 
				\tau_{B(x,\delta_D(x)/4)}\big]\\
				& \ge c_3\delta_D(x)^\alpha \delta_D(y)^d |x-y|^{-d-\alpha}\ge c_3\eps^{d+\alpha}.
			\end{split}
		\end{equation}
		Combining \eqref{e:interior-green-1} with \eqref{e:interior-green-2}  and \eqref{e:interior-green-4},
		we arrive at $	G^{\kappa, B} (x,y) \ge 2^{d-\alpha}c_1c_3\eps^{d+\alpha}|x-y|^{-d+\alpha}$. The proof is complete. 
	\end{proof}

	Using Propositions \ref{p:upper-heatkernel-2} and \ref{p:upper-heatkernel-3}, we get 
	
	\begin{prop}\label{p:green-upper-bound} 	In addition to 
 \hyperlink{B1}{{\bf (B1)}}, \hyperlink{B2-a}{{\bf (B2-b)}}, \hyperlink{B2-b}{{\bf (B2-b)}} and \hyperlink{K1}{{\bf (K1)}}, 
we assume that, when  $\alpha\le 1$, 
		\hyperlink{K2}{{\bf (K2)}}  holds.  For any $R_0>0$, there exists  $C=C(R_0)>0$ such that 		
		\begin{align*} G^{\kappa, B_D(x_0,R_0)}(x,y) \le C|x-y|^{-d+\alpha} \quad \text{for all} \;\, x_0 \in \overline D 			\text{ and }  x,y \in B_D(x_0,R_0).		\end{align*}\end{prop}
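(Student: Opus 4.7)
The plan is to split the integral defining $G^{\kappa, B_D(x_0, R_0)}(x, y)$ at $t = 3$ and apply the two heat kernel upper bounds available: the small-time polynomial bound of Proposition \ref{p:upper-heatkernel-2} and the large-time exponential bound of Proposition \ref{p:upper-heatkernel-3}. Note that the hypotheses of Proposition \ref{p:upper-heatkernel-3} are exactly the standing hypotheses of the present proposition, so both bounds are immediately available for $p^{\kappa, B_D(x_0, R_0)}$ (using the trivial domination $p^{\kappa, B_D(x_0, R_0)} \le p^\kappa$ for the short-time bound).

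For the tail $\int_3^\infty p^{\kappa, B_D(x_0, R_0)}(t, x, y)\,dt$, Proposition \ref{p:upper-heatkernel-3} supplies $C_1, \lambda > 0$ depending only on $R_0$ with $p^{\kappa, B_D(x_0, R_0)}(t, x, y) \le C_1 e^{-\lambda t}$, so the tail is bounded by $(C_1/\lambda) e^{-3\lambda}$. Since $x, y \in B_D(x_0, R_0)$ gives $|x - y| \le 2 R_0$ and since $d \ge 2 > \alpha$ gives $d - \alpha > 0$, we can rewrite this constant as $(C_1/\lambda) e^{-3\lambda} (2 R_0)^{d - \alpha} |x - y|^{-d + \alpha}$.

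For the short-time piece $\int_0^3 p^{\kappa, B_D(x_0, R_0)}(t, x, y)\,dt$, Proposition \ref{p:upper-heatkernel-2} with $T = 3$ gives
\[
p^{\kappa, B_D(x_0, R_0)}(t, x, y) \le C_2\bigl( t^{-d/\alpha} \wedge t |x - y|^{-d - \alpha} \bigr), \qquad 0 < t \le 3.
\]
Set $t_* := |x - y|^\alpha \wedge 3$. I would split the integral at $t_*$: on $(0, t_*]$ use the bound $C_2 t |x - y|^{-d - \alpha}$, which integrates to $\tfrac{C_2}{2} t_*^{\,2} |x - y|^{-d - \alpha}$; on $(t_*, 3]$, use $C_2 t^{-d/\alpha}$, which (since $d/\alpha > 1$) integrates to at most $\tfrac{C_2 \alpha}{d - \alpha} t_*^{\,1 - d/\alpha}$. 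When $|x - y|^\alpha \le 3$ we have $t_* = |x - y|^\alpha$ and both contributions become explicit multiples of $|x - y|^{\alpha - d}$. When $|x - y|^\alpha > 3$ the second contribution is absent and the first equals $\tfrac{9 C_2}{2} |x - y|^{-d - \alpha} \le \tfrac{9 C_2}{2}(2 R_0)^{2\alpha}\, |x - y|^{-d + \alpha}\cdot(2R_0)^{-2\alpha}$; more directly, $|x-y|\le 2R_0$ lets us absorb the extra $|x-y|^{-2\alpha}$ factor into a constant depending on $R_0$. Adding the two contributions from the short-time and tail pieces yields the desired bound.

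The main obstacle is essentially nonexistent: the two heat kernel bounds do all the work, and what remains is the routine time-integral computation above. The only points requiring minor care are the verification $d - \alpha > 0$ (which holds because $d \ge 2$ and $\alpha < 2$, and is used both to make $|x - y|^{\alpha - d}$ bounded below on $B(x_0, 2R_0)$ and to ensure convergence of $\int t^{-d/\alpha}\,dt$ near the lower endpoint $t_*$) and the matching of constants to produce a single constant $C = C(R_0)$, as required.
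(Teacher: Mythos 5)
Your proposal is correct and takes the same route the paper intends: the paper gives no detailed proof, only the sentence ``Using Propositions \ref{p:upper-heatkernel-2} and \ref{p:upper-heatkernel-3}, we get,'' and your computation is precisely the expected filling-in of that integral estimate — split at $t=3$, exponential decay for the tail, polynomial bound split at $t_*=|x-y|^\alpha\wedge 3$ for the short-time piece, and the final absorption using $|x-y|\le 2R_0$ and $d>\alpha$.

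One small inaccuracy worth fixing: in the case $|x-y|^\alpha>3$ you justify absorbing the extra factor $|x-y|^{-2\alpha}$ by invoking $|x-y|\le 2R_0$, but that upper bound on $|x-y|$ gives a \emph{lower} bound on $|x-y|^{-2\alpha}$, which is the wrong direction. What actually does the work there is the case assumption $|x-y|^\alpha>3$, which yields $|x-y|^{-2\alpha}<1/9$, a universal constant. With that correction the argument is complete.
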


	When  $D$ has only finitely many components, we  obtain the following upper estimates for the Green function, with additional dependency on $D$, from Propositions \ref{p:upper-heatkernel-2} and \ref{p:upper-heatkernel-3+}. 
	\begin{prop}\label{p:green-upper-bound+}  Suppose that   $D$ has only finitely many components. For any $R_0>0$, there exists  $C=C(D,R_0)>0$ such that 	
	\begin{align*} 
	G^{\kappa, B_D(x_0,R_0)}(x,y) \le C|x-y|^{-d+\alpha} \quad \text{for all} \;\, x_0 \in \overline D 			\text{ and }  x,y \in B_D(x_0,R_0).		
	\end{align*}
	\end{prop}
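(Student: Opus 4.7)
The plan is to mimic the proof of Proposition \ref{p:green-upper-bound}, replacing the exponential decay result that required the additional condition \hyperlink{K2}{{\bf (K2)}} (Proposition \ref{p:upper-heatkernel-3}) with its $D$-dependent counterpart (Proposition \ref{p:upper-heatkernel-3+}) that is available as soon as $D$ has only finitely many connected components. The only cost is that the constant in the final bound is now permitted to depend on $D$ in addition to $R_0$, which is exactly what the statement allows.

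Concretely, I would fix $x_0 \in \overline D$ and $x, y \in B_D(x_0, R_0)$ (so in particular $|x-y| \le 2R_0$), and split the defining integral of the Green function as
\[
G^{\kappa, B_D(x_0, R_0)}(x,y) = \int_0^3 p^{\kappa, B_D(x_0, R_0)}(t,x,y)\,dt + \int_3^\infty p^{\kappa, B_D(x_0, R_0)}(t,x,y)\,dt.
\]
For the first integral, the monotonicity $p^{\kappa, B_D(x_0, R_0)}(t,x,y) \le p^\kappa(t,x,y)$ together with Proposition \ref{p:upper-heatkernel-2} applied with $T = 3$ gives
\[
p^{\kappa, B_D(x_0, R_0)}(t,x,y) \le C\bigl(t^{-d/\alpha} \wedge t|x-y|^{-d-\alpha}\bigr), \quad 0<t\le 3.
\]
Since $d \ge 2$ and $\alpha \in (0,2)$ one has $d > \alpha$, so splitting the $t$-integral at $t_* := |x-y|^\alpha \wedge 3$ yields the standard estimate $\int_0^3 p^{\kappa, B_D(x_0, R_0)}(t,x,y)\,dt \le C'|x-y|^{\alpha - d}$. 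In the sub-case $|x-y|^\alpha > 3$ (which occurs only if $R_0 > 3^{1/\alpha}/2$) the bound $|x-y|^{-d-\alpha} \le 3^{-2}|x-y|^{\alpha-d}$ produces the same conclusion.

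For the large-time integral, Proposition \ref{p:upper-heatkernel-3+} (whose proof is the step that crucially uses the finite-components hypothesis) furnishes constants $C'' = C''(D, R_0)$ and $\lambda = \lambda(D, R_0) > 0$ with $p^{\kappa, B_D(x_0, R_0)}(t,x,y) \le C'' e^{-\lambda t}$ for $t \ge 3$. Integrating gives a finite constant depending only on $D$ and $R_0$, which can then be written as a multiple of $|x-y|^{\alpha-d}$ via the trivial inequality $|x-y|^{\alpha-d} \ge (2R_0)^{\alpha-d}$. Adding the two contributions yields the required upper bound.

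I do not expect any serious obstacle here: the argument is essentially routine once the two heat-kernel estimates above are in hand, and the finiteness-of-components hypothesis enters only through the exponential large-time decay in Proposition \ref{p:upper-heatkernel-3+}. The only minor bookkeeping is keeping track of how the bound in the regime where $|x-y|$ is comparable to $R_0$ combines with the small-$|x-y|$ regime, exactly as in Proposition \ref{p:green-upper-bound}.
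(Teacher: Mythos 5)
Your proposal is correct and matches the paper's own (one-line) proof exactly: the paper simply states that Proposition \ref{p:green-upper-bound+} follows from Propositions \ref{p:upper-heatkernel-2} and \ref{p:upper-heatkernel-3+}, which is precisely the small-time/large-time split you carry out. You have just supplied the routine bookkeeping the paper leaves implicit.
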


	When $D$ is bounded, $D$ has only finitely many
	connected components. Hence, 	 by taking $R= R_0=9\,\text{diam}(D)$ in Propositions \ref{p:green-lower-bound},  \ref{p:green-upper-bound} and \ref{p:green-upper-bound+}, we obtain the following corollary.
	\begin{corollary}\label{c-int-green-bounded}
		Suppose that $D$ is bounded. Then  there exists $C=C(D)>0$ such that for all $x,y \in D$,
		\begin{align}\label{e:Green-bounded}
			G^\kappa(x,y) \le C|x-y|^{-d+\alpha}, 
		\end{align}
		and for any  $\eps \in (0,1)$, there exists a constant $C(\eps)>0$ such that for all $x,y\in D$ with $|x-y|\le \eps^{-1}(\delta_D(x)\wedge \delta_D(y))$,
		\begin{align*}
			G^\kappa(x,y) \ge C(\eps)|x-y|^{-d+\alpha}.
		\end{align*}
		 Moreover, if we assume that, in addition to 
 \hyperlink{B1}{{\bf (B1)}}, \hyperlink{B2-a}{{\bf (B2-b)}}, \hyperlink{B2-b}{{\bf (B2-b)}} and \hyperlink{K1}{{\bf (K1)}}, 
when  $\alpha\le 1$,		\hyperlink{K2}{{\bf (K2)}}  holds, then the constant $C$ in \eqref{e:Green-bounded} depends on $D$ only through $\Lambda_0, \wh R$ and \text{\rm diam}$(D)$.
	\end{corollary}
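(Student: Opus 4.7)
The proof is essentially a specialization of the three preceding propositions to the case where the ambient ball exhausts the bounded set $D$. The plan is to fix an arbitrary $x_0 \in \overline D$ and apply each proposition with $R = R_0 = 9\,\text{diam}(D)$, observing that under this choice we have the set identification
\[
B_D(x_0, R_0) = D,
\]
since every point of $D$ lies within distance $\text{diam}(D) < 9\,\text{diam}(D)$ of $x_0$. Consequently $G^{\kappa, B_D(x_0, R_0)} \equiv G^\kappa$ on $D \times D$, and any estimate on the former transfers directly to the latter.

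For the upper bound, I would choose $x_0 = x$ (or any fixed reference point in $\overline D$) and invoke Proposition \ref{p:green-upper-bound+}. This is applicable because any bounded Lipschitz open set has only finitely many connected components, so its hypothesis is automatically satisfied. The resulting constant depends on $D$ and $R_0 = 9\,\text{diam}(D)$; but the dependence on $D$ enters through the chain of arguments behind Proposition \ref{p:green-upper-bound+} via both the localization characteristics $(\wh R, \Lambda_0)$ and the geometry of the components. For the sharper dependency claim in the final sentence of the corollary --- namely that the constant depends on $D$ only through $\Lambda_0$, $\wh R$, and $\text{diam}(D)$ --- I would instead apply Proposition \ref{p:green-upper-bound}, which requires the additional assumption \hyperlink{K2}{{\bf (K2)}} precisely in the regime $\alpha \le 1$ where it is being hypothesized, and whose constant depends only on $R_0$ (with the implicit dependence on $\Lambda_0, \wh R$ arising from the standing localization characteristics).

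For the lower bound, I would take $x_0 = x$ so that $y \in B_D(x_0, \text{diam}(D)) \subset B_D(x_0, R/8)$ (where $R/8 = 9\,\text{diam}(D)/8 > \text{diam}(D)$), and $x \in B_D(x_0, R/8)$ trivially. The hypothesis $|x-y| \le \eps^{-1}(\delta_D(x) \wedge \delta_D(y))$ is exactly the hypothesis of Proposition \ref{p:green-lower-bound}, so that proposition yields
\[
G^\kappa(x,y) = G^{\kappa, B_D(x_0, R)}(x,y) \ge C(\eps) |x-y|^{-d+\alpha}
\]
with the constant depending on $R_0 = 9\,\text{diam}(D)$ and $\eps$.

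There is no serious obstacle here: all of the work has been done in Propositions \ref{p:green-lower-bound}, \ref{p:green-upper-bound}, and \ref{p:green-upper-bound+}, and the only thing to verify is that the choice $R = R_0 = 9\,\text{diam}(D)$ with $x_0$ chosen appropriately makes the ball containment $B_D(x_0, R) \supset D$ (hence equal to $D$) hold, which is immediate from the definition of diameter. The one point to mention carefully is that in the first part of the corollary (the bound \eqref{e:Green-bounded} without sharpened constant dependency) one genuinely needs Proposition \ref{p:green-upper-bound+} rather than Proposition \ref{p:green-upper-bound}, since the latter requires \hyperlink{K2}{{\bf (K2)}} when $\alpha \le 1$ --- an assumption not imposed for the first statement of the corollary.
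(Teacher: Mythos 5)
Your proposal is correct and takes essentially the same route as the paper, which simply invokes Propositions \ref{p:green-lower-bound}, \ref{p:green-upper-bound}, and \ref{p:green-upper-bound+} with $R = R_0 = 9\,\text{diam}(D)$ after noting that a bounded open set has finitely many components. Your careful separation of which upper-bound proposition supplies the base estimate (Proposition \ref{p:green-upper-bound+}, without {\bf (K2)}) versus the sharpened constant dependency (Proposition \ref{p:green-upper-bound}, under the extra hypothesis) is exactly the point the corollary's phrasing is getting at.
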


%%%%%%%%%%%%%%%%%%%%%%%%%%%%%%%%%%%%%%%%%%%%%%%%%%%%%%%%%%%%%%%%%%%%%%%%%%%%%%%%%%%%%%%%%%%%%%%%%%%%%%%%%%%%%%%%%%%%%%%%%%%%%%%%%%%%%%%
%%%%%%%%%%%%%%%%%%%%%%%%%%                Analysis of $L^{\sB_{\alpha}$              %%%%%%%%%%%%%%%%%%%%%%%%%%%%%%%%%%%%%%%%%%%%%%%%%%
%%%%%%%%%%%%%%%%%%%%%%%%%%%%%%%%%%%%%%%%%%%%%%%%%%%%%%%%%%%%%%%%%%%%%%%%%%%%%%%%%%%%%%%%%%%%%%%%%%%%%%%%%%%%%%%%%%%%%%%%%%%%%%%%%%%%%%%

\section{Analysis of the operators $L^\sB_\alpha$ and $L^\kappa$}\label{ch:operator}

In this section we first analyze the operators $L^{\sB}_{\alpha}$ and $L^\kappa$, 
and prove a Dynkin-type formula. This analysis requires the assumption \hyperlink{B3}{{\bf (B3)}}. Then we introduce two new assumptions on the function $\sB$, construct a barrier function $\psi^{(r)}$ and establish an upper bound on 
$L^{\sB}_{\alpha} \psi^{(r)}$. For the set $D$ we keep assuming that it is 
a Lipschitz open set  with localization radius $\wh{R}$ and Lipschitz constant $\Lambda_0$. 
\smallskip

Consider a non-local operator $(L^\sB_\alpha, \sD(L^\sB_\alpha))$ of the form
	\begin{align}\label{e:def-L-alpha}
		L^\sB_\alpha f(x)&=\text{p.v.}\int_D (f(y)-f(x)) \frac{\sB(x,y)}{|x-y|^{d+\alpha}}dy, \quad x \in D,
	\end{align}
where  $\sD(L^\sB_\alpha)$ consists of  all functions $f:D \to \R$ for which the above principal value integral makes sense. 
Recall that $\kappa$ is a  non-negative Borel  function on $D$ satisfying \hyperlink{K1}{{\bf (K1)}}. 
We define an  operator $(L^\kappa, \sD(L^\sB_\alpha))$  by 
	\begin{align}\label{e:def-operator}
		L^\kappa f(x)=L^\sB_\alpha f(x) - \kappa(x)f(x), \quad  x \in D.
	\end{align}

\subsection{Dynkin-type formula}

In this subsection, in addition to \hyperlink{B1}{{\bf (B1)}}, \hyperlink{B2-a}{{\bf (B2-a)}}, \hyperlink{B2-b}{{\bf (B2-b)}}, 
we assume that $\sB$ satisfies the following assumption:

\medskip
\noindent\hypertarget{B3}{{\bf (B3)}} If $\alpha \ge 1$, then there exist constants $\theta_0>\alpha-1$ and $C_5>0$ such that
	\begin{align}\label{e:ass-B3}
		|\sB(x,x)-\sB(x,y)| \le C_5 \bigg(\frac{|x-y|}{\delta_D(x) \wedge \delta_D(y) \wedge  \wh R} \bigg)^{\theta_0} 
			\quad \text{for all }  x,y \in D.
	\end{align}

\medskip

For an open set $U \subset \R^d$, denote by $C^{1,1}(U)$ the family of all locally  $C^{1,1}$ functions on $U$, and by $C^{1,1}_c(U)$ the family of all functions in $C^{1,1}(U)$ with compact support in $U$. Then $C^{1,1}_c(U)$ is a normed space   equipped with the norm  
	$$
	\lVert u \rVert_{C_c^{1,1}(U)}:=\lVert u \rVert_{L^\infty(U)} + \lVert \nabla u \rVert_{L^\infty(U)} 
	+ \sup_{x,y \in U,\, x\ne y} \frac{|\nabla u(x)-\nabla u(y)|}{|x-y|}. 
	$$

For the closed set $\overline U \subset \R^d$, define
	\begin{align*}
		C^{1,1}(\overline U) :=\left\{u: \overline U \to \R \,: \begin{array}{ll}
		\text{There exist an open set $V$ with $ \overline U \subset V$}\\
		\text{and $f \in C^{1,1}(V)$ such that $u=f$ on $\overline U$}
		\end{array}  \right\}.
	\end{align*}
We also let
\begin{align*}
	C^{1,1}_c(D;\R^d) :=\big\{u: D \to \R \,:\, \text{There exists $f \in C^{1,1}_c(\R^d)$ such that $u=f$ on $D$}  \big\}.
\end{align*}

\begin{prop}\label{p:generator-C11}
	Let $(\sA^\kappa, \sD(\sA^\kappa))$ be the $L^2$-generator of $(\sE^\kappa,\sF^\kappa)$. 
	Then $C^{1,1}_c(D;\R^d) \subset \sD(\sA^\kappa) \cap \sD(L^\sB_\alpha)$, and
	 for all  $u \in C^{1,1}_c(D;\R^d)$,
	\begin{equation}\label{e:generator-C11}
		\lVert L^\kappa u \rVert_{L^\infty({\rm supp}(u))}<\infty
	\end{equation}
	and	
	$\sA^\kappa u = L^\kappa u$  a.e. in $D$.
\end{prop}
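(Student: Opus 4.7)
The plan is to establish three claims in sequence: pointwise existence of $L^\sB_\alpha u(x)$ for every $x \in D$; the uniform bound \eqref{e:generator-C11}; and the identity $\sA^\kappa u = L^\kappa u$ a.e., obtained by pairing against the core $\mathrm{Lip}_c(D)$.

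For the pointwise step, fix $u \in C^{1,1}_c(D;\R^d)$, so $\mathrm{supp}(u)$ is a compact subset of $D$ at positive distance $d_0$ from $\partial D$. For $x \in D \setminus \mathrm{supp}(u)$ one has $u(x)=0$ and the integrand $u(y)\sB(x,y)|x-y|^{-d-\alpha}$ is bounded (since $\mathrm{dist}(x,\mathrm{supp}(u))>0$) and decays at infinity, so the integral is absolutely convergent. For $x \in \mathrm{supp}(u)$ and $\alpha \in (0,1)$ the Lipschitz bound for $u$ together with \hyperlink{B2-a}{{\bf (B2-a)}} gives absolute convergence. The delicate case is $\alpha \in [1,2)$ and $x \in \mathrm{supp}(u)$: choose $r = \tfrac12(d_0 \wedge \wh R)$ so that $B(x,r) \subset D$ uniformly in $x$, and decompose
$$
\int_D (u(y)-u(x))\,\frac{\sB(x,y)}{|x-y|^{d+\alpha}}\,dy \;=\; \sB(x,x)\,\mathrm{p.v.}\!\!\int_{B(x,r)}\!\!\frac{u(y)-u(x)}{|x-y|^{d+\alpha}}\,dy \;+\; I_{\mathrm{rem}}(x) \;+\; I_{\mathrm{tail}}(x),
$$
where $I_{\mathrm{rem}}$ integrates over $B(x,r)$ against the kernel $(\sB(x,y)-\sB(x,x))|x-y|^{-d-\alpha}$ and $I_{\mathrm{tail}}$ integrates over $D\setminus B(x,r)$. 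Taylor expanding $u(y) = u(x) + \nabla u(x)\!\cdot\!(y-x) + O(|y-x|^2)$, the symmetry of $B(x,r)$ around $x$ kills the linear term in the first integral and leaves an absolutely convergent $O(r^{2-\alpha})$ remainder. Assumption \hyperlink{B3}{{\bf (B3)}} bounds $|\sB(x,y)-\sB(x,x)| \le C(|y-x|/(d_0\wedge\wh R))^{\theta_0}$ on $B(x,r)$, so the integrand of $I_{\mathrm{rem}}$ is of order $|y-x|^{1+\theta_0-d-\alpha}$, integrable exactly because $\theta_0 > \alpha-1$. Finally $|I_{\mathrm{tail}}(x)| \le 2\|u\|_\infty C_1 r^{-\alpha}/\alpha$ by \hyperlink{B2-a}{{\bf (B2-a)}}. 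This shows $u \in \sD(L^\sB_\alpha)$.

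The second claim is immediate from the above: the three bounds are uniform in $x \in \mathrm{supp}(u)$ with constants depending only on $\|u\|_{C^{1,1}}$, $C_1$, $C_5$, $d_0$, and $\wh R$, so $\|L^\sB_\alpha u\|_{L^\infty(\mathrm{supp}(u))} < \infty$; combined with $\|\kappa u\|_{L^\infty(\mathrm{supp}(u))} \le C_3 (d_0\wedge 1)^{-\alpha}\|u\|_\infty$ from \hyperlink{K1}{{\bf (K1)}}, this yields \eqref{e:generator-C11}. Moreover $u \in \mathrm{Lip}_c(D) \subset \sF^0 \subset \overline{\sF}$ and $\int_D u^2 \kappa\,dm_d < \infty$, so $u \in \sF^\kappa$.

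For the third step, take any $v$ in the core $\mathrm{Lip}_c(D)$ and symmetrize using $\sB(x,y)=\sB(y,x)$ to obtain
$$
\sE^\kappa(u,v) \;=\; \iint_{D\times D} (u(x)-u(y))\,v(x)\,\frac{\sB(x,y)}{|x-y|^{d+\alpha}}\,dx\,dy \;+\; \int_D uv\kappa\,dm_d.
$$
Compactness of $\mathrm{supp}(v) \subset\subset D$ forces $\delta_D(x) \ge d_1 > 0$ on the $x$-integration, and the three-piece decomposition above supplies an $\eps$-uniform dominant for the truncated inner integrals $\int_{D,|y-x|>\eps}\cdots dy$; dominated convergence then rewrites the double integral as $-\int_D v(x)\,L^\sB_\alpha u(x)\,dx$, so that $\sE^\kappa(u,v) = -\int_D v\,L^\kappa u\,dm_d$ for every $v$ in the core, and since $L^\kappa u \in L^2(D)$ this forces $u \in \sD(\sA^\kappa)$ with $\sA^\kappa u = L^\kappa u$ a.e. The main technical obstacle is precisely this Fubini/principal-value exchange, because the $\eps \downarrow 0$ limit has to be moved past the outer $dx$-integration; both the pointwise decomposition above and the positive lower bound on $\delta_D$ along $\mathrm{supp}(v)$ are essential for producing the $\eps$-uniform dominant that makes dominated convergence applicable.
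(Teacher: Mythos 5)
Your proposal is correct and follows essentially the same route as the paper. The paper's proof is very terse: it records the $C^{1,1}$ analogue of the Taylor remainder bound, $|u(y)-u(x)-\nabla u(x)\cdot(y-x)|\le\lVert u\rVert_{C^{1,1}_c(\R^d)}|y-x|^2$, and then delegates to the arguments of \cite[Prop.~4.2, Cor.~4.4]{KSV22b}. Your three-piece decomposition (constant-$\sB$ principal value killed by symmetry via the Taylor-type remainder, a $\theta_0$-H\"older correction absolutely convergent by \hyperlink{B3}{{\bf (B3)}} precisely because $\theta_0>\alpha-1$, and a tail bounded by \hyperlink{B2-a}{{\bf (B2-a)}}) is exactly what those cited arguments amount to, so you have reproduced the proof rather than replaced it.

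Two small points. First, a cosmetic one: you write ``Taylor expanding $u(y)=u(x)+\nabla u(x)\cdot(y-x)+O(|y-x|^2)$'', but a $C^{1,1}$ function has no pointwise second-order Taylor expansion; what you actually use, and what the paper is careful to state, is the mean-value estimate $|u(y)-u(x)-\nabla u(x)\cdot(y-x)|\le\lVert u\rVert_{C^{1,1}}|y-x|^2$, which holds because $\nabla u$ is Lipschitz. Second, and more substantively, in the final step you assert ``since $L^\kappa u\in L^2(D)$'' without justification. This does not follow from \eqref{e:generator-C11}, because $L^\sB_\alpha u$ is \emph{not} supported in $\mathrm{supp}(u)$. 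You need to observe that (a) the three-piece decomposition gives $\lVert L^\sB_\alpha u\rVert_{L^\infty(K')}<\infty$ for any compact $K'\subset D$ (not only $K'=\mathrm{supp}(u)$), which is indeed what you use later to get the $\eps$-uniform dominant on $\mathrm{supp}(v)$; and (b) for $x$ with $\mathrm{dist}(x,\mathrm{supp}(u))\ge\rho_0>0$ one has $|L^\sB_\alpha u(x)|\le C_1\lVert u\rVert_\infty\, m_d(\mathrm{supp}(u))\,\mathrm{dist}(x,\mathrm{supp}(u))^{-d-\alpha}$ by \hyperlink{B2-a}{{\bf (B2-a)}}, and the exponent $2(d+\alpha)>d$ makes this square-integrable over $D$. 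Both ingredients are already in your argument, so this is an omission rather than a gap, but as written the $L^2$ claim is unsupported and the reader is entitled to a sentence.
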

\begin{proof} 
For any $u \in C^{1,1}_c(\R^d)$ and $x,y \in \R^d$, by the mean value theorem, there exists $a \in [0,1]$ such that 
$u(y)-u(x)=\nabla u ( ax + (1-a)y) \cdot (y-x)$. Hence,  
	\begin{align}\label{e:generator-C11-1}
		|u(y)-u(x) - \nabla u(x) \cdot (y-x)| \le (1-a)\lVert  u \rVert_{C^{1,1}_c(\R^d)} |y-x|^2  
		\le \lVert  u \rVert_{C^{1,1}_c(\R^d)} |y-x|^2.
	\end{align}
By repeating the arguments of \cite[Proposition 4.2 and Corollary 4.4]{KSV22b}, 
 using \eqref{e:generator-C11-1} instead of  Taylor's theorem, we obtain the desired result. 
\end{proof}

\begin{prop}\label{p:Dynkin-martingale}
	Let $U$ be an open set with $\overline U \subset D$. For any $u \in C^{1,1}_c(D)$ and $x \in U$, 
	$$
		M^{[u]}_t:= u(Y^\kappa_{t \wedge \tau_U})  - u(Y^\kappa_0) - \int_0^{t \wedge \tau_U} L^\kappa u(Y^\kappa_s)ds
	$$
	is a $\P_x$-martingale with respect to the filtration of $Y^\kappa$.
\end{prop}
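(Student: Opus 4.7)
The plan is to combine the pointwise identification $\sA^\kappa u = L^\kappa u$ from Proposition \ref{p:generator-C11} with the Fukushima decomposition of $(\sE^\kappa,\sF^\kappa)$ and the strong Feller regularity of $Y^\kappa$ recorded in Remark \ref{r:strong-Feller}. First, I would observe that $C^{1,1}_c(D) \subset C^{1,1}_c(D;\R^d)$: for $u\in C^{1,1}_c(D)$, extending by zero across the complement of $K:=\text{supp}(u)\subset D$ produces $\wt u\in C^{1,1}_c(\R^d)$, since $K$ is compactly contained in $D$ and $u$, $\nabla u$, and the Lipschitz bound on $\nabla u$ match the trivial values across $\partial K$. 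Consequently Proposition \ref{p:generator-C11} applies, giving $u\in\sD(\sA^\kappa)\cap\sD(L^\sB_\alpha)$ with $\sA^\kappa u=L^\kappa u$ almost everywhere and $L^\kappa u$ bounded on $K$. Combining this with the crude tail estimate $|L^\sB_\alpha u(x)|\le c\,\lVert u\rVert_{L^\infty(K)}\,m_d(K)\,\dist(x,K)^{-d-\alpha}$ for $x\notin K$ and with $\dist(\overline U,\partial D)>0$, we conclude that $L^\kappa u\in L^\infty(\overline U)$.

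Next, the Fukushima decomposition (\cite[Theorem 5.2.2]{FOT}) applied to $u\in\sD(\sA^\kappa)$, together with the identification of the zero-energy continuous additive functional via the essential boundedness of $\sA^\kappa u=L^\kappa u$ on $\overline U$ (\cite[Theorem 5.4.2]{FOT}), yields
\begin{align*}
u(Y^\kappa_{t\wedge\tau_U})-u(Y^\kappa_0)=M^{[u]}_t + \int_0^{t\wedge\tau_U} L^\kappa u(Y^\kappa_s)\,ds,
\end{align*}
$\P_x$-a.s.\ for quasi every $x\in U$, with $M^{[u]}_t$ a $\P_x$-martingale with respect to the filtration of $Y^\kappa$. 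Proposition \ref{p:E2} ensures that $\tau_U<\infty$ $\P_x$-a.s.\ for $x\in U$, making the stopping at $\tau_U$ legitimate.

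Finally, the q.e.\ exceptional set is removed using the strong Feller property and the joint continuity of $p^\kappa$ and $p^{\kappa,U}$ from Remark \ref{r:strong-Feller}: via the strong Markov property, the finite-dimensional distributions of the stopped process $M^{[u]}$ under $\P_x$ depend continuously on $x\in U$, and continuity extends the martingale identity from the dense q.e.\ set to every $x\in U$. The main obstacle is the Fukushima step itself, namely identifying the zero-energy continuous additive functional with the explicit integral $\int_0^\cdot L^\kappa u(Y^\kappa_s)\,ds$: this is classical but hinges on the essential boundedness of $L^\kappa u$ on $\overline U$ established in the first paragraph, together with the removal of the q.e.\ exceptional set afforded by Remark \ref{r:strong-Feller}.
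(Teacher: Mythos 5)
Your argument is correct, but it follows a genuinely different route from the one in the paper. The paper does not apply the Fukushima decomposition to $u$ on the full state space $D$; instead it introduces an intermediate open set $V$ with $\overline{U\cup\mathrm{supp}(u)}\subset V\subset\overline V\subset D$, passes to the \emph{killed} process $Y^{\kappa,V}$ and its $L^2$-generator $(\sA^{\kappa,V},\sD(\sA^{\kappa,V}))$, and invokes (via the already-established strong Feller property of $Y^{\kappa,V}$ from Remark \ref{r:strong-Feller}) the argument in the first paragraph of the proof of \cite[Lemma 4.6]{KSV22b} to get the martingale property for the $Y^{\kappa,V}$-process before $\tau_V$; the stated martingale for $Y^\kappa$ then follows by optional stopping since $\tau_U\le\tau_V$ and $Y^\kappa=Y^{\kappa,V}$ before $\tau_V$. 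Localizing to $V$ buys the paper a process living on a relatively compact subset of $D$, for which $L^\kappa u$ is manifestly bounded and the strong-Feller/Dynkin machinery of \cite{KSV22b} applies off the shelf. Your approach instead works globally on $D$ via the Fukushima decomposition of $(\sE^\kappa,\sF^\kappa)$ and then removes the quasi-everywhere exceptional set using strong Feller continuity and the Markov property; this is more self-contained (it doesn't lean on \cite{KSV22b}) but requires the extra observations you supply: that $L^\kappa u$ is bounded on all of $D$ (combining Proposition \ref{p:generator-C11} on $\mathrm{supp}(u)$ with the tail estimate off $\mathrm{supp}(u)$), and that the map $x\mapsto\E_x[M^{[u]}_t]=\E_x[u(Y^\kappa_t)]-u(x)-\int_0^t P^\kappa_s(L^\kappa u)(x)\,ds$ is continuous and vanishes on a quasi-everywhere dense set, so that the martingale property for all $x$ follows from the Markov property. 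One small imprecision worth tidying: the identification $N^{[u]}_t=\int_0^t\sA^\kappa u(Y^\kappa_s)\,ds$ follows from $u\in\sD(\sA^\kappa)$ alone (not from boundedness of $\sA^\kappa u$ on $\overline U$); the boundedness of $L^\kappa u$ is used to make $M^{[u]}_t$ integrable (hence a true, not merely local, martingale) and to run the continuity argument that removes the exceptional set.
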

\begin{proof} 
Let $u \in C^{1,1}_c(D)$ and  $V$ be an open set with $\overline {U \cup \text{\rm supp}(u)} \subset V \subset \overline V \subset D$. By Proposition \ref{p:generator-C11}, one can  get (see  the proofs of \cite[Corollaries 4.4-4.5]{KSV22b}) that, if  $(\sA^{\kappa, V}, {\mathcal D}(\sA^{\kappa, V}))$ is the $L^2$ generator of the semigroup of $Y^{\kappa, V}$, then
\begin{align}\label{e:killed-generator-C11} 
	C^{1, 1}_c(V)\subset  {\mathcal D}(\sA^{\kappa, V}) \;\,\text{ and }\;\, \sA^{\kappa, V}f=L^\kappa f
	 \text{ a.e. in }V  
	 \,\text{ for all } f \in C^{1, 1}_c(V).  \end{align}
It follows from Remark \ref{r:strong-Feller} that $Y^{\kappa,U}$ is strongly Feller. Hence, since $u \in C^{1,1}_c(V)$, using \eqref{e:killed-generator-C11}, one can  follow the argument in the first paragraph of  the proof of \cite[Lemma 4.6]{KSV22b} and deduce that for any $x \in U$,
$$ u(Y^{\kappa, V}_t)-u(Y^{\kappa, V}_0)-\int^{t\wedge \tau_V}_0L^\kappa u(Y^{\kappa, V}_s)ds$$ is a $\P_x$-martingale with respect to the filtration of $Y^{\kappa}$.   Since $\tau_U\le \tau_V$ and $Y^{\kappa}_t=Y^{\kappa,V}_t$ for $t<\tau_V$, by the optional stopping theorem, the assertion of the proposition follows.	
\end{proof}

\begin{prop}\label{p:Dynkin-formula}
	Let $U$ be a  bounded open set with $\overline U \subset D$. For any bounded
function $u$ on $D$ such that $u|_{\overline U} \in C^{1,1}(\overline U)$, we have
	\begin{align}\label{e:Dynkin-formula}
		\E_x \big[ u(Y^\kappa_{\tau_U})\big] = u(x) + \E_x \bigg[\int_0^{\tau_U} L^\kappa u(Y^\kappa_s)ds \bigg] 
			\quad \text{for all} \;\, x \in U.
	\end{align}
\end{prop}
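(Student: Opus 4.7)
The plan is to reduce to Proposition~\ref{p:Dynkin-martingale} via a cutoff and to account for the jump across $\partial U$ at time $\tau_U$ using the L\'evy system~\eqref{e:Levysystem-Y-kappa}. Since $u|_{\overline U}\in C^{1,1}(\overline U)$, by definition there exist an open set $W$ with $\overline U\subset W\subset \overline W\subset D$ and $f\in C^{1,1}(W)$ with $u=f$ on $\overline U$; choose $\eta \in C^{1,1}_c(W)$ with $\eta\equiv 1$ on an open neighborhood $V$ of $\overline U$ and set $\wt u:=\eta f$, so that $\wt u\in C^{1,1}_c(D)$, $\wt u\equiv u$ on $V$, and $\overline U\subset\mathrm{supp}(\wt u)$. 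I would apply Proposition~\ref{p:Dynkin-martingale} to $\wt u$, take $\P_x$-expectations at $t\wedge\tau_U$, and let $t\to\infty$ by dominated convergence, justified by $\|\wt u\|_\infty<\infty$, $\|L^\kappa \wt u\|_{L^\infty(\mathrm{supp}(\wt u))}<\infty$ from~\eqref{e:generator-C11}, and $\E_x[\tau_U]<\infty$ (obtained by covering $\overline U$ with finitely many closed balls compactly contained in $D$ and invoking Proposition~\ref{p:E2}). This produces
\formula{
\E_x[\wt u(Y^\kappa_{\tau_U})]=\wt u(x)+\E_x\Big[\int_0^{\tau_U}L^\kappa \wt u(Y^\kappa_s)\,ds\Big], \quad x\in U.
}

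Before comparing with~\eqref{e:Dynkin-formula} I would verify that $L^\kappa u$ is well-defined and bounded on $U$. For $x\in U$, since $u-\wt u\equiv 0$ on $V$, the decomposition
\formula{
L^\sB_\alpha u(x)=L^\sB_\alpha \wt u(x)+\int_D(u(y)-\wt u(y))\,\frac{\sB(x,y)}{|x-y|^{d+\alpha}}\,dy
}
makes $L^\sB_\alpha u(x)$ unambiguous: the first term is bounded by~\eqref{e:generator-C11}, while the second integral is absolutely convergent and uniformly bounded in $x\in U$, thanks to $\|u-\wt u\|_\infty<\infty$, $(u-\wt u)\equiv 0$ on $V$, and the uniform integrability of $\sB(x,y)|x-y|^{-d-\alpha}$ over $y\in D\setminus V$ for $x\in U$ (using \hyperlink{B2-a}{{\bf (B2-a)}} and $\dist(U,D\setminus V)>0$). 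Combined with the bound on $\kappa$ over $\overline U$ from~\hyperlink{K1}{{\bf (K1)}}, this yields $\|L^\kappa u\|_{L^\infty(U)}<\infty$, so every term in~\eqref{e:Dynkin-formula} is finite.

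Finally, I would apply~\eqref{e:Levysystem-Y-kappa} with $\tau:=\tau_U$ and $f(z,y):=(u(y)-\wt u(y))\1_U(z)$, $f(z,\partial):=0$, which is bounded in both arguments. Because $Y^\kappa$ is a pure-jump Hunt process constant between its countably many jumps, $Y^\kappa_s\in U$ for every $s<\tau_U$ and $Y^\kappa_{\tau_U-}\in U$ almost surely, so $\1_U(Y^\kappa_s)=1$ on $[0,\tau_U)$ and $\1_U(Y^\kappa_{s-})=1$ at each jump time $s\le\tau_U$ contributing to the sum; moreover $Y^\kappa_s\in V$ for $s<\tau_U$ gives $u(Y^\kappa_s)=\wt u(Y^\kappa_s)$ on $[0,\tau_U)$, so only the terminal jump at $s=\tau_U$ survives on the left-hand side. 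The identity therefore reduces to
\formula{
\E_x[u(Y^\kappa_{\tau_U})-\wt u(Y^\kappa_{\tau_U})]=\E_x\Big[\int_0^{\tau_U}(L^\kappa u-L^\kappa \wt u)(Y^\kappa_s)\,ds\Big],
}
where $L^\sB_\alpha u-L^\sB_\alpha \wt u$ has been replaced by $L^\kappa u-L^\kappa \wt u$ using $\kappa u\equiv \kappa\wt u$ on $U$. Adding this to the Dynkin formula above and invoking $\wt u(x)=u(x)$ for $x\in U$ yields~\eqref{e:Dynkin-formula}. The main obstacle I anticipate is the clean verification that $Y^\kappa_{\tau_U-}\in U$ almost surely together with the dominated-convergence step that sends $t\to\infty$; both are routine given the purely discontinuous nature of $Y^\kappa$ and the compact containment $\overline U\subset D$, but must be handled carefully so that the L\'evy system bookkeeping captures exactly the terminal jump and nothing else.
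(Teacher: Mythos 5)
Your argument is essentially identical to the paper's: both choose $f\in C^{1,1}_c(D)$ (your $\wt u$) agreeing with $u$ on a neighborhood $V$ of $\overline U$, obtain the Dynkin identity for $f$ from Proposition~\ref{p:Dynkin-martingale} by letting $t\to\infty$, and then treat the remainder $h:=u-f$, which vanishes on $V$, via the L\'evy system formula~\eqref{e:Levysystem-Y-kappa}, adding the two identities. Your extra checks (boundedness of $L^\kappa u$ on $U$, that only the terminal jump contributes, that $Y^\kappa_{\tau_U-}\in U$ a.s.) are the same verifications the paper carries out or leaves implicit, so this is the same route.
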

\begin{proof} 
	Choose an open set $V$ of $D$ with $\overline U \subset V$ and $f\in C^{1,1}_c(D)$ such that  $f=u$ on $V$. By Proposition \ref{p:Dynkin-martingale},  we see that for all $x \in U$ and $t>0$,
	\begin{equation}\label{e:Dynkin-formula-1}
		\begin{split} 
		\E_x \left[ f(Y^\kappa_{t\wedge \tau_U})  \right] &= f(x) + \E_x \bigg[ \int_0^{t \wedge \tau_U} L^\kappa f(Y^\kappa_s)ds\bigg]\\
		 &= u(x) + \E_x \bigg[ \int_0^{t \wedge \tau_U} \left( L^\sB_\alpha f(Y^\kappa_s)  - \kappa(Y^\kappa_s)  u(Y^\kappa_s)  \right)ds\bigg].
		\end{split}
	\end{equation}
	By \eqref{e:generator-C11}, we have $\lVert L^\kappa f \rVert_{L^\infty(\overline U)}<\infty$. Letting $t\to \infty$ in \eqref{e:Dynkin-formula-1} and applying  the dominated convergence theorem, we get that for any $x \in U$,
	\begin{align}\label{e:Dynkin-formula-2}
			\E_x \left[ f(Y^\kappa_{\tau_U})  \right]= u(x) +\E_x \bigg[ \int_0^{ \tau_U} \left( L^\sB_\alpha f(Y^\kappa_s)  - \kappa(Y^\kappa_s)  u(Y^\kappa_s)  \right)ds\bigg].
	\end{align}
Let $h:=u-f$. Then $h=0$ on $V$ and $h$ is bounded. In particular, by  \hyperlink{B2-a}{{\bf (B2-a)}},
\begin{align*}
\left	\lVert  \int_{D\setminus V} \frac{h(y)\sB(\cdot ,y)}{|\cdot -y|^{d+\alpha}} dy\right \rVert_{L^\infty(\overline U)} \le C_1 \lVert h \rVert_{L^\infty(D)} \int_{B(0, \text{\rm dist}(\overline U,V^c))}^c \frac{dy}{|y|^{d+\alpha}} <\infty.
\end{align*}
 Hence, using the L\'evy system formula \eqref{e:Levysystem-Y-kappa}, we have for any $x \in U$,
\begin{equation}\label{e:Dynkin-formula-3}
	\begin{split}
		&\E_x \left[ h(Y^\kappa_{\tau_U})  \right] = 	\E_x \left[ h(Y^\kappa_{\tau_U}) : Y^\kappa_{\tau_U} \in D\setminus V \right]\\
		&= \E_x\bigg[\int_0^{\tau_U} \int_{D\setminus V}  \frac{h(y)\sB(Y_s^\kappa,y)}{|Y^\kappa_s-y|^{d+\alpha}}dyds \bigg]	= \E_x\bigg[\int_0^{\tau_U} L^\sB_\alpha h (Y^\kappa_s)ds \bigg].
	\end{split}
\end{equation}
Adding \eqref{e:Dynkin-formula-2} and \eqref{e:Dynkin-formula-3},  we conclude that  \eqref {e:Dynkin-formula} holds.
\end{proof} 

\begin{corollary}\label{c:Dynkin-local}
Let $U \subset D$ be a bounded open set and $u$ be a bounded Borel  function on $D$ such that $u|_{U} \in C^{1,1}(U)$. Suppose that either  $L^\kappa u(y) \ge 0$ in $U$ or $L^\kappa 
u(y) \le  0$ in $U$. Then  \eqref{e:Dynkin-formula} holds.
\end{corollary}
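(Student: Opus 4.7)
The plan is to approximate $U$ from inside by an increasing sequence of open sets to which Proposition \ref{p:Dynkin-formula} already applies, and then pass to the limit. Choose bounded open sets $(U_n)_{n \ge 1}$ with $\overline{U_n} \subset U_{n+1}$ for all $n$ and $\bigcup_n U_n = U$; one convenient choice is $U_n := \{y \in U : \dist(y, U^c) > 1/n\}$. Because $u|_U \in C^{1,1}(U)$ and each $\overline{U_n}$ is a compact subset of $U$, the restriction $u|_{\overline{U_n}}$ belongs to $C^{1,1}(\overline{U_n})$ (take $V=U$ and $f=u|_U$ in the definition of $C^{1,1}(\overline{U_n})$), so Proposition \ref{p:Dynkin-formula} applies on each $U_n$ and yields, for every $x \in U_n$,
\[
\E_x\big[u(Y^\kappa_{\tau_{U_n}})\big] = u(x) + \E_x\bigg[\int_0^{\tau_{U_n}} L^\kappa u(Y^\kappa_s)\,ds\bigg].
\]

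Next I would establish two pathwise facts: $\tau_{U_n} \uparrow \tau_U$ $\P_x$-a.s., and in fact $\tau_{U_n} = \tau_U$ for all sufficiently large $n$ $\P_x$-a.s. The sequence $\tau_{U_n}$ is nondecreasing and bounded above by $\tau_U$; set $\tau := \lim_n \tau_{U_n}$. On $\{\tau < \tau_U\}$ we have $Y^\kappa_\tau \in U$, hence $Y^\kappa_\tau \in U_m$ for some $m$. Quasi-left-continuity of the Hunt process $Y^\kappa$ (assured by Remark \ref{r:strong-Feller}) gives $Y^\kappa_{\tau_{U_n}} \to Y^\kappa_\tau$, so for $n \ge m+1$ the point $Y^\kappa_{\tau_{U_n}}$ would lie in the open neighborhood $U_m \subset U_n$ of $Y^\kappa_\tau$, contradicting $Y^\kappa_{\tau_{U_n}} \notin U_n$ by definition. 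Hence $\tau = \tau_U$ a.s. For the second fact, $Y^\kappa$ is purely discontinuous (the form $\sE^\kappa$ has no diffusion part in its Beurling--Deny decomposition), so $\tau_U$ is almost surely a jump time, that is, $Y^\kappa_{\tau_U^-} \neq Y^\kappa_{\tau_U}$. If $\tau_{U_n} < \tau_U$ strictly along a subsequence on an event of positive probability, càdlàg regularity of the paths gives $Y^\kappa_{\tau_{U_n}} \to Y^\kappa_{\tau_U^-}$ along that subsequence, while quasi-left-continuity simultaneously forces $Y^\kappa_{\tau_{U_n}} \to Y^\kappa_{\tau_U}$; these two limits would coincide, contradicting the jump. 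Therefore $\tau_{U_n} = \tau_U$ eventually a.s.

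Finally I would pass to the limit. The eventual identity $\tau_{U_n} = \tau_U$ gives $u(Y^\kappa_{\tau_{U_n}}) \to u(Y^\kappa_{\tau_U})$ $\P_x$-a.s., and bounded convergence (using $\lVert u \rVert_{L^\infty(D)} < \infty$) yields $\E_x[u(Y^\kappa_{\tau_{U_n}})] \to \E_x[u(Y^\kappa_{\tau_U})]$. On the right-hand side, the sign assumption on $L^\kappa u$ makes the integrands $L^\kappa u(Y^\kappa_s)\1_{[0,\tau_{U_n})}(s)$ monotone in $n$ and pointwise convergent to $L^\kappa u(Y^\kappa_s)\1_{[0,\tau_U)}(s)$, so monotone convergence applied pathwise and then in expectation yields convergence of the right-hand side, the limit being finite because the left-hand side is. Combining the two limits gives \eqref{e:Dynkin-formula}. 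The principal difficulty is precisely this pathwise analysis: since $u$ need not be continuous on $\partial U$ or beyond, one must exploit both quasi-left-continuity and the pure-jump structure of $Y^\kappa$ to force the sequence $Y^\kappa_{\tau_{U_n}}$ to be eventually constantly equal to $Y^\kappa_{\tau_U}$, thereby avoiding any continuity hypothesis on $u$ at the exit position.
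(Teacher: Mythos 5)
Your overall strategy — exhaust $U$ from within by sets to which Proposition \ref{p:Dynkin-formula} applies, then pass to the limit by bounded convergence on the left and monotone convergence on the right — is the same as the paper's. The paper, however, uses a different exhaustion, $A_j = \{y \in U: \delta_D(y) > 2^{-j}\}$ (distance to $\partial D$, not to $\partial U$), and this choice does real work: on the exceptional event $\{\tau_{A_j} < \tau_U\text{ for all }j\}$ one has $\delta_D(Y^\kappa_{\tau_{A_j}}) \le 2^{-j}\to 0$, so quasi-left-continuity forces $\tau_U = \zeta$ there and hence $u(Y^\kappa_{\tau_U}) = u(\partial) = 0$. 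Your exhaustion $U_n=\{y\in U:\dist(y,U^c)>1/n\}$ lacks this feature: on the analogous exceptional event, $Y^\kappa_{\tau_U}$ can land in $\partial U \cap D$ with $\tau_U < \zeta$, which is exactly where $u$ is merely Borel.

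The genuine gap is in your justification of the eventual equality $\tau_{U_n}=\tau_U$. The claim ``$\sE^\kappa$ has no diffusion part in its Beurling--Deny decomposition, so $\tau_U$ is almost surely a jump time, that is, $Y^\kappa_{\tau_U^-}\ne Y^\kappa_{\tau_U}$'' is not a valid deduction, and the ensuing ``contradiction'' is spurious. Quasi-left-continuity of Hunt processes — the very property you invoke — \emph{is} the assertion that $Y^\kappa_{\tau_{U_n}}\to Y^\kappa_{\tau_U}$ on $\{\tau_U<\zeta\}$ whenever $\tau_{U_n}\uparrow\tau_U$; for a strictly increasing sequence this yields $Y^\kappa_{\tau_U^-}=Y^\kappa_{\tau_U}$ on that event, i.e.\ the path is continuous at $\tau_U$ there. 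So the two limits you extract (one from c\`adl\`ag regularity, one from quasi-left-continuity) must coincide — that is the content of quasi-left-continuity, not a contradiction. You are contradicting an \emph{unproven premise} (that $\tau_U$ is a jump time), not deriving a contradiction from established facts. Absence of a diffusion part does not imply the process jumps at every exit time: a one-dimensional symmetric $\alpha$-stable process with $\alpha\in(1,2)$ is purely discontinuous, yet it hits points, and at the hitting time $\sigma_a$ the process is typically continuous, $X_{\sigma_a^-}=X_{\sigma_a}=a$. Whether $\P_x(Y^\kappa_{\tau_U}\in\partial U)=0$ for the processes at hand is a non-trivial statement about the jump kernel and the set $U$ (of Ikeda--Watanabe type), and it needs an actual argument rather than a formal appeal to the Beurling--Deny decomposition. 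Until that is supplied, the pointwise convergence $u(Y^\kappa_{\tau_{U_n}})\to u(Y^\kappa_{\tau_U})$ on $\{\tau_{U_n}<\tau_U\ \forall n\}$ is unavailable, and bounded convergence on the left-hand side is not justified.
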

\begin{proof} 
Let $x \in U$. For $j \ge 1$, define $A_j:=\{y \in U: \delta_D(y) >2^{-j} \}$. 
Clearly, $A_j \uparrow U$. 
Hence, there exists $j_0 \ge 1$ such that $x \in A_j$ for all $j \ge j_0$. Since  $u|_{\overline{A_j}} \in C^{1,1}(\overline{A_j})$ for all $j\ge 1$, by Proposition \ref{p:Dynkin-formula}, we get that for all $j \ge j_0$,	
	\begin{align}\label{e:Dynkin-local-2}
		\E_x \big[ u(Y^\kappa_{\tau_{A_j}})\big] 
			= u(x) + \E_x \bigg[\int_0^{\tau_{A_j}} L^\kappa u(Y^\kappa_s)ds \bigg].	
	\end{align}
By the dominated convergence theorem, the left-hand side of \eqref{e:Dynkin-local-2} converges to that of \eqref{e:Dynkin-formula} as $j \to \infty$. On the other hand, since $L^\kappa u$ 
is either positive in $U$  or negative in $U$, using the monotone convergence theorem, we see that  the right-hand side of \eqref{e:Dynkin-local-2} converges to that of \eqref{e:Dynkin-formula} as $j \to \infty$. Now we arrive at the result by letting $j \to \infty$ in \eqref{e:Dynkin-local-2}.
\end{proof}

\subsection{Construction of  barrier}\label{s-barrier}

In this subsection, we introduce two new assumptions on the function $\sB$, and construct a barrier $\psi^{(r)}$.

Let $\Phi_0$ be a Borel  function on $(0,\infty)$ such that $\Phi_0(r)=1$ for $r \ge 1$ and 
\begin{align}
	c_L \bigg( \frac{r}{s}\bigg)^{\lb_0}\le 	\frac{\Phi_0(r)}{\Phi_0(s)} 
	\le c_U \bigg( \frac{r}{s}\bigg)^{\ub_0} \quad \text{for all} \;\, 0<s\le r\le 1, \label{e:scale}
\end{align} 
for some constants $\ub_0\ge\lb_0\ge0$ and $c_L,c_U>0$.   Let $\beta_0$ be the lower Matuszewska index of $\Phi_0$, see \eqref{e:Matuszewska}. 

Consider the following conditions on $\sB$. 

\medskip

\noindent\hypertarget{B4-a}{{\bf (B4-a)}} 
There exists a constant $C_6>0$ such that 
\begin{align*}
	\sB(x,y) \le C_6 \Phi_0\bigg(\frac{\delta_D(x) \wedge \delta_D(y)}{|x-y|}\bigg) \quad \text{for all }  x,y \in D.
\end{align*}

\noindent\hypertarget{B4-b}{{\bf (B4-b)}}  There exists a constant $C_7>0$ such that
\begin{align*}
	\sB(x,y) \ge C_7 \Phi_0\left(\frac{\delta_D(x) \wedge \delta_D(y)}{|x-y|}\right) 
	\quad \text{for all }  x,y \in D \text{ with } \delta_D(x) \vee \delta_D(y) \ge  \frac{|x-y|}{2}.
\end{align*}

\medskip

From now until the end of Section \ref{ch:green},
we assume that
$$
\textit{$\sB$ satisfies \hyperlink{B1}{{\bf (B1)}}, \hyperlink{B2-b}{{\bf (B2-b)}}, \hyperlink{B3}{{\bf (B3)}},  \hyperlink{B4-a}{{\bf (B4-a)}} and  \hyperlink{B4-b}{{\bf (B4-b)}}. }
$$ 
Note that \hyperlink{B4-a}{{\bf (B4-a)}} implies \hyperlink{B2-a}{{\bf (B2-a)}}.  

\begin{remark}\label{r:A4}
	Since $\sB$ is bounded by \hyperlink{B2-a}{{\bf (B2-a)}},
	the inequality \eqref{e:ass-B3} automatically holds for all $x,y \in D$ with 
	$|x-y| \ge \delta_D(x) \wedge \delta_D(y) \wedge \wh R$. 
	Hence, since \hyperlink{B4-a}{{\bf (B4-a)}} is assumed,
	for \hyperlink{B3}{{\bf (B3)}} it suffices to require 
	that \eqref{e:ass-B3} holds for all  $x,y \in D$ with $|x-y| < \delta_D(x) \wedge \delta_D(y) \wedge \wh R$.
\end{remark}

We now define a barrier $\psi^{(r)}$ and give an upper bound on $L^{\sB}_{\alpha} \psi^{(r)}$. This upper bound will be used in Section \ref{ch:decay-rate}, leading eventually to the important Theorem \ref{t:Dynkin-improve}.

Fix a positive integer $N_0> \alpha + \ub_0+2$. 
Let  $\psi:\bH\to [0,\infty)$ be a $C^{N_0}$ function such that 
(i) $\psi(v)=|\wt v|^{2N_0} + v_d^{2N_0}$ for $v \in U_\bH(2)$; and (ii) $\psi(v)=0$ for $\bH\setminus U_\bH(3)$.  
For a multi-index $\rho=(\rho_1,...,\rho_{d})\in \N_0^{d}$, we define $|\rho|:=\sum_{i=1}^{d}\rho_i$ and  
$\rho!:=\Pi_{i=1}^{d} \rho_i !$. 
Let  further $v^\rho:=\Pi_{i=1}^{d}\,v_i^{\rho_i}$ for $v=(v_1,...v_d) \in 
 \R^{d}$
and
\begin{align*}
	\partial^\rho \psi(v) := \frac{\partial^{|\rho|}\psi(v)}{\partial v_1^{\rho_1}\cdots \partial v_{d}^{\rho_{d}} }, \quad v=(v_1,...v_d) \in 
 \bH
\end{align*}
Denote by $\mathfrak i(k)$ the family of all multi-indices $\rho=(\rho_1,...,\rho_d) \in \N_0^d$ with  $|\rho|=k$. Let $\mathfrak i_0(k):=\{ \rho \in \mathfrak i(k): \rho_d=0\}$. One sees  that  for any integer $1\le k \le N_0$, there exists a constant $c(k)>0$ depending only on $k$ such that for any $v=(\wt v, v_d) \in U_\bH(2)$,
\begin{align}\label{e:8.3-null}
	\sum_{\rho \in \mathfrak i_{0}(k)} \frac{\partial^\rho (\partial\psi/\partial v_d)(v)}{\rho!}=0,
\end{align}
\begin{align}\label{e:8.3}
	\bigg| \sum_{\rho \in \mathfrak i_{0}(k)} \frac{\partial^\rho \psi(v)}{\rho!} \bigg| \le c(k)|\wt v|^{2N_0-k} 
	\quad \text{and} \quad   \bigg|  \frac{\partial^k \psi(v)}{\partial v_d^k} \bigg| \le c(k) v_d^{2N_0-k}.
\end{align}

For  $Q \in \partial D$ and $0<r\le \wh R/(18+9\Lambda_0)$,  we define $\psi^{(r)}=\psi^{(r)}_Q:D\to [0,\infty)$ by
\begin{align}\label{e:def-psi-r}
	\psi^{(r)}(y)=\begin{cases}
		\psi((f_Q^{(r)})^{-1}(y)) & \mbox{ if }  y\in U^Q(3r),\\[3pt]
		0 & \mbox{ if } y\in D\setminus U^Q(3r),
	\end{cases}
\end{align}
where $f_Q^{(r)}$ is the function defined in \eqref{e:def-fr}. Then $\psi^{(r)}$ is a non-negative $C^{1,1}$ function 
with support in $U^Q(3r)$.  By Proposition \ref{p:generator-C11}, $L^\sB_\alpha \psi^{(r)}$ is well defined. 

In the remainder of this subsection, we will work with a fixed $Q\in \partial D$, and 
will write $U(r)$ for $U^Q(r)$  and $f^{(r)}$ for $f_Q^{(r)}.$ 

The goal of this subsection is to prove the following proposition.
\begin{prop}\label{p:compensator}
	Let $Q \in \partial D$. For any $\eps>0$,	there exists a constant $C(\eps)>0$ independent of  $Q$ such that for any $0<r\le \wh R/(18+9\Lambda_0)$ and any $y \in U(r)$,	
	\begin{align*}
		L_\alpha^\sB \psi^{(r)}(y)\le  \eps  \delta_D(y)^{-\alpha}  \psi^{(r)}(y)+ C(\eps)r^{-\alpha} \Phi_0(\delta_D(y)/r).
	\end{align*}
\end{prop}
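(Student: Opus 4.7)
The plan is to decompose $L^\sB_\alpha\psi^{(r)}(y)$ according to the distance $|z-y|$ and estimate each piece using the explicit form of $\psi$ on $U_\bH(2)$ together with the regularity assumption \hyperlink{B3}{{\bf (B3)}}. First I set up local coordinates: let $v:=(f^{(r)})^{-1}(y)\in U_\bH(1)$ and $\rho:=\rho_D(y)=rv_d$, so that $\delta:=\delta_D(y)\asymp\rho$ by Lemma \ref{l:U-rho-Lipschitz}(ii). Since $\psi(w)=|\wt w|^{2N_0}+w_d^{2N_0}$ on $U_\bH(2)$ and $f^{(r)}$ is bi-Lipschitz with constant $\asymp r$ (Lemma \ref{l:diffeo}), one obtains the pointwise bounds $|\partial^\sigma\psi^{(r)}(z)|\le C r^{-|\sigma|}(|\wt z/r|+\rho_D(z)/r)^{2N_0-|\sigma|}$ for $z\in U^Q(2r)$ and $|\sigma|\le N_0$. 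Inside the ball $B(y,\rho/4)$ one has $\rho_D(z)\asymp\rho$, so $\psi^{(r)}$ is $C^{1,1}$ there with $|D^k\psi^{(r)}|\lesssim r^{-k}|v|^{2N_0-k}$ for $k=1,2$.

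Next I write
\[
L^\sB_\alpha\psi^{(r)}(y)=J_1(y)+J_2(y)+J_3(y)
\]
corresponding to the regions $|z-y|<\rho/8$, $\rho/8\le|z-y|<r$, and $|z-y|\ge r$. For $J_3$, \hyperlink{B4-a}{{\bf (B4-a)}} gives $\sB(y,z)\le C_6\Phi_0(\delta/|y-z|)$, and the almost-increasing property of $\Phi_0$ (from \eqref{e:scale}) together with $|y-z|\ge r$ yields $\Phi_0(\delta/|y-z|)\le c\Phi_0(\delta/r)$; combined with $\psi^{(r)}(z)\le\|\psi\|_\infty$, radial integration gives $J_3\le Cr^{-\alpha}\Phi_0(\delta/r)$. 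For $J_2$ I bound $|\psi^{(r)}(z)-\psi^{(r)}(y)|$ by the local Lipschitz norm of $\psi^{(r)}$ and use \hyperlink{B4-a}{{\bf (B4-a)}} with the two-sided weak scaling, noting that the pointwise bound $|\nabla\psi^{(r)}|\le Cr^{-1}(|v|+|z-y|/r)^{2N_0-1}$ supplies sufficient polynomial decay in $|v|$ to control the resulting radial integral.

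For $J_1$ I Taylor-expand
\[
\psi^{(r)}(z)-\psi^{(r)}(y)=\nabla\psi^{(r)}(y)\cdot(z-y)+R(z,y),\qquad |R(z,y)|\le C\|D^2\psi^{(r)}\|_{L^\infty(B(y,\rho/4))}|z-y|^2.
\]
The remainder integrates absolutely against the kernel (bounded by \hyperlink{B2-a}{{\bf (B2-a)}}) and contributes at most $\lesssim r^{-2}|v|^{2N_0-2}\rho^{2-\alpha}$. For the first-order piece, split $\sB(y,z)=\sB(y,y)+[\sB(y,z)-\sB(y,y)]$: the $\sB(y,y)$ term vanishes in the principal value by radial symmetry over $B(y,\rho/8)$, while \hyperlink{B3}{{\bf (B3)}} gives $|\sB(y,z)-\sB(y,y)|\le C_5(|z-y|/\rho)^{\theta_0}$ for $z\in B(y,\rho/4)$ (using $\rho_D(z)\asymp\rho$ there), so the remaining integral is $\lesssim|\nabla\psi^{(r)}(y)|\rho^{1-\alpha}\lesssim r^{-1}|v|^{2N_0-1}\rho^{1-\alpha}$, thanks to $\theta_0>\alpha-1$. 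When $\alpha<1$ the principal value is unnecessary and this step simplifies accordingly.

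Assembling, all three contributions are of the form $Cr^{-\alpha}|v|^{a}$ with exponents $a\ge 2N_0-2$, and since $N_0>\alpha+\ub_0+2$ one has $a>\ub_0$. Using $\Phi_0(\delta/r)\ge c(\delta/r)^{\ub_0}$ (the upper scaling bound in \eqref{e:scale}) together with Young's inequality $|v|^{a}\le \eps|v|^{2N_0}+C(\eps)|v|^{\ub_0}$ valid for $\ub_0<a<2N_0$, each such term splits as $\eps\delta^{-\alpha}\psi^{(r)}(y)+C(\eps)r^{-\alpha}\Phi_0(\delta/r)$, giving the claim. The hard part will be the mid region $J_2$: a crude application of the weak scaling of $\Phi_0$ introduces a potentially large factor $(r/\rho)^{\ub_0}$, and it is precisely the polynomial decay $|\nabla\psi^{(r)}|\sim r^{-1}|v|^{2N_0-1}$, together with the margin supplied by $N_0>\alpha+\ub_0+2$, that allows one to beat this factor and produce the required estimate.
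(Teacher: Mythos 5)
Your decomposition into a near region $J_1$, a middle annulus $J_2$, and a far region $J_3$, together with the use of the Taylor expansion of $\psi^{(r)}$ and the p.v. symmetry to kill the first-order term in $J_1$, is structurally the same as the paper's split into $\sI^{(r)}_1,\sI^{(r)}_2,\sI^{(r)}_3$; your $J_1$ and $J_3$ estimates are sound. However, your final assembling step contains a genuine gap and cannot give the stated inequality.

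First, the contributions are not of the form $Cr^{-\alpha}|v|^a$. Your own bounds for $J_1$ read $\lesssim r^{-2}|v|^{2N_0-2}\rho^{2-\alpha}=r^{-\alpha}|v|^{2N_0-2}v_d^{2-\alpha}$ and $\lesssim r^{-1}|v|^{2N_0-1}\rho^{1-\alpha}=r^{-\alpha}|v|^{2N_0-1}v_d^{1-\alpha}$. When $\alpha>1$ the factor $v_d^{1-\alpha}$ is unbounded as $v_d\to0$, so it cannot simply be absorbed into a constant and the reduction to $r^{-\alpha}|v|^a$ fails. Second, even in the regimes where the $v_d$-weight is bounded, the Young inequality $|v|^a\le\eps|v|^{2N_0}+C(\eps)|v|^{\ub_0}$ does not close the argument, because $|v|^{\ub_0}$ is \emph{not} controlled by $\Phi_0(\delta_D(y)/r)$: one has $\Phi_0(\delta_D(y)/r)\asymp\Phi_0(v_d)\ge cv_d^{\ub_0}$, and when $|\wt v|\gg v_d$ the quantity $|v|^{\ub_0}$ can be larger than $v_d^{\ub_0}$ by an arbitrarily large factor. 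The correct splitting inequality, which the paper proves as \eqref{e:1n-1} and \eqref{e:comp-2-1}, is a dichotomy on whether $|\wt v|$ dominates $v_d$: when $|\wt v|>C(\eps)v_d$ one absorbs the term into $\eps|\wt v|^{2N_0}v_d^{-\alpha}\asymp\eps\delta_D(y)^{-\alpha}\psi^{(r)}(y)r^{\alpha}$, and when $|\wt v|\le C(\eps)v_d$ one uses $v_d^{2N_0-\alpha}\le v_d^{\ub_0}\lesssim\Phi_0(v_d)$. Equivalently, the correct Young's inequality is $|v|^{2N_0-k}v_d^{k}\le\eps|v|^{2N_0}+C(\eps)v_d^{2N_0}$, with $v_d^{2N_0-\alpha}$ — not $|v|^{\ub_0}$ — subsequently compared to $\Phi_0(v_d)$. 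For the middle region $J_2$, the assertion that the pointwise gradient bound ``supplies sufficient polynomial decay to control the resulting radial integral'' is not a proof: after the binomial expansion of $(|v|+s)^{2N_0-1}$ one is left with a family of integrals $\int_{v_d/8}^1 s^{k-1-\alpha}\Phi_0(v_d/s)\,ds$ each of which needs the same dichotomy estimate (this is exactly Lemma \ref{l:1n}(ii) in the paper). In short, your outline is on the right track, but the one hard inequality — the $\eps$-vs-$\Phi_0$ splitting in the presence of the $v_d$-weights — is stated incorrectly and never established.
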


\smallskip

We will prove Proposition \ref{p:compensator} by 
estimating some specific integrals within the half space  through a series of lemmas. 

Define for  $r \in (0, \wh R/(18+9\Lambda_0)]$ and $v \in U_\bH(1)$,
\begin{align}
	\sI^{(r)}_{1}(v)&:=\int_{U_\bH(3) \setminus B(v,v_d/2)} (\psi(w)-\psi(v)) 
	\frac{\sB(f^{(r)}(w),f^{(r)}(v)) }{|f^{(r)}(w)-f^{(r)}(v)|^{d+\alpha}} dw,\label{e:compensator1}
\end{align}
\begin{align}
	\sI^{(r)}_{2}(v)&:=\int_{B(v, v_d/2)}(\psi(w)-\psi(v) ) \frac{ (\sB(f^{(r)}(w),f^{(r)}(v)) - 
		\sB(f^{(r)}(v),f^{(r)}(v)) )}{|f^{(r)}(w)-f^{(r)}(v)|^{d+\alpha}}  dw, \label{e:compensator2}
\end{align}
\begin{align}
	\sI^{(r)}_{3}(v)&:= \sB(f^{(r)}(v),f^{(r)}(v))\int_{B(v, v_d/2)} \frac{\psi(w)-\psi(v) - 
		\nabla \psi(v) \cdot ( w- v)}{|f^{(r)}(w)-f^{(r)}(v)|^{d+\alpha}}   dw .\label{e:compensator3}
\end{align}

To get estimates for $\sI^{(r)}_{1}(v)$,  $\sI^{(r)}_{2}(v)$  and  $\sI^{(r)}_{3}(v)$, we use the following lemma.  
\begin{lemma}\label{l:1n}  
	\noindent	(i)	There exists  $C>0$ such that  for  any $v \in U_\bH(1)$,
	\begin{align*}
		\int_{U_\bH(2)\setminus B(v,v_d/2)} \frac{\Phi_0(v_d/|w-v|)}{|w-v|^{d+\alpha-N_0}}  dw \le   C\Phi_0(v_d).
	\end{align*}
	
	\noindent	(ii)	For any $\eps \in (0,1)$, 	there exists a constant  $C(\eps)>0$ such that  for any  $1\le k\le N_0-1$ and any $v \in U_\bH(1)$,
	\begin{align*}
		(|\wt v|^{2N_0-k} + v_d^{2N_0-k})	\int_{U_\bH(2)\setminus B(v,v_d/2)} \frac{\Phi_0(v_d/|w-v|)}{|w-v|^{d+\alpha-k}} dw
		\le \eps |\wt v|^{2N_0} v_d^{-\alpha}   +  C(\eps)\Phi_0(v_d).
	\end{align*}
\end{lemma}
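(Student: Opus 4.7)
The plan is purely computational, reducing both estimates to careful analysis of a one-dimensional integral.

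First, I would observe that for $v \in U_\bH(1)$ and $w \in U_\bH(2)$, $|w-v|$ is bounded above by a universal constant $c_0$. Passing to polar coordinates $u = w - v = \rho\omega$ and then substituting $t = v_d/\rho$, the inner integral transforms, for any exponent $m \in \{k, N_0\}$, into
\begin{align*}
\int_{U_\bH(2) \setminus B(v, v_d/2)} \frac{\Phi_0(v_d/|w-v|)}{|w-v|^{d+\alpha-m}}\, dw \le |S^{d-1}|\, v_d^{m-\alpha} \int_{v_d/c_0}^{2} \Phi_0(t)\, t^{\alpha-m-1}\, dt.
\end{align*}

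Second, I would split the $t$-integral at $t = v_d$ and at $t = 1$. On $(v_d/c_0, v_d)$, the almost-increasing property of $\Phi_0$ gives $\Phi_0(t) \le C\Phi_0(v_d)$, contributing at most $C\Phi_0(v_d) v_d^{\alpha-m}$. On $(v_d, 1)$ the upper scaling bound $\Phi_0(t) \le c_U (t/v_d)^{\ub_0} \Phi_0(v_d)$ yields $C\Phi_0(v_d)(v_d^{-\ub_0} + v_d^{\alpha-m})$, depending on the sign of $\ub_0+\alpha-m$. On $(1,2)$, $\Phi_0 \equiv 1$ and the contribution is an absolute constant, which I would absorb using $1 \le c_U v_d^{-\ub_0}\Phi_0(v_d)$. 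For Part (i), the assumption $N_0 > \alpha + \ub_0 + 2$ forces all three contributions to be dominated by $C\Phi_0(v_d) v_d^{\alpha-N_0}$, so that multiplying by $v_d^{N_0-\alpha}$ produces the claimed $C\Phi_0(v_d)$.

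For Part (ii), after the same estimate, one gets $v_d^{k-\alpha} J_k(v_d) \le C\Phi_0(v_d)(1 + v_d^{k-\alpha-\ub_0})$. The $v_d^{2N_0-k}$ component of the prefactor yields terms of the form $v_d^\gamma \Phi_0(v_d)$ with $\gamma \in \{2N_0-k, 2N_0-\alpha-\ub_0\}$; since $N_0 > \alpha + \ub_0 + 2$ and $k \le N_0-1$, both exponents are non-negative and these terms are bounded by $C\Phi_0(v_d)$. For the $|\wt v|^{2N_0-k}$ component, I would apply Young's inequality with scaling, using either
\begin{align*}
|\wt v|^{2N_0-k} \le \delta|\wt v|^{2N_0} + C_\delta \qquad \text{or} \qquad |\wt v|^{2N_0-k} v_d^k \le \delta|\wt v|^{2N_0} + C_\delta v_d^{2N_0},
\end{align*}
chosen so that the first summand becomes $\eps|\wt v|^{2N_0} v_d^{-\alpha}$ after scaling (using $v_d^{-\alpha} \ge 1$ and $\Phi_0(v_d)$ bounded), while the residual $v_d$-power is absorbed into $C(\eps)\Phi_0(v_d)$ via the key relation $v_d^\gamma \le C\Phi_0(v_d)$ whenever $\gamma \ge \ub_0$, which follows from the lower bound $\Phi_0(v_d) \ge c_U^{-1} v_d^{\ub_0}$.

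The main obstacle is bookkeeping: ensuring that every residual exponent of $v_d$ produced by Young's inequality is at least $\ub_0$, so that it may be absorbed into $\Phi_0(v_d)$. This is precisely where the margin built into the choice $N_0 > \alpha + \ub_0 + 2$ is used, since it provides the flexibility needed in each sub-case $1 \le k \le N_0-1$.
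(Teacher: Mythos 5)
Your treatment of Part (i) is correct and is essentially the argument in the paper: convert the radial integral to a one-dimensional integral, apply the upper scaling bound of $\Phi_0$, and use $N_0>\alpha+\ub_0$ to guarantee convergence. The cosmetic difference (substituting $t=v_d/\rho$ versus working directly in the variable $s=|w-v|$) does not change the substance.

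Part (ii) has a genuine gap. Your intermediate estimate
\begin{align*}
v_d^{k-\alpha}\int_{v_d/c_0}^{2}\Phi_0(t)\,t^{\alpha-k-1}\,dt \le C\,\Phi_0(v_d)\bigl(1+v_d^{\,k-\alpha-\ub_0}\bigr)
\end{align*}
is obtained by applying the upper scaling bound $\Phi_0(t)\le c_U(t/v_d)^{\ub_0}\Phi_0(v_d)$ over the whole range $(v_d,1)$ before bringing in the prefactor. This produces the factor $v_d^{\,k-\alpha-\ub_0}$, and when you then multiply by the $|\wt v|^{2N_0-k}$ part of the prefactor you are forced to control the quantity $|\wt v|^{2N_0-k}\Phi_0(v_d)\,v_d^{\,k-\alpha-\ub_0}$. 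Any Young split leaves a cross term of the form $\delta\,|\wt v|^{2N_0}\,v_d^{-\alpha}\cdot v_d^{-\ub_0}\Phi_0(v_d)$, and the factor $v_d^{-\ub_0}\Phi_0(v_d)$ is \emph{not} bounded: the scaling hypothesis only gives a lower bound $\Phi_0(v_d)\ge c_U^{-1}v_d^{\ub_0}$, not an upper one. (Take $\Phi_0\equiv 1$ on $(0,1]$ with $\ub_0>0$, which is perfectly admissible; then $v_d^{-\ub_0}\Phi_0(v_d)=v_d^{-\ub_0}\to\infty$.) So for $1\le k<\alpha+\ub_0$ the term $|\wt v|^{2N_0-k}\Phi_0(v_d)\,v_d^{\,k-\alpha-\ub_0}$ cannot be dominated by $\eps|\wt v|^{2N_0}v_d^{-\alpha}+C(\eps)\Phi_0(v_d)$ in general, and the stated "key relation $v_d^{\gamma}\le C\Phi_0(v_d)$ for $\gamma\ge\ub_0$" is not available because the residual exponent here is \emph{negative}. (There is also a secondary issue: when $\ub_0+\alpha-k=0$ your bound silently drops a $\log(1/v_d)$ factor.)

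The missing ingredient, which is precisely what the paper uses, is to keep the $(|\wt v|^{2N_0-k}+v_d^{2N_0-k})$ prefactor inside the radial integral and split at $s=|\wt v|+v_d$ rather than at $s=1$. On the far range $s\ge |\wt v|+v_d$ one has $(|\wt v|+v_d)^{2N_0-k}\le s^{2N_0-k}$, which turns the integrand into $\Phi_0(v_d/s)\,s^{2N_0-1-\alpha}$; applying the scaling bound there and using $2N_0-\alpha-\ub_0>0$ yields a contribution bounded by $C\,\Phi_0(v_d)$ with no negative power of $v_d$ whatsoever. On the near range $s\le |\wt v|+v_d$ one uses boundedness of $\Phi_0$, extracts a factor $(v_d/2)^{-\alpha/2}$, and obtains $c_4\,(|\wt v|+v_d)^{2N_0-\alpha/2}\,v_d^{-\alpha/2}$, which the dichotomy $|\wt v|\gtrless c_7 v_d$ then splits into the $\eps$-term and the $\Phi_0(v_d)$-term. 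This absorption of the prefactor into the radial variable before applying the scaling bound is what your argument is missing, and it cannot be circumvented by Young alone.
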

\begin{proof} 
	(i) Using \eqref{e:scale} and $v_d/2<|v-w| <4$ for $w\in U_{\bH}(2)\setminus B(v,v_2/2)$, 
	since $N_0>\alpha + \ub_0$, we get
	\begin{align*}
		\int_{U_\bH(2)\setminus B(v,v_d/2)} \frac{\Phi_0(v_d/|w-v|)}{|w-v|^{d+\alpha-N_0}}  dw 
		\le c \Phi_0(v_d/4) \int_{v_d/2}^{4}  
		s^{N_0-1-\alpha-\ub_0}ds  
		\le c\Phi_0(v_d).
	\end{align*}
	
	(ii) Let $\eps \in (0,1)$ and $1\le k \le N_0-1$. Using \eqref{e:scale},  
	$|\wt v|^{2N_0-k}+v_d^{2N_0-k} \le  (|\wt v|+v_d)^{2N_0-k}$,
	$k-\alpha/2>0$, the boundedness of $\Phi$, and 
	$2N_0-\alpha-\ub_0>0$, we obtain
	\begin{align*}
		&	(|\wt v|^{2N_0-k} + v_d^{2N_0-k})	\int_{U_\bH(2)\setminus B(v,v_d/2)} \frac{\Phi_0(v_d/|w-v|)}{|w-v|^{d+\alpha-k}} dw\\
		&\le c_1 (|\wt v|^{2N_0-k} + v_d^{2N_0-k}) \int_{v_d/2}^{4} s^{k-1-\alpha} \Phi_0(v_d/s) ds\\
		& \le c_2  
		(|\wt{v}|+v_d)^{2N_0-k} 
		\bigg(  \int_{v_d/2}^{|\wt v|+v_d} s^{k-1-\alpha} ds 
		+ \Phi_0(v_d/4)\int_{|\wt v|+v_d}^{4} 
		s^{k-1-\alpha-\ub_0}  ds \bigg)\\
		&\le \frac{c_2 	(|\wt{v}|+v_d)^{2N_0-k}  }{ (v_d/2)^{\alpha/2}}
		\int_{v_d/2}^{|\wt v|+v_d} s^{k-1-\alpha/2} ds
		+  c_3 \Phi_0(v_d)\int_{|\wt v|+v_d}^{4} 
		s^{2N_0-1-\alpha-\ub_0}  ds \\
		&\le c_4  ( |\wt v|+v_d)^{2N_0-\alpha/2} v_d^{-\alpha/2} + c_5 \Phi_0(v_d).
	\end{align*}
	Hence, it remains to show that there exists $c_6>0$ independent of $v$ such that 
	\begin{align}\label{e:1n-1}
		c_4(|\wt v|+v_d)^{2N_0-\alpha/2} <\eps  |\wt v|^{2N_0} v_d^{-\alpha/2}  + c_6v_d^{\alpha/2} \Phi_0(v_d).
	\end{align}
	Indeed, for 	$c_7=c_7(\eps):= (2^{\alpha/2-2N_0}\eps/c_4)^{ -2/\alpha }+1$, 
	if $|\wt v|>  c_7v_d$, then 
	$ c_4 (|\wt v|+v_d)^{2N_0-\alpha/2} \le 2^{2N_0-\alpha/2}c_4 |\wt v|^{2N_0-\alpha/2} < \eps |\wt v|^{2N_0} v_d^{-\alpha/2}$. 
	If $|\wt v|\le c_7v_d$, then by \eqref{e:scale}, since  $2N_0-\alpha/2-\ub_0>0$, 
	\begin{align*} 
		&c_4(|\wt v|+v_d)^{2N_0-\alpha/2} \le c_4(1+c_7)^{2N_0-\alpha/2} v_d^{2N_0-\alpha/2} \\
		&<c_8 v_d^{2N_0-\alpha/2-\ub_0}
		\Phi(v_d)/\Phi(1)<c_8\Phi(v_d)/\Phi(1).
	\end{align*}
	The proof is complete.
\end{proof}

\begin{lemma}\label{l:comp-1}
  For any $\eps>0$, there exists a constant $C=C(\eps)>0$ such that for all  $r \in (0, \wh R/(18+9\Lambda_0)]$ and $v \in U_\bH(1)$,
	\begin{align*}
		\sI^{(r)}_{1}(v) \le 
		r^{-d-\alpha} \left( \eps |\wt v|^{2N_0} v_d^{-\alpha}  + C\Phi_0(v_d) \right),
	\end{align*}
	where  $	\sI^{(r)}_{1}(v)$ is defined by \eqref{e:compensator1}.
\end{lemma}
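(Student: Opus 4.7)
The plan is to reduce $\sI^{(r)}_{1}(v)$ to a half-space integral involving only $\Phi_0(v_d/|w-v|)$, then split the region of integration and apply Lemma \ref{l:1n}. First, by Lemma \ref{l:diffeo}, $|f^{(r)}(w)-f^{(r)}(v)|^{-d-\alpha} \le c r^{-d-\alpha}|w-v|^{-d-\alpha}$. For the jump kernel, the identity $\rho_D(f^{(r)}(v))=rv_d$ together with Lemma \ref{l:U-rho-Lipschitz}(ii) yields $\delta_D(f^{(r)}(v)) \le r v_d$, and since $\Phi_0$ is almost increasing and satisfies \eqref{e:scale}, assumption \hyperlink{B4-a}{{\bf (B4-a)}} combined with Lemma \ref{l:diffeo} gives $\sB(f^{(r)}(w), f^{(r)}(v)) \le c\, \Phi_0(v_d/|w-v|)$ uniformly in $w$. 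Thus
\[
\sI^{(r)}_{1}(v) \le c\, r^{-d-\alpha}\int_{U_\bH(3) \setminus B(v,v_d/2)} |\psi(w)-\psi(v)|\, \frac{\Phi_0(v_d/|w-v|)}{|w-v|^{d+\alpha}}\, dw.
\]

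Next I would split the domain of integration into the far part $\{|w-v|\ge 1\}$ and the near part $\{v_d/2\le|w-v|<1\}$. On the far part, since $|\psi(w)-\psi(v)|\le 2\|\psi\|_\infty$ and $v_d/|w-v|\le v_d\le 1$ forces $\Phi_0(v_d/|w-v|)\le c\,\Phi_0(v_d)$ via almost monotonicity, direct integration gives a bound of the form $c\,\Phi_0(v_d)$, which is already in the required shape. The near part is the delicate piece: for $v\in U_\bH(1)$, the constraint $|w-v|<1$ forces $w\in U_\bH(2)$, so $\psi$ equals the explicit polynomial $|\wt w|^{2N_0}+w_d^{2N_0}$, which has \emph{no} mixed partials in the $\wt v$ and $v_d$ variables.

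Consequently, Taylor expansion of $\psi$ about $v$ to order $N_0$ decouples: for each multi-index $\rho$ with $|\rho|=k$, either $\rho_d=0$ (so $|\partial^\rho\psi(v)|\le c|\wt v|^{2N_0-k}$), or $\rho=(0,\dots,0,k)$ (so $|\partial^\rho\psi(v)|\le c\, v_d^{2N_0-k}$), or the derivative vanishes. Collecting terms yields
\[
|\psi(w)-\psi(v)| \le c \sum_{k=1}^{N_0-1}\bigl(|\wt v|^{2N_0-k}+v_d^{2N_0-k}\bigr)|w-v|^k + c\,|w-v|^{N_0}
\]
on the near region. Substituting this into the integral, Lemma \ref{l:1n}(ii) handles each term of the sum, producing $\eps|\wt v|^{2N_0}v_d^{-\alpha}+C(\eps)\Phi_0(v_d)$, while Lemma \ref{l:1n}(i) handles the $|w-v|^{N_0}$ remainder, producing $c\,\Phi_0(v_d)$. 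Summing over $k$ and rescaling $\eps$ to absorb the factor $N_0-1$ gives the claimed bound. The main point requiring care is the term-by-term Taylor bookkeeping, where one must exploit the separated-polynomial structure of $\psi$ on $U_\bH(2)$; the decompositions recorded in \eqref{e:8.3-null}--\eqref{e:8.3} are tailored precisely for this purpose.
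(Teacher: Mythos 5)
Your proof is correct and follows essentially the same route as the paper's: bound the jump kernel via \hyperlink{B4-a}{{\bf (B4-a)}}, \eqref{e:U-rho-C11-2} (or Lemma \ref{l:U-rho-Lipschitz}(ii)) and Lemma \ref{l:diffeo}, then Taylor-expand $\psi$ on the near region using its separated polynomial form (i.e., \eqref{e:8.3}) and close with Lemma \ref{l:1n}. The only cosmetic differences are the near/far boundary ($|w-v|=1$ instead of $\partial U_\bH(2)$), using $\sup|\psi|$ rather than the mean value theorem on the far region, and folding the paper's pieces $I_2$ and $I_3$ into a single pointwise bound on $|\psi(w)-\psi(v)|$.
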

\begin{proof}  
	By \hyperlink{B4-a}{{\bf (B4-a)}}, \eqref{e:U-rho-C11-2}, the almost increasing property of $\Phi_0$
	and Lemma   \ref{l:diffeo}, there exist  $c_1,c_2>0$ independent of $Q$ and $r$ such that for any  $w,z \in U_\bH(3)$, 
	\begin{align}\label{e:B-comp}
		\sB(f^{(r)}(w),f^{(r)}(z)) \le c_1 \Phi_0 \bigg(\frac{\rho_D(f^{(r)}(z))}{|f^{(r)}(w)-f^{(r)}(z)|} \bigg) 
		\le c_2\Phi_0\bigg(\frac{z_d}{|w-z|} \bigg).
	\end{align}
	Observe that
	$$
	\sI^{(r)}_{1}(v):=I_{1}+I_{2}+I_{3},
	$$
	where
	\begin{align*}
		I_1&:=\int_{U_\bH(3) \setminus U_\bH(2)} 
		(\psi(w)-\psi(v))\frac{\sB(f^{(r)}(w),f^{(r)}(v)) }{|f^{(r)}(w)-f^{(r)}(v)|^{d+\alpha}}\,dw,\\
		I_2&:=\int_{U_\bH(2) \setminus B(v,v_d/2)} \bigg( \psi(w)-\psi( v)-\sum_{k=1}^{N_0-1}
		\sum_{\rho \in \mathfrak i(k)}\frac{\partial^{\rho}\psi(v)}{\rho!} (w-v)^{\rho}\bigg)\\
		&\qquad \qquad \qquad \qquad\qquad  \times \frac{\sB(f^{(r)}(w),f^{(r)}(v))}{|f^{(r)}(w)-f^{(r)}(v)|^{d+\alpha}} \,dw,\\
		I_3&:=\int_{U_\bH(2)\setminus B(v,v_d/2)}	\sum_{k=1}^{N_0-1}\sum_{\rho \in \mathfrak i(k)} 
		\frac{\partial^{\rho}\psi(v)}{\rho!} (w-v)^{\rho}\frac{\sB(f^{(r)}(w),f^{(r)}(v))}{|f^{(r)}(w)-f^{(r)}(v)|^{d+\alpha}} \, dw.
	\end{align*}
Using the mean value theorem,  \eqref{e:B-comp}, \eqref{e:scale}  and Lemma \ref{l:diffeo}, we obtain
	\begin{align*}
		&I_1 \le  c\int_{U_\bH(3) \setminus B(v,1)} 
		\frac{\sup_{ \xi \in \R^{d}} |\nabla \psi(\xi)| |w- v|}{|f^{(r)}(w)-f^{(r)}(v)|^{d+\alpha}}  
		\Phi_0\bigg( \frac{v_d}{|w-v|}\bigg)dw\\
		& \le cr^{-d-\alpha} \Phi_0(v_d)\int_{U_\bH(3) \setminus B(v,1)} \frac{dw}{|w-v|^{d+\alpha-1}}  \le cr^{-d-\alpha} \Phi_0(v_d). 
	\end{align*}
By using  Taylor's theorem, \eqref{e:B-comp} and  Lemmas \ref{l:diffeo} and \ref{l:1n}(i),  we have
	\begin{align*}
		&	I_2 \le  cr^{-d-\alpha} \int_{U_\bH(2) \setminus B(v,v_d/2)}
		\frac{|w-v|^{N_0}}{|w-v|^{d+\alpha}}
		\Phi_0\bigg( \frac{v_d}{|w-v|}\bigg) dw\le cr^{-\alpha-d}\Phi_0(v_d).
	\end{align*}
	Moreover, using \eqref{e:B-comp},  \eqref{e:8.3-null}, \eqref{e:8.3} and Lemmas \ref{l:diffeo} and \ref{l:1n}(ii), we obtain
	\begin{align*}
		I_3 &=
		\int_{U_\bH(2)\setminus B(v,v_d/2)}
		\sum_{k=1}^{N_0-1}\sum_{\rho \in \mathfrak i_0(k)} \frac{\partial^{\rho}\psi(v)}{\rho!} 
		(w-v)^{\rho}\frac{\sB(f^{(r)}(w),f^{(r)}(v))}{|f^{(r)}(w)-f^{(r)}(v)|^{d+\alpha}} \, dw\\
		&\quad + 	\int_{U_\bH(2)\setminus B(v,v_d/2)}
		\sum_{k=1}^{N_0-1} \frac{1}{k!}\frac{\partial^{k}\psi(v)}{\partial v_d^k} 
		(w_d-v_d)^{k}\frac{\sB(f^{(r)}(w),f^{(r)}(v))}{|f^{(r)}(w)-f^{(r)}(v)|^{d+\alpha}} \, dw\\
		&\le cr^{-d-\alpha} \sum_{k=1}^{N_0-1} (|\wt v|^{2N_0-k} + v_d^{2N_0-k}) 
		\int_{U_\bH(2)\setminus B(v,v_d/2)} \frac{1}{|w-v|^{d+\alpha-k}} \Phi_0 \bigg(\frac{v_d}{|w-v|}\bigg) dw\\
		&\le r^{-d-\alpha} \sum_{k=1}^{N_0-1} \left(  (\eps / N_0) \,|\wt v|^{2N_0} v_d^{-\alpha}  + c\Phi_0(v_d)\right)\\
		& \le  r^{-d-\alpha} \left( \eps |\wt v|^{2N_0} v_d^{-\alpha}  + c\Phi_0(v_d) \right).
	\end{align*}
	Combining the above estimates, we arrive at the desired result. 
\end{proof}

\begin{lemma}\label{l:comp-2}
 For any $\eps>0$, there exists a constant $C=C(\eps)>0$ such that for all  $r \in (0, \wh R/(18+9\Lambda_0)]$ and $v \in U_\bH(1)$,
\begin{align*}
	\sI^{(r)}_{2}(v)  + 	\sI^{(r)}_{3}(v) \le 
	r^{-d-\alpha} \left( \eps |\wt v|^{2N_0} v_d^{-\alpha}  + C\Phi_0(v_d) \right),
\end{align*}
where  $	\sI^{(r)}_{2}(v)$ and  $	\sI^{(r)}_{3}(v)$  are defined by \eqref{e:compensator2} and \eqref{e:compensator3}.
\end{lemma}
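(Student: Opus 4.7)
\medskip
\noindent\textbf{Proof plan for Lemma \ref{l:comp-2}.} The plan is to estimate $\sI^{(r)}_2(v)$ and $\sI^{(r)}_3(v)$ separately, reducing both to a common bound of the form $cr^{-d-\alpha}\bigl[(|\wt v|+v_d)^{\text{high power}}\cdot v_d^{\text{low power}}\bigr]$, which I then absorb into the target via a Young-type inequality and the weak scaling of $\Phi_0$. Throughout, I will use Lemma \ref{l:diffeo} to convert $|f^{(r)}(w)-f^{(r)}(v)|$ into $r|w-v|$ up to multiplicative constants, and the fact that $B(v,v_d/2)\subset U_\bH(2)$ for $v\in U_\bH(1)$, so on this ball $\psi(\xi)=|\wt\xi|^{2N_0}+\xi_d^{2N_0}$ and direct differentiation gives $\sup_{\xi\in B(v,v_d/2)}|D^2\psi(\xi)|\le c(|\wt v|^{2N_0-2}+v_d^{2N_0-2})$ and $\sup|\nabla\psi|\le c(|\wt v|^{2N_0-1}+v_d^{2N_0-1})$.

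For $\sI^{(r)}_3(v)$, I will use the second-order Taylor bound $|\psi(w)-\psi(v)-\nabla\psi(v)\cdot(w-v)|\le c|w-v|^2(|\wt v|^{2N_0-2}+v_d^{2N_0-2})$, combined with \hyperlink{B2-a}{{\bf (B2-a)}} (so $\sB(f^{(r)}(v),f^{(r)}(v))\le C_1$) and $|f^{(r)}(w)-f^{(r)}(v)|\ge (1+\Lambda_0)^{-1}r|w-v|$. Since $\alpha<2$, the remaining radial integral $\int_{B(v,v_d/2)}|w-v|^{2-d-\alpha}dw$ converges and equals $cv_d^{2-\alpha}$. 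This yields $|\sI^{(r)}_3(v)|\le c r^{-d-\alpha}(|\wt v|^{2N_0-2}+v_d^{2N_0-2})v_d^{2-\alpha}$.

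For $\sI^{(r)}_2(v)$, I will split on whether $\alpha<1$ or $\alpha\ge 1$. When $\alpha<1$, just use $|\sB(f^{(r)}(w),f^{(r)}(v))-\sB(f^{(r)}(v),f^{(r)}(v))|\le 2C_1$ and the bound $|\psi(w)-\psi(v)|\le c(|\wt v|^{2N_0-1}+v_d^{2N_0-1})|w-v|$; the resulting integral $\int_{B(v,v_d/2)}|w-v|^{1-d-\alpha}dw\le cv_d^{1-\alpha}$ converges since $\alpha<1$. When $\alpha\ge 1$, I invoke \hyperlink{B3}{{\bf (B3)}} together with the identifications $\delta_D(f^{(r)}(v))\asymp \rho_D(f^{(r)}(v))=rv_d$ and $\delta_D(f^{(r)}(w))\asymp rw_d\asymp rv_d$ (the last since $w\in B(v,v_d/2)$ implies $w_d\in[v_d/2,3v_d/2]$) and $|f^{(r)}(w)-f^{(r)}(v)|\le(1+\Lambda_0)r|w-v|$; this gives $|\sB(f^{(r)}(w),f^{(r)}(v))-\sB(f^{(r)}(v),f^{(r)}(v))|\le c(|w-v|/v_d)^{\theta_0}$. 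Here the hypothesis $r\le\wh R/(18+9\Lambda_0)$ ensures $rv_d\le\wh R$, so the minimum with $\wh R$ in \hyperlink{B3}{{\bf (B3)}} is realized by $\delta_D(\cdot)$; in the rare case the hypothesis $|x-y|<\delta_D\wedge\wh R$ fails, Remark \ref{r:A4} handles it. Since $\theta_0>\alpha-1$, the resulting radial integral $\int_{B(v,v_d/2)}|w-v|^{1+\theta_0-d-\alpha}dw$ is finite and bounded by $cv_d^{1+\theta_0-\alpha}$, and multiplying by $v_d^{-\theta_0}$ gives the same overall bound $|\sI^{(r)}_2(v)|\le cr^{-d-\alpha}(|\wt v|^{2N_0-1}+v_d^{2N_0-1})v_d^{1-\alpha}$ in both cases.

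Adding the two bounds, it remains to show
\[
c\bigl[(|\wt v|^{2N_0-1}+v_d^{2N_0-1})v_d^{1-\alpha}+(|\wt v|^{2N_0-2}+v_d^{2N_0-2})v_d^{2-\alpha}\bigr]\le \eps|\wt v|^{2N_0}v_d^{-\alpha}+C(\eps)\Phi_0(v_d).
\]
The pure $v_d$-terms $v_d^{2N_0-\alpha}$ are controlled via the weak scaling \eqref{e:scale}: since $v_d\le 1$ and $N_0>\alpha+\ub_0+2$, we have $v_d^{2N_0-\alpha}\le v_d^{\ub_0+2}\le c_U^{-1}\Phi_0(1)^{-1}\Phi_0(v_d)$. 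The mixed terms $|\wt v|^{2N_0-k}v_d^{k-\alpha}$ with $k\in\{1,2\}$ equal $v_d^{-\alpha}\cdot|\wt v|^{2N_0-k}v_d^k$, and Young's inequality with exponents $2N_0/(2N_0-k)$ and $2N_0/k$ gives $|\wt v|^{2N_0-k}v_d^k\le \eps|\wt v|^{2N_0}+C(\eps)v_d^{2N_0}$, so the mixed contribution is at most $\eps|\wt v|^{2N_0}v_d^{-\alpha}+C(\eps)v_d^{2N_0-\alpha}$, and the last term is absorbed into $\Phi_0(v_d)$ as above. The main technical hurdle is the case $\alpha\ge 1$ of $\sI^{(r)}_2$, where one must carefully verify that the configuration $(f^{(r)}(w),f^{(r)}(v))$ sits in the regime where \hyperlink{B3}{{\bf (B3)}} provides a useful estimate; everything else is a routine chaining of pointwise bounds and elementary radial integrals. \qed
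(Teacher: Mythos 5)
Your proof is correct and follows essentially the same route as the paper: bound $\sI^{(r)}_3$ via Taylor's theorem and a radial integral giving $cr^{-d-\alpha}v_d^{2-\alpha}(|\wt v|+v_d)^{2N_0-2}$, bound $\sI^{(r)}_2$ via the mean value theorem (splitting on $\alpha<1$ vs.\ $\alpha\ge 1$, invoking \hyperlink{B3}{{\bf (B3)}} in the latter case with the key fact that on $B(v,v_d/2)$ the normal coordinates $w_d$ and $v_d$ are comparable and $rv_d<\wh R$) to get $cr^{-d-\alpha}v_d^{1-\alpha}(|\wt v|+v_d)^{2N_0-1}$, and then absorb the combined bound into $\eps|\wt v|^{2N_0}v_d^{-\alpha}+C(\eps)\Phi_0(v_d)$. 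The only deviation is in that last absorption step: the paper splits on whether $|\wt v|>c_4(\eps)v_d$ or not (using $(|\wt v|+v_d)^{2N_0-1}\le 2^{2N_0-1}|\wt v|^{2N_0-1}$ in the first regime and reducing to a pure power of $v_d$ in the second), whereas you separate the pure $v_d$-powers from the genuinely mixed terms $|\wt v|^{2N_0-k}v_d^{k-\alpha}$ and apply Young's inequality with exponents $2N_0/(2N_0-k)$, $2N_0/k$. Both deliver the same conclusion; your route is marginally tidier in that it avoids tuning a threshold constant, while the paper's case split is marginally more elementary. One cosmetic slip: in bounding $v_d^{\ub_0}$ by $\Phi_0(v_d)$ the constant should be $c_U\Phi_0(1)^{-1}$, not $c_U^{-1}\Phi_0(1)^{-1}$ (you inverted the wrong factor when rearranging \eqref{e:scale}), and the aside about Remark \ref{r:A4} is unnecessary since \hyperlink{B3}{{\bf (B3)}} is stated unconditionally for all $x,y\in D$; neither affects the validity of the argument.
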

\begin{proof} 
	 Denote the Hessian matrix of $\psi$ at point $\xi$ by $D^2 \psi(\xi)$.
	Using  \eqref{e:B(x,x)}, Taylor's theorem, Lemma \ref{l:diffeo}, \eqref{e:8.3-null} and \eqref{e:8.3}, 
	we get
	\begin{align*}
		\sI^{(r)}_{3}(v) & \le c \int_{B(v,v_d/2)} \frac{\sup_{\xi \in B(v,v_d/2)} |D^2 \psi(\xi)| 
			| w- v|^2}{|f^{(r)}(w)-f^{(r)}(v)|^{d+\alpha}} dw\\
		&\le cr^{-d-\alpha}\sup_{\xi \in B(v,v_d/2)} ( |\wt \xi|^{2N_0-2} + \xi_d^{2N_0-2}) 
		\int_{B(v,v_d/2)} \frac{dw}{|w-v|^{d+\alpha-2}} \\
		&\le c_1r^{-d-\alpha} v_d^{2-\alpha} ( |\wt v| + v_d)^{2N_0-2}.
	\end{align*}
	When $\alpha<1$, by using Lemma \ref{l:diffeo},  the mean value theorem and \eqref{e:8.3}, we have
	\begin{align*}
		\sI^{(r)}_{2}(v)  & \le c r^{-d-\alpha}\sup_{ \xi \in B(v,v_d/2)} (|\wt \xi|^{2N_0-1} 
		+ \xi_d^{2N_0-1})\int_{B(v,v_d/2)} \frac{dw}{|w-v|^{d+\alpha-1}} \\
		&\le c_2r^{-d-\alpha} v_d^{1-\alpha}(|\wt v|+v_d)^{2N_0-1}.
	\end{align*}
	When $\alpha \ge 1$, using Lemma \ref{l:diffeo}, the mean value theorem, \hyperlink{B3}{{\bf (B3)}}, 
	\eqref{e:U-rho-C11-2} and  \eqref{e:8.3},  since $\theta_0>\alpha-1$, we obtain
	\begin{align*}
		&	\sI^{(r)}_{2}(v) \\
		&\le c\int_{B(v,v_d/2)} \!\!
		\frac{\sup_{ \xi \in B(v,v_d/2)} |\nabla \psi(\xi)| |w- v|}{|f^{(r)}(w)-f^{(r)}(v)|^{d+\alpha}} 
		\bigg( \frac{|f^{(r)}(w)-f^{(r)}(v)|}{\rho_D(f^{(r)}(w)) \wedge \rho_D(f^{(r)}(v))}\bigg)^{\theta_0} dw\\
		&\le cr^{-d-\alpha}\sup_{\xi \in B(v,v_d/2)} (|\wt \xi|^{2N_0-1} 
		+ \xi_d^{2N_0-1}) \int_{B(v,v_d/2)} \frac{1}{|w-v|^{d+\alpha-1}} \bigg( \frac{|w-v|}{w_d \wedge v_d}\bigg)^{\theta_0} dw \\
		& \le cr^{-d-\alpha}(v_d/2)^{-\theta_0} (|\wt v| + v_d)^{2N_0-1} \int_{B(v,v_d/2)} 
		\frac{dw}{|w-v|^{d+\alpha-1-\theta_0}}\\
		&= c_3r^{-d-\alpha} v_d^{1-\alpha}(|\wt v| + v_d)^{2N_0-1}.
	\end{align*}
	Therefore, it holds that 
	 $$	\sI^{(r)}_{2}(v) +	\sI^{(r)}_{3}(v)  \le (c_1+c_2+c_3)r^{-d-\alpha}	v_d^{1-\alpha}(|\wt v| + v_d)^{2N_0-1}.$$
	
	Let $\eps>0$.  To obtain the desired result, we need to show that there exists a constant  $c(\eps)>0$ independent of $Q,r$ and $v$ such that
	\begin{align}\label{e:comp-2-1}
		(c_1+c_2+c_3)	v_d^{1-\alpha}(|\wt v| + v_d)^{2N_0-1} \le \eps |\wt v|^{2N_0} v_d^{-\alpha} + c(\eps)\Phi_0(v_d).
	\end{align}
	Set $c_4=c_4(\eps):=2^{2N_0-1} (c_1+c_2+c_3)\eps^{-1}+1$.  If $|\wt v|>c_4v_d$, 
	then $(c_1+c_2+c_3)v_d^{1-\alpha}(|\wt v| + v_d)^{2N_0-1}< 2^{2N_0-1}(c_1+c_2+c_3)v_d^{1-\alpha} |\wt v|^{2N_0-1}\le \eps |\wt v|^{2N_0} v_d^{-\alpha}$. If $|\wt v|\le c_4v_d$, 
	then since $N_0>\alpha+\ub_0$ and $v_d < 1$, we get from \eqref{e:scale} that 
	\begin{align*}
		v_d^{1-\alpha}(|\wt v| + v_d)^{2N_0-1}  \le (1+c_4)^{2N_0-1}v_d^{2N_0-\alpha}  
		\le (1+c_4)^{2N_0-1} v_d^{\ub_0}
		\le  c\Phi_0(v_d)/\Phi_0(1).
	\end{align*}
	Therefore, \eqref{e:comp-2-1} holds. 
\end{proof}

\textsc{Proof of Proposition \ref{p:compensator}.} 
Let $Q \in \partial D$, $0<r\le\wh R/(18+9\Lambda_0)$ and  $y\in U(r)$. Denote $v=(f^{(r)})^{-1}(y)\in U_\bH(1)$.   Since $\psi^{(r)}(z)=0$ in $D\setminus U(3r)$, by using the change of variables $z=f^{(r)}(w)$ in the second line below,  we have
\begin{align}\label{e:comp-new1}
	\begin{split} 	&\! \! \! \! \! \! \! \! \! L^{\sB}_{\alpha}\psi^{(r)}(y)
		\le \lim_{\eps \to 0}\int_{D\cap  (U(3r)\setminus B(y,\eps))}(\psi^{(r)}(z)-\psi^{(r)}(y)) 
		\frac{\sB(
		 z,y  )}{|z-y|^{d+\alpha}} dz  \\
		&=\lim_{\eps \to 0} \,r^d\!\int_{\bH\cap (f^{(r)})^{-1} (  U(3r)\setminus B(y,\eps))} (\psi(w)-\psi( v)) 
		\frac{\sB(f^{(r)}(w),f^{(r)}(v)) }{|f^{(r)}(w)-f^{(r)}(v)|^{d+\alpha}}  dw\\
		&=r^d\!\int_{U_\bH(3) \setminus B(v,v_d/2)} (\psi(w)-\psi(v)) 
		\frac{\sB(f^{(r)}(w),f^{(r)}(v)) }{|f^{(r)}(w)-f^{(r)}(v)|^{d+\alpha}} dw\\
		&\;\, + r^d\!\int_{B(v, v_d/2)}(\psi(w)-\psi(v) ) \frac{ (\sB(f^{(r)}(w),f^{(r)}(v)) - 
			\sB(f^{(r)}(v),f^{(r)}(v)) )}{|f^{(r)}(w)-f^{(r)}(v)|^{d+\alpha}}  dw \!\! \\
		&\;\, +r^d \sB(f^{(r)}(v),f^{(r)}(v))\int_{B(v, v_d/2)} \frac{\psi(w)-\psi(v) - 
			\nabla \psi(v) \cdot ( w- v)}{|f^{(r)}(w)-f^{(r)}(v)|^{d+\alpha}}   dw \\
		&= r^d \big( 	\sI^{(r)}_{1}(v) + 	\sI^{(r)}_{2}(v) + 	\sI^{(r)}_{3}(v)\big).
	\end{split} 
\end{align}  Since $\wt v = \wt y/r$ and   $v_d = \rho_D(y)/r$, by  Lemmas \ref{l:comp-1} and \ref{l:comp-2},  for any $\eps>0$,   there exists a constant $c(\eps)>0$ such that
\begin{equation*}
	\begin{split} 	r^d \big( 	\sI^{(r)}_{1}(v) + 	\sI^{(r)}_{2}(v) + 	\sI^{(r)}_{3}(v)\big)&\le 
		r^{-\alpha} \left( \eps|\wt v|^{2N_0} v_d^{-\alpha}  + c(\eps)\Phi_0(v_d) \right)\\
		&=  \eps(|\wt y|/r)^{2N_0} \rho_D(y)^{-\alpha}  + c(\eps)r^{-\alpha}\Phi_0(\rho_D(y)/r).
	\end{split} 
\end{equation*} 
Using this, \eqref{e:U-rho-C11-2}, \eqref{e:scale} and  $\psi^{(r)}(y)= \psi(v) = |\wt v|^{2N_0} + v_d^{2N_0} \ge |\wt v|^{2N_0} = (|\wt y|/r)^{2N_0}$, we obtain 
\begin{equation}\label{e:comp-new2}
	\begin{split} 	r^d \big( 	\sI^{(r)}_{1}(v) + 	\sI^{(r)}_{2}(v) + 	\sI^{(r)}_{3}(v)\big)
		&\le   \eps  \delta_D(y)^{-\alpha} \psi^{(r)}(y)+ c_1 c(\eps) r^{-\alpha} \Phi_0(\delta_D(y)/r).
	\end{split} 
\end{equation} 
Combining \eqref{e:comp-new1} and \eqref{e:comp-new2}, we get the desired result.
\qed

%%%%%%%%%%%%%%%%%%%%%%%%%%%%%%%%%%%%%%%%%%%%%%%%%%%%%%%%%%%%%%%%%%%%%%%%%%%%%%%%%%%%%%%%%%%%%%%%%%%%%%%%%%%%%%%%%%%%%%%%%%%%%%%%%%%%%%%
%%%%%%%%%%%%%%%%%%%%%%%%%%          Key estimates on $C^{1,1}$ open sets             %%%%%%%%%%%%%%%%%%%%%%%%%%%%%%%%%%%%%%%%%%%%%%%%%%
%%%%%%%%%%%%%%%%%%%%%%%%%%%%%%%%%%%%%%%%%%%%%%%%%%%%%%%%%%%%%%%%%%%%%%%%%%%%%%%%%%%%%%%%%%%%%%%%%%%%%%%%%%%%%%%%%%%%%%%%%%%%%%%%%%%%%%%

\section{Key estimates on $C^{1,1}$ open sets}\label{ch:key-estimates}

	Starting from this section, we assume that $D \subset \R^d$ is a $C^{1,1}$ open set with characteristics $(\wh R,\Lambda)$. 
	See Definition \ref{df-c11}.
	Without loss of generality, we assume that   $\wh R\le 1 \wedge (1/(2\Lambda))$. Furthermore, in the remainder of this work, we assume that the killing  potential $\kappa$ satisfies the following:

	\medskip

	\noindent \hypertarget{B3}{{\bf (K3)}} There exist constants $\eta_0>0$ and 	$C_8, C_9\ge 0$ 	such that for all $x \in D$,
	\begin{align}\label{e:kappa-explicit}		\begin{cases}			|\kappa(x) - C_9 \sB(x,x) \delta_D(x)^{-\alpha}| \le C_8\delta_D(x)^{-\alpha+\eta_0} &\text{ if }  \delta_D(x) < 1,\\[4pt]			\kappa(x) \le C_8 &\text{ if } \delta_D(x) \ge 1.		\end{cases}	\end{align}	When $\alpha \le 1$, we further assume that $C_9>0$. 
	
	\medskip
	
Note that  \hyperlink{K3}{{\bf (K3)}} implies conditions \hyperlink{K1}{{\bf (K1)}} and  \hyperlink{K2}{{\bf (K2)}}.

	Now, we introduce  additional  assumptions on $\sB$ that will play a central role in obtaining Proposition \ref{p:barrier}. 
	Recall the definition of the set $E^Q_\nu(r)$  from \eqref{e:def-Enu}. We consider the cases when the killing 
	potential $\kappa(x)$
	 is critical (namely, the constant $C_9$ is \hyperlink{K3}{{\bf (K3)}} is strictly positive) and is subcritical (namely, $C_9=0$), separately.

	\medskip
	
	\textbf{Case $C_9>0$:} In this case we consider the following assumption.
	
	\smallskip
	
	\noindent \hypertarget{B5-I}{{\bf (B5-I)}} There  exist  constants $\nu \in (0,1]$, $\theta_1,\theta_2,C_{10}>0$, and a non-negative Borel function $\F_0$ on $\bH_{-1}$   such that  for any $Q \in \partial D$ and    $x,y \in E^Q_\nu(\wh R/8)$ with $x=(\wt x,x_d)$ in CS$_Q$,
	\begin{align*}
			& \big|\sB(x,y)- \sB(x,x)\F_0((y-x)/x_d) \big| + \big|\sB(x,y)- \sB(y,y)\F_0((y-x)/x_d) \big|\nn\\	&\le C_{10} \bigg(\frac{ \delta_D(x)\vee\delta_D(y) \vee |x-y|}{ \delta_D(x) \wedge \delta_D(y) \wedge |x-y|} \bigg)^{\theta_1}
						\big( \delta_D(x)\vee\delta_D(y) \vee|x-y|\big)^{\theta_2}.
	\end{align*}
	
	\medskip
	
	Under condition \hyperlink{B5-I}{{\bf (B5-I)}}, we  define a  function $\F$ on $\bH_{-1}$ by 
	\begin{align}\label{e:def-F0-transform}
		\F(y) =\frac{\F_0(y) + \F_0(-y/(1+y_d))}{2}, \quad \;\;y  = (\wt y,y_d) \in \bH_{-1}.
	\end{align}
	We will see in Lemma \ref{l:F-basic} that $\F$ is a bounded function. Moreover, we observe that
	\begin{align}\label{e:A5-F}
		\F(y)=\F(-y/(1+y_d)) \quad \text{for all} \;\, y \in \bH_{-1}.
	\end{align}
	This property is in a crucial way related to the symmetry of $\sB$ (see Lemma \ref{l:ass-1}(ii) below). 

For a function $f$ on $\bH_{-1}$, define  
\begin{align}\label{e:def-killing-constant}
		&	C(\alpha,q,f)\\
		&=\int_{\R^{d-1}}\frac{1}{(|\wt u|^2+1)^{(d+\alpha)/2}}\int_0^1  
		\frac{(s^q -1)(1-s^{\alpha-1-q})}{(1-s)^{1+\alpha}} f\big(((s-1)\wt u, s-1)\big) ds \, d \wt u,\nn
	\end{align}

	With the function $\F$ in \eqref{e:A5-F} and $q \in [(\alpha-1)_+, \alpha+\beta_0)$, 
	we associate a  constant $C(\alpha,q,\F)$ using the definition in \eqref{e:def-killing-constant}
and additionally assume that 
	\begin{align}\label{e:K4}	
		C_9 < \lim_{q \to \alpha+\beta_0} C(\alpha,q,\F).
	\end{align}
See Lemma \ref{l:C=infty} for a simple sufficient condition for  \eqref{e:K4}.
	
We will show in Lemma \ref{l:constant} that	$q\mapsto C(\alpha,q,\F)$ is a well-defined strictly increasing continuous function 
on $[(\alpha-1)_+, \alpha+\beta_0)$ 
and $C(\alpha,(\alpha-1)_+,\F)=0$.  Therefore, under \eqref{e:K4},  there exists a unique constant 
$p\in ((\alpha-1)_+, \alpha + \beta_0)$ such that 
	\begin{align}\label{e:C(alpha,p,F)}
		C_9=C(\alpha,p,\F).
	\end{align}

	\medskip
	
	\textbf{Case $C_9=0$:} In this case, instead of \hyperlink{B5-I}{{\bf (B5-I)}}, we introduce the following weaker condition:
	
	\medskip
	
	\noindent \hypertarget{B5-II}{{\bf (B5-II)}} There  exist  constants 
	$\nu \in (0,1]$, $\theta_1,\theta_2,C_{10}>0$, $C_{11}>1$, $i_0 \ge 1$, and  non-negative  Borel  functions 
	$\F_0^i:\bH_{-1} \to [0,\infty)$ and $\mu^i:D \to (0,\infty)$, $1\le i \le i_0$, such that 
	\begin{align}\label{e:mu-bound}
		C_{11}^{-1} \le	\mu^i(x) \le C_{11} \quad \text{for all} \;\, x \in D,
	\end{align}
and for any $Q \in \partial D$ and    $x,y \in E^Q_\nu(\wh R/8)$ with $x=(\wt x,x_d)$ in CS$_Q$,
	\begin{align}\label{e:ass-B5'}	
	\begin{split}
		&\bigg|\sB(x,y)- \sum_{i=1}^{i_0}\mu^i(x)\F_0^i((y-x)/x_d) \bigg| 
			+  \bigg|\sB(x,y)- \sum_{i=1}^{i_0}\mu^i(y)\F_0^i((y-x)/x_d) \bigg| \\
		&\le C_{10} \bigg(\frac{ \delta_D(x)\vee\delta_D(y) \vee |x-y|}{ \delta_D(x) \wedge \delta_D(y) \wedge |x-y|} \bigg)^{\theta_1}
			\big( \delta_D(x)\vee\delta_D(y) \vee|x-y|\big)^{\theta_2}.
	\end{split}
	\end{align}
Analogously to \eqref{e:def-F0-transform}, for each $1\le i\le i_0$, 
 we also define $$\F^i(y):=(\F_0^i(y)+\F_0^i(-y/(1+y_d)))/2$$ and 
  associate  constants
  $C(\alpha, q,\F^i)$ for 	$q \in [(\alpha-1)_+, \alpha+\beta_0)$ . 
	
	\medskip
	
Note that if \hyperlink{B5-I}{{\bf (B5-I)}} holds, then  \hyperlink{B5-II}{{\bf (B5-II)}} holds with $i_0=1$, $\F_0^1=\F_0$ and 
$\mu^1(x)=\sB(x,x)$.

\medskip

 The conditions \hyperlink{B5-I}{{\bf (B5-I)}} and \hyperlink{B5-II}{{\bf (B5-II)}} always come in connection with \hyperlink{K3}{{\bf (K3)}}. We now combine these two conditions into the condition \hyperlink{B5}{{\bf (B5)}}, and assume that \hyperlink{B5}{{\bf (B5)}} holds from here on until the end of Section \ref{ch:green}.

	\medskip
	
	\noindent \hypertarget{B5}{{\bf (B5)}}  If $C_9>0$, then	\hyperlink{B5-I}{{\bf (B5-I)}} and \eqref{e:K4} hold, and  if $C_9=0$, then \hyperlink{B5-II}{{\bf (B5-II)}} holds.

	\medskip
	
\noindent We will  let $p$ denote the constant satisfying \eqref{e:C(alpha,p,F)} if $C_9>0$ and let $p=\alpha-1$ if $C_9=0$. Recall that we assume $\alpha>1$ if $C_9=0$. Hence, we always have $p \in [(\alpha-1)_+,\alpha+\beta_0) \cap (0,\infty)$. 
Furthermore,  we  treat \hyperlink{B5-I}{{\bf (B5-I)}}  as a special case of \hyperlink{B5-II}{{\bf (B5-II)}} with $i_0=1$ in all instances. This means that whenever $C_9>0$, we set $i_0=1$, $\F_0=\F_0^1$ and $\mu^1(x)=\sB(x,x)$. Here $\mu^1(x)=\sB(x,x)$ satisfies \eqref{e:mu-bound} by \eqref{e:B(x,x)}.

\subsection{Properties of   $C(\alpha,q,\F)$ and   $C(\alpha,q,\F^i)$}

In this subsection we show that $q\mapsto C(\alpha,q, F^i)$ is well defined, continuous and strictly increasing, and give a sufficient condition for \eqref{e:K4} to hold true. We start with the symmetrized version of condition \hyperlink{B5-II}{{\bf (B5-II)}}.

\begin{lemma}\label{l:B5'}
For any $Q \in \partial D$ and     $x,y \in E^{Q}_\nu(\wh R/8)$ with $x=(\wt x, x_d)$ in CS$_Q$,
	\begin{align*}
		& \bigg|\sB(x,y)- \sum_{i=1}^{i_0}\mu^i(x)\F^i((y-x)/x_d) \bigg|\nn\\
		&\le C_{10} \bigg(\frac{ \delta_D(x)\vee\delta_D(y) \vee |x-y|}{ \delta_D(x) \wedge \delta_D(y) \wedge |x-y|} \bigg)^{\theta_1}	
			\big(  \delta_D(x)\vee\delta_D(y) \vee|x-y|\big)^{\theta_2}.	
	\end{align*}
\end{lemma}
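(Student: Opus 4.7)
The plan is to symmetrize \hyperlink{B5-II}{{\bf (B5-II)}} with respect to the roles of $x$ and $y$, exploiting the symmetry $\sB(x,y)=\sB(y,x)$ from \hyperlink{B1}{{\bf (B1)}}. First I would apply \hyperlink{B5-II}{{\bf (B5-II)}} to the pair $(x,y)$ and pick out the first of the two simultaneous bounds, obtaining
$$\Bigl|\sB(x,y) - \sum_{i=1}^{i_0} \mu^i(x) \F_0^i((y-x)/x_d)\Bigr| \le C_{10} \, Q(x,y),$$
where $Q(x,y)$ denotes the geometric factor $\bigl((\delta_D(x)\vee\delta_D(y)\vee|x-y|)/(\delta_D(x)\wedge\delta_D(y)\wedge|x-y|)\bigr)^{\theta_1}(\delta_D(x)\vee\delta_D(y)\vee|x-y|)^{\theta_2}$ on the right-hand side. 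Then I would apply \hyperlink{B5-II}{{\bf (B5-II)}} to the swapped pair $(y,x)$ -- which is admissible since both $x$ and $y$ lie in $E^Q_\nu(\wh R/8)$ -- and this time select the \emph{second} of the two simultaneous bounds, the one with the coefficient $\mu^i(x)$ rather than $\mu^i(y)$. This yields
$$\Bigl|\sB(y,x) - \sum_{i=1}^{i_0} \mu^i(x) \F_0^i((x-y)/y_d)\Bigr| \le C_{10} \, Q(y,x) = C_{10} \, Q(x,y),$$
where the equality of the geometric factors is immediate from the symmetry of $Q$ in its arguments.

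The key coordinate identity is then the following: setting $z := (y-x)/x_d \in \bH_{-1}$, a direct computation gives $y_d = x_d(1+z_d)$, whence $(x-y)/y_d = -x_d z / (x_d(1+z_d)) = -z/(1+z_d)$. Using $\sB(x,y) = \sB(y,x)$ from \hyperlink{B1}{{\bf (B1)}}, the two displayed inequalities rewrite as
$$\Bigl|\sB(x,y) - \sum_{i=1}^{i_0} \mu^i(x) \F_0^i(z)\Bigr| \le C_{10} \, Q(x,y), \qquad \Bigl|\sB(x,y) - \sum_{i=1}^{i_0} \mu^i(x) \F_0^i(-z/(1+z_d))\Bigr| \le C_{10} \, Q(x,y).$$
Averaging these and applying the triangle inequality gives
$$\Bigl|\sB(x,y) - \sum_{i=1}^{i_0} \mu^i(x) \cdot \tfrac{1}{2}\bigl(\F_0^i(z) + \F_0^i(-z/(1+z_d))\bigr)\Bigr| \le C_{10} \, Q(x,y),$$
which is the claimed estimate in view of the definition $\F^i(y) = (\F_0^i(y) + \F_0^i(-y/(1+y_d)))/2$ introduced just above. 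There is no genuine obstacle in this argument: the entire content reduces to the elementary coordinate identity $(x-y)/y_d = -z/(1+z_d)$, and the symmetrization $\F^i$ in \eqref{e:def-F0-transform} was set up precisely so that the averaging closes up cleanly with the same constant $C_{10}$.
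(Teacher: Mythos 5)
Your proposal is correct and takes essentially the same approach as the paper: both arguments apply \hyperlink{B5-II}{{\bf (B5-II)}} to the ordered pair $(x,y)$ (keeping the $\mu^i(x)$ term) and to the swapped pair $(y,x)$ (again keeping the $\mu^i(x)$ term), invoke the symmetry $\sB(x,y)=\sB(y,x)$ and the coordinate identity $(x-y)/y_d = -z/(1+z_d)$ with $z=(y-x)/x_d$, and then combine the two bounds via the triangle inequality. Your "averaging" is literally the paper's "multiply by $2$ and split"; the only difference is that you are more explicit about which of the two simultaneous estimates in \hyperlink{B5-II}{{\bf (B5-II)}} is being used in each application.
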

\begin{proof}
	Let $Q \in \partial D$ and  $x=(\wt x,x_d)$, $y=(\wt y,y_d)\in E^{Q}_\nu(\wh R/8)$  in CS$_Q$.  Using \hyperlink{B1}{{\bf (B1)}}  and  applying \eqref{e:ass-B5'} to $\sB(x,y)$ and $\sB(y,x)$, we obtain
	\begin{align*}
		&2 \bigg|\sB(x,y)- \sum_{i=1}^{i_0}\mu^i(x)\F^i((y-x)/x_d) \bigg|\\
		&= \bigg|\sB(x,y) + \sB(y,x)- \sum_{i=1}^{i_0}\mu^i(x) \big(\F_0^i((y-x)/x_d) + \F_0^i ((x-y)/y_d) \big) \bigg|\\
		&\le  \bigg|\sB(x,y)- \sum_{i=1}^{i_0}\mu^i(x)\F_0^i((y-x)/x_d) \bigg| + \bigg|\sB(y,x)- \sum_{i=1}^{i_0}\mu^i(x)\F_0^i((x-y)/y_d) \bigg| \\
		&\le 2C_{10} \bigg(\frac{ \delta_D(x)\vee\delta_D(y) \vee |x-y|}{ \delta_D(x) \wedge \delta_D(y) \wedge |x-y|} \bigg)^{\theta_1}
		\big( \delta_D(x)\vee\delta_D(y) \vee|x-y|\big)^{\theta_2}.
	\end{align*}
\end{proof}

\begin{lemma}\label{l:F-basic}
There exists $C>1$ such that for all $y \in \bH_{-1}$, 
\begin{align*}
	C^{-1}\1_{|\wt y| \le (y_d+1) \vee 1 } \Phi_0\left(\frac{(y_d+1) \wedge 1}{|y|}\right)\le \sum_{i=1}^{i_0}	 \F^i(y)  \le C \Phi_0\left(\frac{(y_d+1) \wedge 1}{|y|}\right).
\end{align*}
In particular, $\F^i$ is bounded for all $1\le i \le i_0$. 
\end{lemma}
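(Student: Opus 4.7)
The plan is to reduce the lemma to Lemma~\ref{l:B5'} (the symmetrized form of \hyperlink{B5-II}{{\bf (B5-II)}}) combined with the two-sided bounds \hyperlink{B4-a}{{\bf (B4-a)}}--\hyperlink{B4-b}{{\bf (B4-b)}} by a ``blow-up'' construction that, for each $y \in \bH_{-1}$, manufactures a pair of nearby points in $E^Q_\nu(\wh R/8)$ whose normalized separation equals $y$. Fix any $Q \in \partial D$ and work in the coordinate system CS$_Q$. For $y = (\wt y, y_d) \in \bH_{-1}$ and $r > 0$, set
\[
 x_r := r\e_d, \qquad y^*_r := x_r + x_{r,d}\, y = (r\wt y,\, r(1+y_d)) \ \text{in CS}_Q,
\]
so that $(y^*_r - x_r)/x_{r,d} = y$ exactly and $|x_r - y^*_r| = r|y|$. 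Using $\Psi(\wt 0)=0$ and \eqref{e:Psi-bound}, one checks from the definition \eqref{e:def-Enu} that, for all sufficiently small $r > 0$ (depending on $y$), both $x_r$ and $y^*_r$ lie in $E^Q_\nu(\wh R/8)$; Lemma~\ref{l:C11}(iii) then gives $\delta_D(x_r) \asymp r$ and $\delta_D(y^*_r) \asymp r(1+y_d)$ with universal comparison constants.

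With this setup in place, I would apply Lemma~\ref{l:B5'} to get
\[
 \Bigl|\sB(x_r, y^*_r) - \sum_{i=1}^{i_0} \mu^i(x_r)\, \F^i(y)\Bigr| \le C_{10}\, A(y)^{\theta_1}\, \bigl(r\, B(y)\bigr)^{\theta_2},
\]
where $A(y), B(y)$ are finite functions of $y$ alone, arising from the ratio and the maximum of $\{1,\, 1+y_d,\, |y|\}$. Combining this with \eqref{e:mu-bound} and \hyperlink{B4-a}{{\bf (B4-a)}} yields
\[
 C_{11}^{-1}\sum_i \F^i(y) \le \sB(x_r, y^*_r) + O(r^{\theta_2}) \le C_6\, \Phi_0\Bigl(\tfrac{\delta_D(x_r) \wedge \delta_D(y^*_r)}{|x_r - y^*_r|}\Bigr) + O(r^{\theta_2}),
\]
and since $\frac{\delta_D(x_r) \wedge \delta_D(y^*_r)}{|x_r - y^*_r|} \asymp \frac{(y_d+1) \wedge 1}{|y|}$ with universal constants, the weak scaling \eqref{e:scale} swaps the argument of $\Phi_0$ for the cleaner $\frac{(y_d+1) \wedge 1}{|y|}$ at the cost of a multiplicative constant. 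Letting $r \downarrow 0$ (the left-hand side does not depend on $r$) yields the upper bound. The lower bound is obtained by the same device, now using \hyperlink{B4-b}{{\bf (B4-b)}}; its hypothesis $\delta_D(x_r) \vee \delta_D(y^*_r) \ge |x_r - y^*_r|/2$ reduces, via the explicit bounds on $\delta_D(x_r), \delta_D(y^*_r)$, to $|y| \le (3/2)((y_d+1)\vee 1)$, and the elementary inequality $y_d^2 \le (5/4)((y_d+1)\vee 1)^2$ valid on $\bH_{-1}$ shows that the lemma's assumption $|\wt y| \le (y_d+1) \vee 1$ forces $|y|^2 \le (9/4)((y_d+1)\vee 1)^2$, which is precisely what is needed.

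The main technical bookkeeping will be in (i) verifying the membership $x_r, y^*_r \in E^Q_\nu(\wh R/8)$ by explicit estimates on $\Psi(r\wt y)$ via \eqref{e:Psi-bound}, (ii) tracking the weak-scaling constants when replacing $\Phi_0\bigl(\tfrac{\delta_D(x_r) \wedge \delta_D(y^*_r)}{|x_r - y^*_r|}\bigr)$ by $\Phi_0\bigl(\tfrac{(y_d+1) \wedge 1}{|y|}\bigr)$, and (iii) matching the hypothesis of \hyperlink{B4-b}{{\bf (B4-b)}} to the indicator assumption of the lemma, so that the lower bound is asserted precisely where \hyperlink{B4-b}{{\bf (B4-b)}} applies. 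Boundedness of each $\F^i$ is then immediate from $\F^i \le \sum_j \F^j \le C\, \Phi_0\bigl(\tfrac{(y_d+1) \wedge 1}{|y|}\bigr)$ together with the fact that $\Phi_0$ is bounded on $(0,\infty)$: indeed $\Phi_0 \equiv 1$ on $[1,\infty)$, while \eqref{e:scale} with $r = 1$ and $s \in (0,1]$ gives $\Phi_0(s) \le c_L^{-1} \Phi_0(1)$.
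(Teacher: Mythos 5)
Your proposal is correct and follows essentially the same strategy as the paper's proof: your pair $(x_r, y^*_r) = (r\e_d,\, r\e_d + r y)$ is exactly the paper's $(\eps\e_d,\, y_\eps)$ with $\eps = r$, and the steps (membership in $E^Q_\nu(\wh R/8)$ for small $r$, application of Lemma~\ref{l:B5'}, two-sided bounds via \hyperlink{B4-a}{{\bf (B4-a)}}--\hyperlink{B4-b}{{\bf (B4-b)}} and \eqref{e:scale}, then $r\downarrow 0$) match the paper's. The only minor difference is that the paper records the explicit constraints on $\eps_0$ ensuring $y_\eps\in E^Q_\nu(\wh R/8)$, whereas you defer this to bookkeeping; your verification that the indicator hypothesis implies the applicability of \hyperlink{B4-b}{{\bf (B4-b)}} is also essentially the paper's computation.
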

\begin{proof} 
Let $y=(\wt y,y_d)\in \bH_{-1}$. We fix  $Q\in \partial D$ and use the  coordinate system CS$_{Q}$.   For each $\eps \in (0,\wh R/8]$, 
define $y_\eps=\eps(y+\e_d) =(\wt y_\eps, (y_\eps)_d)$. Then we have
	\begin{align}\label{e:y_eps}
		\wt y_\eps = \eps \wt y, \quad (y_\eps)_d=\eps(y_d+1), \quad  
		\delta_D(\eps \e_{d})
		=\eps \quad \text{and} \quad |\eps \e_d - y_\eps| = \eps|y|.
	\end{align}

Fix a constant $\eps_0\in (0,\wh R/16)$ satisfying $\eps_0 |\wt y|<\wh R/32$, 
$\eps_0(y_d+1)<\wh R/16$ and  $\eps_0(y_d+1)>2^{2+3\nu}\eps_0^{1+\nu} \wh R^{-\nu} |\wt y|^{1+\nu}$. 
Then  $y_\eps \in E^Q_\nu(\wh R/8)$ for all $\eps \in (0,\eps_0)$. Moroever, for all $\eps \in (0,\eps_0)$, using Lemma \ref{l:B5'}, 
and \eqref{e:y_eps}, we get
	\begin{align}\label{e:F-B5-1}
		& \bigg|\sB(\eps \e_d,y_\eps)- \sum_{i=1}^{i_0}\mu^i(\eps \e_d)\F^i(y) \bigg|\\
		&= \bigg|\sB(\eps \e_d,y_\eps)- \sum_{i=1}^{i_0}\mu^i(\eps \e_d)\F^i((y_\eps - \eps \e_d)/\eps) \bigg|\nn\\
		& \le c\bigg(\frac{1 \vee (y_d+1) \vee |y|}{1 \wedge (y_d+1)  \wedge |y| }\bigg)^{\theta_1} 
		\big(\eps(1 \vee  (y_d+1) \vee |y|)\big)^{\theta_2}
		=:c(y) \eps^{\theta_2}.\nn
	\end{align}

If $|\wt y|\le (y_d+1) \vee 1$, then by Lemma \ref{l:C11}(iii), we get that for all $\eps<\eps_0$,
 \begin{align*}
 	& |\eps \e_d - y_\eps|^2= \eps^2(|\wt y|^2 + y_d^2) \le \begin{cases}	2\eps^2	&\mbox{ if } y_d\le 0,\\2\eps^2(y_d+1)^2	&\mbox{ if } y_d>0\end{cases}\\
 	& \le  2(\eps \vee (y_\eps)_d)^2 \le 4(\delta_D(\eps \e_d) \vee \delta_D(y_\eps))^2.\end{align*} 
Hence,  by \hyperlink{B4-a}{{\bf (B4-a)}}, \hyperlink{B4-b}{{\bf (B4-b)}}, Lemma \ref{l:C11}(iii),  \eqref{e:y_eps} and \eqref{e:scale},   there exists $c_1 \ge 1$ such that  for all $\eps <\eps_0$,
\begin{align}\label{e:F-B5-2}
 c_1^{-1} \1_{|\wt y| \le (y_d+1) \vee 1} \,\Phi_0\bigg(\frac{(y_d+1) \wedge 1}{|y|}\bigg)\le	\sB(\eps \e_d, y_\eps) \le c_1 \Phi_0\bigg(\frac{(y_d+1) \wedge 1}{|y|}\bigg).
\end{align}

Now, by choosing $\eps\in (0,\eps_0)$ small enough so that $c(y)\eps^{\theta_2}<2^{-1}c_1^{-1}\Phi_0(((y_d+1) \wedge 1)/|y|)$, we deduce from  \eqref{e:F-B5-1},  \eqref{e:F-B5-2}, the triangle inequality and \eqref{e:mu-bound} that 
	\begin{align*}
		\frac{\1_{|\wt y| \le (y_d+1) \vee 1 }}{2c_1C_{11}} \Phi_0\left(\frac{(y_d+1) \wedge 1}{|y|}\right)\le  \sum_{i=1}^{i_0}\F^i(y)  		\le 2c_1C_{11} \Phi_0\left(\frac{(y_d+1) \wedge 1}{|y|}\right).
	\end{align*}
The proof is complete.
\end{proof}

\begin{lemma}\label{l:constant}	
	For every $1\le i \le i_0$,	$q\mapsto C(\alpha,q,\F^i)$ is a well-defined strictly increasing continuous function on 
	$[(\alpha-1)_+, \alpha+\beta_0)$ and $C(\alpha,(\alpha-1)_+,\F^i)=0$.
\end{lemma}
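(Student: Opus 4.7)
The plan is to establish the four assertions---well-definedness, vanishing at the left endpoint, continuity, and strict monotonicity---by analyzing the integrand of \eqref{e:def-killing-constant} at its two singular endpoints and producing a $q$-uniform integrable majorant on compact subintervals. Write
$$h_q(s) := \frac{(s^q-1)(1-s^{\alpha-1-q})}{(1-s)^{1+\alpha}},\qquad s\in(0,1).$$
For $q \in [(\alpha-1)_+, \alpha+\beta_0)$, both numerator factors are non-positive on $(0,1)$, so $h_q \ge 0$ throughout.

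The first step is the endpoint analysis. A Taylor expansion at $s=1$ gives $h_q(s) \sim q(q+1-\alpha)(1-s)^{1-\alpha}$, integrable since $\alpha<2$; $\F^i$ is bounded by Lemma \ref{l:F-basic}, so this end presents no obstruction. Near $s=0$, the algebraic factor behaves like $s^{\alpha-1-q}$, failing integrability once $q\ge\alpha$. The remedy comes from the decay of $\F^i$: writing $|((s-1)\wt u, s-1)| = (1-s)\sqrt{|\wt u|^2+1}$ and noting that $(y_d+1)\wedge 1 = s$ here, Lemma \ref{l:F-basic} yields
$$\F^i\bigl(((s-1)\wt u, s-1)\bigr) \le c\, \Phi_0\!\left(\frac{s}{(1-s)\sqrt{|\wt u|^2+1}}\right),$$
and the weak scaling \eqref{e:scale} of $\Phi_0$ with lower Matuszewska index $\beta_0$ upgrades this, for any $\epsilon>0$, to a bound of the form $c(\epsilon)\,s^{\beta_0-\epsilon}(|\wt u|^2+1)^{-(\beta_0-\epsilon)/2}$ for $s\in(0,1/2)$. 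Combined with the factor $(|\wt u|^2+1)^{-(d+\alpha)/2}$, this produces an integrable majorant on $(0,1/2)\times\R^{d-1}$ provided $q+\epsilon<\alpha+\beta_0$; choosing $\epsilon$ below $\alpha+\beta_0-b$ for any fixed upper endpoint $b$ furnishes a $q$-uniform majorant on $[a,b]$. Well-definedness on the whole range, and continuity by dominated convergence, follow.

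The vanishing at the left endpoint is immediate from the explicit form of $h_q$: when $\alpha \le 1$ one has $q=0$ and $s^q - 1 \equiv 0$; when $\alpha > 1$ one has $q=\alpha-1$ and $1-s^{\alpha-1-q}\equiv 0$. For strict monotonicity I would differentiate:
$$\frac{\partial}{\partial q}\bigl[(s^q-1)(1-s^{\alpha-1-q})\bigr] = (\log s)\bigl(s^q - s^{\alpha-1-q}\bigr),$$
which is strictly positive on $(0,1)$ whenever $2q>\alpha-1$---and this holds throughout $((\alpha-1)_+,\alpha+\beta_0)$ in both regimes $\alpha\le 1$ and $\alpha>1$. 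The continuity majorant also bounds this derivative, so differentiation under the integral sign applies; since Lemma \ref{l:F-basic} forces $\F^i$ to be positive on a set of positive measure in the application (where $i_0=1$ whenever $C_9>0$, so $\F^1=\F$), one concludes $\partial_q C(\alpha,q,\F^i) > 0$, giving strict monotonicity.

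The principal obstacle is constructing the $q$-uniform integrable majorant: one must carefully coordinate the scaling exponent $\beta_0-\epsilon$ with the gap $\alpha+\beta_0-q$ and separate the contributions near $s=0$ and $s=1$, while using that $s/((1-s)\sqrt{|\wt u|^2+1})\le 1$ on $(0,1/2)\times \R^{d-1}$ to legitimize the weak-scaling bound. Once this majorant is in place, the remaining three claims reduce to Taylor expansion, dominated convergence, and differentiation under the integral sign.
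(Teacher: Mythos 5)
The proposal is correct and takes essentially the same approach as the paper: the same split of the $s$-integral at $s=1/2$, the same appeal to Lemma~\ref{l:F-basic} and to the near-optimal scaling exponent of $\Phi_0$ (your $\beta_0-\epsilon$ plays the role of the paper's auxiliary $\lb_0\le\beta_0$) to handle the singularity at $s=0$, and dominated convergence for continuity. Your explicit derivative $\partial_q\bigl[(s^q-1)(1-s^{\alpha-1-q})\bigr]=(\log s)\bigl(s^q-s^{\alpha-1-q}\bigr)$ is simply the worked-out version of the paper's observation that $f_s(q)$ is strictly increasing in $q$ for each fixed $s\in(0,1)$.
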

\begin{proof}
Fix $1\le i \le i_0$ and an arbitrary  $\lb_0\in [0, \beta_0]$ such that the first inequality in \eqref{e:scale} holds. 
Let $q \in [(\alpha-1)_+, \alpha+\lb_0)$.
Since $\F^i$ is non-negative, $C(\alpha,q,\F^i)$ is non-negative. By Lemma \ref{l:F-basic}, $C(\alpha,q,\F^i)$ is bounded above by
	\begin{align*}
		&c\bigg(\int_{0}^{1/2}+\int_{1/2}^1\bigg)\int_{\R^{d-1}}\frac{1}{(|\wt u|^2+1)^{(d+\alpha)/2}} \\
		&\qquad \qquad \qquad \qquad \qquad \times \frac{(s^q -1)(1-s^{\alpha-1-q})}{(1-s)^{1+\alpha}} 
		\Phi_0\left(\frac{s}{(1-s)|(\wt u,1)|}\right)   d \wt u \,ds 	\\
		&=:c(I_1+I_2).
	\end{align*}
Since $\Phi_0$ is bounded, using the mean value theorem, we get
	\begin{align*}
		I_2 &\le c\int_{\R^{d-1}}\!\frac{d \wt u}{(|\wt u|^2+1)^{(d+\alpha)/2}} \!\int_{1/2}^1  
		\!\frac{(1-s^q)(s^{\alpha-1-q}-1)}{(1-s)^{1+\alpha}} ds   \le c\int_{1/2}^1  \frac{ds}{(1-s)^{\alpha-1}}<\infty.
	\end{align*}
Besides, using \eqref{e:scale}, since $\sup_{s\in (0,1/2)}( (1-s^q)/(1-s)^{1+\alpha})<\infty$ and $q<\alpha+\lb_0$, we get 
	\begin{align*}
		&I_1\le c\int_{\R^{d-1}}\frac{1}{(|\wt u|^2+1)^{(d+\alpha)/2}}\int_0^{1/2}  (s^{\alpha-1-q}-1) 
	 		\left(\frac{s}{(1-s)|(\wt u,1)|}\right)^{\lb_0}  ds \, d \wt u \\
		&\le c\int_{\R^{d-1}}\frac{d \wt u}{(|\wt u|^2+1)^{(d+\alpha+\lb_0)/2}}
			\int_0^{1/2}  (s^{\alpha-1-q+\lb_1}-s^{\lb_0})  ds <\infty.
\end{align*}
Therefore, $C(\alpha,q, \F^i)$ is well-defined. 
Continuity can be proved using the dominated convergence theorem.

For each fixed $s \in (0,1)$, the map  $f_s(q):= (s^q-1)(1-s^{\alpha-1-q})/(1-s)^{1+\alpha}$ is strictly increasing on 
$[(\alpha-1)_+,\infty)$ and satisfies  $f_s(\alpha-1)=0$. Thus, $q\mapsto C(\alpha,q,\F^i)$ is strictly increasing on 
$[(\alpha-1)_+,\alpha+\beta_0)$ and $C(\alpha,(\alpha-1)_+,\F^i)=0$.
\end{proof}

Under \hyperlink{B5-I}{{\bf (B5-I)}},  we give a sufficient condition for $\lim_{q \to \alpha+\beta_0} C(\alpha,q,\F)=\infty$ 
so that \eqref{e:K4} holds trivially.
\begin{lemma}\label{l:C=infty}
Assume \hyperlink{B5-I}{{\bf (B5-I)}}. Let $\ell_0:(0,1) \to (0,\infty)$ be  a non-decreasing function satisfying
	\begin{align}\label{e:sufficient-C=infty}
		\int_0^1 \frac{\ell_0(r)}{r} dr = \infty.
	\end{align}
Suppose that  
$\Phi_0(r) = r^{\beta_0} \ell_0(r)$ for $0<r \le 1$.
Then  $\lim_{q \to \alpha+\beta_0} C(\alpha,q,\F)=\infty$. 
\end{lemma}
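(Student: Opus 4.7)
Under \hyperlink{B5-I}{{\bf (B5-I)}} we have a single function $\F=\F^1$, and Lemma \ref{l:F-basic} (with $i_0=1$) gives the pointwise lower bound
$$
\F(y)\ge C^{-1}\,\mathbf 1_{|\wt y|\le (y_d+1)\vee 1}\,\Phi_0\!\left(\frac{(y_d+1)\wedge 1}{|y|}\right),\qquad y=(\wt y,y_d)\in \bH_{-1}.
$$
Plugging in $y=((s-1)\wt u,\, s-1)$, we have $y_d+1=s\in(0,1)$, $|\wt y|=(1-s)|\wt u|$, and $|y|=(1-s)\sqrt{|\wt u|^2+1}$. The plan is to restrict the domain of integration in \eqref{e:def-killing-constant} to the set $\{|\wt u|\le 1,\ 0<s<s_0\}$, where $s_0\in(0,1/2)$ is fixed and small. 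On this set the indicator in the lower bound for $\F$ is automatic, and using the assumed form $\Phi_0(r)=r^{\beta_0}\ell_0(r)$ for $r\in(0,1]$ together with monotonicity of $\ell_0$ one obtains
$$
\F\bigl((s-1)\wt u, s-1\bigr)\ \ge\ c_1\, s^{\beta_0}\,\ell_0(s/\sqrt 2),
$$
because the argument $s/((1-s)\sqrt{|\wt u|^2+1})$ lies in $[s/\sqrt 2,\ 2s]\subset(0,1)$.

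Next I would bound the $s$-kernel from below for $q$ close to $\alpha+\beta_0$. Writing
$(s^q-1)(1-s^{\alpha-1-q})=(1-s^q)(s^{\alpha-1-q}-1)$, and choosing $s_0$ small enough (depending only on $\alpha$ and $\beta_0$) so that for every $q$ in some neighborhood of $\alpha+\beta_0$ we have $s^q\le 1/2$ and $s^{\alpha-1-q}\ge 2$ for all $s\le s_0$, we get
$$
\frac{(s^q-1)(1-s^{\alpha-1-q})}{(1-s)^{1+\alpha}}\ \ge\ \tfrac14\, s^{\alpha-1-q}\qquad\text{for } 0<s\le s_0.
$$
Combining this with the bound on $\F$ and using that $(|\wt u|^2+1)^{-(d+\alpha)/2}$ is bounded below by a positive constant for $|\wt u|\le 1$, we conclude
$$
C(\alpha,q,\F)\ \ge\ c_2\int_0^{s_0} s^{\alpha-1-q+\beta_0}\,\ell_0(s/\sqrt 2)\,ds.
$$

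Finally, as $q\uparrow \alpha+\beta_0$, the integrand increases pointwise to $s^{-1}\ell_0(s/\sqrt 2)$ on $(0,s_0)$, so by the monotone convergence theorem and the substitution $r=s/\sqrt 2$,
$$
\lim_{q\to \alpha+\beta_0} C(\alpha,q,\F)\ \ge\ c_2 \int_0^{s_0/\sqrt 2}\frac{\ell_0(r)}{r}\,dr.
$$
To finish, I would note that $\ell_0=\Phi_0(r)/r^{\beta_0}$ is bounded on any interval $[a,1]$ with $a>0$ (since $\Phi_0$ is bounded by \eqref{e:scale}), so $\int_a^1 \ell_0(r)/r\,dr<\infty$; hence the hypothesis \eqref{e:sufficient-C=infty} forces $\int_0^{s_0/\sqrt 2}\ell_0(r)/r\,dr=\infty$, and the claim follows.

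\noindent\textbf{Main obstacle.} The only subtle step is the uniform control of $(s^q-1)(1-s^{\alpha-1-q})(1-s)^{-1-\alpha}$ for $s$ near $0$, uniformly in $q$ in a neighborhood of $\alpha+\beta_0$; this is an elementary computation but must be done carefully since the exponents depend on $q$. The rest is a routine application of monotone convergence plus the observation that the (bounded) behavior of $\ell_0$ near $r=1$ forces the divergence in the hypothesis to occur at $r=0$.
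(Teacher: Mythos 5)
Your proposal is correct and takes essentially the same approach as the paper: both restrict the integral defining $C(\alpha,q,\F)$ to $|\wt u|<1$ and small $s$, use the lower bound on $\F$ from Lemma~\ref{l:F-basic}, bound the kernel below by a constant times $s^{\alpha-1-q}$, and deduce divergence as $q\uparrow\alpha+\beta_0$ from $\int_0^1\ell_0(r)r^{-1}\,dr=\infty$. The only cosmetic difference is that the paper parametrizes by $\eps=\alpha+\beta_0-q$ and uses the weak scaling of $\Phi_0$ to keep $\ell_0(s)$, while you use monotonicity of $\ell_0$ directly and a change of variables; the two routes are equivalent.
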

\begin{proof} 
By Lemma \ref{l:F-basic}, for $\eps \in (0, (\alpha \wedge 1)/2)$, the constant $C(\alpha,\alpha+\beta_0-\eps,\F)$ is bounded below by
	\begin{align*}
		I(\eps)&:=c\int_{0}^{1/2}\int_{\R^{d-1}, \, |\wt u|<1}\frac{1}{(|\wt u|^2+1)^{(d+\alpha)/2}} \\
		&\qquad \qquad \qquad\qquad  \times  
		\frac{ (1-s^{\alpha+\beta_0-\eps} )(s^{-\beta_0-1+\eps}-1)}
		{(1-s)^{1+\alpha}} \Phi_0\left(\frac{s}{(1-s)|(\wt u,1)|}\right)   d \wt u \,ds.
	\end{align*}
Note that for all $s \in (0,1/2)$, we have  
$1-s^{\alpha+\beta_0-\eps} \ge 1- 2^{-\alpha/2}$, $1-s \le 1$ and
$s^{-\beta_0-1+\eps}\ge 2^{1/2}$ so that	 $s^{-\beta_0-1+\eps}-1 \ge (1-2^{-1/2})s^{-\beta_0-1+\eps}$.
Moreover, by \eqref{e:scale}, we see  that for all $s \in (0,1/2)$ and $\wt u \in \R^{d-1}$ with $|\wt u|<1$,
	$$
		\Phi_0\left(\frac{s}{(1-s)|(\wt u,1)|}\right)  \ge c\Phi_0(2^{-1/2}s) \ge c\Phi_0(s). 
	$$  
It follows that
	\begin{align*}
		I(\eps) &\ge \frac{c(1-2^{-\alpha/2})(1-2^{-1/2})}{2^{(d+\alpha)/2}} \int_{0}^{1/2}  s^{-\beta_0-1+\eps} 
		\Phi_0(s) ds \int_{\R^{d-1}, \, |\wt u|<1}   d \wt u\\
 		&  =c \int_{0}^{1/2} s^{-1+\eps} \ell_0(s) ds.
	\end{align*}
Hence, by \eqref{e:sufficient-C=infty}, we obtain 
$\lim_{q \to \alpha+\beta_0} C(\alpha,q,\F)\ge c\lim_{\eps \to 0} I(\eps)=\infty$. 
\end{proof}

\begin{remark}\label{r:C=infty}
Let $a \ge 0$. Note that $\ell_0(r)=\log^a(e/r)$ satisfies \eqref{e:sufficient-C=infty}. Hence, if 
$\Phi_0(r)=r^{\beta_0} \log^a(e/r)$ for $0< r \le 1$, then  $\lim_{q \to \alpha+\beta_0} C(\alpha,q,\F)=\infty$.
\end{remark}

\subsection{Estimates of some auxiliary integrals}

In this subsection, we present estimates of some integrals that will be used in the proof of Proposition \ref{p:barrier}.

\begin{lemma}\label{l:int-F}
Let $q \in [0, \alpha+\beta_0)$.
For all  $1\le i \le i_0$, $x=(\wt 0,x_d) \in \bH$ and $\eps>0$, 
	\begin{align*}
		\int_{\bH, \, |x-y|>\eps} \frac{y_d^q\, \F^i((y-x)/x_d)}{|x-y|^{d+\alpha}} dy<\infty.
	\end{align*}
\end{lemma}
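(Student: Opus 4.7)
My plan is to reduce the integral to one over the half-space $\bH_{-1}$ via the natural rescaling $z = (y-x)/x_d$, then separate the domain into a bounded annular piece (easy) and an exterior piece (where Lemma \ref{l:F-basic} and the Matuszewska bound on $\Phi_0$ give integrability precisely when $q<\alpha+\beta_0$).

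First I would change variables. Since $x=(\tilde 0,x_d)$, the substitution $y = x_d(z+\e_d)$ sends $\bH = \{y_d>0\}$ bijectively to $\bH_{-1} = \{z_d>-1\}$, and one has $|y-x| = x_d|z|$, $y_d = x_d(z_d+1)$, and $dy = x_d^d\,dz$. So the integral becomes
\begin{equation*}
x_d^{q-\alpha}\int_{\bH_{-1},\, |z|>\eps/x_d} (z_d+1)^q \F^i(z)\,|z|^{-d-\alpha}\,dz,
\end{equation*}
and it suffices to show $\int_{\bH_{-1},\,|z|>\eps'}(z_d+1)^q \F^i(z)|z|^{-d-\alpha}\,dz<\infty$ for any fixed $\eps'>0$.

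Next I would split into $A_1 = \{\eps'<|z|\le 2\}\cap\bH_{-1}$ and $A_2 = \{|z|>2\}\cap\bH_{-1}$. On $A_1$ both $\F^i$ (by Lemma \ref{l:F-basic}, since $\Phi_0$ is bounded) and $(z_d+1)^q\le 3^q$ are bounded, and the factor $|z|^{-d-\alpha}\le(\eps')^{-d-\alpha}$, so finiteness is immediate. On $A_2$ the key is Lemma \ref{l:F-basic}, which gives $\F^i(z)\le C\Phi_0\!\left(\tfrac{(z_d+1)\wedge 1}{|z|}\right)$, combined with the Matuszewska estimate: fix any $\beta$ with $\max\{0,q-\alpha\}<\beta<\beta_0$ (possible since $q<\alpha+\beta_0$); then by the definition of the lower Matuszewska index there is $c>0$ with $\Phi_0(r)\le c\, r^\beta$ for all $r\in(0,1]$.

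On $A_2\cap\{z_d\ge 0\}$: here $(z_d+1)\wedge 1=1$ and $(z_d+1)\le 2|z|$, so the integrand is bounded by $c\,|z|^{q-d-\alpha}\Phi_0(1/|z|)\le c'|z|^{q-d-\alpha-\beta}$, which is integrable over $|z|>2$ precisely because $q<\alpha+\beta$. On $A_2\cap\{-1<z_d<0\}$: here $(z_d+1)\wedge 1 = z_d+1<1/2$, so $\Phi_0((z_d+1)/|z|)\le c(z_d+1)^\beta|z|^{-\beta}$, and using $(z_d+1)^q\le 1$ the integrand is bounded by $c(z_d+1)^\beta|z|^{-d-\alpha-\beta}$. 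Since on $A_2$ we have $|\tilde z|\ge\sqrt{4-z_d^2}\ge\sqrt{3}$ and $|z|\ge|\tilde z|$, the $\tilde z$-integral $\int_{|\tilde z|>\sqrt 3}|\tilde z|^{-d-\alpha-\beta}d\tilde z$ is finite, and integrating $(z_d+1)^\beta$ over $z_d\in(-1,0)$ gives $1/(\beta+1)<\infty$.

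The only mild subtlety is choosing $\beta$ correctly: one must have $\beta>q-\alpha$ to control the $z_d\ge 0$ tail (where $(z_d+1)^q$ can grow like $|z|^q$), and $\beta<\beta_0$ to apply the Matuszewska estimate. Both are simultaneously achievable exactly under the hypothesis $q<\alpha+\beta_0$, which is why the range of $q$ stated in the lemma is sharp.
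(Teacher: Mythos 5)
Your change of variables to $\bH_{-1}$ and the subsequent split into a bounded annular piece and a far piece mirror the paper's own strategy (near/far split, Lemma \ref{l:F-basic}, then the scaling decay of $\Phi_0$). There is, however, a genuine gap in the choice of $\beta$: you require $\max\{0,q-\alpha\}<\beta<\beta_0$, and this open interval is empty exactly when $\beta_0=0$ (for instance $\Phi_0\equiv 1$, the case of a boundary-nondegenerate kernel, which is certainly within the scope of the lemma). The parenthetical "possible since $q<\alpha+\beta_0$" is therefore false when $\beta_0=0$: in that case the hypothesis forces $q<\alpha$, and one must take $\beta=0$ and invoke only the boundedness of $\Phi_0$, which the strict inequalities do not permit. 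The paper sidesteps this by choosing $\lb_0\in[0,\beta_0]$ (closed at $0$) with $q<\alpha+\lb_0$ and for which the lower scaling bound in \eqref{e:scale-2} holds; $\lb_0=0$ is explicitly admissible, and the corresponding estimate $\Phi_0(r)\le c$ suffices since then $q<\alpha$. Your argument is easily patched the same way, but as written the step fails.

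A secondary observation: the sub-split of $A_2$ by the sign of $z_d$ is unnecessary and obscures the estimate. On all of $A_2$ one has $(z_d+1)\wedge 1\le 1$, so the almost-increasing property of $\Phi_0$ gives
\begin{align*}
\Phi_0\!\left(\frac{(z_d+1)\wedge 1}{|z|}\right)\le c\,\Phi_0(1/|z|)\le c'|z|^{-\beta},
\end{align*}
while $z_d+1\le |z|+1\le 2|z|$ yields $(z_d+1)^q\le (2|z|)^q$ uniformly. Together these produce the single integrable bound $c|z|^{q-d-\alpha-\beta}$ on the whole of $A_2$; this is exactly what the paper's proof does in the original $y$-coordinates on the set $\{|x-y|>2|x|\}$, where $y_d\lesssim |y|\asymp |x-y|$.
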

\begin{proof} 
Choose $\lb_0\in [0, \beta_0]$ such that $q \in [0, \alpha+\lb_0)$ and the first inequality in \eqref{e:scale} holds.
Fix $1 \le i \le i_0$, $x=(\wt 0,x_d) \in \bH$ and $\eps>0$.
	Using Lemma \ref{l:F-basic} and the almost monotonicity of $\Phi_0$, we obtain
	\begin{align*}
		\int_{\bH, \, |x-y|>\eps} \frac{y_d^q\, \F^i((y-x)/x_d)}{|x-y|^{d+\alpha}} dy 
		&\le c \eps^{-d-\alpha}	\int_{\bH, \, |x-y|\le 2|x|} y_d^q \Phi_0(x_d/|x-y|) dy \\
		&\quad + c	\int_{\bH, \, |x-y|>2|x|} \frac{y_d^q \Phi_0(x_d/|x-y|)}{|x-y|^{d+\alpha}} dy\\
		&=:I_1+I_2.
	\end{align*}
Since $\Phi_0$ is bounded,  $I_1 \le 
c(\eps) |x|^q \int_{\R^d, \, |x-y|\le 2|x|} dy <\infty$.  
On the other hand, for any $y\in \bH$ with $|x-y|>2|x|$, we have $|y|\le |x-y|+|x|<2|x-y|$ and $|y| \ge |x-y|-|x| >|x|=x_d$. Thus, using \eqref{e:scale}, since $q<\alpha+\lb_0$, we obtain
	\begin{align*}
		I_2 \le c\int_{\bH, \, |y|>|x|} \frac{y_d^q }{(|y|/2)^{d+\alpha}} 
		\bigg(\frac{x_d}{|y|/2}\bigg)^{\lb_0} dy\le cx_d^{\lb_0} \int_{\R^d \setminus B(0,x_d)} |y|^{-d-\alpha+q-\lb_0}
		dy<\infty.
	\end{align*}
The proof  is complete. \end{proof}

\begin{lemma}\label{l:I2}
Let $q \in [(\alpha-1)_+, \alpha+\beta_0)$. 
There exists $C>0$  such that for all $1\le i \le i_0$, $x=(\wt 0, x_d) \in \bH$ and $\delta \in (0, (x_d \wedge 1)/2)$,
	\begin{align*}	
		\left|	\int_{\bH, \, |x-y|> \delta} (y_d^q-x_d^q)  
		\frac{\F^i((y-x)/x_d)}{|x-y|^{d+\alpha}}dy -	C(\alpha,q,\F^i)x_d^{q-\alpha} \right| 
		\le C(\delta/x_d)^{2-\alpha} x_d^{q-\alpha}.	\end{align*}
	\end{lemma}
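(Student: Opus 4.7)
\medskip

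The plan is to normalize by the scaling substitution $z=(y-x)/x_d$, which maps $\{y\in\bH:|x-y|>\delta\}$ onto $\{z\in\bH_{-1}:|z|>\epsilon\}$ with $\epsilon:=\delta/x_d\in(0,1/2)$, and transforms the integral into $x_d^{q-\alpha}$ times
$$
\sI(\epsilon) := \int_{\bH_{-1},\,|z|>\epsilon} \frac{(1+z_d)^q-1}{|z|^{d+\alpha}}\,\F^i(z)\,dz.
$$
So the lemma reduces to proving $|\sI(\epsilon)-C(\alpha,q,\F^i)|\le C\epsilon^{2-\alpha}$ uniformly in $\epsilon\in(0,1/2)$ and $1\le i\le i_0$ (uniformity in $i$ follows because $\F^i$ is uniformly bounded by Lemma \ref{l:F-basic}).

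The key device is the involution $T:\bH_{-1}\to\bH_{-1}$, $T(z)=-z/(1+z_d)$, which exchanges $\{z_d\ge 0\}$ and $\{z_d\in(-1,0]\}$ and satisfies
$$
|T(z)|=\tfrac{|z|}{1+z_d},\qquad 1+T(z)_d=\tfrac{1}{1+z_d},\qquad |\mathrm{Jac}\,T(z)|=(1+z_d)^{-(d+1)},
$$
while $\F^i\circ T=\F^i$ by \eqref{e:A5-F}. I split $\sI(\epsilon)=\sI_-(\epsilon)+\sI_+(\epsilon)$ at $\{z_d=0\}$ and apply $T$ to $\sI_+$; a direct computation converts the integrand $((1+z_d)^q-1)/|z|^{d+\alpha}$ into $((1+z_d)^{\alpha-1-q}-(1+z_d)^{\alpha-1})/|z|^{d+\alpha}$ on $\{z_d\in(-1,0]\}$, with the domain constraint $|z|>\epsilon(1+z_d)$. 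Combining with $\sI_-(\epsilon)$ and using the algebraic identity
$$
((1+z_d)^q-1)+((1+z_d)^{\alpha-1-q}-(1+z_d)^{\alpha-1})=((1+z_d)^q-1)(1-(1+z_d)^{\alpha-1-q}),
$$
one obtains
$$
\sI(\epsilon)=\int_{\substack{z_d\in(-1,0)\\|z|>\epsilon}}\!\frac{((1+z_d)^q-1)(1-(1+z_d)^{\alpha-1-q})}{|z|^{d+\alpha}}\F^i(z)\,dz+J(\epsilon),
$$
where $J(\epsilon)$ is the extra contribution on the thin shell $\{z_d\in(-1,0],\ \epsilon(1+z_d)<|z|\le\epsilon\}$.

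In parallel, I rewrite $C(\alpha,q,\F^i)$ in the same $z$-variables by substituting $s=1+z_d$ and $\wt u=\wt z/z_d$ in \eqref{e:def-killing-constant}. The powers of $|z_d|$ cancel exactly (Jacobian factor $(-z_d)^{-(d-1)}$ times $|z_d|^{d+\alpha}/|z|^{d+\alpha}$ over $(-z_d)^{1+\alpha}$ gives $|z|^{-(d+\alpha)}$), producing
$$
C(\alpha,q,\F^i)=\int_{z_d\in(-1,0)}\frac{((1+z_d)^q-1)(1-(1+z_d)^{\alpha-1-q})}{|z|^{d+\alpha}}\F^i(z)\,dz.
$$
Therefore $\sI(\epsilon)-C(\alpha,q,\F^i)=-K(\epsilon)+J(\epsilon)$, where $K(\epsilon)$ is the same integrand over $\{z_d\in(-1,0),|z|\le\epsilon\}$, and both $K$ and $J$ live in the regime $|z|\le\epsilon<1/2$ where $|z_d|\le|z|\le 1/2$ so $1+z_d$ stays in $(1/2,1)$. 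By Taylor expansion, $(1+z_d)^q-1=O(|z_d|)$ and $1-(1+z_d)^{\alpha-1-q}=O(|z_d|)$, so the integrand of $K$ is $O(|z|^{2-d-\alpha})$, and spherical integration gives $|K(\epsilon)|\le C\int_0^{\epsilon} r^{1-\alpha}\,dr\le C\epsilon^{2-\alpha}$. For $J$, the integrand is $O(|z_d|/|z|^{d+\alpha})\le O(|z|^{1-d-\alpha})$; writing $z=r\omega$ with $u:=-\omega_d\in(0,1]$, the $r$-range $(\epsilon/(1+\epsilon u),\epsilon]$ yields $\int_{\epsilon/(1+\epsilon u)}^{\epsilon} r^{-\alpha}\,dr\le C\epsilon^{1-\alpha}u$ (with the $\alpha=1$ case handled by a $\log$ expansion), so $|J(\epsilon)|\le C\epsilon^{2-\alpha}\int_{S^{d-1}_-}u^2\,d\omega\le C\epsilon^{2-\alpha}$.

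The main obstacle is executing the involution substitution cleanly while tracking how the inner constraint $|z|>\epsilon$ transforms to $|z|>\epsilon(1+z_d)$; the symmetry $\F^i\circ T=\F^i$ is indispensable here, since without it the transformed $\sI_+$ would not combine with $\sI_-$ into the nice product form matching the representation of $C(\alpha,q,\F^i)$. Once the bookkeeping is done, the remainder estimates are elementary and all constants depend only on $\alpha,q$ and the uniform bound on $\F^i$.
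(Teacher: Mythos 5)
Your proof is correct and follows essentially the same route as the paper's: both exploit the involutive symmetry $\F^i(z)=\F^i(-z/(1+z_d))$ to fold the part of the integral over $\{z_d>0\}$ onto $\{z_d\in(-1,0)\}$, so that the combined integrand becomes $(s^q-1)(1-s^{\alpha-1-q})/(1-s)^{1+\alpha}$ in the representation that matches $C(\alpha,q,\F^i)$, with the two boundary-layer errors each of size $O(\epsilon^{2-\alpha})$; the paper first separates variables via $\wt u=\wt z/(z_d-1)$ and performs the one-dimensional involution $s=1/z_d$ at fixed $\wt u$, whereas you perform the $d$-dimensional involution $T(z)=-z/(1+z_d)$ directly, which is the same substitution in different coordinates. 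One small slip: the intermediate bound should read $\int_{\epsilon/(1+\epsilon u)}^{\epsilon}r^{-\alpha}\,dr\le C\epsilon^{2-\alpha}u$ rather than $C\epsilon^{1-\alpha}u$ (the interval has length $\epsilon^2 u/(1+\epsilon u)\le\epsilon^2 u$ and $r\asymp\epsilon$ on it), which is what actually produces your stated conclusion $|J(\epsilon)|\le C\epsilon^{2-\alpha}$; you should also note, as the paper does via Lemma~\ref{l:int-F}, that the integrals you decompose are absolutely convergent (the decay of $\F^i$ near $z_d=-1$ and at infinity from Lemma~\ref{l:F-basic} is what handles the range $q\in[\alpha,\alpha+\beta_0)$).
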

\begin{proof} 
Let $1\le i \le i_0$, $x =(\wt 0,x_d) \in \bH$ and $\delta \in (0,(x_d \wedge 1)/2)$. We set
	$$
		I(q,\delta):=\int_{\bH, \, |x-y|>\delta}  (y_d^q - x_d^q)\frac{ \F^i((y-x)/x_d)}{|x-y|^{d+\alpha}}dy.
	$$
Using Lemma \ref{l:int-F} twice (with $q$ and $q=0$), one sees that the integrand in the above integral is absolutely integrable. Hence $I(q,\delta)$ is well-defined. Using the change of variables $z=y/x_d$ in the first equality below and  the change of  
variables $\wt u = \wt z/(z_{d}-1)$ in the second, we get
	\begin{align}\label{e:generator-F-0}
		&I(q,\delta)		=x_d^{q-\alpha} \int_{\bH,\, |\wt{z}|^2+(z_d-1)^2> (\delta/x_d)^2}
			\frac{(z_d^q-1)\,\F^i(z- \e_d)}{| (\wt{z}, z_d)-\mathbf{e}_d|^{d+\alpha}} d \wt{z}\,	dz_d \\
		&=x_d^{q-\alpha}\int_{\bH,\, (|\wt{u}|^2 +1)(z_d-1)^2> (\delta/x_d)^2}
			\frac{(z_d^q-1)\,\F^i(((z_d-1)\wt u, z_d-1))}{(|\wt u|^2+1)^{(d+\alpha)/2} |z_d-1|^{1+\alpha}} d \wt u\,	dz_d.\nn
	\end{align}
Set  $\epsilon(\delta, \wt u):=(\delta/x_d)(|\wt{u}|^2+1)^{-1/2}  \in (0,1/2)$.  
By Fubini's theorem, we obtain from \eqref{e:generator-F-0} that
	\begin{align}\label{e:generator-F-1}
		I(q,\delta)	=x_d^{q-\alpha}\int_{\R^{d-1}} ( I_1(q,\delta,\wt u)  + I_2(q,\delta,\wt u) )
			\frac{d\wt u}{(|\wt u|^2+1)^{(d+\alpha)/2}},
	\end{align}
where
	\begin{align*}
		I_1(q,\delta, \wt u)&:= \int_0^{1-	\epsilon(\delta, \wt u) } 
			\frac{(z_d^q -1)\, \F^i(((z_d-1)\wt u, z_d-1))}{|z_d-1|^{1+\alpha}} dz_d,\\
		I_2(q,\delta, \wt u)&:= \int_{1+	\epsilon(\delta, \wt u) }^\infty 
			\frac{(z_d^q -1)\, \F^i(((z_d-1)\wt u, z_d-1))}{|z_d-1|^{1+\alpha}}  dz_d.
	\end{align*}
Using the change of the variables $s=1/z_d$ and  \eqref{e:A5-F}, we see that
	\begin{align*}
		I_2(q,\delta, \wt u)&= \int_0^{(1+\epsilon(\delta, \wt u))^{-1}}  
			\frac{(1/s)^q -1}{|(1/s)-1|^{1+\alpha}} \F^i(((1/s-1)\wt u, 1/s-1))\, \frac{ds}{s^2}\\
		&=\bigg(\int_0^{1-\epsilon(\delta, \wt u) }+ \int_{1-\epsilon(\delta, \wt u) }^{(1+\epsilon(\delta, \wt u))^{-1}} \bigg) 
			\frac{s^{\alpha-1-q}(1-s^q)}{(1-s)^{1+\alpha}} \F^i(((s-1)\wt u, s-1)) ds\\
		&=:I_{2,1}(q,\delta,\wt u) + I_{2,2}(q,\delta,\wt u).
	\end{align*}
Note that 
$(1+\epsilon(\delta, \wt u))^{-1}-1+\epsilon(\delta, \wt u) 
= 	\epsilon(\delta, \wt u)^2 (1+\epsilon(\delta, \wt u))^{-1} \le \epsilon(\delta, \wt u)^2.$
Therefore, since $\F^i$ is bounded and  $\epsilon(\delta, \wt u) \le \delta/x_d<1/2$,
by using the mean value theorem we have
	\begin{align*}
		\left| I_{2,2}(q,\delta, \wt{u})\right|
		& \le c\int_{1-\epsilon(\delta, \wt u)}^{1-\epsilon(\delta, \wt u)+\epsilon(\delta, \wt u)^2} 
			\frac{1-s^q}{(1-s)^{1+\alpha}}ds\\
		& \le c(2^{q-1}\vee 1) \int_{1-\epsilon(\delta, \wt u)}^{1-\epsilon(\delta, \wt u)+\epsilon(\delta, \wt u)^2} 
			\frac{ds}{(1-s)^{\alpha}}\\
		&\le  \frac{c(2^{q-1} \vee 1)}{(\epsilon(\delta, \wt u)/2)^{\alpha}}  
			\int_{1-\epsilon(\delta, \wt u)}^{1-\epsilon(\delta, \wt u)+\epsilon(\delta, \wt u)^2} ds
			\le c\epsilon(\delta, \wt u)^{2-\alpha},
	\end{align*}
which implies that 
	\begin{align}\label{e:generator-F-2}
		\left| \int_{\R^{d-1}}  \frac{I_{2,2}(q,\delta, \wt u)}{(|\wt u|^2+1)^{(d+\alpha)/2}}d\wt u\right|
		\le c  (\delta/x_d)^{2-\alpha}\int_{\R^{d-1}}  \frac{ d\wt u}{(|\wt u|^2+1)^{(d+2)/2}}=c(\delta/x_d)^{2-\alpha}.
	\end{align}
On the other hand,  by the mean value theorem,  we have
	\begin{align*}
		\left| (s^q -1)(1-s^{\alpha-1-q})(1-s)^{-1-\alpha} \right| \le c(1-s)^{1-\alpha} \quad \text{for all} \;\, s \in (1/2,1).
	\end{align*} 
Thus, since $\F^i$ is bounded,  we get
	\begin{align}\label{e:generator-F-3}	
		& \left| \int_{\R^{d-1}}(I_1(q,\delta, \wt u) + I_{2,1}(q,\delta, \wt u)) \frac{d\wt u}{(|\wt u|^2+1)^{(d+\alpha)/2}} 
			- C(\alpha,q,\F^i)\right| \\	
		&= \int_{\R^{d-1}}\frac{1}{(|\wt u|^2+1)^{(d+\alpha)/2}}\int_{1-\epsilon(\delta, \wt u)}^1  
			\frac{(s^q -1)(1-s^{\alpha-1-q})}{(1-s)^{1+\alpha}} \F^i(((s-1)\wt u, s-1)) ds \, d \wt u\nn\\
		&\le c\int_{\R^{d-1}}\frac{1}{(|\wt u|^2+1)^{(d+\alpha)/2}}\int_{1-\epsilon(\delta, \wt u)}^1  
			\frac{ds}{(1-s)^{-1+\alpha}}  d \wt u \nn\\
		& \le c(\delta/x_d)^{2-\alpha} \int_{\R^{d-1}}  \frac{ d\wt u}{(|\wt u|^2+1)^{(d+2)/2}}=c(\delta/x_d)^{2-\alpha}.\nn 
	\end{align}
Combining \eqref{e:generator-F-1} with \eqref{e:generator-F-2} and \eqref{e:generator-F-3}, we arrive at the result.
\end{proof}

\begin{lemma}\label{l:tech-1} 
Let $q \in [0,\alpha+\beta_0)$, $\nu \in (0,1]$, $r\in (0, \wh R/8]$ and $x=(\wt 0,x_d) \in \bH$ with $x_d \le r/4$. 
Then the following hold.

\noindent (i) There exist constants  $C>0$ and $b_1>0$  independent of $r$ and $x$ such that
	\begin{align*}
		\int_{\R^d \setminus (E_\nu^0(r) \cup \wt E_\nu^0(r))}   
			\frac{|y_d|^q\,\Phi_0(x_d/|x-y|)}{|x-y|^{d+\alpha}}dy\le C (x_d/r)^{b_1} x_d^{q-\alpha}.
	\end{align*}

\noindent (ii) There exist constants  $C>0$ and $b_1'>0$  independent of $r$ and $x$ such that
	\begin{align*}
		\int_{B(0,\wh R) \setminus (E_\nu^0(r) \cup \wt E_\nu^0(r))}   
			\frac{(\wh R^{-1}|\wt y|^2)^q\,\Phi_0(x_d/|x-y|)}{|x-y|^{d+\alpha}}dy\le C (x_d/r)^{b_1'} x_d^{q-\alpha}.
	\end{align*}
\end{lemma}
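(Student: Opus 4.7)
The plan is to split the domain into two natural pieces—an outer region and a wedge region—and estimate each using the weak scaling of $\Phi_0$. Fix $\lb_0\in(q-\alpha,\beta_0)$, which is possible since $q<\alpha+\beta_0$, so that \eqref{e:scale} gives $\Phi_0(t)\le c\,t^{\lb_0}$ for $t\in(0,1]$. Let $\mathcal{O}$ denote the set of $y\in\R^d\setminus(E^0_\nu(r)\cup\wt E^0_\nu(r))$ with $|\wt y|\ge r/4$ or $|y_d|\ge r/2$, and let $\mathcal{W}=\{|\wt y|<r/4,\,|y_d|<r/2,\,|y_d|\le 4r^{-\nu}|\wt y|^{1+\nu}\}$. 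Since $\wt x=\wt 0$ and $x_d\le r/4$, one has $|x-y|\ge r/4$ on $\mathcal{O}$, while on $\mathcal{W}$ an elementary calculation gives $|x-y|\asymp|\wt y|\vee x_d$ and the $y_d$-interval has length $\asymp r^{-\nu}|\wt y|^{1+\nu}$.

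On $\mathcal{O}$ I will apply a dyadic decomposition $A_k=\{y:2^{k}r/4\le|x-y|<2^{k+1}r/4\}$, which for (i) sums freely to infinity and for (ii) is truncated at $|x-y|\asymp\wh R$. On $A_k$ one has $\Phi_0(x_d/|x-y|)\le c(x_d/2^kr)^{\lb_0}$; combining this with $|y_d|\le|y|\le c\,2^kr$ in (i), or with $(\wh R^{-1}|\wt y|^2)^q\le|\wt y|^q\le|x-y|^q\le c(2^kr)^q$ in (ii), the $k$-th term is bounded by $c(2^kr)^{q-\alpha-\lb_0}x_d^{\lb_0}$. Since $q-\alpha-\lb_0<0$, summing the geometric series yields $c(x_d/r)^{\lb_0-(q-\alpha)}x_d^{q-\alpha}$, giving the desired bound on $\mathcal O$ with exponent $\lb_0-(q-\alpha)>0$.

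On $\mathcal{W}$ I will use Fubini, integrating first in $y_d$. In (i), the bound $|y_d|^q\le(r^{-\nu}|\wt y|^{1+\nu})^q$ produces a factor $r^{-\nu(q+1)}|\wt y|^{(1+\nu)(q+1)}$; in (ii) the prefactor $(\wh R^{-1}|\wt y|^2)^q$ simply pulls out of the $y_d$-integral. I then integrate radially in $\rho=|\wt y|$, splitting into $\{\rho\le x_d\}$ (where $|x-y|\asymp x_d$ and $\Phi_0$ is of constant order) and $\{x_d<\rho<r/4\}$ (where $|x-y|\asymp\rho$ and $\Phi_0(x_d/\rho)\le c(x_d/\rho)^{\lb_0}$). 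Each resulting radial integral is an elementary power integral that produces a bound of the form $(x_d/r)^{b}x_d^{q-\alpha}$ for an explicit positive $b$ depending on $\nu,q,\alpha,\lb_0$; in (ii) one converts the $x_d^{2q-\alpha}$ obtained in the inner sub-case to the required $x_d^{q-\alpha}$ via $x_d^q\le 1$, which is valid since $x_d\le\wh R\le 1$. The final exponents $b_1,b_1'$ are the minima of all positive powers appearing across the (finitely many) sub-cases.

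The one subtle point is the borderline exponent $E=-1$ in the radial integral $\int_{x_d}^{r/4}\rho^{E}\,d\rho$, which would produce a logarithmic factor $\log(r/x_d)$. This is handled without extra effort: since $\lb_0$ was chosen in the open interval $(q-\alpha,\beta_0)$, one can either perturb it to avoid the critical value or, alternatively, absorb the logarithm into a strictly smaller positive power of $x_d/r$. No other delicate issues arise; the argument is essentially a careful, parameter-tracking calculation in spherical coordinates.
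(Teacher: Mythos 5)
Your proof is correct, and it takes a genuinely different (and arguably cleaner) route than the paper's. The paper first reduces to the half-space $\bH$ by a reflection inequality and then splits the remaining integral over $\bH\setminus E^0_\nu(r)$ into five pieces for (i) (six for (ii)), with the threshold radius $l_\nu=(r^\nu x_d)^{1/(1+\nu)}/4$ appearing as an organizing scale and each piece estimated by an explicit two-variable iterated integral. You instead split the complement directly into the outer region $\mathcal{O}$ and the wedge $\mathcal{W}$. For $\mathcal{O}$ you use a single dyadic decomposition in $|x-y|$ and a geometric series, which replaces several of the paper's pieces ($I_3,I_4,I_5$ and $II_2,II_4,II_5,II_6$) at once; the key input is just $\Phi_0(t)\le ct^{\lb_0}$ with $\lb_0\in(q-\alpha,\beta_0)$. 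For $\mathcal{W}$ you integrate radially with the single threshold $\rho=x_d$, relying on the observation $|x-y|\asymp|\wt y|\vee x_d$ on $\mathcal{W}$, which indeed holds (for $\rho\le x_d$ one gets $|x-y|\ge\rho\vee|y_d-x_d|\gtrsim x_d$ from the wedge constraint, and for $\rho>x_d$ one gets $|x-y|\asymp\rho$). This corresponds to the paper's $I_1,I_2$ and $II_1,II_3$, but with a more natural cut. Your handling of the borderline exponent $E'=0$ by absorbing $\log(r/x_d)$ into a slightly smaller power of $x_d/r$, or by perturbing $\lb_0$, is the same device the paper uses implicitly. One small inaccuracy in your write-up: in the inner sub-case of (ii), the inequality $x_d^q\le 1$ alone does not produce the required extra factor $(x_d/r)^{b_1'}$; what actually happens is that the $y_d$-integral contributes $r^{-\nu}\rho^{1+\nu}$, so the inner sub-case yields $\wh R^{-q}r^{-\nu}x_d^{2q+\nu-\alpha}=(x_d/\wh R)^q(x_d/r)^\nu x_d^{q-\alpha}$, and one discards $(x_d/\wh R)^q\le 1$ to keep $(x_d/r)^\nu x_d^{q-\alpha}$. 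The small-power factor comes from the $\nu$-dependence of the wedge, not from $x_d^q\le 1$. With that clarification, the argument is complete and the exponents you obtain are positive precisely because $\lb_0>q-\alpha$ (outer region and outer radial sub-case) and $\nu>0$ (inner radial sub-case).
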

\begin{proof} 
Choose $\lb_0\in [0, \beta_0]$ so that $q \in [0,\alpha+\lb_0)$ and \eqref{e:scale} holds.
Since $\Phi_0$ is almost increasing, for all $y =(\wt y, y_d)\in \bH \setminus   E_\nu^0(r)$, 
	\begin{align}\label{e:tech-reflect}
		\frac{\Phi_0(x_d/|x-(\wt y,- y_d)|)}{|x-(\wt y,-y_d)|^{d+\alpha}}\le c	\frac{\Phi_0(x_d/|x-y|)}{|x-y|^{d+\alpha}} .
	\end{align}
	
\noindent (i)  By \eqref{e:tech-reflect}, to get the desired result, it suffices to show that
	\begin{align*}
		I:= \int_{ \bH \setminus   E_\nu^0(r)}\frac{y_d^q\,\Phi_0(x_d/|x-y|)}{|x-y|^{d+\alpha}}dy\le c_1(x_d/r)^{b_1} x_d^{-\alpha}
	\end{align*}
for some constants $c_1,b_1>0$ independent of $r$ and $x$.
	
We now estimate $I$ from above by splitting the integral into five pieces over not necessarily disjoint subsets of $	\bH$ that may contain parts of $E^0_{\nu}(r)$.
Set $l_\nu:=(r^\nu x_d)^{1/(1+\nu)}/4 \in (0,r/4)$.  For any $y=(\wt y,y_d) \in \bH \setminus E^0_\nu(r)$ with $|\wt y|=s<l_\nu$, we have either $y_d \le 4r^{-\nu}s^{1+\nu}$ or $y_d \ge r/2$. Thus,  since $\Phi_0$ is almost increasing, we get
	\begin{align*}
		I&\le c\int_{0}^{x_d/8} s^{d-2} \int_0^{4r^{-\nu}s^{1+\nu}} 
			\frac{y_d^q\, \Phi_0(x_d/|x_d-y_d|) }{|x_d-y_d|^{d+\alpha}} dy_d \,  ds \\
		&\quad  +  c\int_{x_d/8}^{l_\nu} s^{d-2} \int_0^{4r^{-\nu}s^{1+\nu}} \frac{y_d^q \, \Phi_0(x_d/s)}{s^{d+\alpha}} dy_d \,  ds\\
		&\quad +   c\int_0^{l_\nu} s^{d-2} \int_{r/2}^\infty 
			\frac{y_d^q \, \Phi_0(x_d/|x_d-y_d|)}{|x_d-y_d|^{d+\alpha}} dy_d \,  ds  \\
		&\quad +   c\int_{l_\nu}^\infty s^{d-2} \int_0^{16s} \frac{y_d^q \, \Phi_0(x_d/s)}{s^{d+\alpha}} dy_d \,  ds  \\
		&\quad  +   c\int_{l_\nu}^\infty s^{d-2} \int_{16s}^\infty 
			\frac{y_d^q \, \Phi_0(x_d/|x_d-y_d|)}{|x_d-y_d|^{d+\alpha}} dy_d \,  ds  \\
		&=:cI_1+cI_2+cI_3+cI_4+ cI_5.
	\end{align*}
	
For $s\in (0,x_d/8)$ and $y_d \in (0,4r^{-\nu}s^{1+\nu})$, we have $y_d \le 4s \le  x_d/2$, and so $|x_d-y_d| \ge x_d/2$. 
Thus, since $\Phi_0$ is bounded, we get
	\begin{align*}
		I_1&\le c(x_d/2)^{-d-\alpha} \int_0^{x_d/8} s^{d-2} \int_0^{4r^{-\nu}s^{1+\nu}} y_d^q \,dy_d\,ds\\
		&= cr^{-\nu(q+1)} x_d^{-d-\alpha}\int_0^{x_d/8} s^{d-2 + (1+\nu)(q+1)}ds = c(x_d/r)^{\nu(q+1)} x_d^{q-\alpha}.
	\end{align*}
	
For $I_2$,  using \eqref{e:scale} and the fact that $q<\alpha+\lb_0$, we get
	\begin{align*}
		I_2 &\le c\Phi_0(1)x_d^{\lb_0}\int_{x_d/8}^{l_\nu} s^{-\alpha-\lb_0-2} 
			\int_0^{4r^{-\nu}s^{1+\nu}} y_d^q\, dy_d\,ds\\
		&\le cr^{-\nu(q+1)}x_d^{\lb_0}\int_{x_d/8}^{l_\nu} s^{-\alpha-\lb_0-2 + (1+\nu)(q+1)}ds \\
		&\le cl_\nu^{\,\nu(q+1)}r^{-\nu(q+1)}x_d^{\lb_0}\int_{x_d/8}^{l_\nu} s^{-\alpha-\lb_0+q-1}ds\\
		&\le c l_\nu^{\,\nu(q+1)} r^{-\nu(q+1)} x_d^{q-\alpha} = c(x_d/r)^{\nu (q+1)/(1+\nu)}x_d^{q-\alpha}.
	\end{align*}
	
For  $y_d>r/2$,  we have $|x_d-y_d|\ge  y_d-r/4 \ge y_d/2$. Thus, using \eqref{e:scale} and the fact that $q<\alpha+\lb_0$, we obtain
	\begin{align*}
		I_3 &\le  c\int_0^{l_\nu} s^{d-2} \int_{r/2}^\infty y_d^{-d-\alpha+q} \, \Phi_0(x_d/y_d) dy_d \,  ds\\
		&\le  c\Phi_0(1)x_d^{\lb_0}\int_0^{l_\nu} s^{d-2} \int_{r/2}^\infty  y_d^{-d-\alpha-\lb_0+q} dy_d \,  ds  \\
		&= cr^{-d-\alpha- \lb_0+q+1}x_d^{\lb_0}\int_0^{l_\nu}  s^{d-2}ds\\
		&= cl_\nu^{\,d-1}r^{-d-\alpha- \lb_0+q+1}x_d^{\lb_0}
			= c(x_d/r)^{\alpha+\lb_0-q+(d-1)/(1+\nu)} \, x_d^{q-\alpha}.
	\end{align*} 
	
For $I_4$, by \eqref{e:scale},  we see that
	\begin{align*}
		I_4&\le c \Phi_0(1) x_d^{\lb_0} \int_{l_\nu}^\infty s^{-\alpha-\lb_0-2} \int_0^{16s} y_d^q\, dy_d \, ds 
		\le cx_d^{\lb_0}\int_{l_\nu}^\infty s^{-\alpha-\lb_0+q-1}  ds\\
		&= c l_\nu^{-\alpha-\lb_0+q} x_d^{\lb_0}=c(x_d/r)^{\nu(\alpha+\lb_0-q)/(1+\nu)} x_d^{q-\alpha}.
	\end{align*}
		
For 	$s>  l_{\nu} >  (r^\nu x_d)^{1/(1+\nu)}/8$ and $y_d>16s$, we have $y_d>2x_d$, so that $y_d-x_d>y_d/2$. 
Thus, using \eqref{e:scale}, we obtain
	\begin{align*}
		I_5 &\le c\int_{l_\nu}^\infty s^{d-2} \int_{16s}^\infty y_d^{q-d-\alpha} \, \Phi_0(x_d/y_d)dy_d \,  ds\\
		&\le  c\Phi_0(1)x_d^{\lb_0}\int_{l_\nu}^\infty  s^{d-2} \int_{16s}^\infty  y_d^{q-d-\alpha-\lb_0} dy_d \,  ds \\
		&= c x_d^{\lb_0}\int_{l_\nu}^\infty s^{-\alpha- \lb_0+q-1}ds= c(x_d/r)^{\nu(\alpha+\lb_0-q)/(1+\nu)} x_d^{q-\alpha}.
	\end{align*} 
The proof of (i) is complete.

(ii) By \eqref{e:tech-reflect}, to get the desired result, it suffices to show that
	\begin{align*}
		II:= 	\int_{(B(0,\wh R) \cap \bH) \setminus E_\nu^0(r)}   
			\frac{(\wh R^{-1}|\wt y|^2)^q\,\Phi_0(x_d/|x-y|)}{|x-y|^{d+\alpha}}dy\le c_1(x_d/r)^{b_1'} x_d^{-\alpha}
	\end{align*}
for some constants $c_1,b_1'>0$ independent of $r$ and $x$. Since $\Phi_0$ is almost increasing, 
 	\begin{align*}
		II		&\le c\int_0^{x_d/8} s^{d-2}   \int_{0}^{4r^{-\nu}s^{1+\nu}} 
			\frac{(r^{-1} s^2)^q\, \Phi_0(x_d/(x_d-y_d))}{(x_d-y_d)^{d+\alpha}} dy_d  \, ds\\
		&\quad + c\int_0^{x_d/8} s^{d-2}   \int_{r/2}^{\wh R} 
			\frac{(r^{-1} s^2)^q \, \Phi_0(x_d/(y_d-x_d))}{(y_d-x_d)^{d+\alpha}} dy_d \,  ds  \\
		&\quad +   c\int_{x_d/8}^{r/4} s^{d-2} \int_{0}^{4r^{-\nu}s^{1+\nu}} 
			\frac{(r^{-1} s^2)^q\, \Phi_0(x_d/s)}{s^{d+\alpha }} dy_d \, ds   \\
		&\quad +   c\int_{x_d/8}^{r/4} s^{d-2}  \int_{r/2}^{\infty} 
			\frac{(r^{-1} s^2)^q \, \Phi_0(x_d/(y_d-x_d))}{(y_d-x_d)^{d+\alpha }} dy_d\,  ds   \\
		& \quad + c\int_{r/4}^{\wh R}  s^{d-2} \int_{0}^{2s} \frac{(\wh R^{-1} s^2)^q\, \Phi_0(x_d/s)}{s^{d+\alpha }} dy_d \, ds \\
		& \quad + c \int_{r/4}^{\wh R}  s^{d-2}  \int_{2s}^{\wh R} 
			\frac{ (\wh R^{-1} s^2)^q \, \Phi_0(x_d/(y_d-x_d))}{|y_d-x_d|^{d+\alpha}} dy_d \, ds \\
		&=:II_{1}+II_{2}+II_{3}+II_{4}+II_{5}+II_{6}.
	\end{align*}	
Note that since $x_d\le r/4$, for all  $s \in (0,x_d/8)$ and $y_d \in (0, 4r^{-\nu}s^{1+\nu})$,
	\begin{align*}
		x_d-y_d \ge x_d - 4r^{-\nu}(x_d/8)^{1+\nu} \ge x_d/2 .
	\end{align*}
Using this, since $\Phi_0$ is bounded, we get
	\begin{align*}
		II_1&\le  cr^{-q} x_d^{-d-\alpha}\int_0^{x_d/8} s^{d-2+2q}   \int_{0}^{4r^{-\nu}s^{1+\nu}}  dy_d  \, ds\\
			&\le  cr^{-q-\nu} x_d^{-d-\alpha}\int_0^{x_d/8} s^{d-1+2q+\nu}    \, ds = c (x_d/r)^{q+\nu}x_d^{q-\alpha}.
	\end{align*}
We also note that $y_d-x_d \ge y_d-r/4 \ge y_d/2$ for all $y_d>r/2$. Using this and \eqref{e:scale},  since $q<\alpha+\lb_0$, we obtain
	\begin{align*}
		II_2&\le c\Phi_0(1)r^{-q}x_d^{\lb_0}\int_0^{x_d/8} s^{d-2+2q} ds \int_{r/2}^{\wh R} y_d^{-(d+\alpha +\lb_0)} dy_d \\
		&\le c(x_d/r)^{d+\alpha+\lb_0+q-1}x_d^{q-\alpha},
	\end{align*}
	\begin{align*}
	II_4&\le  c\Phi_0(1)r^{-q}x_d^{\lb_0}\int_{x_d/8}^{r/4} s^{d-2+2q}  ds\int_{r/2}^{\infty} y_d^{-(d+\alpha+\lb_0)} dy_d
	\le c(x_d/r)^{\alpha+\lb_0-q} x_d^{q-\alpha},
	\end{align*}
and
	\begin{align}
		II_6&\le c\Phi_0(1)\wh R^{-q} x_d^{\lb_0} \int_{r/4}^{\wh R}  s^{d-2+2q}\int_{2s}^{\wh R}y_d^{-(d+\alpha+\lb_0)} dy_d\, ds\nn\\
		&\le  c\wh R^{-q} x_d^{\lb_0} \int_{r/4}^{\wh R}  s^{-\alpha-\lb_0-1+2q} ds\nn\\
		&\le  c x_d^{\lb_0} \int_{r/4}^{\wh R}  s^{-\alpha-\lb_0-1+q} ds 
		\le c(x_d/r)^{\alpha+\lb_0-q} x_d^{q-\alpha}.\label{e:tech-large}
	\end{align}
For $II_5$, using \eqref{e:scale} and \eqref{e:tech-large}, we see that
	\begin{align*}
		II_5&\le c\Phi_0(1) \wh R^{-q} x_d^{\lb_0} \int_{r/4}^{\wh R}  s^{-\alpha-\lb_0-2+2q} \int_{0}^{2s} dy_d \, ds\\
		&\le c \wh R^{-q} x_d^{\lb_0} \int_{r/4}^{\wh R}  s^{-\alpha-\lb_0+1+2q}  ds 
		\le c(x_d/r)^{\alpha+\lb_0-q} x_d^{q-\alpha}.
	\end{align*}
For $II_3$, we fix a constant $\eps\in (0, (\alpha+\lb_1-q) \wedge (q+\nu))$. Using \eqref{e:scale}, we get
	\begin{align*}
		II_3&\le c\Phi_0(1)r^{-q} x_d^{\lb_0}\int_{x_d/8}^{r/4} s^{-\alpha-\lb_0-2+2q } \int_{0}^{4r^{-\nu}s^{1+\nu}} dy_d \, ds\\
		&\le cr^{-q-\nu} x_d^{\lb_0}\int_{x_d/8}^{r/4} s^{-\alpha-\lb_0-1+2q+\nu }  \, ds \\
		&\le cr^{-\eps} x_d^{\lb_0}\int_{x_d/8}^{r/4} s^{-\alpha-\lb_0-1+q+\eps}  \, ds
		\le c(x_d/r)^\eps x_d^{q-\alpha}.
	\end{align*}	
The proof is complete. 
\end{proof}

\subsection{Key estimates on cutoff distance functions}

For a Borel set $V \subset D$ and $q \in  [(\alpha-1)_+,\alpha+\beta_0) \cap (0,\infty)$,  let 
	\begin{align}\label{e:def-hqV}
		h_{q,V}(y)=\1_V(y) \delta_D(y)^q
	\end{align} 
be the $q$-th power of the cutoff distance function. 
 
The next proposition is the main result of this subsection  and one of the key estimates of this work.

\begin{prop}\label{p:barrier}
Let  $q\in [(\alpha-1)_+,\alpha+\beta_0) \cap (0,\infty)$, 
$Q \in \partial D$ and $r \in (0,\wh R/8]$. There exist constants $C>0$ and 
$\eta_1>0$ independent of $Q$ and $r$ such that for any Borel set $V$ satisfying 
$U^Q(3r)  \subset V \subset B_D(Q, \wh R)$ and   any $x \in U^Q(r/4)$,
	\begin{align*}
		|L_\alpha^\sB h_{q,V}(x)-\sum_{i=1}^{i_0} \mu^i(x) C(\alpha,q,\F^i)  \delta_D(x)^{q-\alpha}| 
		\le C (\delta_D(x)/r)^{\eta_1} \delta_D(x)^{q-\alpha}.
	\end{align*}
\end{prop}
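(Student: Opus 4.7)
Our strategy is to reduce $L^\sB_\alpha h_{q,V}(x)$ to the half-space principal-value integral computed by Lemma~\ref{l:I2}, by working in local coordinates centered at the boundary point closest to $x$, and to bound the resulting error terms using Lemmas~\ref{l:C11}, \ref{l:F-basic}, \ref{l:I2}, \ref{l:tech-1}, \ref{l:B5'} together with assumptions \hyperlink{B3}{{\bf (B3)}}, \hyperlink{B4-a}{{\bf (B4-a)}}, \hyperlink{B5}{{\bf (B5)}}.

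Let $s:=\delta_D(x)$, so $s<r/4\le\wh R/32$, and let $Q_x\in\partial D$ be the unique point with $|x-Q_x|=s$ (Lemma~\ref{l:C11}(ii)). All computations are performed in CS$_{Q_x}$, where $x=(\wt 0,s)$. Set $W:=E^{Q_x}_\nu(r)$ with $\nu$ the constant from \hyperlink{B5}{{\bf (B5)}}; since $\Lambda_0\le 1/2$, Lemma~\ref{l:U-rho-Lipschitz} gives $|Q-Q_x|\le|Q-x|+s<3r/4$, and a short geometric check then yields $x\in W\subset E^{Q_x}_\nu(\wh R/8)$ and $W\subset B_D(Q_x,3r/4)\subset B_D(Q,3r/2)\subset U^Q(3r)\subset V$. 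In particular $h_{q,V}=\delta_D^q$ on $W$, and Lemma~\ref{l:C11}(iii) gives $\delta_D(y)\asymp y_d$ with $|\delta_D(y)-y_d|\le r^{-1}|\wt y|^2$ on $W$. By Lemma~\ref{l:I2} (sending $\delta\downarrow 0$; the principal value is justified by the symmetry \eqref{e:A5-F}), the target main term is the half-space integral
\[
\sum_i\mu^i(x)\,C(\alpha,q,\F^i)\,s^{q-\alpha}=\sum_i\mu^i(x)\,\mathrm{p.v.}\!\int_\bH(y_d^q-s^q)\frac{\F^i((y-x)/s)}{|x-y|^{d+\alpha}}\,dy.
\]
Subtracting this from $L^\sB_\alpha h_{q,V}(x)$ and decomposing the domain of integration into the natural pieces leads to $L^\sB_\alpha h_{q,V}(x)-\sum_i\mu^i(x)C(\alpha,q,\F^i)s^{q-\alpha}=\mathrm{I}+\mathrm{II}+\mathrm{III}+\mathrm{IV}+\mathrm{V}$ with
\begin{align*}
\mathrm{I}&=\int_W(\delta_D(y)^q-y_d^q)\,\tfrac{\sB(x,y)}{|x-y|^{d+\alpha}}\,dy,\\
\mathrm{II}&=\mathrm{p.v.}\!\int_W(y_d^q-s^q)\,\tfrac{\sB(x,y)-\sum_i\mu^i(x)\F^i((y-x)/s)}{|x-y|^{d+\alpha}}\,dy,\\
\mathrm{III}&=\int_{V\setminus W}(\delta_D(y)^q-s^q)\,\tfrac{\sB(x,y)}{|x-y|^{d+\alpha}}\,dy,\\
\mathrm{IV}&=-s^q\int_{D\setminus V}\tfrac{\sB(x,y)}{|x-y|^{d+\alpha}}\,dy,\\
\mathrm{V}&=-\sum_i\mu^i(x)\int_{\bH\setminus W}(y_d^q-s^q)\,\tfrac{\F^i((y-x)/s)}{|x-y|^{d+\alpha}}\,dy.
\end{align*}
The objective is to bound each of these by $C(s/r)^{\eta_1}s^{q-\alpha}$ for a common $\eta_1>0$.

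Four of the pieces are routine. For $\mathrm{I}$, the mean value theorem (separately for $q\ge 1$ and $0<q<1$) combined with $|\delta_D(y)-y_d|\le r^{-1}|\wt y|^2$ yields $|\delta_D(y)^q-y_d^q|\lesssim y_d^{q-1}r^{-1}|\wt y|^2$ on $W$, and after applying \hyperlink{B4-a}{{\bf (B4-a)}} the integral is controlled by estimates of the same flavor as Lemma~\ref{l:tech-1}(ii). For $\mathrm{III}$ and $\mathrm{V}$, the domains $V\setminus W$ and $\bH\setminus W$ both lie inside $\R^d\setminus(E^0_\nu(r)\cup\wt E^0_\nu(r))$ in CS$_{Q_x}$ (since $\wt E^0_\nu(r)\cap D=\emptyset=\wt E^0_\nu(r)\cap\bH$), so combining $\delta_D(y)^q\lesssim y_d^q+(\wh R^{-1}|\wt y|^2)^q$ (Lemma~\ref{l:C11}(i)) with \hyperlink{B4-a}{{\bf (B4-a)}} for $\mathrm{III}$, or the bound $\F^i(z)\lesssim\Phi_0(((z_d+1)\wedge 1)/|z|)$ of Lemma~\ref{l:F-basic} for $\mathrm{V}$, with Lemma~\ref{l:tech-1}(i)--(ii) (applied with the given $q$ and with $q=0$ for the $s^q$-pieces), produces $|\mathrm{III}|+|\mathrm{V}|\lesssim(s/r)^{\min(b_1,b_1')}s^{q-\alpha}$. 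Finally, $D\setminus V$ lies at distance at least $3r/2$ from $x$ (using $x\in B_D(Q,r/2)$ and $D\setminus V\subset D\setminus B_D(Q,2r)$), and so by \hyperlink{B4-a}{{\bf (B4-a)}} and the almost-monotonicity of $\Phi_0$,
\[
|\mathrm{IV}|\lesssim s^q\Phi_0(s/r)\!\int_{|y-x|\ge 3r/2}\!|y-x|^{-d-\alpha}\,dy\lesssim(s/r)^\alpha\Phi_0(s/r)s^{q-\alpha}\lesssim(s/r)^\alpha s^{q-\alpha}.
\]

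The main technical obstacle is $\mathrm{II}$. Lemma~\ref{l:B5'} gives
\[
\Big|\sB(x,y)-\sum_i\mu^i(x)\F^i((y-x)/s)\Big|\le C_{10}\Big(\tfrac{M(x,y)}{m(x,y)}\Big)^{\theta_1}\!M(x,y)^{\theta_2},
\]
with $M$ and $m$ the max and min of $\delta_D(x),\delta_D(y),|x-y|$. One splits $\mathrm{II}$ at $|y-x|=s/2$. On the outer region $|y-x|>s/2$, $M(x,y)\asymp|x-y|\vee\delta_D(y)$ and $m(x,y)\asymp\delta_D(y)\wedge s$; the resulting integral is absolutely convergent, and after restriction to $W\subset B(x,cr)$ and invocation of the weak scaling of $\Phi_0$, it is bounded by $C(s/r)^{\eta_2}s^{q-\alpha}$ for some $\eta_2>0$ depending on $\theta_1,\theta_2$. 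On the inner region $|y-x|\le s/2$ one has $y_d\asymp\delta_D(y)\asymp s$, hence $M(x,y)\asymp s$, $m(x,y)\asymp|x-y|$, and $|y_d^q-s^q|\le qs^{q-1}|y-x|+O(|y-x|^2 s^{q-2})$. The second-order remainder is absolutely integrable and yields a bound of the desired order. The first-order part requires principal-value cancellation: the integrals $\mathrm{p.v.}\!\int(y_d-s)\sB(x,y)|x-y|^{-d-\alpha}\,dy$ and $\mathrm{p.v.}\!\int(y_d-s)\sum_i\mu^i(x)\F^i((y-x)/s)|x-y|^{-d-\alpha}\,dy$ are treated separately; for the former the H\"older estimate \hyperlink{B3}{{\bf (B3)}} (whose exponent $\theta_0>\alpha-1$ is exactly what makes the first-order remainder integrable) together with the odd symmetry of $(y_d-s)/|x-y|^{d+\alpha}$ across the hyperplane $\{z_d=s\}$ supplies the needed cancellation; for the latter, the symmetry \eqref{e:A5-F} of $\F^i$ together with Lemma~\ref{l:I2} plays the analogous role. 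Putting these bounds together yields $|\mathrm{II}|\lesssim(s/r)^{\eta_2}s^{q-\alpha}$, and taking $\eta_1$ as the minimum of the exponents produced in each step completes the proof.
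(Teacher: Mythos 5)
Your decomposition of $L^{\sB}_\alpha h_{q,V}(x)$ into $\mathrm{I}+\mathrm{II}+\mathrm{III}+\mathrm{IV}+\mathrm{V}$ is algebraically the same as the paper's (after regrouping), and pieces $\mathrm{I},\mathrm{III},\mathrm{IV},\mathrm{V}$ are treated in essentially the way the paper treats $I_1,I_7,I_8,I_9$. The gap is in $\mathrm{II}$, and it concerns the choice of cutoff radius for the inner region.

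You split $\mathrm{II}$ at $|y-x|=s/2$ with $s=\delta_D(x)$, i.e., at a \emph{fixed multiple} of $\delta_D(x)$. With that choice, the quantities you produce on the inner region are each of size $O(s^{q-\alpha})$, \emph{without} the required factor $(s/r)^{\eta_1}$. Concretely: the second-order Taylor remainder gives $\int_{B(x,s/2)}s^{q-2}|y-x|^{2}\,|x-y|^{-d-\alpha}\,dy \asymp s^{q-\alpha}$; the first-order term with the \hyperlink{B3}{{\bf (B3)}} correction gives $qs^{q-1}\int_{B(x,s/2)}|y_d-s|\,(|x-y|/s)^{\theta_0}|x-y|^{-d-\alpha}\,dy\asymp s^{q-\alpha}$; and the $\F^i$ part via Lemma~\ref{l:I2} with $\delta=s/2$ gives an error of order $((s/2)/s)^{2-\alpha}s^{q-\alpha}=2^{-(2-\alpha)}s^{q-\alpha}$. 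None of these carries a power of $s/r$, so there is no way to obtain the asserted conclusion from your inner-region estimates. (Your claim that the second-order remainder of $\mathrm{II}$ is ``absolutely integrable'' via Lemma~\ref{l:B5'} is also incorrect when $\theta_1\ge 2-\alpha$: the bound $s^{q-2}|y-x|^2(s/|y-x|)^{\theta_1}s^{\theta_2}|x-y|^{-d-\alpha}$ produces $\int_0^{s/2}\rho^{1-\theta_1-\alpha}d\rho$, which diverges. One must revert to the crude bound $|\sB-\sum\mu^i\F^i|\lesssim 1$ there, which then only yields $O(s^{q-\alpha})$.)

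The missing idea, which is the crux of the paper's proof, is that the inner ball $\sO$ must have radius $R_\sO\asymp r^{-\theta_2/(2\alpha+2\theta_1)}\delta_D(x)^{1+\theta_2/(2\alpha+2\theta_1)}$, which is $\ll\delta_D(x)$ whenever $\delta_D(x)\ll r$. This $r$-dependent choice is the source of the $(\delta_D(x)/r)^{\eta_1}$ decay: the Taylor remainder $I_3$ is then $\lesssim s^{q-2}R_\sO^{2-\alpha}=(s/r)^{(2-\alpha)\theta_2/(2\alpha+2\theta_1)}s^{q-\alpha}$, the \hyperlink{B3}{{\bf (B3)}}-error $I_4$ gains a factor $(s/r)^{(1+\theta_0-\alpha)\theta_2/(2\alpha+2\theta_1)}$, and the Lemma~\ref{l:I2} error (the $I_6$ term) gains a factor $(R_\sO/s)^{2-\alpha}=(s/r)^{(2-\alpha)\theta_2/(2\alpha+2\theta_1)}$. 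At the same time $R_\sO$ cannot be too small, since on the annulus $E_\nu\setminus\sO$ the \hyperlink{B5}{{\bf (B5)}}-error integral picks up a factor $R_\sO^{-\alpha-\theta_1}$ that must balance against the $s^{\theta_2}$ gain, and the paper's exponent $\theta_2/(2\alpha+2\theta_1)$ is exactly the equilibrium. Replacing $R_\sO$ by $s/2$ destroys both halves of this balance. (There is also a small geometric slip: $B(x,s/2)$ need not be contained in $E^{Q_x}_\nu(r)$, since the paper only guarantees $B(x,\delta_D(x)/\sqrt{17})\subset E_\nu$; see~\eqref{e:delta-Enu-x}. The constant $1/2$ must be replaced by something at most $1/\sqrt{17}$, but this is minor compared with the $r$-dependence issue.)
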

\begin{proof} 
Let $x \in U^Q(r/4)$ and $Q_x \in \partial D$ be the point such that $\delta_D(x)=|x-Q_x|$. We use the coordinate system CS$_{Q_x}$ and denote $E^{Q_x}_\nu(r)$ and $\wt E^{Q_x}_\nu(r)$  by $E_\nu$ and $\wt E_\nu$  respectively. Note that $x=(\wt 0,x_d)=(\wt 0, \delta_D(x))\in E_\nu$ and
	\begin{align}\label{e:delta-Enu-x}
		\delta_{E_\nu}(x) \ge \delta_{\{(\wt y,y_d):y_d>4|\wt y|\}} ((\wt 0,x_d))  = x_d/\sqrt{17}=\delta_D(x)/\sqrt{17}.
	\end{align}
Using \eqref{e:U-rho-C11-1} twice, we see that $B_D(x,r) \subset B_D(Q, r + |x-Q|) \subset B_D(Q,3r/2) \subset U^Q(3r)$. Also, for any $y=(\wt y,y_d) \in E_\nu$, we have $|x-y| \le |\wt y| + |y_d-x_d| < r/4+r/2$. Hence,
 	\begin{align}\label{e:EsubsetV}
		E_\nu \subset B_D(x, r) \subset U^Q(3r)\subset V.
 	\end{align}

Let 
	$$
		\sO:=B(x,5^{-1}r^{-\theta_2/(2\alpha+2\theta_1)} x_d^{1+\theta_2/(2\alpha+2\theta_1)}),
	$$
where $\theta_1,\theta_2>0$ are the constants in \eqref{e:ass-B5'}.
Since $x_d<r/4$, we have by \eqref{e:delta-Enu-x}, 
	\begin{equation}\label{e:delta-Enu-x-2}
		5^{-1}r^{-\theta_2/(2\alpha+2\theta_1)} x_d^{1+\theta_2/(2\alpha+2\theta_1)}<5^{-1} x_d\le \delta_{E_\nu}(x)
	\end{equation}
so that $\sO \subset E_\nu$.  Thus, since   $h_{q,V}(x)=x_d^q$,  we get that

	\begin{align*}
		L_\alpha^{\sB} h_{q,V}(x)&=\text{p.v.}\int_{D}  \frac{(h_{q,V}(y)-h_{q,V}(x))\sB(x,y)}{|x-y|^{d+\alpha}}dy\\
		& =\text{p.v.} \int_{E_\nu}   \frac{(h_{q,V}(y)-y_d^q)\sB(x,y)}{|x-y|^{d+\alpha}}dy 
			+ \text{p.v.} \int_{E_\nu}   \frac{(y_d^q-x_d^q)\sB(x,y)}{|x-y|^{d+\alpha}}dy\\
		&\quad +\int_{D \setminus E_\nu}  \!\!\!\frac{(h_{q,V}(y)-h_{q,V}(x)) \sB(x,y)}{|x-y|^{d+\alpha}}dy \\
		&=: I_1+J_2+J_3.
	\end{align*}
We further split $J_2$ and $J_3$ as follows:
	\begin{align*}
		J_2&=\text{p.v.} \int_{\sO} \frac{ qx_d^{q-1}(y_d-x_d)\sB(x,x)}{|x-y|^{d+\alpha}}dy\\
		&\quad + \int_{\sO}  \frac{(y_d^q-x_d^q-qx_d^{q-1}(y_d-x_d)) \sB(x,x) }{|x-y|^{d+\alpha}}dy 
			+\int_{\sO}(y_d^q-x_d^q) \frac{\sB(x,y)-\sB(x,x)}{|x-y|^{d+\alpha}}dy \\
		& \quad  +\int_{E_\nu\setminus \sO}   \frac{(y_d^q-x_d^q) (\sB(x,y)- 
			\sum_{i=1}^{i_0}\mu^i(x)\F^i((y-x)/x_d)))}{|x-y|^{d+\alpha}}dy\\
		&\quad + \sum_{i=1}^{i_0}\int_{\bH\setminus \sO}  \frac{(y_d^q-x_d^q) \mu^i(x)\F^i((y-x)/x_d)}{|x-y|^{d+\alpha}}dy\\
		&\quad   - \sum_{i=1}^{i_0} \int_{\bH \setminus E_\nu} \frac{(y_d^q-x_d^q) \mu^i(x)\F^i((y-x)/x_d)}{|x-y|^{d+\alpha}}dy \\
		& =:I_2+I_3+I_4+I_5+I_6-I_7,
	\end{align*}
	\begin{align*}
		&J_3=\int_{V \setminus E_\nu}  \frac{h_{q,V}(y)\sB(x,y)}{|x-y|^{d+\alpha}}dy  
			-  \int_{D \setminus E_\nu}  \frac{h_{q,V}(x)\sB(x,y)}{|x-y|^{d+\alpha}}dy =:I_8-I_9.
	\end{align*}

Estimates of the integrals $I_1$ and $I_5$ are the most delicate and are postponed to, respectively,  Lemmas \ref{l:I1} and  \ref{l:I5} below which together give that
	\begin{equation}\label{e:I1-I5}
		|I_1|+|I_5|  \le c(x_d/r)^{\eta} x_d^{q-\alpha}
	\end{equation}
for some constants $c,\eta>0$ independent of $Q,r,x$ and $V$. In the rest of the proof we estimate the remaining seven integrals.

By Lemma \ref{l:I2} and  \eqref{e:mu-bound},  we get 
	\begin{align*}
		&\bigg| I_6-\sum_{i=1}^{i_0}\mu^i(x)C(\alpha, q, \F^i)x_d^{q-\alpha} \bigg|\\
		& \le \sum_{i=1}^{i_0} \mu^i(x) \bigg| \int_{\bH\setminus \sO} 
			 \frac{(y_d^q-x_d^q) \F^i((y-x)/x_d)}{|x-y|^{d+\alpha}}dy -C(\alpha, q, \F^i)x_d^{q-\alpha} \bigg| \\
		&\le  c\sum_{i=1}^{i_0} ( r^{-\theta_2/(2\alpha+2\theta_1)} x_d^{\theta_2/(2\alpha+2\theta_1)})^{2-\alpha} x_d^{q-\alpha} \\
		&\le c  (x_d/r)^{(2-\alpha)\theta_2/(2\alpha+2\theta_1)} x_d^{q-\alpha}.
	\end{align*}

We have $I_2=0$ by symmetry. Note that for any $y \in \sO$,  by the triangle inequality and \eqref{e:delta-Enu-x-2}, 
$(4/5)x_d \le \delta_D(y) \le (6/5)x_d$. Hence, 	since $\sO \subset E_\nu$, using  Lemma \ref{l:C11}(iii), we see that
	\begin{align}\label{e:sO}
		&	y_d \asymp \delta_D(y)  \asymp x_d \quad \text{for} \;\, y \in \sO.
	\end{align}
Using \eqref{e:B(x,x)},  Taylor's theorem and \eqref{e:sO}, we obtain
	\begin{align*}
		|I_3| &\le  c \int_{\sO} \frac{x_d^{q-2}|x_d-y_d|^2}{|x-y|^{d+\alpha}} dy \le cx_d^{q-2} 
			\int_0^{5^{-1}r^{-\theta_2/(2\alpha+2\theta_1)} x_d^{1+\theta_2/(2\alpha+2\theta_1)}} l^{1-\alpha}dl\\
		&\le  c x_d^{q-2} (x_d^{1+\theta_2/(2\alpha+2\theta_1)}/r^{\theta_2/(2\alpha+2\theta_1)})^{2-\alpha} 
			= c(x_d/r)^{(2-\alpha)\theta_2/(2\alpha+2\theta_1)} x_d^{q-\alpha}.
	\end{align*}

When $\alpha\ge 1$, by the mean value theorem, \eqref{e:sO} and  \hyperlink{B3}{{\bf (B3)}}, 
	\begin{align}\label{e:I4}
 		|I_4| &\le c \int_\sO \frac{x_d^{q-1-\theta_0}}{|x-y|^{d+\alpha-1-\theta_0}}dy \le c x_d^{q-1-\theta_0} 
 			\Big(\frac{ x_d^{1+\theta_2/(2\alpha+2\theta_1)}}{r^{\theta_2/(2\alpha+2\theta_1)}}\Big)^{1+\theta_0-\alpha}\nn\\	
 		& = c \left(\frac{x_d}{r}\right)^{(1+\theta_0-\alpha)\theta_2/(2\alpha+2\theta_1)}x_d^{q-\alpha}.
	\end{align}
When $\alpha<1$, since $\sB$ is bounded, \eqref{e:I4} holds with $\theta_0=0$. 

For $I_7$, using    Lemma \ref{l:F-basic}, \eqref{e:mu-bound} and Lemma \ref{l:tech-1}(i) twice, we get that
	\begin{align*}	
		|I_7| &\le  c \int_{\bH \setminus E_\nu }  \frac{y_d^q\Phi_0(x_d/|x-y|)}{|x-y|^{d+\alpha}}dy + cx_d^q 
			\int_{\bH \setminus E_\nu }  \frac{\Phi_0(x_d/|x-y|)}{|x-y|^{d+\alpha}}dy \\
			&\le c(x_d/r)^{b_1}x_d^{q-\alpha} +c (x_d/r)^{b_2}r^{q-\alpha} ,
	\end{align*}
for some constants $b_1,b_2>0$ independent of $Q,r,x$ and $V$.

For $I_9$, using Lemma \ref{l:C11}(ii), \hyperlink{B4-a}{{\bf (B4-a)}} and Lemma \ref{l:tech-1}(i),  we obtain
	\begin{align*}     
		&    I_9	\le cx_d^q \int_{\R^d \setminus (E^0_\nu \cup \wt E^0_\nu)} 
			\frac{\Phi_0(x_d/|x-y|)}{|x-y|^{d+\alpha}} dy \le c (x_d/r)^{b_2}r^{q-\alpha}.
	\end{align*}

For $I_8$, using the fact that $V \subset B_D(Q,\wh R)$, Lemma \ref{l:C11}(i)-(ii),   \hyperlink{B4-a}{{\bf (B4-a)}}
and Lemma \ref{l:tech-1}(i)-(ii), we get
	\begin{align*}
		I_8 &\le c\int_{B(0,\wh R) \setminus (E^0_\nu \cup \wt E^0_\nu)}  
			\frac{(|y_d| + \wh R^{-1}|\wt y|^2)^q\, \Phi_0(x_d/|x-y|)}{|x-y|^{d+\alpha}}dy\\
		&\le c\int_{\R^d \setminus (E^0_\nu \cup \wt E^0_\nu)} \frac{|y_d|^q \, \Phi_0(x_d/|x-y|)}{|x-y|^{d+\alpha}}dy\\
		&\quad +  c\int_{B(0,\wh R) \setminus (E^0_\nu \cup \wt E^0_\nu)} (\wh R^{-1}|\wt y|^2)^q 
			\frac{\Phi_0(x_d/|x-y|)}{|x-y|^{d+\alpha}}dy\\
		&\le c(x_d/r)^{b_1} x_d^{q-\alpha} + c(x_d/r)^{b_3} x_d^{q-\alpha},
	\end{align*}
for some constant $b_3>0$ independent of $Q,r,x$ and $V$.

Together with \eqref{e:I1-I5} this completes the proof. 
\end{proof}

In the remainder of this subsection, we fix $q, Q, r$ and $V$ as in Proposition \ref{p:barrier},  
let $\lb_0\in [0, \beta_0]$ be such that $q\in [(\alpha-1)_+,\alpha+\lb_0) \cap (0,\infty)$ and that the first inequality in \eqref{e:scale} holds, let $x\in U^Q(r/4)$ and let $Q_x\in \partial D$ be such that $\delta_D(x)=|x-Q_x|$,  use the coordinate system CS$_{Q_x}$, and denote 	$E^{Q_x}_\nu(r)$ and $\wt E^{Q_x}_\nu(r)$  by $E_\nu$ and $\wt E_\nu$  respectively.

\begin{lemma}\label{l:I1}
There exist constants  $C, b>0$ independent of $Q, r,x$ and $V$  such that
	\begin{align*}
		|I_1| \le C(x_d/r)^{b} x_d^{q-\alpha}.
	\end{align*}
\end{lemma}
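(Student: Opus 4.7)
The key structural observation is that $E_\nu \subset V$ (already established in the proof of Proposition \ref{p:barrier}; see \eqref{e:EsubsetV}), so $h_{q,V}(y) = \delta_D(y)^q$ for $y \in E_\nu$, and the integrand of $I_1$ reduces to $(\delta_D(y)^q - y_d^q)\,\sB(x,y)\,|x-y|^{-d-\alpha}$. In the coordinate system CS$_{Q_x}$ we have $x=(\wt 0, x_d)$, so $\wt y\to\wt 0$ as $y\to x$, which (combined with the pointwise bound below) makes the integrand absolutely integrable near $x$ for $\alpha<2$ and lets us drop the principal value.

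The first main step is the pointwise inequality
$$|\delta_D(y)^q - y_d^q| \le C\,\wh R^{-1}\,y_d^{q-1}\,|\wt y|^2, \qquad y \in E_\nu.$$
The upper half $\delta_D(y) - y_d \le \wh R^{-1}|\wt y|^2$ is contained in Lemma \ref{l:C11}(i). For the matching lower half we invoke the interior ball condition for $C^{1,1}$ open sets: since $\nabla\Psi(\wt 0)=0$ and $\wh R \le 1/(2\Lambda)$, the ball $B(Q_x + \wh R\,\e_d,\wh R)$ lies in $D$, and because $|\wt y| < r/4 \le \wh R/32$ and $y_d < r/2 \le \wh R/16$ an elementary geometric computation yields $y_d - \delta_D(y) \le C\,\wh R^{-1}|\wt y|^2$. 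The displayed inequality then follows from the comparability $\delta_D(y) \asymp y_d$ on $E_\nu$ (Lemma \ref{l:C11}(iii)) and the mean value theorem applied to $t\mapsto t^q$ on the interval between $\delta_D(y)$ and $y_d$.

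Combining this pointwise bound with \hyperlink{B4-a}{{\bf (B4-a)}} and $\delta_D(y) \asymp y_d$ reduces the task to estimating
$$\frac{C}{\wh R}\int_{E_\nu}\frac{y_d^{q-1}\,|\wt y|^2\,\Phi_0((x_d\wedge y_d)/|x-y|)}{|x-y|^{d+\alpha}}\,dy.$$
Since $|\wt y|^2 \le |x-y|^2$, the effective singularity is only $|x-y|^{-d-\alpha+2}$, which is integrable near $y=x$. Splitting $E_\nu$ into the near region $\{|x-y|\le x_d/2\}$ (where $\Phi_0\asymp 1$ and $y_d\asymp x_d$, giving contribution $\le C\,x_d^{q+1-\alpha}/\wh R \le C(x_d/r)\,x_d^{q-\alpha}$ thanks to $r\le\wh R$) and a far-field region partitioned in the same way as the regions $II_1$--$II_6$ in the proof of Lemma \ref{l:tech-1}(ii), and applying the weak scaling \eqref{e:scale} of $\Phi_0$ together with the constraints $|\wt y|<r/4$ and $y_d>4r^{-\nu}|\wt y|^{1+\nu}$ defining $E_\nu$, each piece is bounded by $C(x_d/r)^{b'} x_d^{q-\alpha}$ for some $b'>0$. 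This yields the lemma.

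The principal obstacle is the far-field case analysis, especially near the tilted boundary of $E_\nu$ where $y_d$ approaches $4r^{-\nu}|\wt y|^{1+\nu}$ and where the cutoff exponents must be chosen delicately in order to absorb the weak-scaling error of $\Phi_0$. The required positive power of $x_d/r$ is extracted from the $|\wt y|^2$ factor together with the geometric constraints of $E_\nu$, following exactly the template used to estimate $II_1$--$II_6$ in Lemma \ref{l:tech-1}(ii); no new technique is needed beyond careful bookkeeping of exponents.
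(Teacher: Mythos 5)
Your plan matches the paper's proof in its essentials: reduce $I_1$ via the pointwise bound $|\delta_D(y)^q - y_d^q| \lesssim r^{-1}y_d^{q-1}|\wt y|^2$ (the paper gets the $r^{-1}$ version directly from the two-sided inequality $y_d - r^{-1}|\wt y|^2 \le \delta_D(y) \le y_d + \wh R^{-1}|\wt y|^2$ of Lemma \ref{l:C11}(i),(iii) plus the mean value theorem, whereas you re-derive the harder half via an interior ball; both are fine since $r\le\wh R$), then combine with \hyperlink{B4-a}{{\bf (B4-a)}}, $\delta_D\asymp y_d$ on $E_\nu$, and the weak scaling \eqref{e:scale} of $\Phi_0$, and finally split into regions in $(|\wt y|,y_d)$. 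The one imprecision worth flagging: for the far-field computation you point to the decomposition $II_1$--$II_6$ in the proof of Lemma \ref{l:tech-1}(ii), but that computation is over $B(0,\wh R)\setminus(E_\nu^0\cup\wt E_\nu^0)$ with weight $(\wh R^{-1}|\wt y|^2)^q$, i.e.\ over the \emph{complement} of $E_\nu$ with a different integrand; it is not the right template. What you actually need is a region split \emph{inside} $E_\nu$ adapted to the weight $r^{-1}y_d^{q-1}|\wt y|^2$ (the paper's own $I_{1,1}$--$I_{1,7}$), where the terms near $y_d\approx 4r^{-\nu}|\wt y|^{1+\nu}$ and the case analysis in $-\alpha-\lb_0+q$ versus $-1$ have to be handled; invoking Lemma \ref{l:tech-1}(ii) verbatim would not close the argument, although the scaling bookkeeping is indeed analogous in spirit.
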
 
\begin{proof}  
Recall from the proof of Proposition \ref{p:bound-for-integral-new} that
	$$
		I_1=\text{p.v.} \int_{E_\nu}   \frac{(h_{q,V}(y)-y_d^q)\sB(x,y)}{|x-y|^{d+\alpha}}dy.
	$$
We will show that the integral above is actually absolutely convergent and will establish the required estimate.

Recall from \eqref{e:EsubsetV} that $E_\nu \subset V$. By Lemma \ref{l:C11}(i), (iii) and the mean value theorem, we see that for any $y \in E_\nu$,
	\begin{align*}
		&|h_{q,V}(y)-y_d^q| \le \big( (y_d+r^{-1}|\wt y|^2)^q - y_d^q\big) \vee \big(y_d^q-(y_d-r^{-1}|\wt y|^2)^q \big) \\
		&\le \big( 2q(2^{q-1} \vee 1)y_d^{q-1} r^{-1}|\wt y|^2 \big) \vee \big( q(2^{1-q} \vee 1)y_d^{q-1} r^{-1}|\wt y|^2 \big) 
			\le q2^{q+1} r^{-1} y_d^{q-1}|\wt y|^2 .
	\end{align*}
Hence, using \hyperlink{B4-a}{{\bf (B4-a)}}and \eqref{e:scale}, since  $\Phi_0$ is bounded and almost increasing,  we obtain
	\begin{align*}
		&|I_{1}| \le c \int_{0}^{8^{-1/(1+\nu)}r}  s^{d-2}\int_{4r^{-\nu}s^{1+\nu}}^{r/2} 
			\frac{r^{-1}y_d^{q-1}s^2}{(s+|y_d-x_d|)^{d+\alpha}} \Phi_0\bigg(  \frac{x_d}{s+|y_d-x_d|} \bigg) dy_d\, ds \\
		&\le c\int_0^{x_d/2} s^{d}\int_{0}^{x_d/2} \frac{r^{-1}y_d^{q-1}}{(x_d-y_d)^{d+\alpha}} dy_d\,ds  
			+ c \int_0^{x_d/2} s^{d}\int_{x_d/2}^{x_d-s}\frac{ r^{-1}y_d^{q-1}}{(x_d-y_d)^{d+\alpha}}dy_d\,ds   \\
		&\quad+ c \int_0^{x_d/2} s^{d}\int_{x_d-s}^{x_d+s}\frac{ r^{-1}y_d^{q-1}}{s^{d+\alpha}}dy_d\,ds  
			+ c  \int_0^{x_d/2} s^{d}\int_{x_d+s}^{2x_d} \frac{r^{-1}y_d^{q-1}}{(y_d-x_d)^{d+\alpha}}dy_d\,ds \\
		&\quad  + c\int_0^{x_d/2} s^{d}\int_{2x_d}^{r/2} \frac{r^{-1}y_d^{q-1} \Phi_0(x_d/(y_d-x_d))}{(y_d-x_d)^{d+\alpha}}dy_d\,ds  \\
		&\quad+ c \int_{x_d/2}^{r/4} s^{d} \int_{0}^{4s} \frac{r^{-1} y_d^{q-1} \Phi_0(x_d/s)}{s^{d+\alpha}}dy_d \, ds \\
		&\quad+  c \int_{x_d/2}^{r/4} s^{d} \int_{4s}^{r} \frac{r^{-1} y_d^{q-1} \Phi_0(x_d/y_d)}{(y_d-x_d)^{d+\alpha}}dy_d \, ds\\
		&=:I_{1,1}+I_{1,2}+I_{1,3}+I_{1,4}+I_{1,5}+I_{1,6}+I_{1,7}.
	\end{align*}
For $I_{1,1}$, we have 
$I_{1,1} \le  cr^{-1}x_d^{-d-\alpha}\int_0^{x_d/2} s^{d}ds\int_{0}^{x_d/2} y_d^{q-1} dy_d  = cr^{-1}x_d^{q-\alpha+1}$. Note that
	\begin{align}\label{e:I_12}
		y_d^{q-1} \le (2^{1-q}+2^{q-1})x_d^{q-1} \quad \text{for} \;\, y_d \in (x_d/2, 2x_d).
	\end{align}
Using \eqref{e:I_12}, we get
	\begin{align*}
		I_{1,2} &\le cr^{-1}x_d^{q-1} \int_0^{x_d/2} s^{d}\int_{x_d/2}^{x_d-s} \frac{dy_d}{(x_d-y_d)^{d+\alpha}}ds \\
		& \le c r^{-1}x_d^{q-1} \int_0^{x_d/2} s^{1-\alpha}ds =cr^{-1}x_d^{q-\alpha+1},
	\end{align*}
	\begin{align*}
		I_{1,3} \le cr^{-1} x_d^{q-1} \int_0^{x_d/2}s^{-\alpha} \int_{x_d-s}^{x_d+s} dy_d\,ds 
			=  cr^{-1} x_d^{q-1} \int_0^{x_d/2}s^{1-\alpha} ds=cr^{-1}x_d^{q-\alpha+1}
	\end{align*}
and
	\begin{align*}
		I_{1,4}& \le cr^{-1}x_d^{q-1} \int_0^{x_d/2} s^d \int_{x_d+s}^{2x_d} \frac{dy_d}{(y_d-x_d)^{d+\alpha}}ds\\
		&\le    cr^{-1} x_d^{q-1} \int_0^{x_d/2}s^{1-\alpha} ds=cr^{-1}x_d^{q-\alpha+1}.
	\end{align*}
Besides, by using \eqref{e:scale}, since $q<\alpha+\lb_0$, we obtain
	\begin{align*}
		&	I_{1,5}\le cr^{-1}\int_0^{x_d/2} s^{d}\int_{2x_d}^{r/2} y_d^{-d-\alpha+q-1} \Phi_0(x_d/y_d)dy_d\,ds\\
		& \le c\Phi_0(1)r^{-1}x_d^{\lb_0}\int_0^{x_d/2} s^{d}ds\int_{2x_d}^{r/2} y_d^{-d-\alpha-\lb_0+q-1} dy_d 
			\le c r^{-1}x_d^{q-\alpha+1},
	\end{align*}
	\begin{align*}
		&	I_{1,6} \le c\Phi_0(1)r^{-1} x_d^{\lb_0} \int_{x_d/2}^{r/4} s^{-\alpha-\lb_0} \int_0^{4s} y_d^{q-1}dy_d \,ds
		=   cr^{-1} x_d^{\lb_0}  \int_{x_d/2}^{r/4}s^{-\alpha-\lb_0+q} ds
	\end{align*}
and
	\begin{align*}
		I_{1,7} &\le c \Phi_0(1)r^{-1}x_d^{\lb_0}  \int_{x_d/2}^{r/4} s^{d} \int_{4s}^{r} y_d^{-d-\alpha-\lb_0+q-1}dy_d \, ds \\
		&\le c r^{-1}x_d^{\lb_0}  \int_{x_d/2}^{r/4} s^{-\alpha-\lb_0+q}  ds.
	\end{align*}
Since
	\begin{align*}
		& r^{-1}x_d^{\lb_0}  \int_{x_d/2}^{r/4} s^{-\alpha-\lb_0+q}  ds 
			\le 	 c x_d^{q-\alpha} 
			\begin{cases}
				(x_d/r)^{\alpha+\lb_0-q} &\mbox{ if } -\alpha-\lb_0+q>-1,\\
				x_d/r &\mbox{ if } -\alpha-\lb_0+q<-1,\\ 	
				(x_d/r) \log(r/x_d) &\mbox{ if } -\alpha-\lb_0+q=-1
	 		\end{cases}
	\end{align*}
and $(x_d/r) \log(r/x_d) \le (x_d/r)^{1/2}$, we arrive at the result. 
\end{proof} 

\begin{lemma}\label{l:I5}
There exist constants $C,b>0$  independent of $Q,r,x$ and $V$ such that
	\begin{align*}
|		I_5| \le C (x_d/r)^{b} x_d^{q-\alpha}.
	\end{align*}	
\end{lemma}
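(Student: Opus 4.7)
The plan is to parallel the approach of Lemma \ref{l:I1}: first apply the symmetrized bound from Lemma \ref{l:B5'} to the factor $|\sB(x,y) - \sum_{i=1}^{i_0} \mu^i(x)\F^i((y-x)/x_d)|$ appearing in the integrand of $I_5$, then decompose the integration region $E_\nu \setminus \sO$ into subregions based on the relative sizes of $|\wt y|$, $y_d$, and $x_d$, and estimate each piece separately.

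First I would note that, since $r \le \wh R/8$, we have $E_\nu = E^{Q_x}_\nu(r) \subset E^{Q_x}_\nu(\wh R/8)$, so for any $y \in E_\nu$ both $x=(\wt 0, x_d)$ and $y$ satisfy the hypothesis of Lemma \ref{l:B5'}. Hence
\begin{equation*}
\bigg|\sB(x,y) - \sum_{i=1}^{i_0} \mu^i(x) \F^i((y-x)/x_d)\bigg| \le C_{10} \bigg(\frac{M(x,y)}{m(x,y)}\bigg)^{\theta_1} M(x,y)^{\theta_2},
\end{equation*}
where $M(x,y) = \delta_D(x)\vee\delta_D(y)\vee|x-y|$ and $m(x,y) = \delta_D(x)\wedge\delta_D(y)\wedge|x-y|$. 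By Lemma \ref{l:C11}(iii), $\delta_D(y) \asymp y_d$ on $E_\nu$, and since $|\wt y|<r/4$, $y_d<r/2$ and $x_d<r/4$, we deduce $|x-y|<r$ and $M(x,y) \le cr$. In particular $M(x,y)^{\theta_2} \le cr^{\theta_2}$.

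Next I would exploit the restriction $y \in E_\nu \setminus \sO$, which forces $|x-y| \ge \rho_0 := 5^{-1} r^{-\eps} x_d^{1+\eps}$ with $\eps = \theta_2/(2\alpha+2\theta_1)$. Using the elementary inequality
\begin{equation*}
\left(\frac{r}{m(x,y)}\right)^{\theta_1} \le c\left[\left(\frac{r}{|x-y|}\right)^{\theta_1} + \left(\frac{r}{x_d}\right)^{\theta_1} + \left(\frac{r}{y_d}\right)^{\theta_1}\right]
\end{equation*}
together with $(r/|x-y|)^{\theta_1} \le (r/\rho_0)^{\theta_1} = c(r/x_d)^{\theta_1(1+\eps)}$, the task reduces to estimating integrals of the form
\begin{equation*}
\int_{E_\nu \setminus \sO} \frac{|y_d^q - x_d^q|}{|x-y|^{d+\alpha}}\, w(y_d)\, dy,
\end{equation*}
where $w(y_d)$ is either $1$ or $y_d^{-\theta_1}$, each multiplied by the appropriate prefactor $r^{\theta_2}(r/x_d)^{\theta_1(1+\eps)}$, $r^{\theta_2}(r/x_d)^{\theta_1}$, or $r^{\theta_2}$. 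Each of these can be handled by the same polar-coordinate splitting used in the proof of Lemma \ref{l:I1}: writing $y = (\wt y, y_d)$ with $s = |\wt y|$, one integrates over $s$ and $y_d$ subdivided by the relations of $y_d$ to $x_d$ (using the mean value theorem $|y_d^q - x_d^q| \le c\,(x_d^{q-1} \vee y_d^{q-1})|y_d - x_d|$ when $y_d \asymp x_d$, and the cruder bound $|y_d^q - x_d^q| \le c(x_d^q + y_d^q)$ otherwise), and of $s$ to $x_d$, $\rho_0$, and $r$. The resulting computations are direct adaptations of the seven-piece decomposition carried out for $I_1$.

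The main obstacle is the bookkeeping of exponents at the end. The value $\eps = \theta_2/(2\alpha+2\theta_1)$ was chosen precisely so that the small prefactor $r^{\theta_2}$ compensates the potentially large factor $(r/x_d)^{\theta_1(1+\eps)}$ arising in the regime $|x-y| \asymp \rho_0$. A direct computation shows that in every subregion the surviving power of $(x_d/r)$ is strictly positive, yielding $|I_5| \le C(x_d/r)^b\, x_d^{q-\alpha}$ for some $b>0$ independent of $Q$, $r$, $x$, and $V$, as required.
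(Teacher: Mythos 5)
Your overall plan parallels the paper's treatment of $I_1$, but applying the symmetrized bound from Lemma \ref{l:B5'} \emph{directly} to the entire region $E_\nu\setminus\sO$ and then distributing $(r/m(x,y))^{\theta_1}$ across $|x-y|$, $x_d$, $y_d$ misses a key point, and the argument breaks on the far region where $y_d>2x_d$ (the paper's $A_3$).

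Consider $y\in E_\nu\setminus\sO$ with $y_d>2x_d$ and $|x-y|\asymp y_d$. There $m(x,y)\asymp x_d$ and $M(x,y)\asymp y_d$, so Lemma \ref{l:B5'} gives roughly $(y_d/x_d)^{\theta_1}y_d^{\theta_2}$, and the corresponding piece of $I_5$ is bounded by
\begin{align*}
\frac{1}{x_d^{\theta_1}}\int_{2x_d}^{r/2}y_d^{q+\theta_1+\theta_2-\alpha-1}\,dy_d
\;\lesssim\;\frac{r^{q+\theta_1+\theta_2-\alpha}}{x_d^{\theta_1}}
\end{align*}
(assuming $q+\theta_1+\theta_2>\alpha$, which is typical). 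Setting $t=x_d/r$ and demanding this be $\le C\,t^b\,x_d^{q-\alpha}$ reduces to $r^{\theta_2}\,t^{\alpha-q-\theta_1-b}\le C$, which forces $b\le\alpha-q-\theta_1$; since $q>(\alpha-1)_+$, this is at most $1-\theta_1$, hence impossible whenever $\theta_1\ge1$ (and $\theta_1$ can be as large as $d+\alpha$ in the subordinate killed stable example). So the bound simply does not hold with your ingredients. The prefactor $r^{\theta_2}$ and your choice of $\eps$ fix the regime $|x-y|\asymp\rho_0$ near the inner cutoff, as you say, but they do nothing for the regime $|x-y|\asymp y_d\gg x_d$, where the problematic growth $(y_d/x_d)^{\theta_1}$ lives.

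The missing idea is an interpolation against the uniform decay of the jump kernel. The paper picks a small $\lambda\in(0,1/(\theta_1\vee\theta_2\vee 1))$ with $q<\alpha+(1-\lambda)\lb_0-\lambda(\theta_1+2\theta_2)$, and on $A_2\cup A_3$ writes
\begin{align*}
\Big|\sB(x,y)-\sum_i\mu^i(x)\F^i(\cdot)\Big|
\;\le\;\Big(\sB(x,y)+\sum_i\mu^i(x)\F^i(\cdot)\Big)^{1-\lambda}\,\Big|\sB(x,y)-\sum_i\mu^i(x)\F^i(\cdot)\Big|^{\lambda}.
\end{align*}
The first factor is then controlled by $\Phi_0(x_d/|x-y|)^{1-\lambda}$ via \hyperlink{B4-a}{{\bf (B4-a)}} and Lemma \ref{l:F-basic}, which supplies the crucial decay $(x_d/|x-y|)^{(1-\lambda)\lb_0}$ on $A_3$; only the $\lambda$-power of the B5$'$ bound carries the $(M/m)^{\theta_1}$ blow-up, and since $\lambda$ is small the exponent bookkeeping closes. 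Without this interpolation (or an equivalent use of the $\Phi_0$-decay of $\sB$ on the far region), the lemma cannot be obtained from B5$'$ alone.
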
 
\begin{proof} 
We first recall that 
	$$
		I_5=\int_{E_\nu\setminus \sO}   \frac{(y_d^q-x_d^q) (\sB(x,y)- \sum_{i=1}^{i_0}\mu^i(x)\F^i((y-x)/x_d)))}{|x-y|^{d+\alpha}}dy.
	$$
Let  $\lambda \in (0,1/(\theta_1 \vee \theta_2 \vee 1))$  be  such that $q<\alpha + (1-\lambda)\lb_0 -  \lambda(\theta_1+ 2\theta_2)$, 
where $\theta_1,\theta_2>0$ are the constants in \eqref{e:ass-B5'}.  Define 
	\begin{align*}
		A_1&=\{y\in E_\nu\setminus \sO: 3|x-y|<y_d \vee (2x_d) \},\\
		A_2&=\{y \in E_\nu\setminus \sO: 3|x-y| \ge 2x_d \ge y_d\},\\
		A_3&=\{y \in E_\nu \setminus \sO: 3|x-y| \ge y_d > 2x_d\}.
	\end{align*}
Using Lemma \ref{l:B5'},   Lemma   \ref{l:C11}(iii), \hyperlink{B4-a}{{\bf (B4-a)}}, Lemma \ref{l:F-basic} and \eqref{e:scale}, we obtain
	\begin{align*}
		|I_5| &\le   \int_{A_1}  \bigg|\sB(x,y)- \sum_{i=1}^{i_0}\mu^i(x)\F^i((y-x)/x_d))) \bigg|\, 
			\frac{|x_d^q-y_d^q|}{|x-y|^{d+\alpha}}\,dy\\
			&\quad +  \int_{A_2 \cup A_3} \bigg(\sB(x,y)+\sum_{i=1}^{i_0}\mu^i(x)\F^i((y-x)/x_d))) \bigg)^{1-\lambda}\\
			&\qquad \qquad \qquad \times  \bigg|\sB(x,y)- \sum_{i=1}^{i_0}\mu^i(x)\F^i((y-x)/x_d))) \bigg|^{\lambda} 
				\frac{|x_d^q-y_d^q|}{|x-y|^{d+\alpha}}\,dy \\
			& \le cr^{-\theta_2}  \int_{A_1}   
				\frac{(x_d \vee y_d \vee  |x-y| )^{\theta_1+\theta_2} }{(x_d \wedge y_d \wedge |x-y|)^{\theta_1}} 
				\frac{|x_d^q-y_d^q|}{|x-y|^{d+\alpha}}\,dy\\
			& \quad +  cr^{-\lambda \theta_2} \int_{A_2}   \Phi_0(x_d/|x-y|) ^{1-\lambda} 
				\left(\frac{(x_d \vee y_d \vee  |x-y| )^{\theta_1+\theta_2} }{(x_d \wedge y_d \wedge |x-y|)^{\theta_1}}\right)^\lambda
				 \frac{|x_d^q-y_d^q|}{|x-y|^{d+\alpha}}\,dy\\
 			&\quad + cr^{-\lambda \theta_2}  \int_{A_3} \Phi_0(x_d/|x-y|) ^{1-\lambda} 
 			\left(  \frac{(x_d \vee y_d \vee  |x-y| )^{\theta_1+\theta_2} }{(x_d \wedge y_d \wedge |x-y|)^{\theta_1}}\right)^\lambda
 			 	\frac{|x_d^q-y_d^q|}{|x-y|^{d+\alpha}}\,dy\\
		&=:I_{5,1}+I_{5,2}+I_{5,3}.
	\end{align*}	
By the triangle inequality,  for every $y \in A_1$, if $3|x-y|<y_d$, then $2|x-y|<(2/3)y_d<x_d<(4/3)y_d$, and 
if $3|x-y|<2x_d$, then  $(1/2)|x-y|<(1/3)x_d <y_d <(5/3)x_d$.  
Hence, for every $y \in A_1$, we have 
	\begin{align*}
		(1/2)|x-y|<x_d \wedge y_d\le x_d \vee y_d \le (5/3)(x_d \wedge y_d).
	\end{align*}
 Using this, since $|x_d^q-y_d^q|\le x_d^q+y_d^q$ for all $y\in A_1$, we obtain
 	\begin{align*}
 		I_{5,1}&\le cr^{-\theta_2} x_d^{q+\theta_1+\theta_2}\int_{A_1} \frac{dy}{|x-y|^{d+\alpha +\theta_1}} \\
 		&\le   cr^{-\theta_2} x_d^{q+\theta_1+\theta_2} 
 			\int_{\R^d \setminus B(x,5^{-1}r^{-\theta_2/(2\alpha+2\theta_1)} x_d^{1+\theta_2/(2\alpha+2\theta_1)})} 
 			\frac{dy}{|x-y|^{d+\alpha+\theta_1}}\\
 		& \le c (x_d/r)^{\theta_2/2} x_d^{q-\alpha}.
 	\end{align*}
For $I_{5,2}$, since $\Phi_0$ is bounded, $|x_d^q-y_d^q| \le x_d^q$ for all $y \in A_2$ and  $\lambda (\theta_1\vee \theta_2)<1$,  we have
	\begin{align*}
		I_{5,2} &\le cr^{-\lambda\theta_2} x_d^q\int_{A_2} 
			\frac{(|x-y|^{\theta_1+\theta_2} /y_d^{\theta_1})^\lambda }{ |x-y|^{d+\alpha}}dy\\
		&\le cr^{-\lambda\theta_2}x_d^q \int_{A_2} 
			\frac{dy}{    y_d^{\lambda \theta_1} (2x_d/3)^{\alpha+1-\lambda \theta_1} |x-y|^{d-1-\lambda\theta_2} }\\ 
		& \le cr^{-\lambda\theta_2}x_d^{q-\alpha-1+\lambda \theta_1} 
			\int_{\wt y \in \R^{d-1}, \, 4r^{-\nu}|\wt y|^{1+\nu}<2x_d}  \int_{4r^{-\nu}|\wt y|^{1+\nu}}^{2x_d}   
			\frac{1}{y_d^{\lambda \theta_1} |\wt y|^{d-1-\lambda \theta_2}}dy_d d\wt y \\
		& \le cr^{-\lambda\theta_2}x_d^{q-\alpha-1+\lambda \theta_1} \int_{0}^{(r^\nu x_d)^{1/(1+\nu)}}   
			\frac{s^{d-2}}{s^{d-1-\lambda\theta_2}} \int_{4r^{-\nu}s^{1+\nu}}^{4x_d} y_d^{-\lambda \theta_1} dy_d  \, ds  \\
		&\le cr^{-\lambda\theta_2}x_d^{q-\alpha} 
			\int_{0}^{(r^\nu x_d)^{1/(1+\nu)}}   s^{-1+\lambda\theta_2}ds = c (x_d/r)^{\lambda\theta_2/(1+\nu)}  x_d^{q-\alpha}.
	\end{align*}
For all $y \in A_3$, we have $|x-y| \ge y_d-x_d \ge y_d/2>x_d$. Using this,   \hyperlink{B4-a}{{\bf (B4-a)}}, Lemma \ref{l:F-basic} and \eqref{e:scale}, since $q-d-\alpha-(1-\lambda)\lb_0+\lambda(\theta_1+\theta_2)<-1$ 
and $q-\alpha-1-(1-\lambda)\lb_0+\lambda(\theta_1+2\theta_2)<-1$, we get
	\begin{align*}
		I_{5,3}&\le c\Phi_0(1)^{1-\lambda} r^{-\lambda \theta_2} \int_{A_3}   
				\bigg(\frac{x_d}{|x-y|}\bigg)^{(1-\lambda)\lb_0}
				\bigg(  \frac{|x-y|^{\theta_1+\theta_2} }{x_d^{\theta_1}}\bigg)^\lambda \frac{y_d^q}{|x-y|^{d+\alpha}}\, dy\\
                 &\le  cr^{-\lambda \theta_2} x_d^{ (1-\lambda)\lb_0-\lambda \theta_1} 
                 \int_{0}^{x_d/4} s^{d-2} \int_{2x_d}^{r/2} 
                 y_d^{q-d-\alpha - (1-\lambda)\lb_0 + \lambda (\theta_1+\theta_2)}dy_d\,  ds\\
                	&\quad + cr^{-\lambda \theta_2}x_d^{ (1-\lambda)\lb_0-\lambda \theta_1}  
				\int_{x_d/4}^{r/4 } s^{d-2} \int_{2x_d}^{r/2} 
				\frac{y_d^{q-\alpha-1-(1-\lambda)\lb_0+\lambda(\theta_1+2\theta_2)}}{s^{d-1+\lambda\theta_2}}dy_d\,  ds\\
				&\le  cr^{-\lambda \theta_2}x_d^{q-d-\alpha+\lambda \theta_2+1} \int_{0}^{x_d/4} s^{d-2} ds 
				+ cr^{-\lambda \theta_2}x_d^{q-\alpha+2\lambda \theta_2} 
				\int_{x_d/4}^{r/4}  s^{-1-\lambda \theta_2} ds\\
				&\le c(x_d/r)^{\lambda \theta_2} x_d^{q-\alpha}.
	\end{align*}
The proof is complete.
\end{proof} 

%%%%%%%%%%%%%%%%%%%%%%%%%%%%%%%%%%%%%%%%%%%%%%%%%%%%%%%%%%%%%%%%%%%%%%%%%%%%%%%%%%%%%%%%%%%%%%%%%%%%%%%%%%%%%%%%%%%%%%%%%%%%%%%%%%%%%%%
%%%%%%%%%%%%%%%%%%%%%%%%%%                    Explicit decay rate                    %%%%%%%%%%%%%%%%%%%%%%%%%%%%%%%%%%%%%%%%%%%%%%%%%%
%%%%%%%%%%%%%%%%%%%%%%%%%%%%%%%%%%%%%%%%%%%%%%%%%%%%%%%%%%%%%%%%%%%%%%%%%%%%%%%%%%%%%%%%%%%%%%%%%%%%%%%%%%%%%%%%%%%%%%%%%%%%%%%%%%%%%%%

\section{Explicit decay rates}\label{ch:decay-rate}

In this section we establish the explicit decay rate of some particular harmonic functions, namely, exit probabilities from small boxes based at a boundary point. We will show that these functions decay as the $p$-th power of the distance to the boundary. The first step towards this goal is to combine the already constructed barrier $\psi^{(r)}$ (see Subsection \ref{s-barrier}) with cutoff functions of the type $h_{q,U(r)}$ to obtain more refined barriers. This is done in the next subsection. Subsection \ref{s:decay-spec-har} is devoted to the proof of Theorem \ref{t:Dynkin-improve} -- sharp two-sided estimates of some exit probabilities.

Throughout this section, we work with $Q\in \partial D$ and will
denote $U^Q(a, b)$ by $U(a, b)$, and $U^Q(a)$ by $U(a)$.

\subsection{Barriers revisited}\label{s-refined-barriers}

Recall the definition of  $h_{q,V}(y)$ from \eqref{e:def-hqV}. Since $D$ is a $C^{1,1}$ open set, it is known that for any 
$q\in [(\alpha-1)_+,\alpha+\beta_0) \cap (0,\infty)$, $Q \in \partial D$ and $r \in (0,\wh R/8]$,
	\begin{align}\label{e:hq-C11}
		h_{q,U(r)} \in C^{1,1}(U(r/2)).
	\end{align}
See, e.g., \cite[Theorem 7.8.4]{DZ11}. 

We also recall that $p \in  [(\alpha-1)_+,\alpha+\beta_0) \cap (0,\infty)$
denotes the constant satisfying \eqref{e:C(alpha,p,F)} if \hyperlink{K3}{{\bf (K3)}} holds with $C_9>0$, and $p=\alpha-1$ if $C_9=0$,  and  the   operators $L^\sB_\alpha$ and $L^\kappa$ are defined by
	\begin{align*}
		L^\sB_\alpha f(x)&=\text{p.v.}\int_D (f(y)-f(x)) \frac{\sB(x,y)}{|x-y|^{d+\alpha}}dy \quad \text{and} 
			\quad 	L^\kappa f(x)=L^\sB_\alpha f(x) - \kappa(x)f(x).
	\end{align*}

\begin{lemma}\label{l:Dynkin1}
Let $Q \in \partial D$, $0<r \le \wh R/8$, and 
	$$
		q_0=p +  2^{-1} \left(  (\alpha+\beta_0-p) \wedge \eta_0 \wedge \eta_1 \right),
	$$
where  $\eta_0,\eta_1>0$ are the constants in \eqref{e:kappa-explicit} and Proposition \ref{p:barrier} respectively.
Define functions $\phi_p$ and $\varphi_p$ on $D$ by
	\begin{align}\label{e:def-phi-varphi}
		\phi_p(y)&=2 h_{p, U(r)}(y) -r^{p-q_0} h_{q_0, U(r)}(y),\nn\\
	 	\varphi_p(y)&=  h_{p, U(r)}(y) +r^{p-q_0} h_{q_0, U(r)}(y).
	\end{align}
Then there  exists a constant $\epsilon_1 \in (0,1/12)$ independent of $Q$ and $r$ such that  
	
	\smallskip
	
	\noindent (i) $\phi_p$ satisfies the following properties:
	
	\smallskip

	(a) $\phi_p \in C^{1,1}(U(\epsilon_1 r))$ and $\phi_p(y)=0$ for all $y \in D \setminus U(r)$;
	
	(b) $\delta_D(y)^p\le \phi_p(y) \le 2\delta_D(y)^p$ for all $y \in U( r)$; 
	
	(c) $L^\kappa \phi_p(y) \le -(\delta_D(y)/r)^{\eta_0 \wedge \eta_1}\delta_D(y)^{p-\alpha}$ for all $y \in  U(\epsilon_1r)$.

	\smallskip

	\noindent (ii)  $\varphi_p$ satisfies the following properties:
	
	\smallskip
	
	(a)  $\varphi_p \in C^{1,1}(U(\epsilon_1 r))$ and $\varphi_p(y)=0$ for all $y \in D \setminus U(r)$;
	
	(b) $\delta_D(y)^p\le \varphi_p(y) \le 2\delta_D(y)^p$ for all $y \in U( r)$;

	(c)  $L^\kappa\varphi_p(y) \ge (\delta_D(y)/r)^{\eta_0 \wedge \eta_1}\delta_D(y)^{p-\alpha}$ for all $y \in  U(\epsilon_1r)$.
\end{lemma}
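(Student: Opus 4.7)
The plan is to handle parts (a) and (b) by direct computation and then focus on the delicate estimate (c). For (a), since $h_{q,U(r)} \in C^{1,1}(U(r/2))$ by \eqref{e:hq-C11} for any $q \in [(\alpha-1)_+, \alpha+\beta_0) \cap (0,\infty)$, both $\phi_p$ and $\varphi_p$ are in $C^{1,1}(U(\epsilon_1 r))$ as long as $\epsilon_1 \le 1/2$; the vanishing outside $U(r)$ is immediate from $h_{q,U(r)} \equiv 0$ on $D \setminus U(r)$. For (b), on $U(r)$ we have $\phi_p(y)/\delta_D(y)^p = 2 - (\delta_D(y)/r)^{q_0-p}$ and $\varphi_p(y)/\delta_D(y)^p = 1 + (\delta_D(y)/r)^{q_0-p}$; since $\delta_D(y) \le \rho_D(y) < r$ on $U(r)$ by Lemma \ref{l:U-rho-Lipschitz}, and $q_0 > p$, the quantity $(\delta_D(y)/r)^{q_0-p}$ lies in $(0,1]$, giving both bounds.

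For (c), I would apply Proposition \ref{p:barrier} with $r$ replaced by $r/3$ and $V = U(r)$ (permitted since $U(r/3 \cdot 3) = U(r) \subset V$), obtaining for $x \in U(r/12)$ and $q \in \{p, q_0\}$
\begin{equation*}
\Bigl|L^\sB_\alpha h_{q,U(r)}(x) - \sum_{i=1}^{i_0} \mu^i(x) C(\alpha,q,\F^i) \delta_D(x)^{q-\alpha}\Bigr| \le C \bigl(\delta_D(x)/r\bigr)^{\eta_1} \delta_D(x)^{q-\alpha}.
\end{equation*}
Taking $\epsilon_1 \le 1/12$, so that $U(\epsilon_1 r) \subset U(r/12)$ and in particular $\delta_D(y) < 1$, one combines the above with \hyperlink{K3}{{\bf (K3)}} to expand $L^\kappa \phi_p(y) = L^\sB_\alpha \phi_p(y) - \kappa(y)\phi_p(y)$. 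The point is that by the defining relation \eqref{e:C(alpha,p,F)} in the critical case ($C_9 > 0$, $i_0 = 1$, $\mu^1 = \sB(\cdot,\cdot)$, so $\sum_i \mu^i(y) C(\alpha,p,\F^i) = C_9 \sB(y,y)$) and by $C(\alpha,\alpha-1,\F^i)=0$ from Lemma \ref{l:constant} in the subcritical case ($C_9 = 0$, $p = \alpha-1$), the leading term of order $\delta_D(y)^{p-\alpha}$ exactly cancels the contribution $C_9 \sB(y,y)\delta_D(y)^{p-\alpha}$ coming from $\kappa(y)\delta_D(y)^p$. The next-order term, of size $(\delta_D(y)/r)^{q_0-p}\delta_D(y)^{p-\alpha}$, has coefficient $\sum_i \mu^i(y)(C_9 \sB(y,y)\mathbf{1}_{\{i=1\}} - C(\alpha,q_0,\F^i))$, which is strictly negative and bounded away from zero uniformly in $y,Q,r$: in the critical case this is $\sB(y,y)(C(\alpha,p,\F) - C(\alpha,q_0,\F)) < 0$ by strict monotonicity (Lemma \ref{l:constant}), and in the subcritical case it is $-\sum_i \mu^i(y) C(\alpha,q_0,\F^i) \le -C_{11}^{-1} \sum_i C(\alpha,q_0,\F^i) < 0$, where positivity of at least one $C(\alpha,q_0,\F^i)$ follows from Lemma \ref{l:F-basic} together with Lemma \ref{l:constant}.

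All the remaining errors (from Proposition \ref{p:barrier} applied at both $p$ and $q_0$, and from the remainder term in \hyperlink{K3}{{\bf (K3)}}) can be absorbed into $O((\delta_D(y)/r)^{\eta_0 \wedge \eta_1}\delta_D(y)^{p-\alpha})$, after using $r \le 1$ to convert $\delta_D(y)^{\eta_0}$ into $(\delta_D(y)/r)^{\eta_0}$. Thus
\begin{equation*}
L^\kappa \phi_p(y) \le -c_1 (\delta_D(y)/r)^{q_0-p} \delta_D(y)^{p-\alpha} + c_2 (\delta_D(y)/r)^{\eta_0\wedge\eta_1} \delta_D(y)^{p-\alpha}
\end{equation*}
for some $c_1, c_2 > 0$ independent of $Q, r, y$. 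The crucial observation is that $q_0 - p < \eta_0 \wedge \eta_1$ by the very definition of $q_0$, so by taking $\epsilon_1$ sufficiently small (depending only on $c_1, c_2, \eta_0, \eta_1$ and $q_0 - p$) the first term dominates and yields the claimed bound $L^\kappa \phi_p(y) \le -(\delta_D(y)/r)^{\eta_0 \wedge \eta_1}\delta_D(y)^{p-\alpha}$. For $\varphi_p$, the analogous expansion shows that both the $\delta_D(y)^{p-\alpha}$ terms cancel in exactly the same way, but the coefficient of $(\delta_D(y)/r)^{q_0-p}\delta_D(y)^{p-\alpha}$ is now strictly positive (same sign arguments, but with $+$ instead of $-$), and the killing contribution $-\kappa(y)\varphi_p(y) \le 0$ is controlled by $-c\,\delta_D(y)^{p-\alpha+\eta_0}$; choosing $\epsilon_1$ small then gives the matching lower bound. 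The main obstacle I anticipate is the bookkeeping of the many error terms, especially verifying that $\kappa(y)\phi_p(y)$ and $\kappa(y)\varphi_p(y)$ admit the expansions with the same leading constant $C_9 \sB(y,y)$ required for the cancellation, and that the subcritical case (where $\sum_i C(\alpha,q_0,\F^i) > 0$ despite the vanishing of all $C(\alpha,p,\F^i)$) genuinely produces a uniformly positive second-order coefficient; this relies in an essential way on Lemma \ref{l:F-basic} to guarantee that not all $\F^i$ are trivial.
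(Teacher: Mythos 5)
Your proof is correct and follows the paper's own route: verify (a)--(b) directly, apply Proposition \ref{p:barrier} to both $h_{p,U(r)}$ and $h_{q_0,U(r)}$, use \eqref{e:C(alpha,p,F)} together with \hyperlink{K3}{{\bf (K3)}} to cancel the leading $\delta_D(y)^{p-\alpha}$ terms, and exploit $q_0-p<\eta_0\wedge\eta_1$ so that for $\epsilon_1$ small the second-order term of definite sign dominates the remaining errors on $U(\epsilon_1 r)$. The only quibble is that the displayed coefficient of the $(\delta_D(y)/r)^{q_0-p}\delta_D(y)^{p-\alpha}$ term should read $C_9\sB(y,y)-\sum_i\mu^i(y)C(\alpha,q_0,\F^i)$ (the prefactor $\sum_i\mu^i(y)$ should not act on the $C_9\sB(y,y)$ piece), but your verbal sign analysis is right, so nothing changes.
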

\begin{proof}  
By \eqref{e:hq-C11}, we have that $\phi_{p}, \varphi_p \in C^{1,1}(U(\varepsilon r))$ for any $\varepsilon\in (0,1/12)$. 
Clearly, $\phi_p(y)=\varphi_p(y)=0$ for  $y \in D \setminus U(r)$. Hence, $\phi_p$ and $\varphi_p$ satisfy property (a) in (i) and (ii), respectively. Moreover, for all $y \in U(r)$, since $\delta_D(y)<r$ and $q_0>p$, we have 
	\begin{align*}
		r^{p-q_0}h_{q_0,U(r)}(y) =(\delta_D(y)/r)^{q_0-p} h_{p,U(r)}(y) \le h_{p,U(r)}(y) = \delta_D(y)^p.
	\end{align*}
Using this, we see that  $\phi_p$ and $\varphi_p$ satisfy property (b) in (i) and (ii), respectively.

Now, we show that   $\phi_p$ and $\varphi_p$ satisfy property (c) in (i) and (ii) respectively. When $C_9>0$, by  Proposition \ref{p:barrier}, \eqref{e:kappa-explicit} and  \eqref{e:C(alpha,p,F)},  for all $y \in U(r/12)$,
	\begin{align*}
	\left|	L^\kappa h_{p, U(r)}(y) \right|&= \left|	L^\sB_\alpha h_{p, U(r)}(y) - \kappa(y) h_{p, U(r)}(y)\right| \nn\\
		& \le \left|  L_\alpha^\sB h_{p, U(r)}(y) - C(\alpha,p, \F) \sB(y,y)\delta_D(y)^{-\alpha} h_{p, U(r)}(y) \right| \nn\\
		&\quad +    \left| \kappa(y) - C_9 \sB(y,y)\delta_D(y)^{-\alpha}  \right| h_{p, U(r)}(y)\nn\\
		&\le c (\delta_D(y)/r)^{\eta_1} \delta_D(y)^{p-\alpha}  + c \delta_D(y)^{p-\alpha+\eta_0}\nn\\
		&\le  c(\delta_D(y)/r)^{\eta_0 \wedge \eta_1} \delta_D(y)^{p-\alpha}.
	\end{align*}
Moreover, using Proposition \ref{p:barrier}, \eqref{e:kappa-explicit},  \eqref{e:C(alpha,p,F)} and   \eqref{e:B(x,x)},   since $C(\alpha,q_0,\F)>C(\alpha,p,\F)$ by Lemma \ref{l:constant}, we also get that for all $y \in U(r/12)$,
	\begin{align*}
		L^\kappa h_{q_0, U(r)}(y) &=	L^\sB_\alpha h_{q_0, U(r)}(y) - \kappa(y) h_{q_0, U(r)}(y) \nn\\
		&=    L_\alpha^\sB h_{q_0, U(r)}(y) - C(\alpha,q_0, \F)\sB(y,y)\delta_D(y)^{-\alpha} h_{q_0, U(r)}(y)   \nn\\
		&\quad  +  (C(\alpha,q_0,\F) - C(\alpha,p,\F))  \sB(y,y) \delta_D(y)^{-\alpha} h_{q_0, U(r)}(y)  \nn\\
		&\quad -   (\kappa(y) - C(\alpha,p,\F) \sB(y,y)\delta_D(y)^{-\alpha})h_{q_0, U(r)}(y) \nn\\
		&\ge C_2(C(\alpha,q_0,\F) - C(\alpha,p,\F))\delta_D(y)^{q_0-\alpha}    \nn\\
		&\quad - c(\delta_D(y)/r)^{\eta_1} \delta_D(y)^{q_0-\alpha}-  c \delta_D(y)^{q_0-\alpha+\eta_0}\nn\\
		&\ge c\delta_D(y)^{q_0-\alpha}-c(\delta_D(y)/r)^{\eta_0 \wedge \eta_1} \delta_D(y)^{q_0-\alpha}.
	\end{align*}
When $C_9=0$,  using \eqref{e:kappa-explicit}, Proposition \ref{p:barrier} and \eqref{e:mu-bound}, since 
$C(\alpha, \alpha-1, \F^i)=0<C(\alpha, q_0, \F^i)$ for all $1\le i \le i_0$, we get that for all $y \in U(r/12)$,
	\begin{align*}
		&\left|	L^\kappa h_{p, U(r)}(y) \right|	 \le \left|  L_\alpha^\sB h_{p, U(r)}(y) \right|  +   \kappa(y)  h_{p,U(r)}(y)\nn\\
		&\le c (\delta_D(y)/r)^{\eta_1} \delta_D(y)^{p-\alpha}  + c \delta_D(y)^{p-\alpha+\eta_0}
			\le  c(\delta_D(y)/r)^{\eta_0 \wedge \eta_1} \delta_D(y)^{p-\alpha}
	\end{align*}
and
	\begin{align*}
		L^\kappa h_{q_0, U(r)}(y) &=	L^\sB_\alpha h_{q_0, U(r)}(y) - \kappa(y) h_{q_0, U(r)}(y) \nn\\
		&=    L_\alpha^\sB h_{q_0, U(r)}(y) 
			- \sum_{i=1}^{i_0} \mu^i(x) C(\alpha, q_0, \F^i)\delta_D(y)^{-\alpha} h_{q_0, U(r)}(y)   \nn\\
		&\quad +  \sum_{i=1}^{i_0} \mu^i(x) C(\alpha, q_0, \F^i)\delta_D(y)^{-\alpha} h_{q_0, U(r)}(y)   
			 -   \kappa(y)h_{q_0, U(r)}(y) \nn\\
		&\ge C_{11}^{-1} \sum_{i=1}^{i_0}  C(\alpha, q_0, \F^i)\delta_D(y)^{q_0-\alpha}    \nn\\
		&\quad  - c(\delta_D(y)/r)^{\eta_1} \delta_D(y)^{q_0-\alpha}-  c \delta_D(y)^{q_0-\alpha+\eta_0}\nn\\
		&\ge c\delta_D(y)^{q_0-\alpha}-c(\delta_D(y)/r)^{\eta_0 \wedge \eta_1} \delta_D(y)^{q_0-\alpha}.
	\end{align*}
Therefore, in both cases,  we have
	\begin{align}\label{e:generator-hp}
		\left|	 L^\kappa h_{p, U(r)}(y) \right|	\le  c_1 (\delta_D(y)/r)^{\eta_0 \wedge \eta_1} \delta_D(y)^{p-\alpha}
	\end{align}
and
	\begin{align}\label{e:generator-hq}
	 	L^\kappa h_{q_0, U(r)}(y)	\ge  c_2\delta_D(y)^{q_0-\alpha}  
	 		-  c_3 (\delta_D(y)/r)^{\eta_0 \wedge \eta_1} \delta_D(y)^{q_0-\alpha}.
	\end{align}
Since $p<q_0<p+\eta_0 \wedge \eta_1$, there exists $\epsilon_1 \in (0,1/12)$ such that
	\begin{align}\label{e:property-eps1}
		c_2s^{q_0-p-\eta_0 \wedge \eta_1} \ge  c_3s^{q_0-p}+2c_1 +1\quad \text{for all} \;\, s \in (0,\epsilon_1].
	\end{align}

(i) Using \eqref{e:generator-hp}, \eqref{e:generator-hq} and \eqref{e:property-eps1}, we get that  for all $y \in U(\epsilon_1 r)$,
	\begin{align*}
		&L^\kappa \phi_p(y) =  2 	L^\kappa h_{p, U(r)}(y)  -r^{p-q_0} L^\kappa h_{q_0, U(r)}(y)  \nn\\
		& \le - \left(  c_2  (\delta_D(y)/r)^{q_0-p-\eta_0 \wedge \eta_1 }  - c_3 (\delta_D(y)/r)^{q_0-p}  
			- 2c_1  \right) (\delta_D(y)/r)^{\eta_0 \wedge \eta_1}  \delta_D(y)^{p-\alpha}\nn\\
		&\le - (\delta_D(y)/r)^{\eta_0 \wedge \eta_1}  \delta_D(y)^{p-\alpha}.
	\end{align*}

(ii)  Using \eqref{e:generator-hp}, \eqref{e:generator-hq} and \eqref{e:property-eps1}, we see that  for all $y \in U(\epsilon_1 r)$,
	\begin{align*}
		&L^\kappa \varphi_p(y) = r^{p-q_0} L^\kappa h_{q_0, U(r)}(y)  + 	L^\kappa h_{p, U(r)}(y) \nn\\
		& \ge  \left(  c_2  (\delta_D(y)/r)^{q_0-p-\eta_0 \wedge \eta_1 }  - c_3 (\delta_D(y)/r)^{q-p}
			- 2c_1 \right) (\delta_D(y)/r)^{\eta_0 \wedge \eta_1}  \delta_D(y)^{p-\alpha}\nn\\
		&\ge  (\delta_D(y)/r)^{\eta_0 \wedge \eta_1}  \delta_D(y)^{p-\alpha}.
	\end{align*}
The proof is complete. 
\end{proof}

Recall that  we  treat \hyperlink{B5-I}{{\bf (B5-I)}}  as a special case of \hyperlink{B5-II}{{\bf (B5-II)}} with $i_0=1$. 
	
	To control certain exit 
 probabilities  from below (see Lemma \ref{l:Dynkin-pre-lower}), we need to introduce the following non-local operators with appropriate  additional critical killings.

For $q \in (p, \alpha+\beta_0)$, we define
	\begin{align}\label{e:def-Lq}
		\wt L_q f(y) = L^\kappa f(y) - \sum_{i=1}^{i_0}  \mu^i (y)( C(\alpha, q, \F^i)- C(\alpha, p, \F^i)) \delta_D(y)^{-\alpha}f(y).
	\end{align}

In the following two lemmas, we let $\psi^{(r)}$ denote the function defined by \eqref{e:def-psi-r} and 
let $N_0>\alpha +\ub_0+2$ be the constant appearing in the construction of $\psi^{(r)}$.

\begin{lemma}\label{l:compensator} 
Let $Q \in \partial D$ and $q \in (p, \alpha+\beta_0)$.	
There exists  $C>0$ independent of  $Q$ such that for all $0<r\le \wh R/24$ and $y \in U(r)$,	
	\begin{align*}	
		\wt	L_q \psi^{(r)}(y)\le C r^{-\alpha} \Phi_0(\delta_D(y)/r).	
	\end{align*} 
\end{lemma}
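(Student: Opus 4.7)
The plan is straightforward: apply Proposition \ref{p:compensator} and then use the extra critical killing term in $\wt L_q$ to absorb the $\eps\delta_D(y)^{-\alpha}\psi^{(r)}(y)$ error in that bound. Unwinding definitions,
\begin{align*}
\wt L_q \psi^{(r)}(y) &= L^\sB_\alpha \psi^{(r)}(y) - \kappa(y)\psi^{(r)}(y) \\
&\qquad - \sum_{i=1}^{i_0} \mu^i(y)\bigl(C(\alpha,q,\F^i) - C(\alpha,p,\F^i)\bigr) \delta_D(y)^{-\alpha} \psi^{(r)}(y).
\end{align*}
Since $\kappa\ge 0$ and $\psi^{(r)}\ge 0$, the term $-\kappa(y)\psi^{(r)}(y)$ is non-positive and can simply be discarded.

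Next I would note that the range $0<r\le \wh R/24$ lies inside the range $0<r\le \wh R/(18+9\Lambda_0)$ required by Proposition \ref{p:compensator}, because $\Lambda_0\le 1/2$ by \eqref{e:Lipschitz-constant}, so $18+9\Lambda_0\le 45/2<24$. Hence Proposition \ref{p:compensator} applies and yields, for any $\eps>0$,
\begin{equation*}
L^\sB_\alpha \psi^{(r)}(y) \le \eps\, \delta_D(y)^{-\alpha}\psi^{(r)}(y) + C(\eps)\, r^{-\alpha}\Phi_0(\delta_D(y)/r).
\end{equation*}

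The conceptual step is to show that the coefficient
\begin{equation*}
c_0 := \inf_{y\in D}\sum_{i=1}^{i_0} \mu^i(y)\bigl(C(\alpha,q,\F^i) - C(\alpha,p,\F^i)\bigr)
\end{equation*}
is strictly positive. The lower bound \eqref{e:mu-bound} on each $\mu^i$ (using \eqref{e:B(x,x)} in the case $i_0=1$ with $\mu^1=\sB(\cdot,\cdot)$) reduces this to showing $\sum_{i=1}^{i_0}\bigl(C(\alpha,q,\F^i) - C(\alpha,p,\F^i)\bigr)>0$. By linearity of $f\mapsto C(\alpha,q,f)$ in its functional argument, this sum equals $C(\alpha,q,\sum_i\F^i) - C(\alpha,p,\sum_i\F^i)$; Lemma \ref{l:F-basic} ensures $\sum_i\F^i$ is non-trivial, and Lemma \ref{l:constant} (strict monotonicity in $q$) then gives the claim since $q>p$.

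Choosing $\eps=c_0$ in the bound above, the two $\delta_D(y)^{-\alpha}\psi^{(r)}(y)$ terms combine to $(\eps-c_0)\delta_D(y)^{-\alpha}\psi^{(r)}(y)\le 0$, and what remains is exactly $C(c_0)\,r^{-\alpha}\Phi_0(\delta_D(y)/r)$, proving the lemma with $C:=C(c_0)$ (independent of $Q$ and $r$, since all constants involved are). There is no real obstacle here: Proposition \ref{p:compensator} does the heavy analytic lifting, and the only design choice is recognizing that the supplementary potential built into $\wt L_q$ via the strict monotonicity of $q\mapsto C(\alpha,q,\F^i)$ is precisely what is needed to defeat the troublesome first term on the right-hand side of that proposition.
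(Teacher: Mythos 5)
Your proposal is correct and takes essentially the same approach as the paper: drop the nonpositive $-\kappa\psi^{(r)}$ term, lower-bound the supplementary killing coefficient using \eqref{e:mu-bound}, and apply Proposition \ref{p:compensator} with $\eps$ chosen to equal that coefficient. The only (harmless) difference is that you route the positivity of $\sum_i\bigl(C(\alpha,q,\F^i)-C(\alpha,p,\F^i)\bigr)$ through linearity of $C(\alpha,q,\cdot)$ and Lemma \ref{l:F-basic}, whereas the paper simply invokes Lemma \ref{l:constant} to get $C(\alpha,q,\F^i)>C(\alpha,p,\F^i)$ termwise.
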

\begin{proof} 
Let $r\in (0, \wh R/24]$ and $y \in U(r)$. By Lemma \ref{l:constant},  $C(\alpha,q,\F^i)>C(\alpha,p,\F^i)$ for all $1\le i\le i_0$. Thus, by \eqref{e:mu-bound}, we have
	 \begin{align}\label{e:compensator-1}
	 	\wt	L_q \psi^{(r)}(y) \le L^\sB_\alpha  \psi^{(r)}(y) - C_{11}^{-1} 
	 		\sum_{i=1}^{i_0}  ( C(\alpha, q, \F^i)- C(\alpha, p, \F^i)) \delta_D(y)^{-\alpha}\psi^{(r)}(y) .
	 \end{align}
Note that $r\le \wh R/24< \wh R/(18+9\Lambda_0)$ by \eqref{e:Lipschitz-constant}. Applying Proposition \ref{p:compensator} with   
$\eps = C_{11}^{-1}\sum_{i=1}^{i_0}(C(\alpha,q,\F^i)$ $-C(\alpha,p,\F^i))$, we deduce the result from \eqref{e:compensator-1}.
\end{proof}

\begin{lemma}\label{l:Dynkin2}
Let $q \in (p,\alpha+
\beta_0)$, $q_1 \in (p,q)$, $Q \in \partial D$ and $0<r \le \wh R/24$.  There exist constants 
$  \epsilon_2 =\epsilon_2 (q, q_1)   \in (0,\epsilon_1]$ and $C>0$ independent of $Q$ and $r$ such that the function  $\chi_q$  defined by
	$$
		\chi_{q}(y)=h_{q,U(r)}(y) -\epsilon_2^{q-2N_0}r^q\psi^{(r)}(y), \quad y \in D,
	$$
satisfies the following properties:

	(a) $\chi_q \in C^{1,1}(U(\epsilon_2 r))$ and $\chi_q(y) \le 0$ for all $y \in D \setminus U(\epsilon_2 r)$;
	
	(b) $2^{-1}\delta_D(y)^q \le \chi_q(y) \le \delta_D(y)^q$ for all $y=(\wt 0, y_d) \in U(\epsilon_2 r/2)$;
	
	(c)   $\wt L_{q_1}\chi_q(y) \ge - C r^{q-\alpha} \Phi_0(\delta_D(y)/r)$ for all $y \in U(\epsilon_2 r)$.
	
	\smallskip
	
\noindent Here, $\epsilon_1\in (0,1/12)$ is the constant in Lemma \ref{l:Dynkin1}.
\end{lemma}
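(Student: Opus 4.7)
The plan is to treat the three conclusions (a), (b), (c) in turn, relying on the fact that $\chi_q$ is a linear combination of $h_{q,U(r)}$ (whose action is controlled by Proposition \ref{p:barrier}) and $\psi^{(r)}$ (whose action is controlled by Lemma \ref{l:compensator}), and that the prefactor $\epsilon_2^{q-2N_0}r^q$ is chosen so that $\psi^{(r)}$ dominates $h_{q,U(r)}$ away from the smaller box $U(\epsilon_2 r/2)$.

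For (a), smoothness on $U(\epsilon_2 r)$ follows immediately from \eqref{e:hq-C11} (using $\epsilon_2\le \epsilon_1<1/12<1/2$) together with the fact that $\psi^{(r)}$ is $C^{N_0}\subset C^{1,1}$. To see $\chi_q\le 0$ on $D\setminus U(\epsilon_2 r)$, I split into cases. If $y\in D\setminus U(r)$ then $h_{q,U(r)}(y)=0$ while $\psi^{(r)}(y)\ge 0$, so $\chi_q(y)\le 0$. If $y\in U(r)\setminus U(\epsilon_2 r)$, recall that $\psi^{(r)}(y)=(|\tilde y|/r)^{2N_0}+(\rho_D(y)/r)^{2N_0}$ on $U(2r)$. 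When $\rho_D(y)\ge \epsilon_2 r$, one has $\epsilon_2^{q-2N_0} r^q (\rho_D(y)/r)^{2N_0}\ge r^q(\rho_D(y)/r)^q\ge \delta_D(y)^q$; when instead $|\tilde y|\ge \epsilon_2 r$ while $\rho_D(y)<\epsilon_2 r$, we have $\delta_D(y)\le \rho_D(y)<\epsilon_2 r$ and $\epsilon_2^{q-2N_0}r^q(|\tilde y|/r)^{2N_0}\ge \epsilon_2^q r^q>\delta_D(y)^q$. Both cases give $\chi_q(y)\le 0$. For (b), take $y=(\tilde 0,y_d)\in U(\epsilon_2 r/2)$; since $\Psi(\tilde 0)=0$ and $|\nabla\Psi(\tilde 0)|=0$, we have $\delta_D(y)=y_d$, and the explicit formula yields
\[
\chi_q(y)=y_d^q\bigl(1-\epsilon_2^{q-2N_0}(y_d/r)^{2N_0-q}\bigr),
\]
with $(y_d/r)^{2N_0-q}<(\epsilon_2/2)^{2N_0-q}$, so $\epsilon_2^{q-2N_0}(y_d/r)^{2N_0-q}<2^{-(2N_0-q)}\le 1/2$ since $2N_0>q+1$, which gives the desired sandwich.

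For (c), on $U(\epsilon_2 r)\subset U(r/4)$, Proposition \ref{p:barrier} yields
\[
L^{\sB}_{\alpha} h_{q,U(r)}(y)\ge \sum_{i=1}^{i_0}\mu^i(y)C(\alpha,q,\F^i)\delta_D(y)^{q-\alpha}-C(\delta_D(y)/r)^{\eta_1}\delta_D(y)^{q-\alpha}.
\]
Assumption \hyperlink{K3}{{\bf (K3)}} together with the identification $C_9=C(\alpha,p,\F^1)\mu^1/\sB(\cdot,\cdot)$ (in the $C_9>0$ branch, with $i_0=1$, $\mu^1=\sB(\cdot,\cdot)$) or $C_9=0$ and $C(\alpha,p,\F^i)=0$ (in the subcritical branch) gives
\[
\kappa(y)\delta_D(y)^{q}=\sum_{i=1}^{i_0}\mu^i(y)C(\alpha,p,\F^i)\delta_D(y)^{q-\alpha}+O(\delta_D(y)^{q-\alpha+\eta_0}).
\]
Subtracting off the extra killing $\sum_i \mu^i(y)(C(\alpha,q_1,\F^i)-C(\alpha,p,\F^i))\delta_D(y)^{q-\alpha}$ from \eqref{e:def-Lq} and using $\delta_D(y)<\epsilon_2 r\le 1$ to absorb $\delta_D(y)^{\eta_0}\le \epsilon_2^{\eta_0}$ and $(\delta_D(y)/r)^{\eta_1}\le \epsilon_2^{\eta_1}$, I obtain
\[
\wt L_{q_1}h_{q,U(r)}(y)\ge \Bigl(C_{11}^{-1}\!\sum_{i=1}^{i_0}\bigl(C(\alpha,q,\F^i)-C(\alpha,q_1,\F^i)\bigr)-c\epsilon_2^{\eta_0\wedge\eta_1}\Bigr)\delta_D(y)^{q-\alpha}.
\]
Lemma \ref{l:constant} applied to each $\F^i$ gives $C(\alpha,q,\F^i)>C(\alpha,q_1,\F^i)$ strictly (since $q>q_1$), so the bracket is a fixed positive number once $\epsilon_2=\epsilon_2(q,q_1)$ is chosen small enough, and one has $\wt L_{q_1}h_{q,U(r)}(y)\ge 0$. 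Combining this with $\wt L_{q_1}\psi^{(r)}(y)\le Cr^{-\alpha}\Phi_0(\delta_D(y)/r)$ from Lemma \ref{l:compensator} and dropping the positive piece yields (c):
\[
\wt L_{q_1}\chi_q(y)\ge -\epsilon_2^{q-2N_0}r^{q}\cdot Cr^{-\alpha}\Phi_0(\delta_D(y)/r)=-C'r^{q-\alpha}\Phi_0(\delta_D(y)/r).
\]

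The only real obstacle is the bookkeeping in (c): one must verify, in both the $C_9>0$ and $C_9=0$ branches of \hyperlink{B5}{{\bf (B5)}}, that after the three cancellations (leading $C(\alpha,q,\F^i)$ term from Proposition \ref{p:barrier}, the $C(\alpha,p,\F^i)$ term from \hyperlink{K3}{{\bf (K3)}}, and the $C(\alpha,q_1,\F^i)-C(\alpha,p,\F^i)$ term from $\wt L_{q_1}$) the residual coefficient reduces cleanly to $C(\alpha,q,\F^i)-C(\alpha,q_1,\F^i)$, and that all error contributions (coming from Proposition \ref{p:barrier}, \hyperlink{K3}{{\bf (K3)}}, and the $q-2N_0<0$ blow-up of the $\psi^{(r)}$ prefactor) are tamed by taking $\epsilon_2$ small in terms of $q,q_1$. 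The strict monotonicity statement in Lemma \ref{l:constant} is what makes this threshold choice of $\epsilon_2$ possible.
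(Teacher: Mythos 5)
Your proof is correct and follows the same route as the paper's: (a) and (b) by direct inspection of the formula for $\chi_q$ on $U(r)$ and on the centre line, and (c) by combining Proposition \ref{p:barrier} with \hyperlink{K3}{{\bf (K3)}}, the definition of $\wt L_{q_1}$, and Lemma \ref{l:compensator}, then choosing $\epsilon_2$ small via the strict monotonicity in Lemma \ref{l:constant}. One small imprecision: applying Proposition \ref{p:barrier} with $V=U(r)$ requires the rescaling $r\mapsto r/3$ so the estimate is valid on $U(r/12)$ rather than $U(r/4)$, but since $\epsilon_2\le\epsilon_1<1/12$ this does not affect the conclusion on $U(\epsilon_2 r)$.
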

\begin{proof} Let $\epsilon_2 \in (0,\epsilon_1]$ be  a constant to be  determined later. 
		
By \eqref{e:hq-C11}, since $\psi^{(r)} \in C^{1,1}(D)$, we have $\chi_q  \in C^{1,1}(U(\epsilon_2 r))$. 
Since $\psi^{(r)}$ is non-negative,  $\chi_q(y)=-\epsilon_2^{q-2N_0}r^q\psi^{(r)}(y) \le 0$ for all $y \in D \setminus U(r)$.	
Let $y \in U(r) \setminus U(\epsilon_2r)$ and denote $v=(f^{(r)})^{-1}(y)$ where $f^{(r)}$ is the function defined in \eqref{e:def-fr}. Then $v \in U_\bH(1) \setminus U_\bH(\epsilon_2)$, $\rho_D(y)=rv_d$ and $\psi^{(r)}(y)=|\wt v|^{2N_0} + v_d^{2N_0}$.  
If $v_d \ge \epsilon_2$, then since $\delta_D(y) \le \rho_D(y)=rv_d$ and $N_0>q$, we get
	$$
		\chi_q(y) \le r^q v_d^q  - \epsilon_2^{q-2N_0} r^q v_d^{2N_0} = r^q v_d^q ( 1 - \epsilon_2^{q-2N_0} v_d^{2N_0-q} ) \le 0.
	$$
	Assume  $v_d<\epsilon_2$. Since $v \notin U_\bH(\epsilon_2)$, it follows that $|\wt v| \ge \epsilon_2$. Hence, we obtain
	$$
		\chi_q(y) \le r^q v_d^q  - \epsilon_2^{q-2N_0} r^q |\wt v|^{2N_0}  \le \epsilon_2^q r^q - \epsilon_2^{q} r^q=0.
	$$
	Therefore, $\chi_q$ satisfies (a).
	
Since $\psi^{(r)}$ is non-negative,   $\chi_q(y) \le \delta_D(y)^q$ for all $y \in U(\epsilon_2 r/2)$. Moreover, for any 
$y = (\wt 0, y_d) \in U(r)$,   we have $\psi^{(r)}(y) =(y_d/r)^{2N_0}= (\delta_D(y)/r)^{2N_0} $. Thus, since $N_0>q+2$,  
for all $y = (\wt 0,y_d) \in U(\epsilon_2 r/2)$, we have
	\begin{align*}
		&	\chi_q(y) \ge \delta_D(y)^q - \epsilon_2^{q-2N_0}r^q (\epsilon_2/2)^{2N_0-q}  (\delta_D(y)/r)^q \\
		&= (1-2^{q-2N_0}) \delta_D(y)^q \ge 2^{-1}\delta_D(y)^q.
	\end{align*}
Hence, $\chi_q$ satisfies (b).

For (c), using  \eqref{e:mu-bound}, Proposition \ref{p:barrier}, \eqref{e:kappa-explicit}  and  \eqref{e:C(alpha,p,F)},  we see that for all $y \in U(r/12)$,
	\begin{align}\label{e:wtL-upper}
	\begin{split}	
		\wt L_{q_1} h_{q,U(r)}(y) &=   L_\alpha^\sB h_{q,U(r)} (y) - \sum_{i=1}^{i_0}  \mu^i (y) 
			C(\alpha, q, \F^i)\delta_D(y)^{-\alpha}h_{q,U(r)}(y)  \\
		&\quad  +    \sum_{i=1}^{i_0}  \mu^i (y)( C(\alpha, q, \F^i)- C(\alpha, q_1, \F^i)) \delta_D(y)^{-\alpha} h_{q,U(r)}(y)   \\
		&\quad  - \bigg( \kappa(y)-\sum_{i=1}^{i_0}  \mu^i (y) C(\alpha, p, \F^i)\bigg) h_{q,U(r)}(y)\\
		&\ge  C_{11}^{-1} \sum_{i=1}^{i_0}( C(\alpha, q, \F^i)- C(\alpha, q_1, \F^i))  \delta_D(y)^{-\alpha}h_{q,U(r)}(y)\\
		&\quad -(c_1 (\delta_D(y)/r)^{\eta_1} +  C_8\delta_D(y)^{\eta_0})\delta_D(y)^{-\alpha}h_{q,U(r)}(y).
	\end{split} 
	\end{align}
Set $c_2:= C_{11}^{-1} \sum_{i=1}^{i_0}( C(\alpha, q, \F^i)- C(\alpha, q_1, \F^i)) $. Since $q_1<q$, by Lemma \ref{l:constant}, $c_2$ is a positive constant. Now we choose $\epsilon_2\in (0,\epsilon_1]$ to satisfy 
$c_2- c_1 \epsilon_2^{\eta_1} - C_8 \epsilon_2^{\eta_0} \ge 0$. By \eqref{e:wtL-upper} and  Lemma \ref{l:compensator},
we get that for all $y \in U(\epsilon_2r)$,
	\begin{align*}
		\wt L_{q_1} \chi_q \ge- \epsilon_2^{q-2N_0}r^q \wt L_{q_1} \psi^{(r)}(y) \ge - c_3 r^{q-\alpha} \Phi_0(\delta_D(y)/r).
	\end{align*}
The proof is complete.
\end{proof}

\subsection{Explicit decay rate of some special harmonic functions}\label{s:decay-spec-har}

In this subsection, we establish some estimates of exit probabilities from small boxes based at a boundary point. These exit probabilities are non-negative harmonic functions vanishing continuously at the boundary. 
Recall that the definition of  harmonic and regular harmonic functions is given in Definition \ref{df:harmonic}. 

\medskip
\emph{For the remainder of this work,  we suppress the superscript $\kappa$ from $Y^\kappa$ and related objects.}
\medskip

We also recall the following well-known fact: If $f:D\to [0,\infty)$ is harmonic in $D\cap B(Q,r)$, $Q\in \partial D$, and vanishes continuously on $\partial D\cap B(Q,r)$, then $f$ is regular harmonic in $D\cap B(Q,r/2)$ 
(see, for example, \cite[Lemma 5.1]{KSV18-a} and its proof). 

Throughout this subsection, 
 we let $\epsilon_1$  be the constant in Lemmas \ref{l:Dynkin1}. We also fix  
 	$$
	 q=\frac{p+2\alpha+2\beta_0}{3}
	\quad \text{and} \quad
	q_1= \frac{2p+\alpha+\beta_0}{3}
	$$ 
	and let $\epsilon_2$ be the constant in Lemma  \ref{l:Dynkin2} 
with these fixed $q$ and $q_1$.

 The goal of this subsection is to prove the following theorem.

\begin{thm}\label{t:Dynkin-improve}	Let $Q \in \partial D$ and $0<r \le \wh R/24$. There are comparison constants independent of $Q$ and $r$ such that for all $x \in U(\epsilon_2 r/4)$,
	\begin{align*} 	
		\P_x (Y_{\tau_{U(\epsilon_2 r)}} \in U(r) \setminus U(r, r/2) )  
			\asymp  	\P_x (Y_{\tau_{U(\epsilon_2 r)}}\in D ) \asymp (\delta_D(x)/r)^p.
	\end{align*}
\end{thm}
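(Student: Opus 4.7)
The strategy applies the Dynkin-type formula of Corollary \ref{c:Dynkin-local} to the barriers $\phi_p,\varphi_p$ of Lemma \ref{l:Dynkin1} and $\chi_q$ of Lemma \ref{l:Dynkin2}, combined with the L\'evy system \eqref{e:Levysystem-Y-kappa}. Writing $\tau=\tau_{U(\epsilon_2 r)}$, since $\P_x(Y_\tau\in U(r)\setminus U(r,r/2))\le\P_x(Y_\tau\in D)$, it suffices to prove the upper bound on $\P_x(Y_\tau\in D)$ and the lower bound on $\P_x(Y_\tau\in U(r)\setminus U(r,r/2))$.

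For the upper bound, I apply Corollary \ref{c:Dynkin-local} to the supersolution $\phi_p$ on $U(\epsilon_2 r)\subset U(\epsilon_1 r)$: since $L^\kappa\phi_p\le 0$, we get $\E_x[\phi_p(Y_\tau)]\le\phi_p(x)\le 2\delta_D(x)^p$. By Lemma \ref{l:C11}(iii), on $U(r)\setminus U(r,r/2)$ we have $\delta_D\asymp\rho_D\ge r/2$, hence $\phi_p(y)\ge cr^p$ there, yielding $\P_x(Y_\tau\in U(r)\setminus U(r,r/2))\le C(\delta_D(x)/r)^p$. To upgrade this to the full $\P_x(Y_\tau\in D)$, I decompose the remaining exit region into $U(r,r/2)\setminus U(\epsilon_2 r)$ and $D\setminus U(r)$ and bound each contribution via the L\'evy system: for each such region $A$ and $y\in U(\epsilon_2 r)$, the quantity $\int_A J(y,z)\,dz$ is controlled by $Cr^{-\alpha}\Phi_0(\delta_D(y)/r)$ using \hyperlink{B4-a}{{\bf (B4-a)}} and the weak scaling \eqref{e:scale}, so the residual probabilities reduce to $\E_x[\int_0^\tau r^{-\alpha}\Phi_0(\delta_D(Y_s)/r)\,ds]$. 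This integral is in turn controlled by $(\delta_D(x)/r)^p$ via the Dynkin identity applied to $\phi_p$, using the quantitative bound $-L^\kappa\phi_p\ge(\delta_D/r)^{\eta_0\wedge\eta_1}\delta_D^{p-\alpha}$ from Lemma \ref{l:Dynkin1}(i)(c) together with the almost monotonicity of $\Phi_0$.

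For the lower bound, I first apply Corollary \ref{c:Dynkin-local} to the subsolution $\varphi_p$: $L^\kappa\varphi_p\ge 0$ on $U(\epsilon_2 r)$ gives $\varphi_p(x)\le\E_x[\varphi_p(Y_\tau)]$; combined with $\varphi_p\le 2r^p$ on $U(r)$ and $\varphi_p=0$ on $D\setminus U(r)$, this produces $\P_x(Y_\tau\in U(r)\setminus U(\epsilon_2 r))\ge\tfrac12(\delta_D(x)/r)^p$. To replace the set $U(r)\setminus U(\epsilon_2 r)$ by $U(r)\setminus U(r,r/2)$, I need a matching upper bound of strictly higher order on the intermediate exit probability $\P_x(Y_\tau\in U(r,r/2)\setminus U(\epsilon_2 r))$. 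The barrier $\chi_q$ of Lemma \ref{l:Dynkin2}, analyzed through the operator $\wt L_{q_1}$ of \eqref{e:def-Lq} whose extra killing $V_{q_1}\asymp\delta_D^{-\alpha}$ upgrades the effective decay rate from $p$ to $q>p$, delivers precisely this: via a Feynman-Kac transform and the bounds $\chi_q\le 0$ outside $U(\epsilon_2 r)$ and $-\wt L_{q_1}\chi_q\le Cr^{q-\alpha}\Phi_0(\delta_D/r)$, one obtains $\P_x(Y_\tau\in U(r,r/2)\setminus U(\epsilon_2 r))\le C(\delta_D(x)/r)^q$, which, since $\delta_D(x)/r\le\epsilon_2/4<1$, is of strictly smaller order than $(\delta_D(x)/r)^p$ and can be absorbed into the $\varphi_p$-lower bound (possibly after shrinking $\epsilon_2$).

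The main obstacle is the sharp control of the intermediate region $U(r,r/2)\setminus U(\epsilon_2 r)$, needed both for the upper-bound refinement and the lower-bound upgrade: naive L\'evy-system bounds fail there because the jump intensity from $U(\epsilon_2 r)$ into this adjacent region admits no uniform upper bound (points on opposite sides of $\partial U(\epsilon_2 r)$ can be arbitrarily close). The resolution is to interleave the supersolution/subsolution barriers $\phi_p,\varphi_p$ tuned to rate $p$ with the refined barrier $\chi_q$, whose subtraction of a scaled $\psi^{(r)}$ forces negativity outside $U(\epsilon_2 r)$ and whose governing operator $\wt L_{q_1}$ carries the critical extra killing $V_{q_1}$, producing the strictly higher exponent $q$ that closes the estimate.
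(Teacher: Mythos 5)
Your proposal correctly identifies the lateral/intermediate exit region $U(r,r/2)\setminus U(\epsilon_2 r)$ as the crux of the problem and correctly observes that a naive L\'evy-system bound fails there. However, your proposed resolution does not work. You claim that the barrier $\chi_q$ of Lemma~\ref{l:Dynkin2} yields the \emph{upper} bound $\P_x(Y_\tau\in U(r,r/2)\setminus U(\epsilon_2 r))\le C(\delta_D(x)/r)^q$ with $q>p$. But the properties of $\chi_q$ point the other way: $\chi_q\le 0$ on $D\setminus U(\epsilon_2 r)$ and $\wt L_{q_1}\chi_q\ge -Cr^{q-\alpha}\Phi_0$ combine, via the Dynkin formula, to give a \emph{lower} bound on $\E_x\int_0^\tau\Phi_0(\delta_D(Y_s)/r)\,ds$ and hence on the lateral exit probability at exponent $q$ (this is exactly Lemma~\ref{l:Dynkin-pre-lower}). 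Because $\chi_q$ is merely $\le 0$ on the intermediate region and can be arbitrarily close to $0$ near $\partial U(\epsilon_2 r)$, the Dynkin formula applied to $\chi_q$ contributes only a nonnegative term to the exit expectation and cannot produce an upper bound on $\P_x(Y_\tau\in U(r,r/2)\setminus U(\epsilon_2 r))$. Moreover, Lemma~\ref{l:Dynkin2}(c) is a one-sided bound (a lower bound on $\wt L_{q_1}\chi_q$); to extract the estimate you want, one would need the complementary upper bound on $L^\kappa\chi_q$, which is not available. So the ``strictly higher exponent $q$'' does not appear on the side you need.

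The paper closes this gap with a completely different ingredient: Proposition~\ref{p:Dynkin-improve}, which asserts the comparison $\P_x(Y_\tau\in U(\epsilon_2 r,r)\setminus U(\epsilon_2 r,(3/4)r))\ge C\,\P_x(Y_\tau\in U(r))$ directly, with no exponent loss. That proposition is not a barrier argument at all; it is proved by an iterative stochastic scheme over a telescoping family of boxes $U_i^\pm$ (Lemma~\ref{l:6.29}), controlled by a one-step exit estimate (Lemma~\ref{l:one-step}), its geometric iteration (Lemma~\ref{l:m-step}), and a jump-range bound (Lemma~\ref{l:jump-horizon}). The $\chi_q$ barrier does enter, but only as the \emph{lower} bound $\P_z(H_1)\ge c(2^{-i-2}\epsilon_2)^q$ on the denominator of the ratios $a_i$ (via Lemma~\ref{l:Dynkin-pre-lower}); it is never used to bound an intermediate exit probability from above. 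Once Proposition~\ref{p:Dynkin-improve} is in hand, the theorem follows quickly from \eqref{e:4.2.1}, \eqref{e:4.2.2}, and the far-field L\'evy-system estimate $\P_x(Y_\tau\in D\setminus U(r))\lesssim(\delta_D(x)/r)^p$ via Corollary~\ref{c:Dynkin-upper}, which is the part your decomposition handles correctly. The concrete gap in your argument, then, is the claimed upper bound on the adjacent exit probability: it is not delivered by $\chi_q$, and the step where the process can exit $U(\epsilon_2 r)$ into the thin boundary strip $U(r,r/2)\setminus U(\epsilon_2 r)$ remains uncontrolled without the iterative Proposition~\ref{p:Dynkin-improve}.
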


Before giving the proof of Theorem \ref{t:Dynkin-improve}, we record one of its consequences. 

\begin{corollary}\label{c:Carleson-1}
		There exists a constant $K_0>4$ such that for all  $x \in D$ with $\delta_D(x)\le \epsilon_2\wh R/(24K_0)$, 
		it holds that $\P_x(\tau_{B_D(x,(2K_0+1)\delta_D(x))}= \zeta)\ge 1/2$.
\end{corollary}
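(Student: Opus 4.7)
The plan is to derive this from Theorem \ref{t:Dynkin-improve} by choosing the box radius $r$ proportional to $\delta_D(x)$, so that the probability of exiting the box alive is bounded by $C(\delta_D(x)/r)^p = C(\epsilon_2/K_0)^p$, which can be made $\le 1/2$ by taking $K_0$ large.

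Let $x \in D$ satisfy $\delta_D(x) \le \epsilon_2 \wh{R}/(24 K_0)$, where $K_0 > 4$ is to be fixed momentarily. Choose $Q \in \partial D$ with $|x-Q| = \delta_D(x)$; since $\wh R \le 1/(2\Lambda)$, the closest point exists and $x-Q$ is normal to $\partial D$ at $Q$, so in the coordinate system $\mathrm{CS}_Q$ (recall $\Psi(\wt 0) = 0$, $\nabla\Psi(\wt 0) = 0$) we have $x = \delta_D(x) \e_d$, hence $\rho_D(x) = \delta_D(x)$. Set
\[
 r := \frac{K_0\,\delta_D(x)}{\epsilon_2}.
\]
Then $r \le \wh{R}/24$ (by the assumption on $\delta_D(x)$), and $\delta_D(x) = \epsilon_2 r/K_0 < \epsilon_2 r/4$ since $K_0 > 4$, so $x \in U^Q(\epsilon_2 r/4)$ and Theorem \ref{t:Dynkin-improve} applies.

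By Theorem \ref{t:Dynkin-improve}, there is a constant $C_*>0$ independent of $Q$, $r$ (hence independent of $x$) such that
\[
 \P_x\bigl(Y_{\tau_{U^Q(\epsilon_2 r)}} \in D\bigr) \le C_* (\delta_D(x)/r)^p = C_* (\epsilon_2/K_0)^p.
\]
Since the process either exits $U^Q(\epsilon_2 r)$ alive (landing in $D$) or dies first (so $\tau_{U^Q(\epsilon_2 r)}=\zeta$), this gives
\[
 \P_x\bigl(\tau_{U^Q(\epsilon_2 r)} = \zeta\bigr) \ge 1 - C_*(\epsilon_2/K_0)^p.
\]
Now fix $K_0 := \max\{5,\ \epsilon_2 (2C_*)^{1/p}\}$, so that $C_*(\epsilon_2/K_0)^p \le 1/2$.

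To conclude, I will show the geometric inclusion $U^Q(\epsilon_2 r) \subset B_D(x,(2K_0+1)\delta_D(x))$. By \eqref{e:U-rho-C11-1}, $U^Q(\epsilon_2 r) \subset B_D(Q, 2\epsilon_2 r) = B_D(Q, 2K_0\delta_D(x))$, and for any $y$ in this ball the triangle inequality gives $|x-y| \le |x-Q| + |Q-y| \le \delta_D(x) + 2K_0\delta_D(x) = (2K_0+1)\delta_D(x)$. Hence $\tau_{U^Q(\epsilon_2 r)} \le \tau_{B_D(x,(2K_0+1)\delta_D(x))}$, and on the event $\{\tau_{U^Q(\epsilon_2 r)} = \zeta\}$ the process also never exits the larger ball before dying, so
\[
 \P_x\bigl(\tau_{B_D(x,(2K_0+1)\delta_D(x))} = \zeta\bigr) \ge \P_x\bigl(\tau_{U^Q(\epsilon_2 r)} = \zeta\bigr) \ge \tfrac{1}{2}.
\]
No serious obstacle is anticipated; the whole argument is essentially bookkeeping around the sharp decay rate provided by Theorem \ref{t:Dynkin-improve}. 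The only mild subtlety is to verify that $r$ falls into the admissible range $(0,\wh R/24]$ and that $x$ lies in $U^Q(\epsilon_2 r/4)$, both of which are guaranteed by the hypothesis $\delta_D(x) \le \epsilon_2\wh R/(24K_0)$ together with $K_0>4$.
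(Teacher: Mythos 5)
Your proof is correct and follows essentially the same approach as the paper's: set $r = K_0\delta_D(x)/\epsilon_2$, apply Theorem \ref{t:Dynkin-improve} to get $\P_x(Y_{\tau_{U^Q(\epsilon_2 r)}}\in D) \le C_*(\epsilon_2/K_0)^p$, verify $U^Q(\epsilon_2 r)\subset B_D(x,(2K_0+1)\delta_D(x))$ via \eqref{e:U-rho-C11-1} and the triangle inequality, and choose $K_0$ large enough. The paper states the conclusion via the complementary event $\P_x(Y_{\tau_B}\in D)\le \P_x(Y_{\tau_U}\in D)$ rather than directly via $\{\tau_U=\zeta\}\subset\{\tau_B=\zeta\}$, but these are identical statements.
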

\begin{proof} Let $K_0>4$ be a constant to be  chosen later, $x \in D$ with $\delta_D(x)\le \epsilon_2\wh R/(24K_0)$ 
and $Q_x \in \partial D$ be such that $|x-Q_x|=\delta_D(x)$. Note that 
$B_D(x,(2K_0+1)\delta_D(x)) \supset B_D(Q_x, 2K_0\delta_D(x)) \supset U^{Q_x}(K_0\delta_D(x))$ by \eqref{e:U-rho-C11-1}. Hence, by Theorem \ref{t:Dynkin-improve} (with $r=K_0\delta_D(x)/\epsilon_2$), 
there exists $c_1>0$ independent of $x$ and $K_0$ such that
	\begin{align*}
		&	\P_x (Y_{\tau_{B_D(x,(2K_0+1)\delta_D(x))}}\in D ) \le 	\P_x (Y_{\tau_{U^{Q_x}(K_0\delta_D(x))}}\in D ) \le c_1K_0^{-p}.
	\end{align*}
Set $K_0:=(2c_1)^{1/p} \vee 5$. Then we arrive at
	\begin{align*}
		\P_x(\tau_{B_D(x,(2K_0+1)\delta_D(x))}= \zeta) =1- \P_x (Y_{\tau_{B_D(x,(2K_0+1)\delta_D(x))}}\in D )  \ge 1/2.
	\end{align*}
\end{proof}

Now we turn to the proof of Theorem \ref{t:Dynkin-improve}. To do so, we first establish several results that will be used in the proof. The proof of Theorem \ref{t:Dynkin-improve} will be presented  at the end of this subsection.

\begin{lemma}\label{l:exit-jump}
Let	$Q \in \partial D$, $0< r\le \wh R/8$ and $\eps \in (0,1/12)$. There exist  comparison constants independent of $Q$ and $r$ such that for all $x \in U(\eps r)$, 
	\begin{align*}
		\P_x \big(Y_{\tau_{U(\eps r)}} \in U(\eps r, r) \setminus U(\eps r, (3/4)r) \big) \asymp r^{-\alpha }\,
			\E_x 	\left[\int_0^{\tau_{U(\epsilon  r)}} \Phi_0(\delta_D(Y_s)/r)  ds \right].
	\end{align*}
\end{lemma}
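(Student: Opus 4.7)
\smallskip

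The plan is to reduce both sides to an integral against the jump kernel via the L\'evy system formula \eqref{e:Levysystem-Y-kappa} and then show that the jump kernel, integrated over the target annular set, is pointwise comparable to $r^{-\alpha}\Phi_0(\delta_D(\cdot)/r)$. Concretely, write $A := U(\eps r,r) \setminus U(\eps r, (3/4)r)$. Since $A$ lies strictly above the slab $\{\rho_D \le (3/4)r\}$ while $\overline{U(\eps r)}$ lies inside $\{\rho_D \le \eps r\}$ and $\eps < 1/12 < 3/4$, we have $A \cap \overline{U(\eps r)} = \emptyset$. Hence, for any jump time $s<\tau_{U(\eps r)}$, $Y_s \notin A$, and the only contribution to $\sum_{s\le \tau_{U(\eps r)}}\1_A(Y_s)$ comes from the exit jump, so $\P_x(Y_{\tau_{U(\eps r)}}\in A) = \E_x[\sum_{s\le \tau_{U(\eps r)}}\1_A(Y_s)]$. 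Applying \eqref{e:Levysystem-Y-kappa} to $f(z,y)=\1_A(y)$ therefore yields
\begin{equation*}
\P_x(Y_{\tau_{U(\eps r)}}\in A) = \E_x\!\left[\int_0^{\tau_{U(\eps r)}} \!\int_A \frac{\sB(Y_s,y)}{|Y_s-y|^{d+\alpha}}\,dy\,ds\right].
\end{equation*}

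The main work is then to prove that, uniformly in $z\in U(\eps r)$,
\begin{equation*}
\int_A \frac{\sB(z,y)}{|z-y|^{d+\alpha}}\,dy \asymp r^{-\alpha}\Phi_0(\delta_D(z)/r),
\end{equation*}
with comparison constants depending only on $\eps$ (and the universal data), after which Fubini and the display above deliver the lemma. For this I will carry out three geometric estimates for $z\in U(\eps r)$ and $y\in A$, all using that $r\le \wh R/8$, $\Lambda_0\le 1/2$, and $|\Psi(\tilde w)|\le \wh R^{-1}|\tilde w|^2$ from \eqref{e:Psi-bound}: (i) $|z-y|\asymp r$, obtained by bounding $|\tilde z-\tilde y|<2\eps r$ and $y_d-z_d\in ((3/4-\eps-2\eps)r,\,(1+\eps^2)r)$; (ii) $\delta_D(y)\asymp \rho_D(y)\asymp r$, via Lemma \ref{l:U-rho-Lipschitz}(ii) with $\Lambda_1=1/2$ giving $\delta_D(y)\ge (3/(2\sqrt 5))r$; and (iii) $m_d(A)\asymp \eps^{d-1} r^d$ from the box geometry of \eqref{e:def-UQ}.

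Once (i)--(iii) are in hand, the upper bound on the kernel integral uses \hyperlink{B4-a}{{\bf (B4-a)}} and the almost monotonicity of $\Phi_0$, which together with (i) yield $\sB(z,y)\le C_6\Phi_0(\delta_D(z)/|z-y|)\asymp \Phi_0(\delta_D(z)/r)$. For the lower bound I will invoke \hyperlink{B4-b}{{\bf (B4-b)}}: by (ii) we have $\delta_D(y)\ge (3/(2\sqrt 5))r$ while (i) gives $|z-y|\le (1+O(\eps))r$, so that for $\eps$ small (and hence for all $\eps\in (0,1/12)$ after adjusting constants) $\delta_D(y)\vee \delta_D(z)\ge |z-y|/2$, giving $\sB(z,y)\ge C_7 \Phi_0(\delta_D(z)/|z-y|)\asymp \Phi_0(\delta_D(z)/r)$. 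Multiplying by $|z-y|^{-d-\alpha}\asymp r^{-d-\alpha}$ and integrating over $A$ using (iii) produces the required two-sided bound.

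The main obstacle, though mild, is verifying the lower bound condition $\delta_D(y)\vee\delta_D(z)\ge |z-y|/2$ needed to apply \hyperlink{B4-b}{{\bf (B4-b)}}: since both $\delta_D(y)$ and $|z-y|$ are of order $r$, this comparison requires sharp (not just order-of-magnitude) control of the constants. Using $\Lambda_0\le 1/2$ (from \eqref{e:Lipschitz-constant}) makes this a clean calculation and the factor $3/(2\sqrt 5)\approx 0.67$ comfortably exceeds $(1+O(\eps))/2$ for $\eps<1/12$. Everything else is bookkeeping; the constants can legitimately depend on $\eps$, $d$, $\alpha$, $\Lambda$ and $\wh R$, as in the statement.
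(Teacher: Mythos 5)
Your proposal takes essentially the same route as the paper's proof: invoke the L\'evy system formula to write the exit probability as an integral of the jump kernel over $A$, establish the pointwise comparability $\sB(z,y)|z-y|^{-d-\alpha}\asymp r^{-d-\alpha}\Phi_0(\delta_D(z)/r)$ for $z\in U(\eps r)$, $y\in A$ by checking $|z-y|\asymp r$, $\delta_D(y)\ge \sqrt{9/20}\,r>|z-y|/2\vee\delta_D(z)$, and appealing to \hyperlink{B4-a}{{\bf (B4-a)}}, \hyperlink{B4-b}{{\bf (B4-b)}} and the scaling of $\Phi_0$, and then integrate over $A$. The only cosmetic difference is that you record $m_d(A)\asymp\eps^{d-1}r^d$ explicitly, whereas the paper simply writes $m_d(A)\asymp r^d$ and absorbs the $\eps$-dependence into the comparison constants, which is legitimate since they are only required to be independent of $Q$ and $r$.
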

\begin{proof}   Let $z \in U(\eps r)$ and  $y \in U(\eps r,r) \setminus U(\eps r, (3/4)r)$. Then  $|y-z| \ge (3/4-\eps)r \ge (2/3)r$ and  $|y-z| \le ((|\wt y | + |\wt z|)^2 + y_d^2)^{1/2} < ((2\eps)^2 + 1)^{1/2}r< \sqrt{37/36}\,r$. Moreover, by  \eqref{e:U-rho-C11-2}, we have $\delta_D(y) \ge \sqrt{4/5}\,\rho_D(y) \ge \sqrt{9/20} \, r >2^{-1}|y-z| \vee \delta_D(z)$. Thus, using \hyperlink{B4-a}{{\bf (B4-a)}}, \hyperlink{B4-b}{{\bf (B4-b)}} and \eqref{e:scale}, we get that
	\begin{align}\label{e:4.3.1}
		\frac{\sB(z,y)}{|z-y|^{d+\alpha}} \asymp  \frac{\Phi_0 (\delta_D(z)/|y-z|)}{|z-y|^{d+\alpha}} 
			\asymp  \frac{\Phi_0(\delta_D(z)/r)}{r^{d+\alpha}}.
	\end{align}
By using the L\'evy system formula \eqref{e:Levysystem-Y-kappa} and \eqref{e:4.3.1}, we deduce that for all $x \in U(\eps r)$,
	\begin{align*}
		& \P_x \big(Y_{\tau_{U(\eps r)}} \in U(\eps r, r) \setminus U(\eps r, (3/4)r) \big)\\
		&= \E_x \left[\int_0^{\tau_{U(\eps r)}}\int_{ U(\eps r, r) \setminus U(\eps r, (3/4)r)} 
			\frac{\sB(Y_s, y)}{|Y_s-y|^{d+\alpha}} dy ds \right] \nn\\
		& \asymp  r^{-d-\alpha} m_d(U(\eps r, r) \setminus U(\eps r, (3/4)r) )\, 
			\E_x \left[\int_0^{\tau_{U(\eps r)}} \Phi_0(\delta_D(Y_s)/r)  ds \right] \nn\\
		& \asymp  r^{-\alpha }\,\E_x \left[\int_0^{\tau_{U(\eps r)}} \Phi_0(\delta_D(Y_s)/r)  ds \right].
	\end{align*}
\end{proof}

\begin{lemma}\label{l:4.2}		
Let $Q \in \partial D$, $0< r\le \wh R/8$ and $\eps \in (0,\epsilon_1)$. For all $x \in U(\eps r)$, we have		
	 \begin{align}		
		2^{p+1}(\sqrt 5/3)^{p}	(\delta_D(x)/r)^p 
		&\ge 	\P_x \big(Y_{\tau_{U(\eps r)}} \in U(\eps r, r) \setminus U(\eps r, (3/4)r) \big)\label{e:4.2.1}	
	\end{align}
and
	\begin{align}		
		2^{-1}	(\delta_D(x)/r)^p & \le  \P_x \big(Y_{\tau_{U(\eps r)}} \in  U(r)\big).\label{e:4.2.2}	
	\end{align}
\end{lemma}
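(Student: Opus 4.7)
The plan is to apply the Dynkin-type formula (Corollary \ref{c:Dynkin-local}) to the two barriers $\phi_p$ and $\varphi_p$ constructed in Lemma \ref{l:Dynkin1}. Since $\eps\in(0,\epsilon_1)$ we have $U(\eps r)\subset U(\epsilon_1 r)$, so both barriers lie in $C^{1,1}(U(\eps r))$, are bounded on $D$ (vanishing outside $U(r)$ and bounded by $2r^p$ on $U(r)$), and satisfy, respectively, $L^\kappa\phi_p\le 0$ and $L^\kappa\varphi_p\ge 0$ on $U(\eps r)$. In particular, Corollary \ref{c:Dynkin-local} applies to $U=U(\eps r)$ with either of these barriers, using the standard convention that $\phi_p(\partial)=\varphi_p(\partial)=0$, which is consistent with their vanishing outside $U(r)\subset D$.

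For \eqref{e:4.2.1}, Corollary \ref{c:Dynkin-local} applied to $\phi_p$ gives
\[
\E_x[\phi_p(Y_{\tau_{U(\eps r)}})] \le \phi_p(x) \le 2\delta_D(x)^p,
\]
by Lemma \ref{l:Dynkin1}(i)(b)--(c). For any $y\in U(\eps r,r)\setminus U(\eps r,(3/4)r)$ we have $\rho_D(y)\ge (3/4)r$, and since $y\in U(r)\subset U(\wh R/8)$, \eqref{e:U-rho-C11-2} gives $\delta_D(y)\ge (2/\sqrt 5)\rho_D(y)\ge (3/(2\sqrt 5))r$. Then by Lemma \ref{l:Dynkin1}(i)(b), $\phi_p(y)\ge \delta_D(y)^p\ge (3/(2\sqrt 5))^p r^p$. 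Markov's inequality now yields
\[
\P_x\!\big(Y_{\tau_{U(\eps r)}}\in U(\eps r,r)\setminus U(\eps r,(3/4)r)\big)\cdot (3/(2\sqrt 5))^p r^p \;\le\; 2\delta_D(x)^p,
\]
which rearranges to \eqref{e:4.2.1}.

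For \eqref{e:4.2.2}, Corollary \ref{c:Dynkin-local} applied to $\varphi_p$ gives
\[
\E_x[\varphi_p(Y_{\tau_{U(\eps r)}})] \ge \varphi_p(x) \ge \delta_D(x)^p,
\]
by Lemma \ref{l:Dynkin1}(ii)(b)--(c). Since $\varphi_p$ vanishes off $U(r)$ and at $\partial$, the left-hand side equals $\E_x[\varphi_p(Y_{\tau_{U(\eps r)}});\,Y_{\tau_{U(\eps r)}}\in U(r)]$; on $U(r)$ one has $\delta_D(\cdot)\le \rho_D(\cdot)<r$, hence $\varphi_p\le 2\delta_D(\cdot)^p\le 2r^p$ by (b). Thus
\[
2r^p\cdot \P_x\!\big(Y_{\tau_{U(\eps r)}}\in U(r)\big)\;\ge\;\delta_D(x)^p,
\]
which is \eqref{e:4.2.2}.

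The substantive work is already packaged in the construction of the barriers $\phi_p,\varphi_p$ in Lemma \ref{l:Dynkin1} (which in turn relies on the refined estimate Proposition \ref{p:barrier} on $L_\alpha^{\sB}h_{q,V}$ and on the compensator bound Proposition \ref{p:compensator}), so the present lemma itself is a routine application of Dynkin's formula. The only delicate point to keep track of is the cemetery-point convention, but this causes no trouble since both barriers are already compactly supported in $U(r)\subset D$.
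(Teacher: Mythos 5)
Your proof is correct and follows the same route as the paper: apply Corollary \ref{c:Dynkin-local} on $U(\eps r)$ to the barriers $\phi_p$, $\varphi_p$ from Lemma \ref{l:Dynkin1}, then extract the probability bounds from the pointwise sandwich $\delta_D^p\le\phi_p,\varphi_p\le 2\delta_D^p$ on $U(r)$ together with the estimate $\delta_D(y)\ge(2/\sqrt 5)\rho_D(y)\ge(3/(2\sqrt 5))r$ from \eqref{e:U-rho-C11-2} on the target annulus. The constant bookkeeping and the remark about the cemetery convention both match what the paper does implicitly.
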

\begin{proof}  By Lemma \ref{l:Dynkin1}(i)-(ii), the functions $\phi_p$ and $\varphi_p$ defined in \eqref{e:def-phi-varphi} satisfy all the assumptions of Corollary \ref{c:Dynkin-local} with $U=U(\epsilon_1 r)$.
Hence, using Corollary \ref{c:Dynkin-local},  Lemma \ref{l:Dynkin1}(i)-(ii) and \eqref{e:U-rho-C11-2}, we get that for
all $x \in U(\eps r)$,	
	  \begin{align*} 
	  	2\delta_D(x)^p &\ge \phi_p(x) \ge \E_x \big[ \phi_p (Y_{\tau_{U(\eps r)}})\big]
	  		\ge	\E_x \big[ \delta_D (Y_{\tau_{U(\eps r)}})^p:Y_{\tau_{U(\eps r)}} \in U(r) \big]\nn\\	
	  	&\ge  (3/2\sqrt 5)^{p} r^p\,	\P_x \big(Y_{\tau_{U(\eps r)}}\in U(\eps r, r) \setminus U(\eps r, (3/4)r) \big)	
  	\end{align*}
and	
	\begin{align*}		
		\delta_D(x)^p &\le \varphi_p(x)\le\E_x \big[ \varphi_p (Y_{\tau_{U(\eps r)}})\big] 
			\le 2	\E_x \big[ \delta_D (Y_{\tau_{U(\eps r)}})^p:Y_{\tau_{U(\eps r)}} \in U(r) \big] \nn\\		
		& \le 2r^p	\, \P_x \big(Y_{\tau_{U(\eps r)}} \in  U(r)\big).	
	\end{align*}
\end{proof}

Combining \eqref{e:4.2.1} with Lemma \ref{l:exit-jump}, we arrive at

\begin{corollary}\label{c:Dynkin-upper} 	
Let $Q \in \partial D$ and  $0<r \le \wh R/8$. There exists $C>0$ independent of $Q$ and $r$ such that for all $x \in U(\epsilon_1 r)$,		\begin{align*} 	
		\E_x \left[\int_0^{\tau_{U(\epsilon_1 r)}} \Phi_0(\delta_D(Y_s)/r)  ds \right] \le C (\delta_D(x)/r)^p  r^\alpha.	
	\end{align*}
\end{corollary}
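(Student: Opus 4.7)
The corollary is essentially a one-line consequence of the two preceding results once we notice that the hypothesis $\epsilon_1 \in (0,1/12)$ from Lemma \ref{l:Dynkin1} puts us squarely within the range of admissible $\eps$ in Lemma \ref{l:exit-jump}.

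My plan is as follows. First, I would apply Lemma \ref{l:exit-jump} with $\eps = \epsilon_1$ to convert the expected occupation-type integral into a boundary exit probability: there exists a constant $c_1 > 0$, independent of $Q$ and $r$, such that for all $x \in U(\epsilon_1 r)$,
\begin{equation*}
\E_x \left[\int_0^{\tau_{U(\epsilon_1 r)}} \Phi_0(\delta_D(Y_s)/r)\, ds \right]
\le c_1 r^\alpha \,\P_x \big(Y_{\tau_{U(\epsilon_1 r)}} \in U(\epsilon_1 r, r) \setminus U(\epsilon_1 r, (3/4)r) \big).
\end{equation*}
This step is legitimate because $\epsilon_1 < 1/12$ and $r \le \wh{R}/24 < \wh{R}/8$, so both hypotheses of Lemma \ref{l:exit-jump} are satisfied.

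Next, I would invoke the upper bound \eqref{e:4.2.1} from Lemma \ref{l:4.2}, applied with $\eps = \epsilon_1$. That gives
\begin{equation*}
\P_x \big(Y_{\tau_{U(\epsilon_1 r)}} \in U(\epsilon_1 r, r) \setminus U(\epsilon_1 r, (3/4)r) \big)
\le 2^{p+1}(\sqrt 5/3)^{p} (\delta_D(x)/r)^p
\end{equation*}
for every $x \in U(\epsilon_1 r)$. Combining the two inequalities yields the claimed bound with constant $C := 2^{p+1}(\sqrt 5/3)^{p} c_1$, which depends only on the quantities allowed and is independent of $Q$, $r$, and $x$.

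There is essentially no obstacle here: the real work has already been absorbed into the barrier construction (Lemma \ref{l:Dynkin1}), which is invoked through Lemma \ref{l:4.2}, and into the L\'evy system estimate of jump contributions (Lemma \ref{l:exit-jump}). The corollary is simply the juxtaposition of these two statements, after observing that the same cutoff parameter $\epsilon_1$ can be used in both.
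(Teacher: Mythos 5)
Your proof is correct and is exactly the paper's argument: the corollary appears immediately after Lemma~\ref{l:exit-jump} and inequality~\eqref{e:4.2.1} with the phrase ``Combining~\eqref{e:4.2.1} with Lemma~\ref{l:exit-jump}, we arrive at,'' which is precisely your two-step combination. The only point worth flagging is that Lemma~\ref{l:4.2} is stated for $\eps \in (0,\epsilon_1)$, so applying~\eqref{e:4.2.1} at $\eps = \epsilon_1$ is technically outside its stated range; this is nonetheless valid because Lemma~\ref{l:Dynkin1}(i) gives $\phi_p \in C^{1,1}(U(\epsilon_1 r))$ and $L^\kappa \phi_p \le 0$ on all of $U(\epsilon_1 r)$, so Corollary~\ref{c:Dynkin-local} may be invoked with $U = U(\epsilon_1 r)$ itself, and the paper makes the same implicit endpoint extension.
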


\begin{lemma}\label{l:6.1}
Let $\lb_0\in [0, \beta_0]$ be such that $p<\alpha+\lb_0$ and that the first inequality in \eqref{e:scale} holds.
Let $Q \in \partial D$, $0<r \le \wh R/8$ and $a,b \in (0,1)$ be such that $a<\epsilon_1 b/5$.	 
There exists $C>0$ independent of $Q, r, a$ and $b$ such that for all $x \in U(a r)$,
	\begin{align*} 
		\P_x \Big( Y_{\tau_{B_D(x,ar)}} \in D \setminus B(x,br) \Big) 
			\le C (a/b)^{\alpha+\lb_0} ( \delta_D(x)/(ar))^p.
	\end{align*}
\end{lemma}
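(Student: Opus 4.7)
The plan is to combine the L\'evy system formula \eqref{e:Levysystem-Y-kappa} with a careful pointwise estimate of the jump-kernel integral and Corollary \ref{c:Dynkin-upper}. First I would write
\begin{equation*}
\P_x\big(Y_{\tau_{B_D(x,ar)}}\in D\setminus B(x,br)\big)=\E_x\bigg[\int_0^{\tau_{B_D(x,ar)}}J(Y_s)\,ds\bigg],\quad J(z):=\int_{D\setminus B(x,br)}\frac{\sB(z,y)}{|z-y|^{d+\alpha}}\,dy.
\end{equation*}
For $z\in B_D(x,ar)$, the hypotheses $x\in U(ar)$ and $a<\epsilon_1 b/5$ give $\delta_D(z)\le 2ar$ and $|z-y|\ge (b-a)r\ge 4br/5$ for every $y\in D\setminus B(x,br)$. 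Using \hyperlink{B4-a}{{\bf (B4-a)}} together with the almost monotonicity of $\Phi_0$, integration in spherical shells followed by the substitution $u=\delta_D(z)/\rho$ yields
\begin{equation*}
J(z)\le C\,\delta_D(z)^{-\alpha}\int_0^{u_0} u^{\alpha-1}\Phi_0(u)\,du,\qquad u_0:=\frac{\delta_D(z)}{(b-a)r}\le \frac{5a}{2b}<1.
\end{equation*}
The lower scaling in \eqref{e:scale} gives $\Phi_0(u)\le c(u/u_0)^{\lb_0}\Phi_0(u_0)$ for $u\le u_0\le 1$, so that the remaining integral is bounded by $c\,\Phi_0(u_0)u_0^{\alpha}/(\alpha+\lb_0)$, whence $J(z)\le C(br)^{-\alpha}\Phi_0(\delta_D(z)/(br))$. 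Applying the lower scaling in \eqref{e:scale} a second time to compare $\Phi_0$ at arguments $\delta_D(z)/(br)\le \delta_D(z)/(ar)$ (both strictly less than $1$, or else $\Phi_0\equiv 1$) produces
\begin{equation*}
J(z)\le C(a/b)^{\alpha+\lb_0}(ar)^{-\alpha}\Phi_0(\delta_D(z)/(ar)).
\end{equation*}

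Next I would integrate this bound in time via Corollary \ref{c:Dynkin-upper} applied at the \emph{rescaled} radius $\wt r:=9ar/(2\epsilon_1)$. Since $a<\epsilon_1/5$ one has $\wt r<r\le \wh R/8$, so the corollary applies. By \eqref{e:U-rho-C11-1}, $x\in U(ar)\subset U(\epsilon_1\wt r)$ and $B_D(x,ar)\subset B_D(Q,3ar)\subset U(9ar/2)=U(\epsilon_1\wt r)$, so $\tau_{B_D(x,ar)}\le \tau_{U(\epsilon_1\wt r)}$. Because $ar$ and $\wt r$ differ by the fixed factor $2\epsilon_1/9$, the scaling of $\Phi_0$ yields $\Phi_0(\delta_D(\cdot)/(ar))\le c\,\Phi_0(\delta_D(\cdot)/\wt r)$, and Corollary \ref{c:Dynkin-upper} then gives
\begin{equation*}
\E_x\bigg[\int_0^{\tau_{B_D(x,ar)}}\Phi_0(\delta_D(Y_s)/(ar))\,ds\bigg]\le C(\delta_D(x)/\wt r)^p \wt r^{\alpha}\le C(\delta_D(x)/(ar))^p (ar)^{\alpha}.
\end{equation*}
Substituting the pointwise bound for $J$ and this time integral into the L\'evy system identity yields the claimed inequality.

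The main obstacle will be the first step: arranging the double use of the lower Matuszewska scaling so that the extracted factor is exactly $(a/b)^{\alpha+\lb_0}$, once \emph{inside} the $u$-integral (producing the exponent $\alpha+\lb_0$ from $\int u^{\alpha+\lb_0-1}du$), and once when rescaling $\Phi_0$ from length scale $br$ to $ar$. Everything else reduces to a routine combination of the L\'evy system formula, hypothesis \hyperlink{B4-a}{{\bf (B4-a)}}, the geometric inclusions \eqref{e:U-rho-C11-1} for $C^{1,1}$ boxes, and Corollary \ref{c:Dynkin-upper}.
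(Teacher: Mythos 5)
Your proof is correct and follows essentially the same strategy as the paper: write the probability via the L\'evy system formula, bound the jump-kernel tail using \hyperlink{B4-a}{{\bf (B4-a)}} together with the lower scaling in \eqref{e:scale} to peel off the factor $(a/b)^{\alpha+\lb_0}$, and then control the remaining time integral of $\Phi_0(\delta_D(Y_s)/(ar))$ by Corollary \ref{c:Dynkin-upper} applied at a slightly inflated box. The only cosmetic difference is that you pass through the substitution $u=\delta_D(z)/\rho$ and bound $\int_0^{u_0}u^{\alpha-1}\Phi_0(u)\,du$, whereas the paper peels off the scaling factor $\big(4\epsilon_1^{-1}ar/|y-z|\big)^{\lb_0}$ before integrating in $z$; these are the same computation in different bookkeeping.
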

\begin{proof} Let $y\in B_D(x,ar)$. Then $\delta_D(y) \le \delta_D(x)+ar <2ar$ and $B(y,4br/5) \subset B(x,br)$ 
since $a<\epsilon_1 b/5$. For all $z \in D\setminus B(y,4br/5)$
we have that $|y-z|\ge 4\epsilon^{-1}ar$, hence by \hyperlink{B4-a}{{\bf (B4-a)}} and \eqref{e:scale}, it holds that
	\begin{align*}
		\sB(y,z) \le c_1\Phi_0\bigg(\frac{\delta_D(y)}{|y-z| }\bigg) 
			\le c_2 \bigg(\frac{4\epsilon_1^{-1}ar}{|y-z|}\bigg)^{\lb_0}
			\Phi_0\bigg(\frac{\delta_D(y)}{4\epsilon_1^{-1}ar}\bigg).
	\end{align*}
Therefore, we have
	\begin{align}\label{e:jump-annulus}
	 	\int_{ D \setminus B(x,br)} \frac{\sB(y,z)}{|y-z|^{d+\alpha}}dz 
	 	&\le c_2 (4\epsilon_1^{-1}ar)^{\lb_0}
	 		\,\Phi_0\bigg(\frac{\delta_D(y)}{4\epsilon_1^{-1}ar}\bigg)\int_{ D \setminus B(y,4br/5) } 
	 		\frac{dz}{|y-z|^{d+\alpha+\lb_0}}\nn\\
		&\le  \frac{c_3 (4\epsilon_1^{-1}ar)^{\lb_0}}{(4br/5)^{\alpha+\lb_0}}\Phi_0\bigg(\frac{\delta_D(y)}{4\epsilon_1^{-1}ar}\bigg) 
			=\frac{c_4a^{\lb_0}}{b^{\alpha+\lb_0}r^{\alpha} }\Phi_0\bigg(\frac{\delta_D(y)}{4\epsilon_1^{-1}ar}\bigg).
	\end{align}
Note that  $B_D(x,ar) \subset  B_D(Q, 2ar) \subset U(4ar)$ by \eqref{e:U-rho-C11-1}. Thus, using 
the L\'evy system formula \eqref{e:Levysystem-Y-kappa}, \eqref{e:jump-annulus} and Corollary \ref{c:Dynkin-upper},  we arrive at
	\begin{align*}
		\P_x \Big( Y_{\tau_{B_D(x,a r)}} \in D \setminus B(x,br) \Big) &= \E_x \left[ \int_0^{\tau_{B_D(x,a r)}} 
			\int_{ D \setminus B(x,br) } \frac{\sB(Y_s,z)}{|Y_s-z|^{d+\alpha}}dz ds\right]\\
		&\le \frac{c_4a^{\lb_0}}{b^{\alpha+\lb_0}r^{\alpha}} 
			\E_x \left[ \int_0^{\tau_{ B_D(x,a r)}}\Phi_0\bigg(\frac{\delta_D(Y_s)}{4\epsilon_1^{-1}ar}\bigg) ds \right]\\
		& \le \frac{c_4a^{\lb_0}}{b^{\alpha+\lb_0}r^{\alpha}}
			\E_x \left[ \int_0^{\tau_{U(4ar)}}\Phi_0\bigg(\frac{\delta_D(Y_s)}{4\epsilon_1^{-1}ar}\bigg) ds \right] \\
		&\le c_5(a/b)^{\alpha+\lb_0} ( \delta_D(x)/(ar))^p.
	\end{align*}
\end{proof}

Let  $Q \in \partial D$ and $0< r \le \wh R/24$. Since $q \in (p,\alpha+\lb_1)$ and $q_1 \in (p,q)$, by using  Corollary \ref{c:Dynkin-local} and  Lemma \ref{l:Dynkin2}, we see that for all $x \in U(\epsilon_2r/2)$,
	\begin{align*}
		&2^{-1}\delta_D(x)^q \le \chi_q(x) \le \chi_q(x) -  \E_x \big[ \chi_{q} (Y_{\tau_{U(\epsilon_2r)})}\big] 
			=- \E_x \bigg[ \int_0^{\tau_{U(\epsilon_2r)}} L \chi_q(Y_s)ds \bigg] \\
		&\le  - \E_x \bigg[ \int_0^{\tau_{U(\epsilon_2r)}} \wt L_{q_1} \chi_q(Y_s)ds \bigg] \le cr^{q-\alpha} 
			\E_x \bigg[ \int_0^{\tau_{U(\epsilon_2r)}} \Phi_0(\delta_D(Y_s)/r)ds \bigg],
	\end{align*}
where the function $\chi_q$ is defined  in Lemma \ref{l:Dynkin2} and the operator $\wt L_{q_1}$ is defined by \eqref{e:def-Lq}.
Combining the above with Lemma \ref{l:exit-jump},  we obtain

\begin{lemma}\label{l:Dynkin-pre-lower}
Let 
$Q \in \partial D$ and $0<r \le \wh R/24$. There exists $C>0$ independent of $Q$ and $r$ such that for any $x \in U(\epsilon_2r/2)$,
	\begin{align*} 
		\P_x \big(Y_{\tau_{U(\epsilon_2 r)}} \in U(\epsilon_2 r, r) \setminus U(\epsilon_2 r, (3/4)r) \big) 
			\ge C (\delta_D(x)/r)^q.
	\end{align*}
\end{lemma}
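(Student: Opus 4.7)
The plan is to apply the Dynkin-type formula of Corollary \ref{c:Dynkin-local} to the barrier $\chi_q$ constructed in Lemma \ref{l:Dynkin2}, and then use Lemma \ref{l:exit-jump} to convert the resulting integral estimate into a lower bound on the exit probability. The barrier $\chi_q$ is custom-built for exactly this purpose: by (a)--(c) of Lemma \ref{l:Dynkin2} it is $C^{1,1}$ on $U(\epsilon_2 r)$, non-positive off $U(\epsilon_2 r)$, bounded below by $\tfrac12 \delta_D(\cdot)^q$ at an axis point, and satisfies the favorable one-sided estimate $\wt L_{q_1}\chi_q(y)\ge -C r^{q-\alpha}\Phi_0(\delta_D(y)/r)$.

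First, I would reduce to an axis configuration: given $x\in U(\epsilon_2 r/2)$, I replace $Q$ by the boundary point closest to $x$, so that $x=(\wt 0, x_d)$ in the new local chart and Lemma \ref{l:Dynkin2}(b) gives $\chi_q(x)\ge \tfrac12 \delta_D(x)^q$. Applying Corollary \ref{c:Dynkin-local} on $U(\epsilon_2 r)$ to $\chi_q$, together with the fact that $\chi_q(Y_{\tau_{U(\epsilon_2 r)}})\le 0$ a.s.\ by (a), yields
\[
\tfrac12 \delta_D(x)^q \;\le\; \chi_q(x)-\E_x[\chi_q(Y_{\tau_{U(\epsilon_2 r)}})] \;=\; -\E_x\!\left[\int_0^{\tau_{U(\epsilon_2 r)}} L^\kappa\chi_q(Y_s)\,ds\right].
\]
Next, I use the relation
\[
L^\kappa\chi_q(y) \;=\; \wt L_{q_1}\chi_q(y)+\sum_{i=1}^{i_0}\mu^i(y)\bigl(C(\alpha,q_1,\F^i)-C(\alpha,p,\F^i)\bigr)\delta_D(y)^{-\alpha}\chi_q(y),
\]
combined with Lemma \ref{l:constant} (which gives strict positivity of the bracketed coefficients since $q_1>p$) and the non-negativity of $\chi_q$ on the relevant region, to get $-L^\kappa\chi_q(y)\le -\wt L_{q_1}\chi_q(y)\le C r^{q-\alpha}\Phi_0(\delta_D(y)/r)$ by Lemma \ref{l:Dynkin2}(c). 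Hence
\[
\tfrac12 \delta_D(x)^q \;\le\; C r^{q-\alpha}\, \E_x\!\left[\int_0^{\tau_{U(\epsilon_2 r)}}\Phi_0(\delta_D(Y_s)/r)\,ds\right].
\]
Lemma \ref{l:exit-jump} (applied with $\varepsilon=\epsilon_2$) then translates this integral into the desired exit probability, producing $\P_x(Y_{\tau_{U(\epsilon_2 r)}}\in U(\epsilon_2 r,r)\setminus U(\epsilon_2 r,(3/4)r))\ge C(\delta_D(x)/r)^q$.

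The main obstacle is justifying the sign hypotheses: Corollary \ref{c:Dynkin-local} requires $L^\kappa\chi_q$ to have a definite sign on $U(\epsilon_2 r)$, and the step $-L^\kappa\chi_q\le -\wt L_{q_1}\chi_q$ needs $\chi_q\ge 0$ throughout $U(\epsilon_2 r)$. Both are delicate because $\chi_q=h_{q,U(r)}-\epsilon_2^{q-2N_0}r^q\psi^{(r)}$ mixes the distance-power and the auxiliary bump $\psi^{(r)}$, and the latter can be non-trivial in transverse directions. Resolving this is exactly what Lemma \ref{l:Dynkin2}(c) achieves: the parameter $\epsilon_2$ is chosen small enough that the barrier's negative contributions from $\psi^{(r)}$ are absorbed, ensuring the favorable one-sided bound for $\wt L_{q_1}$, and ensuring that $\chi_q$ behaves well enough on $U(\epsilon_2 r)$ that the argument above goes through (with, if necessary, a standard truncation at the zero set of $\chi_q$, which preserves the upper barrier property because $\chi_q$ vanishes off $U(\epsilon_2 r)$ and the Dynkin formula is stable under such modifications). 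Granted this, the three steps above close the argument.
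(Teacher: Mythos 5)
Your proposal follows exactly the route the paper takes: apply the Dynkin formula (Corollary \ref{c:Dynkin-local}) to the barrier $\chi_q$ of Lemma \ref{l:Dynkin2}, drop the exit term via Lemma \ref{l:Dynkin2}(a), pass from $L^\kappa\chi_q$ to $\wt L_{q_1}\chi_q$, bound the latter via Lemma \ref{l:Dynkin2}(c), and convert the resulting occupation-time bound into an exit probability via Lemma \ref{l:exit-jump}. You also correctly identify the two sign obstacles. However, your claimed resolution of those obstacles does not close the gap.

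First, the comparison $-L^\kappa\chi_q\le -\wt L_{q_1}\chi_q$ requires $\chi_q\ge 0$ on $U(\epsilon_2 r)$, and this actually \emph{fails}: for $y$ with $|\wt y|\approx\epsilon_2 r/2$ and $\rho_D(y)$ small, one has $\psi^{(r)}(y)=(|\wt y|/r)^{2N_0}+(\rho_D(y)/r)^{2N_0}\approx(\epsilon_2/2)^{2N_0}$ while $\delta_D(y)^q\to 0$, so $\chi_q(y)\to -2^{-2N_0}\epsilon_2^q r^q<0$. Lemma \ref{l:Dynkin2}(c) bounds $\wt L_{q_1}\chi_q$ from below; it does not control the sign of $\chi_q$ itself, nor the sign of $L^\kappa\chi_q$, which is what Corollary \ref{c:Dynkin-local} literally requires. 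The ``standard truncation at the zero set of $\chi_q$'' that you invoke is not innocuous either: $\chi_q^+=\chi_q\vee 0$ is no longer $C^{1,1}$ across $\{\chi_q=0\}$, so the Dynkin formula in Proposition \ref{p:Dynkin-formula}/Corollary \ref{c:Dynkin-local} cannot be applied to it; and restricting the domain to $\{\chi_q>0\}$ still leaves the sign of $L^\kappa\chi_q$ undetermined there.

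Second, your re-centering step (replacing $Q$ by $Q_x$) is needed to invoke Lemma \ref{l:Dynkin2}(b), which is stated only for \emph{axis} points $y=(\wt 0,y_d)$, but it changes the box: after re-centering, the Dynkin argument and Lemma \ref{l:exit-jump} produce a lower bound for $\P_x\big(Y_{\tau_{U^{Q_x}(\epsilon_2 r)}}\in U^{Q_x}(\epsilon_2 r,r)\setminus U^{Q_x}(\epsilon_2 r,(3/4)r)\big)$, whereas the lemma (and its application in \eqref{e:H1-prebound}) asserts a bound for the exit sets in the $Q$-chart, $U^Q(\cdot)$. These boxes are comparable in size but not nested, so the two exit events are not directly comparable, and you have not bridged them. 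In short, you have faithfully reproduced the paper's one-paragraph argument and correctly flagged its delicate points, but the patches you sketch (small $\epsilon_2$, truncation, re-centering) do not, as written, discharge them.
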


The next proposition is the most demanding part of the proof of Theorem \ref{t:Dynkin-improve}.

\begin{prop}\label{p:Dynkin-improve}	Let $Q \in \partial D$ and $0<r \le \wh R/24$. There exists $C>0$ independent of $Q$ and $r$ such that for any $x \in 	U(\epsilon_2r/4)$,
	\begin{align*} 
		\P_x \big( Y_{\tau_{U(\epsilon_2 r)}} \in U(\epsilon_2 r, r) \setminus U(\epsilon_2 r, (3/4)r) \big) 
			\ge C 	\P_x \big(Y_{\tau_{U(\epsilon_2 r)}} \in U(r) \big).
	\end{align*}
\end{prop}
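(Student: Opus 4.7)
The plan is to reduce the inequality, via the L\'evy system formula \eqref{e:Levysystem-Y-kappa}, to a pointwise estimate on jump intensities. Write $T := U(\epsilon_2 r, r) \setminus U(\epsilon_2 r, (3/4)r)$ for the top slab. By Lemma~\ref{l:exit-jump} applied with $\eps = \epsilon_2$, the left-hand side $\P_x(Y_{\tau_{U(\epsilon_2 r)}} \in T)$ is comparable to $r^{-\alpha} E_\Phi$, where
$$E_\Phi := \E_x\left[\int_0^{\tau_{U(\epsilon_2 r)}} \Phi_0(\delta_D(Y_s)/r)\,ds\right],$$
while the L\'evy system directly gives
$$\P_x\big(Y_{\tau_{U(\epsilon_2 r)}} \in U(r)\setminus T\big) = \E_x\left[\int_0^{\tau_{U(\epsilon_2 r)}} J(Y_s)\,ds\right], \qquad J(z) := \int_{U(r)\setminus T} \frac{\sB(z,y)}{|z-y|^{d+\alpha}}\,dy.$$
Thus, the proposition reduces to establishing a pointwise bound of the form $J(z) \le C\,\Phi_0(\delta_D(z)/r)/r^\alpha$ for $z$ visited with most of the occupation time of $Y$.

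Decompose $U(r)\setminus T = M \cup S$, where $M := \{|\wt y|<\epsilon_2 r,\; \epsilon_2 r<\rho_D(y)<(3/4)r\}$ and $S := \{\epsilon_2 r \le |\wt y|<r,\; 0<\rho_D(y)<r\}$. For every $z \in U(\epsilon_2 r)$ and $y \in M$, Lemma~\ref{l:C11}(iii) and the geometry give $\delta_D(y)\asymp\rho_D(y)\ge \epsilon_2 r$ and $|z-y|\ge \rho_D(y)-\rho_D(z)$; applying \hyperlink{B4-a}{{\bf (B4-a)}} in the form $\sB(z,y)\le C\Phi_0((\delta_D(z)\wedge\delta_D(y))/|z-y|)$ together with the almost monotonicity and weak scaling \eqref{e:scale} of $\Phi_0$ yields $\int_M \sB(z,y)|z-y|^{-d-\alpha}\,dy \le C\Phi_0(\delta_D(z)/r)/r^\alpha$ by a direct cylindrical integration, parallel to the estimates in Lemma~\ref{l:tech-1}.

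The main obstacle is the side contribution $\int_S$. For $z$ whose horizontal projection $|\wt z|$ is close to $\epsilon_2 r$, close-in jumps to $S$ (small $|\wt y|-\epsilon_2 r$ and matching $\rho_D$) make the pointwise intensity blow up, so the pointwise bound does not hold uniformly on $U(\epsilon_2 r)$. The hypothesis $x \in U(\epsilon_2 r/4)$ is needed to circumvent this. First, for $z \in U(\epsilon_2 r/2)$ one has $|z-y|\ge \epsilon_2 r/2$ for every $y \in S$, and a dyadic decomposition of $S$ in the horizontal variable $|\wt y|-\epsilon_2 r$ combined with \hyperlink{B4-a}{{\bf (B4-a)}} and \eqref{e:scale} yields $\int_S \sB(z,y)|z-y|^{-d-\alpha}\,dy \le C\Phi_0(\delta_D(z)/r)/r^\alpha$. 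Second, to control the occupation time of $Y_s$ in the annulus $U(\epsilon_2 r)\setminus U(\epsilon_2 r/2)$, I would apply the strong Markov property at $\sigma := \tau_{U(\epsilon_2 r/2)}$: by L\'evy-system comparison of rates entering the annulus versus rates entering $T$, using arguments analogous to Lemma~\ref{l:6.1} together with \eqref{e:4.2.2} to lower-bound the eventual exit into $U(r)$ starting from any point of the annulus, one can bound the contribution of this bad region by a fixed fraction of the top-slice probability and absorb it via a standard iteration.

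Once the pointwise bound on $J(z)$ is established for $z \in U(\epsilon_2 r/2)$ and the bad-annulus contribution is absorbed, integrating against the occupation time through the L\'evy system representation and invoking Lemma~\ref{l:exit-jump} gives $\P_x(Y_{\tau_{U(\epsilon_2 r)}} \in U(r)\setminus T) \le C'\P_x(Y_{\tau_{U(\epsilon_2 r)}} \in T)$. Adding $\P_x(Y_{\tau_{U(\epsilon_2 r)}} \in T)$ to both sides yields $\P_x(Y_{\tau_{U(\epsilon_2 r)}} \in U(r)) \le (1+C')\P_x(Y_{\tau_{U(\epsilon_2 r)}} \in T)$, which is the claimed inequality with constant $1/(1+C')$.
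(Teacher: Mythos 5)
Your reduction via Lemma~\ref{l:exit-jump} and the L\'evy system formula, and the split of $U(r)\setminus (T\cup U(\epsilon_2 r))$ into a middle block $M$ and a side block $S$, are sensible, and the pointwise bound $J(z)\le C r^{-\alpha}\Phi_0(\delta_D(z)/r)$ for $z\in U(\epsilon_2 r/2)$ is indeed provable (for such $z$ one has $|z-y|\gtrsim \epsilon_2 r$ uniformly over $y\in M\cup S$, so \hyperlink{B4-a}{{\bf (B4-a)}}, almost monotonicity, and the weak scaling \eqref{e:scale} give the claimed bound). Your diagnosis of where the pointwise bound fails, namely on the annulus $B:=U(\epsilon_2 r)\setminus U(\epsilon_2 r/2)$, is also correct.

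The gap is in the final step, where you claim to ``bound the contribution of this bad region by a fixed fraction of the top-slice probability and absorb it via a standard iteration.'' There is no such standard iteration. After applying the strong Markov property at $\sigma=\tau_{U(\epsilon_2 r/2)}$, you are left needing an upper bound for $\P_w\big(Y_{\tau_{U(\epsilon_2 r)}}\in M\cup S\big)\big/\P_w\big(Y_{\tau_{U(\epsilon_2 r)}}\in T\big)$ uniformly over $w\in B$. On the top part of $B$ (where $\delta_D(w)\asymp r$) this ratio is bounded by a Harnack/chain argument. But the side part of $B$ extends all the way down to $\partial D$: there are points $w\in B$ with $|\wt w|\approx \epsilon_2 r$ and $\delta_D(w)$ arbitrarily small, and for those $w$ both the numerator and the denominator are $\asymp(\delta_D(w)/r)^p$, so the sought bound on the ratio is precisely the statement of the proposition itself (now at a point near the lateral face instead of near the center). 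Lemma~\ref{l:6.1} and \eqref{e:4.2.2} do not break this circularity; they only give the rate at which the separate probabilities decay, not a comparison between them. The essential obstruction is that the process may spend most of its occupation time very close to $\partial D$ and then escape $U(\epsilon_2 r)$ laterally into $S$, never visiting $T$, and the frequency of this creeping behaviour is not controlled by any single pointwise jump-rate comparison.

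This is exactly the phenomenon the paper's proof is built to handle, and it requires a genuinely new ingredient. The argument in the paper introduces the nested dyadic slabs $U_i^{\pm}$ with heights $\asymp 2^{-i}\epsilon_2 r$, tracks the ratio $a_i=\sup_{z\in U_i^+}\P_z(H_2)/\P_z(H_1)$ across dyadic depth scales (Lemma~\ref{l:6.29}), and then shows by the one-step and $m$-step Lemmas~\ref{l:one-step}--\ref{l:m-step} together with the jump-horizon Lemma~\ref{l:jump-horizon} that the recursion error $\P_z\big(Y_{\tau_i}\in U(r)\setminus\cup_{k\le i}U_k^+\big)/\P_z(H_1)$ decays geometrically in $i$. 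It is precisely these lemmas that rule out the lateral creeping at small $\delta_D$ scales and allow $\sup_i a_i<\infty$. Your outline would need to reproduce something with this multi-scale structure; the two-scale split into $U(\epsilon_2 r/2)$ and its complementary annulus cannot close the argument.
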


To prove Proposition \ref{p:Dynkin-improve}, we follow the proof of  \cite[Lemma 6.2]{KSV18-a} (see also \cite[Lemma 6.2]{KSV} and \cite[Lemma 5.2]{KSV21}). 
The main challenge is to prove Lemma \ref{l:one-step} below. Unlike in \cite{KSV18-a} and \cite{KSV},
since we allow the killing potential $\kappa$ to be zero,  highly non-trivial modifications are needed. In \cite{KSV21}, where the case of no killing potential was studied, the step corresponding to Lemma \ref{l:one-step} 
was a consequence of the scaling property of the underlying process (see \cite[Corollary 3.4(b)]{KSV21}),  which is not applicable in the current setting.

Before giving the proof of Proposition \ref{p:Dynkin-improve}, we introduce some notation which is 
used in the proof.  Let $Q \in \partial D$, $0<r \le \wh R/24$,
	\begin{align*}
		H_1:=\left\{Y_{\tau_{U(\epsilon_2 r)}} \in U(\epsilon_2 r, r) \setminus U(\epsilon_2 r, (3/4)r)\right\} 
			\quad \text{and} \quad 	H_2:=\left\{Y_{\tau_{U(\epsilon_2 r)}} \in U(r)\right\}.
	\end{align*}
For $i \ge 1$, we set
	\begin{align*}
  		&s_i:= \frac{5\epsilon_2 r}{8} \bigg(\frac12 - \frac{1}{50} \sum_{j=1}^i \frac{1}{j^2}\bigg), \\
  		& U_i^-:=U(s_i, 2^{-i-1}\epsilon_2 r)	 \quad \text{ and } \quad U_i^+:=U(s_i, 2^{-i} \epsilon_2   r) \setminus U_i^-.
	\end{align*}
Note that for all $i \ge 1$, we have  $\epsilon_2 r/4<s_i<5\epsilon_2r/16$, $U_{i+1}^+ \subset U_i^- \subset U(\epsilon_2 r)$ for all $i\ge 1$ and 
	\begin{align}\label{e:Ui+}
		2^{-i-2}\epsilon_2 r\le \delta_D(z) \le 2^{-i}\epsilon_2 r, \quad z \in U_i^+
	\end{align}  
by  \eqref{e:U-rho-C11-2}. This implies that $U(\epsilon_2 r/4)\subset \cup_{i\ge 1}U_i^+$. 
\begin{figure}[!h]
	\centering
	
	\includegraphics[width=0.7\textwidth]{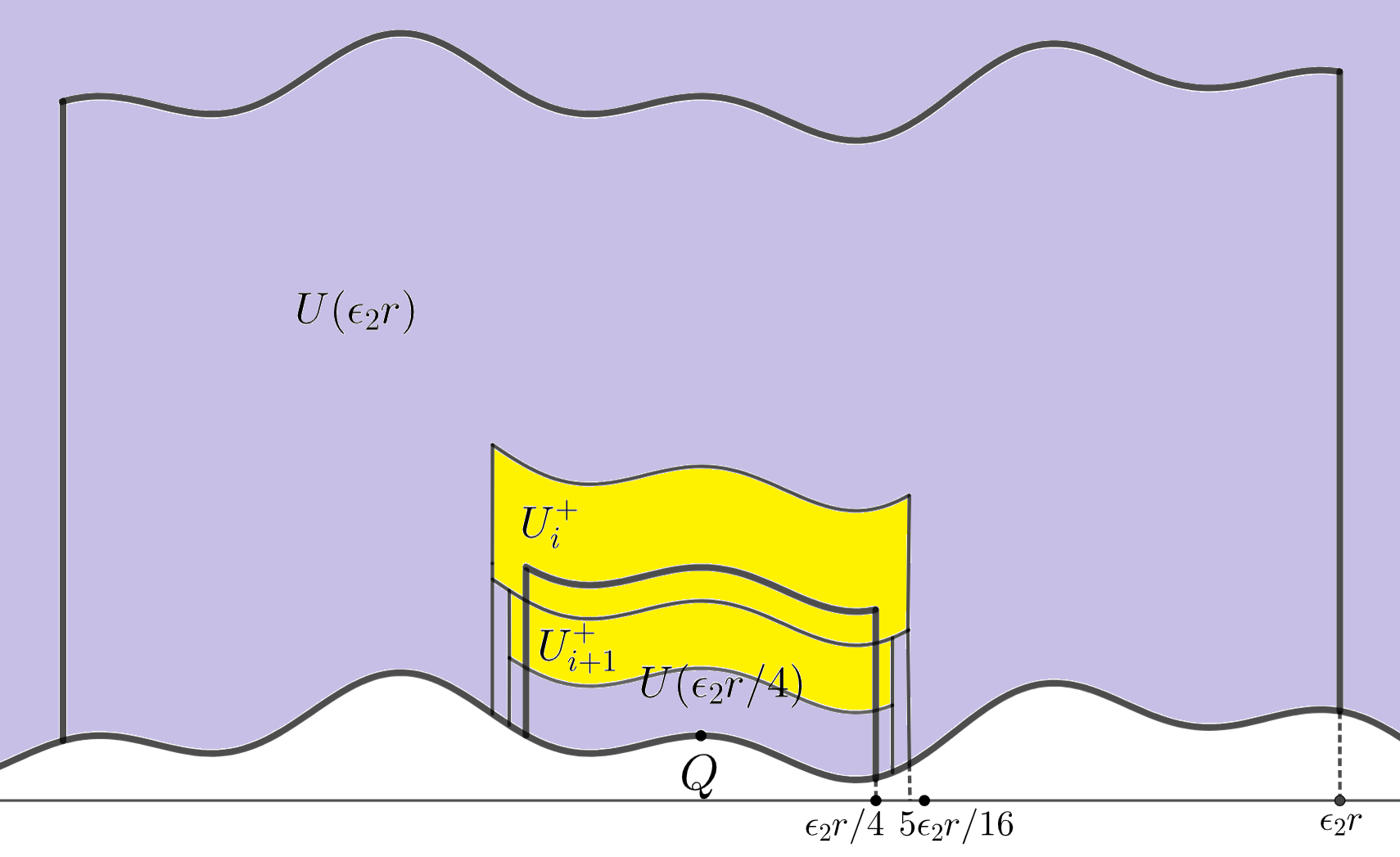}
	
	\vspace{-3mm}
	
	\caption{The sets $U^+_i$ and $U^+_{i+1}$}\label{Boxes}
\end{figure} 
Moreover, by Lemma \ref{l:Dynkin-pre-lower}, \eqref{e:Ui+},
there exists a constant $c>0$ such that  
	\begin{align}\label{e:H1-prebound}
		\P_z(H_1) \ge c(2^{-i-2}\epsilon_2)^q \quad \text{for  all $i \ge 1$ and $z \in U_{i}^+$}.
	\end{align}
Define for $i \ge 1$,
	\begin{align*}
		a_i= \sup_{z \in U_i^+} \big(\P_z(H_2)/\P_z(H_1)\big) \quad \text{and} \quad \tau_i = \tau_{U_i^-}.
	\end{align*}
For every $i \ge 1$, the constant $a_i$ is finite  by \eqref{e:H1-prebound}. Our goal is to show that 
	$$
		\sup_{z\in U(\epsilon_2 r/4)}\big(\P_z(H_2)/\P_z(H_1)\big)
			\le \sup_{z\in \cup_{i\ge 1}U_i^+}\big(\P_z(H_2)/\P_z(H_1)\big)=\sup_{i\ge 1}a_i <\infty,
	$$
which proves the proposition. This will be done through a series of lemmas.

\begin{lemma}\label{l:6.29}
For all $i \ge 1$,
	\begin{align}\label{e:6.29}
		a_{i+1} \le \sup_{1 \le j \le i}a_j + \sup_{z \in U_{i+1}^+}\frac{\P_z\big( Y_{\tau_{i}} 
			\in U(r) \setminus  \cup_{k=1}^{i} U_k^+\big)}{\P_z(H_1)}.
	\end{align}
\end{lemma}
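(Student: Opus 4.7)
The plan is to apply the strong Markov property at $\tau_i = \tau_{U_i^-}$, exploiting that $U_{i+1}^+ \subset U_i^- \subset U(\epsilon_2 r)$, so $\tau_i \le \tau_{U(\epsilon_2 r)}$ under $\P_z$ for every $z \in U_{i+1}^+$. I will decompose
\[
\P_z(H_2) = \P_z\big(H_2,\, Y_{\tau_i} \notin U(\epsilon_2 r)\big) + \P_z\big(H_2,\, Y_{\tau_i} \in U(\epsilon_2 r) \setminus U_i^-\big).
\]
On the first event, since $Y$ exits $U_i^-$ to a point outside the larger set $U(\epsilon_2 r)$, necessarily $\tau_i = \tau_{U(\epsilon_2 r)}$ and $Y_{\tau_i} = Y_{\tau_{U(\epsilon_2 r)}}$, so this term equals $\P_z\big(Y_{\tau_i} \in U(r) \setminus U(\epsilon_2 r)\big)$. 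On the second, the strong Markov property gives $\P_z(H_2,\, Y_{\tau_i} \in A) = \E_z[\P_{Y_{\tau_i}}(H_2);\, Y_{\tau_i} \in A]$ for any Borel $A \subset U(\epsilon_2 r) \setminus U_i^-$, and the same identity will be used with $H_1$ in place of $H_2$.

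Next I will split $U(\epsilon_2 r) \setminus U_i^-$ into the layers $U_k^+$ with $k \le i$ (each of which is contained in $U(\epsilon_2 r) \setminus U_i^-$, since their $\rho_D$-ranges lie above that of $U_i^-$) together with the remainder $B := (U(\epsilon_2 r) \setminus U_i^-) \setminus \bigcup_{k=1}^i U_k^+$. By the very definition of $a_k$, on $\{Y_{\tau_i} \in U_k^+\}$ one has $\P_{Y_{\tau_i}}(H_2) \le a_k\, \P_{Y_{\tau_i}}(H_1) \le \big(\sup_{1 \le j \le i} a_j\big)\,\P_{Y_{\tau_i}}(H_1)$, while on $B$ the trivial bound $\P_{Y_{\tau_i}}(H_2) \le 1$ will suffice. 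Summing the $U_k^+$ contributions and reapplying strong Markov yields $\sum_{k=1}^i \E_z[\P_{Y_{\tau_i}}(H_1);\, Y_{\tau_i} \in U_k^+] = \sum_{k=1}^i \P_z(H_1,\, Y_{\tau_i} \in U_k^+) \le \P_z(H_1)$.

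Finally, since each $U_k^+ \subset U(s_k, 2^{-k}\epsilon_2 r) \subset U(\epsilon_2 r) \subset U(r)$, both $U(r) \setminus U(\epsilon_2 r)$ and $B$ are contained in $U(r) \setminus \bigcup_{k=1}^i U_k^+$, so combining the three pieces produces
\[
\P_z(H_2) \le \Big(\sup_{1 \le j \le i} a_j\Big) \P_z(H_1) + \P_z\big(Y_{\tau_i} \in U(r) \setminus \textstyle\bigcup_{k=1}^i U_k^+\big).
\]
Dividing by $\P_z(H_1) > 0$ (positivity guaranteed by \eqref{e:H1-prebound}) and taking the supremum over $z \in U_{i+1}^+$ will yield \eqref{e:6.29}. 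The main obstacle is essentially bookkeeping: one must carefully distinguish the two regimes ($Y_{\tau_i}$ outside vs.\ inside $U(\epsilon_2 r)$) when invoking the strong Markov property, and verify the set inclusions needed to merge the \emph{outside} remainder $U(r)\setminus U(\epsilon_2 r)$ with the \emph{inside} remainder $B$ into the single set $U(r) \setminus \bigcup_{k=1}^i U_k^+$; beyond this, no new analytic input is required.
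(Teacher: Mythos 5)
Your proposal is correct and takes essentially the same approach as the paper: apply the strong Markov property at $\tau_i$, use the definition of $a_k$ on the layers $U_k^+$ for $k\le i$, and fold everything else into the remainder term $\P_z(Y_{\tau_i}\in U(r)\setminus\cup_{k=1}^i U_k^+)$. The paper handles the remainder in one stroke by noting $H_2\subset\{Y_{\tau_i}\in U(r)\}$, whereas you split it into the pieces $\{Y_{\tau_i}\notin U(\epsilon_2 r)\}$ and $\{Y_{\tau_i}\in B\}$ before merging them back — a slightly more verbose but equally valid bookkeeping of the same idea.
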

\begin{proof}
Let $i\ge 1$ and $z\in U_{i+1}^+$. Since $\tau_i\le \tau_{U(\epsilon_2 r)}$, we have by the strong Markov property that
	\begin{align*}
		& \P_z(H_2, Y_{\tau_i}\in \cup_{k=1}^i U_k^+)=\sum_{k=1}^i \P_z(H_2, Y_{\tau_i}\in U_k^+)\\
		& = \sum_{k=1}^i \E_z(\P_{Y_{\tau_i}}(H_2), Y_{\tau_i}\in U_k^+) \le \sum_{k=1}^i 
			\E_z(a_k \P_{Y_{\tau_i}}(H_2), Y_{\tau_i}\in U_k^+)\\
		&\le (\sup_{1\le j \le i} a_j )\P_z(H_1, Y_{\tau_i}\in \cup_{k=1}^i U_k^+)\le (\sup_{1\le j \le i} a_j ) \P_z(H_1).
	\end{align*}
This implies that
	\begin{align*}
		\P_z(H_2)&=\P_z(H_2, Y_{\tau_i}\in \cup_{k=1}^i U_k^+)+\P_z(Y_{\tau_i}\in U(r)\setminus \cup_{k=1}^i U_k^+)\\
		&\le (\sup_{1\le j \le i} a_j ) \P_z(H_1)+ \P_z(Y_{\tau_i}\in U(r)\setminus \cup_{k=1}^i U_k^+),
	\end{align*}
which implies \eqref{e:6.29}.
\end{proof}

For $i \ge 1$, we define $\sigma_{i,0}=0$, $ \sigma_{i,1}=\inf\{t>0:|Y_t-Y_0| \ge 2^{-i-1}\epsilon_2 r\}$ 
and $\sigma_{i,k+1}=\sigma_{i,k} + \sigma_{i,1}\circ \uptheta_{\sigma_{i,k}}$ for $k \ge 1$. Here $\uptheta_t$ denotes the shift operator for $Y$.
\begin{lemma}\label{l:one-step}
	There exists a constant  $b_1 \in (0,1)$ independent of $Q$ and $r$ such that for all $i \ge 1$ and $w \in U_i^-$,
	\begin{align*}
		\P_w (\tau_{i} > \sigma_{i, 1} ) \le b_1.
	\end{align*}
\end{lemma}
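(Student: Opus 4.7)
The plan is to translate the estimate into a comparison of jump-rate integrals via the L\'evy system formula \eqref{e:Levysystem-Y-kappa}, and then to carry out the comparison by placing a test region just above $U_i^-$.

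Set $R_i := 2^{-i-1}\epsilon_2 r$, $B_i := B(w, R_i)$ and $A_i := B_i \cap U_i^-$, so that $\sigma_{i,1} = \tau_{B_i}$ and $\tau_{A_i} = \tau_i \wedge \sigma_{i,1}$. Decomposing $\{Y_{\tau_{A_i}} \in \cdot\}$ into ``$Y$ jumps from $A_i$ into $U_i^- \setminus B_i$'', ``$Y$ jumps from $A_i$ into $D \setminus U_i^-$'', and ``$Y$ is killed inside $A_i$'', \eqref{e:Levysystem-Y-kappa} yields
\begin{align*}
	\P_w(\tau_i > \sigma_{i,1}) &= \E_w \int_0^{\tau_{A_i}} L_1(Y_s)\,ds,\\
	1 - \P_w(\tau_i > \sigma_{i,1}) &= \E_w \int_0^{\tau_{A_i}} L_2(Y_s)\,ds,
\end{align*}
where $L_1(z) := \int_{U_i^- \setminus B_i} \sB(z,y)|z-y|^{-d-\alpha}\,dy$ and $L_2(z) := \int_{D \setminus U_i^-} \sB(z,y)|z-y|^{-d-\alpha}\,dy + \kappa(z)$. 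Hence it suffices to produce a constant $c_0 > 0$, independent of $Q, r, i$ and $w$, such that $\E_w\int_0^{\tau_{A_i}} L_2(Y_s)\,ds \ge c_0\,\E_w\int_0^{\tau_{A_i}} L_1(Y_s)\,ds$; the lemma then follows with $b_1 := (1+c_0)^{-1}$.

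To bound $L_2$ from below, I would choose the test box
$$F := \{y = (\wt y, y_d)\text{ in CS}_Q : |\wt y - \wt w| < R_i/100,\; 5R_i < \rho_D(y) < 10R_i\},$$
which, using $R_i \le \wh R/96$ and \eqref{e:Psi-bound}, lies in $B_D(Q, \wh R) \setminus U_i^-$ and has $m_d(F) \asymp R_i^d$. Elementary coordinate estimates based on $\Lambda_0 \le 1/2$ give $|z-y| \asymp R_i$ and $\delta_D(y) \ge \tfrac12|z-y|$ for all $z \in A_i$ and $y \in F$, so that \hyperlink{B4-b}{(B4-b)} and the almost monotonicity of $\Phi_0$ (from \eqref{e:scale}) yield
$$L_2(z) \ge c_1 R_i^{-\alpha}\Phi_0(\delta_D(z)/R_i), \qquad z \in A_i.$$
For the upper bound on $L_1$, I would split the integration along $|z-y| \ge R_i$ versus $|z-y| < R_i$. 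In the far regime, \hyperlink{B4-a}{(B4-a)} together with almost monotonicity of $\Phi_0$ give $\sB(z,y) \le c\,\Phi_0(\delta_D(z)/R_i)$, and a polar integration exploiting $U_i^- \subset \{0 < y_d < 2R_i\}$ produces a bound of order $R_i^{-\alpha}\Phi_0(\delta_D(z)/R_i)$. On the bulk $\{z \in A_i : |z-w| \le R_i/2\}$ the near regime is empty (as $|z-y| \ge R_i/2$ by the triangle inequality), which gives the pointwise comparison $L_2(z) \ge c_0\,L_1(z)$ on the bulk.

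The main obstacle is the boundary layer $\{z \in A_i : |z-w| > R_i/2\}$, where $L_1(z)$ blows up like $(R_i - |z-w|)^{-\alpha}$ as $z \to \partial B_i$ and a pointwise comparison with $L_2$ fails. The integrated quantity $\E_w\int_0^{\tau_{A_i}} L_1(Y_s)\,ds$ is nevertheless bounded by $1$, and the plan is to decompose the time integral according to whether $Y_s$ lies in the bulk or in the boundary layer: the bulk contribution is controlled by the pointwise bound together with the $L_2$ lower bound, while the boundary-layer contribution is estimated by rewriting the visits to the layer via the L\'evy system and using that each entry to it is followed rapidly by an exit or a killing event, at rate at least $c\,R_i^{-\alpha}\Phi_0(\delta_D(z)/R_i)$ coming from \hyperlink{B4-b}{(B4-b)} applied to $F$ again. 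Quantifying this averaged estimate uniformly in $Q, r, i$ and $w$ is the technical heart of the proof; once in place, the two pieces combine with the lower bound on $L_2$ to yield a constant $c_0 > 0$ independent of the geometry, completing the argument with $b_1 := (1+c_0)^{-1}$.
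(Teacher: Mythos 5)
Your L\'evy-system decomposition and the pointwise comparison on the bulk are a reasonable start (modulo a minor tuning of the box $F$ so that the hypothesis $\delta_D(y) \ge |z-y|/2$ of \hyperlink{B4-b}{{\bf (B4-b)}} actually holds for all $z \in A_i$ and $y \in F$ — with $\rho_D(y) \in (5R_i, 10R_i)$ and $|\wt z - \wt w|$ up to $R_i$, one can have $|z-y|$ up to roughly $10.5 R_i$ against $\delta_D(y) \ge (2/\sqrt 5)\cdot 5 R_i \approx 4.47 R_i$, which narrowly fails; a thinner, nearer box fixes this). The real problem is that you acknowledge the boundary-layer contribution is ``the technical heart of the proof'' and then do not carry it out; the sketch about rewriting visits to the layer via the L\'evy system does not clearly close. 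This is a genuine gap, not a routine detail. Moreover, even if the boundary layer could be controlled, your approach still needs a uniform-in-$i,w$ lower bound on $\E_w\int_0^{\tau_{A_i}}L_2(Y_s)\,ds$, and this does not follow from your pointwise estimate alone: $L_2(z) \asymp R_i^{-\alpha}\Phi_0(\delta_D(z)/R_i)$ degenerates as $\delta_D(z)\to 0$, so when $w$ is very close to $\partial D$ the weighted occupation time is not obviously bounded below. One has to show that the process reaches a region where $\delta_D$ is comparable to $R_i$ and stays there for time $\asymp R_i^\alpha$, with probability bounded away from zero uniformly in $i$; nothing in your proposal supplies that.

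The paper sidesteps both difficulties at once. Instead of comparing two occupation-time integrals, it shrinks the comparison ball to $B_D(w, 2^{-i-2}\epsilon_2 r) = B_D(w, R_i/2)$ — eliminating the boundary layer — and lower-bounds $\P_w(\tau_i \le \sigma_{i,1})$ directly by the probability that the process, before exiting this smaller ball, either is killed or accumulates exit rate into $D\setminus U_i^-$ exceeding a fixed constant. Concretely, the proof works with the reflected process $\overline Y$ via the L\'evy system, reduces to the claim \eqref{e:one-step-claim} that $\P_w(\int_0^{\bar\tau}\int_{D\setminus U_i^-}\cdots \ge c_0) \ge c_1$, and proves this by a ``hit-and-stay'' argument: hit a compact set $K_i$ away from $\partial D$ before leaving $B_{\overline D}(w, R_i/2)$ using Lemma~\ref{l:ckw-3-7}, stay within a ball of radius $\asymp R_i$ for time $\asymp R_i^\alpha$ with probability $\ge 1/2$ using Lemma~\ref{l:EP}, and observe that from near $K_i$ the jump rate into a suitable ball in $D\setminus U_i^-$ is $\gtrsim R_i^{-\alpha}$ by \hyperlink{B4-b}{{\bf (B4-b)}}. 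All constants scale with $R_i$, giving uniformity in $i$. Your plan, to become a proof, would have to be redirected toward this kind of direct probabilistic lower bound; the occupation-time comparison is not the natural route here.
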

\begin{proof} Let $i \ge 1$ and  $w \in U_{i}^-$. By the L\'evy system formula\eqref{e:Levysystem-Y-kappa}, 
since  $Y$ can be regarded as the part process of $\overline Y$ killed at $\zeta$,  we have
	\begin{equation}\label{e:one-step-1}
	\begin{split}
		\P_w (\tau_{i} \le \sigma_{i,1} ) &\ge  \P_w \big( Y_{\tau_{B_D(w, 2^{-i-2} \epsilon_2r)}} \in (D \setminus U_i^-) 
			\cup \{ \partial\}\big)\\
		& \ge \P_w\left(Y_{\tau_{B_{ D}(w, 2^{-i-2} \epsilon_2r)}} 
			\in D \setminus U_i^-, \, \tau_{ B_D(w,2^{-i-2} \epsilon_2 r)}<\zeta \right)   \\
		&\quad + \P_w\left(\tau_{ B_D(w,2^{-i-2} \epsilon_2 r)}=\zeta \right)\\
		& = \E_w \bigg[\1_{\{\tau_{ B_D(w,2^{-i-2} \epsilon_2 r)} <\zeta\}}
			\!\int_0^{\bar\tau_{B_{\overline D}(w,2^{-i-2} \epsilon_2 r)}} \!\! \int_{D \setminus U_i^-}  
			\frac{\sB(\overline Y_s,y)}{|\overline Y_s-y|^{d+\alpha}} dyds  \bigg] \\
		&\quad + \E_w\Big[\1_{\{\tau_{ B_D(w,2^{-i-2} \epsilon_2 r)} =\zeta\}}\Big]  \\
		&\ge  \E_w \bigg[1 \wedge \int_0^{\bar\tau_{B_{\overline D}(w,2^{-i-2} \epsilon_2 r)}} \int_{D \setminus U_i^-} 
			\frac{\sB(\overline Y_s,y)}{|\overline Y_s-y|^{d+\alpha}}dyds   \bigg]. 
	\end{split}
	\end{equation}
Thus, to obtain the desired result, it suffices to show that there exist constants $c_0,c_1 \in (0,1)$ independent of $Q,r,i$ and 
 $w\in U_i^-$ such that
	\begin{align}\label{e:one-step-claim}
		 \P_w \bigg( \int_0^{\bar\tau_{B_{\overline D}(w,2^{-i-2} \epsilon_2 r)}} 
	 		\int_{D \setminus U_i^-} \frac{\sB(\overline Y_s,y)}{|\overline Y_s-y|^{d+\alpha}}dyds   \ge c_0 \bigg) \ge c_1.
	\end{align}
Indeed, if \eqref{e:one-step-claim} holds, then we deduce from \eqref{e:one-step-1} that
	\begin{align*}
		&\P_w(\tau_i> \sigma_{i,1}) =  1 - 	\P_w(\tau_i \le \sigma_{i,1}) \\
		&\le 1 - c_0  \P_w \bigg( \int_0^{\bar\tau_{B_{\overline D}(w,2^{-i-2} \epsilon_2 r)}} 
			\int_{D \setminus U_i^-} \frac{\sB(\overline Y_s,y)}{|\overline Y_s-y|^{d+\alpha}}dyds   \ge c_0 \bigg) \le 1- c_0c_1,
	\end{align*}
which yields the result.

Now we prove \eqref{e:one-step-claim}. We will use the coordinate system CS$_Q$. For $i \ge 1$, define
	$$ 
		A_i=\left\{ z \in B_{\overline D}(w,\,2^{-i-3} \epsilon_2 r) : \delta_D(z) > 2^{-i-5} \epsilon_2 r\right\}.
	$$
Then $m_d(A_i) \ge c_2(2^{-i-3}\epsilon_2 r)^d$ for a constant $c_2>0$ independent of $Q,r,i$ and $w$.
Let $K_i$ be any compact subset of $A_i$ such that $m_d(K_i) \ge 2^{-1}m_d(A_i)$. Then by Lemma \ref{l:ckw-3-7}
(with $b=1/2$, $R_0=\wh{R}/24$, and $r$ replaced by $2^{-i-2}\epsilon_2 r$), 
there exists $c_3 \in (0,1)$ independent of $Q,r,i$ and $w$  such that 
	\begin{align}\label{e:killing-technical-0}
		\P_w\big(\bar\sigma_{K_i}< \bar\tau_{B_{\overline D}(w,2^{-i-2} \epsilon_2 r)} \big) \ge c_3.
	\end{align}
Choose any $z \in K_i$,  $v\in B(z,2^{-i-6} \epsilon_2 r)$ and $y \in B(z+2^{-i}\epsilon_2 r \e_d , 2^{-i-6} \epsilon_2 r)$. Then we have $\delta_D(v) \ge \delta_D(z) - |z-v| \ge 2^{-i-6}\epsilon_2 r$, 
$\delta_D(v) \le \delta_D(w)  + |w-z| + |z-v|  \le  2^{-i}\epsilon_2r$ and
	\begin{align*}
		|v-y| \le |v-z| +2^{-i}\epsilon_2 r  + |z+2^{-i}\epsilon_2 r \e_d -y| < (2^{-i}+2^{-i-5})\epsilon_2 r.
	\end{align*}
Moreover, using the mean value theorem and \eqref{e:local-map},  since $z_d \ge \Psi(\wt z)$, 
$|\wt y| \vee |\wt z| \le |w| + 2^{1-i}\epsilon_2 r < r$ by \eqref{e:U-rho-C11-1} 
$\Lambda r\le \Lambda \wh R/24 \le 1/48$ and $\epsilon_2\le 1/12$, we see that
	\begin{align*}
		\rho_D(y) &\ge z_d+2^{-i}\epsilon_2 r - 2^{-i-6}\epsilon_2r - \Psi(\wt y)\\
		& \ge z_d +2^{-i}\epsilon_2 r - 2^{-i-6}\epsilon_2r - \Psi(\wt z) - \Lambda (|\wt y| \vee |\wt z|)|\wt y- \wt z|\\
		& \ge 2^{-i}\epsilon_2 r - 2^{-i-6}\epsilon_2r - (2^{-i}\epsilon_2 r + 2^{-i-6}\epsilon_2r)/48 \ge (23/24)2^{-i} \epsilon_2 r.
	\end{align*}
Thus $y\in D\setminus U_i^-$ showing that $B(z+2^{-i}\epsilon_2 r \e_d, 2^{-i-6}\epsilon_2 r)\subset D\setminus U_i^-$. 
Further, by \eqref{e:U-rho-C11-2}, 
$\delta_D(y) \ge  (2/\sqrt 5) \rho_D(y) \ge  (2^{-i-1}+2^{-i-6}) \epsilon_2 r \ge 2^{-1} (|v-y| \vee \delta_D(v))$.
By \hyperlink{B4-b}{{\bf (B4-b)}}, \eqref{e:scale} and \eqref{e:VD}, it follows that
	\begin{align*}
		&\int_{D \setminus U_i^-} \frac{\sB(v,y)}{| v-y|^{d+\alpha}}dy 
			\ge  C_7	\int_{B(z + 2^{-i}\epsilon_2 r \e_d , \,2^{-i-6} \epsilon_2 r)} 
			\frac{\Phi_0((\delta_D(v) \wedge \delta_D(y)) /|v-y|)}{| v-y|^{d+\alpha}}dy\nn\\
		&\quad  \ge \frac{c_4\Phi_0(2^{-6})}{(2^{1-i}\epsilon_2 r) ^{d+\alpha}}
			\int_{B(z + 2^{-i}\epsilon_2 r \e_d ,\, 2^{-i-6} \epsilon_2 r)} dy \ge c_5  (2^{-i}\epsilon_2 r)^{-\alpha},
	\end{align*}
where $c_5 \in (0,1)$ is a constant independent of $Q,r,i$ and $w$.

On the other hand, by Lemma \ref{l:EP} (with $T=\wh R^\alpha$, see also Remark \ref{r:conti}), 
there exists $c_6\in (0,1)$ such that 
	\begin{align}\label{e:killing-technical-1}
		\P_w \big( \bar\tau_{B(Y_0, 2^{-i-6}\epsilon_2 r)} \circ \uptheta_{\bar \sigma_{K_i}} 
			\ge c_6 ( 2^{-i-6}\epsilon_2 r)^{\alpha}\big) \ge 2^{-1}.
	\end{align}
On the event $\{\tau_{B(Y_0, 2^{-i-6}\epsilon_2 r)} \circ \uptheta_{\bar \sigma_{K_i}} \ge c_6 ( 2^{-i-6}\epsilon_2 r)^{\alpha}, \,\bar\sigma_{K_i}< \bar\tau_{B_{\overline D}(w,2^{-i-2} \epsilon_2 r)}\}$, we have 
	\begin{equation}\label{e:killing-technical-2}
		\begin{split}
		& \int_0^{\bar\tau_{B_{\overline D}(w,2^{-i-2} \epsilon_2 r)}} 
			\int_{D \setminus U_i^-} \frac{\sB(\overline Y_s,y)}{|\overline Y_s-y|^{d+\alpha}}dyds \\
	 	&\ge \int_{\bar\sigma_{K_i}}^{\bar \sigma_{K_i}+\bar\tau_{B(Y_0, 2^{-i-6}\epsilon_2 r)} 
	 		\circ \uptheta_{\bar \sigma_{K_i}}}	 
	  		\int_{D \setminus U_i^-} \frac{\sB(\overline Y_s,y)}{|\overline Y_s-y|^{d+\alpha}}dyds\\
		&\ge \int_{\bar\sigma_{K_i}}^{ \bar \sigma_{K_i}+\bar\tau_{B(Y_0, 2^{-i-6}\epsilon_2 r)} 
			\circ \uptheta_{\bar \sigma_{K_i}} } 	  
	   		\inf_{z \in K_i,\,v \in B(z, 2^{-i-6}\epsilon_2r)}\int_{D \setminus U_i^-} \frac{\sB(v,y)}{|v-y|^{d+\alpha}}dyds\\
	  	&\ge c_5 (2^{-i}\epsilon_2 r)^{-\alpha}
	 		 \int_{\bar\sigma_{K_i}}^{\bar \sigma_{K_i}+\bar\tau_{B(Y_0, 2^{-i-6}\epsilon_2 r)} 
	 		 \circ \uptheta_{\bar \sigma_{K_i}} }ds 
	  	\ge 2^{-6\alpha}c_5c_6.
	 \end{split}
	\end{equation}
By the strong Markov property,  \eqref{e:killing-technical-0},  \eqref{e:killing-technical-1}  and  \eqref{e:killing-technical-2}, we arrive at
	\begin{align*}
		 &\P_w \bigg( \int_0^{\bar\tau_{B_{\overline D}(w,2^{-i-2} \epsilon_2 r)}} \int_{D \setminus U_i^-} 
		 	\frac{\sB(\overline Y_s,y)}{|\overline Y_s-y|^{d+\alpha}}dyds   \ge 2^{-6\alpha}c_5c_6 \bigg) \\
		 &\ge \P_w \Big(\bar\tau_{B(Y_0, 2^{-i-6}\epsilon_2 r)} \circ \uptheta_{\bar \sigma_{K_i}} 
		 	\ge c_6 ( 2^{-i-6}\epsilon_2 r)^{\alpha}, \,\bar\sigma_{K_i}< \bar\tau_{B_{\overline D}(w,2^{-i-2} \epsilon_2 r)}   \Big)\\
		 &\ge 2^{-1}\P_w \big(\bar\sigma_{K_i}< \bar\tau_{B_{\overline D}(w,2^{-i-2} \epsilon_2 r)}   \big) \ge 2^{-1}c_3,
	\end{align*}
proving that  \eqref{e:one-step-claim} holds with  $c_0:=2^{-6\alpha}c_5c_6$. The proof is complete.
\end{proof}

\begin{lemma}\label{l:m-step}
For all $i, m \ge 1$ and $z \in U_{i+1}^+$, we have
	\begin{align*}
		\P_z (\tau_{i} > \sigma_{i, mi} ) \le b_1^{mi},
	\end{align*}
where $b_1 \in (0,1)$ is the constant in Lemma \ref{l:one-step}.
\end{lemma}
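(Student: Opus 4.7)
The strategy is a straightforward induction on $k$, using the strong Markov property at the stopping times $\sigma_{i,k}$, with the base step and the inductive step both supplied by Lemma \ref{l:one-step}. The point is that $U_{i+1}^+\subset U_i^-$, so the starting point already lies in the set to which Lemma \ref{l:one-step} applies, and moreover on the event $\{\tau_i>\sigma_{i,k}\}$ the process at time $\sigma_{i,k}$ is still inside $U_i^-$, so the lemma can be reapplied to the post-$\sigma_{i,k}$ evolution.

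First I will record the induction claim: for every $k\ge 1$ and every $z\in U_i^-$,
\begin{equation}\label{e:induction-claim}
\P_z(\tau_i>\sigma_{i,k})\le b_1^k.
\end{equation}
Since $U_{i+1}^+\subset U_i^-$, applying \eqref{e:induction-claim} with $k=mi$ yields the desired bound.

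For $k=1$, \eqref{e:induction-claim} is exactly Lemma \ref{l:one-step}. For the inductive step, fix $k\ge 1$ and suppose \eqref{e:induction-claim} holds for every starting point in $U_i^-$. By the definition of $\sigma_{i,k+1}$ and the fact that $\tau_i\circ\uptheta_{\sigma_{i,k}}=\tau_i-\sigma_{i,k}$ on $\{\tau_i>\sigma_{i,k}\}$, we have
\[
\{\tau_i>\sigma_{i,k+1}\}=\{\tau_i>\sigma_{i,k}\}\cap\{\tau_i\circ\uptheta_{\sigma_{i,k}}>\sigma_{i,1}\circ\uptheta_{\sigma_{i,k}}\}.
\]
On $\{\tau_i>\sigma_{i,k}\}$, the point $Y_{\sigma_{i,k}}$ lies in $U_i^-$ (it has not yet exited this open set), so Lemma \ref{l:one-step} yields $\P_{Y_{\sigma_{i,k}}}(\tau_i>\sigma_{i,1})\le b_1$. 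By the strong Markov property applied at $\sigma_{i,k}$,
\[
\P_z(\tau_i>\sigma_{i,k+1})
=\E_z\!\left[\1_{\{\tau_i>\sigma_{i,k}\}}\,\P_{Y_{\sigma_{i,k}}}(\tau_i>\sigma_{i,1})\right]
\le b_1\,\P_z(\tau_i>\sigma_{i,k})\le b_1^{k+1},
\]
where the last inequality uses the induction hypothesis.

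This completes the induction, and taking $k=mi$ and $z\in U_{i+1}^+\subset U_i^-$ gives $\P_z(\tau_i>\sigma_{i,mi})\le b_1^{mi}$. There is no real obstacle here: Lemma \ref{l:one-step} has already absorbed all the analytic work (establishing a uniform lower bound on the probability that $Y$ either exits $U_i^-$ or is killed before moving distance $2^{-i-1}\epsilon_2 r$), and the present lemma only iterates that one-step estimate.
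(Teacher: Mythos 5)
Your proof is correct and takes essentially the same approach as the paper's: both iterate the one-step estimate from Lemma \ref{l:one-step} via the strong Markov property at the times $\sigma_{i,k}$, relying on $U_{i+1}^+\subset U_i^-$ and the fact that on $\{\tau_i>\sigma_{i,k}\}$ the point $Y_{\sigma_{i,k}}$ is still in $U_i^-$. The paper writes the iteration as a single telescoping chain of inequalities while you organize it as an explicit induction, but the underlying argument is identical.
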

\begin{proof} Using the strong Markov property and Lemma \ref{l:one-step}, since $U_{i+1}^+ \subset U_i^-$, we obtain
	\begin{align*}
		&\P_z (\tau_{i} > \sigma_{i, mi} ) 
		= \E_z \left[ \P_{Y_{\sigma_{i, mi-1}}}  (\tau_i > \sigma_{i,1}) ; Y_{\sigma_{i, k}} \in U_i^-, \, 1 \le k \le mi-1 \right]\\
		&\le 	\sup_{w \in U_i^-}	\P_w (\tau_{i} > \sigma_{i, 1} ) \P_z( \tau_i > \sigma_{i,mi-1}) 
		\le \cdots \le \Big( \sup_{w \in U_i^-}	\P_w (\tau_{i} > \sigma_{i, 1} )\Big)^{mi} \le b_1^{mi}.
	\end{align*}
\end{proof}

Let $\lb_0\in [0, \beta_0]$ be such that $p<\alpha+\lb_0$ and that the first inequality in \eqref{e:scale} holds.
We now choose $m_0 \in \N$ such that $b_1^{m_0} < 2^{-(\alpha+\lb_0)}$, 
where $b_1 \in (0,1)$ is the constant in Lemma \ref{l:one-step}. Then we  choose $i_0 \in \N$ such that 
$400m_0(i+1)^3 < \epsilon_1 2^{i+1}$ for all $i \ge i_0$, where $\epsilon_1 \in (0,1/12)$ is the constant in Lemma \ref{l:Dynkin1}.

\begin{lemma}\label{l:jump-horizon}
	There exists $C>0$ independent of $Q$ and $r$ such that for any $i \ge i_0$ and $z \in U_{i+1}^+$,
	\begin{align*}
		\P_z\big( Y_{\tau_{i}} \in U(r) \setminus  \cup_{k=1}^{i} U_k^+, \, \tau_i \le \sigma_{i,m_0i}\big) 
			\le C  i^{3(\alpha+\lb_0)+1} \, 2^{-i(\alpha+\lb_0)}.
	\end{align*}
\end{lemma}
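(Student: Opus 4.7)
My plan is to bound the probability by applying the L\'evy system formula, splitting the target set $A:=U(r)\setminus\bigcup_{k=1}^i U_k^+$ into vertical ``slices,'' and estimating the integrated jump intensity from $U_i^-$ to each slice. By \eqref{e:Levysystem-Y-kappa},
$$
\P_z\big(Y_{\tau_i}\in A,\ \tau_i\le \sigma_{i,m_0 i}\big)\;=\;\E_z\!\int_0^{\tau_i\wedge \sigma_{i,m_0 i}}\!\!\int_A \frac{\sB(Y_s,y)}{|Y_s-y|^{d+\alpha}}\,dy\,ds,
$$
and for $s<\tau_i$ we have $Y_s\in U_i^-$, so $\delta_D(Y_s)\le 2^{-i-1}\epsilon_2 r$. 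Using assumption \hyperlink{B4-a}{{\bf (B4-a)}} together with \eqref{e:scale}, the integrand is dominated by $c\,\delta_D(Y_s)^{\lb_0}/|Y_s-y|^{d+\alpha+\lb_0}$. I will decompose $A=A^{(0)}\cup A^{(1)}\cup\cdots\cup A^{(i+1)}$, where $A^{(0)}=\{y\in U(r):\rho_D(y)>\epsilon_2 r/2\}$, $A^{(k)}=\{y\in U(r):2^{-k-1}\epsilon_2 r<\rho_D(y)\le 2^{-k}\epsilon_2 r,\ |\wt y|\ge s_k\}$ for $1\le k\le i$, and $A^{(i+1)}=\{y\in U(r):\rho_D(y)\le 2^{-i-1}\epsilon_2 r,\ |\wt y|\ge s_i\}$.

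For each $0\le k\le i-1$ (the ``non-degenerate'' levels), the minimum distance from $U_i^-$ to $A^{(k)}$ is at least $R_k:=\max\{c\, 2^{-k}\epsilon_2 r,\ s_k-s_i\}$, where the vertical gap comes from $\rho_D(y)-\rho_D(y_s)\ge 2^{-k-2}\epsilon_2 r$ and the horizontal gap $s_k-s_i\asymp \epsilon_2 r(i-k)/i^2$ comes from $|\wt y|-|\wt y_s|\ge s_k-s_i$. Integrating the dominant bound over $A^{(k)}$ gives $\int_{A^{(k)}} \sB(y_s,y)/|y_s-y|^{d+\alpha}\,dy\le c(2^{-i}\epsilon_2 r)^{\lb_0}/R_k^{\alpha+\lb_0}$. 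By the strong Markov property at each $\sigma_{i,n-1}$, the total time spent in $U_i^-$ during each epoch is bounded in expectation by $c(2^{-i-1}\epsilon_2 r)^\alpha$, via Lemma \ref{l:EP} (or Proposition \ref{p:E2}). Summing over $n=1,\ldots,m_0 i$ and $k=0,\ldots,i-1$, using that the crossover from vertical-gap dominance to horizontal-gap dominance occurs around $k\sim 2\log_2 i$, yields a contribution of order $i^{2(\alpha+\lb_0)+1}\,2^{-i(\alpha+\lb_0)}$.

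The main obstacle is the two ``degenerate'' levels $k=i$ and $k=i+1$, where the vertical gap vanishes and the horizontal gap $s_i-|\wt Y_s|$ can be arbitrarily small if $Y_s$ drifts toward the lateral boundary of $U_i^-$. To handle this I introduce intermediate nested boxes $W_n:=U\!\big(s_{i+1}+n(s_i-s_{i+1})/i,\ 2^{-i-1}\epsilon_2 r\big)$ for $n=0,1,\ldots,i-1$, so that $z\in U_{i+1}^+\subset W_0\subset W_1\subset\cdots\subset W_{i-1}\subset U_i^-$. The choice of $i_0$, which enforces $400 m_0(i+1)^3<\epsilon_1 2^{i+1}$, ensures that the cumulative ``small motion'' across $m_0 i$ epochs is much smaller than the layer thickness $(s_i-s_{i+1})/i\asymp \epsilon_2 r/i^3$, so to exit $W_n$ into $W_{n+1}^c$ the process must perform a ``big'' jump at one of the times $\sigma_{i,k}$. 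Applying Lemma \ref{l:6.1} to each layer gives a per-layer transition probability controlled by the ratio of the epoch radius $2^{-i-1}\epsilon_2 r$ to the layer gap, and an iterative strong Markov argument bounds the chance of crossing $\ell$ layers by a product of such factors. The jump to $A^{(i)}\cup A^{(i+1)}$ from $W_n$ has horizontal gap $\ge (s_i-s_{i+1})(i-1-n)/i$; combining this with the $W_n$-reach probabilities and summing over $n$ produces the extra $i^{\alpha+\lb_0}$ factor, giving a contribution bounded by $c\, i^{3(\alpha+\lb_0)+1}\,2^{-i(\alpha+\lb_0)}$, which dominates the estimate from the non-degenerate levels and yields the claim.
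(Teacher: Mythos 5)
Your approach is genuinely different from the paper's, but it leaves the hardest step unproved. The paper's argument is much shorter and avoids the obstacle you identify: it never estimates the L\'evy system integral over the target set at all. Instead it observes the purely geometric fact that, since $z\in U_{i+1}^+$ has $|\wt z|\le s_{i+1}$ and $\rho_D(z)\le 2^{-i-2}\epsilon_2 r$, every point $y\in U(r)\setminus(U_i^-\cup\bigcup_{k=1}^i U_k^+)$ satisfies $|y-z|\ge s_i-s_{i+1}=\epsilon_2 r/(80(i+1)^2)$ (if $|\wt y|\ge s_i$ this is the horizontal gap; if $|\wt y|<s_i$ then a direct computation with \eqref{e:local-map} forces $y_d-z_d\ge 2^{-3}\epsilon_2 r$, which is even larger). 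On the event $\{\tau_i\le\sigma_{i,m_0 i}\}$, the triangle inequality forces $\sum_{k}|Y_{\sigma_{i,k}}-Y_{\sigma_{i,k-1}}|\ge s_i-s_{i+1}$ over at most $m_0 i$ terms, so by pigeonhole at least one epoch displaces the process by $\ge\epsilon_2 r/(80 m_0(i+1)^3)$. The strong Markov property and subadditivity then reduce everything to a single application of Lemma \ref{l:6.1} with $a=2^{-i-1}\epsilon_2$ and $b=\epsilon_2/(80m_0(i+1)^3)$ (the hypothesis $a<\epsilon_1 b/5$ is exactly what the choice of $i_0$ guarantees), giving $\P_z(\cdot)\le c\,m_0 i\bigl(80m_0(i+1)^3/2^{i+1}\bigr)^{\alpha+\lb_0}$, which is the claimed bound.

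By contrast, your proposed direct bound on $\E_z\int_0^{\tau_i\wedge\sigma_{i,m_0 i}}\int_A\sB(Y_s,y)|Y_s-y|^{-d-\alpha}\,dy\,ds$ by slice decomposition hits exactly the difficulty you name: for the slices $A^{(i)}$ and $A^{(i+1)}$ adjacent to $U_i^-$, the inner integral is unbounded as $Y_s$ approaches the lateral boundary of $U_i^-$, so you cannot pull it out of the expectation. The intermediate-shell construction $W_0\subset\cdots\subset W_{i-1}$ and the ``iterative strong Markov argument'' you invoke to repair this are precisely the nontrivial content, and they are not carried out: you do not specify how the expected occupation time of each annulus $W_n\setminus W_{n-1}$ is controlled, how the per-layer crossing probability is derived from Lemma \ref{l:6.1} without a circular appeal to the very exit-probability estimate being proved, or how the resulting sum of products produces the stated power $i^{3(\alpha+\lb_0)+1}$ rather than something worse. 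In short, the part of the argument you label ``the main obstacle'' is where the proof actually has to happen, and it is left as a sketch. If you want to follow the occupation-time route, you would essentially need to reinvent the displacement-counting argument anyway in order to bound the time spent near the lateral boundary; the paper's pigeonhole argument does this directly and makes the slice decomposition unnecessary.
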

\begin{proof}  Let $z \in U^+_{i+1}$.  Note that for any $y=(\wt y,y_d) \in U(r) \setminus (U_i^- \cup  \cup_{k=1}^{i} U_k^+)$ in CS$_Q$, if $|\wt y|<s_i$, then using the mean value theorem and \eqref{e:local-map},  since $\rho_D(y) \ge 2^{-1}\epsilon_2r$, 
$\rho_D(z) \le 2^{-i-1}\epsilon_2 r$, $|\wt z| \le s_{i+1}< s_i<5\epsilon_2 r/16$ and  $\Lambda r\le \Lambda \wh R/24 \le 1/48$, we see that
	\begin{align*}
		y_d-z_d &\ge \rho_D(y) - \rho_D(z) - (\Psi(\wt y)-\Psi(\wt z))\\
		& \ge 2^{-1}\epsilon_2 r - 2^{-i-1}\epsilon_2 r  -   \sup\{\Lambda | w| : w \in \R^{d-1}, \, |w|<s_i\}\, |\wt y- \wt z|\\
		& \ge 2^{-2}\epsilon_2 r- 2\Lambda s_i^2 \ge 2^{-3}\epsilon_2 r > s_i-s_{i+1} = (\epsilon_2r)/(80(i+1)^2).
	\end{align*}
Thus, it holds that
	$$
		|y-z| \ge s_i-s_{i+1} =(\epsilon_2r)/(80(i+1)^2) 
		\quad \text{for all} \;\, y \in U(r) \setminus (U_i^- \cup  \cup_{k=1}^{i} U_k^+).
	$$
Hence, on the event $\{Y_{\tau_{i}} \in U(r) \setminus  \cup_{k=1}^{i} U_k^+, \, \tau_i \le \sigma_{i,m_0i} \}$, we have
	\begin{align*}
		\frac{\epsilon_2rm_0i}{40 m_0(i+1)^3}<	\frac{\epsilon_2r}{40(i+1)^2} 
			\le \sum_{1\le k \le m_0i, \, \sigma_{i,k-1}<\tau_i} |Y_{\sigma_{i,k}}- Y_{\sigma_{i,{k-1}}}|,
	\end{align*}
which yields that
	\begin{align*}
		&\left\{Y_{\tau_{i}} \in U(r) \setminus  \cup_{k=1}^{i} U_k^+, \, \tau_i \le \sigma_{i,m_0i} \right\}\\
		& \subset \cup_{k=1}^{m_0i} \left\{ |Y_{\sigma_{i,k}}- Y_{\sigma_{i,{k-1}}}| > \epsilon_2r / (40m_0(i+1)^3), \,
			 Y_{\sigma_{i,k-1}} \in U_i^-, \, Y_{\sigma_{i,k}} \in U(r)\right\}.
	\end{align*}
Now, using the strong Markov property, subadditivity and Lemma \ref{l:6.1}  (with $a=2^{-i-1}\epsilon_2$ and 
$b= \epsilon_2/(80m_0(i+1)^3)$), we obtain
	\begin{align*}
		&	\P_z\big( Y_{\tau_{i}} \in U(r) \setminus  \cup_{k=1}^{i} U_k^+, \, \tau_i \le \sigma_{i,m_0i}\big)\\
		&\le m_0i \sup_{ w \in U_i^-} \P_w \Big( |Y_{\sigma_{i,1}}-w| >\epsilon_2r/(80m_0(i+1)^3) \Big)\le c_1m_0i 
			\bigg( \frac{80m_0(i+1)^3}{2^{i+1}} \bigg)^{\alpha+\lb_0}.
	\end{align*}
The proof is complete. 
\end{proof}

\textsc{Proof of Proposition \ref{p:Dynkin-improve}.}  
By Lemmas \ref{l:m-step} and \ref{l:jump-horizon} and the definition of $m_0$, 
we have for all $i \ge i_0$ and $z \in U_{i+1}^+$,
	\begin{align*}
		&	\P_z\big( Y_{\tau_{i}} \in U(r) \setminus  \cup_{k=1}^{i} U_k^+\big) \\
		&\le 	\P_z\big( Y_{\tau_{i}} \in U(r) \setminus  \cup_{k=1}^{i} U_k^+, \, \tau_i \le \sigma_{i,m_0i}\big) 
			+ \P_z\big(\tau_i > \sigma_{i,m_0i}\big)\\
		&\le  c_1  i^{3(\alpha+\lb_0)+1} \, 2^{-i(\alpha+\lb_0)} + b_1^{m_0i} 
			\le  (c_1+1) i^{3(\alpha+\lb_0)+1} \, 2^{-i(\alpha+\lb_0)}.
	\end{align*}
Therefore, we deduce from \eqref{e:H1-prebound} and \eqref{e:6.29} that for all $i \ge i_0$,
	\begin{align*}
		\sup_{1 \le j \le i+1}	a_{j} \le \sup_{1 \le j \le i}a_j + c_2 i^{3(\alpha+\lb_0)+1} \, 2^{-i(\alpha+\lb_0-q)},
	\end{align*}
which implies that 
	$$	
		\sup_{j \ge 1}	a_{j} \le \sup_{1 \le j \le i_0}a_j 
			+ c_2\sum_{i=i_0}^\infty i^{3(\alpha+\lb_0)+1} \, 2^{-i(\alpha+\lb_0-q)}<\infty.
	$$
This proves the proposition. 
\qed

Finally, we are ready to give the proof of Theorem \ref{t:Dynkin-improve}.

\medskip

\textsc{Proof of Theorem \ref{t:Dynkin-improve}.}  
Using Proposition \ref{p:Dynkin-improve}, \eqref{e:4.2.1} and \eqref{e:4.2.2}, we get
	\begin{align*}
		\P_x (Y_{\tau_{U(\epsilon_2 r)}} \in U(r) ) 
			\le c 	\P_x (Y_{\tau_{U(\epsilon_2 r)}}\in U(\epsilon_2r, r) \setminus U(\epsilon_2 r, (3/4)r) ) \le c(\delta_D(x)/r)^p
	\end{align*}
and
	\begin{align*} 	
		&\P_x (Y_{\tau_{U(\epsilon_2 r)}} \in U(r) \setminus U(r, r/2) )   \\
		&\ge 	\P_x (Y_{\tau_{U(\epsilon_2 r)}} \in U(\epsilon_2r, r) \setminus U(\epsilon_2 r, (3/4)r) ) 
			\ge c	\P_x (Y_{\tau_{U(\epsilon_2 r)}} \in U(r) ) \ge c(\delta_D(x)/r)^p.
	\end{align*} 
Thus, it remains to show that $\P_x(Y_{\tau_{U(\epsilon_2 r)}}\in D\setminus U(r))\le c_1(\delta_D(x)/r)^p$ for some $c_1>0$. 

Let  $z \in U(\epsilon_2 r)$ and $w \in D \setminus U(r)$.  By \eqref{e:U-rho-C11-1}, we have $|z-Q|<2\epsilon_2 r$ and $|w-Q|>2r/3$. Hence, by  \hyperlink{B4-a}{{\bf (B4-a)}}, since  $\epsilon_2 \le 1/12$ and $\Phi_0$ is almost increasing, we see that 
	\begin{align}\label{e:new-lemma-1}
		&|z-w| \ge |w-Q| - |z-Q| \ge |w-Q|/2 \ge r/3 \quad \text{and} \quad	\sB(z,w) \le c \Phi_0(\delta_D(w)/r).
	\end{align}
Using the L\'evy system formula \eqref{e:Levysystem-Y-kappa} in the first line,  \eqref{e:new-lemma-1} and \eqref{e:scale} in the second, \eqref{e:U-rho-C11-1} in the third, and Corollary \ref{c:Dynkin-upper}  in the last, we arrive at
	\begin{align*}
		\P_x (Y_{\tau_{U(\epsilon_2 r)}} \in D \setminus U(r) ) &= \E_x \int_0^{\tau_{U(\epsilon_2 r)}} 
			\int_{D\setminus U(r)}\frac{\sB(Y_s, w)}{|Y_s-w|^{d+\alpha}}\, dw\, ds\\
		&\le  c_2 \E_x \int_0^{\tau_{U(\epsilon_2 r)}} \Phi_0(\delta_D(Y_s)/r)ds\,\int_{D\setminus U(r)}\frac{dw}{|w-Q|^{d+\alpha}}\\
		&\le  c_2 \E_x \int_0^{\tau_{U(
		\ \epsilon_1  r)}} \Phi_0(\delta_D(Y_s/r))ds\int_{B(Q,2r/3)^c}\frac{dw}{|w-Q|^{d+\alpha}}\\
		&\le c_3 (\delta_D(x)/r)^p r^{\alpha} (2r/3)^{-\alpha}  =c_4(\delta_D(x)/r)^p.
	\end{align*}
The proof is complete. 
\qed

%%%%%%%%%%%%%%%%%%%%%%%%%%%%%%%%%%%%%%%%%%%%%%%%%%%%%%%%%%%%%%%%%%%%%%%%%%%%%%%%%%%%%%%%%%%%%%%%%%%%%%%%%%%%%%%%%%%%%%%%%%%%%%%%%%%%%%%
%%%%%%%%%%%%%%%%%%%%%%%%%%           Estimates of killed potentials                  %%%%%%%%%%%%%%%%%%%%%%%%%%%%%%%%%%%%%%%%%%%%%%%%%%
%%%%%%%%%%%%%%%%%%%%%%%%%%%%%%%%%%%%%%%%%%%%%%%%%%%%%%%%%%%%%%%%%%%%%%%%%%%%%%%%%%%%%%%%%%%%%%%%%%%%%%%%%%%%%%%%%%%%%%%%%%%%%%%%%%%%%%%

\section{Estimates of Green potentials}\label{ch-killed-potentials}

In this section we establish some upper and lower bounds of the Green function $G^{B_D(x_0,R_0)}(x,y)$, $x_0\in \overline{D}$, $R_0>0$, that incorporate the decay rate at the boundary. Based on these estimates and using the technical Lemma \ref{l:key-curved}, we obtain sharp two-sided estimates of (killed) Green potentials of powers of distance to the boundary.

 We let $\epsilon_2 \in (0,1/12)$ be the constant in Theorem \ref{t:Dynkin-improve}   for the remainder of this work.  

We first deal with  the  upper  bound. 
\begin{prop}\label{p:prelub} 
	 Let $x_0\in \overline{D}$ and $R_0>0$. There exists $C=
	 C(R_0)>0$ independent of $x_0$ such that 
	\begin{align*}
		G^{B_D(x_0,R_0)}(x, y) \le C
		\left(\frac{\delta_D(x) \wedge \delta_D(y) }{|x-y|} \wedge 1\right)^p
		\frac{1}{|x-y|^{d-\alpha}}, \quad x,y \in B_D(x_0, R_0).
	\end{align*}
\end{prop}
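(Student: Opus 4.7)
The plan is to combine the interior Green function bound from Proposition \ref{p:green-upper-bound} with the boundary exit-probability estimate of Theorem \ref{t:Dynkin-improve}, via a Markov/Lévy-system argument centered at the boundary point closest to $x$. Without loss of generality assume $\delta_D(x)\le \delta_D(y)$. Since $B_D(x_0,R_0)\subset B_D(x,2R_0)$, the monotonicity of Green functions in the domain gives $G^{B_D(x_0,R_0)}(x,y)\le G^{B_D(x,2R_0)}(x,y)=:G'(x,y)$, so it suffices to bound $G'(x,y)$; centering the outer ball at $x$ removes the possibility that the forthcoming box $U^{Q_x}(\cdot)$ could stick out of the original $B(x_0,R_0)$. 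Fix a small $\epsilon=\epsilon(R_0)\in(0,1)$. If $\delta_D(x)\ge \epsilon|x-y|$, then $(\delta_D(x)/|x-y|\wedge 1)^p\ge (\epsilon\wedge 1)^p$ is bounded below, so the claim reduces at once to Proposition \ref{p:green-upper-bound}. The substance lies in the complementary case $\delta_D(x)<\epsilon|x-y|$.

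In that case, let $Q_x\in\partial D$ satisfy $|x-Q_x|=\delta_D(x)$; in $\mathrm{CS}_{Q_x}$ one has $x=(\widetilde 0,\delta_D(x))$. Set $c_0:=1/(10\epsilon_2)$ and $r:=(c_0|x-y|)\wedge (\wh R/24)\le \wh R/24$. Using $|x-y|\le 2R_0$, one verifies that for $\epsilon$ small enough (depending on $R_0$) $\delta_D(x)<\epsilon_2 r/4$; in particular $x\in U^{Q_x}(\epsilon_2 r/4)$, so Theorem \ref{t:Dynkin-improve} gives $\P_x(Y_{\tau_A}\in D)\le C(\delta_D(x)/r)^p$ for $A:=U^{Q_x}(\epsilon_2 r)$. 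The elementary estimates $|Q_x-y|\ge(1-\epsilon)|x-y|$, $2\epsilon_2 r\le |x-y|/5$, and $A\subset B(Q_x,2\epsilon_2 r)$ imply $A\subset B(x,2R_0)\setminus\{y\}$ and $|Y_s-z|\asymp|x-y|$ uniformly for $Y_s\in A$ and $z\in B(y,|x-y|/4)$. Since $G'(\cdot,y)$ is regular harmonic in $A$ with respect to $Y^{B_D(x,2R_0)}$, the strong Markov property together with the Lévy system formula \eqref{e:Levysystem-Y-kappa} yields
\begin{equation*}
G'(x,y)=\E_x\int_0^{\tau_A}\int_{D\setminus A} G'(z,y)\,\frac{\sB(Y_s,z)}{|Y_s-z|^{d+\alpha}}\,dz\,ds.
\end{equation*}

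Split the inner integral at $|z-y|=|x-y|/4$. On $\{|z-y|\ge|x-y|/4\}$, Proposition \ref{p:green-upper-bound} gives $G'(z,y)\le C|x-y|^{-d+\alpha}$, and bounding the remaining total jump-out intensity by $\P_x(Y_{\tau_A}\in D)$ contributes $C(\delta_D(x)/r)^p|x-y|^{-d+\alpha}$. On $\{|z-y|<|x-y|/4\}$, the geometric separation $|Y_s-z|\asymp|x-y|$, assumption \hyperlink{B4-a}{{\bf (B4-a)}}, and the almost monotonicity of $\Phi_0$ (combined with $r\le |x-y|$) yield
\begin{equation*}
\frac{\sB(Y_s,z)}{|Y_s-z|^{d+\alpha}}\le C\,\frac{\Phi_0(\delta_D(Y_s)/r)}{|x-y|^{d+\alpha}};
\end{equation*}
coupling this with $\int_{B(y,|x-y|/4)}G'(z,y)\,dz\le C|x-y|^\alpha$ (again Proposition \ref{p:green-upper-bound}) and $\E_x\int_0^{\tau_A}\Phi_0(\delta_D(Y_s)/r)\,ds\le C(\delta_D(x)/r)^p r^\alpha$ from Corollary \ref{c:Dynkin-upper} (using $U^{Q_x}(\epsilon_2 r)\subset U^{Q_x}(\epsilon_1 r)$) gives a contribution of order $(\delta_D(x)/r)^p r^\alpha |x-y|^{-d}$. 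When $c_0|x-y|\le \wh R/24$ one has $r\asymp|x-y|$ and the two contributions combine into the desired $(\delta_D(x)/|x-y|)^p|x-y|^{-d+\alpha}$; when $c_0|x-y|>\wh R/24$, the ratio $|x-y|/r$ is a constant depending only on $R_0$ and $\wh R$, which is absorbed into the final $C(R_0)$.

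The main obstacle is purely technical bookkeeping: the constants $\epsilon$ and $c_0$ must be chosen so that \emph{simultaneously} $x\in U^{Q_x}(\epsilon_2 r/4)$, $A\subset B(x,2R_0)\setminus\{y\}$, and $A$ is separated from $B(y,|x-y|/4)$ by a distance comparable to $|x-y|$. Additionally, one must handle the ``saturation'' regime $r=\wh R/24$, arising when $|x-y|$ is of order $R_0$, in which $r$ and $|x-y|$ are no longer comparable but their ratio remains bounded by a function of $R_0/\wh R$, so that the mismatch in powers is absorbed into the final constant $C(R_0)$.
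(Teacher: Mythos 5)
Your proof is correct and follows essentially the same strategy as the paper: reduce to the case $\delta_D(x)\wedge\delta_D(y)\lesssim|x-y|$, build a box $U^{Q_x}(\epsilon_2 r)$ at the nearest boundary point at scale $r\asymp|x-y|$, and combine regular harmonicity with the L\'evy system formula, Theorem \ref{t:Dynkin-improve}, Corollary \ref{c:Dynkin-upper}, and Proposition \ref{p:green-upper-bound}. One small slip: since $c_0=1/(10\epsilon_2)>1$ you actually have $r=c_0|x-y|>|x-y|$ in the unsaturated regime, so the parenthetical ``combined with $r\le|x-y|$'' is false; the needed inequality $\Phi_0(\delta_D(Y_s)/|Y_s-z|)\lesssim\Phi_0(\delta_D(Y_s)/r)$ still holds because $|Y_s-z|\asymp|x-y|\asymp r$ and $\Phi_0$ obeys the weak scaling condition \eqref{e:scale}, and the paper avoids the saturation regime entirely by taking $r=2^{-6}(\wh R/R_0)|x-y|$, which is automatically $\le \wh R/32<\wh R/24$ whenever $|x-y|<2R_0$.
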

\begin{proof}  Without loss of generality, we assume $R_0>4$.  Set $B:=B_D(x_0,R_0)$ and $\wt B  :=B_D(x_0,R_0+1)$. By  the symmetry and Proposition \ref{p:green-upper-bound},  we only need to show 
that  there exists a constant $c_1=c_1(R_0)>0$ such that for any $x,y \in B$ with 
$\delta_D(x)=\delta_D(x) \wedge \delta_D(y) <2^{-8}(\epsilon_2\wh R/R_0)|x-y|$, 
	$$
		G^{B}(x, y) \le c_1 \delta_D(x)^p |x-y|^{-d+\alpha-p}.
	$$ 

Let $x,y \in B$ with $\delta_D(x)=\delta_D(x) \wedge \delta_D(y) < 2^{-8}(\epsilon_2\wh R/R_0)|x-y|$ and  
$Q_x\in \partial D$ be such that $|x-Q_x|=\delta_D(x)$. In the following, we  use the coordinate system CS$_{Q_x}$,
 and write $U(r)$ for $U^{Q_x}(r)$. 
Set $r:=2^{-6}(\wh R/R_0)|x-y|$.  Then  $r<(\wh R/32) \wedge (|x-y|/8)$. Moreover, by \eqref{e:U-rho-C11-1}, we see that
	\begin{align}\label{e:prelub-1}
		U(\epsilon_2 r) \subset 	U(r) \subset 	B_D(Q_x, 2r) \subset B_D(x,3r) \subset \wt B\setminus B_D(y,5r).
	\end{align}
In particular, $G^{\wt B}(\cdot, y)$ is regular harmonic in $U(\epsilon_2 r)$. Thus, we have
	\begin{align*}
		&G^{B}(x, y) \le G^{\wt B}(x, y)\\
		&=\E_x\left[G^{\wt B}(Y_{\tau_{U(\epsilon_2 r)}}, y); Y_{\tau_{U(\epsilon_2 r)}}\in U(r)\right] 
			+ \E_x\left[G^{\wt B}(Y_{\tau_{U(\epsilon_2 r)}}, y); Y_{\tau_{U(\epsilon_2 r)}} \in \wt B \setminus U(r)	\right]\\
		&=:I_1+I_2.
	\end{align*}

For $I_1$, using Proposition \ref{p:green-upper-bound}, \eqref{e:prelub-1} and Theorem \ref{t:Dynkin-improve}, we get
	\begin{align*}
		&I_1\le  c_2 (5r)^{\alpha-d}\,\P_x(Y_{\tau_{U(\epsilon_2 r)}}\in U(r))\le c_3   \delta_D(x)^p r^{-d+\alpha-p}.
	\end{align*}

For $w\in U(\epsilon_2 r)$ and $z\in D\setminus U(r)$, we have  
$|w| \le 2\epsilon_2 r$ and $|z| \ge r/2$ by \eqref{e:U-rho-C11-1}, 
so that  $|z| \asymp |z-w|\ge r/3$. Thus, by using  Proposition \ref{p:green-upper-bound} and \hyperlink{B4-a}{{\bf (B4-a)}}, 
since $\Phi_0$ is almost increasing,  we see that for all $w \in U(\epsilon_2 r)$,
	\begin{align*}
		&\int_{D\setminus U(r)}G^{\wt B}(z,y) \frac{\sB(w,z)}{|w-z|^{d+\alpha}} dz
			\le c_4 \Phi_0(\delta_{D}(w)/r)\int_{D\setminus U(r)}\frac{dz}{|y-z|^{d-\alpha}|z|^{d+\alpha}}.
	\end{align*}
Hence, by using the L\'evy system formula \eqref{e:Levysystem-Y-kappa} and Corollary \ref{c:Dynkin-upper}, since $\epsilon_2$ is less  than or equal to the constant $\epsilon_1$ in Corollary \ref{c:Dynkin-upper}, we obtain
	\begin{align*}
		I_2 &   = \E_x \int_{0}^{\tau_{U(\epsilon_2 r)}} \int_{D\setminus U(r)}G^{\wt B}(z,y) 
			\frac{\sB(Y_s,z)}{|Y_s-z|^{d+\alpha}} dz \,ds\\
		&\le c_4 \E_x\int^{\tau_{U(\epsilon_2r)}}_0 \Phi_0(\delta_D(Y_s)/r)ds
			\int_{D\setminus U(r)}\frac{dz}{|y-z|^{d-\alpha}|z|^{d+\alpha}} \\
		&\le c_5 \delta_D(x)^p r^{\alpha-p} \int_{D\setminus U(r)}\frac{dz}{|y-z|^{d-\alpha}|z|^{d+\alpha}}.
	\end{align*}
Since $D\setminus U(r) \subset \R^d \setminus B(0,r/2)$ by \eqref{e:U-rho-C11-1}, we have
	\begin{align*}
		\int_{D\setminus (U(r) \cup B(y, r))} \frac{dz}{|y-z|^{d-\alpha}|z|^{d+\alpha}} 
			\le  r^{-d+\alpha} \int_{\R^d \setminus B(0,r/2)}\frac{dz}{|z|^{d+\alpha}}   \le c_6 r^{-d}
	\end{align*}
and
	\begin{align*}
		\int_{B(y,r)\setminus U(r)} \frac{dz}{|y-z|^{d-\alpha}|z|^{d+\alpha}} 
			\le     r^{-d-\alpha} \int_{B(y, r)}\frac{dz}{|y-z|^{d-\alpha}} \le c_7 r^{-d}.
	\end{align*}
Thus, $I_2 \le c_8 \delta_D(x)^p r^{-d+\alpha-p}$ and the proof is complete.
\end{proof}

We now deal with  the \ lower  bound. 
\begin{thm}\label{t:GB}
Let $x_0 \in \overline D$ and $R_0>0$.	There exists  $C
= C(R_0)>0$ independent of $x_0$ such that for all $R \in (0,R_0]$ 
and  $x,y\in B_D(x_0,R/10)$, 
	$$
		G^{B_D(x_0,R)}(x,y)\ge C \left(\frac{\delta_D(x)}{|x-y|}  \wedge 1 \right)^p\left(\frac{\delta_D(y)}{|x-y|} 
			 \wedge 1 \right)^p \frac{1}{|x-y|^{d-\alpha}}.
	$$
\end{thm}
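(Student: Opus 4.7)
The plan is to reduce the estimate to the interior lower bound of Proposition \ref{p:green-lower-bound} by using the strong Markov property together with the sharp exit-distribution estimate of Theorem \ref{t:Dynkin-improve} to push the process away from the boundary. Without loss of generality, assume $\delta_D(x) \le \delta_D(y)$, and set $B := B_D(x_0, R)$, $r := |x-y|$. Fix a small constant $c_0 \in (0, \epsilon_2/8)$ depending only on $\wh R$, $\Lambda$, and $R_0$. If $\delta_D(x) \ge c_0 r$, then $\delta_D(y) \ge c_0 r$ as well, so the quantities $(\delta_D(x)/r) \wedge 1$ and $(\delta_D(y)/r) \wedge 1$ are comparable to positive constants, and Proposition \ref{p:green-lower-bound} applies directly (with $\eps = c_0$) to give the conclusion.

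Suppose now $\delta_D(x) < c_0 r$. Let $Q_x \in \partial D$ satisfy $|x - Q_x| = \delta_D(x)$, and choose $s_x \asymp r$ small enough that $U^{Q_x}(s_x) \subset B(x, r/4) \cap B$ and $x \in U^{Q_x}(\epsilon_2 s_x/4)$; this is possible by Lemma \ref{l:U-rho-Lipschitz}(i) provided $c_0$ is small, together with the assumption $x, y \in B_D(x_0, R/10)$. Set $V_x := U^{Q_x}(s_x) \setminus U^{Q_x}(s_x, s_x/2)$. By \eqref{e:U-rho-C11-2}, every $z \in V_x$ satisfies $\delta_D(z) \asymp s_x \asymp r$, and since $V_x \subset B(x, r/4)$ one also has $|z-y| \asymp r$; in particular $V_x$ avoids $y$. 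Thus $G^B(\cdot, y)$ is regular harmonic in $U^{Q_x}(\epsilon_2 s_x)$, and the strong Markov property combined with Theorem \ref{t:Dynkin-improve} yields
\[
G^B(x, y) \ge \E_x\!\left[G^B\bigl(Y_{\tau_{U^{Q_x}(\epsilon_2 s_x)}},\, y\bigr);\, Y_{\tau_{U^{Q_x}(\epsilon_2 s_x)}} \in V_x\right] \ge c\, (\delta_D(x)/r)^p \inf_{z \in V_x} G^B(z, y).
\]

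If $\delta_D(y) \ge c_0 r$, then for any $z \in V_x$, Proposition \ref{p:green-lower-bound} applied to $(z,y)$ gives $G^B(z, y) \ge c r^{-d+\alpha}$, and since $(\delta_D(y)/r) \wedge 1 \asymp 1$ in this regime, this already yields the desired bound. Otherwise, repeat the pushing at $y$: pick $Q_y \in \partial D$ with $|y - Q_y| = \delta_D(y)$, and $s_y \asymp r$ so that $U^{Q_y}(s_y) \subset B(y, r/4)$ (hence disjoint from $V_x$, at distance $\asymp r$) and $y \in U^{Q_y}(\epsilon_2 s_y/4)$. By symmetry $G^B(z, y) = G^B(y, z)$, and $G^B(\cdot, z)$ is regular harmonic in $U^{Q_y}(\epsilon_2 s_y)$ since this box avoids every $z \in V_x$. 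Theorem \ref{t:Dynkin-improve} then gives
\[
G^B(z, y) \ge c\,(\delta_D(y)/r)^p \inf_{w \in V_y} G^B(z, w),
\]
with $V_y$ the analogue of $V_x$. For $z \in V_x$ and $w \in V_y$ one has $\delta_D(z) \wedge \delta_D(w) \asymp r$ and $|z - w| \asymp r$, so Proposition \ref{p:green-lower-bound} (once more, with $\eps$ bounded below by a constant) yields $G^B(z, w) \ge c r^{-d+\alpha}$. Combining the two pushings delivers the full two-sided boundary factor and completes the proof.

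The main obstacle is the geometric bookkeeping: one must select the radii $s_x$ and $s_y$ so that $U^{Q_x}(s_x)$ and $U^{Q_y}(s_y)$ are both contained in $B$, are well-separated, and simultaneously satisfy the hypotheses of Theorem \ref{t:Dynkin-improve} (namely that $x \in U^{Q_x}(\epsilon_2 s_x /4)$ and $y \in U^{Q_y}(\epsilon_2 s_y/4)$). The assumption $x, y \in B_D(x_0, R/10)$, together with \eqref{e:U-rho-C11-1} and $c_0$ chosen small, ensures all of these inclusions. A minor additional point is that when $\delta_D(x)$ is intermediate (comparable to but smaller than $c_0 r$), one could alternatively take $s_x$ of order $\delta_D(x)/\epsilon_2$; since $(\delta_D(x)/r) \wedge 1 \le 1$, taking $s_x \asymp r$ throughout gives the same bound up to constants and streamlines the presentation.
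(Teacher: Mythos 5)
Your proof is correct and follows essentially the same strategy as the paper, which establishes the bound through Lemmas \ref{l:GB_1} and \ref{l:GB_4} --- each a single pushing via Theorem \ref{t:Dynkin-improve} into an interior annulus $U^{Q}(r)\setminus U^Q(r,r/2)$ where $\delta_D\asymp r$, followed by the interior lower bound of Proposition \ref{p:green-lower-bound} --- combined in Theorem \ref{t:GB} by a three-way case split according to how $\delta_D(x),\delta_D(y)$ compare with $|x-y|$. You fold the two pushings and the case analysis into one streamlined argument; this is an organizational rather than a mathematical difference, though one should note (as you partly do) that to keep the boxes inside $B_D(x_0,R)$ and within the hypothesis $r\le\wh R/24$ of Theorem \ref{t:Dynkin-improve}, the radius $s_x$ must be scaled as $s_x=\wh R\,|x-y|/(100+48R_0)$ as in the paper, so the comparison constant in $s_x\asymp|x-y|$ depends on $\wh R$ and $R_0$, exactly as the comparison constant in the final estimate does.
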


We will prove Theorem \ref{t:GB} using the following two lemmas.  

\begin{lemma}\label{l:GB_1}
Let $x_0 \in \overline D$ and $R_0>0$.	For every $k>1$, there exists $C=C(R_0,k)>0$ such that for all $R\in (0,R_0]$ a
nd  $x,y\in B_D(x_0,R/9)$  with $\delta_D(x) \le |x-y|\le k \delta_D(y) $, it holds that
	$$
		G^{B_D(x_0,R)}(x,y)\ge C \delta_D(x)^p|x-y|^{-d+\alpha-p}.
	$$
\end{lemma}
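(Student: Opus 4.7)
The strategy is to split based on the ratio $\delta_D(x)/|x-y|$. In the easier case $\delta_D(x)\ge c_0|x-y|$ for a small $c_0=c_0(k)>0$, the hypothesis $|x-y|\le k\delta_D(y)$ gives $|x-y|\le(c_0\wedge k^{-1})^{-1}(\delta_D(x)\wedge\delta_D(y))$, so Proposition \ref{p:green-lower-bound} applied with $\eps=c_0\wedge k^{-1}$ immediately yields $G^{B_D(x_0,R)}(x,y)\ge c_1|x-y|^{\alpha-d}$; since $(\delta_D(x)/|x-y|)^p\le 1$, this implies the required bound with a constant depending on $k$.

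In the principal case $\delta_D(x)<c_0|x-y|$, fix $Q_x\in\partial D$ with $\delta_D(x)=|x-Q_x|$ and work in the coordinate system $CS_{Q_x}$. I set $r:=c_1|x-y|$ for a small constant $c_1=c_1(c_0,k,R_0,\wh R,\eps_2)>0$ chosen so that (a) $r\le\wh R/24$; (b) $\delta_D(x)<\eps_2 r/8$, so $x\in U^{Q_x}(\eps_2 r/4)$ and Theorem \ref{t:Dynkin-improve} applies; (c) $2\eps_2 r<|x-y|/(2k)\le\delta_D(y)\le|Q_x-y|$, so $y\notin\overline{U^{Q_x}(\eps_2 r)}$; and (d) $\sqrt{3}\,r<|x-y|/(4k)$, so that $|z-y|\asymp|x-y|$ with constants depending only on $c_1$ and $k$ for every $z$ in the exit region $U^{Q_x}(r)\setminus U^{Q_x}(r,r/2)$. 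Writing $E$ for the event that $Y_{\tau_{U^{Q_x}(\eps_2 r)}}$ lies in this exit region, Theorem \ref{t:Dynkin-improve} gives
\[
\P_x(E)\ge c_2(\delta_D(x)/r)^p\asymp(\delta_D(x)/|x-y|)^p.
\]
On $E$, \eqref{e:U-rho-C11-2} gives $\delta_D(w)\ge r/\sqrt{5}$ for $w:=Y_{\tau_{U^{Q_x}(\eps_2 r)}}$, so $\delta_D(w)\asymp|x-y|$ as well. Since $G^{B_D(x_0,R)}(\cdot,y)$ is regular harmonic on $U^{Q_x}(\eps_2 r)$,
\[
G^{B_D(x_0,R)}(x,y)\ge\E_x[G^{B_D(x_0,R)}(w,y);E].
\]

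To obtain a uniform lower bound $G^{B_D(x_0,R)}(w,y)\ge c_3|x-y|^{\alpha-d}$ on $E$, I exploit monotonicity of the Green function. Fix an auxiliary radius $R^*=R^*(R_0,\wh R,k)$ with $R^*\le R-R/9$, so that $B_D(y,R^*)\subset B_D(x_0,R)$, and with $R^*$ large enough that both $w,y\in B_D(y,R^*/8)$. This is feasible provided $|x-y|$ is at most a definite fraction of $R$. Proposition \ref{p:green-lower-bound} applied to $B_D(y,R^*)$, with $\eps=\eps(c_1,k)$ chosen so that $|w-y|\le\eps^{-1}(\delta_D(w)\wedge\delta_D(y))$, yields $G^{B_D(y,R^*)}(w,y)\ge c_3|w-y|^{\alpha-d}\asymp|x-y|^{\alpha-d}$, and monotonicity transfers this bound to $G^{B_D(x_0,R)}(w,y)$. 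Combined with the probability estimate, this finishes the sub-range in which $|x-y|$ is suitably small relative to $R$.

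The main obstacle is the residual sub-case $|x-y|\asymp R\asymp R_0$, where the auxiliary ball above fails to contain both $w$ and $y$ in its inner region. In this regime the claim reduces to the uniform bound $G^{B_D(x_0,R)}(x,y)\ge c\,\delta_D(x)^p$ with $c$ depending on $R_0,\wh R,k$, which I plan to establish by applying Theorem \ref{t:Dynkin-improve} with $r$ equal to a fixed constant depending only on $R_0$ and $\wh R$ (producing probability $\ge c\,\delta_D(x)^p$ of reaching an interior point $w$ with $\delta_D(w)$ bounded below by a constant) and then propagating a uniform positive lower bound for $G^{B_D(x_0,R)}(w,y)$ via iterated application of the Harnack inequality (Theorem \ref{t:phi2}) along a finite chain of balls inside $D$ connecting $w$ to $y$.
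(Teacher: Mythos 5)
Your two-case split and the use of regular harmonicity of $G^{B_D(x_0,R)}(\cdot,y)$ on a small box $U^{Q_x}(\epsilon_2 r)$ combined with Theorem \ref{t:Dynkin-improve} matches the paper's strategy and is sound up through the probability estimate $\P_x(E)\gtrsim(\delta_D(x)/|x-y|)^p$. The gap is in the final step, and you have correctly diagnosed it yourself: your plan to lower-bound $G^{B_D(x_0,R)}(w,y)$ on $E$ by introducing an auxiliary ball $B_D(y,R^*)$ and using monotonicity creates a genuine conflict when $|x-y|$ is comparable to $R$, because you need simultaneously $R^*\le 8R/9$ (so that $B_D(y,R^*)\subset B_D(x_0,R)$) and $R^*\ge 8|w-y|$ (so that $w$ lies in the inner region $B_D(y,R^*/8)$ where Proposition \ref{p:green-lower-bound} applies), and since $|x-y|$ can be as large as $2R/9$ with $|w-y|\asymp|x-y|$, the second requirement can exceed $8R/9$.

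The auxiliary ball is unnecessary. Proposition \ref{p:green-lower-bound} can be applied directly to the given ball $B_D(x_0,R)$: one only needs both $z$ and $y$ to lie in $B_D(x_0,R/8)$ together with $|z-y|\le\eps^{-1}(\delta_D(z)\wedge\delta_D(y))$. With $r$ chosen as a fixed multiple of $|x-y|$ (small enough that $r<R/400$, say), the exit region $U^{Q_x}(r)\setminus U^{Q_x}(r,r/2)$ is contained in $B_D(Q_x,2r)\subset B_D(x,33r/16)\subset B_D(x_0,R/8)$, and $y\in B_D(x_0,R/9)\subset B_D(x_0,R/8)$ trivially. Since on the exit region $\delta_D(z)\ge r/\sqrt5\asymp|x-y|\asymp|z-y|$ and also $\delta_D(y)\ge|x-y|/k\asymp|z-y|$, Proposition \ref{p:green-lower-bound} applied with $x_0$ and $R$ unchanged gives $G^{B_D(x_0,R)}(z,y)\ge c|z-y|^{\alpha-d}\asymp r^{\alpha-d}$ uniformly over the exit region, with no restriction on how $|x-y|$ compares to $R$. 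This removes the residual sub-case entirely, so the Harnack-chain plan in your last paragraph is not needed; and as written that plan would require care anyway, since the chain must stay uniformly away from $\partial B(x_0,R)$ for the Harnack constant to be controlled, which is not automatic if $y$ is near $\partial B(x_0,R/9)$.
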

\begin{proof} Fix $x,y\in B_D(x_0,R/9)$  with $\delta_D(x) \le |x-y|\le k \delta_D(y)$.  Let $r:=\wh R|x-y|/(100+48R_0)$ and  
$Q_x\in \partial D$ be such that $|x-Q_x|=\delta_D(x)$. If $\delta_D(x) \ge 2^{-2}\epsilon_2r$, then  
$|x-y| \le (k \vee (2^2\epsilon_2^{-1} (100+48R_0)/\wh R))(\delta_D(x) \wedge \delta_D(y))$ so that  the result holds by Proposition \ref{p:green-lower-bound}.  

Suppose now that $\delta_D(x) \le 2^{-2}\epsilon_2r$. Set $U:=U^{Q_x}(\epsilon_2 r)$. 
By \eqref{e:U-rho-C11-1},  since $r<|x-y|/100< R/400$, we have
	\begin{align}\label{e:green-lower-region-1}
		U \subset  U^{Q_x}(r) \subset B_D(Q_x, 2r) \subset B_D(x, 33r/16) \subset B_D(x_0, R/8) \setminus \{y\}.
	\end{align}
Besides, by \eqref{e:U-rho-C11-1} and \eqref{e:U-rho-C11-2},  for all $z\in U^{Q_x}(r) \setminus U^{Q_x}(r,r/2)$, 
we have 
	\begin{align}\label{e:green-lower-region-2}
		|z-y|\le |z-Q_x| + |x-Q_x|+  |x-y|\le (3+ (100+48R_0)/\wh R )\,r
	\end{align}
and $\delta_D(z)\ge (2/\sqrt 5)\rho_D(z)\ge r/\sqrt 5$.
Thus, there exists  $c_1=c_1(R_0,k)>0$ such that
	\begin{align}\label{e:green-lower-region-3}
		|z-y| \le c_1(\delta_D(y) \wedge \delta_D(z)) \quad \text{for all} \;\, z\in 	U^{Q_x}(r) \setminus U^{Q_x}(r,r/2).
	\end{align}
By \eqref{e:green-lower-region-1} and \eqref{e:green-lower-region-3}, we get from Proposition \ref{p:green-lower-bound} and \eqref{e:green-lower-region-2}  that there exists $c_2=c_2(R_0,k)>0$ such that for all $z\in U^{Q_x}(r) \setminus U^{Q_x}(r,r/2)$,
	\begin{align*}
		G^{B_D(x_0,R)}(z,y) \ge c|z-y|^{-d+\alpha} \ge  c_2r^{-d+\alpha}.
	\end{align*}
Using this and Theorem \ref{t:Dynkin-improve}, since $G^{B_D(x_0,R)}(\cdot, y)$ is regular harmonic in $U$ 
by \eqref{e:green-lower-region-1}, we arrive at
	\begin{align*}
		&G^{B_D(x_0,R)}(x, y) \ge \E_x\left [G^{B_D(x_0,R)}(Y_{\tau_{U}}, y): Y_{\tau_{U}} 
			\in U^{Q_x}(r) \setminus U^{Q_x}(r,r/2)\right ]\\
		&\ge c_2r^{-d+\alpha} \P_x\left(Y_{\tau_{U}} 
			\in	U^{Q_x}(r) \setminus U^{Q_x}(r, r/2)\right) 
		\ge c_3 \delta_D(x)^p r^{-d+\alpha-p}.
	\end{align*}
\end{proof}

\begin{lemma}\label{l:GB_4}
	 Let $x_0 \in \overline D$ and $R_0>0$. There exists $C=C(R_0)>0$ such that for all $R \in (0,R_0]$ and  $x,y\in B_D(x_0,R/10)$ with $|x-y|\ge 4(\delta_D(x) \vee \delta_D(y))$, it holds that
	$$
		G^{B_D(x_0,R)}(x, y)\ge C\delta_D(x)^p\delta_D(y)^p|x-y|^{-d+\alpha-2p}.
	$$
\end{lemma}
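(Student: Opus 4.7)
The plan is to push $x$ and $y$ into the interior of $D$ via the exit probability estimates of Theorem~\ref{t:Dynkin-improve} and then to bridge the resulting interior points by the Green function lower bound of Proposition~\ref{p:green-lower-bound}. I choose a constant $c_0 = c_0(R_0) \in (0,1)$ (to be taken small enough in terms of $R_0$) and split into two regimes according to whether or not one of $\delta_D(x), \delta_D(y)$ is comparable to $|x-y|$.

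\textbf{Case 1: $\delta_D(x) \vee \delta_D(y) > \epsilon_2 c_0 |x-y|/4$.} By the symmetry $G^{B_D(x_0,R)}(x,y) = G^{B_D(x_0,R)}(y,x)$, I may assume $\delta_D(y) > \epsilon_2 c_0 |x-y|/4$. Setting $k := 4/(\epsilon_2 c_0)$, we have $\delta_D(x) \le |x-y|/4 \le |x-y| \le k\, \delta_D(y)$ and $x, y \in B_D(x_0, R/10) \subset B_D(x_0, R/9)$, so Lemma~\ref{l:GB_1} yields $G^{B_D(x_0,R)}(x,y) \ge C_1 \delta_D(x)^p |x-y|^{-d+\alpha-p}$ for some $C_1 = C_1(R_0) > 0$. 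Since $\delta_D(y) \le |x-y|/4$, we have $\delta_D(y)^p \le 4^{-p}|x-y|^p$, so the right-hand side dominates $4^{p} C_1\, \delta_D(x)^p \delta_D(y)^p |x-y|^{-d+\alpha-2p}$, as claimed.

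\textbf{Case 2: $\delta_D(x) \vee \delta_D(y) \le \epsilon_2 c_0 |x-y|/4$.} Let $Q_x, Q_y \in \partial D$ satisfy $|x-Q_x| = \delta_D(x)$ and $|y-Q_y| = \delta_D(y)$, set $r := c_0 |x-y|$, and write $U_x := U^{Q_x}(\epsilon_2 r)$, $U_y := U^{Q_y}(\epsilon_2 r)$, $A_x := U^{Q_x}(r) \setminus U^{Q_x}(r, r/2)$ and $A_y := U^{Q_y}(r) \setminus U^{Q_y}(r, r/2)$. By the case hypothesis, $x \in U^{Q_x}(\epsilon_2 r/4)$ and $y \in U^{Q_y}(\epsilon_2 r/4)$, so Theorem~\ref{t:Dynkin-improve} applies at both endpoints. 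For $c_0$ small enough depending only on $R_0$ (so that $r \le \wh R/24$ and the geometric estimates below close), a direct computation shows that $\overline{U_x}$ and $\overline{U_y}$ are disjoint subsets of $B_D(x_0, R/8)$, $y \notin \overline{U_x}$, and $x \notin \overline{U_y}$; moreover, using Lemma~\ref{l:C11}(iii) and \eqref{e:U-rho-C11-2}, $\delta_D(z) \asymp \delta_D(w) \asymp r \asymp |x-y|$ and $|z-w| \asymp |x-y|$ uniformly over $z \in A_x$ and $w \in A_y$, whence $|z-w| \le \eps^{-1}(\delta_D(z) \wedge \delta_D(w))$ for some $\eps = \eps(R_0) \in (0,1)$.

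Using regular harmonicity of $G^{B_D(x_0,R)}(\cdot, y)$ on $U_x$, the symmetry of $G^{B_D(x_0,R)}$, regular harmonicity of $G^{B_D(x_0,R)}(\cdot, z)$ on $U_y$ for each fixed $z \in A_x$, and two applications of Theorem~\ref{t:Dynkin-improve}, I obtain
$$
G^{B_D(x_0,R)}(x,y) \ge \inf_{z \in A_x,\, w \in A_y} G^{B_D(x_0,R)}(z,w) \cdot \P_x(Y_{\tau_{U_x}} \in A_x)\, \P_y(Y_{\tau_{U_y}} \in A_y),
$$
with the two probability factors bounded below by constant multiples of $(\delta_D(x)/r)^p$ and $(\delta_D(y)/r)^p$, respectively. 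Proposition~\ref{p:green-lower-bound} applied to $(z,w)$ with the $\eps$ above gives $G^{B_D(x_0,R)}(z,w) \ge C_2 |z-w|^{-d+\alpha}$ uniformly, with $C_2 = C_2(R_0) > 0$. Combining these bounds and using $r \asymp |x-y|$ produces the claimed estimate $\delta_D(x)^p \delta_D(y)^p |x-y|^{-d+\alpha-2p}$. The main subtlety is the calibration of $c_0$: it must be small enough (depending on $R_0$) so that the boxes of diameter $O(r) \asymp c_0 |x-y|$ around $Q_x$ and $Q_y$ still lie within $B_D(x_0, R/8)$, echoing the role of the small factor $\wh R/(100+48R_0)$ appearing in the proof of Lemma~\ref{l:GB_1}.
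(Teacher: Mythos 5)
Your proof is correct, and it follows the same basic strategy as the paper (split on whether a boundary distance is comparable to $|x-y|$, then push into the interior via Theorem~\ref{t:Dynkin-improve}), but the main step in Case~2 is organized differently. The paper pushes only the $x$-endpoint into the interior: it uses regular harmonicity of $G^{B_D(x_0,R)}(\cdot,y)$ in $U^{Q_x}(\epsilon_2 r)$, and for $z \in U^{Q_x}(r)\setminus U^{Q_x}(r,r/2)$ invokes Lemma~\ref{l:GB_1} to estimate $G^{B_D(x_0,R)}(z,y)$, which already accounts for the boundary decay in $y$. You instead push \emph{both} endpoints into the interior and then bridge the two pushed points using Proposition~\ref{p:green-lower-bound} directly. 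Your route is more symmetric and makes the dependence on both $\delta_D(x)^p$ and $\delta_D(y)^p$ transparent at the expense of a double iteration of the regular-harmonicity/Markov-property step and a somewhat more involved geometric calibration of $c_0$; the paper's route is shorter because it reuses Lemma~\ref{l:GB_1} a second time. Both ultimately rest on the same two ingredients (Theorem~\ref{t:Dynkin-improve} and, underneath Lemma~\ref{l:GB_1}, Proposition~\ref{p:green-lower-bound}). Two small points worth tidying: the disjointness you need for the iterated harmonicity step is $z \notin \overline{U_y}$ for all $z \in A_x$ (i.e.\ $A_x \cap \overline{U_y}=\emptyset$), which is stronger than $\overline{U_x}\cap\overline{U_y}=\emptyset$, though it still follows for small $c_0$; and the comparability $\delta_D(z) \asymp r$ on $A_x$ comes from \eqref{e:U-rho-C11-2} alone — Lemma~\ref{l:C11}(iii) concerns the sets $E_\nu$, not $U(r)\setminus U(r,r/2)$, so that citation is not the relevant one.
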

\begin{proof}  
Let $x,y \in B_D(x_0,R/10)$ and  $r:=\wh R|x-y|/(100+48R_0)$.  By symmetry and Lemma \ref{l:GB_1},  it suffices to consider 
the case $\delta_D(x)\le \delta_D(y) \le 2^{-2}\epsilon_2 r$ only. Let  
$Q_x\in \partial D$ be such that $|x-Q_x|=\delta_D(x)$ and $U:=U^{Q_x}(\epsilon_2 r)$.  
By \eqref{e:U-rho-C11-1},  since $r<|x-y|/100< R/500$,
	\begin{align*}
		U \subset  U^{Q_x}(r) \subset B_D(Q_x, 2r) \subset B_D(x, 3r) \subset B_D(x_0, R/9) \setminus \{y\}.
	\end{align*}
Moreover, by \eqref{e:U-rho-C11-1}, we see that for all $z \in U^{Q_x}(r) \setminus U^{Q_x}(r,r/2)$, 
	$$
		|z-y| \ge |x-y| - |x-Q_x| - |z-Q_x| \ge 100r-3r=97r \ge \delta_D(y).
	$$ 
Note that \eqref{e:green-lower-region-2} and \eqref{e:green-lower-region-3} are still valid. By \eqref{e:green-lower-region-3}, there exists $c_1=c_1(R_0,k)>0$ such that  $\delta_D(z) \ge c_1 |z-y| \ge 97c_1r$ for all $z \in U^{Q_x}(r) \setminus U^{Q_x}(r,r/2)$. 
Thus, we can use Lemma \ref{l:GB_1} to conclude that 
	\begin{equation}\label{e:GB_4}
		G^{B_D(x_0,R)}(z,y)\ge c_2\delta_D(y)^p\, r^{-d+\alpha-p} \quad \text{for all } z\in  U^{Q_x}(r) \setminus U^{Q_x}(r, r/2).
	\end{equation}
Since $G^{B_D(x_0,R)}(\cdot , y)$ is regular harmonic in $U$,
by \eqref{e:GB_4}  and  Theorem \ref{t:Dynkin-improve}, we arrive at
	\begin{align*}
		&G^{B_D(x_0,R)}(x,y) \ge \E_x\left[	G^{B_D(x_0,R)}(Y_{\tau_{U}}, y): Y_{\tau_{U}} 
			\in 	U^{Q_x}(r) \setminus U^{Q_x}(r, r/2)\right]\\
			&\quad \ge c_2\delta_D(y)^pr^{-d+\alpha-p}\, \P_x\left(Y_{\tau_{U}} 
			\in U^{Q_x}(r) \setminus U^{Q_x}(r, r/2)\right)\\
		&\quad \ge c_3 \delta_D(x)^p \delta_D(y)^p r^{-d+\alpha-2p}.
	\end{align*}
\end{proof}

 Now we are in the position to give the proof of Theorem \ref{t:GB}. 

\textsc{Proof of Theorem \ref{t:GB}:} 
Let $x,y\in B_D(x_0,R/10)$. Without loss of generality, we assume that $\delta_D(x)\le \delta_D(y)$.  We have three cases:

\smallskip

\noindent
Case 1: $\delta_D(x)\le \delta_D(y)\le |x-y|/4$. Then we conclude from Lemma \ref{l:GB_4} that
	$$
		G^{B_D(x_0,R)}(x,y)\ge \frac{c_1 \delta_D(x)^p\,\delta_D(y)^p}{ |x-y|^{d-\alpha+2p}}\asymp  
		\left(\frac{\delta_D(x)}{|x-y|}\wedge 1\right)^p \left(\frac{\delta_D(y)}{|x-y|}\wedge 1\right)^p \frac{1}{|x-y|^{d-\alpha}}.
	$$

\noindent
Case 2: $\delta_D(x) \le |x-y|/4 \le \delta_D(y)$. Then we conclude from Lemma \ref{l:GB_1} that
	$$
		G^{B_D(x_0,R)}(x,y)\ge \frac{c_2 \delta_D(x)^p}{ |x-y|^{d-\alpha+p}}\asymp  
		\left(\frac{\delta_D(x)}{|x-y|}\wedge 1\right)^p \left(\frac{\delta_D(y)}{|x-y|}\wedge 1\right)^p \frac{1}{|x-y|^{d-\alpha}}.
	$$

\noindent
Case 3: $|x-y|/4\le \delta_D(x)\le \delta_D(y)$. Then  we conclude from Proposition \ref{p:green-lower-bound} that 
	$$
		G^{B_D(x_0,R)}(x,y)\ge \frac{c_3}{|x-y|^{d-\alpha}}\asymp  \left(\frac{\delta_D(x)}{|x-y|}\wedge 1\right)^p 
			\left(\frac{\delta_D(y)}{|x-y|}\wedge 1\right)^p \frac{1}{|x-y|^{d-\alpha}}.
	$$
The proof is complete.
\qed

\bigskip

In the next lemma, we let $\Phi$ be a positive Borel  function on $(0,1]$ such that 
	\begin{align}\label{e:scale4}
		c_l \bigg( \frac{r}{s}\bigg)^{\lb}\le 	\frac{\Phi(r)}{\Phi(s)} 
			\le c_u \bigg( \frac{r}{s}\bigg)^{\ub} \quad \text{for all} \;\, 0<s\le r \le 1
	\end{align}
for some constants $\lb,\ub \in \R$ with $\lb \le \ub$ and $c_l,c_u>0$.  Observe that for any $\gamma>-1-\lb$, by \eqref{e:scale4}, there exists $c_1=c_1(\gamma)>0$ such that
	\begin{align}\label{e:intscale}
		\int_0^s u^\gamma \Phi(u) du \le c_l^{-1} s^{-\lb}\, \Phi(s) 	\int_0^s u^{\gamma+\lb}  du 
			= c_1 s^{\gamma+1}\Phi(s) \quad \text{for all} \;\, s \in (0,1].
	\end{align}

The next technical lemma is a generalization of \cite[Lemma 6.1]{KSV20}, which was inspired by \cite[Lemma 3.3]{AGCV22}.
A simple version of the next lemma is used in Proposition \ref{p:bound-for-integral-new} below, while its full power will be used in Section \ref{ch:green}.

\begin{lemma}\label{l:key-curved}
Let $\Phi$ be as above, $\gamma>-1-\lb$,   $q>\alpha-1$,  $r \in (0, \wh R/8]$,  $x\in D$ with $\delta_D(x)\le r/2$, 
and let $Q_x\in \partial D$ be such that  $|x-Q_x|=\delta_D(x)$. For $a \in (0,r]$, define
	\begin{align*}
		\sI^{q,\gamma}(r,a)= \int_{U^{Q_x}(r,a)}\left(\frac{\delta_D(x)}{|x-w|}\wedge 1\right)^q  
			\frac{\rho_{D}^{Q_x}(w)^{\gamma} \,\Phi(\rho_{D}^{Q_x}(w)/r)}{|x-w|^{d-\alpha}} dw.
	\end{align*}
The following statements hold.
	
	\smallskip	
	
\noindent (i)  There exists $C>0$ independent of $r$ and  $x$ such that  for any $2\delta_D(x) \le a \le b\le r$,
	\begin{align*}
		&	\sI^{q,\gamma}(r,b)-\sI^{q,\gamma}(r,a)\le Cr^{\alpha+\gamma-q}\delta_D(x)^q 
			\int_{a/r}^{b/r} s^{\alpha+\gamma-q-1}\Phi(s)ds.
	\end{align*}
	
\noindent (ii) There exists $C>0$ independent of $r$ and  $x$ such that  for any $a \in (0,2\delta_D(x)]$,
	\begin{align*}
		&	\sI^{q,\gamma}(r,a)\le C \delta_D(x)^{\alpha-1}\, a^{\gamma+1}   \Phi(a/r).  
	\end{align*}  
	
\noindent (iii)  Assume that  $q < \alpha+\gamma+\lb$. Then there exists $C>0$ independent of $r$ 
and  $x$ such that  for any $a \in (0,r]$,
	\begin{align*}
		&	\sI^{q,\gamma}(r,a)\le C \delta_D(x)^{\alpha-1}\, a^{\gamma+1} 
			\bigg( \frac{\delta_D(x)}{a} \wedge 1 \bigg)^{q-\alpha+1}  \Phi(a/r).  
	\end{align*}  
\end{lemma}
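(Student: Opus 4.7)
The plan is to work in the local coordinate system $\mathrm{CS}_{Q_x}$. Since $Q_x$ is the closest point on $\partial D$ to $x$ and $\partial D$ is $C^{1,1}$, the vector $x - Q_x$ is normal to $\partial D$ at $Q_x$, so in these coordinates $x = (\wt 0, \delta_D(x))$. Parametrize $w \in U^{Q_x}(r,a)$ by $(\wt w, \rho)$ with $|\wt w| < r$, $\rho = \rho_D^{Q_x}(w) \in (0,a)$, and $w_d = \Psi(\wt w) + \rho$. Using $|\Psi(\wt w)| \le \Lambda_0 |\wt w| \le |\wt w|/2$ (cf.~\eqref{e:Lipschitz-constant}), the bounds $|x-w| \ge |\wt w|$ and $|x-w| \ge |\delta_D(x) - \rho| - |\Psi(\wt w)|$ together with the trivial upper bound yield
\[
|x-w| \asymp |\wt w| + |\delta_D(x) - \rho|,
\]
with constants depending only on $\Lambda_0$. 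Passing to spherical coordinates in $\wt w$ with $s := |\wt w|$ reduces the problem to estimating
\[
\sI^{q,\gamma}(r,a) \asymp \int_0^a \rho^\gamma \Phi(\rho/r)\int_0^r \frac{s^{d-2}}{(s+|\delta_D(x)-\rho|)^{d-\alpha}}\bigg(\frac{\delta_D(x)}{s+|\delta_D(x)-\rho|}\wedge 1\bigg)^q ds\, d\rho.
\]

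For Part (i), the constraint $\rho \ge a \ge 2\delta_D(x)$ forces $|\delta_D(x)-\rho| = \rho - \delta_D(x) \ge \rho/2 > \delta_D(x)$, so the minimum equals $(\delta_D(x)/(s+|\delta_D(x)-\rho|))^q$. Splitting the inner integral at $s=\rho$ (using the bound $s + |\delta_D(x)-\rho| \asymp \rho$ for $s \le \rho$, and $q > \alpha - 1$ to ensure convergence of the tail $\int_\rho^\infty s^{\alpha-q-2}\,ds$) gives $\int_0^r \cdots ds \asymp \delta_D(x)^q \rho^{\alpha-q-1}$, and the substitution $u = \rho/r$ in the outer integral produces the announced bound.

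For Part (ii), with $a \le 2\delta_D(x)$, set $\tau = |\delta_D(x)-\rho| \in [0, \delta_D(x)]$. Substituting $u = s + \tau$ and using $s^{d-2} \le u^{d-2}$ (since $d \ge 2$) bounds the inner integral by $\int_\tau^{r+\tau} u^{\alpha-2}\min(1,(\delta_D(x)/u)^q)\,du$; splitting at $u = \delta_D(x)$ yields $C\tau^{\alpha-1}$ when $\alpha < 1$, $C\log(\delta_D(x)/\tau)+C$ when $\alpha=1$, and $C\delta_D(x)^{\alpha-1}$ when $\alpha > 1$, each integrable in $\rho$ since $\alpha > 0$. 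Splitting the outer integral at $\rho = \delta_D(x)/2$ and invoking the weak scaling \eqref{e:scale4} for $\Phi$ together with \eqref{e:intscale} on each piece produces the estimate $C\delta_D(x)^{\alpha-1}a^{\gamma+1}\Phi(a/r)$.

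Part (iii) follows by decomposing $\sI^{q,\gamma}(r,a) = \sI^{q,\gamma}(r, 2\delta_D(x)\wedge a) + \big[\sI^{q,\gamma}(r,a)-\sI^{q,\gamma}(r, 2\delta_D(x)\wedge a)\big]$. When $a\le 2\delta_D(x)$, the factor $(\delta_D(x)/a \wedge 1)^{q-\alpha+1}$ in the target is $\asymp 1$ and (iii) reduces to (ii). When $a > 2\delta_D(x)$, the first term is bounded by (ii) and the second by (i); the hypothesis $q < \alpha + \gamma + \lb$ gives $\alpha + \gamma - q > -\lb$, so \eqref{e:intscale} yields $\int_{2\delta_D(x)/r}^{a/r} s^{\alpha+\gamma-q-1}\Phi(s)\,ds \le C(a/r)^{\alpha+\gamma-q}\Phi(a/r)$, producing the factor $\delta_D(x)^q a^{\alpha+\gamma-q}\Phi(a/r) = \delta_D(x)^{\alpha-1}a^{\gamma+1}(\delta_D(x)/a)^{q-\alpha+1}\Phi(a/r)$ needed; the weak scaling of $\Phi$ ensures that the $\Phi(2\delta_D(x)/r)$ contribution from (ii) is absorbed into this same expression using $\alpha+\gamma-q+\lb > 0$. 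The main technical obstacle is the bookkeeping in Part (ii): isolating the $\tau^{\alpha-1}$ singularity when $\alpha < 1$, handling the sub-regimes $\tau \lessgtr \delta_D(x)$, $s + \tau \lessgtr \delta_D(x)$, and $a \lessgtr \delta_D(x)$, and absorbing each contribution into the clean form $\delta_D(x)^{\alpha-1}a^{\gamma+1}\Phi(a/r)$ without picking up extraneous factors.
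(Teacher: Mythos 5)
Your proof is correct, and the overall strategy is the same as the paper's: both parametrize $U^{Q_x}(r,\cdot)$ by the vertical distance $\rho$ and the horizontal offset $\wt w$ (the paper does this via the straightening map $f^{(r)}$ from \eqref{e:def-fr}, which is the same change of variables written in $r$-normalized coordinates), and both use that $|x-w| \asymp |\wt w| + |\delta_D(x)-\rho|$. Parts (i) and (iii) then proceed exactly as in the paper. The one genuine difference is in part (ii): to handle the inner $s$-integral you use $s^{d-2} \le (s+\tau)^{d-2}$ (valid since $d \ge 2$), which collapses it to the one-dimensional integral $\int_\tau^{r+\tau} u^{\alpha-2}(\delta_D(x)/u \wedge 1)^q\,du$, at the cost of a three-way case split on $\alpha < 1$, $\alpha=1$, $\alpha>1$ and a subsequent absorption argument for the resulting $\tau^{\alpha-1}$ (or logarithmic) singularity in the $\rho$-integral. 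The paper instead splits the ``difficult'' region $U_\bH(v_d,a/r)\setminus U_\bH(v_d,a/(4r))$ and uses the factorization $|v-\xi|^{d-\alpha} \ge s^{d-1-\alpha/2}|\xi_d-v_d|^{1-\alpha/2}$, which keeps the exponents strictly integrable in both $s$ and $\xi_d$ uniformly for all $\alpha\in(0,2)$ and so avoids the case split entirely. Both approaches are valid; the paper's is a bit slicker because it treats $\alpha$ uniformly, while yours is more elementary but requires the bookkeeping you flag at the end. I'll just note that when you invoke the absorption argument you should make explicit that, for $\delta_D(x)/2 < a \le 2\delta_D(x)$, the comparability $a \asymp \delta_D(x)$ together with \eqref{e:scale4} is what lets you replace $\Phi(\delta_D(x)/r)$ by $\Phi(a/r)$ -- that is where the weak scaling is actually used, not just \eqref{e:intscale}.
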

\begin{proof} Let  $f^{(r)}=f^{(r)}_{Q_x}:U_\bH(3)\to U^{Q_x}(3r)$ be the function defined by \eqref{e:def-fr}. 
Set $v:=(f^{(r)})^{-1}(x)=(\wt 0, \delta_D(x)/r)$. 

\smallskip

(i) Let $2\delta_D(x) \le a \le b\le r$. Using the change of the variables $w=f^{(r)}(\xi)$ and Lemma \ref{l:diffeo}, we obtain
	\begin{align*}
		&	\sI^{q,\gamma}(r,b)-\sI^{q,\gamma}(r,a)	\le cr^d\int_{U_{\bH}(1, b/r) \setminus U_{\bH}(1,a/r)}
			\left(\frac{rv_d}{r|v-\xi|}\right)^q \frac{(r\xi_d)^{\gamma}\,\Phi(\xi_d)}{(r|v-\xi|)^{d-\alpha}} \,d\xi  \\
		&\qquad = cr^{\alpha+\gamma}  \int_{a/r}^{b/r} \bigg( \int_{|\wt \xi| <\xi_d}  
			+ \int_{\xi_d\le |\wt \xi| <1}  \bigg) \left(\frac{v_d}{|v-\xi|}\right)^q 
			\frac{\xi_d^{\gamma}\,\Phi(\xi_d)}{|v-\xi|^{d-\alpha}}\, d\wt\xi \, d\xi_d \\
		&\qquad =:cr^{\alpha+\gamma}(I_1+I_2).
	\end{align*}
Since $a/r\ge 2\delta_D(x)/r = 2v_d$, we get
	\begin{align*}
		I_1 &\le cv_d^q \int_{a/r}^{b/r}  \int_0^{\xi_d} \frac{\xi_d^{\gamma}\,
			\Phi(\xi_d)}{(\xi_d-v_d)^{q+d-\alpha}}\, s^{d-2}ds \, d\xi_d \\
		&\le  2^{q+d-\alpha}cv_d^q \int_{a/r}^{b/r} \xi_d^{\alpha+\gamma-q-1} \Phi(\xi_d) d\xi_d.
	\end{align*}
Besides, since $q>\alpha-1$, we have
	\begin{align*}
		I_2 \le cv_d^q \int_{a/r}^{b/r} \int_{\xi_d}^1 \frac{\xi_d^{\gamma}\,\Phi(\xi_d)}{s^{q+d-\alpha}}\, s^{d-2}ds \, d\xi_d  
			\le c v_d^q \int_{a/r}^{b/r} \xi_d^{\alpha+\gamma-q-1}\Phi(\xi_d)d\xi_d.
	\end{align*}
Since $v_d= \delta_D(x)/r$, we arrive at the desired result.

(ii) By the change of the variables $w=f^{(r)}(\xi)$ and Lemma \ref{l:diffeo}, for all $a \in (0,r]$,
	\begin{align}\label{e:key-curved-1}
		\sI^{q,\gamma}(r,a)	&\le c\int_{U_{\bH}(1,a/r)}\left(\frac{rv_d}{r|v-\xi|}\wedge 1\right)^q 
			\frac{(r\xi_d)^{\gamma}\,\Phi(\xi_d)}{(r|v-\xi|)^{d-\alpha}}\, r^d d\xi \\
		& =cr^{\alpha+\gamma}\int_{U_{\bH}(1,a/r)}\left(\frac{v_d}{|v-\xi|}\wedge 1\right)^q 
			\frac{\xi_d^{\gamma}\,\Phi(\xi_d)}{|v-\xi|^{d-\alpha}} d\xi. \nn
	\end{align}

Let $a \in (0, 2\delta_D(x)]$. Using \eqref{e:intscale}, since $q>\alpha-1$ and $\gamma>-1-\lb$, we obtain
	\begin{align*}
		& \int_{U_{\bH}(1,a/r) \setminus U_{\bH}(v_d, a/r)}\left(\frac{v_d}{|v-\xi|}\wedge 1\right)^q 
			\frac{\xi_d^{\gamma}\,\Phi(\xi_d)}{|v-\xi|^{d-\alpha}} d\xi\\ 
		&\le  cv_d^q \int_{v_d}^1 \int_{0}^{a/r}   \frac{\xi_d^{\gamma}\,\Phi(\xi_d)}{s^{q+d-\alpha}} s^{d-2} \,d\xi_d\, ds 
			\le c v_d^{\alpha-1} (a/r)^{\gamma+1}\Phi(a/r).
	\end{align*}
Since $v_d = \delta_D(x)/r \ge a/(2r)$, using \eqref{e:intscale} again, we also get
	\begin{align*}
		&  \int_{U_{\bH}(v_d,a/(4r))}\left(\frac{v_d}{|v-\xi|}\wedge 1\right)^q 
			\frac{\xi_d^{\gamma}\,\Phi(\xi_d)}{|v-\xi|^{d-\alpha}} d\xi	\\
		&\le  c \int_{0}^{v_d} \int_{0}^{a/(4r)}  \frac{\xi_d^{\gamma}\,\Phi(\xi_d)}{(v_d-\xi_d)^{d-\alpha}} s^{d-2} \,d\xi_d\, ds \\
		&	\le  c(v_d/2)^{-d+\alpha} \int_{0}^{v_d} s^{d-2} ds  \int_{0}^{a/r} \xi_d^\gamma \Phi(\xi_d)\,   d\xi_d\\
		&\le c v_d^{\alpha-1} (a/r)^{\gamma+1}\Phi(a/r).
	\end{align*}
Further, by using \eqref{e:scale4},  we see that
	\begin{align*}
		&I:=\int_{U_{\bH}(v_d,a/r) \setminus U_{\bH}(v_d,a/(4r))} 
			\left(\frac{v_d}{|v-\xi|}\wedge 1\right)^q \frac{\xi_d^{\gamma}\,\Phi(\xi_d)}{|v-\xi|^{d-\alpha}} d\xi\\
		&\le c(a/r)^\gamma \Phi(a/r) \int_{U_{\bH}(v_d,a/r) \setminus U_{\bH}(v_d,a/(4r))} \frac{d\xi}{|v-\xi|^{d-\alpha}}  \\
		&\le c(a/r)^\gamma \Phi(a/r) \int_{a/(4r)}^{a/r} \int_0^{v_d} 
			\frac{s^{d-2} }{s^{d-1-\alpha/2} \, |\xi_d-v_d|^{1-\alpha/2}}ds  \, d\xi_d\\
		&= cv_d^{\alpha/2} (a/r)^\gamma \Phi(a/r)  \int_{a/(4r)}^{a/r} \frac{d\xi_d}{ |\xi_d-v_d|^{1-\alpha/2}}.
	\end{align*}
If $v_d \ge 2a/r$, then 
	$$ 
		\int_{a/(4r)}^{a/r} \frac{d\xi_d}{ |\xi_d-v_d|^{1-\alpha/2}} \le (v_d/2)^{\alpha/2-1} 
			\int_{a/(4r)}^{a/r}  d\xi_d \le (v_d/2)^{\alpha/2-1}(a/r) 
	$$
and if $a/(2r)\le v_d < 2a/r$, then 
	$$ 
		\int_{a/(4r)}^{a/r} \frac{d\xi_d}{ |\xi_d-v_d|^{1-\alpha/2}} \le  \int_{v_d/8}^{2v_d}  
			\frac{d\xi_d}{ |\xi_d-v_d|^{1-\alpha/2}} \le c v_d^{\alpha/2} \asymp v_d^{\alpha/2-1}(a/r).
	$$
Thus, in any case, we get $I \le cv_d^{\alpha-1}(a/r)^{\gamma+1}\Phi(a/r)$.

Combining the above estimates with \eqref{e:key-curved-1}, since $v_d=\delta_D(x)/r$,  we arrive at
	$$
		\sI^{q,\gamma}(r,a)\le cr^{\alpha+\gamma} (\delta_D(x)/r)^{\alpha-1}(a/r)^{\gamma+1}\Phi(a/r)
			= c \delta_D(x)^{\alpha-1} a^{\gamma+1}\Phi(a/r).
	$$

(iii) By (ii), it remains to prove the claim for  $a\in [2\delta_D(x),r]$. Let $a\in [2\delta_D(x),r]$. Using (i) and (ii) in the second line, and \eqref{e:intscale} and \eqref{e:scale4} in the third, since $\alpha+\gamma-q-1>-1-\lb$, we get
	\begin{align*}
		\sI^{q,\gamma}(r,a) &= 	\sI^{q,\gamma}(r,a)- 	\sI^{q,\gamma}(r,2\delta_D(x)) + \sI^{q,\gamma}(r,2\delta_D(x))\\
		&\le cr^{\alpha+\gamma-q}\delta_D(x)^q \int_{2\delta_D(x)/r}^{a/r} s^{\alpha+\gamma-q-1}\Phi(s)ds 
			+ c \delta_D(x)^{\alpha+\gamma}  \Phi(2\delta_D(x)/r)\\
		&\le c r^{\alpha+\gamma-q}\delta_D(x)^q (a/r)^{\alpha+\gamma-q}\Phi(a/r) 
			+ c \delta_D(x)^{\alpha+\gamma} (2\delta_D(x)/a)^{\lb} \,\Phi(a/r)\\
		& 	\le c\delta_D(x)^q a^{\alpha+\gamma-q}\Phi(a/r) 
			+ c \delta_D(x)^{\alpha+\gamma} (2\delta_D(x)/a)^{-\alpha-\gamma+q} \,\Phi(a/r)\\
		&=c\delta_D(x)^q a^{\alpha+\gamma-q}\Phi(a/r) .
	\end{align*}
The proof is complete.   
\end{proof}
Now we are ready  to give the sharp two sided estimates on the Green potentials. 

\begin{prop}\label{p:bound-for-integral-new} 
Let $Q \in \partial D$ and $\gamma>-p-1$.  Then for any $R \in (0, \wh R/24]$,  any Borel set $A$ satisfying 
$B_D(Q,R/4) \subset A \subset B_D(Q,R)$  and any $x \in B_D(Q,R/8)$,
	\begin{equation*} 
		\E_x \int_0^{\tau_A}\delta_D(Y_t)^{\gamma }\, dt  = \int_A G^A(x,y) \delta_D(y)^{ \gamma }\, dy  \asymp  
		\begin{cases} 
			R^{\alpha+  \gamma   -p} \delta_D(x)^p, &  \gamma >p-\alpha,\\[2pt]
			\delta_D(x)^p\log(R/\delta_D(x)), &  \gamma =p-\alpha, \\[2pt]
			\delta_D(x)^{\alpha+ \gamma }, &\gamma <p-\alpha,
		\end{cases}
	\end{equation*}
where the comparison constants are independent of $Q$, $R$, $A$ and $x$.
\end{prop}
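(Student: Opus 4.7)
The proof plan is to establish matching upper and lower bounds, both reducing to evaluations of the box-type integral $\sI^{q,\gamma}(r,a)$ controlled by Lemma~\ref{l:key-curved} (with $\Phi\equiv 1$). Let $Q_x\in\partial D$ realize $\delta_D(x)=|x-Q_x|$; since $x\in B_D(Q,R/8)$ one has $|Q-Q_x|\le R/8+\delta_D(x)\le R/4$, and $R\le\wh R/24$ forces $r_\ast:=15R/8\le\wh R/8$, so $A\subset B_D(Q,R)\subset U^{Q_x}(r_\ast)$ and throughout this box $\rho_D^{Q_x}\asymp\delta_D$ by \eqref{e:U-rho-C11-2}.

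For the upper bound, $G^A(x,y)\le G^{B_D(Q,R)}(x,y)$ combined with Proposition~\ref{p:prelub} and the bound $\delta_D(x)\wedge\delta_D(y)\le\delta_D(x)$ give
\[
\int_A G^A(x,y)\delta_D(y)^\gamma\,dy\le C\,\sI^{p,\gamma}(r_\ast,r_\ast).
\]
When $\gamma>p-\alpha$ I apply Lemma~\ref{l:key-curved}(iii) directly to obtain $C R^{\alpha+\gamma-p}\delta_D(x)^p$. When $\gamma\le p-\alpha$ I decompose $\sI^{p,\gamma}(r_\ast,r_\ast)=\sI^{p,\gamma}(r_\ast,2\delta_D(x))+[\sI^{p,\gamma}(r_\ast,r_\ast)-\sI^{p,\gamma}(r_\ast,2\delta_D(x))]$, bound the first via part (ii) and the second via part (i); evaluating $\int_{2\delta_D(x)/r_\ast}^1 s^{\alpha+\gamma-p-1}\,ds$ then produces the logarithm in Case~2 and $\delta_D(x)^{\alpha+\gamma}$ in Case~3.

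For the lower bound I split according to the size of $\delta_D(x)$. In the interior regime $\delta_D(x)\ge R/64$, the ball $B(x,R/128)$ lies in $A$ and a straightforward application of Theorem~\ref{t:GB} with $x_0=x$ yields $G^A(x,y)\ge c|x-y|^{\alpha-d}$ on $B(x,R/1280)$, giving lower bound $cR^{\alpha+\gamma}$ which matches all three cases once $\delta_D(x)\asymp R$. In the boundary regime $\delta_D(x)<R/64$, I use the strong Markov property at $\tau_V$ with $V=U^{Q_x}(\epsilon_2 r)\subset A$ for a suitable $r$: by Theorem~\ref{t:Dynkin-improve}, $\P_x(Y_{\tau_V}\in U^{Q_x}(r)\setminus U^{Q_x}(r,r/2))\asymp(\delta_D(x)/r)^p$; on this event $\delta_D(Y_{\tau_V})\asymp r$, so the interior regime applied to $u(z):=\E_z\!\int_0^{\tau_A}\delta_D(Y_t)^\gamma dt$ gives $u(z)\ge cr^{\alpha+\gamma}$. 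Consequently $u(x)\ge C\delta_D(x)^p\,r^{\alpha+\gamma-p}$; choosing $r\asymp R$ handles Case~1 and $r\asymp\delta_D(x)$ handles Case~3.

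The main obstacle is Case~2 of the lower bound, where $\alpha+\gamma-p=0$ and the single-scale Dynkin/Markov argument yields only $\delta_D(x)^p$, losing the logarithmic factor. The remedy is to extract the log by summing disjoint contributions across dyadic scales $r_k=2^k\delta_D(x)$ for $k=0,1,\ldots,\lfloor\log_2(R/\delta_D(x))\rfloor$: apply the Markov estimate inside each annular shell $U^{Q_x}(r_k)\setminus U^{Q_x}(r_{k-1})$ separately and sum, or alternatively integrate the pointwise Green function lower bound from Lemma~\ref{l:GB_4} directly on the annulus $\{y:|x-y|\asymp\delta_D(y)\asymp t\}$ over $t\in[\delta_D(x),R]$, which produces $\delta_D(x)^p\int_{\delta_D(x)}^{R}t^{-1}dt$ precisely when $\alpha+\gamma=p$. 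The technical care needed here lies in verifying that the Dynkin/Theorem~\ref{t:GB} bounds remain applicable at each scale despite $x$ not necessarily lying in the small ball $B_D(Q,R/40)$ that would permit direct use of Theorem~\ref{t:GB} with $x_0=Q$; this is arranged by centering the auxiliary box at $Q_x$ rather than $Q$ and exploiting the slack $|Q-Q_x|\le R/4$.
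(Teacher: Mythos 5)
Your upper bound argument is the same as the paper's (reduce to the box integral $\sI^{p,\gamma}(r,a)$ via Proposition~\ref{p:prelub} and split into the three ranges using Lemma~\ref{l:key-curved}); the only caveat is that the constant in Proposition~\ref{p:prelub} depends on the radius $R_0$ of the reference ball, so one should majorize $G^A$ by $G^{B_D(Q_x,\wh R)}$ (as the paper does) or by $G^{B_D(Q,\wh R/24)}$ with the radius \emph{fixed}, rather than by $G^{B_D(Q,R)}$ with variable $R$.

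Your lower bound takes a genuinely different route from the paper. The paper simply inserts Theorem~\ref{t:GB} pointwise and then integrates directly: for $\gamma>p-\alpha$ over a small ball at distance $\asymp R$ from $x$, for $\gamma<p-\alpha$ over a small ball at distance $\asymp\delta_D(x)$, and for $\gamma=p-\alpha$ over a cone of aperture one, extracting the log from $\int|x-y|^{-d}\,dy$. You instead organize by an interior/boundary dichotomy and, in the boundary regime, propagate a lower bound on $u(z)=\E_z\int_0^{\tau_A}\delta_D(Y_t)^\gamma dt$ from the ``interior'' region $\{\delta_D(z)\asymp r\}$ back to $x$ via the excessiveness of $u$, Theorem~\ref{t:Dynkin-improve} for the exit probability, and a single interior Green-function estimate. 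This works for Cases 1 and 3 once the constants are set so that $U^{Q_x}(r)$ and the balls around its top edge sit inside $B_D(Q,R/4)\subset A$ (you have enough slack because in the boundary regime $|Q-Q_x|<9R/64<R/4$, but the specific thresholds $R/64$, etc., need to be tuned consistently, and for Case~3 you must take $r\ge 4\delta_D(x)/\epsilon_2$ so that Theorem~\ref{t:Dynkin-improve} applies). The two routes invoke the same deep input (Theorems~\ref{t:GB} and \ref{t:Dynkin-improve} are proved from the same technology), so neither is strictly simpler; yours makes the probabilistic content more visible, while the paper's is shorter to write out.

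One genuine gap: your \emph{first} remedy for Case~2 — ``apply the Markov estimate inside each annular shell $U^{Q_x}(r_k)\setminus U^{Q_x}(r_{k-1})$ separately and sum'' — does not produce a sum of contributions. Each application of the strong Markov property at a different exit time $\tau_{V_k}$ yields a \emph{separate} lower bound for $u(x)$; these are estimates of the same quantity at different scales and cannot be added. To get the logarithm you must decompose the spatial integral $\int_A G^A(x,y)\delta_D(y)^\gamma\,dy$ over disjoint regions in $y$ and add those, which is exactly your \emph{second} remedy (integrating the pointwise lower bound from Lemma~\ref{l:GB_4} over dyadic annuli $\{|x-y|\asymp t,\ \delta_D(y)\asymp t/10\}$, $t=2^k\delta_D(x)$, taking care that $|x-y|\ge 4(\delta_D(x)\vee\delta_D(y))$ holds and that the annuli fit inside $B_D(Q,R/4)$). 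That second remedy is essentially the paper's cone integration, just parametrized differently, and it is correct. So the proposal stands, but you should commit to the second remedy and discard the first.
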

\begin{proof} Let  $R \in (0, \wh R/24]$, $A$ be a Borel set satisfying $B_D(Q,R/4) \subset A \subset B_D(Q,R)$ and $x \in B_D(Q,R/8)$. Note that $\delta_D(x)<R/8$.

\smallskip

{\bf Upper bound:} Let $Q_x\in \partial D$ be such that $|x-Q_x|=\delta_D(x)$. Since $|Q-Q_x| \le |Q-x|+\delta_D(x)<R/4$, using  \eqref{e:U-rho-C11-1}, we see that	$A \subset B_D(Q_x, 2R) \subset U^{Q_x}(3R)$. Thus, by  Proposition \ref{p:prelub} 
(with $x_0=Q_x$ and $R_0=\wh{R}$), we have
	\begin{align*}
		&\int_A G^A(x,y) \delta_D(y)^{\gamma}\, dy \le \int_{B_D(Q_x,2R)} G^{B_D(Q_x,\wh R)}(x,y)\delta_D(y)^{\gamma}\, dy\\
		&\le c\bigg( \int_{U^{Q_x}(3R) \setminus U^{Q_x}(3R, 2\delta_D(x) )} 
			+  \int_{U^{Q_x}(3R,2\delta_D(x))}\bigg)  \left(\frac{\delta_D(x)}{|x-y|}\wedge 1\right)^p  
			\frac{\rho_D(y)^\gamma}{|x-y|^{d-\alpha}} dy\\
		&=:I_1+I_2.
	\end{align*}
Applying Lemma \ref{l:key-curved}(i)-(ii) with $\Phi=1$  and $q=p$, we get that
	\begin{align*}
		I_1+I_2\le c R^{\alpha+\gamma-p}\delta_D(x)^p \int_{2\delta_D(x)/(3R)}^{1} s^{\alpha+\gamma-p-1}ds 
			+ c \delta_D(x)^{\alpha+\gamma}.
	\end{align*}
By considering each case separately, we deduce that the upper bound holds.

\smallskip

{\bf Lower bound:} By  Theorem \ref{t:GB}, we have
	\begin{align*}
		&\int_A G^A(x,y) \delta_D(y)^{\gamma}\, dy   \ge \int_{B_D(x,R/80)} 
			G^{B_D(x,R/8)}(x, y) \delta_D(y)^\gamma\, dy \nn\\
		&\quad \ge c\delta_D(x)^p \int_{B_D(x, R/80) \setminus B_D(x, \delta_D(x)/80)}    
			\frac{\delta_D(y)^{p+\gamma} } {|x-y|^{d-\alpha+2p}} dy =: c\delta_D(x)^p II.
	\end{align*}
Note that there exist $z_1\in D$ and a constant $c_1\in (0,1)$ depending only on $\Lambda$ 
such that $R/320<|z_1-x|<R/160$ and $\delta_D(z_1) \ge c_1 R/320$. Let $z_2 \in D$ be such that  
$\delta_D(x)/40<|z_2-x|<\delta_D(x)/20$. Now if $\gamma>p-\alpha$, then 
	$$
		II \ge (R/80)^{-d+\alpha-2p}\int_{B(z_1, c_1R/640)} \delta_D(y)^{p+\gamma}  dy \ge c R^{\alpha+\gamma-p}
	$$
and  if $\gamma<p-\alpha$, then 
	$$
		II \ge (\delta_D(x)/20 + \delta_D(x)/80)^{-d+\alpha-2p}\int_{B(z_2, \delta_D(x)/80)} \delta_D(y)^{p+\gamma}  dy 
			\ge c \delta_D(x)^{\alpha+\gamma-p}.
	$$

Now suppose that $\gamma=p-\alpha$. Define
	$$
	 V		=\left\{w=(\wt w,w_d) \text{ in CS}_{Q_x}: \delta_D(x)/80<w_d-x_d<R/160,\;  w_d-x_d>|\wt w| \right\}.
	$$
For any $w=(\wt w,w_d) \in V$, we have $|w-x|<2(w_d-x_d)<R/80$ and $\Psi(\wt w) \le |\wt w|<w_d$ by \eqref{e:Psi-bound}. 
Thus, $V \subset B_D(x, R/80) \setminus B_D(x, \delta_D(x)/80)$. It follows that
	\begin{align*}
		II&\ge \int_{V}    \frac{dy}{|x-y|^{d}}  
			\ge c \int_{\delta_D(x)/80}^{R/160} \int_{0}^{w_d} \frac{1}{(2w_d)^d} s^{d-2}ds \, dw_d\\
		& =  c \int_{\delta_D(x)/80}^{R/160}  \frac{dw_d}{w_d}  \asymp \log (R/\delta_D(x)).
	\end{align*}
The proof is complete. \end{proof}

%%%%%%%%%%%%%%%%%%%%%%%%%%%%%%%%%%%%%%%%%%%%%%%%%%%%%%%%%%%%%%%%%%%%%%%%%%%%%%%%%%%%%%%%%%%%%%%%%%%%%%%%%%%%%%%%%%%%%%%%%%%%%%%%%%%%%%%
%%%%%%%%%%%%%%%%%%%%%%%%%%   Carleson's estimate and boundary Harnack principle      %%%%%%%%%%%%%%%%%%%%%%%%%%%%%%%%%%%%%%%%%%%%%%%%%%
%%%%%%%%%%%%%%%%%%%%%%%%%%%%%%%%%%%%%%%%%%%%%%%%%%%%%%%%%%%%%%%%%%%%%%%%%%%%%%%%%%%%%%%%%%%%%%%%%%%%%%%%%%%%%%%%%%%%%%%%%%%%%%%%%%%%%%%

\section{Carleson's estimate and  the  boundary Harnack principle}\label{ch:bhp}

So far our assumptions on the function $\sB(x,y)$ do not provide a full description of its behavior near the boundary -- the lower bound in \hyperlink{B4-b}{{\bf (B4-b)}} need not hold when both $x$ and $y$ are close to the boundary. Moreover, compared with \eqref{e:intro-skst} the factor containing $\delta_D(x)\vee \delta_D(y)$ is missing. This will be rectified in our final assumption on $\sB$. 
This final assumption will imply  \hyperlink{B4-a}{{\bf (B4-a)}} and  \hyperlink{B4-b}{{\bf (B4-b)}} with a specific $\Phi_0$.
With this assumption we will first prove Carleson's estimate and then also the boundary Harnack principle.

Let  $\Phi_1$ and $\Phi_2$ be Borel  functions on $(0,\infty)$ such that $\Phi_1(r)= \Phi_2(r)=1$ for $r \ge 1$ and that
	\begin{align*}
		c_L' \bigg( \frac{r}{s}\bigg)^{\lb_1}\le 	\frac{\Phi_1(r)}{\Phi_1(s)} \le c_U' \bigg( \frac{r}{s}\bigg)^{\ub_1} 
			\quad \text{for all} \;\, 0<s\le r\le 1,
	\end{align*}
and
	\begin{align*}
		c_L''\bigg( \frac{r}{s}\bigg)^{\lb_2}\le 	\frac{\Phi_2(r)}{\Phi_2(s)} \le c_U'' \bigg( \frac{r}{s}\bigg)^{\ub_2} 
			\quad \text{for all} \;\, 0<s\le r\le 1
	\end{align*}
for some $\ub_1\ge \lb_1\ge 0$, $\ub_2\ge \lb_2\ge 0$ and $c_L', c_U', c_L'', c_U''>0$. Note that $\Phi_1$ and $\Phi_2$ are almost increasing.
Let $\beta_1$ and $\beta_2$ be the lower Matuszewska indices of $\Phi_1$ and $\Phi_2$, defined by \eqref{e:Matuszewska} with $\Phi_1$ and $\Phi_2$ instead of $\Phi_0$, respectively.  Then by the definition of  the lower Matuszewska index, since $\Phi_1$ and $\Phi_2$ are almost increasing, we see that for any $\eps>0$, there exist constants  $c_L'(\eps)>0$ and $c_L''(\eps)>0$ such that 
	\begin{align}
		c_L'(\eps)\bigg( \frac{r}{s}\bigg)^{\beta_1-\eps  \wedge \beta_1}
			\le 	\frac{\Phi_1(r)}{\Phi_1(s)} \le c_U'\bigg( \frac{r}{s}\bigg)^{\ub_1} \quad \text{for all} \;\, 0<s\le r\le 1 
				\label{e:scale1new}
	\end{align}
		and
	\begin{align}
		c_L''(\eps)\bigg( \frac{r}{s}\bigg)^{\beta_2-\eps \wedge \beta_2}
			\le 	\frac{\Phi_2(r)}{\Phi_2(s)} \le c_U'' \bigg( \frac{r}{s}\bigg)^{\ub_2} \quad \text{for all} \;\, 
			0<s\le r\le 1.\label{e:scale2}
	\end{align} 

Let $\ell$ be a Borel  function on $(0,\infty)$ with the following properties: (i)   $\ell(r)=1$ for $r \ge 1$,  
and (ii) for every $\eps>0$, there exists a constant  $c(\eps)>1$ such that 
	\begin{align}	
		c(\eps)^{-1} \bigg( \frac{r}{s}\bigg)^{ -\eps \wedge    \beta_1}
			\le \frac{\ell(r)}{\ell(s)} 
			\le c(\eps) \bigg( \frac{r}{s}\bigg)^{  \eps  \wedge \beta_2} \quad \text{for all} \;\, 0<s\le r\le 1.
			\label{e:scale3}
	\end{align}
Note that $\ell$  is almost increasing if $\beta_1=0$, and $\ell$ is almost decreasing if $\beta_2=0$.

We consider the following condition.

\medskip

\noindent\hypertarget{B4-c}{{\bf (B4-c)}} There exist  comparison constants such that for all $x,y \in D$,
	\begin{align*}
		\sB(x,y) \asymp  \Phi_1\bigg(\frac{\delta_D(x) \wedge \delta_D(y)}{|x-y|}\bigg)\,
		\Phi_2\bigg(\frac{\delta_D(x) \vee \delta_D(y)}{|x-y|}\bigg)\,
		\ell\bigg(\frac{\delta_D(x)\wedge \delta_D(y)}{(\delta_D(x)\vee\delta_D(y))\wedge |x-y|}\bigg).
	\end{align*}

\medskip

\begin{remark}\label{r:compare-KSV}
Let $\beta_1,\beta_2,\beta_3,\beta_4\ge 0$ be such that $\beta_1>0$ if $\beta_3>0$, and $\beta_2>0$ if $\beta_4>0$. By letting  $\Phi_1(r)=(r\wedge 1)^{\beta_1}$, 
	$$
		\Phi_2(r)= \frac{1}{(\log 2)^{\beta_4}}(r\wedge 1)^{\beta_2}(\log (1+1/(r\wedge 1)))^{\beta_4}
	$$ 
	and 
	$$
		\ell(r) = \frac{1}{(\log 2)^{\beta_3}} (\log(1+1/(r\wedge 1)))^{\beta_3},
	$$ 
\hyperlink{B4-c}{{\bf (B4-c)}}  covers the assumption {\bf (A3)} in \cite{KSV20}. Moreover, by Remark \ref{r:C=infty}, 
 we have  $\lim_{q \to \alpha+\beta_1} C(\alpha,q,\F)=\infty$.
\end{remark}

\ We now explain how  \hyperlink{B4-c}{{\bf (B4-c)}} is related to the previous assumptions. 

For given $\Phi_1$ and $\ell$ satisfying \eqref{e:scale1new} and \eqref{e:scale3} respectively, 
we define a function $\Phi_0$ on $(0, \infty)$ by
	\begin{align}\label{e:phi0new}
		\Phi_0(r):=\Phi_1(r)\ell(r), \qquad r>0.
	\end{align}
Then $\Phi_0(r)=1$ for $r \ge 1$. Further, for any $\eps>0$,  
by \eqref{e:scale1new} and \eqref{e:scale3}, there exists a constant $\wt c(\eps)>1$ such that 
	\begin{align}\label{e:scalenew}
		\wt c(\eps)^{-1}\bigg( \frac{r}{s}\bigg)^{\beta_1-\eps \wedge \beta_1}
			\le \frac{\Phi_0(r)}{\Phi_0(s)}\le \wt c(\eps) 
			\bigg( \frac{r}{s}\bigg)^{\ub_1+\eps\wedge\beta_2}\quad \mbox{for all } 0<s\le r\le1.
	\end{align}
Indeed, the second inequality in \eqref{e:scalenew} directly follows from \eqref{e:scale1new} and \eqref{e:scale3}. When $\beta_1=0$, the first inequality in \eqref{e:scalenew} holds since both $\Phi_1$ and $\ell$ are almost increasing in this case. When $\beta_1>0$, using \eqref{e:scale1new} and \eqref{e:scale3} with $\eps$ replaced by $(\eps \wedge \beta_1)/2$, we see that  the first inequality in \eqref{e:scalenew} holds. 
Therefore, the function $\Phi_0$ defined in \eqref{e:phi0new}
satisfies \eqref{e:scale} and is thus almost increasing.
It is clear from \eqref{e:scale3} that the lower Matuszewska index of $\Phi_0$ is equal to  $\beta_1$.
 
In this section and the next, 
we assume that \hyperlink{B1}{{\bf (B1)}}, \hyperlink{B3}{{\bf (B3)}}, \hyperlink{B4-c}{{\bf (B4-c)}},  \hyperlink{K3}{{\bf (K3)}} and \hyperlink{B5}{{\bf (B5)}} hold.
In the next lemma, we will show that, under these assumptions, \hyperlink{B2-a}{{\bf (B2-a)}}, \hyperlink{B2-b}{{\bf (B2-b)}}, 
\hyperlink{UBS}{{\bf (UBS)}} (hence \hyperlink{IUBS}{{\bf (IUBS)}}), and \hyperlink{B4-a}{{\bf (B4-a)}}--\hyperlink{B4-b}{{\bf (B4-b)}} (with the $\Phi_0$ defined in \eqref{e:phi0new}) holds. 

\emph{
In this section and the next, 
we will always take $\Phi_0$ to be the function defined in \eqref{e:phi0new} and thus}
	$$
 		\emph{the constant $\beta_0$ in Sections \ref{ch:operator}--\ref{ch-killed-potentials} is equal to $\beta_1$.}
	$$
	
\begin{lemma}\label{l:A3-c}
The following statements hold under \hyperlink{B4-c}{{\bf (B4-c)}}.
	
\noindent (i) 
	\hyperlink{B2-a}{{\bf (B2-a)}}, \hyperlink{B2-b}{{\bf (B2-b)}}, \hyperlink{B4-a}{{\bf (B4-a)}} and  \hyperlink{B4-b}{{\bf (B4-b)}} (with the $\Phi_0$ defined in \eqref{e:phi0new}) 	 hold. 
	
\noindent (ii) 
For every $\eps \in (0,1)$, there exists  $C=C(\eps)>1$ such that for every $x_0 \in D$ and $0<r<\delta_D(x_0)/(1+\eps)$, we have
	\begin{align*}
	 	C^{-1}\sB(z,y)\le 	\sB(x,y) \le C\sB(z,y) \;\; \text{for all} \;\, x,z \in B(x_0, r) 
	 		\text{ and } y \in D  \setminus B(x_0, (1+\eps)r).
	\end{align*}
	
\noindent (iii) For every $k\ge 1$, there exists $C=C(k)>0$ such that for all $x,y,z \in D$ satisfying $\delta_D(x) \le  k\delta_D(z)$ and $|y-z| \le M|y-x|$ with $M\ge 1$,
	\begin{align}\label{e:B6}
		\sB(x,y) \le CM^{\beta_1+\ub_1+\beta_2+\ub_2} \sB(z,y).
	\end{align}
	
\noindent (iv)  \hyperlink{UBS}{{\bf (UBS)}} holds.
\end{lemma}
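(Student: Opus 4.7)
\emph{Plan for (i).} With $a=\delta_D(x)\wedge\delta_D(y)$, $b=\delta_D(x)\vee\delta_D(y)$, $r=|x-y|$, \hyperlink{B4-c}{{\bf (B4-c)}} reduces \hyperlink{B4-a}{{\bf (B4-a)}}--\hyperlink{B4-b}{{\bf (B4-b)}} to comparing $\Phi_2(b/r)\ell(a/(b\wedge r))$ with $\ell(a/r)$. When $b\ge r$, the former equals $\ell(a/r)$ exactly, since $\Phi_2(b/r)=1$ and $b\wedge r=r$. When $b<r$, for the upper bound \hyperlink{B4-a}{{\bf (B4-a)}} the upper scaling of $\Phi_2$ in \eqref{e:scale2} gives $\Phi_2(b/r)\lesssim (b/r)^{\lb_2}$, while \eqref{e:scale3} applied to $a/b\ge a/r$ gives $\ell(a/b)\le c(\eps)(r/b)^{\eps\wedge\beta_2}\ell(a/r)$, and for $\eps$ small enough that $\lb_2\ge\eps\wedge\beta_2$ the product is $\lesssim \ell(a/r)$. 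For \hyperlink{B4-b}{{\bf (B4-b)}}, the hypothesis $b\ge r/2$ forces $\Phi_2(b/r)$ to be bounded below and $a/b\in[a/r,2a/r]$, so \eqref{e:scale3} yields $\ell(a/b)\asymp\ell(a/r)$. Property \hyperlink{B2-a}{{\bf (B2-a)}} then follows because $\Phi_0$ is bounded on $(0,\infty)$, which is immediate from \eqref{e:scalenew}. For \hyperlink{B2-b}{{\bf (B2-b)}}, if $a\ge a_0 r$, all three arguments $a/r$, $b/r$, $a/(b\wedge r)$ are bounded below by a constant depending on $a_0$, and the corresponding lower bounds in \eqref{e:scale1new}, \eqref{e:scale2}, \eqref{e:scale3} show each factor in \hyperlink{B4-c}{{\bf (B4-c)}} is bounded below by a constant depending on $a_0$.

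\emph{Plan for (iii).} By \hyperlink{B4-c}{{\bf (B4-c)}}, the ratio $\sB(x,y)/\sB(z,y)$ is a product of three ratios of $\Phi_1$, $\Phi_2$, $\ell$. The hypotheses yield $a_{x,y}\le k\,a_{z,y}$, $b_{x,y}\le k\,b_{z,y}$, and $r_{z,y}\le Mr_{x,y}$ (with the obvious notation), hence $a_{x,y}/r_{x,y}\le kM\,a_{z,y}/r_{z,y}$ and likewise for $b/r$. A case split on whether each argument exceeds its counterpart, combined with almost-monotonicity of $\Phi_i$ and the upper scalings \eqref{e:scale1new}--\eqref{e:scale2}, bounds the $\Phi_i$-ratio by $C(k)M^{\ub_i}$ for $i=1,2$. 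For the $\ell$-ratio one may assume both arguments lie in $(0,1]$ since $\ell\equiv 1$ on $[1,\infty)$; an analogous case split together with both sides of \eqref{e:scale3} yields a bound $C(k)M^{\beta_1+\beta_2}$ upon letting $\eps\to 0$ and absorbing the $\eps$-dependent constants into $C(k)$. Multiplying the three estimates produces $C(k)M^{\beta_1+\ub_1+\beta_2+\ub_2}$.

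\emph{Plans for (ii), (iv), and main obstacle.} Part (ii) follows from (iii) applied in both directions: for $x,z\in B(x_0,r)$ with $r<\delta_D(x_0)/(1+\eps)$ one has $\delta_D(x)\asymp\delta_D(z)\asymp\delta_D(x_0)$, and for $y\notin B(x_0,(1+\eps)r)$ one has $|y-x|\asymp|y-z|\asymp|y-x_0|$, with comparison constants depending only on $\eps$; both $k$ and $M$ in (iii) are then bounded by constants depending on $\eps$. For (iv), given $x\in D$ and $0<r\le(|x-y|\wedge\wh R)/2$, I would exhibit a set $K\subset B_{\overline D}(x,r)$ with $m_d(K)\asymp r^d$ and $\delta_D(z)\gtrsim\delta_D(x)\vee r$ on $K$: take $K=B(x,r/2)$ when $\delta_D(x)\ge r$, and otherwise use the Lipschitz graph at the boundary point closest to $x$ to find $z^\ast\in D$ with $|z^\ast-x|\le r/2$ and $\delta_D(z^\ast)\gtrsim r$, then set $K=B(z^\ast,cr)$ for a sufficiently small $c$. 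Since $|y-z|\le(3/2)|y-x|$ for $z\in K$, part (iii) gives $\sB(z,y)\ge C^{-1}\sB(x,y)$ uniformly on $K$, and averaging yields \hyperlink{UBS}{{\bf (UBS)}}. The main obstacle is the $\ell$-ratio estimate in (iii): unlike $\Phi_1$ and $\Phi_2$, $\ell$ need not be monotone and its two arguments may lie on either side of $1$ in either order, so the case analysis via \eqref{e:scale3} must carefully isolate the $\beta_1$ and $\beta_2$ contributions to the final exponent while keeping all $\eps$-dependent constants absorbed into the $k$-dependent constant.
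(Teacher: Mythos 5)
Parts (i), (ii), and (iv) of your plan are essentially sound: (i) matches the paper's computation; (ii) via (iii) is a legitimate alternative to the paper's direct use of the scalings; (iv) is slightly more than needed (the paper uses $V=\{z\in B_{\overline D}(x,r):\delta_D(z)\ge\delta_D(x)/4\}$, not requiring $\delta_D(z)\gtrsim r$) but works. The genuine gap is in (iii), on which (ii) and (iv) then depend.

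Your plan for (iii) tries to bound the $\Phi_1$-ratio, the $\Phi_2$-ratio, and the $\ell$-ratio separately and then multiply. The $\Phi_1$- and $\Phi_2$-ratios are indeed each $\le C(k)M^{\ub_i}$, because only the ratios $r_i^{x,y}/r_i^{z,y}\le kM$ are needed for the upper direction and almost-monotonicity handles the other direction. But the $\ell$-ratio is \emph{not} bounded by $C(k)M^{\beta_1+\beta_2}$, and no independent bound is possible. The argument of $\ell$ is $r_3=r_1/(r_2\wedge 1)$; when $r_2^{x,y},r_2^{z,y}<1$,
\[
\frac{r_3^{x,y}}{r_3^{z,y}}=\frac{r_1^{x,y}}{r_1^{z,y}}\cdot\frac{r_2^{z,y}}{r_2^{x,y}},
\]
and while $r_1^{x,y}/r_1^{z,y}\le kM$ and $r_2^{x,y}/r_2^{z,y}\le kM$, the reciprocal $r_2^{z,y}/r_2^{x,y}$ has no upper bound in terms of $k,M$. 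So the $r_3$-ratio, and with it the $\ell$-ratio via either inequality of \eqref{e:scale3}, is uncontrolled. Concretely, let $\ell(s)=\log 2/\log(2/s)$ on $(0,1]$ (increasing to $1$; this requires $\beta_2>0$), and take $\delta_D(x)=\delta_D(y)$ fixed, $\delta_D(z)=T\to\infty$, $|y-z|=2T$, $|y-x|\ge |y-z|$, so that $k=M=1$. Then $r_3^{x,y}=1$, $r_3^{z,y}=\delta_D(y)/T\to 0$, and $\ell(r_3^{x,y})/\ell(r_3^{z,y})=\log(2T/\delta_D(y))/\log 2\to\infty$, while the target ratio $\sB(x,y)/\sB(z,y)$ remains bounded because $\Phi_2(r_2^{x,y})/\Phi_2(r_2^{z,y})$ decays polynomially in $T$. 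This cancellation between the $\ell$-ratio and the $\Phi_2$-ratio (and analogously $\Phi_1$) is exactly the point: the paper does not discard the uncontrolled factors but carries them explicitly in \eqref{e:checkB6-1} as $(1\vee r_1^{z,y}/r_1^{x,y})^{\beta_1/2}(1\vee r_2^{z,y}/r_2^{x,y})^{\beta_2/2}$, and then cancels them against the $\Phi_1,\Phi_2$ ratios via the lower scaling estimates \eqref{e:scale1new}, \eqref{e:scale2} applied with $\eps=\beta_1/2,\beta_2/2$ in \eqref{e:B6-check-Phi1}--\eqref{e:B6-check-Phi2}. Your ``main obstacle'' paragraph correctly locates the difficulty, but the proposed resolution (an independent $C(k)M^{\beta_1+\beta_2}$ bound on the $\ell$-ratio) does not exist; the three ratios must be estimated jointly.
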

\begin{proof} 
For $x,y\in D$, we define 
	\begin{align*}	
		&r_1^{x,y}=\frac{\delta_D(x) \wedge \delta_D(y)}{|x-y|}, \qquad r_2^{x,y}=\frac{\delta_D(x) \vee \delta_D(y) }{|x-y|}\\	
		&  \text{and} \quad  r_3^{x,y}=\frac{\delta_D(x) \wedge \delta_D(y)}{(\delta_D(x)\vee\delta_D(y)) \wedge |x-y|}.
	\end{align*}
Note that 
	\begin{align}\label{e:r3/r1=r2}
		r_3^{x,y}=r_1^{x,y}/ (r_2^{x,y} \wedge 1), \quad x,y \in D.
	\end{align}

(i) Let $x,y \in D$.  Using \hyperlink{B4-c}{{\bf (B4-c)}} in the first line below,  \eqref{e:scale3} in the second, \eqref{e:r3/r1=r2} in the third, and \eqref{e:scale2} in the last, we get 
	\begin{align}\label{e:B4-c-bounded}
	\begin{split}
		\sB(x,y) 	&\asymp \Phi_1(r_1^{x,y})\Phi_2(r_2^{x,y})	 \ell(r_3^{x,y}) \\
		& \le c_1 \Phi_0(r_1^{x,y})\Phi_2(r_2^{x,y}) (r_3^{x,y}/r_1^{x,y})^{\beta_2/2} \\
		&= c_1\begin{cases}
			\Phi_0(r_1^{x,y}) \Phi_2(1)  &\mbox{ if } r_2^{x,y} \ge 1,\\
			\Phi_0(r_1^{x,y}) \Phi_2(r_2^{x,y})(r_2^{x,y})^{-\beta_2/2}&\mbox{ if } r_2^{x,y} < 1
		\end{cases}\\
		& \le c_2 \Phi_0(r_1^{x,y}) \Phi_2(1).
	\end{split}
	\end{align}
Hence, \hyperlink{B4-a}{{\bf (B4-a)}} holds. 

For any $a \in (0,1)$, if $r_2^{x,y} \ge a$, then using the almost monotonicity of $\Phi_2$, \eqref{e:scale3} 
and  \eqref{e:r3/r1=r2}, we get
	\begin{align}\label{e:check-A3-b}
	\begin{split}
		&	\Phi_1(r_1^{x,y})\Phi_2(r_2^{x,y})\ell(r_3^{x,y}) 
			\ge c_3 \Phi_0(r_1^{x,y})\Phi_2(a) (r_3^{x,y}/r_1^{x,y})^{ -\beta_1}\\
		&\ge c_4 \Phi_2(a) (r_2^{x,y}\wedge 1)^{\beta_1} \Phi_0(r_1^{x,y}) \ge c_4 a^{\beta_1} \Phi_2(a) \Phi_0(r_1^{x,y}).
	\end{split}
	\end{align} 
Thus, \hyperlink{B4-b}{{\bf (B4-b)}} holds. Further, if $r_2^{x,y} \ge r_1^{x,y} \ge a$, then  since $\Phi_0$ is almost increasing, we get from \eqref{e:check-A3-b} that 
	$$
		\Phi_1(r_1^{x,y}) \Phi_2(r_2^{x,y})\ell(r_3^{x,y}) \ge c_5 a^{\beta_1} \Phi_0(a) \Phi_2(a),
	$$ 
which yields that  \hyperlink{B2-b}{{\bf (B2-b)}} holds. 
\hyperlink{B2-a}{{\bf (B2-a)}} holds since $\Phi_0$ is almost increasing.

(ii) Let $x_0 \in D$, $0<r<\delta_D(x_0)/(1+\eps)$ and $x,z \in B(x_0, r)$. We have 
$\delta_D(x) \vee \delta_D(z)\le  \delta_D(x_0) + r < 2\delta_D(x_0)$ and  
$\delta_D(x) \wedge \delta_D(z)\ge  \delta_D(x_0) - r>\eps\delta_D(x_0)/(1+\eps)$ so that $\delta_D(x) \asymp \delta_D(z)$ with  comparison constants depending only on $\eps$. Moreover,  $|x-y| \asymp |z-y|$ for $y \in D  \setminus B(x_0, (1+\eps)r)$. 
Hence, using \eqref{e:scale1new}, \eqref{e:scale2} and \eqref{e:scale3},  we get the result  from \hyperlink{B4-c}{{\bf (B4-c)}}.

(iii) Fix $k \ge 1$. Let  $x,y,z \in D$  be such that $\delta_D(x) \le k\delta_D(z)$ and $|y-z| \le M|y-x|$ with $M\ge 1$.  	Observe that
	\begin{align}
		\frac{r_1^{x,y}}{r_1^{z,y}}  &   \le M \left( \frac{\delta_D(x) \wedge \delta_D(y)}{\delta_D(z) \wedge \delta_D(y)} \right)
			\le kM \label{e:B6-check-r1}
	\end{align}
and
	\begin{equation}\label{e:B6-check-r2}
		\frac{r_2^{x,y}}{r_2^{z,y}}  \le M \left( \frac{\delta_D(x) \vee \delta_D(y)}{\delta_D(z) \vee \delta_D(y)} \right) \le kM.
	\end{equation}
We consider the following two cases separately.
	
	\smallskip
	
Case 1: $\delta_D(z)\vee \delta_D(y) \ge |y-z|/(kM)$. Recall that,  by (i),  \hyperlink{B4-c}{{\bf (B4-c)}} implies  \hyperlink{B4-a}{{\bf (B4-a)}}. Using   \hyperlink{B4-a}{{\bf (B4-a)}},  \eqref{e:scalenew} and \eqref{e:B6-check-r1}, we get
	\begin{align*}
		\sB(x,y) \le  c\Phi_0(r_1^{x,y})\le  c ( 1 \vee (r_1^{x,y}/ r_1^{z,y}))^{\ub_1+\beta_2} \Phi_0(r_1^{z,y}) 
			\le  c(k)M^{\ub_1+\beta_2}\Phi_0(r_1^{z,y}).
	\end{align*}
On the other hand, note that we have $r_2^{z,y} \ge  (kM)^{-1}$ in this case.  Using this,  \hyperlink{B4-c}{{\bf (B4-c)}},  \eqref{e:scale2},   \eqref{e:scale3} with the fact that $r_3^{z,y}\ge r_1^{z,y}$, and  \eqref{e:r3/r1=r2}, we  obtain
	\begin{align*}
		\sB(z,y)& \ge c\Phi_1(r_1^{z,y})\Phi_2(r_2^{z,y})	 \ell(r_3^{z,y}) 
			= 	c \frac{	\Phi_2(r_2^{z,y}) \ell(r_3^{z,y})}{\Phi_2(1)\ell(r_1^{z,y})} \Phi_0(r_1^{z,y})  \\
		&\ge c(r_2^{z,y} \wedge 1)^{\ub_2}(r_3^{z,y}/r_1^{z,y} )^{-\beta_1}\Phi_0(r_1^{z,y})\\
		&= c(r_2^{z,y} \wedge 1)^{\beta_1+\ub_2}\Phi_0(r_1^{z,y})\\
		& \ge c(k)M^{-\beta_1-\ub_2}\Phi_0(r_1^{z,y}).
	\end{align*}
	Hence, we conclude that  \eqref{e:B6} holds in this case. 
	
	\smallskip
	
Case 2: $\delta_D(z) \vee \delta_D(y)<|y-z|/(kM)$. 
In this case, we have $r_2^{z,y} <(kM)^{-1}$ and 
$\delta_D(x) \vee \delta_D(y) \le k (	\delta_D(z) \vee \delta_D(y)) <|y-z|/M\le |y-x|$. Hence,   $r_2^{z,y}\wedge1=r_2^{z,y}$ and  $r_2^{x,y}\wedge1=r_2^{x,y}$.
Thus, by  \eqref{e:scale3}, \eqref{e:r3/r1=r2}, \eqref{e:B6-check-r1} and \eqref{e:B6-check-r2}, if $r_3^{x,y} \ge r_3^{z,y}$, then
	\begin{align*}
		\frac{\ell(r_3^{x,y})}{\ell(r_3^{z,y})} 
		&\le c\bigg(\frac{r_3^{x,y}}{r_3^{z,y}}\bigg)^{\beta_2/2} 
			= c\bigg(\frac{r_1^{x,y}}{r_1^{z,y}}\bigg)^{\beta_2/2} \bigg(\frac{r_2^{z,y}}{r_2^{x,y}}\bigg)^{\beta_2/2} 
			\le c(k)M^{\beta_2/2}  \bigg(\frac{r_2^{z,y}}{r_2^{x,y}}\bigg)^{\beta_2/2}
	\end{align*}
and if $r_3^{x,y} < r_3^{z,y}$, then
	\begin{align*}
		\frac{\ell(r_3^{x,y})}{\ell(r_3^{z,y})} 
		&\le c\bigg(\frac{r_3^{x,y}}{r_3^{z,y}}\bigg)^{-\beta_1/2} = c\bigg(\frac{r_1^{z,y}}{r_1^{x,y}}\bigg)^{\beta_1/2}
		 \bigg(\frac{r_2^{x,y}}{r_2^{z,y}}\bigg)^{\beta_1/2} \le c(k)M^{\beta_1/2}\bigg(\frac{r_1^{z,y}}{r_1^{x,y}}\bigg)^{\beta_1/2} .
	\end{align*}
Therefore, whether $r_3^{x,y} \ge r_3^{z,y}$ or not, it holds that
	\begin{align}\label{e:checkB6-1}
		\frac{\ell(r_3^{x,y})}{\ell(r_3^{z,y})} 
			&\le c(k)M^{(\beta_1 +\beta_2)/2 }\bigg(1 \vee \frac{r_1^{z,y}}{r_1^{x,y}}\bigg)^{\beta_1/2}  
			\bigg(1\vee \frac{r_2^{z,y}}{r_2^{x,y}}\bigg)^{\beta_2/2}.
	\end{align}
By \eqref{e:scale1new} and \eqref{e:B6-check-r1}, we have
	\begin{align}\label{e:B6-check-Phi1}
		\begin{split}
		\frac{\Phi_1(r_1^{x,y})}{\Phi_1(r_1^{z,y})}\bigg(1 \vee \frac{r_1^{z,y}}{r_1^{x,y}}\bigg)^{\beta_1/2}& \le c\begin{cases}
			(r_1^{x,y}/r_1^{z,y})^{ \ub_1}  &\mbox{if } r_1^{z,y}\le r_1^{x,y},\\
			(r_1^{x,y}/r_1^{z,y})^{\beta_1/2 - \beta_1/2} 
			&\mbox{if } r_1^{z,y}> r_1^{x,y}
		\end{cases}\\
		& \le c(k) M^{ \ub_1} .
		\end{split}
	\end{align}
Similarly, using \eqref{e:scale2} and \eqref{e:B6-check-r2}, we get 
	\begin{align}\label{e:B6-check-Phi2}
		\begin{split}
		\frac{\Phi_2(r_2^{x,y})}{\Phi_2(r_2^{z,y})}\bigg(1 \vee \frac{r_2^{z,y}}{r_2^{x,y}}\bigg)^{\beta_2/2}& \le c\begin{cases}
			(r_2^{x,y}/r_2^{z,y})^{\ub_2}  &\mbox{if } r_2^{z,y}\le r_2^{x,y},\\
			(r_2^{x,y}/r_2^{z,y})^{\beta_2/2-\beta_2/2}  
			&\mbox{if } r_2^{z,y}> r_2^{x,y}
		\end{cases}\\
		& \le c(k) M^{\ub_2} .
		\end{split} 
	\end{align}
Using \hyperlink{B4-c}{{\bf (B4-c)}} in the first line below, \eqref{e:checkB6-1} in the second,  and \eqref{e:B6-check-Phi1} and  \eqref{e:B6-check-Phi2}  in the  last, we arrive at
	\begin{align*}
		\frac{\sB(x,y)}{\sB(z,y)}& \asymp 	\frac{\Phi_1(r_1^{x,y})
			\Phi_2(r_2^{x,y})	 \ell(r_3^{x,y})}{\Phi_1(r_1^{z,y})
			\Phi_2(r_2^{z,y})	 \ell(r_3^{z,y})} \\
		& \le c(k)M^{(\beta_1 +\beta_2)/2 }\bigg(1 \vee \frac{r_1^{z,y}}{r_1^{x,y}}\bigg)^{\beta_1/2}  
			\bigg(1\vee \frac{r_2^{z,y}}{r_2^{x,y}}\bigg)^{\beta_2/2}	\frac{\Phi_1(r_1^{x,y})
			\Phi_2(r_2^{x,y})}{\Phi_1(r_1^{z,y})
			\Phi_2(r_2^{z,y})} \\
		&\le c(k) M^{\beta_1 + \ub_1+\beta_2 + \ub_2} .
	\end{align*}
The proof of (iii) is complete.
	
(iv)  Let $x,y \in D$ and   $0<r \le  (|x-y| \wedge  \wh R)/2$. Let
	$$
		V:=\left\{z \in B_{\overline D}(x,r) : \delta_D(z) \ge \delta_D(x)/4 \right\}.
	$$
Since $D$ is a Lipschitz open set, we have
	\begin{align}\label{e:measure-density-2}
		m_d(V)  \ge c_1r^d,
	\end{align}
where $c_1>0$ is a constant independent of $x$ and $r$. Besides, note that for all $z \in V$,  
	$$
		|y-z| \le |y-x| + |x-z| \le |y-x| + r <(3/2) |y-x|
	$$
by the triangle inequality. Hence, by (iii), we get $\sB(z,y) \ge c_2 \sB(x,y)$ for all $z \in V$. Using this 
and \eqref{e:measure-density-2}, we arrive at
	\begin{align*}
		\frac{1}{r^d}\int_{	B_{\overline D}(x,r)} \sB(z,y)dz \ge 	\frac{1}{r^d}\int_{V} \sB(z,y)dz \ge c_1c_2\sB(x,y).
	\end{align*} 
\end{proof}

The next result is Carleson’s estimate for $Y$, which is a usual step 
 in proving the boundary Harnack principle.

\begin{thm}\label{t:Carleson}\!{\rm(Carleson's estimate)}	 
Suppose that \hyperlink{B1}{{\bf (B1)}},  \hyperlink{B3}{{\bf (B3)}}, \hyperlink{B4-c}{{\bf (B4-c)}}, \hyperlink{K3}{{\bf (K3)}} and  \hyperlink{B5}{{\bf (B5)}} hold.
Let  $p \in [(\alpha-1)_+, \alpha+\beta_1) \cap (0,\infty)$ denote the constant satisfying \eqref{e:C(alpha,p,F)} if $C_9>0$ and let $p=\alpha-1$ if $C_9=0$ where $C_9$ is the contant in \hyperlink{K3}{{\bf (K3)}}.
Then there exists  $C\ge1$ such that for any $Q \in \partial D$, $r \in (0, \wh R]$ and any non-negative Borel function $f$ on $D$ which is harmonic in $B_D(Q,r)$ with respect to $Y$ and vanishes continuously on $\partial D \cap B(Q,r)$, we have
	\begin{align}\label{e:Carleson}
		f(x) \le C f(z_0) \quad \text{for all} \;\, x \in B_D(Q,r/2),
	\end{align}
where $z_0 \in B_D(Q,8r/9)$ is any point with $\delta_D(z_0) \ge r/8$.
\end{thm}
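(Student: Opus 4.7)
The plan is to adapt the classical contradiction/iteration scheme for Carleson's estimate in the non-local setting to the present framework with boundary-decaying jump kernel. As a preparation I would first show that $f$ is regular harmonic in $B_D(Q, r')$ for every $r' < r$, by approximating from within by open sets compactly contained in $B_D(Q, r)$, applying harmonicity on each, and passing to the limit using that $f \ge 0$ vanishes continuously on $\partial D \cap B(Q, r)$. Using a Harnack chain based on the elliptic consequence of Theorem \ref{t:phi2} (applicable since \hyperlink{IUBS}{(IUBS)} holds by Lemma \ref{l:A3-c}(iv)) and the $C^{1,1}$ regularity of $D$, one then gets a uniform constant $C_0$ with $f(z) \le C_0 f(z_0)$ for all $z$ in the interior shell
\[
\mathcal{I} := \{z \in B_D(Q, 15r/16): \delta_D(z) \ge r/32\}.
\]
The task reduces to bounding $f$ on points $x \in B_D(Q, r/2)$ with $\delta_D(x) < r/32$.

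For such $x$, let $Q_x \in \partial D$ with $|x - Q_x| = \delta_D(x)$ and fix a large constant $c_1$ (independent of $x$, to be chosen). Consider the box $V_x := U^{Q_x}(c_1 \delta_D(x))$, which for $\delta_D(x)<r/32$ is contained in $B_D(Q, r/4)$. By Theorem \ref{t:Dynkin-improve}, $\P_x(Y_{\tau_{V_x}} \in D) \le C c_1^{-p}$, so choosing $c_1$ sufficiently large makes this probability smaller than any prescribed $\lambda \in (0,1)$. By regular harmonicity $f(x) = \E_x[f(Y_{\tau_{V_x}})]$, and I would split this expectation into three parts according to where $Y_{\tau_{V_x}}$ lands: (A) in $\mathcal{I}$, where $f \le C_0 f(z_0)$; (B) in the boundary strip $B_D(Q, 15r/16) \setminus \mathcal{I}$, where only $f \le M := \sup_{y \in B_D(Q, r/2)} f(y)$ is available; and (C) in the ``far region'' $D \setminus B_D(Q, 15r/16)$, where $f$ is entirely uncontrolled. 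Parts (A) and (B) contribute at most $C_0 f(z_0)$ and $\lambda M$ respectively.

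For part (C), the L\'evy system formula \eqref{e:Levysystem-Y-kappa} writes the contribution as
\[
\int_{V_x} G^{V_x}(x, z) \int_{D \setminus B_D(Q, 15r/16)} f(y) \frac{\sB(z, y)}{|z-y|^{d+\alpha}} \, dy \, dz.
\]
For $z \in V_x$ and $y$ in the outer region, elementary geometry gives $|y-z| \asymp |y-z_0|$ and $\delta_D(z) \lesssim \delta_D(z_0)$ (because $z_0$ sits in the interior at scale $r$ while $z$ is near $\partial D$ at scale $\delta_D(x) \ll r$), so Lemma \ref{l:A3-c}(iii) yields
\[
\sB(z, y)/|z-y|^{d+\alpha} \le C\, \sB(z_0, y)/|z_0 - y|^{d+\alpha}.
\]
Applying the same L\'evy-system representation to the regular harmonicity of $f$ on an interior ball $B(z_0, r/16) \subset D$ shows $\int_{D \setminus B_D(Q, 15r/16)} f(y) \sB(z_0, y)/|z_0-y|^{d+\alpha}\,dy \lesssim f(z_0)/\E_{z_0}[\tau_{B(z_0, r/16)}] \asymp f(z_0) r^{-\alpha}$. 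Multiplied by $\E_x[\tau_{V_x}]$ -- which, by Proposition \ref{p:bound-for-integral-new} with $\gamma=0$, is uniformly bounded by a constant times $r^\alpha$ in every case for $p$ -- this bounds part (C) by $C f(z_0)$. Summing (A), (B), (C) gives $f(x) \le C' f(z_0) + \lambda M$; taking the supremum over $x \in B_D(Q, r/2)$ and rearranging yields $M \le C' f(z_0)/(1-\lambda)$, which is \eqref{e:Carleson}.

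The main obstacle will be making the kernel-comparability argument for part (C) fully rigorous: specifically, verifying the parameters $k, M$ in Lemma \ref{l:A3-c}(iii) so that one can replace the base point $z \in V_x$ by $z_0$, and then chaining the regular-harmonicity representations at $x$ and $z_0$ through this comparability in a way that the net scale factors all cancel. This step is precisely where the boundary-decaying nature of $\sB$ would otherwise obstruct the standard Carleson argument based on uniform lower bounds on the jump kernel; the quantitative comparability of Lemma \ref{l:A3-c}(iii), which carefully tracks both the scale separation between $z$ and $z_0$ and the distance-to-boundary separation, is the essential ingredient that makes the reduction work in the present generality.
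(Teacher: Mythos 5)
Your approach is genuinely different from the paper's: you aim for a direct ``sup-and-absorb'' argument (bound $f$ on a fixed region, absorb the small probability of landing in the boundary strip, rearrange), replacing the paper's contradiction/iteration scheme with the anisotropically-scaled intermediate box $V_2(x)$ and the improved chain inequality \eqref{e:Carleson-chain}. Where your kernel comparison for part (C) applies, it is in fact simpler than the paper's: since your far region is at fixed scale $r$, the magnification factor $M$ in Lemma \ref{l:A3-c}(iii) is an absolute constant, whereas the paper must take $M = r^\lambda\delta_D(x)^{-\lambda}$ and then fine-tune $\lambda$ so that the resulting power of $r/\delta_D(x)$ exactly cancels against $\E_x[\tau_{V_1(x)}]\asymp\delta_D(x)^\alpha$.

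However, the absorption step does not close as written. Part (B) is bounded by $\lambda\sup_{B_D(Q,15r/16)\setminus\mathcal{I}} f$, and the strip $B_D(Q,15r/16)\setminus\mathcal{I}$ is \emph{not} contained in $B_D(Q,r/2)$; the claim that ``$f\le M:=\sup_{B_D(Q,r/2)} f$ is available'' on the strip is therefore not justified, so the final rearrangement $M\le C'f(z_0)/(1-\lambda)$ does not follow. The obvious repair---take $M'=\sup$ over the full strip and try to establish $f(x)\le C'f(z_0)+\lambda M'$ for \emph{every} $x$ in the strip, then rearrange $M'$---runs into a structural obstruction: for $x$ close to $\partial B(Q,15r/16)$ with $\delta_D(x)$ small but not tiny, the box $V_x=U^{Q_x}(c_1\delta_D(x))$ with the necessarily large constant $c_1$ (large enough that $Cc_1^{-p}<\lambda$) intrudes into the far region $D\setminus B_D(Q,15r/16)$. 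Then the L\'evy-system integral in (C) runs over $y$ that can be arbitrarily close to $z\in V_x$, and the comparability $|y-z|\asymp|y-z_0|$ underlying the kernel comparison collapses. You cannot simply shrink $V_x$ for such $x$: the threshold $\delta_D(x)\lesssim (15r/16-|x-Q|)/c_1$ goes to $0$ at the outer boundary, leaving a set of $x$ with moderate $\delta_D(x)$ for which neither the $V_x$ argument nor a uniform Harnack chain applies. This is precisely the gap that the paper's estimate \eqref{e:Carleson-chain} ($f(x)\lesssim (\delta_D(x)/r)^{-\gamma}f(z_0)$) is designed to fill: it guarantees that if $f(x_n)$ is large then $\delta_D(x_n)$ is small, hence the next box $V_2(x_n)$ (of scale $r^{1-\lambda}\delta_D(x_n)^\lambda$) is small, so the iteration $x_n\to x_{n+1}\in V_2(x_n)$ stays inside a compactly contained region. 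Without something playing this role, the maximum in your rearrangement cannot be localized.

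Secondary but also incorrect: the assertion that $V_x\subset B_D(Q,r/4)$ whenever $\delta_D(x)<r/32$ fails once $c_1$ is large, and the statement $\int_{D\setminus B_D(Q,15r/16)} f\,\sB(z_0,\cdot)|z_0-\cdot|^{-d-\alpha}\lesssim f(z_0)r^{-\alpha}$ requires $\overline{B(z_0,r/16)}\subset B(Q,15r/16)$, which can fail for $z_0\in B_D(Q,8r/9)$ ($8/9>7/8$). These are fixable by adjusting radii, but they compound the main gap by making it harder to nail down exactly which $x$ the argument is valid for.
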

\begin{proof} 
By using  Lemma \ref{l:A3-c},  Theorem \ref{t:phi2} and  Corollary \ref{c:Carleson-1}, the assertion
can be proved by arguments similar to that for \cite[Theorem 1.2]{KSV}. We give the details for completeness. 

Let $Q \in \partial D$, $r \in (0, \wh R]$, $z_0 \in B_D(Q,8r/9)$ with $\delta_D(z_0) \ge r/8$ 
and let $f$ be a non-negative Borel function on $D$ which is harmonic in $B_D(Q,r)$ and vanishes continuously 
on $\partial D \cap B(Q,r)$.  
 Recall that $\epsilon_2 \in (0,1/12)$ 
 is  the constant in Theorem \ref{t:Dynkin-improve}.  
Note that  \hyperlink{IUBS}{{\bf (IUBS)}} holds  by Lemma \ref{l:A3-c}(iv). Hence, by Theorem \ref{t:phi2} and a standard chain argument, it suffices to prove \eqref{e:Carleson} for 
 $x\in B_D(Q, \epsilon_2r / (48K_0))$ where 
   $K_0 >4$ is the constant in 
 Corollary \ref{c:Carleson-1}. 
  Moreover, we also deduce from Theorem \ref{t:phi2} and a standard chain argument that there exist  constants $c_1,\gamma>0$ independent of $Q,r,f$ and $z_0$ such that 
	\begin{align}\label{e:Carleson-chain}
 		f(x) \le  c_1 (\delta_D(x)/r)^{-\gamma} f(z_0) \quad \text{for all} \;\, x\in B_D(Q, \epsilon_2r / (24K_0)).
	\end{align} 
In the following, the constants  $c_i$ are always independent of $Q,r, f$ and $z_0$.

Set $\theta:= \beta_1+\ub_1+\beta_2+\ub_2$ and $\lambda:= \alpha/(d+\alpha+\theta)$. Define
	\begin{align*}
		U_1:=B(z_0, \delta_D(z_0)/8), \qquad U_2:=B(z_0, \delta_D(z_0)/4)
	\end{align*}
and for $x\in B_D(Q, \epsilon_2r / (12K_0))$,
	\begin{align*}
		V_1(x):=B_D(x, (2K_0+1)\delta_D(x)), \qquad V_2(x):= B_D(x, (4K_0+2)r^{1-\lambda} \delta_D(x)^\lambda).
	\end{align*}
First note that,  since for all $w \in U_1$, 
	$$
		|w-Q| \le |w-z_0|+|z_0-Q| < \delta_D(z_0)/8+ |z_0-Q| \le 9 |z_0-Q| /8,
	$$
we have $U_1 \subset B_D(Q,r)$.
For all  $x\in B_D(Q, \epsilon_2r / (12K_0))$, since $\delta_D(x) <\epsilon_2 r/(12K_0)$, 
we have $V_1(x) \subset V_2(x) \cap B_D(Q,r)$.
Further, by Corollary \ref{c:Carleson-1}, it holds that
	\begin{align}\label{e:Carleson-lifetime}
		\P_x (\tau_{V_1(x)}=\zeta) \ge 1/2 \quad \text{for all} \;\, x\in B_D(Q, \epsilon_2r / (24K_0)).
	\end{align}

Pick any $x \in B_D(Q, \epsilon_2r / (24K_0))$.  Since $V_1(x) \subset B_D(Q,r)$, by the harmonicity of $f$,  we have
	\begin{align*}
		f(x) &= \E_x \left[ f(Y_{\tau_{V_1(x)}}) ; Y_{\tau_{V_1(x)}} \in V_2(x)\right] 
			+ \E_x \left[ f(Y_{\tau_{V_1(x)}}) ; Y_{\tau_{V_1(x)}} \in D\setminus V_2(x)	\right]\\
		& =:I_1+I_2.
	\end{align*}
By \eqref{e:Carleson-lifetime}, 
	\begin{align}\label{e:Carleson-I1}
		I_1 \le  \Big( \sup_{y \in V_2(x)} f(y)\Big)\,\P_x ( Y_{\tau_{V_1(x)}} \in V_2(x)) \le 2^{-1} \sup_{y \in V_2(x)} f(y).
	\end{align}
Observe that for all $w \in V_1(x)$ and $y \in D \setminus V_2(x)$, 
	\begin{align}\label{e:Carleson-dist}
		\delta_D(w) \le (2K_0+2) \delta_D(x) \quad \text{ and } \quad |w-y| \ge |x-y| - |x-w| \ge |x-y|/2.
	\end{align}
Thus, by Lemma \ref{l:A3-c}(iii), $\sB(w,y) \le c_2 \sB(x,y)$ for all $w \in V_1(x)$ and $y \in D\setminus V_2(x)$. Using this and the second inequality in \eqref{e:Carleson-dist} in the second line below, and  Proposition \ref{p:E2} in the third,  we obtain
	\begin{align}\label{e:Carleson-I2-0}
	\begin{split}
		I_2 &= \E_x \bigg[ \int_0^{\tau_{V_1(x)}} \int_{D\setminus V_2(x)} 
			\frac{f(y)\,\sB(Y_s,y)}{|Y_s-y|^{d+\alpha}} dy\,  ds \bigg]\\
		&	\le c_3\E_x[\tau_{V_1(x)}] \, \int_{D\setminus V_2(x)} \frac{f(y)\,\sB(x,y)}{|x-y|^{d+\alpha}} dy\\
		&\le c_4  \delta_D(x)^\alpha \bigg[\int_{(D\setminus V_2(x)) \cap U_2} \frac{f(y)\,\sB(x,y)}{|x-y|^{d+\alpha}} dy 
			+ \int_{(D\setminus V_2(x)) \cap U_2^c} \frac{f(y)\,\sB(x,y)}{|x-y|^{d+\alpha}} dy \bigg]\\
		&=:c_4\delta_D(x)^\alpha(I_{2,1}+I_{2,2}).
	\end{split}
	\end{align}
Here, we used the L\'evy system formula \eqref{e:Levysystem-Y-kappa} in the first line. Using the triangle inequality, we see that for all $y \in U_2$,
	$$
		|x-y| \ge |z_0-Q| - |Q-x| - |z_0-y| \ge 3\delta_D(z_0)/4 - \epsilon_2 r/(24K_0) \ge r/16.
	$$
Further, by Theorem \ref{t:phi2}, we get $f(y) \le c_5 f(z_0)$ for all $y \in U_2$. Thus, since $\sB$ is bounded, we obtain
	\begin{align}\label{e:Carleson-I2-1}
		I_{2,1} \le c_6 f(z_0) \int_{U_2} \frac{dy}{|x-y|^{d+\alpha}} \le c_6 f(z_0) \int_{B(x, r/16)^c} \frac{dy}{|x-y|^{d+\alpha}}
			 \le c_7r^{-\alpha} f(z_0).
	\end{align}
For $I_{2,2}$, we observe that for all $y \in D \setminus V_2(x)$,
	\begin{align*}
		&|z_0-y| \le |x-y|+ |z_0-Q|+|x-Q| \\
		&\le |x-y|  + 2r \le  (1+r^{\lambda} \delta_D(x)^{-\lambda})|x-y| \le 2r^{\lambda} \delta_D(x)^{-\lambda}|x-y|.
	\end{align*}
Thus, since $\delta_D(x)<\delta_D(z_0)$,  by Lemma \ref{l:A3-c}(iii), we have
	$$
		\sB(x,y) \le c_8  (2r^{\lambda} \delta_D(x)^{-\lambda})^{\theta} \sB(z_0, y) \quad \text{for all} \;\, 
			y \in D \setminus V_2(x).
	$$
Using the  two displays above, since $f$ is non-negative, we get
	\begin{align}\label{e:Carleson-I2-2}
		I_{2,2} \le  c_9(r^\lambda \delta_D(x)^{-\lambda})^{d+\alpha+\theta} \int_{D \setminus U_2} 
			\frac{f(y)\,\sB(z_0,y)}{|z_0-y|^{d+\alpha}} dy .
	\end{align}
Besides, using the harmonicity of $f$ on $U_1 \subset B_D(Q,r)$, and the fact $f \ge 0$ in the first line below,  
the L\'evy system formula \eqref{e:Levysystem-Y-kappa} in the second,  Lemma \ref{l:A3-c}(ii) and the fact that 
$|w-y| \le |z_0-w| + |z_0-y|\le  2|z_0-y|$ for all $ w \in U_1$ and $y \in D\setminus U_2$ 
in the third, and Proposition \ref{p:E2} in the last, we get
	\begin{align*}
		f(z_0) &\ge \E_{z_0} \left[ f(Y_{\tau_{U_1}}) ; 
				Y_{\tau_{U_1}} \in D\setminus U_2 \right]\\
			&= \E_{z_0} \bigg[ \int_0^{\tau_{U_1}} \int_{D\setminus U_2} \frac{f(y)\,\sB(Y_s,y)}{|Y_s-y|^{d+\alpha}} dy\,  ds \bigg]\\
			&\ge c_{10} \E_{z_0}[\tau_{U_1}] \, \int_0^{\tau_{U_1}} \int_{D\setminus U_2} 
				\frac{f(y)\,\sB(z_0,y)}{|z_0-y|^{d+\alpha}} dy\\
			&\ge c_{11} r^{\alpha} \int_0^{\tau_{U_1}} \int_{D\setminus U_2} \frac{f(y)\,\sB(z_0,y)}{|z_0-y|^{d+\alpha}} dy.
	\end{align*} 
Hence, we deduce from \eqref{e:Carleson-I2-2} that 
	\begin{align}\label{e:Carleson-I2-3}
		I_{2,2} \le c_{12} r^{-\alpha }   (r^\lambda \delta_D(x)^{-\lambda})^{d+\alpha+\theta}  f(z_0) 
			\le c_{13} \delta_D(x)^{-\alpha}  f(z_0).
	\end{align}
Combining \eqref{e:Carleson-I1}, \eqref{e:Carleson-I2-1} and \eqref{e:Carleson-I2-3}, since $\delta_D(x)< \epsilon_2r/(24K_0)$, we arrive at
	\begin{align}\label{e:Carleson-induction}
		f(x) \le 2^{-1} \sup_{y \in V_2(x)} f(y) + c_{14} f(z_0)  \quad \text{for all} \;\, x\in B_D(Q, \epsilon_2r / (24K_0)).
	\end{align}

Now we prove that \eqref{e:Carleson} holds for all $x\in B_D(Q, \epsilon_2r / (48K_0))$ with 
	$$
		C=M:=3c_{14} + c_1 \bigg(\frac{24K_0(4K_0+2)}{a_0\epsilon_2}\bigg)^{\gamma/\lambda},
	$$
where
 	$$
 		a_0:=2^{-1}\bigg(\sum_{n=0}^\infty \bigg(\frac{3}{4}\bigg)^{n\lambda/ \gamma} \bigg)^{-1}.
 	$$
Suppose this fails. Then there exists $x_1 \in  B_D(Q, \epsilon_2r / (48K_0))$  such that  $f(x_1) >Mf(z_0)$.  In the following, we  construct a sequence  $(x_n)_{n \ge 2}$ in  $B_D(Q, \epsilon_2r / (24K_0))$  such that for all $n \ge 2$, 
	\begin{align}\label{e:Carleson-claim}
		& |x_{n}-x_{n-1}|<\frac{a_0\epsilon_2 r}{24K_0}\bigg(\frac{3}{4}\bigg)^{(n-2)\lambda/\gamma} \;\;\text{ and } \;\; f(x_{n}) 
			\ge f(x_1)\bigg(\frac{4}{3}\bigg)^{n-1}.
 	\end{align}
This leads to a contradiction since $f$ is bounded. 

By \eqref{e:Carleson-induction},  since $M>3c_{14}$, there exists $x_2 \in V_2(x_1)$ such that 
 	$$
 		f(x_2) \ge 2(f(x_1) - c_{14} f(z_0)) \ge (4/3) f(x_1).
 	$$
Note that 
$\delta_D(x_1) \le c_1^{1/\gamma} r (f(z_0)/f(x_1))^{1/\gamma} <  (c_1/M)^{1/\gamma}r$ 
by \eqref{e:Carleson-chain}. Thus, we have
 	\begin{align*}
 		|x_2-x_1| <(4K_0+2) r^{1-\lambda} \delta_D(x_1)^{\lambda} < (4K_0+2)(c_1/M)^{\lambda/\gamma}
 			r <a_0\epsilon_2 r/(24K_0)
 	\end{align*}
so that
	$$
 		|x_2-Q|\le |x_1-Q| + |x_1-x_2| < \epsilon_2 r/ (24K_0)
 	$$
by the triangle inequality. Hence, $x_2 \in B_D(Q,\epsilon_2 r/ (24K_0))$ and \eqref{e:Carleson-claim}  holds for $n=2$. Next, assume that $x_n \in B_D(Q,\epsilon_2 r/ (24K_0))$, $1\le n \le k$, are chosen to satisfy \eqref{e:Carleson-claim} for all $1\le n \le k$, for some $k \ge 2$. By \eqref{e:Carleson-induction}, since $f(x_k)\ge f(x_1) >Mf(z_0)$, there exists $x_{k+1} \in V_2(x_k)$ such that 
 	$$
 		f(x_{k+1}) \ge 2(f(x_k) - c_{14} f(z_0)) \ge (4/3) f(x_k).
 	$$
Since 
	$$
		\frac{\delta_D(x_k)}{r} \le c_1^{1/\gamma}  \left(\frac{f(z_0)}{f(x_k)}\right)^{1/\gamma} 
		\le  c_1^{1/\gamma}  \left(\frac{f(z_0)}{f(x_1) (4/3)^{k-1} }\right)^{1/\gamma}   
		<  (c_1/M)^{1/\gamma} \bigg(\frac{3}{4}\bigg)^{(k-1)\lambda/\gamma}
	$$ 
by \eqref{e:Carleson-chain} and the induction hypothesis, we have
 	\begin{align*}
 		&|x_{k+1}-x_k|<(4K_0+2)r\bigg(\frac{\delta_D(x_k)}{r}\bigg)^{\lambda}\\
 		&< (4K_0+2) (c_1/M)^{\lambda/\gamma} r  \bigg(\frac{3}{4}\bigg)^{(k-1)\lambda/\gamma}
 			< \frac{a_0\epsilon_2 r}{24K_0}\bigg(\frac{3}{4}\bigg)^{(k-1)\lambda/\gamma}.
	 \end{align*}
Using this and the induction hypothesis, we get
	\begin{align*}
		&	|x_{k+1}-Q| \le
		 |x_1-Q| + \sum_{n=2}^{k+1} |x_{n}-x_{n-1}|\\ &< \frac{\epsilon_2 r}{24K_0}\bigg( \frac{1}{2} 
			+ a_0\sum_{n=2}^{k+1} \bigg(\frac{3}{4}\bigg)^{(n-2)\lambda/\gamma} \bigg)  <  \frac{\epsilon_2 r}{24K_0}.
	\end{align*}
Therefore, $x_{k+1} \in B_D(Q, \epsilon_2r/(24K_0))$ and we deduce that \eqref{e:Carleson-claim} holds for all $n$ by the induction. The proof is complete. 
\end{proof}

The above Theorem \ref{t:Carleson} will be used 
in the proof of the next theorem which is  our first main result -- the boundary Harnack principle.

\begin{thm}\label{t:BHPnew} \!{\rm(Boundary Harnack principle)} 
Suppose that \hyperlink{B1}{{\bf (B1)}}, \hyperlink{B3}{{\bf (B3)}}, \hyperlink{B4-c}{{\bf (B4-c)}},  \hyperlink{K3}{{\bf (K3)}} and  \hyperlink{B5}{{\bf (B5)}} hold. Suppose also that $p<\alpha+(\beta_1\wedge \beta_2)$. 
Here   $p \in [(\alpha-1)_+, \alpha+\beta_1) \cap (0,\infty)$ denotes the constant satisfying \eqref{e:C(alpha,p,F)} if $C_9>0$ and  $p=\alpha-1$ if $C_9=0$ where $C_9$ is the contant in \hyperlink{K3}{{\bf (K3)}}. 
Then  for any $Q \in \partial D$, $0<r \le \wh R$, and any non-negative Borel function $f$ in $D$ which is harmonic in $B_D(Q,r)$ with respect to $Y$ and vanishes continuously on $\partial D\cap B(Q,r)$, 	we have 
	\begin{equation}\label{e:TAMSe1.8new}
		\frac{f(x)}{\delta_D(x)^p}\asymp 	\frac{f(y)}{\delta_D(y)^p} \quad \text{for} \;\, x,y\in B_D(Q,r/2),
	\end{equation}
where the comparison constants are independent of $Q,r$ and $f$,  and depend on $D$ only through $\wh R$ and $\Lambda_0$. 
\end{thm}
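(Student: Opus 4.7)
\medskip

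The plan is to establish matching upper and lower bounds of the form $f(x) \asymp (\delta_D(x)/r)^p f(z_0)$ for $x$ near the boundary, where $z_0 \in B_D(Q, r/2)$ is a fixed interior reference point with $\delta_D(z_0) \asymp r$. Once this is done, combining the two bounds yields \eqref{e:TAMSe1.8new}. Throughout, the standard chaining argument using the interior Harnack inequality (Theorem \ref{t:phi2}) together with Lemma \ref{l:A3-c}(iv) reduces matters to proving these bounds for $x \in U^Q(\epsilon_2 r_0/4)$ for some $r_0 \asymp r$ chosen so that $U^Q(r_0) \subset B_D(Q, r/2)$ via \eqref{e:U-rho-C11-1}; for $x \in B_D(Q, r/2)$ with $\delta_D(x) \asymp r$ the statement follows directly from Harnack.

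For the lower bound, fix $x \in U^Q(\epsilon_2 r_0/4)$. Since $f$ is regular harmonic in $U^Q(\epsilon_2 r_0)$, one has
\[
f(x) \ge \E_x\big[f(Y_{\tau_{U^Q(\epsilon_2 r_0)}}) \,;\, Y_{\tau_{U^Q(\epsilon_2 r_0)}} \in U^Q(r_0) \setminus U^Q(r_0, r_0/2)\big].
\]
On the set $U^Q(r_0) \setminus U^Q(r_0, r_0/2)$ the distance to $\partial D$ is comparable to $r_0$ by \eqref{e:U-rho-C11-2}, so Carleson's estimate (Theorem \ref{t:Carleson}) together with the Harnack inequality along a short chain of balls of radius $\asymp r_0$ yields $f(y) \ge c\, f(z_0)$ there. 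Combining this with the sharp lower bound on the exit probability supplied by Theorem \ref{t:Dynkin-improve} gives $f(x) \ge c\,(\delta_D(x)/r_0)^p f(z_0)$.

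The upper bound is more delicate. Again by regular harmonicity,
\[
f(x) = \E_x\big[f(Y_{\tau}) \,;\, Y_{\tau} \in U^Q(r_0)\big] + \E_x\big[f(Y_{\tau}) \,;\, Y_{\tau} \in D \setminus U^Q(r_0)\big] =: I_1 + I_2,
\]
where $\tau = \tau_{U^Q(\epsilon_2 r_0)}$. For $I_1$, Carleson's estimate gives $f \le C f(z_0)$ on $U^Q(r_0) \subset B_D(Q, r/2)$, and Theorem \ref{t:Dynkin-improve} bounds the probability by $C(\delta_D(x)/r_0)^p$, yielding $I_1 \le C (\delta_D(x)/r)^p f(z_0)$. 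For $I_2$, we apply the L\'evy system formula \eqref{e:Levysystem-Y-kappa} to write
\[
I_2 = \E_x \int_0^{\tau} \int_{D \setminus U^Q(r_0)} f(z)\,\frac{\sB(Y_s, z)}{|Y_s - z|^{d+\alpha}}\,dz\,ds.
\]
The key step is a jump-kernel comparison lemma derived from assumption \hyperlink{B4-c}{{\bf (B4-c)}} and the scaling properties \eqref{e:scale1new}--\eqref{e:scale3}: for $w \in U^Q(\epsilon_2 r_0)$ and $z \in D \setminus U^Q(r_0)$,
\[
\frac{\sB(w, z)}{|w - z|^{d+\alpha}} \le C\, \Psi\!\left(\frac{\delta_D(w)}{r_0}\right) \frac{\sB(z_0, z)}{|z_0 - z|^{d+\alpha}},
\]
where $\Psi$ is a suitable almost-increasing function whose scaling matches $\Phi_1$ and $\Phi_2$ at small arguments (this is Lemma \ref{l:estimates-of-J-for-BHP} of the paper). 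After this comparison, one factors out and bounds $\int_{D \setminus U^Q(r_0)} f(z)\,\sB(z_0, z)/|z_0-z|^{d+\alpha}\,dz$ by $C r^{-\alpha} f(z_0)$, using regular harmonicity of $f$ on $B(z_0, \delta_D(z_0)/2)$, the L\'evy system, and Proposition \ref{p:E2}. The remaining factor
\[
\E_x \int_0^{\tau} \Psi\!\left(\frac{\delta_D(Y_s)}{r_0}\right) ds
\]
is estimated by Proposition \ref{p:bound-for-integral-new} applied to appropriate powers of $\delta_D$, giving exactly $C\, r^{\alpha}(\delta_D(x)/r)^p$.

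The main obstacle is establishing the jump-kernel comparison in a form that, after integration via Proposition \ref{p:bound-for-integral-new}, produces the sharp factor $(\delta_D(x)/r)^p$ rather than something weaker. This is precisely where the hypothesis $p < \alpha + (\beta_1 \wedge \beta_2)$ enters: it guarantees that the parameters appearing in Proposition \ref{p:bound-for-integral-new} fall in the regime $\gamma > p - \alpha$ for the relevant choices, so that no logarithmic or anomalous correction arises, and it ensures $\Psi$ has enough decay at zero through both $\Phi_1$ and $\Phi_2$ to kill the bad interaction terms in the kernel comparison. Managing the interplay between $\beta_1$ and $\beta_2$ in the comparison—particularly for points $z$ with $\delta_D(z)$ comparable to $|z-Q|$, where the $\Phi_2$ factor is active—is the technically most demanding portion.
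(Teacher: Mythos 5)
Your proposal is correct and follows essentially the same route as the paper: chaining via the interior Harnack inequality reduces to the boundary region, the lower bound comes from Theorem \ref{t:Dynkin-improve} plus a Harnack chain to the reference point $z_0$, and the upper bound splits into $I_1$ (Carleson plus the exit-probability estimate) and $I_2$ (the jump-kernel comparison of Lemma \ref{l:estimates-of-J-for-BHP} combined with Proposition \ref{p:bound-for-integral-new}), with the hypothesis $p<\alpha+(\beta_1\wedge\beta_2)$ entering exactly where you place it, namely in choosing $\eps\in((\beta_1-\beta_2)_+,\,\beta_1-p+\alpha)$ so that the Green-potential estimate has exponent $\gamma=\beta_1-\eps>p-\alpha$. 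One small imprecision: in your lower-bound step, only the Harnack chain is needed to get $f\ge c\,f(z_0)$ on the annulus $U^Q(r_0)\setminus U^Q(r_0,r_0/2)$; Carleson's estimate bounds $f$ from \emph{above} and plays no role there (it is used only in the $I_1$ upper-bound term).
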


\smallskip

It is worth mentioning that given any (large) $p>(\alpha-1)_+$, 
 there exist $\sB(x,y)$ and $\kappa(x)$ 
such that the BHP holds with  decay rate $\delta_D(x)^p$ for the operator $L$ in \eqref{e:def-operator}.

For $Q \in \partial D$, $0<r\le \wh R/8$ and $y \in D$, define 
	\begin{align}\label{e:k}
		k_r(y)=\frac{1}{|y-Q|^{d+\alpha} }
		\Phi_1\bigg(\frac{r \wedge \delta_D(y)}{|y-Q|}\bigg)\,
		\Phi_2\bigg(\frac{r \vee \delta_D(y)}{|y-Q|}\bigg)\,
		\ell\bigg(\frac{r\wedge \delta_D(y)}{r\vee\delta_D(y)}\bigg).
	\end{align}

	In the following  lemma,  we compare the above function $k_r$ with the jump kernel in certain regions 
	that appear
	in the proof of Theorem \ref{t:BHPnew}.

\begin{lemma}\label{l:estimates-of-J-for-BHP}
Let $Q \in \partial D$ and $0<r\le \wh R/8$.
	
\noindent (i) There exists $C>0$ independent of $Q$ and $r$ such that for all $z \in U(2^{-1}r) \setminus U(2^{-1}r, 2^{-3}r)$ 
and   $y \in  D\setminus U(r)$, 
	\begin{align*}
		\sB(z,y)|z-y|^{-d-\alpha} \ge  Ck_r(y).
	\end{align*}
	
\noindent (ii) Let $\eps \in ( (\beta_1  -\beta_2)_+,\infty)$. There exists $C=C(\eps)>0$ independent of $Q$ and $r$ such that  for all $z\in U(2^{-1}r)$ and $y \in D\setminus U(r)$, 
	\begin{equation*}
		\sB(z,y)|z-y|^{-d-\alpha}  \le  C (\delta_D(z)/r)^{\beta_1-\eps} k_r(y).
	\end{equation*}
\end{lemma}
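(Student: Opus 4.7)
The plan is to reduce both parts to the product structure of $\sB$ given by \hyperlink{B4-c}{{\bf (B4-c)}}, with the principal work being a careful comparison of the arguments appearing in $\sB(z,y)$ and in $k_r(y)$. First I would establish the basic geometric facts. Using $z \in U(r/2)$ and \eqref{e:U-rho-C11-1} one has $|z - Q| \le r$. For $y \in D \setminus U(r)$, a short argument in the local coordinate system CS$_Q$, using \eqref{e:Psi-bound} and $\Lambda_0 \le 1/2$, yields $|y - Q| \ge r/2$ (splitting according to whether $|\tilde y| \ge r$, $\rho_D(y) \ge r$, or $|y| \ge \wh R$). Consequently $|z - y| \le 3|y - Q|$ in every case, and $|z - y| \ge |y-Q|/2$ whenever $|y - Q| \ge 2r$. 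Under the stronger hypotheses of (i) one has $\rho_D(z) \in [r/8, r/2)$, which gives $\delta_D(z) \asymp r$ via \eqref{e:U-rho-C11-2}; moreover, in the residual case $|y - Q| < 2r$ a direct estimate using $\rho_D(z) \ge r/8$ together with the Lipschitz bound on $\Psi$ gives $|z - y| \ge r/4 \asymp |y-Q|$. Thus under the hypotheses of (i) we have both $\delta_D(z)\asymp r$ and $|z - y| \asymp |y - Q|$.

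With those comparisons in hand, part (i) follows from \hyperlink{B4-c}{{\bf (B4-c)}}: the three arguments of $\Phi_1, \Phi_2, \ell$ appearing in $\sB(z,y)$ are respectively comparable to those appearing in $k_r(y)$ (the third argument, involving $\wedge |z-y|$ in $\sB$, requires a short extra check that uses $|z - y| \gtrsim r \vee \delta_D(y)$ in each of the two subcases above). The weak scaling properties \eqref{e:scale1new}, \eqref{e:scale2}, \eqref{e:scale3} then imply that $\Phi_1, \Phi_2, \ell$ at comparable arguments in $(0,1]$ are comparable, whence $\sB(z,y)/|z-y|^{d+\alpha} \asymp k_r(y)$, which yields the lower bound of (i) (it actually gives both directions).

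For (ii), the comparison $|z-y| \asymp |y-Q|$ can fail and $\delta_D(z)$ may be arbitrarily small, so I would split into two regimes. In the \emph{far regime} $|z - y| \ge |y-Q|/4$ one still has $|z - y| \asymp |y - Q|$. Writing $A_i$ and $B_i$ for the $i$th arguments of $\Phi_1, \Phi_2, \ell$ in $\sB(z,y)$ and $k_r(y)$ respectively, the bound $\delta_D(z) \le r$ gives $A_1 \le c B_1$ and $A_2 \le c B_2$, and the lower scaling \eqref{e:scale1new} produces $\Phi_1(A_1) \le c (\delta_D(z)/r)^{\beta_1 - \eps'} \Phi_1(B_1)$ for any small $\eps' > 0$, supplying the desired factor. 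The remaining ratios $\Phi_2(A_2)/\Phi_2(B_2)$ and $\ell(A_3)/\ell(B_3)$ can grow by at most a power of $r/\delta_D(z)$ whose exponent is controlled by $\eps'' \wedge \beta_2$ via the upper parts of \eqref{e:scale2} and \eqref{e:scale3}; the condition $\eps > (\beta_1 - \beta_2)_+$ is exactly what is needed so that, after all cancelations, the net exponent on $\delta_D(z)/r$ is at least $\beta_1 - \eps$. In the \emph{near regime} $|z - y| < |y - Q|/4$ one necessarily has $|y - Q| \asymp r$, so the $\Phi_2$- and $\ell$-factors in $k_r(y)$ are uniformly bounded away from $0$, and the estimate reduces to a direct comparison of $\Phi_0$-type quantities using \hyperlink{B4-a}{{\bf (B4-a)}} (with $\Phi_0 = \Phi_1 \ell$ as in \eqref{e:phi0new}) and once more \eqref{e:scale1new}.

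The main obstacle is the bookkeeping of scaling exponents in the far regime: the $\Phi_1$-scaling supplies the correct factor with exponent $\beta_1 - \eps'$, but the $\Phi_2$- and $\ell$-scalings can each contribute powers of $\delta_D(z)/r$ going in the \emph{opposite} direction (with exponents involving $\beta_2$), and it is precisely this competition that the restriction $\eps > (\beta_1 - \beta_2)_+$ encodes. The near-regime bound, by contrast, becomes essentially routine once one recognizes that $|z - y| \ll |y - Q|$ forces $y$ to sit just outside $U(r)$ with $|y - Q| \asymp r$, which tightly constrains the geometry and collapses the $\Phi_2$- and $\ell$-factors.
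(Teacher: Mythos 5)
The overall scaffolding is right for part~(i), but there are two problems with the part~(ii) plan, one cosmetic and one substantive.

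First, the cosmetic one: the comparison $|z-y|\asymp|y-Q|$ does not ``fail'' for $z\in U(r/2)$ and $y\in D\setminus U(r)$ --- it holds universally (the paper establishes $|y|/3\le|z-y|\le 2|y|$ at the very start, since there is an $r$-wide gap between $U(r/2)$ and the complement of $U(r)$). Consequently your ``near regime'' $|z-y|<|y-Q|/4$ is empty, and the whole near/far dichotomy collapses. This is harmless but suggests a misreading of the geometry; the clean route is to prove $|z-y|\asymp|y-Q|$ once and use it everywhere, as the paper does.

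Second, and this is the real gap: the inequality $\Phi_1(A_1)\le c(\delta_D(z)/r)^{\beta_1-\eps'}\Phi_1(B_1)$ is false in general. What the lower scaling of $\Phi_1$ gives is $\Phi_1(A_1)\le c(A_1/B_1)^{\beta_1-\eps'}\Phi_1(B_1)$, and
\begin{align*}
\frac{A_1}{B_1}\asymp\frac{\delta_D(z)\wedge\delta_D(y)}{r\wedge\delta_D(y)},
\end{align*}
which is \emph{not} comparable to $\delta_D(z)/r$: it ranges from $\delta_D(z)/r$ all the way up to $1$ as $\delta_D(y)$ decreases. In particular when $\delta_D(y)<\delta_D(z)$ one has $A_1\asymp B_1$, so $\Phi_1(A_1)/\Phi_1(B_1)\asymp 1$ and the $\Phi_1$-factor supplies no decay at all. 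In that regime it is the $\Phi_2$-factor that produces the decaying power $(\delta_D(z)/r)^{\beta_2-\eps'}$, while $\ell$ contributes a factor going the wrong way, and the condition $\eps>(\beta_1-\beta_2)_+$ is needed to absorb the deficit $\beta_1-\beta_2$. The analogous swap happens in the intermediate ranges. This is exactly why the paper's proof of~(ii) splits into four cases according to where $\delta_D(y)$ sits relative to $\delta_D(z)$, $(r\,\delta_D(z))^{1/2}$, and $r$: in each range a different product of the three scaling bounds yields the target power $(\delta_D(z)/r)^{\beta_1-\eps}$. Your bookkeeping as written assumes the dominant decay always comes from $\Phi_1$, which is the one regime where it does not need rescuing; without the case split the argument does not close. (It \emph{can} be made to close by a more careful uniform analysis --- one can check that $(A_1/B_1)(A_2/B_2)=\delta_D(z)/r$ exactly and $A_3/B_3\in[\delta_D(z)/r,\,r/\delta_D(z)]$ --- but that is a different argument from the one you sketched, and you would still have to track the sign of $\log(A_3/B_3)$, i.e.\ do a case split in disguise.)
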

\begin{proof} 
In this proof, we use the coordinate system CS$_Q$,  and write $U(r)$ for $U^Q(r)$.  By  \eqref{e:U-rho-C11-1},  
$U(2^{-1}r) \subset B(0,r)$ and $B(0,2^{-1}r) \subset U(r)$. Thus, for $z\in U(2^{-1}r)$ and $y\in D\setminus U(r)$, we have  
$ \delta_D(z) \vee \delta_D(y) \le (r/2) \vee |y|  =|y|$,
$|z-y|\le |y|+|z|<2|y|$ and
	$$ 
		|z-y| \ge (1/3)(|y|-|z|)  + (2/3)|z-y|  > (1/3)(|y|-r) + (1/3)r = |y|/3.  
	$$
Therefore,  by \hyperlink{B4-c}{{\bf (B4-c)}} and the scaling properties of $\Phi_1,\Phi_2$ and $\ell$, it holds that  
for any $z\in U(2^{-1}r)$ and $y\in D\setminus U(r)$,
	\begin{align}\label{e:flb}
		\frac{\sB(z,y)}{|z-y|^{d+\alpha}}  \asymp \frac{1}{|y|^{d+\alpha}}	\Phi_1\bigg(\frac{\delta_D(z) \wedge \delta_D(y)}{|y|}\bigg) 			\Phi_2\bigg(\frac{\delta_D(z) \vee  \delta_D(y)}{|y|}\bigg)	
			\ell\bigg(\frac{\delta_D(z) \wedge \delta_D(y)}{\delta_D(z) \vee  \delta_D(y)}\bigg).
	\end{align}
(i)  Observe that 
	\begin{align}\label{e:flb-1}
		r/\sqrt{80}\le 	\delta_D(z) \le r/2 \quad \text{for all} \;\,z \in U(2^{-1}r) \setminus U(2^{-1}r,2^{-3}r). 
	\end{align}
Indeed, the second inequality in \eqref{e:flb-1} is clear. Besides, using \eqref{e:U-rho-C11-2}, we get  
$\delta_D(z) \ge (2/\sqrt 5) \rho_D(z) \ge r/\sqrt{80}$. Hence, \eqref{e:flb-1} holds.  Now the result follows from \eqref{e:flb}, \eqref{e:flb-1} and the scaling properties of $\Phi_1,\Phi_2$ and $\ell$.

\smallskip
\noindent
(ii) Let $\eps \in ( (\beta_1-\beta_2)_+,\infty)$, $z \in U(2^{-1}r)$, $y \in D \setminus U(r)$ and
	$$
 		 I:=\sB(z,y)|z-y|^{-d-\alpha}/k_r(y).
	$$
Choose a constant $\lambda \in (0,1/2)$ such that $(1-2\lambda) \eps \ge \beta_1-\beta_2$. Note that $\delta_D(z)\le r/2$.  There are four cases. 

\smallskip

\noindent Case 1: $\delta_D(y)<\delta_D(z)$. Since  $\beta_2- 2 \lambda\eps \ge\beta_1-\eps$, by \eqref{e:flb}, 
\eqref{e:scale2} and the upper scaling property of $\ell$  in \eqref{e:scale3} (with $\eps$ replaced by  $\lambda\eps$), we have
	\begin{align*}
		 I &\le c \bigg(\frac{ \delta_D(z) \vee \delta_D(y)}{r \vee \delta_D(y) }\bigg)^{\beta_2- \lambda \eps  } 
		 	\bigg(\frac{\delta_D(y)/\delta_D(z)}{\delta_D(y)/ r}\bigg)^{\lambda\eps}\\
		&=  c(\delta_D(z)/r)^{\beta_2-2\lambda\eps} \le  c(\delta_D(z)/r)^{\beta_1-\eps}.
	\end{align*}

\noindent Case 2:  $\delta_D(z) \le \delta_D(y) <(r\delta_D(z))^{1/2}$. Since $-\beta_1+\beta_2+(1-2\lambda)\eps\ge 0$, using \eqref{e:flb}, \eqref{e:scale1new}, \eqref{e:scale2} and the upper scaling property of $\ell$  in \eqref{e:scale3} (with $\eps$ replaced by  $\lambda\eps$), we get
	\begin{align*}
		I	& \le c \bigg(\frac{\delta_D(z) \wedge \delta_D(y)}{ r \wedge \delta_D(y)}\bigg)^{\beta_1-\lambda \eps} 
				\bigg(\frac{ \delta_D(z) \vee \delta_D(y)}{r \vee \delta_D(y) }\bigg)^{\beta_2-\lambda \eps } 
				\bigg(\frac{\delta_D(z)/\delta_D(y)}{\delta_D(y)/ r}\bigg)^{\lambda\eps}\\
			&= c (\delta_D(z)/r)^{\beta_1} (\delta_D(y)/r)^{-\beta_1+\beta_2-2\lambda \eps } \\
			& \le  c (\delta_D(z)/r)^{\beta_1-\eps} (\delta_D(y)/r)^{-\beta_1+\beta_2+(1-2\lambda) \eps } \\
			&\le  c(\delta_D(z)/r)^{\beta_1-\eps}.
	\end{align*}

\noindent Case 3: $(r\delta_D(z))^{1/2}\le \delta_D(y)<r$.  Since   $-\beta_1+\beta_2+\eps \ge 0$, we get from \eqref{e:flb}, \eqref{e:scale1new}, \eqref{e:scale2} and the lower scaling property of $\ell$ in \eqref{e:scale3} (with $\eps$ replaced by  
$\eps/2$) that
	\begin{align*}
		I	&\le c \bigg(\frac{\delta_D(z)\wedge \delta_D(y)}{r \wedge \delta_D(y)}\bigg)^{\beta_1-\eps/2} 
			\bigg(\frac{\delta_D(z)\vee \delta_D(y)}{r \vee \delta_D(y)}\bigg)^{\beta_2-\eps/2} 
			\bigg(\frac{\delta_D(z)/\delta_D(y)}{\delta_D(y)/ r}\bigg)^{-\eps/2}\\
			&= c(\delta_D(z)/r)^{\beta_1-\eps}( \delta_D(y)/r)^{-\beta_1 + \beta_2 + \eps} \\
			& \le c(\delta_D(z)/r)^{\beta_1-\eps}.
	\end{align*}

\noindent Case 4: $\delta_D(y) \ge r$. 
By  \eqref{e:flb}, \eqref{e:scale1new}  and the lower scaling property of $\ell$ in  \eqref{e:scale3}  (with $\eps$ replaced by  
$\eps/2$), we obtain
	\begin{align*}
		I& \le c  \bigg(\frac{\delta_D(z)\wedge \delta_D(y)}{r\wedge \delta_D(y)}\bigg)^{\beta_1-\eps/2}
			\bigg(\frac{\delta_D(z)/\delta_D(y)}{r/\delta_D(y)}\bigg)^{-\eps/2} = c (\delta_D(z)/r)^{\beta_1-\eps} .
	\end{align*}

The proof is complete.
\end{proof}

We now give the proof of Theorem \ref{t:BHPnew}.
Proposition \ref{p:bound-for-integral-new} will play an important role in the proof.

\textsc{Proof of Theorem \ref{t:BHPnew}}. 
We use the coordinate system CS$_Q$ in this proof,  and write $U(r)$ for $U^Q(r)$. 
 Recall that $\epsilon_2 \in (0,1/12)$ 
 is  the constant in Theorem \ref{t:Dynkin-improve}.   
Recall from Lemma \ref{l:A3-c}(iv) that \hyperlink{IUBS}{{\bf (IUBS)}} holds under \hyperlink{B4-c}{{\bf (B4-c)}}. Hence, by Theorem \ref{t:phi2} and a standard chain argument, it suffices to prove \eqref{e:TAMSe1.8new}
for  $x, y\in B_D(Q,2^{-10} \epsilon_2r)$. 

Let $x\in B_D(Q,2^{-10} \epsilon_2r)$ and set $z_0:=(\widetilde{0}, 2^{-5}r)$.  Using Theorem \ref{t:phi2} and a chain argument, 
we see  that there exists   $c_1>0$ independent of $Q,r$ and $f$ such that
	\begin{align}\label{e:BHP-chain}
		f(z) \ge c_1f(z_0) \quad\text{for all} \;\, z \in B(z_0, (2^{-10} -  2^{-15})^{1/2}r ).
	\end{align} 
Note that for all $w=(\wt w,w_d) \in U(2^{-7}r) \setminus U(2^{-7}r, 2^{-8}r)$, we have $|\wt w|<2^{-7}r$, so by \eqref{e:Psi-bound},
$w_d = \rho_D(w) +  \Psi(\wt w)  <(2^{-7}+2^{-14})r$ and  $w_d > (2^{-8} - 2^{-14})r$. Thus, 
	$$
	|z_0-w|^2 = |\wt w|^2  + (2^{-5}r-w_d)^2< (2^{-14} + (2^{-5} - 2^{-9})^2)r^2<(2^{-10}-2^{-15})r^2.
	$$
Hence, $U(2^{-7}r) \setminus U(2^{-7}r, 2^{-8}r) \subset B(z_0, (2^{-10} - 2^{-15})^{1/2}r )$.
Using this,  \eqref{e:BHP-chain} and Theorem \ref{t:Dynkin-improve}, since  $f$ is harmonic in $B_D(Q,r)$,  we obtain
	\begin{align}\label{e:TAMSe6.37-new}
		f(x)&=\E_{x}\big[f(Y_{\tau_{U(2^{-7}\epsilon_2 r)}})\big]\\
			&\ge \E_{x}\big[f(Y_{\tau_{U(2^{-7}\epsilon_2 r)}}); Y_{\tau_{U(2^{-7}\epsilon_2 r)}}\in  U(2^{-7}r) 
				\setminus U(2^{-7}r, 2^{-8}r)\big]\nonumber\\
		&\ge c_1f(z_0)\P_{x}\big( Y_{\tau_{U(2^{-7}\epsilon_2 r)}}\in  U(2^{-7}r) \setminus U(2^{-7}r, 2^{-8}r)\big)\nn\\
		& \ge c_2 (\delta_D(x)/r)^p f(z_0).\nn
	\end{align}

On the other hand, using the harmonicity of $f$ again, we see that
	\begin{align*}
		f(x)&= \E_x\big[f( Y_{\tau_{
				U(2^{-7}\epsilon_2 r)}});  Y_{\tau_{
				U(2^{-7}\epsilon_2 r)}}\in  U(2^{-3}r)\big]  \\
			&\quad + \E_x\big[f( Y_{\tau_{
				U(2^{-7}\epsilon_2 r)}});  Y_{\tau_{
				U(2^{-7}\epsilon_2 r)}}  \in D\setminus U(2^{-3}r)\big]\\
			&=:I_1+I_2.
	\end{align*}
Since $U(2^{-3}r) \subset B(0,2^{-2}r)$, by Theorem \ref{t:Carleson}  (with $r$ replaced by $2^{-2}r$), we have $f(z)\le c_3f(z_0)$  for all $z \in U(2^{-3}r)$. Thus, using Theorem \ref{t:Dynkin-improve}, we get that
	\begin{align}\label{e:BHP-1}
		I_1&\le  c_3f(z_0)\P_{x}( Y_{\tau_{U(2^{-7}\epsilon_2 r)}}\in D) \le c_4 (\delta_D(x)/r)^p f(z_0).
	\end{align}
Now we estimate $I_2$. Let $k_{2^{-3}r}$ be the function defined in \eqref{e:k}. Note that for all 
$w=(\wt w,w_d) \in B(z_0, 2^{-7}r)$, $|\wt w|<2^{-7}r$, $\rho_D(w)<(2^{-5}+2^{-7})r$ and 
$\rho_D(w)>w_d  -\wh R^{-1}|\wt w|^2>(2^{-5}-2^{-7}- 2^{-14})r$ by \eqref{e:Psi-bound}. 
Hence, $B(z_0, 2^{-7}r) \subset U(2^{-4}r) \setminus U(2^{-4}r, 2^{-6}r)$. Using this,  the harmonicity  of $f$, 
the L\'evy system formula \eqref{e:Levysystem-Y-kappa}, Lemma \ref{l:estimates-of-J-for-BHP}(i)  (with $r$ replaced by $2^{-3}r$) and Proposition \ref{p:E2},  we get
	\begin{align}\label{e:POTAe7.27}
	\begin{split}
		f(z_0)&\ge \E_{z_0} \big[f(Y_{\tau_{U(2^{-4}r) \setminus U(2^{-4}r, 2^{-6}r)}}); 
			Y_{\tau_{U(2^{-4}r) \setminus U(2^{-4}r, 2^{-6}r)}} \in D \setminus U(2^{-3}r)	\big]\\
		&= \E_{z_0}\int^{\tau_{U(2^{-4}r) \setminus U(2^{-4}r, 2^{-6}r)}	}_0\int_{D\setminus U(2^{-3}r)}	
			\frac{\sB(Y_t,w)}{|Y_t-w|^{d+\alpha}} f(w)dw\,dt\\
		&\ge c_5 \E_{z_0}\tau_{B(z_0, 2^{-7}r)} \int_{D\setminus U(2^{-3}r)}k_{2^{-3}r}(w)f(w)dw\\
		&\ge c_6r^{\alpha}	\int_{D\setminus U(2^{-3}r)}k_{2^{-3}r}(w)f(w)dw.
	\end{split}
	\end{align} 
Using the assumption $p<\alpha+(\beta_1\wedge \beta_2)$,  we see that $(\beta_1-\beta_2)_+< \beta_1-p+\alpha$. 
We now choose a positive constant $\eps \in ((\beta_1-\beta_2)_+, \beta_1-p+\alpha)$ so that  $p-\alpha<\beta_1-\eps$. 
By Lemma \ref{l:estimates-of-J-for-BHP}(ii) (with $r$ replaced by $2^{-3}r$),  Proposition \ref{p:bound-for-integral-new} and \eqref{e:POTAe7.27}, we have
	\begin{align}\label{e:BHP-2}
		I_2	&= \E_{x}\int^{\tau_{U(2^{-7}\epsilon_2r)}	}_0\int_{D\setminus U(2^{-3}r)}	
			\frac{\sB(Y_t,w)}{|Y_t-w|^{d+\alpha}} f(w)dw\,dt\\	
		&\le c_{7} r^{-(\beta_1-\eps)}\E_{x}\int^{\tau_{U(2^{-7}\epsilon_2r)}	}_0\delta_D(Y_t)^{\beta_1-\eps}dt 
			\int_{D\setminus U(2^{-3}r)}k_{2^{-3}r}(w)f(w)dw\nn\\
		& \le c_8 r^{-(\beta_1-\eps)} r^{\alpha+\beta_1-\eps-p} \delta_D(x)^p  r^{-\alpha}f(z_0) = c_8(\delta_D(x)/r)^p f(z_0).\nn
	\end{align}

Combining \eqref{e:TAMSe6.37-new} with \eqref{e:BHP-1} and \eqref{e:BHP-2}, we arrive at $f(x) \asymp (\delta_D(x)/r)^p f(z_0)$ which implies the conclusion of the theorem.  
\qed

\bigskip

	The result of  Theorem \ref{t:BHPnew} implies the following statement: There exists $C>0$ such that for any $Q \in \partial D$ and $0<r \le \wh R$, whenever two Borel functions $f, g$ in $D$ are harmonic in $B_D(Q,r)$ with respect to $Y$ and vanish continuously on $\partial D \cap B(Q,r)$,
	\begin{align}\label{e:def-BHP}
		\frac{f(x)}{f(y)}\,\le C\,\frac{g(x)}{g(y)}\quad \text{for all} \;\, x,y\in B_D(Q,r/2).
	\end{align}
	 The inequality \eqref{e:def-BHP} is referred to as the \textit{scale-invariant boundary Harnack principle} for $Y$.

	We say that the  \textit{inhomogeneous non-scale-invariant boundary Harnack principle}  holds for $Y$, if there is a constant $r_0\in (0,\wh R]$  such that for any $Q \in \partial D$ and 
	$0<r\le r_0$, there exists  a constant $C=C(Q,r)\ge 1$ such that \eqref{e:def-BHP} holds for any two Borel functions $f, g$ in $D$ which are harmonic in $B_D(Q,r)$ with respect to $Y$ and vanish continuously on $\partial D \cap B(Q,r)$.
	
	We will show that without the extra condition $p < \alpha + (\beta_1 \wedge \beta_2)$ in Theorem \ref{t:BHPnew}, even inhomogeneous non-scale-invariant BHP may not hold for $Y$. In the remainder of this section, we assume that  \hyperlink{B1}{{\bf (B1)}}, \hyperlink{B3}{{\bf (B3)}}, \hyperlink{B4-c}{{\bf (B4-c)}},  \hyperlink{K3}{{\bf (K3)}} and \hyperlink{B5}{{\bf (B5)}} hold.  Consider the following condition:

	\medskip
	
	\noindent\hypertarget{F}{{\bf (F)}} For any   $0<r\le \wh R$, there exists a constant $C=C(r)$ such that
	\begin{align*}
		\liminf_{s\to 0} \frac{\Phi_2(b/r) \ell(s/b)}{\ell(s)} \ge C b^{p-\alpha} \quad \text{for all} \;\, 0<b \le r.
	\end{align*}

	\begin{thm}\label{t:BHPfail-general}
		Suppose that \hyperlink{B1}{{\bf (B1)}}, \hyperlink{B3}{{\bf (B3)}}, \hyperlink{B4-c}{{\bf (B4-c)}},  \hyperlink{K3}{{\bf (K3)}} and \hyperlink{B5}{{\bf (B5)}} hold.  Suppose also that   \hyperlink{F}{{\bf (F)}} holds.
		Then the inhomogeneous non-scale-invariant boundary Harnack principle fails for $Y$.
	\end{thm}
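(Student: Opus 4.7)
The plan is to construct, for a given $Q \in \partial D$ and $r \in (0, \wh R]$, two non-negative Borel functions $f$ and $g$ on $D$, each harmonic in $B_D(Q, r)$ with respect to $Y$, vanishing continuously on $\partial D \cap B(Q, r)$, and for which the ratio $f(x)/g(x)$ is not bounded above by any positive constant on $B_D(Q, r/2)$. This contradicts the existence of a constant $C = C(Q, r)$ in \eqref{e:def-BHP} for the specific pair $(f, g)$ and disproves the inhomogeneous non-scale-invariant BHP.

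Both $f$ and $g$ will be taken as exit functionals of $Y$ from $B_D(Q, r)$: $g(x) := \P_x(Y_{\tau_{B_D(Q,r)}} \in A_g)$ and $f(x) := \E_x[F(Y_{\tau_{B_D(Q,r)}})\1_{\{Y_{\tau_{B_D(Q,r)}} \in D\}}]$, with $A_g \subset D \setminus B(Q, r)$ well-separated from $\partial D$ and $F \ge 0$ supported in a thin shell $\{w \in D : r < |w - Q| < 3r/2,\, \delta_D(w) \sim b\}$ for $b$ to be selected. Both are automatically harmonic in $B_D(Q, r)$, and continuity up to $\partial D \cap B(Q, r)$ follows from the strong Feller property of $Y$ recorded in Remark \ref{r:strong-Feller} once $A_g$ and the support of $F$ are at positive distance from $\partial D \cap B(Q, r)$. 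Adapting the barrier argument of Theorem \ref{t:Dynkin-improve} from $U^Q(\epsilon_2 r)$ to $B_D(Q, r)$ yields the reference decay $g(x) \asymp (\delta_D(x)/r)^p$ on $B_D(Q, r/2)$.

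The key computation is via the Lévy system \eqref{e:Levysystem-Y-kappa}: $f(x) = \E_x\int_0^{\tau_{B_D(Q,r)}} J_F(Y_s)\, ds$, where $J_F(z) := \int_{D \setminus B(Q,r)} F(w)\sB(z,w)|z-w|^{-d-\alpha}\, dw$. For $z$ with small $\delta_D(z)$ and $w$ at height $\delta_D(w) \sim b$ in the support of $F$, assumption \hyperlink{B4-c}{{\bf (B4-c)}} gives $\sB(z,w)|z-w|^{-d-\alpha} \asymp r^{-d-\alpha}\Phi_1(\delta_D(z)/r)\Phi_2(b/r)\ell(\delta_D(z)/b)$ (when $\delta_D(z) < b$). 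Integrating the slice contributions over $b$ with the weight defining $F$, and invoking condition \hyperlink{F}{{\bf (F)}} in the form $\Phi_2(b/r)\ell(\delta_D(z)/b) \ge C_1 b^{p-\alpha}\ell(\delta_D(z))$, produces asymptotics for $J_F(z)$ that differ from $r^{-\alpha}\Phi_0(\delta_D(z)/r) = r^{-\alpha}\Phi_1(\delta_D(z)/r)\ell(\delta_D(z)/r)$ by the ratio $\ell(\delta_D(z))/\ell(\delta_D(z)/r)$. Transferring this pointwise discrepancy to the integrated functional $f(x)$ via Corollary \ref{c:Dynkin-upper} and the matching lower bound in Lemma \ref{l:Dynkin-pre-lower} shows that the boundary decay rate of $f$ genuinely differs from $(\delta_D(x)/r)^p$, yielding $f(x)/g(x) \to \infty$ (or $0$, depending on the orientation of the $\ell$-factor) along any sequence tending to $Q$.

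The main obstacle is to convert the pointwise comparison between $J_F(z)$ and $r^{-\alpha}\Phi_0(\delta_D(z)/r)$ into a sharp two-sided estimate for $f(x)$ with the correct asymptotic rate in $\delta_D(x)$. This requires constructing a refined barrier — adapted from the combination of $\chi_q$ and $\psi^{(r)}$ appearing in Lemma \ref{l:Dynkin2}, but now incorporating a profile modulated by $\ell(\delta_D(\cdot))$ — and computing its $L^\kappa$-image by means of Proposition \ref{p:barrier} and the Dynkin-type formula of Corollary \ref{c:Dynkin-local}. A secondary difficulty is choosing $F$ (and the height parameter $b$) so that $f$ is well-defined, harmonic, and vanishes continuously on $\partial D \cap B(Q, r)$, which reduces to verifying integrability of $F \cdot \sB(z,\cdot)|z-\cdot|^{-d-\alpha}$ over the thin shell; this is controlled by the lower scaling exponent $\beta_2$ of $\Phi_2$ combined with the restriction $p \ge \alpha + \beta_2$ forced by \hyperlink{F}{{\bf (F)}} (see Remark \ref{r:condition-F}).
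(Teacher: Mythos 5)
Your proposal has the right high-level shape — compare a reference harmonic function $g$ with $g(x)\asymp(\delta_D(x)/r)^p$ against a second harmonic function built from an exit functional — but the mechanism you describe for extracting a faster decay rate has two genuine gaps, and the paper's actual argument differs in important ways.

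First, you work with a \emph{single} test function $F$ supported in a thin shell at interior height $\delta_D(w)\sim b$, and you try to apply condition \hyperlink{F}{{\bf (F)}} pointwise in the form $\Phi_2(b/r)\ell(\delta_D(z)/b)\ge C_1 b^{p-\alpha}\ell(\delta_D(z))$. But \hyperlink{F}{{\bf (F)}} is a $\liminf_{s\to0}$ statement: the inequality becomes available only when the variable playing the role of $s$ tends to zero. In your setup $s=\delta_D(z)$ is the distance-to-boundary of the \emph{intermediate} point of the process, which is integrated over all scales in the Green potential $\int G(x,z)J_F(z)\,dz$ and does not go to zero uniformly. The paper avoids this by constructing a \emph{sequence} $F_n(x)=\E_x[\Xi_n(Y_{\tau_{U(3r)}})]$ whose masses $\Xi_n$ are supported in shrinking balls $B_D(P,r/n)$ around a second boundary point $P$; there the variable that tends to zero is $\delta_D(y)$ for $y$ in $\mathrm{supp}\,\Xi_n$, which is uniformly $<r/n$, and the $\liminf$ in \hyperlink{F}{{\bf (F)}} is captured via Fatou's lemma as $n\to\infty$. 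Your construction keeps the support of $F$ at a fixed interior height, so there is nothing going to zero and nothing to which \hyperlink{F}{{\bf (F)}} applies. The roles are in fact reversed: you need the \emph{exit} point small and the \emph{intermediate} point at height $b=\delta_D(z)$, not the other way around.

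Second, the decay-rate discrepancy you point to — "the ratio $\ell(\delta_D(z))/\ell(\delta_D(z)/r)$" — is not the mechanism that breaks BHP; that ratio need not diverge, and in the regularly-varying case it is a slowly varying correction. The mechanism in the paper is that after passing to the limit $n\to\infty$, condition \hyperlink{F}{{\bf (F)}} turns the integrand into $G^{U(3r)}(u\e_d,z)\,\delta_D(z)^{p-\alpha}$, and the exponent $\gamma=p-\alpha$ is exactly the \emph{critical} one in Proposition \ref{p:bound-for-integral-new}, where the Green potential estimate acquires a logarithm: $\int G^{U(3r)}(u\e_d,z)\delta_D(z)^{p-\alpha}\,dz\asymp u^p\log(3r/u)$. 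It is this extra $\log(3r/u)\to\infty$ factor, compared against the bound $f(x)/f(y)\le C(\delta_D(x)/\delta_D(y))^p$ forced by the assumed BHP (via Lemma \ref{l:BHPfail}), that produces the contradiction. Related to this, your proposed "refined barrier" incorporating an $\ell(\delta_D(\cdot))$ profile is unnecessary: no new barrier construction is needed, since the already-established Proposition \ref{p:bound-for-integral-new} (with its logarithmic critical case) does the work directly. The upper bound needed to show $F_n$ vanishes continuously on $\partial D\cap B(Q,r)$ is likewise handled by that same proposition with a different (subcritical) exponent, after using \eqref{e:scale3} with a small $\eps$ to control the $\ell$-factors, rather than by the scaling of $\Phi_2$ as you suggest.
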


\begin{remark}\label{r:condition-F}
	If $p<\alpha+\beta_2$, then \hyperlink{F}{{\bf (F)}} fails. Indeed, suppose that  $\eps:=\alpha+\beta_2 - p>0$. Then using  the definition of the lower Matuszewska index and \eqref{e:scale3-2} (with $\eps$ replaced by $\eps/3$),  we get that for any $0<s<b \le r \le \wh R$,
	\begin{align*}
		\frac{\Phi_2(b/r) \ell(s/b)}{\ell(s)}  \le c_1 \Phi_2(1)  (b/r)^{\beta_2 -\eps/3}  (1/b)^{\eps/3} = c_2(r) b^{p-\alpha+\eps/3}.
	\end{align*}
	Hence, \eqref{e:condition-F-2} can not hold for all $0<b\le r$.
\end{remark}

A measurable function $f:(0,1] \to (0,\infty)$ is said to be slowly varying at zero if 
\begin{align*}
	\lim_{s \to 0} \frac{f(\lambda s)}{f(s)} = 1  \quad\;\; \text{for all $\lambda>1$}.
\end{align*}

We present two sufficient conditions for condition  \hyperlink{F}{{\bf (F)}}.

\begin{lemma}\label{l:condition-F}
	(i) If $p>\alpha+\ub_2$, then \hyperlink{F}{{\bf (F)}} holds. 
	
	\noindent (ii) If $p=\alpha+\beta_2$,  $\ell$ is  slowly varying at zero, and there exists $c_0>0$ such that  $\Phi_2(r) \ge c_0 
r^{\beta_2}$	 for all $0<r\le 1$,  then \hyperlink{F}{{\bf (F)}} holds. 
\end{lemma}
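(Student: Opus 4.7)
\medskip\noindent\textbf{Proof proposal.} Both parts amount to careful bookkeeping of the scaling exponents in \eqref{e:scale2} and \eqref{e:scale3}, together with the explicit pointwise lower bound on $\Phi_2$ in part (ii). Throughout, I will fix $r\in(0,\wh R]$ and $b\in(0,r]$, noting that $b\le r\le\wh R\le 1$ so that the relevant arguments of $\Phi_2$ and $\ell$ lie in $(0,1]$; moreover, for $s$ small enough we have $s\le b\le 1$, placing $s$ and $s/b$ inside the range where \eqref{e:scale2} and \eqref{e:scale3} apply.

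\medskip\noindent\emph{Part (i).} Applying the upper scaling in \eqref{e:scale2} with $r=1$ (and using $\Phi_2(1)=1$) gives a constant $c_1>0$ such that
\[
\Phi_2(b/r)\ge c_1 (b/r)^{\ub_2}.
\]
Next, for any $\eps>0$ the lower scaling in \eqref{e:scale3} (applied with $u=s$ and $v=s/b$, which satisfy $u\le v\le 1$ once $s\le b$) yields a constant $c_2(\eps)>0$ with
\[
\ell(s/b)/\ell(s)\ge c_2(\eps)\, b^{\eps\wedge\beta_1}.
\]
Since $p-\alpha>\ub_2$, I choose $\eps\in(0,p-\alpha-\ub_2]$ so that $\ub_2+\eps\wedge\beta_1\le p-\alpha$. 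Combining the two lower bounds and using $b\le 1$ (so that smaller exponents give larger values),
\[
\Phi_2(b/r)\,\frac{\ell(s/b)}{\ell(s)}\ge c_1 c_2(\eps)\, r^{-\ub_2}\, b^{\ub_2+\eps\wedge\beta_1}\ge c_1 c_2(\eps)\, r^{-\ub_2}\, b^{p-\alpha}.
\]
Taking the liminf as $s\downarrow 0$ produces the desired estimate with $C(r)=c_1 c_2(\eps)r^{-\ub_2}$, proving \hyperlink{F}{{\bf (F)}}.

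\medskip\noindent\emph{Part (ii).} Here $p-\alpha=\beta_2$, and the hypothesis supplies $\Phi_2(b/r)\ge c_0 (b/r)^{\beta_2}$ directly (no scaling slack needed). For the $\ell$ factor, slow variation of $\ell$ at zero gives, for each fixed $b\in(0,r]$ with $b\le 1$,
\[
\lim_{s\to 0}\frac{\ell(s/b)}{\ell(s)}=1,
\]
so in particular $\liminf_{s\to 0}\ell(s/b)/\ell(s)\ge 1/2$ (or any positive number $<1$). Multiplying,
\[
\liminf_{s\to 0}\frac{\Phi_2(b/r)\ell(s/b)}{\ell(s)}\ge \tfrac{c_0}{2}(b/r)^{\beta_2}= \tfrac{c_0}{2}r^{-\beta_2}b^{p-\alpha},
\]
which is condition \hyperlink{F}{{\bf (F)}} with $C(r)=c_0 r^{-\beta_2}/2$.

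\medskip\noindent\emph{Remarks on difficulty.} Neither part involves any real obstacle beyond matching exponents; the only minor subtlety is in (i), where one must allow $\beta_1=0$, in which case $\eps\wedge\beta_1=0$ and the bound on $\ell(s/b)/\ell(s)$ degenerates to a positive constant. The inequality $\ub_2\le p-\alpha$ then still does the job because $b\le 1$. In (ii), the critical equality $p-\alpha=\beta_2$ is precisely what forces reliance on the pointwise hypothesis $\Phi_2(r)\ge c_0 r^{\beta_2}$, since the scaling inequality \eqref{e:scale2} alone only delivers the exponent $\ub_2\ge\beta_2$ which would be too large.
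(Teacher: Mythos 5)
Your proof is correct and follows essentially the same route as the paper: apply the lower scaling bounds for $\Phi_2$ and $\ell$ in part (i), and use the explicit pointwise lower bound on $\Phi_2$ together with slow variation of $\ell$ in part (ii). The only cosmetic difference is that in (ii) the paper uses $\lim_{s\to0}\ell(s/b)/\ell(s)=1$ directly rather than bounding the $\liminf$ below by $1/2$, but both yield the required constant.
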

\begin{proof}
	(i) Assume that $\eps:=p-\alpha-\ub_2>0$. Using \eqref{e:scale2-2} and \eqref{e:scale3-2},  we get that for any $0<b \le r \le \wh R$,
	\begin{align*}
		\liminf_{s\to 0}	\frac{\Phi_2(b/r) \ell(s/b)}{\ell(s)}  \ge c_1 \Phi_2(1)  (b/r)^{\ub_2}  b^{\eps} = c_2(r) b^{p-\alpha}.
	\end{align*}

	(ii) By the  assumptions,  we get that for any $0<b \le r \le \wh R$,
	\begin{align*}
		\liminf_{s \to0}\frac{\Phi_2(b/r) \ell(s/b)}{\ell(s)}  \ge c_0 (b/r)^{\beta_2}  	\liminf_{s \to0}	\frac{ \ell(s/b)}{\ell(s)}   =c_0 r^{-\beta_2} b^{p-\alpha}.
	\end{align*}
\end{proof} 
In particular, if  $\Phi_1$, $\Phi_2$ and $\ell$ are 
 the functions from  Remark \ref{r:compare-KSV}, then \hyperlink{F}{{\bf (F)}} holds if $\beta_1 > \beta_2$ and $p \in [\alpha+\beta_2, \alpha+\beta_1)$.
	
	To prove Theorem \ref{t:BHPfail-general}, we first establish the following lemma. 
	\begin{lemma}\label{l:BHPfail}
		Suppose that \hyperlink{B1}{{\bf (B1)}},  \hyperlink{B3}{{\bf (B3)}},  \hyperlink{B4-c}{{\bf (B4-c)}}, \hyperlink{K3}{{\bf (K3)}} and \hyperlink{B5}{{\bf (B5)}} hold. If the inhomogeneous non-scale-invariant boundary Harnack principle holds  for $Y$ with $r_0 \in (0, \wh R]$, then the following  is true:  For any $Q \in \partial D$ and $0<r\le r_0 \wedge (\epsilon_2\wh R/288)$, there exists $C=C(Q,r)\ge 1$ such that for any non-negative Borel function $f$ in $D$ which is harmonic  in $ B_D(Q, r)$ and vanishes continuously on $ \partial D \cap B(Q, r)$, 
		\begin{equation*}
			\frac{f(x)}{f(y)} \le C\bigg( \frac{\delta_D(x)}{\delta_D(y)}\bigg)^p\quad  \hbox{for all } x, y\in   B_D(Q,r/2) \, \text{ with }\,  \delta_D(x) \vee \delta_D(y) \le \epsilon_2 r/8,
		\end{equation*}
		where	$\epsilon_2 \in (0,1/12)$ is the constant in Lemma \ref{l:Dynkin2}.
		
	\end{lemma}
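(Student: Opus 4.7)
The plan is to apply the assumed inhomogeneous non-scale-invariant BHP at $Q$ with radius $r$, paired with an explicit test function whose decay rate is known exactly from Theorem~\ref{t:Dynkin-improve}. Concretely, I would set $R := 3r/\epsilon_2$, so that $R\le \wh R/96\le \wh R/24$ (using $r\le \epsilon_2\wh R/288$) and Theorem~\ref{t:Dynkin-improve} is applicable at $Q$ with parameter $R$. As test function I would take
\[
g(z) := \P_z\!\bigl(Y_{\tau_{U^Q(\epsilon_2 R)}}\in U^Q(R)\setminus U^Q(R,R/2)\bigr),\qquad z\in D.
\]

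The key point is that the scale $R=3r/\epsilon_2$ is calibrated precisely so that two geometric inclusions hold simultaneously via \eqref{e:U-rho-C11-1} (since $\epsilon_2 R = 3r\le \wh R/8$): first, $U^Q(\epsilon_2 R)\supset B_D(Q,2\epsilon_2 R/3)=B_D(Q,2r)\supset B_D(Q,r)$, which together with the strong Markov property shows that $g$ is harmonic in $B_D(Q,r)$; and second, $U^Q(\epsilon_2 R/4)\supset B_D(Q,\epsilon_2 R/6)=B_D(Q,r/2)$, which combined with Theorem~\ref{t:Dynkin-improve} yields the sharp two-sided estimate
\[
g(z)\asymp (\delta_D(z)/R)^p\asymp (\delta_D(z)/r)^p,\qquad z\in B_D(Q,r/2),
\]
with comparison constants independent of $Q$ and $r$. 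Once the continuous vanishing of $g$ on $\partial D\cap B(Q,r)$ is verified, the inhomogeneous BHP applied to the pair $(f,g)$ in $B_D(Q,r)$ delivers
\[
\frac{f(x)}{f(y)}\le C(Q,r)\frac{g(x)}{g(y)}\le C'(Q,r)\Bigl(\frac{\delta_D(x)}{\delta_D(y)}\Bigr)^p \quad \text{on } B_D(Q,r/2),
\]
which is stronger than the claim (in particular it holds under the extra restriction $\delta_D(x)\vee\delta_D(y)\le \epsilon_2r/8$).

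The main obstacle is the verification that $g$ vanishes continuously on the \emph{full} boundary set $\partial D\cap B(Q,r)$, not merely near $Q$. For boundary points $Q'\in\partial D\cap B(Q,r/2)$ this is immediate from the estimate $g(z)\asymp(\delta_D(z)/r)^p$ above. For $Q'\in \partial D\cap(B(Q,r)\setminus B(Q,r/2))$, the estimate from Theorem~\ref{t:Dynkin-improve} with reference point $Q$ is no longer directly available, so one must argue locally at $Q'$: inside a small neighborhood $B_D(Q',\rho)\subset U^Q(\epsilon_2 R)$, $g$ is bounded and harmonic for $Y$, and one extracts boundary decay by iterating Corollary~\ref{c:Carleson-1} together with the strong Markov property (or, equivalently, by reapplying Theorem~\ref{t:Dynkin-improve} with reference point $Q'$ and a small auxiliary radius). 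This is a standard potential-theoretic argument, but it does need to be executed carefully so that the hypothesis of the inhomogeneous BHP is genuinely met for $g$ and the constant $C(Q,r)$ produced depends only on $Q$ and $r$ as claimed.
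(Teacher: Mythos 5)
Your approach is essentially the same as the paper's (a test function with decay rate controlled by Theorem~\ref{t:Dynkin-improve}, then apply the assumed inhomogeneous BHP), but you stop short of a complete proof at exactly the point you flag. The paper chooses $g(x)=\P_x\bigl(Y_{\tau_{U^Q(3r)}}\in D\bigr)$ — the total survival probability, which is the second of the two equivalent quantities in Theorem~\ref{t:Dynkin-improve} — rather than your band-exit probability, but this difference is cosmetic. The substantive difference is how the two-sided estimate on $g$ is obtained, and it is precisely this difference that closes the gap you identify.

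The paper does not apply Theorem~\ref{t:Dynkin-improve} at the fixed center $Q$. Instead, for each $x\in B_D(Q,r)$ with $\delta_D(x)\le \epsilon_2 r/8$, it writes $Q_x$ for the boundary projection of $x$ and observes the nested inclusions
\[
U^{Q_x}(\epsilon_2 r)\ \subset\ B_D(Q,2r)\ \subset\ U^Q(3r)\ \subset\ B_D(Q,6r)\ \subset\ B_D(Q_x,8r)\ \subset\ U^{Q_x}(12r),
\]
which follow from \eqref{e:U-rho-C11-1} and the triangle inequality $|Q-Q_x|\le |Q-x|+\delta_D(x)<(2-2\epsilon_2)r$. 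Applying Theorem~\ref{t:Dynkin-improve} twice at the moving center $Q_x$ — once at scale $r$ for the upper bound $g(x)\le \P_x(Y_{\tau_{U^{Q_x}(\epsilon_2 r)}}\in D)$, once at scale $12r/\epsilon_2$ for the lower bound $g(x)\ge \P_x(Y_{\tau_{U^{Q_x}(12r)}}\in D)$ (this is where the hypothesis $r\le\epsilon_2\wh R/288$ is used, so that $12r/\epsilon_2\le\wh R/24$) — yields $g(x)\asymp(\delta_D(x)/r)^p$ on the full set $\{x\in B_D(Q,r):\delta_D(x)\le\epsilon_2 r/8\}$. Continuous vanishing on all of $\partial D\cap B(Q,r)$ is then immediate, and the BHP hypothesis for the pair $(f,g)$ is met in one stroke.

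Your version only yields $g(z)\asymp(\delta_D(z)/r)^p$ on $B_D(Q,r/2)\subset U^Q(\epsilon_2 R/4)$, so the vanishing of $g$ near $\partial D\cap\bigl(B(Q,r)\setminus B(Q,r/2)\bigr)$ is genuinely missing and must be proved separately before the inhomogeneous BHP can be invoked; acknowledging it as ``a standard potential-theoretic argument'' does not complete the proof. The fix you sketch in parentheses (``reapplying Theorem~\ref{t:Dynkin-improve} with reference point $Q'$'') is the right one — but once you commit to re-centering, you may as well do it for every $x$ near the boundary as the paper does, obtaining the uniform two-sided estimate and the vanishing simultaneously, with no case split between $B(Q,r/2)$ and its complement. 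That is the cleaner route, and it is what the paper takes.
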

	\begin{proof}
		Let $Q \in \partial D$ and $r \in (0, r_0 \wedge (\epsilon_2\wh R/288)]$. We use the coordinate system CS$_Q$ in this proof.

		Define  $g(x)= \P_x(Y_{\tau_{U(3r)}}\in D)$.  By the strong Markov property,  since $B_D(Q, r) \subset U(3r/2)$ by \eqref{e:U-rho-C11-1}, the function $g$ is harmonic in $B_D(Q,r)$. We claim that  there exists $c_1>1$ such that for all $x \in B_D(Q,r)$ with $\delta_D(x)\le \epsilon_2 r/8$,
		\begin{align}\label{e:BHP-failure-claim}
			c_1^{-1}(\delta_D(x)/r)^p\le 	g(x) \le  c_1(\delta_D(x)/r)^p.
		\end{align}
		To establish this claim, choose any  $x \in B_D(Q,r)$ with  $\delta_D(x)\le \epsilon_2 r/8$, and  let $Q_x \in \partial D$ be such that $|x-Q_x|=\delta_D(x)$. Since $\epsilon_2<1/12$, by the triangle inequality, it holds that  $|Q-Q_x|\le |Q-x|  +\delta_D(x)<(2-2\epsilon_2)r$. Hence, by   \eqref{e:U-rho-C11-1}, we have
		$$
		U^{Q_x}(\epsilon_2 r) \subset B_D(Q_x, 2\epsilon_2r) \subset B_D(Q, 2 r) \subset U(3r)
		$$
		and
		$$
		U(3r) \subset B_D(Q, 6r) \subset B_D(Q_x, 8r) \subset U^{Q_x}(12r).
		$$
		Using these and Theorem \ref{t:Dynkin-improve}, since $12r\le \epsilon_2 \wh R/24$ and  $x \in U^{Q_x}(\epsilon_2 r/4)$, we obtain
		\begin{align*}
			g(x) \le  \P_x(Y_{\tau_{U^{Q_x}(\epsilon_2 r)}}\in D) \le c (\delta_D(x)/ r)^p
		\end{align*}
		and 
		\begin{align*}
			g(x) \ge  \P_x(Y_{\tau_{U^{Q_x}(12 r)}}\in D) \ge c (\epsilon_2\delta_D(x)/ (12r))^p.
		\end{align*}
		Therefore, \eqref{e:BHP-failure-claim} holds. 	Note that \eqref{e:BHP-failure-claim} particularly implies that   $g$ vanishes continuously on $\partial D \cap B(Q,r)$.  Now, the desired result  follows  from \eqref{e:def-BHP} and \eqref{e:BHP-failure-claim}.	\end{proof}
	
	\smallskip

	\textsc{Proof of Theorem \ref{t:BHPfail-general}}. 
We suppose that  the inhomogeneous non-scale-invariant BHP holds for $Y$ with $r_0 \in (0,\wh R]$
and derive a contradiction. 
	Let $Q \in \partial D$ and $r \in (0, r_0 \wedge (\epsilon_2\wh R/288)]$. We use the coordinate system CS$_Q$ in this proof.

	Let  $P\in \partial D$ be such that $10r<|P-Q|<12r$. Using \eqref{e:U-rho-C11-1}, we see that 
	\begin{align}\label{e:BHPfail-1}
		3r<|z- y| < 20r \quad \text{for all} \;\, z \in  U(3r), \; y \in B_D(P,r).
	\end{align}
	Since $D$ is a $C^{1,1}$ open set, by  \eqref{e:scale3} (with $\eps=1/2$), we see that
	\begin{align*}
		\int_{B_D(P,r/n)} \ell (\delta_D(y))dy \le c 	\int_{B_D(P,r/n)}  \delta_D(y)^{-1/2}dy <\infty.
	\end{align*}
	For $n \ge 1$, define
	$$
	K_n:= \int_{B_D(P,r/n)} \ell (\delta_D(y))dy, \qquad 	\Xi_n(y):= \frac{r^{d+\alpha}\,{\bf 1}_{B_D(P,r/n)}(y)}{K_n\Phi_1(\delta_D(y)/(3r))} 
	$$
	and
	$$ F_n(x):=\,\E_x\big[\Xi_n(Y_{\tau_{U(3r)}} )\big].
	$$
Since $B_D(Q,r) \subset U(3r)$ by \eqref{e:U-rho-C11-1}, it 
follows
 by the strong Markov property that $F_n$ is harmonic 
in $B_D(Q,r)$ for any $n\ge 1$.
	
We first show that $F_n$ vanishes continuously on $\partial D \cap B(Q,r)$. 
Using
	 the L\'evy system formula \eqref{e:Levysystem-Y-kappa} in the first line below, and \hyperlink{B4-c}{{\bf (B4-c)}},   \eqref{e:BHPfail-1} and the scaling properties of $\Phi_1$, $\Phi_2$ and $\ell$ in the second,  we get that for all $x \in B_D(Q,r)$,
	\begin{align*}
		&F_n(x)=\frac{r^{d+\alpha}}{K_n}\E_x \bigg[ \int_{0}^{\tau_{U(3r)}}  \int_{B_D(P, r/n)}  \frac{\sB(Y_t,y)}{ \Phi_1(\delta_D(y)/(3r))\,|Y_t-y|^{d+\alpha}} dy\,dt\bigg]\\
		&\asymp \frac{1}{K_n} \,\E_x \bigg[\int_{0}^{\tau_{U(3r)}}   \int_{ B_D(P, r/n)}\,  \Phi_1\bigg(\frac{\delta_D(Y_t) \wedge \delta_D(y)}{3r}\bigg) \Phi_1\bigg(\frac{\delta_D(y)}{3r}\bigg)^{-1} \\
		&\qquad\qquad\qquad   \times \Phi_2\bigg(\frac{\delta_D(Y_t) \vee \delta_D(y)}{3r}\bigg)\ell \bigg( \frac{\delta_D(Y_t) \wedge \delta_D(y)}{\delta_D(Y_t) \vee \delta_D(y)}\bigg)\,dy\,dt \bigg]\\
		&= \frac{1}{K_n} \int_{B_D(P,r/n)}  \int_{z\in U(3r):  \delta_D(z) \le \delta_D(y)} G^{U(3r)}(x,z) \Phi_1\bigg(\frac{\delta_D(z)}{3r}\bigg)\Phi_1\bigg(\frac{\delta_D(y)}{3r}\bigg)^{-1} \\
		&\qquad\qquad\qquad  \qquad\qquad\qquad \qquad\qquad\quad  \times \Phi_2\bigg(\frac{ \delta_D(y)}{3r}\bigg)\ell \bigg( \frac{\delta_D(z)}{ \delta_D(y)}\bigg)\,dzdy\\
		&\quad +\frac{1}{K_n} \int_{B_D(P,r/n)}  \int_{z\in U(3r):  \delta_D(z) > \delta_D(y)} G^{U(3r)}(x,z)  \Phi_2\bigg(\frac{ \delta_D(z)}{3r}\bigg)\ell \bigg( \frac{\delta_D(y)}{ \delta_D(z)}\bigg)\,dzdy\\
		&=:f_{n,1}(x) + f_{n,2}(x).
	\end{align*}
	Note that $B_D(Q,2r) \subset U(3r) \subset B_D(Q,8r)$ by \eqref{e:U-rho-C11-1} and $p- \alpha  \ge \beta_2 \ge 0$ by Remark \ref{r:condition-F}.
	Using the almost monotonicity of $\Phi_1$ and the boundedness of $\Phi_2$ in the first line below,  \eqref{e:scale3} (with $\eps=\alpha/2$) in the second and third, and  Proposition \ref{p:bound-for-integral-new} in the last,  we get  that for all $x \in B_D(Q,r)$,
	\begin{align*}
		&f_{n,1}(x) \le \frac{c_1}{K_n} \int_{B_D(P,r/n)}  \int_{z\in U(3r):  \delta_D(z) \le \delta_D(y)} G^{U(3r)}(x,z)\ell \bigg( \frac{\delta_D(z)}{ \delta_D(y)}\bigg)\,dzdy\\
		&  \le \frac{c_2}{K_n} \int_{B_D(P,r/n)}  \int_{z\in U(3r):  \delta_D(z) \le \delta_D(y)} G^{U(3r)}(x,z) \frac{\ell  (\delta_D(z))}{\delta_D(y)^{\alpha/2}}\,dzdy\\
		& \le \frac{c_3}{K_n}  \int_{B_D(P,r/n)}  \int_{z\in U(3r):  \delta_D(z) \le \delta_D(y)} G^{U(3r)}(x,z) \frac{ (\delta_D(y)/\delta_D(z))^{\alpha/2}}{\delta_D(y)^{\alpha/2}} \ell  (\delta_D(y))\,dzdy\\
		& \le c_3 \int_{U(3r)} G^{U(3r)}(x,z) \delta_D(z)^{-\alpha/2}\,dz \\
		&\le c_4 \delta_D(x)^{\alpha/2}.
	\end{align*}
	Further,  for all $x \in B_D(Q,r)$, using the boundedness of $\Phi_2$ and \eqref{e:scale3} (with $\eps=\alpha/2)$ in the first inequality below, and Proposition \ref{p:bound-for-integral-new} in the third, we also get
	\begin{align*}
		f_{n,2}(x) &\le \frac{ c_5}{K_n} \int_{B_D(P,r/n)}  \int_{z\in U(3r):  \delta_D(z) > \delta_D(y)} G^{U(3r)}(x,z) \frac{\ell (\delta_D(y))}{\delta_D(z)^{\alpha/2}}\,dzdy\\
		&\le  c_5 \int_{U(3r)} G^{U(3r)}(x,z) \delta_D(z)^{-\alpha/2}\,dz\\
		& \le c_6 \delta_D(x)^{\alpha/2}.
	\end{align*}
	Therefore,  there exists $c_7>0$ such that for all $x \in B_D(Q,r)$,
	\begin{align}\label{e:BHP-failure-claim-1}
		f_{n,1}(x)+f_{n,2}(x) \le c_7 \delta_D(x)^{\alpha/2}.
	\end{align}
	In particular, the above estimate shows that for any $n\ge 1$, 
the function $F_n$ vanishes continuously on $\partial D \cap B(Q,r)$.

	We claim that 	 there exists $c_8=c_8(r)>0$ such that the following statement holds: For every $u \in (0,\epsilon_2r/8)$, there exists $N(u) \in \N$ such that
	\begin{align}\label{e:BHP-failure-claim-2}
		f_{N(u),2}( u\e_d) \ge c_8 u^{p} \log (3r/u).
	\end{align}
	Assume  for the moment that \eqref{e:BHP-failure-claim-2} holds. Then  for all $ u\in (0,\epsilon_2r/8)$, by  \eqref{e:BHP-failure-claim-1} and  \eqref{e:BHP-failure-claim-2}, it holds that 
	\begin{align*}
		\frac{F_{N(u)}(u\e_d)}{F_{N(u)}( (\epsilon_2r/8)\e_d)}& \ge 	\frac{c_9 f_{N(u),2}(u\e_d)}{f_{N(u),1}( (\epsilon_2r/8)\e_d) + f_{N(u),2}( (\epsilon_2r/8)\e_d)}\\
		& \ge \frac{c_8c_9u^p \log(3r/u)}{c_7 (\epsilon_2 r/8)^{\alpha/2}},
	\end{align*}
	while by  Lemma \ref{l:BHPfail}, there exists $c_{10}>0$ independent of $u$ such that 
	\begin{align*}
		\frac{F_{N(u)}(u\e_d)}{F_{N(u)}( (\epsilon_2r/8)\e_d)} \le  \frac{c_{10} u^p}{(\epsilon_2r /8)^p}.
	\end{align*}
	Since $\lim_{u \to 0}\log(r/u)  = \infty$, this gives a contradiction, thereby concluding the proof.
	
	Now, we show that \eqref{e:BHP-failure-claim-2} holds. Let $u \in (0,\epsilon_2r/8)$. Observe that for all $n \ge 1$,
	\begin{align*}
		f_{n,2}(u\e_d)
		&\ge\frac{1}{K_n} \int_{B_D(P,r/n)}  \int_{z\in U(3r):  \delta_D(z) > r/n} G^{U(3r)}(u\e_d,z)  \Phi_2\bigg(\frac{ \delta_D(z)}{3r}\bigg)\\
		&\qquad \qquad\qquad \qquad \qquad \qquad \qquad \qquad  \times  \frac{\ell(\delta_D(y)/\delta_D(z))}{\ell(\delta_D(y))}  \ell(\delta_D(y))\,dzdy\\
		&\ge \int_{z\in U(3r):  \delta_D(z) > r/n} G^{U(3r)}(u\e_d,z)  \Phi_2\bigg(\frac{ \delta_D(z)}{3r}\bigg)  \inf_{0<s<r/n}\frac{\ell(s/\delta_D(z))}{\ell(s)} dz.
	\end{align*}
	Thus, using Fatou's lemma and  \hyperlink{F}{{\bf (F)}} in the first inequality below, and Proposition \ref{p:bound-for-integral-new} in the second, we obtain
	\begin{align*}
		\liminf_{n \to \infty} f_{n,2}(u\e_d) &\ge  c_{11}\int_{ U(3r)} G^{U(3r)}(u\e_d,z) \delta_D(z)^{p-\alpha} dz \ge c_{12} u^p \log(3r/u).
	\end{align*}
	This implies \eqref{e:BHP-failure-claim-2}. The proof is complete. \qed

%%%%%%%%%%%%%%%%%%%%%%%%%%%%%%%%%%%%%%%%%%%%%%%%%%%%%%%%%%%%%%%%%%%%%%%%%%%%%%%%%%%%%%%%%%%%%%%%%%%%%%%%%%%%%%%%%%%%%%%%%%%%%%%%%%%%%%%
%%%%%%%%%%%%%%%%%%%%%%%%%%           Sharp estimates on Green function               %%%%%%%%%%%%%%%%%%%%%%%%%%%%%%%%%%%%%%%%%%%%%%%%%%
%%%%%%%%%%%%%%%%%%%%%%%%%%%%%%%%%%%%%%%%%%%%%%%%%%%%%%%%%%%%%%%%%%%%%%%%%%%%%%%%%%%%%%%%%%%%%%%%%%%%%%%%%%%%%%%%%%%%%%%%%%%%%%%%%%%%%%%

\section{Sharp  estimates of Green function}\label{ch:green}

In this section, we establish sharp two-sided Green function estimates when $D$ is bounded.
With the functions $\Phi_1$ and $\Phi_2$ in \hyperlink{B4-c}{{\bf (B4-c)}}, we define a positive function $\Upsilon$ on $(0,\infty)$ by
	\begin{equation}\label{e:def-of-Theta}
		\Upsilon(t)	:= \int_{t\, \wedge\, 1}^2 u^{2\alpha-2p-1}\Phi_1(u)\Phi_2(u)\, du.
	\end{equation}
Since $\Phi_1(u)=\Phi_2(u)=1$ for $u\ge 1$, it holds that  for all $t>0$,
	\begin{equation}\label{e:lower-bound-Theta-const}
		\Upsilon(t)\ge \int_{1}^2 u^{2\alpha-2p-1}du= c_1.
	\end{equation} 
Moreover, by \eqref{e:scale1new} and \eqref{e:scale2}, we see that for all $t\in (0,1]$,
	\begin{equation}\label{e:lower-bound-Theta}
		\Upsilon(t)\ge \int_t^{2t} u^{2\alpha-2p-1}\Phi_1(u)\Phi_2(u)\, du \ge c_2 t^{2\alpha-2p}\Phi_1(t)\Phi_2(t).
	\end{equation}
Further, given $a\in (0,1)$, there exists $c=c(a)>0$ such that for all  for all $t>0$,
	\begin{equation}\label{e:scaling-Theta}
 		\Upsilon(at)\ge	\Upsilon(t)\ge c\, \Upsilon(at).
	\end{equation}
Indeed, the first inequality in \eqref{e:scaling-Theta} is obvious.   Next,  if $at \ge 1$, then $\Upsilon(at)=\Upsilon(t)$ 
and if $at <1$, then by   \eqref{e:scale1new},  \eqref{e:scale2}, \eqref{e:lower-bound-Theta-const} and \eqref{e:lower-bound-Theta}, 
	\begin{align*}
		\Upsilon(at)&=\Upsilon(t) + \int_{at}^{t \wedge 1} u^{2\alpha-2p-1}\Phi_1(u)\Phi_2(u)\, du \\
		&\le \Upsilon(t) +  c_1 \Phi_1(t \wedge 1)\Phi_2(t \wedge 1)\int_{at}^{t} u^{2\alpha-2p-1}\, du \\
		&\le \Upsilon(t) + c_2
			\begin{cases}
			1  &\mbox{ if } t\ge 1,\\
			t^{2\alpha-2p}	\Phi_1(t)\Phi_2(t) &\mbox{ if } t<1
			\end{cases} \\
		& \le (1+c_3)\, \Upsilon(t),
	\end{align*}
proving the claim. 

The goal of this section is to get the following  two-sided estimates on the Green function. 

\begin{thm}\label{t:Green}
Suppose that $D$ is a bounded $C^{1,1}$ open set and \hyperlink{B1}{{\bf (B1)}}, \hyperlink{B3}{{\bf (B3)}}, \hyperlink{B4-c}{{\bf (B4-c)}}, 
 \hyperlink{K3}{{\bf (K3)}} and  \hyperlink{B5}{{\bf (B5)}} hold.
Let  $p \in [(\alpha-1)_+, \alpha+\beta_1) \cap (0,\infty)$ 
denote the constant satisfying \eqref{e:C(alpha,p,F)} if $C_9>0$ and let $p=\alpha-1$ if $C_9=0$ where $C_9$ is the contant in \hyperlink{K3}{{\bf (K3)}}.  Then for all $x,y \in D$,
	\begin{align}\label{e:Green-full}
	\begin{split}
		G(x, y)& \asymp  \left(\frac{\delta_D(x)\wedge \delta_D(y)}{|x-y|} \wedge 1\right)^p 
			\left(\frac{\delta_D(x)\vee \delta_D(y)}{|x-y|} \wedge 1\right)^p \\
		&\qquad \times \Upsilon\left(\frac{\delta_D(x)\vee \delta_D(y)}{|x-y|}\right)\frac1{|x-y|^{d-\alpha}},
		\end{split}
	\end{align}
 	where  the comparison constants depend on $D$ only through $\wh R, \Lambda$ 
and \text{\rm diam}$(D)$.
\end{thm}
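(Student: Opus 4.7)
\textbf{Proof Plan for Theorem \ref{t:Green}.}
Write $R=|x-y|$, $\delta_{\wedge}=\delta_D(x)\wedge\delta_D(y)$, $\delta_{\vee}=\delta_D(x)\vee\delta_D(y)$, and denote by $M(x,y)$ the right-hand side of \eqref{e:Green-full}. By symmetry I may assume $\delta_D(x)=\delta_{\wedge}$. The plan is to split into three ranges. If $R\le \delta_{\wedge}$, then Corollary \ref{c-int-green-bounded} yields $G(x,y)\asymp R^{\alpha-d}$; the two $(\delta/R)\wedge 1$-factors in $M(x,y)$ are then $\asymp 1$, and $\Upsilon(\delta_{\vee}/R)$ is pinned between two positive constants by \eqref{e:lower-bound-Theta-const} together with $\Upsilon(t)\le \int_0^2 u^{2\alpha-2p-1}\Phi_1(u)\Phi_2(u)\,du<\infty$ (finite by $p<\alpha+\beta_1$). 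If $\delta_{\wedge}< R\le C_*\,\delta_{\vee}$ for a fixed constant $C_*$, the same interior estimates of Corollary \ref{c-int-green-bounded} combined with the Harnack chaining furnished by Theorem \ref{t:phi2} reduce the estimate to the first range. The substantive case is $\delta_{\vee}<R/C_*$, on which I focus below. Fix $Q_x\in\partial D$ with $|x-Q_x|=\delta_D(x)$ and $Q_y\in\partial D$ with $|y-Q_y|=\delta_D(y)$, and set $r:=\epsilon_2 R/C$ for a small geometric constant so that $y\notin U^{Q_x}(3r/\epsilon_2)$ and vice versa.

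\textbf{Upper bound.} Since $G(\cdot,y)$ is regular harmonic on $U:=U^{Q_x}(\epsilon_2 r)$, I split
\begin{equation*}
G(x,y)=\mathbb{E}_x\bigl[G(Y_{\tau_U},y);Y_{\tau_U}\in U^{Q_x}(r)\bigr]+\mathbb{E}_x\bigl[G(Y_{\tau_U},y);Y_{\tau_U}\in D\setminus U^{Q_x}(r)\bigr]=:I_1+I_2.
\end{equation*}
For $I_1$, Theorem \ref{t:Dynkin-improve} gives $\mathbb{P}_x(Y_{\tau_U}\in U^{Q_x}(r))\le c(\delta_D(x)/r)^p$, while on $U^{Q_x}(r)$ the hitting point $z$ satisfies $\delta_D(z)\le 2r$, so $G(z,y)$ is bounded by the same RHS with $z$ replacing $x$; this contribution already carries the first $(\delta_D(x)/R\wedge 1)^p$ factor, and the remaining ingredients match $M(x,y)$. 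For $I_2$, the L\'evy system formula \eqref{e:Levysystem-Y-kappa} rewrites it as an integral of $\sB(Y_t,z)|Y_t-z|^{-d-\alpha}G(z,y)$ over $(t,z)\in[0,\tau_U]\times(D\setminus U^{Q_x}(r))$. Using \hyperlink{B4-c}{{\bf (B4-c)}} to estimate $\sB$, together with Corollary \ref{c:Dynkin-upper} to control $\mathbb{E}_x\int_0^{\tau_U}\Phi_0(\delta_D(Y_t)/r)dt\lesssim (\delta_D(x)/r)^p r^{\alpha}$, reduces the problem to bounding $\int_{D\setminus U^{Q_x}(r)}\Phi_2(\delta_D(z)/R)\ell(\cdot)|z|^{-d-\alpha}G(z,y)\,dz$. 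Partition this last integral into annular strips $\{z: \delta_D(z)\in [2^{-k-1}R,2^{-k}R]\}$, control $G(z,y)$ on each strip by the same argument applied (recursively) with $z$ in place of $x$, and invoke Lemma \ref{l:key-curved} with the choices $q=p$, $\gamma=p-\alpha$, $\Phi=\Phi_1\Phi_2$; the resulting integrand is precisely $u^{2\alpha-2p-1}\Phi_1(u)\Phi_2(u)$, which integrates to $\Upsilon(\delta_{\vee}/R)$.

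\textbf{Lower bound.} Theorem \ref{t:GB} (applied with $R_0=\mathrm{diam}(D)$) already gives the lower bound matching $M(x,y)$ up to the $\Upsilon$-factor, i.e., with $\Upsilon$ replaced by its constant lower bound from \eqref{e:lower-bound-Theta-const}. To insert $\Upsilon(\delta_{\vee}/R)$ when $\delta_{\vee}\ll R$, I iterate: for each $u\in(\delta_{\vee}/R,1)$, I produce an intermediate point $z_u$ near $\partial D$ with $\delta_D(z_u)\asymp uR$ and $|x-z_u|\asymp|z_u-y|\asymp R$, apply the Markov property at the first jump into a small ball around $z_u$, and use Theorem \ref{t:GB} from $z_u$ to $y$. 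The jump kernel $\sB(x,z_u)/|x-z_u|^{d+\alpha}\asymp R^{-d-\alpha}\Phi_1(\delta_D(x)/R)\Phi_2(uR/R)\ell(\cdot)$ by \hyperlink{B4-c}{{\bf (B4-c)}}, and the Green function from $z_u$ to $y$ contributes an additional factor $\asymp (\delta_D(y)/R)^p u^p R^{\alpha-d}$. Integrating over the Lebesgue measure of such $z_u$ (a shell of thickness $\sim uR$ and volume $\sim u R^d$) produces precisely $\int u^{2p-2\alpha+(\beta_1+\beta_2)\text{-stuff}}\,du$ which, after carefully converting, matches $\Upsilon(\delta_{\vee}/R)$.

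\textbf{Main obstacle.} The hardest part is the upper bound on $I_2$: the recursion on scales must be closed rigorously, since $G(z,y)$ appearing under the integral is itself controlled by $M(z,y)$, which still depends on $\Upsilon$. The clean way to handle this is to introduce the ansatz $G(x,y)\le K\cdot M(x,y)$, substitute it into the $I_2$ bound, and verify that Lemma \ref{l:key-curved}(i)--(iii) produces precisely $M(x,y)$ on the right with a constant strictly less than $K$ once $K$ is chosen large, thereby closing the bootstrap. Establishing this requires delicate bookkeeping of the scaling indices of $\Phi_1,\Phi_2,\ell$ to ensure all integrals are absolutely convergent in each of the parameter ranges $p<\alpha+(\beta_1+\beta_2)/2$, $p=\alpha+(\beta_1+\beta_2)/2$, and $p>\alpha+(\beta_1+\beta_2)/2$—these are exactly the regimes that produce the distinct behaviors of $\Upsilon$ (bounded, logarithmic, or $\Phi_1\Phi_2$-type decay) visible in Corollary \ref{c:Green-2}.
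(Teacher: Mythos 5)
Your high-level decomposition by the three ranges of $\delta_D(x)\wedge\delta_D(y)$, $\delta_D(x)\vee\delta_D(y)$, and $|x-y|$ matches the paper, and your lower-bound idea (dyadic shells near the boundary producing the integral $\Upsilon$) is in the right spirit, although the paper actually carries out a cleaner version using \emph{matched} dyadic boxes $V_x(n)$ near $x$ and $V_y(n)$ near $y$ simultaneously and sums the chain $x\to V_x(n)\to V_y(n)\to y$ via the L\'evy system and Theorem \ref{t:GB} rather than integrating over single intermediate points $z_u$.

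The serious gap is in the upper bound. Your plan hinges on closing a bootstrap: substituting the ansatz $G\le K\cdot M$ into the integral coming from $I_2$ and checking that the coefficient contracts. You flag this as the main obstacle, and rightly so — it is not demonstrated, and nothing in your argument indicates the contraction constant would be strictly below one. In fact the paper \emph{avoids} any bootstrap by first establishing the weaker one-sided preliminary estimate
\[
G^{B_D(x_0,R_0)}(x,y)\ \le\ C\left(\tfrac{\delta_D(x)\wedge\delta_D(y)}{|x-y|}\wedge 1\right)^p\frac{1}{|x-y|^{d-\alpha}}
\]
(Proposition \ref{p:prelub}), which carries only one boundary factor, and then feeding this into a direct computation of a double integral over boxes around \emph{both} $x$ and $y$ (Lemma \ref{l:key-estimate}). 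Concretely, the paper writes $I_1=\int_{U^{Q_x}(\epsilon_2 r)}G^U(x,w)\int_{U^{Q_y}(\epsilon_2 r)}\sB(w,z)|w-z|^{-d-\alpha}G(z,y)\,dz\,dw$, bounds both Green functions by Proposition \ref{p:prelub}, uses {\bf (B4-c)} for $\sB(w,z)$ (noting $|w-z|\asymp r$ because $w,z$ sit in separated boxes), and then Lemma \ref{l:key-estimate} — a repeated application of Lemma \ref{l:key-curved} over the two boxes — yields $\Upsilon$ directly. This is where the hard work actually lives, and it is missing from your plan. Note also that the paper's $I_2$ (exit far from $y$) is the \emph{easy} term: it is bounded by Proposition \ref{p:prelub} and Theorem \ref{t:Dynkin-improve} alone, giving $c(\delta_D(x)\delta_D(y)/r^2)^p r^{\alpha-d}$, which is already dominated by the $\Upsilon$ lower bound \eqref{e:lower-bound-Theta-const}; no annular recursion is needed there.

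A further structural issue: you build a box only around $x$ and split according to whether $Y_{\tau_U}$ lands in $U^{Q_x}(r)$, i.e., still near $Q_x$. For such landing points $z$, the quantity $\delta_D(y)$ does not naturally enter $G(z,y)$ through the jump step, so your $I_1$ as written would not produce the factor $\Upsilon(\delta_D(y)/|x-y|)$ involving $\delta_D(y)$. The paper's choice to split according to whether $Y_{\tau_U}$ lands in the box near $y$ is what makes both boundary distances and the $\Upsilon$-integral emerge from a single double-integral computation, and is the missing idea here.
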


Note that the function $\ell$  does not play a role in  the Green function estimates in \eqref{e:Green-full}, while it appears in the estimate of $\sB$ in  \hyperlink{B4-c}{{\bf (B4-c)}}. 

By \eqref{e:scale1new} and \eqref{e:scale2},  since $\Phi_1(r)=\Phi_2(r)=1$ for $r\ge 1$, we see that for every $\eps>0$, there exists a constant $c(\eps)>1$ such that for all $0<s\le r\le 2$,
	\begin{align*}
			c(\eps)^{-1}\bigg( \frac{r}{s}\bigg)^{\beta_1 + \beta_2-\eps \wedge  (\beta_1+\beta_2)}
				\le 	\frac{\Phi_1(r)\Phi_2(r)}{\Phi_1(s)\Phi_2(s)} \le c(\eps) 
			 \bigg( \frac{r}{s}\bigg)^{\ub_1+\ub_2 }.
	\end{align*}
Therefore, if $p<\alpha+(\beta_1+\beta_2)/2$ or $p>\alpha+(\ub_1+\ub_2)/2$,  
then we obtain the following explicit estimates for $\Upsilon$ from  \cite[Lemma 5.1]{CKSV22}: For all $t>0$,
	\begin{align}\label{e:Theta-explicit}
		\Upsilon(t)\asymp 
			\begin{cases}1 
				&\mbox{ if }\,p<\alpha+(\beta_1+\beta_2)/2,\\
				(t\wedge 1)^{2\alpha-2p}\Phi_1(t \wedge 1)\Phi_2(t \wedge1)  &\mbox{ if } \,  p>\alpha+(\ub_1+\ub_2)/2.
			\end{cases}
	\end{align}
Observe that $\Phi_1(t \wedge 1)\Phi_2(t \wedge1)= \Phi_1(t)\Phi_2(t)$ for all $t>0$. Hence, by \eqref{e:Theta-explicit},  we obtain the next corollary from Theorem \ref{t:Green}.

\begin{corollary}\label{c:Green}
Under the setting of Theorem \ref{t:Green}, the following statements hold true.
	
\noindent (i) Suppose that $p<\alpha + (\beta_1+\beta_2)/2$. Then  for all $x,y \in D$,
	\begin{align*}
		&G(x, y) \asymp
		 \left(\frac{\delta_D(x)\wedge \delta_D(y)}{|x-y|} \wedge 1\right)^p 
		 \left(\frac{\delta_D(x)\vee \delta_D(y)}{|x-y|} \wedge1\right)^p\frac1{|x-y|^{d-\alpha}}.
	\end{align*}
\noindent (ii)  
Suppose that $\alpha + (\ub_1+\ub_2)/2<p<\alpha+\beta_1$. 
	Then for all $x,y \in D$,
	\begin{align}\label{e:Green-special-2}
		\begin{split}
		G(x, y)
		& \asymp
			\left(\frac{\delta_D(x)\wedge \delta_D(y)}{|x-y|} \wedge 1\right)^p 
			\left(\frac{\delta_D(x)\vee \delta_D(y)}{|x-y|} \wedge1\right)^{2\alpha-p} \\
		&\qquad \times  \Phi_1\left(\frac{\delta_D(x)\vee \delta_D(y)}{|x-y|}\right) \Phi_2
			\left(\frac{\delta_D(x)\vee \delta_D(y)}{|x-y|}\right)\frac1{|x-y|^{d-\alpha}}.
		\end{split}
	\end{align}
\end{corollary}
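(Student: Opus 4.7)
The plan is to derive this corollary as a direct algebraic consequence of Theorem \ref{t:Green} together with the already-recorded two-sided estimate \eqref{e:Theta-explicit} for the integral function $\Upsilon$. In particular, no further probabilistic or potential-theoretic work is needed; the entire task is to verify that each of the two parameter regimes forces $\Upsilon$ into one of the two shapes listed in \eqref{e:Theta-explicit}, and then to absorb the resulting power of $(\delta_D(x)\vee \delta_D(y))/|x-y|$ into the second factor in \eqref{e:Green-full}.

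First I would recall from the discussion preceding \eqref{e:Theta-explicit} that the product $\Phi_1\Phi_2$ satisfies a two-sided weak scaling on $(0,2]$ with lower index $\beta_1+\beta_2$ and upper index $\ub_1+\ub_2$. Under the hypothesis of (i), namely $2\alpha-2p > -(\beta_1+\beta_2)$, the integrand $u^{2\alpha-2p-1}\Phi_1(u)\Phi_2(u)$ is integrable at $0$, so the integral defining $\Upsilon(t)$ in \eqref{e:def-of-Theta} is bounded above by a constant independent of $t$, while \eqref{e:lower-bound-Theta-const} already gives a matching lower bound. Hence $\Upsilon \asymp 1$ and substitution into \eqref{e:Green-full} immediately yields (i). Under the hypothesis of (ii), namely $2\alpha-2p < -(\ub_1+\ub_2)$, the integrand is maximized near $t\wedge 1$, and a direct application of \cite[Lemma 5.1]{CKSV22} (the second line of \eqref{e:Theta-explicit}) gives
\[
\Upsilon(t)\asymp (t\wedge 1)^{2\alpha-2p}\Phi_1(t\wedge 1)\Phi_2(t\wedge 1), \qquad t>0.
\]

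Next I would substitute $t=(\delta_D(x)\vee\delta_D(y))/|x-y|$ into this estimate and combine with the outer factor $((\delta_D(x)\vee\delta_D(y))/|x-y|\wedge 1)^p$ appearing in \eqref{e:Green-full}. Using
\[
\Bigl(\tfrac{\delta_D(x)\vee \delta_D(y)}{|x-y|}\wedge 1\Bigr)^{p}\Bigl(\tfrac{\delta_D(x)\vee \delta_D(y)}{|x-y|}\wedge 1\Bigr)^{2\alpha-2p}
=\Bigl(\tfrac{\delta_D(x)\vee \delta_D(y)}{|x-y|}\wedge 1\Bigr)^{2\alpha-p},
\]
the two $\Upsilon$-factors coalesce into the desired power $2\alpha-p$. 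Finally, the identity $\Phi_i(s\wedge 1)=\Phi_i(s)$ for $i=1,2$, which follows from the normalization $\Phi_i(r)=1$ for $r\ge 1$, lets me drop the $\wedge\,1$ inside the $\Phi_i$'s, producing exactly \eqref{e:Green-special-2}.

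There is no real obstacle here: the only things to check carefully are the book-keeping of the exponents (in particular that $p + (2\alpha-2p) = 2\alpha-p$) and that the regime thresholds in (i) and (ii) match the scaling hypotheses under which \cite[Lemma 5.1]{CKSV22} gives the two cases of \eqref{e:Theta-explicit}. Since $\Phi_1\Phi_2$ genuinely satisfies weak scaling with indices $\beta_1+\beta_2$ and $\ub_1+\ub_2$, both regimes are covered and the corollary follows. The one small point worth flagging is that (ii) also requires $p<\alpha+\beta_1$, which is already assumed in Theorem \ref{t:Green} (it is the range in which $p$ is defined), so no additional constraint needs to be imposed.
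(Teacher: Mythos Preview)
Your proposal is correct and follows essentially the same approach as the paper: both derive the corollary by invoking \eqref{e:Theta-explicit} (which rests on \cite[Lemma 5.1]{CKSV22} applied to $\Phi_1\Phi_2$), substituting the resulting form of $\Upsilon$ into \eqref{e:Green-full}, and using $\Phi_i(t\wedge 1)=\Phi_i(t)$ to clean up. The only cosmetic point is that the lower scaling index of $\Phi_1\Phi_2$ is $\beta_1+\beta_2-\eps$ for arbitrary $\eps>0$ rather than exactly $\beta_1+\beta_2$ (as the paper writes it), but since the hypothesis $p<\alpha+(\beta_1+\beta_2)/2$ is strict this makes no difference.
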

\begin{remark}
Let $\beta_1^*$ be the  upper Matuszewska index of $\Phi_1$, namely,
	\begin{equation*}
		\beta_1^*:=\inf\left \{\beta:\exists \, a>0 \,\mbox{ s.\ t. } \Phi_1(r)/\Phi_1(s)\le a(r/s)^\beta \,
		 \mbox{ for } \,0<s\le r\le 1 \right\},
	\end{equation*}
and $\beta_2^*$ be the upper Matuszewska index of $\Phi_2$. Then  \eqref{e:Green-special-2} continues to hold true if   
$\alpha + (\beta^*_1+\beta^*_2)/2<p<\alpha+\beta_1$.
\end{remark}

Suppose that 
	\begin{align}\label{e:Green-special-case}
		\Phi_1(u)=(u \wedge 1)^{\beta_1}\ell_1(u), \quad\;\; \Phi_2(u)=(u \wedge 1)^{\beta_2}\ell_2(u)
	\end{align}
and $p=\alpha + (\beta_1+\beta_2)/2$ (so $2\alpha-2p-1 = -\beta_1-\beta_2-1$), where $\ell_1$ and $\ell_2$ are slowly varying at zero. Here we note that since $\Phi_1$ and $\Phi_2$ are assumed to be almost increasing, $\ell_1$ (resp. $\ell_2$) should be almost increasing if $\beta_1=0$ (resp. $\beta_2=0$). Then $\Upsilon(t) \asymp  \mathfrak L(t)$ where 
	\begin{align}\label{e:def-of-Theta0}
		\mathfrak L(t) =1+\left(\int_{t}^1 \frac{\ell_1(u)\ell_2(u)}{u}du \right)_+.
	\end{align} 
Consequently, we obtain the next corollary from Theorem \ref{t:Green}.

\begin{corollary}\label{c:Greenc}
Under the setting of Theorem \ref{t:Green}, suppose also that \eqref{e:Green-special-case} holds with $\ell_1$ and $\ell_2$ that are slowly varying at zero, and $p=\alpha + (\beta_1+\beta_2)/2$. Then  for all $x,y \in D$, 
	\begin{align*}
		G(x, y)& \asymp  \left(\frac{\delta_D(x)}{|x-y|} \wedge 1\right)^{p} \left(\frac{\delta_D(y)}{|x-y|} \wedge 1\right)^{p} \\
			&\qquad \times \mathfrak L\left(\frac{\delta_D(x)\vee \delta_D(y)}{|x-y|}\right)\frac1{|x-y|^{d-\alpha}}
	\end{align*}
where $\mathfrak L$ is defined in \eqref{e:def-of-Theta0}.
\end{corollary}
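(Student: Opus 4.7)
The plan is to derive this from Theorem \ref{t:Green} by showing that, under the stated hypotheses, the auxiliary function $\Upsilon$ defined in \eqref{e:def-of-Theta} is comparable to $\mathfrak{L}$ defined in \eqref{e:def-of-Theta0}. Once this is established, substituting $\Upsilon \asymp \mathfrak{L}$ into \eqref{e:Green-full} yields the claimed estimate, since $\Phi_1, \Phi_2$ and $\ell$ otherwise enter only through $\Upsilon$ in that theorem.

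The key observation is an algebraic cancellation at the critical exponent. Since $p = \alpha + (\beta_1+\beta_2)/2$, we have $2\alpha - 2p - 1 = -(\beta_1+\beta_2) - 1$. Substituting $\Phi_1(u) = (u\wedge 1)^{\beta_1}\ell_1(u)$ and $\Phi_2(u) = (u\wedge 1)^{\beta_2}\ell_2(u)$ into \eqref{e:def-of-Theta} gives, for $u \in [t\wedge 1, 1]$, an integrand equal to $u^{-1}\ell_1(u)\ell_2(u)$, while for $u \in [1, 2]$ the integrand reduces to the bounded function $u^{-(\beta_1+\beta_2)-1}$. Therefore
\begin{equation*}
\Upsilon(t) = \int_{t\wedge 1}^{1} \frac{\ell_1(u)\ell_2(u)}{u}\, du + \int_1^2 u^{-(\beta_1+\beta_2)-1}\, du,
\end{equation*}
where the first integral is interpreted as $0$ when $t\ge 1$. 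The second integral is a positive finite constant $c_0 = c_0(\beta_1,\beta_2)$.

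From here it is straightforward to compare with $\mathfrak{L}(t)$. When $t \ge 1$, the integral defining $\mathfrak{L}$ has range $[t,1] \subset [1,1]$, so its positive part is $0$ and $\mathfrak{L}(t) = 1$; also $\Upsilon(t) = c_0$, giving $\Upsilon(t) \asymp \mathfrak{L}(t)$. When $t < 1$, since $\ell_1, \ell_2$ are positive,
\begin{equation*}
\Upsilon(t) = \int_t^1 \frac{\ell_1(u)\ell_2(u)}{u}\, du + c_0 \asymp 1 + \int_t^1 \frac{\ell_1(u)\ell_2(u)}{u}\, du = \mathfrak{L}(t),
\end{equation*}
with comparison constants depending only on $c_0$. (Here we use that the integral is non-negative, so the $(\cdot)_+$ in the definition of $\mathfrak{L}$ plays no role in this range.) This gives $\Upsilon \asymp \mathfrak{L}$ on $(0,\infty)$.

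There is no serious obstacle: the proposition is essentially an explicit evaluation of the integral defining $\Upsilon$ at the critical exponent, coupled with an invocation of Theorem \ref{t:Green}. The only points requiring a little care are (i) handling the range $t \ge 1$ separately so that the $(\cdot)_+$ in the definition of $\mathfrak{L}$ and the vanishing first integral in the decomposition of $\Upsilon$ are treated consistently, and (ii) noting that slow variation of $\ell_1, \ell_2$ is not actually needed for the equivalence $\Upsilon \asymp \mathfrak{L}$ itself — the scaling bounds \eqref{e:scale1new}, \eqref{e:scale2}, which hold automatically for the $\Phi_i$ of the assumed form provided $\ell_i$ are slowly varying (ensuring the required Matuszewska indices match $\beta_1,\beta_2$), enter only through the applicability of Theorem \ref{t:Green}.
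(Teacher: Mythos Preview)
Your proposal is correct and matches the paper's approach: the paper simply asserts $\Upsilon(t)\asymp\mathfrak L(t)$ in the paragraph preceding the corollary, and you have supplied the (straightforward) verification of this equivalence by direct substitution and splitting the integral at $u=1$. Your side remark that slow variation is needed only to ensure the scaling hypotheses on $\Phi_1,\Phi_2$ (hence the applicability of Theorem~\ref{t:Green}), not for the algebraic identity $\Upsilon\asymp\mathfrak L$ itself, is also accurate.
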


The following lemma will play an important role in obtaining the sharp upper estimates of the Green function.

\begin{lemma}\label{l:key-estimate}
Let $r\in (0, \wh{R}/8]$, $x, y\in D$ with $\delta_D(x) \vee \delta_D(y) \le r/2$,  and $Q_x, Q_y \in \partial D$ be such 
that $|x-Q_x|=\delta_D(x)$ and $|y-Q_y|=\delta_D(y)$.	There exists $C>0$ independent of $r$, $x$ and $y$ such that
	\begin{align*}
		&\int_{U^{Q_x}(r)}dw\int_{U^{Q_y}(r)}dz\left(  \frac{\delta_D(x)\wedge \delta_D(w)}{|x-w|} \wedge 1\right)^p	
			\left(  \frac{\delta_D(y)\wedge \delta_D(z)}{|y-z|} \wedge 1\right)^p |x-w|^{\alpha-d}\,\nn\\
		&\qquad \quad  \;\; \times   |y-z|^{\alpha-d} \,
			\Phi_1\bigg(\frac{\delta_D(w) \wedge \delta_D(z)}{r}\bigg)\Phi_2\bigg(\frac{\delta_D(w) \vee \delta_D(z)}{r}\bigg)
				\ell\bigg(\frac{\delta_D(w)\wedge \delta_D(z)}{\delta_D(w)\vee \delta_D(z)}\bigg)\nn\\
		& \le  C r^{2\alpha}\left(\frac{\delta_D(x)\wedge \delta_D(y)}{r}\right)^p 
			\left(\frac{\delta_D(x)\vee \delta_D(y)}{r}\right)^p \Upsilon\left(\frac{\delta_D(x)\vee \delta_D(y)}{r}\right).
	\end{align*}
\end{lemma}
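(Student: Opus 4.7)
The strategy is to reduce the double integral to one that Lemma \ref{l:key-curved} can handle layer by layer. Without loss of generality assume $u := \delta_D(x) \le v := \delta_D(y) \le r/2$. As a first step, I would drop the $\delta_D(w)$ and $\delta_D(z)$ inside the curly-brace factors using the pointwise bounds $\delta_D(x)\wedge \delta_D(w)\le \delta_D(x)=u$ and $\delta_D(y)\wedge \delta_D(z)\le \delta_D(y)=v$. This reduces the left-hand side to an integral whose $w$- and $z$-dependences decouple except through the kernel factor $K(w,z):=\Phi_1\Phi_2\ell(\cdots)$.

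Next I would decompose the integration regions into vertical dyadic layers $A_i := U^{Q_x}(r, 2^{-i}r)\setminus U^{Q_x}(r, 2^{-i-1}r)$ and $B_j := U^{Q_y}(r, 2^{-j}r)\setminus U^{Q_y}(r, 2^{-j-1}r)$ for $i,j\ge 0$; by \eqref{e:U-rho-C11-2} we have $\delta_D(w)\asymp \rho_D(w)\asymp 2^{-i}r$ on $A_i$ and similarly for $B_j$. By the symmetry $w\leftrightarrow z$, I restrict attention to the portion $\rho_D(w)\le \rho_D(z)$, i.e. $i\ge j$. On $A_i\times B_j$ the factor $K(w,z)$ is essentially constant, and by \eqref{e:scale1new}--\eqref{e:scale3} it is bounded by $C\,\Phi_1(2^{-i})\Phi_2(2^{-j})\ell(2^{-(i-j)})$.

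The inner integral $\int_{A_i}(u/|x-w|\wedge 1)^p|x-w|^{\alpha-d}dw$ equals $\sI^{p,0}(r,2^{-i}r)-\sI^{p,0}(r,2^{-i-1}r)$ in the notation of Lemma \ref{l:key-curved} (with $\Phi\equiv 1$, $\gamma=0$, $q=p>(\alpha-1)_+$). For $2^{-i-1}r\ge 2u$ I apply Lemma \ref{l:key-curved}(i) to obtain a bound of the form $C r^\alpha (u/r)^p\cdot 2^{-i(\alpha-p)}$; for $2^{-i}r\le 2u$ I apply Lemma \ref{l:key-curved}(ii) to obtain $C r^\alpha(u/r)^\alpha 2^{-i}$, which absorbs into the same form. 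The analogous bounds hold for $\int_{B_j}F_y(z)dz$ with $v$ replacing $u$. Inserting everything, the left-hand side is controlled by
\begin{equation*}
C\,r^{2\alpha}\Big(\frac{u}{r}\Big)^p\Big(\frac{v}{r}\Big)^p \sum_{i\ge j\ge 0}\Phi_1(2^{-i})\Phi_2(2^{-j})\ell(2^{-(i-j)})\cdot 2^{-i(\alpha-p)}2^{-j(\alpha-p)}
\end{equation*}
(with obvious modifications in the regimes $2^{-i}r\lesssim u$ or $2^{-j}r\lesssim v$).

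The final step is to recognize this double sum as (a constant multiple of) the target quantity. Changing indices to $k=i-j\ge 0$ and using \eqref{e:scale3}, the sum over $k$ of $\ell(2^{-k})2^{-k(\alpha-p)}$ is finite because $\ell$ has two-sided weak-scaling; what remains is a geometric-type sum in $j$ that reproduces (up to constants) the integral $\int_{v/r}^{1}s^{2\alpha-2p-1}\Phi_1(s)\Phi_2(s)\,ds$, which is exactly $\Upsilon(v/r)$ modulo the harmless additive constant in \eqref{e:lower-bound-Theta-const}. The main obstacle will be executing this last summation cleanly in all parameter regimes: whether $p$ lies below, at, or above $\alpha$, and how $\Phi_1,\Phi_2$ balance against the $2^{-j(\alpha-p)}$ factor, change which of the three pieces of Lemma \ref{l:key-curved} dominates, so a careful case split (and the two-sided weak scaling of $\ell$) is needed to unify the estimates into the single bound involving $\Upsilon(v/r)$.
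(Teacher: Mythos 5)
Your dyadic–shell decomposition is a genuinely different route from the paper's: the paper does \emph{not} decompose $U^{Q_x}(r)$ and $U^{Q_y}(r)$ into layers, but instead first absorbs the $\ell$ factor into modified functions $\wt\Phi_1(t)=\Phi_1(t)(t\wedge 1)^{-\varepsilon}$, $\wt\Phi_2(t)=\Phi_2(t)(t\wedge 1)^{\varepsilon}$ (choosing $\varepsilon$ small so $p<\alpha+\lb_1-\varepsilon\wedge\lb_2-\varepsilon$), then integrates the inner variable in one shot using Lemma~\ref{l:key-curved}(iii) (getting out a factor $\rho_D(w)\,\Phi_3(\rho_D(w)/r)$), and only then splits the outer integral at $\rho_D(w)\lessgtr 2\rho_D(y)$ and applies Lemma~\ref{l:key-curved}(i)--(ii). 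Your approach applies Lemma~\ref{l:key-curved}(i)--(ii) directly at each dyadic scale and pushes the $\ell$-handling into the $k$-sum. Both are viable in principle.

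However, the pivotal claim in your sketch is wrong as stated. You write that ``the sum over $k$ of $\ell(2^{-k})2^{-k(\alpha-p)}$ is finite because $\ell$ has two-sided weak-scaling.'' When $p\ge\alpha$ --- which is allowed here, since $p$ ranges up to $\alpha+\beta_1$ --- the factor $2^{-k(\alpha-p)}=2^{k(p-\alpha)}$ grows geometrically, while \eqref{e:scale3} only controls $\ell(2^{-k})$ up to $2^{k(\varepsilon\wedge\beta_1)}$, so $\sum_k\ell(2^{-k})2^{-k(\alpha-p)}$ diverges. The convergence of the inner sum actually requires keeping the ratio $\Phi_1(2^{-j-k})/\Phi_1(2^{-j})\lesssim 2^{-k(\beta_1-\varepsilon\wedge\beta_1)}$, which supplies the decay $2^{-k(\alpha+\beta_1-p-2(\varepsilon\wedge\beta_1))}$ needed for summability --- precisely the mechanism the paper implements uniformly via the $\wt\Phi_1,\wt\Phi_2$ rebalancing. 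By factoring the double sum into a $j$-sum times an apparently $j$-independent $k$-sum, you have separated $\Phi_1$ from the variable $k$ that it must control. A related issue is that the cutoff at $j_0\asymp\log_2(r/\delta_D(y))$ that turns the $j$-sum into $\Upsilon(\delta_D(y)/r)$ rather than a possibly divergent $\Upsilon(0+)$ (which \emph{does} diverge in the critical case $p=\alpha+(\beta_1+\beta_2)/2$) is not automatic from ``absorbing into the same form''; it needs the sharper endpoint estimate from Lemma~\ref{l:key-curved}(ii) for the small layers, summed separately, as the paper does in its $I_{1,1}$ and $I_{2,1}$ terms.
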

\begin{proof} By symmetry, without loss of generality, we can assume that $\delta_D(x)\le \delta_D(y)$. For convenience, 
we  use $\rho_D(w)$ to denote  $\rho_D^{Q_x}(w)$ for $w\in U^{Q_x}(r)$,
and use $\rho_D(z)$ to denote $\rho_D^{Q_y}(z)$ for $z\in U^{Q_y}(r)$. 
Choose $\lb_1\in [0, \beta_1]$, $\lb_2\in [0, \beta_1]$ and $\varepsilon \in (0,1/2)$ such that 
$p<\alpha+\lb_1-\varepsilon\wedge \lb_2-\varepsilon$ and the first inequalities in \eqref{e:scale1new}-\eqref{e:scale2} hold.
Define
	$$
		\wt{\Phi}_1(t)=\Phi_1(t)(t\wedge 1)^{-\varepsilon} \quad \text{and} 
		\quad \wt{\Phi}_2(t)=\Phi_2(t)(t \wedge 1)^{\varepsilon}, \quad t>0.
	$$ 
We also define 
	$$
		\Phi_3(t)={\Phi}_1(t){\Phi}_2(t), \quad t>0.
	$$
Clearly, $\Phi_3(t)=\wt{\Phi}_1(t)\wt{\Phi}_2(t)$ for all $t>0$. By  \eqref{e:U-rho-C11-2}, \eqref{e:scale3},  \eqref{e:scale1new} and \eqref{e:scale2}, we see that for all $w \in U^{Q_x}(r)$ and $z \in U^{Q_y}(r)$,
	\begin{align*}
		&\Phi_1\left(\frac{\delta_D(w) \wedge\delta_D(z) }{r}\right)\Phi_2\left(\frac{\delta_D(w)  \vee \delta_D(z) }{r}\right)
			\ell\left(\frac{\delta_D(w) \wedge\delta_D(z)}{\delta_D(w)  \vee \delta_D(z)}\right)\nn\\
		&\le c_1\Phi_1\left(\frac{\rho_D(w) \wedge\rho_D(z) }{r}\right)\Phi_2\left(\frac{\rho_D(w)  \vee \rho_D(z) }{r}
			\right)\ell\left(\frac{\rho_D(w) \wedge\rho_D(z)}{\rho_D(w)  \vee \rho_D(z)}\right) \nn\\
		&\le c_2 \Phi_1\left(\frac{\rho_D(w) \wedge\rho_D(z) }{r}\right)\Phi_2\left(\frac{\rho_D(w)  \vee \rho_D(z) }{r}\right) 
			\ell(1)\left(\frac{\rho_D(w) \wedge\rho_D(z)}{\rho_D(w)  \vee \rho_D(z)}\right)^{-\varepsilon} \nn\\
		& = c_2\wt{\Phi}_1\left(\frac{\rho_D(w) \wedge\rho_D(z) }{r}\right)\wt{\Phi}_2\left(\frac{\rho_D(w)  
			\vee \rho_D(z) }{r}\right).
	\end{align*}
Hence, we have
	\begin{align*}
		&\int_{U^{Q_x}(r)}dw\int_{U^{Q_y}(r)}dz\left(  \frac{\delta_D(x)\wedge \delta_D(w)}{|x-w|} \wedge 1\right)^p	
			\left(  \frac{\delta_D(y)\wedge \delta_D(z)}{|y-z|} \wedge 1\right)^p |x-w|^{\alpha-d}\nn\\
		&\qquad \quad  \times  |y-z|^{\alpha-d} \,
			\Phi_1\bigg(\frac{\delta_D(w) \wedge \delta_D(z)}{r}\bigg)\Phi_2\bigg(\frac{\delta_D(w) \vee \delta_D(z)}{r}\bigg)
				\ell\bigg(\frac{\delta_D(w)\wedge \delta_D(z)}{\delta_D(w)\vee \delta_D(z)}\bigg)\nn\\
		&\le  c_2 \bigg(\int_{U^{Q_x}(r)} dw \int_{U^{Q_y}(r), \, \rho_D(z)<\rho_D(w)}dz + \int_{U^{Q_x}(r)} dw \int_{U^{Q_y}(r), 
			\, 	\rho_D(z)\ge \rho_D(w)}dz \bigg)\\
 		&\quad\; \left(\frac{\delta_D(x)}{|x-w|} \wedge 1\right)^p 	
 			\left(  \frac{\delta_D(y)}{|y-z|} \wedge 1\right)^p\frac{\wt{\Phi}_1\big((\rho_D(w)\wedge 
 				\rho_D(z))/r\big)\wt{\Phi}_2\big((\rho_D(w)\vee \rho_D(z))/r\big)}{|x-w|^{d-\alpha}\,|y-z|^{d-\alpha}}  \\
		&\le c_2(I_1+I_2),
	\end{align*}
where
	\begin{align*}
		I_1&:=  \int_{U^{Q_x}(r)}   dw	\left(  \frac{\delta_D(x)}{|x-w|} \wedge 1\right)^p 
			\frac{\wt{\Phi}_2(\rho_D(w)/r) }{|x-w|^{d-\alpha}}\\
			&\qquad  \times 	\int_{U^{Q_y}(r,\rho_D(w))} 	
				\left(  \frac{\delta_D(y)}{|y-z|} \wedge 1\right)^p \frac{\wt{\Phi}_1(\rho_D(z)/r)}{|y-z|^{d-\alpha}}  dz
	\end{align*}
and
	\begin{align*}
		I_2&:=\int_{U^{Q_y}(r)} dz	\left(  \frac{\delta_D(y)}{|y-z|} \wedge 1\right)^p
			\frac{\wt{\Phi}_2(\rho_D(z)/r)}{|y-z|^{d-\alpha}}  \\
		&\qquad  \times	\int_{U^{Q_x}(r, \rho_D(z))}\left(  \frac{\delta_D(x)}{|x-w|} \wedge 1\right)^p 
			\frac{\wt{\Phi}_1(\rho_D(w)/r)}{|x-w|^{d-\alpha}}  dw.
	\end{align*}
To estimate $I_1$ and $I_2$, we use Lemma \ref{l:key-curved} several times. Note that by \eqref{e:scale1new} and  \eqref{e:scale2},  
$\wt \Phi_1$ satisfies \eqref{e:scale4} with $\lb=\wt \lb_1:=\lb_1- \eps \wedge \lb_2-\eps$ and   $\Phi_3$ satisfies \eqref{e:scale4} with $\lb= \lb_3:=\lb_1+\lb_2- \eps \wedge \lb_2$. Clearly, $\lb_3 \ge \wt \lb_1$. By the choice of $\eps$, we see that
	$$
		\wt \lb_1 >-2\eps >-1 \quad \text{and} \quad p<\alpha+\wt \lb_1.
	$$

We first estimate $I_1$. By applying Lemma  \ref{l:key-curved}(iii) with $\Phi=\wt \Phi_1$, $\gamma=0$ and $q=p$, we get that
	\begin{align*}
		I_1&\le c \delta_D(y)^{\alpha-1} 	\int_{U^{Q_x}(r)}   
			\bigg(  \frac{\delta_D(x)}{|x-w|} \wedge 1\bigg)^p  \rho_D(w)  
			\bigg(  \frac{\delta_D(y)}{\rho_D(w)} \wedge 1\bigg)^{p-\alpha+1}  \frac{\Phi_3 (\rho_D(w)/r) }{|x-w|^{d-\alpha}} dw \\
			&\le c \delta_D(y)^{\alpha-1}	\int_{U^{Q_x}(r,2\rho_D(y))}  
			\left(  \frac{\delta_D(x)}{|x-w|} \wedge 1\right)^p \frac{ \rho_D(w)\,\Phi_3 (\rho_D(w)/r) }{|x-w|^{d-\alpha}} dw \\
			&\quad + c \delta_D(y)^p	\int_{U^{Q_x}(r) \setminus U^{Q_x}(r,2\rho_D(y))}  
				\left(  \frac{\delta_D(x)}{|x-w|} \wedge 1\right)^p 
				\frac{ \rho_D(w)^{\alpha-p}\,\Phi_3 (\rho_D(w)/r)}{|x-w|^{d-\alpha}} dw \\
			&=:c(I_{1,1}+I_{1,2}).
	\end{align*}
Applying Lemma  \ref{l:key-curved}(iii) with $\Phi=\Phi_3$, $\gamma=1$ and $q=p$, and using the scaling property of $\Phi_3$  and \eqref{e:lower-bound-Theta}, since $\rho_D(y)=\delta_D(y)$, we obtain
	\begin{align*}
		I_{1,1} &\le c \delta_D(x)^{\alpha-1} \delta_D(y)^{\alpha-1} (2\rho_D(y))^2	
			\left(  \frac{\delta_D(x)}{2\rho_D(y)} \right)^{p-\alpha+1}  \Phi_3(2\rho_D(y)/r) \\
			&\le c  \delta_D(x)^{p}\delta_D(y)^{2\alpha-p} \,   \Phi_3(\rho_D(y)/r) \\
			&\le c r^{2\alpha-2p}\delta_D(x)^p\delta_D(y)^p \, \Upsilon( \delta_D(y)/r).
	\end{align*}
For $I_{1,2}$, since $2\rho_D(y) = 2\delta_D(y) \ge \delta_D(x)$, applying Lemma \ref{l:key-curved}(i) 
with $\Phi=\Phi_3$, $\gamma=\alpha-p$ and $q=p$, we obtain
	\begin{align*}
		I_{1,2}& \le cr^{2\alpha-2p}\delta_D(y)^p \delta_D(x)^p \int_{2\rho_D(y)/r}^1 s^{2\alpha-2p-1}\Phi_3(s)ds\\
			& \le cr^{2\alpha-2p}\delta_D(y)^p \delta_D(x)^p \,\Upsilon(\delta_D(y)/r).
	\end{align*}

For $I_2$, by applying Lemma  \ref{l:key-curved}(iii) with $\Phi=\wt \Phi_1$, $\gamma=0$ and $q=p$, we see that
	\begin{align*}
		I_2 &\le 	c \delta_D(x)^{\alpha-1} \int_{U^{Q_y}(r)} 
				\bigg(  \frac{\delta_D(y)}{|y-z|} \wedge 1\bigg)^p \rho_D(z) 
				\bigg(\frac{\delta_D(x)}{\rho_D(z)}  \wedge 1\bigg)^{p-\alpha+1}\frac{\Phi_3(\rho_D(z)/r)}{|y-z|^{d-\alpha}}  dz\\
			&\le c \delta_D(x)^{p} \int_{ U^{Q_y}(r, 2\rho_D(y))} 
				\left(  \frac{\delta_D(y)}{|y-z|} \wedge 1\right)^p 
				\frac{\rho_D(z)^{\alpha-p}\,\Phi_3(\rho_D(z)/r)}{|y-z|^{d-\alpha}}  dz\\
			&\quad + c \delta_D(x)^{p} \int_{U^{Q_y}(r) \setminus U^{Q_y}(r, 2\rho_D(y))} 
				\left(  \frac{\delta_D(y)}{|y-z|} \wedge 1\right)^p 
				\frac{\rho_D(z)^{\alpha-p}\,\Phi_3(\rho_D(z)/r)}{|y-z|^{d-\alpha}}  dz\\
			&=:c(I_{2,1}+I_{2,2}).
	\end{align*}
Applying Lemma \ref{l:key-curved}(ii) with $\Phi=\Phi_3$, $\gamma=\alpha-p$ and $q=p$,  and using the scaling propery of $\Phi_3$ and  \eqref{e:lower-bound-Theta},  we obtain
	\begin{align*}
		I_{2,1} &\le c \delta_D(x)^p\delta_D(y)^{\alpha-1} (2\rho_D(y))^{\alpha-p+1} \Phi_3(2\rho_D(y)/r) \\
			&\le c  \delta_D(x)^{p}\delta_D(y)^{2\alpha-p} \,   \Phi_3(\rho_D(y)/r)\\
			& \le c r^{2\alpha-2p}\delta_D(x)^p\delta_D(y)^p \, \Upsilon( \delta_D(y)/r).
	\end{align*}
Moreover, applying Lemma \ref{l:key-curved}(i) with $\Phi=\Phi_3$, $\gamma=\alpha-p$ and $q=p$,  we get that
	\begin{align*}
		I_{2,2} &\le c r^{2\alpha-2p} \delta_D(x)^p\delta_D(y)^{p} \int_{2\rho_D(y)/r}^1 s^{2\alpha-2p-1}\Phi_3(s) ds \\
			&\le cr^{2\alpha-2p}\delta_D(y)^p \delta_D(x)^p \,\Upsilon(\delta_D(y)/r).
	\end{align*}
The proof is complete. 
\end{proof}

\textsc{Proof of Theorem \ref{t:Green}.} 
Let $x,y \in D$ and set $r:=\wh R\,|x-y|/(30+24\, \text{diam}(D))$.    Note that $ r<   (|x-y|/30)  \wedge (\wh R/24)$. Without loss of generality, we assume that  $\delta_D(x)\le \delta_D(y)$.  

\smallskip

{\bf Upper bound:}
 Recall that $\epsilon_2 \in (0,1/12)$ 
 is the constant in Theorem \ref{t:Dynkin-improve}. 
 If $\delta_D(y) \ge 2^{-4}\epsilon_2r$, then by using \eqref{e:lower-bound-Theta-const}, we get the result from Proposition \ref{p:prelub} by taking $R_0>2\mathrm{diam} (D)$.

Suppose now that $\delta_D(y) <2^{-4}\epsilon_2 r$. 
Denote by $Q_x, Q_y \in \partial D$ the points satisfying $\delta_D(x)=|x-Q_x|$ and $\delta_D(y)=|y-Q_y|$. 
Set $U:=U^{Q_x}(\epsilon_2 r)$ and $V:= U^{Q_y}(\epsilon_2 r)$.  By \eqref{e:U-rho-C11-1}, we see that  
$U \subset B_D(Q_x, 2\epsilon_2 r) \subset B_D(x, r) \subset D \setminus B(y, 29r)$. Hence,  $G(\cdot, y)$ is regular harmonic in $U$. Thus, we get
	\begin{align*}
		&G(x, y)=\E_x\big[G(Y_{\tau_{U}}, y); Y_{\tau_{U}}\in V\big]+ \E_x\big[G(Y_{\tau_{U}}, y); 
			Y_{\tau_{U}} \in D\setminus V\big]=:I_1+I_2.
	\end{align*}

Observe that $|w-z| \asymp r$ for $w \in U$ and $z\in V$. Thus, by \hyperlink{B4-c}{{\bf (B4-c)}} and the scaling properties 
of $\Phi_1,\Phi_2$ and $\ell$, we see that  for $w \in U$ and $z \in V$,
	\begin{align}\label{e:dist-U-V}
		\sB(w,z)  \asymp \Phi_1\left(\frac{\delta_D(w) \wedge \delta_D(z)}{r}\right)
			\Phi_2\left(\frac{\delta_D(w) \vee \delta_D(z)}{r}\right) 
			\ell\left(\frac{\delta_D(w) \wedge \delta_D(z)}{\delta_D(w) \vee \delta_D(z)}\right).
	\end{align} 
By using the L\'evy system formula \eqref{e:Levysystem-Y-kappa} in the equality, \eqref{e:dist-U-V} and Proposition \ref{p:prelub}  (with $R_0=2\,\text{diam}(D)$) in the first inequality, Lemma \ref{l:key-estimate} in the second, 
and  \eqref{e:scaling-Theta} (with $a=\epsilon_2 \wh R/(30+24\, \text{diam}(D))$) in the last, we obtain
	\begin{align*}
		I_1	& = \E_x \left[ \int_0^{\tau_U}  \int_{V} \frac{\sB( Y_s, z)\,G( z, y) }{| Y_s-z|^{d+\alpha}} dz  \,ds  \right] \\
			& =\int_U G^U (x,w) \int_V \frac{\sB(w,z)}{|w-z|^{d+\alpha}} \,G(z,y)dz\,dw\\
			& \le \frac{c_1}{r^{d+\alpha}}\int_{U}dw\int_{V}dz
				\left(  \frac{\delta_D(x)\wedge \delta_D(w)}{|x-w|} \wedge 1\right)^p
				\left(  \frac{\delta_D(z) \wedge \delta_D(y)}{|y-z|} \wedge 1\right)^p  |x-w|^{\alpha-d} \\
			&\qquad  \times  |y-z|^{\alpha-d} \, \Phi_1\left(\frac{\delta_D(w) \wedge \delta_D(z)}{r}\right)
				\Phi_2\left(\frac{\delta_D(w) \vee \delta_D(z)}{r}\right) 		
				\ell\left(\frac{\delta_D(w) \wedge \delta_D(z)}{\delta_D(w) \vee \delta_D(z)}\right) \\
			&\le  \frac{c_2}{r^{d+\alpha}} (\epsilon_2 r)^{2\alpha}\left(\frac{\delta_D(x)\wedge \delta_D(y)}{\epsilon_2 r}\right)^p 
				\left(\frac{\delta_D(x)\vee \delta_D(y)}{\epsilon_2 r}\right)^p 
				\Upsilon\left(\frac{\delta_D(x)\vee \delta_D(y)}{\epsilon_2 r}\right)\\
			& \le  \frac{c_3}{r^{d-\alpha}} \left(\frac{\delta_D(x)\wedge \delta_D(y)}{|x-y|}\right)^p 
				\left(\frac{\delta_D(x)\vee \delta_D(y)}{|x-y|}\right)^p 
				\Upsilon\left(\frac{\delta_D(x)\vee \delta_D(y)}{|x-y|}\right).
	\end{align*}

For $I_2$, we note that $|y-z| \ge \epsilon_2r/2$ for all $z \in D\setminus  V$  by \eqref{e:U-rho-C11-1}.   Thus, by Proposition \ref{p:prelub},  it holds that   for all $z \in D \setminus  V$,
	$$
		G(z, y) \le c_4 \left(\frac{\delta_D(y)}{|y-z|} \wedge 1\right)^p \frac{1}{|y-z|^{d-\alpha}}
			 \le c_5 (\delta_D(y)/r)^p \,r^{-d+\alpha}.
	$$
Using this in the first inequality below and  Theorem \ref{t:Dynkin-improve} in the second,  we obtain
	\begin{align*}
		& I_2 \le c_5 (\delta_D(y)/r)^p \, r^{-d+\alpha} \, \P_x(Y_{\tau_{U}}  \in  D ) 
			\le c_6(\delta_D(x)/r)^p  (\delta_D(y)/r)^p \,r^{-d+\alpha}.
	\end{align*}
By \eqref{e:lower-bound-Theta-const}, we deduce that  the desired upper bound holds.

\smallskip

{\bf Lower bound:}   If $\delta_D(y) \ge r$, then  $\Upsilon(\delta_D(y)/|x-y|) \le \Upsilon(\wh R/(30+24\, \text{diam}(D)))$ and the result follows from Theorem \ref{t:GB} (with $R_0= 20 \,\mathrm{diam}(D)$).
Hence, we assume $\delta_D(x)\le \delta_D(y)<r$.
Again we denote by $Q_x, Q_y \in \partial D$ the points satisfying $\delta_D(x)=|x-Q_x|$ and $\delta_D(y)=|y-Q_y|$. 

Let $n_0 \ge 1$ be such that $ 2^{-n_0}r \le \delta_D(y) < 2^{-n_0+1}r$.  For $1\le n \le n_0$, we define 
	\begin{align*}
		V_x(n) &= \left\{ w = (\wt w, w_d) \text{ in CS$_{Q_x}$} : |\wt w|<2^{-n}r \le w_d-\delta_D(x) <2^{-n+1}r\right\},\\
		V_y(n) &=\left\{z = (\wt z, z_d) \text{ in CS$_{Q_y}$} : |\wt z|<2^{-n}r \le z_d-\delta_D(y) <2^{-n+1}r\right\}.
	\end{align*}
Then  there exists $c_1>0$ such that for all $1 \le n \le n_0$,
	\begin{align}\label{e:green-lower-1}
		m_d(V_x(n))\wedge  m_d(V_y(n)) \ge  c_1(2^{-n}r)^d.
	\end{align}
Moreover, we see that for all $1\le n \le n_0$,  $w \in V_x(n)$ and $z \in V_y(n)$,
	\begin{align}\label{e:green-lower-2}
		2^{-n}r \le |w-x| < 2^{-n+2}r, \quad \;\;2^{-n}r \le |z-y| < 2^{-n+2}r
	\end{align}
so that
	\begin{align}\label{e:green-lower-2+}
		|w-z| \le |x-y|  + 2^{-n+3}r \le ( (30+24\, \text{diam}(D))/\wh R + 4)\, r.
	\end{align}
Let $1\le n \le n_0$ and $w \in V_x(n)$. Then $	\delta_D(w) \le  |w-Q_x| \le \delta_D(x) + 2^{-n+2}r < 2^{-n+3}r$. On the other hand, by \eqref{e:U-rho-C11-2} and \eqref{e:Psi-bound},
	\begin{align*}
		\delta_D(w) \ge (2/ \sqrt 5)(w_d - (10r)^{-1}|\wt w|^2) \ge 2^{-n-1}r.
	\end{align*}
Therefore, by repeating the same argument for $1 \le n \le n_0$ and $z \in V_y(n)$, we get that
	\begin{align}  
		&2^{-n-1}r \le \delta_D(w) \wedge \delta_D(z)  \le \delta_D(w) \vee\delta_D(z)  \le 2^{-n+3}r.\label{e:green-lower-3}
	\end{align}
By \eqref{e:green-lower-2} and  \eqref{e:green-lower-3}, we get from Theorem \ref{t:GB} that for all  
$1\le n \le n_0$ and $w \in V_x(n)$,
	\begin{align}\label{e:green-lower-4}
		G^{B_D(x,20r)}(x,w) \ge c \bigg(\frac{\delta_D(x)}{|x-w|} \bigg)^p 
			\frac{1}{|x-w|^{d-\alpha}} \ge c (2^{-n}r)^{-d+\alpha-p}\, \delta_D(x)^p
	\end{align}
and for all   $1\le n \le n_0$ and  $z \in V_y(n)$,
	\begin{align}\label{e:green-lower-5}
		G^{B_D(y,20r)}(z,y) \ge c \bigg(\frac{\delta_D(y)}{|z-y|} \bigg)^p 
			\frac{1}{|z-y|^{d-\alpha}} \ge c (2^{-n}r)^{-d+\alpha-p}\, \delta_D(y)^p.
	\end{align}
Further, by \eqref{e:green-lower-2+}, \eqref{e:green-lower-3},  \hyperlink{B4-c}{{\bf (B4-c)}} and the  scaling properties 
of $\Phi_1,\Phi_2$ and $\ell$, we see  that for all  $1\le n \le n_0$, $w \in V_x(n)$ and $z \in V_y(n)$,
	\begin{align}\label{e:green-lower-6}
		\frac{\sB(w,z)}{|w-z|^{d+\alpha}} &\ge cr^{-d-\alpha} \Phi_1(2^{-n-1}) \,
			\Phi_2(2^{-n-1}) \,\ell(2^{-3})\ge cr^{-d-\alpha} \Phi_1(2^{-n+1}) \,\Phi_2(2^{-n+1}).
	\end{align}

Now using  the regular harmonicity of $G(\cdot, y)$ on $B_D(x,20r)$ in the first inequality below, the L\'evy system formula
\eqref{e:Levysystem-Y-kappa} in the second, \eqref{e:green-lower-5} and \eqref{e:green-lower-6} in the fourth, \eqref{e:green-lower-1} in the fifth, the scaling properties of $\Phi_1$ and $\Phi_2$ in the sixth and \eqref{e:scaling-Theta} in the last, we arrive at
	\begin{align*}
		&G(x, y)  \ge \E_x \left[ G( Y_{\tau_{B_D(x,20r)}},y ):Y_{\tau_{B_D(x,20r)}} \in \cup_{n=1}^{n_0} V_y(n)\right]\\
		&\ge \sum_{n=1}^{n_0} \int_{B_D(x,20r)} \int_{V_y(n)} G^{B_D(x,20r)}(x, w)\, \frac{\sB(w,z)}{|w-z|^{d+\alpha}} G(z,y) dz\,dw\\
		&\ge \sum_{n=1}^{n_0} \int_{V_x(n)} \int_{V_y(n)} G^{B_D(x,20r)}(x, w)\, 
			\frac{\sB(w,z)}{|w-z|^{d+\alpha}} G^{B_D(y,20r)}(z,y) dz\,dw\\
		&\ge \frac{c\delta_D(x)^p \delta_D(y)^p}{r^{d+\alpha}}\sum_{n=1}^{n_0} (2^{-n}r)^{2(-d+\alpha-p)} \,
			\Phi_1(2^{-n+1})\Phi_2(2^{-n+1}) \int_{W_x(n)}dz \int_{W_y(n)}  dw\\
		&\ge  \frac{c\delta_D(x)^p \delta_D(y)^p}{r^{d-\alpha+2p}}    
			\sum_{n=1}^{n_0} 2^{-2(\alpha-p)n} \Phi_1(2^{-n+1})\Phi_2(2^{-n+1}) \\
		& \ge  \frac{c\delta_D(x)^p \delta_D(y)^p}{r^{d-\alpha+2p}}    
			 \sum_{n=1}^{n_0} \int_{2^{-n+1}}^{2^{-n+2}} u^{2\alpha-2p-1}\Phi_1(u)\Phi_2(u) du\\
		& = cr^{-d+\alpha-2p} \delta_D(x)^p \delta_D(y)^p  \,\Upsilon( 2^{-n_0+1}) \\
		&\ge cr^{-d+\alpha-2p} \delta_D(x)^p \delta_D(y)^p \,\Upsilon( \delta_D(y)/r).
	\end{align*}
This finishes the proof. 
\qed

%%%%%%%%%%%%%%%%%%%%%%%%%%%%%%%%%%%%%%%%%%%%%%%%%%%%%%%%%%%%%%%%%%%%%%%%%%%%%%%%%%%%%%%%%%%%%%%%%%%%%%%%%%%%%%%%%%%%%%%%%%%%%%%%%%%%%%%
%%%%%%%%%%%%%%%%%%%%%%%%%%                             Examples                      %%%%%%%%%%%%%%%%%%%%%%%%%%%%%%%%%%%%%%%%%%%%%%%%%%
%%%%%%%%%%%%%%%%%%%%%%%%%%%%%%%%%%%%%%%%%%%%%%%%%%%%%%%%%%%%%%%%%%%%%%%%%%%%%%%%%%%%%%%%%%%%%%%%%%%%%%%%%%%%%%%%%%%%%%%%%%%%%%%%%%%%%%%

\section{Examples}\label{ch:examples}

This section is devoted to two types of examples. The main representatives of the first type are subordinate killed stable processes and their modifications. In Subsection \ref{s:subordinate-killed} we show that they satisfy all of the introduced assumptions. Note that these processes are defined through probabilistic transformations (killing and subordination), or, analytically, through their infinitesimal generators. This is different from the second type of examples where the processes are defined via their jump kernel 
$\sB^a(x,y)|x-y|^{-d-\alpha}$ with the function $\sB^a$ being equal to some function $a(x,y)$ multiplied by 
the quantity on the right-hand side of the display in assumption \hyperlink{(B4-c)}{{\bf (B4-c)}}, 
see \eqref{e:B-expression}. Such kernels are studied in Subsection \ref{s-example-general} where we give sufficient conditions on the function $a(x,y)$ so that all assumptions {\bf (B)} are satisfied. The last example of the subsection extends the setting of \cite{Gu07}.

We begin with a general lemma, inspired by the half-space 
setting in  \cite{KSV20},
that will be used several times in the section.

\begin{lemma}\label{l:ass-1}
	Let $K:\bH \times \bH \to [0,\infty)$ be  such that for all $x,y \in \bH$, $a>0$ and $\wt z \in \R^{d-1}$,  
	\begin{align}\label{e:ass-1-cond}
	 	K(x,y)=K(ax,ay)=K(x+(\wt z,0), y+(\wt z,0)).
	\end{align}
Define $F_K:\bH_{-1} \to [0,\infty)$ by 
	\begin{align*}
		F_K(z)=K(\e_d,\e_d+z).
	\end{align*}
Then the following statements hold.
 
\noindent (i) $K(x,y)=F_K((y-x)/x_d)$ for all $x,y \in \bH$.
 
\noindent (ii) If $K$ is also assumed to be symmetric in $x$ and $y$,  then 
	 \begin{align*}
 		F_K(z)=F_K(-z/(1+z_d)) \quad \text{for all} \;\, z \in \bH_{-1}.
 	\end{align*}
\end{lemma}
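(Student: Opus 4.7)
\medskip
\noindent\textbf{Proof plan.} The two parts are pure algebraic manipulations of the two invariance properties in \eqref{e:ass-1-cond}, so I will not aim for anything clever, only a careful bookkeeping of how the horizontal-translation and the dilation together move an arbitrary pair $(x,y)\in\bH\times\bH$ to the normalized pair $(\e_d,\e_d+(y-x)/x_d)$.

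For part (i), given $x=(\wt x,x_d)\in\bH$ and $y=(\wt y,y_d)\in\bH$, I will first apply the horizontal-translation invariance with $\wt z=-\wt x$, which replaces $(x,y)$ by $((\wt 0,x_d),(\wt y-\wt x,y_d))$ without changing the value of $K$. Then I will apply the scaling invariance with $a=1/x_d>0$, which replaces the pair by $((\wt 0,1),((\wt y-\wt x)/x_d,y_d/x_d))=(\e_d,\e_d+(y-x)/x_d)$. By the definition of $F_K$ this equals $F_K((y-x)/x_d)$. Note that one must verify that $(y-x)/x_d\in\bH_{-1}$, which is immediate since $y_d>0$ and $x_d>0$ give $(y_d-x_d)/x_d>-1$.

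For part (ii), I will combine (i) applied to $(x,y)$ and to $(y,x)$ with the symmetry assumption $K(x,y)=K(y,x)$ to obtain
\[
F_K\!\left(\frac{y-x}{x_d}\right)=F_K\!\left(\frac{x-y}{y_d}\right).
\]
Setting $z:=(y-x)/x_d\in\bH_{-1}$, so that $y=x+x_d z$ and in particular $y_d=x_d(1+z_d)$, the right-hand argument becomes
\[
\frac{x-y}{y_d}=\frac{-x_d z}{x_d(1+z_d)}=-\frac{z}{1+z_d},
\]
and the desired identity $F_K(z)=F_K(-z/(1+z_d))$ follows. To conclude, I will observe that as $x$ varies over $\bH$ and $y$ varies over $\bH$, the vector $z=(y-x)/x_d$ ranges over all of $\bH_{-1}$ (e.g.\ fix $x=\e_d$ and let $y$ run through $\bH$), so the identity indeed holds for every $z\in\bH_{-1}$.

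I do not foresee any real obstacle: the entire content is the two elementary changes of variable above, together with the routine observation that the map $(x,y)\mapsto (y-x)/x_d$ surjects onto $\bH_{-1}$. The only point one has to be careful with is making sure that $-z/(1+z_d)$ again lies in $\bH_{-1}$, which is a short check from $z_d>-1$.
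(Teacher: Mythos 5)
Your proof is correct and uses essentially the same algebraic manipulations as the paper. Part (i) is identical (translate by $-\wt x$, then dilate by $1/x_d$). For part (ii) you derive the identity as a corollary of (i) applied to both $(x,y)$ and $(y,x)$ together with symmetry, whereas the paper instead performs a direct four-step chain of translation, dilation, symmetry, translation starting from $F_K(-z/(1+z_d))$; the underlying facts are the same, your route is a slightly cleaner bookkeeping, and your final surjectivity remark (fix $x=\e_d$) correctly closes the argument.
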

\begin{proof} 
(i) Using \eqref{e:ass-1-cond}, we get that for all $x,y \in \bH$,
	\begin{align*}
		K(x,y)=K((\wt 0, x_d),(\wt y-\wt x, y_d))= K((\wt 0,x_d),(\wt 0,x_d)+ y-x)) = F_K((y-x)/x_d).
	\end{align*}

(ii) Using \eqref{e:ass-1-cond} and symmetry, we obtain that for any $z \in \bH_{-1}$,
	\begin{align*}	
		&F_K(-z/(1+z_d))=K(\e_d, (-\wt z/(1+z_d),1/(1+z_d))) = K((1+z_d)\e_d, (-\wt z,1))\\
		&=K((-\wt z,1),(1+z_d)\e_d) =K(\e_d,(\wt z, 1+z_d)) = F_K(z).
	\end{align*}
\end{proof}

Throughout the next two subsections, we let $D \subset \R^{d}$,  $d \ge 2$, be a $C^{1,1}$ open set with 
characteristics $(\wh R, \Lambda)$, assume that $\wh R\le 1 \wedge (1/(2\Lambda))$ without loss of generality, and set  $R:=\wh R/8$.

\subsection{Subordinate killed stable processes}\label{s:subordinate-killed}

Let $\gamma \in (0,2]$. In this subsection, we  assume that $D$ is either (1) bounded or (2) the domain above the graph of a bounded $C^{1,1}$ function in $\R^{d-1}$. When $\gamma=2$, we additionally assume that $D$ is connected. 

Let $Z^\gamma$ be an isotropic  $\gamma$-stable process in $\R^d$, that is, a rotationally symmetric L\'evy process with  L\'evy  exponent $|\xi|^\gamma$. 
Denote by $q^{\gamma}(t,|x-y|)$ the transition density  of $Z^{\gamma}$.
For a $C^{1,1}$ open set $U\subset \R^d$, denote by  $Z^{\gamma,U}$ the part process of $Z^\gamma$ killed upon exiting $U$.  Denote by $q^{\gamma,U}(t,x,y)$ the transition density  of $Z^{\gamma,U}$. We extend the domain of $q^{\gamma,U}$ to $(0,\infty) \times \R^d \times \R^d$ by letting $q^{\gamma,U}(t,x,y)=0$ if $x \in \R^d \setminus U$ or $y \in \R^d \setminus U$. 
	
\smallskip
	
A non-negative function $\phi$  on $(0,\infty)$ is called a \textit{Bernstein function} if $\phi$ is infinitely differentiable and $(-1)^{n-1} \phi^{(n)} (\lambda) \ge 0$ for all $n \in \N$ and $\lambda>0$. It is known that every Bernstein function  $\phi$ has the following representation:
	\begin{align*}
		\phi(\lambda) = a + b\lambda +  \int_0^\infty (1-e^{-\lambda t}) \Pi(dt),
	\end{align*}
where $a, b\ge 0$  and $\Pi$ is a 	measure on $(0,\infty)$ satisfying $\int_0^\infty(1 \wedge t)\Pi(dt)<\infty$. 
The triplet $(a,b,\Pi)$ is called \textit{the L\'evy triplet} of the Bernstein function $\phi$. See \cite[Theorem 3.2]{SSV12}.

A process $T=(T_t)_{t \ge 0}$ is called a \textit{subordinator}, if it is a non-decreasing L\'evy process with 	$T_0=0$.
For a given subordinator $T$, there exists a unique Bernstein function $\phi$ such that 
	\begin{align}\label{e:Laplace-exponent}
		\E[e^{-\lambda T_t}] = e^{-t\phi(\lambda)} \quad \text{for all} \;\, \lambda, t>0.
	\end{align}
In this sense, the Bernstein function $\phi$ is called \textit{the Laplace exponent} of $T$. Conversely, 
given a Bernstein function $\phi$ with $\phi(0+)=0$, 
there exists a unique subordinator $T^\phi$ (up to equivalence) such that \eqref{e:Laplace-exponent} holds. 
See \cite[Theorem 5.2]{SSV12}.

\smallskip
	
Let $\beta \in (0,1)$ and $T=(T_t)_{t \ge 0}$ be a  $\beta$-stable subordinator  with  Laplace exponent $\lambda^\beta$,  independent of $Z^\gamma$.  Define a time-changed process $Y^{U}=Y^{\gamma,U,\beta}$ by 
 	\begin{align}\label{e:def-subordinate-killed}
 		Y^{U}_t=Z^{\gamma,U}_{T_t}, \quad t\ge 0.
 	\end{align} 
The generator of $Y^{U}$ is equal to 
$-((-\Delta)^{\gamma/2}|_U)^{\beta}$. When $\gamma=2$, it is the negative of the spectral fractional Laplacian.

 By \cite[(2.8)-(2.9)]{Oku}, the jump kernel $J^U(dx,dy)$ and the killing measure $\kappa^U(dx)$ of $Y^{U}$ have densities $J^U(x,y)$ and  $\kappa^U(x)$ given by 
	 \begin{align}
 		J^U(x,y)&= J^{\gamma,U,\beta}(x,y)  =c_\beta\int_0^\infty q^{\gamma,U}(t,x,y) t^{-1-\beta}dt,
 			\label{e:subordinate-killed-jump} \\
 	 	\kappa^U(x)&= \kappa^{\gamma,U,\beta}(x) =c_\beta\int_0^\infty \bigg(1- \int_U q^{\gamma,U}(t,x,y)dy \bigg) t^{-1-\beta}dt,
 	 		\label{e:subordinate-killed-killing} 
 	 \end{align}
where $c_\beta t^{-1-\beta}$ is the L\'evy density of the  subordinator  $T$. 

Let  $\alpha:=\gamma\beta$. Note that $J^{\R^d}(x,y)$  equals, 
$c_{d,-\alpha} |x-y|^{-d-\alpha}$, which is the jump kernel of isotropic $\alpha$-stable process.
Define 
	\begin{align}\label{e:def-BU-subordinate}
		\sB^U(x,y)=\sB^{\gamma,U,\beta}(x,y)=
		\begin{cases}
			|x-y|^{d+\alpha}J^U(x,y) &\mbox{ if } x\ne y,\\
			c_{d,-\alpha}
 		&\mbox{ if } x= y.
		\end{cases}
	\end{align}
By the scaling and translation invariance properties of $Z^\gamma$,  
we see that the kernel
$c_{d,-\alpha}^{-1}\sB^\bH(x,y)$ satisfies \eqref{e:ass-1-cond} and is symmetric in $x$ and $y$. Define a function $F_0^{\gamma,\beta}$ by 
	\begin{align}\label{e:subordinate-killed-F-1}
 		F_0^{\gamma,\beta}(z):=c_{d,-\alpha}^{-1} \sB^{\bH}(\e_d, \e_d+z), \quad\;\; z \in \bH_{-1}.
	\end{align}
By Lemma \ref{l:ass-1}(i)-(ii),  we have
	\begin{align}\label{e:subordinate-killed-F}		
		J^\bH(x,y)= c_{d,-\alpha}F_0^{\gamma,\beta}((y-x)/x_d)|x-y|^{-d-\alpha}  \quad \text{ for all} \;\, x,y \in \bH	
	\end{align}
and $F_0^{\gamma,\beta}(z)=F_0^{\gamma,\beta}(-z/(1+z_d))$ for all $z \in \bH_{-1}$. It follows that
	\begin{align}\label{e:subordinate-killed-F-2}	
		F_0^{\gamma,\beta}(z) =\frac{1}{2} \big(F_0^{\gamma,\beta}(z) + F_0^{\gamma,\beta}(-z/(1+z_d))\big), 
			\quad \;\;z  = (\wt z,z_d) \in \bH_{-1}.
	\end{align}
Set 
	\begin{align}\label{e:defb}
b_{\gamma,\beta}:=
\begin{cases}
 \gamma/2  &\text{if }\gamma=2 \text{ or }\beta<1/2
 ;\\
 \gamma-\alpha & \text{otherwise},
\end{cases}
	\end{align}
and define
	\begin{align}
	 	\Phi_1^{\gamma,\beta}(r)&:= 
			(r \wedge 1)^{b_{\gamma,\beta}}  \label{e:example-Phi-1},\\
	 	\Phi_2^{\gamma,\beta}(r)&:= \begin{cases}
			r \wedge 1 &\mbox{ if $\gamma=2$;}  \\
			(r \wedge 1)^{\gamma/2-\alpha
			} &\mbox{ if  $\gamma<2$ and $\beta<1/2$};\\
			1 &\mbox{ if $\gamma<2$ and $\beta \ge 1/2$},
		\end{cases} \label{e:example-Phi-2}\\
		 \ell^{\gamma,\beta}(r)&:= \begin{cases}
			\log( e/ (r\wedge 1)) &\mbox{ if $\gamma<2$ and $\beta=1/2$;}  \\
			1 &\mbox{  otherwise}.\label{e:example-ell}
		\end{cases}
	\end{align}

The following is the main result of this subsection.

\begin{prop}\label{p:example-subordinate-killed}
The process $Y^D$ defined by \eqref{e:def-subordinate-killed} satisfies \hyperlink{B1}{{\bf (B1)}}, \hyperlink{B3}{{\bf (B3)}}, \hyperlink{B4-c}{{\bf (B4-c)}},   \hyperlink{K3}{{\bf (K3)}} and  \hyperlink{B5}{{\bf (B5)}}. More precisely, 
$Y^D$ satisfies \hyperlink{B4-c}{{\bf (B4-c)}} with  $\Phi_1=\Phi_1^{\gamma,\beta}$, $\Phi_2=\Phi_2^{\gamma,\beta}$ 
and $\ell=\ell^{\gamma,\beta}$,  \hyperlink{B5-I}{{\bf (B5-I)}}  with $\F_0=\F=F_0^{\gamma,\beta}$ and  any $\nu\in (0,1)$, 
and \eqref{e:C(alpha,p,F)} with  $p=\gamma/2$.
\end{prop}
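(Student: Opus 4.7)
My plan is to use the scale and translation invariance of the half-space process $Y^\bH$ to pin down every structural constant, and then transport the conclusions to a general $C^{1,1}$ open set $D$ through a local comparison of jump kernels. Condition \hyperlink{B1}{{\bf (B1)}} is immediate from \eqref{e:subordinate-killed-jump} and the symmetry of $q^{\gamma,D}(t,\cdot,\cdot)$. Condition \hyperlink{B4-c}{{\bf (B4-c)}} is a rewriting of the sharp two-sided estimates on $J^D$ obtained in \cite{KSV18-a,KSV18-b,KSV} and recalled in \eqref{e:intro-skbm}--\eqref{e:intro-skst}: in each of the four regimes ($\gamma=2$; $\gamma<2$ with $\beta>1/2$, $\beta=1/2$, or $\beta<1/2$) the exponents on $\delta_{\min}/|x-y|$ and $\delta_{\max}/|x-y|$ and the power/logarithmic factor involving $\delta_{\min}/\delta_{\max}$ match exactly the functions defined in \eqref{e:example-Phi-1}--\eqref{e:example-ell}.

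Next I would treat the half-space case. The scaling $q^{\gamma,\bH}(t,ax,ay)=a^{-d}q^{\gamma,\bH}(t/a^\gamma,x,y)$ combined with $\alpha=\gamma\beta$ turns \eqref{e:subordinate-killed-killing} into $\kappa^\bH(x)=C_9\,x_d^{-\alpha}$ with $C_9:=\kappa^\bH(\e_d)/c_{d,-\alpha}>0$, yielding the exact identity $\kappa^\bH(x)=C_9\,\sB^\bH(x,x)\,\delta_\bH(x)^{-\alpha}$. The identification $C_9=C(\alpha,\gamma/2,F_0^{\gamma,\beta})$ comes from the fact that $x\mapsto x_d^{\gamma/2}$ is the classical Martin kernel at infinity for $Z^{\gamma,\bH}$, hence is harmonic for $Z^{\gamma,\bH}$ in $\bH$, and by a standard consequence of subordination also harmonic for $Y^\bH$ in $\bH$. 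Applying Corollary \ref{c:Dynkin-local} locally to $x_d^{\gamma/2}$ gives $L^{\sB^\bH}_\alpha(x_d^{\gamma/2})=\kappa^\bH(x)\,x_d^{\gamma/2}$; on the other hand, the change of variables $y=x_d(\e_d+z)$ in the principal-value integral, together with \eqref{e:subordinate-killed-F} and the symmetry \eqref{e:subordinate-killed-F-2}, reduces $L^{\sB^\bH}_\alpha(x_d^{\gamma/2})$ to $c_{d,-\alpha}\,C(\alpha,\gamma/2,F_0^{\gamma,\beta})\,x_d^{\gamma/2-\alpha}$ via the definition \eqref{e:def-killing-constant}. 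Comparing the two expressions forces \eqref{e:C(alpha,p,F)} with $p=\gamma/2$; a case-by-case check against \eqref{e:defb} and \eqref{e:example-Phi-1} shows $\gamma/2\in[(\alpha-1)_+,\alpha+\beta_1)\cap(0,\infty)$, and in every regime $\Phi_0=\Phi_1^{\gamma,\beta}\ell^{\gamma,\beta}$ has the form $r^{\beta_1}\ell_0(r)$ with $\int_0^1\ell_0(r)/r\,dr=\infty$, so Lemma \ref{l:C=infty} makes the strict inequality \eqref{e:K4} automatic.

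The main obstacle is the transfer to general $D$. The key input is Lemma \ref{l:gamma-beta-extra-decay} from the introduction, which provides, in the coordinate system $\mathrm{CS}_Q$ at a boundary point $Q$ and with $\bH$ the tangent half-space at $Q$,
\[
|J^D(x,y)-J^\bH(x,y)|\le C\!\left(\frac{\delta_D(x)\vee\delta_D(y)}{\wh R}\right)^{(1-\beta)(1-\nu)\gamma/(2+2\nu)}\frac{1}{(\delta_D(x)\vee\delta_D(y))^{d+\alpha}}
\]
for all $\nu\in(0,1)$ and $x,y\in E^Q_\nu(\wh R/8)$. Multiplying by $|x-y|^{d+\alpha}$, using $\sB^\bH(x,y)=c_{d,-\alpha}F_0^{\gamma,\beta}((y-x)/x_d)$ from \eqref{e:subordinate-killed-F-1}--\eqref{e:subordinate-killed-F}, and noting that $\sB^D(x,x)=\sB^D(y,y)=c_{d,-\alpha}$, one reaches \eqref{e:ass-B5-2} with $\F_0=F_0^{\gamma,\beta}$, $\theta_1=d+\alpha$ and $\theta_2=(1-\beta)(1-\nu)\gamma/(2+2\nu)>0$; by \eqref{e:subordinate-killed-F-2} the symmetrization \eqref{e:def-F0-transform} returns $\F=F_0^{\gamma,\beta}$, which establishes \hyperlink{B5-I}{{\bf (B5-I)}}. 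An analogous argument for $\kappa^D(x)-C_9c_{d,-\alpha}\delta_D(x)^{-\alpha}$, via \eqref{e:subordinate-killed-killing} and an $L^1(\R^d)$ comparison of $q^{\gamma,D}$ against $q^{\gamma,\bH}$ (splitting the $t$-integral at $t\asymp\delta_D(x)^\gamma$ and using the $C^{1,1}$ boundary curvature to gain an extra power), yields \hyperlink{K3}{{\bf (K3)}} with the same $C_9$ and some $\eta_0>0$.

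Finally, Remark \ref{r:A4} reduces \hyperlink{B3}{{\bf (B3)}} to the regime $|x-y|<\delta_D(x)\wedge\delta_D(y)\wedge\wh R$, where the integral representation \eqref{e:subordinate-killed-jump} together with interior Hölder regularity of $q^{\gamma,D}(t,\cdot,y)$ (a standard consequence of the parabolic regularity for the killed stable process, uniform in $t$ after the appropriate rescaling) produces a Hölder exponent $\theta_0>\alpha-1$ for $\sB^D(x,y)-\sB^D(x,x)$ whenever $\alpha\ge 1$. This completes the verification of \hyperlink{B1}{{\bf (B1)}}, \hyperlink{B3}{{\bf (B3)}}, \hyperlink{B4-c}{{\bf (B4-c)}}, \hyperlink{K3}{{\bf (K3)}} and \hyperlink{B5}{{\bf (B5)}} with the data claimed in the proposition.
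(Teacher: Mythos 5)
Your overall architecture matches the paper on the transfer step: Lemma \ref{l:gamma-beta-extra-decay} drives \hyperlink{B5-I}{{\bf (B5-I)}}, and \hyperlink{B4-c}{{\bf (B4-c)}} follows from the known sharp bounds on $J^D$. However, two pieces take a genuinely different route from the paper, and one of them has a gap you would need to patch.

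For \hyperlink{B3}{{\bf (B3)}} the paper does not invoke interior H\"older regularity of $q^{\gamma,D}(t,\cdot,y)$. It uses the much more elementary observation, recalled just after the statement, that $0\le j(|x-y|)-J^D(x,y)\le j(\delta_D(y))$, which comes directly from the strong Markov property and the fact that $|Z^\gamma_{\tau_D^{(\gamma)}}-y|\ge\delta_D(y)$. Multiplying by $|x-y|^{d+\alpha}$ immediately gives $|\sB^D(x,x)-\sB^D(x,y)|\le c_{d,-\alpha}(|x-y|/\delta_D(y))^{d+\alpha}$, i.e.\ $\theta_0=d+\alpha>\alpha-1$ with no regularity theory at all. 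Your route through parabolic regularity would work but costs more and is harder to make uniform in $t$; it is worth knowing that the cheap bound suffices here.

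For the identification $p=\gamma/2$ the paper and your proposal diverge entirely. The paper takes $\kappa^\bH(x)=c_1x_d^{-\alpha}$ as an external input (cited from \cite[Lemma 2.4(i)]{CKSV23}), obtains the existence and uniqueness of $p$ from Lemmas \ref{l:constant} and \ref{l:C=infty}, and then pins down $p=\gamma/2$ \emph{indirectly} by comparing the Green function estimate of Theorem \ref{t:Green} (which contains $p$) with the already-known Green function estimate for subordinate killed stable processes in \cite[(7.10)]{CKSV22} (which contains $\gamma/2$). Your plan is the more constructive one: show $x_d^{\gamma/2}$ is $Y^\bH$-harmonic and read off $C(\alpha,\gamma/2,F_0^{\gamma,\beta})$ from a Dynkin-type formula. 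That is a legitimate alternative (this is essentially the calculation done in the half-space papers), but as written there is a real gap: Corollary \ref{c:Dynkin-local} requires $u$ bounded, and $x_d^{\gamma/2}$ is unbounded on $\bH$, so you cannot quote it directly; you would need a cut-off and tail estimate, which is nontrivial because the exterior integral $\int_{\bH,\,|x-y|>1}y_d^{\gamma/2}\F((y-x)/x_d)/|x-y|^{d+\alpha}\,dy$ is only finite because $\gamma/2<\alpha+\beta_1$ (this is exactly Lemma \ref{l:int-F}). Also, ``harmonicity is preserved under subordination'' needs a precise statement: the first exit time of $Y^\bH$ from $U$ is not the subordinated exit time of $Z^{\gamma,\bH}$, so this is a fact that needs either a citation (e.g.\ to \cite{KSV18-b}) or a short argument via invariance of $x_d^{\gamma/2}$ under the killed semigroup. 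The paper's indirect comparison avoids both issues, which is precisely why it is phrased that way.

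Finally, on \hyperlink{K3}{{\bf (K3)}}: the single split at $t\asymp\delta_D(x)^\gamma$ that you sketch will not by itself yield the needed extra decay $\delta_D(x)^{\eta_0}$ in \eqref{e:kappa-explicit-2}. The paper splits at two scales, $R^{-\eps_1}\delta_D(x)^{\gamma+\eps_1}$ and $R^{\eps_2}\delta_D(x)^{\gamma-\eps_2}$, and also truncates spatially to a ball $W:=B(x,R^{\eps_2/\gamma}\delta_D(x)^{1-\eps_2/\gamma})$ on the middle piece, which is where the gain comes from (via Lemmas \ref{l:dirichlet-1}--\ref{l:dirichlet-2} applied with $y$ replaced by $x$ and symmetry). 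A single-scale split leaves a term of order $\delta_D(x)^{-\alpha}$ with no small factor. This is a fixable but non-cosmetic gap in the proposal.
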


Note that it was already proved in 
\cite[Lemma 3.2]{KSV18-b} and 
\cite[(2.9)]{KSV} that $Y^D$ satisfies \hyperlink{B3}{{\bf (B3)}}.   We repeat the argument from \cite{KSV}  below since it provides a passageway to the proofs \hyperlink{B5-I}{{\bf (B5-I)}} and \hyperlink{K3}{{\bf (K3)}}. 
 
 By the the strong Markov property and joint continuity of $q^{\gamma }$ and $q^{\gamma,D}$, 
$$
q^{\gamma } (s, |x- y|)-q^{\gamma,D}(s, x, y)=
\E_x
\Big[ q^{\gamma}\big(s - \tau^{(\gamma)}_D, |Z^\gamma_{\tau^{(\gamma)}_D}-y|\big) : \tau^{(\gamma)}_D < s\Big]
$$
Thus,  by \eqref{e:subordinate-killed-jump},  Fubini's theorem and the change of variables, we have tha  for any $x, y \in D$,
 \begin{align*}
j(|x- y|)-J^{ D}(x, y)
&=
c_\beta\E_x \int^\infty_{ \tau^{(\gamma)}_D} 
q^{\gamma}\big(s - \tau^{(\gamma)}_D,|Z^\gamma_{\tau^{(\gamma)}_D}-y|\big) 
s^{-1-\beta}ds
\nn\\
&=
c_\beta \E_x \int^\infty_{0} 
q^{\gamma}\big(v,|Z^\gamma_{\tau^{(\gamma)}_D}-y|\big) 
(v+\tau^{(\gamma)}_D)^{-1-\beta}ds\\
&\le
c_\beta\E_x \int^\infty_{0} 
q^{\gamma}\big(v,|Z^\gamma_{\tau^{(\gamma)}_D}-y|\big) 
v^{-1-\beta}dv\nn\\
&=
\E_x \Big [ j\big(|Z^\gamma_{\tau^{(\gamma)}_D}- y|\big)\Big] \le  j(\delta_D(y)),
\end{align*}
where the last inequality follows from  $|Z^\gamma_{\tau^{(\gamma)}_D}- y|\ge \delta_D(y)$.
Hence, 
\begin{align*}
&\sB^D(x, x)-\sB^D(x, y)=
c_{d, -\alpha}-|x-y|^{d+\alpha}J^D(x,y)\\
&=
|x-y|^{d+\alpha} \left(j(|x- y|)-J^{ D}(x, y)\right)\\ 
& \le
|x-y|^{d+\alpha} j(\delta_D(y)) =c_{d, -\alpha}\left(\frac{|x-y|}{ \delta_D(y)}\right)^{d+\alpha}.
\end{align*}
Thus  \hyperlink{B3}{{\bf (B3)}} holds with $\theta_0=d+\alpha>1$. 

As we have seen,  \hyperlink{B3}{{\bf (B3)}} can be proved by analyzing  the difference between the jumping kernel  of $Y^D$ and that of 
$\alpha$-stable process in
$\R^d$. 
However, the bound in 
\hyperlink{B5-I}{{\bf (B5-I)}} is much more delicate and we need more refined estimates to obtain the bound with the extra  vanishing term 
$( \delta_D(x)\vee\delta_D(y) \vee|x-y|)^{\theta_2}$. To prove \hyperlink{B5-I}{{\bf (B5-I)}} and \hyperlink{K3}{{\bf (K3)}},  
we will analyze  the difference between the jumping kernel and killing potential of $Y^D$ and those of $Y^{\bH}$, and use the sharp estimates of the transition density 
of subordinate killed stable processes in complement of balls.

Recall that $R=\wh R/8$, and  $E^{Q}_{\nu}(R)$, $S^{Q}(R)$ and $\wt S^Q(R)$ are defined by \eqref{e:def-Enu}. 
In the remainder of this subsection,  we fix $Q \in \partial D$, use the coordinate system CS$_{Q}$,  
and denote $E^{Q}_{\nu}(R)$, $S^{Q}(R)$ and $\wt S^Q(R)$ by $E_{\nu}$, $S$ and $\wt S$ respectively.

For a Borel set $A \subset \R^d$, let $\tau^{(\gamma)}_A:=\inf\{t>0:Z^\gamma_t \notin A\}$. By Lemma \ref{l:C11}(ii), the strong Markov property and joint continuity of $q^{\gamma,\R^d \setminus \wt S}$, we see that for all $t>0$ and $x,y \in \R^d \setminus \wt S$,
	\begin{align}\label{e:subordinate-heatkernel-diff}
		\begin{split}	|q^{\gamma,D}(t,x,y)- q^{\gamma,\bH}(t,x,y)| & \le q^{\gamma,\R^d \setminus \wt S}(t,x,y) -q^{\gamma,S}(t,x,y)\\
		&=\E_x \Big[q^{\gamma,\R^d \setminus \wt S}(t-\tau^{(\gamma)}_S, Z^\gamma(\tau^{(\gamma)}_S),y); \tau^{(\gamma)}_S<t \Big].\end{split}
	\end{align}
Hence, by \eqref{e:subordinate-killed-jump} and  Fubini's theorem, for any $x, y \in D\cap \bH$,
	\begin{align}\label{e:subordinate-killed-diff}	
		\begin{split}
		&\left| J^D(x,y)- J^\bH(x,y)\right| \\&\le c_\beta\int_0^\infty |q^{\gamma,D}(t,x,y)- q^{\gamma,\bH}(t,x,y)| \, t^{-1-\beta} dt \\
		& \le c_\beta\int_0^\infty  \E_x \Big[q^{\gamma,\R^d \setminus \wt S}(t-\tau^{(\gamma)}_S, Z^\gamma(\tau^{(\gamma)}_S),y);
			 \tau^{(\gamma)}_S<t \Big] t^{-1-\beta}dt\\
		& = c_\beta\E_x \bigg[ \int_{\tau^{(\gamma)}_S}^\infty q^{\gamma,\R^d \setminus \wt S}(t-\tau^{(\gamma)}_S,
			 Z^\gamma(\tau^{(\gamma)}_S),y) \, t^{-1-\beta}dt \bigg] \\
		&\le c_\beta \sup_{z \in \R^d \setminus (S \cup \wt S)}  \int_{0}^\infty q^{\gamma,\R^d \setminus \wt S}(t, z,y)
			 t^{-1-\beta}dt. 
		\end{split}
	\end{align}
Define   $q_\gamma:(0,\infty)\times \R^d \times \R^d \to (0,\infty)$ and 
$h_{\gamma,R}:(0,\infty)\times (\R^d \setminus \wt S) \to (0,\infty)$
by
	\begin{align*}
		q_\gamma(t,x,y)&= 
		\begin{cases}
			\displaystyle	t^{-d/2}e^{-|x-y|^2/(4t)} &\quad\mbox{ if } \gamma=2,\\
			\displaystyle	t^{-d/\gamma} \wedge \frac{t}{|x-y|^{d+\gamma}} &\quad \mbox{ if } \gamma<2,
		\end{cases}\\
		h_{\gamma,R}(t,x)&=\begin{cases}		\displaystyle  1 \wedge \frac{\delta_{\R^d \setminus \wt S}(x)^{\gamma/2}}{(t \wedge R^\gamma)^{1/2}} 					&\mbox{ if } d>\gamma,\\[10pt]	\displaystyle	 1 \wedge \frac{ \log(1+\delta_{\R^d \setminus \wt S}(x)/R)}{ \log(1+t^{1/2}/R)} 	 &\mbox{ if } d= \gamma=2.	\end{cases}
	\end{align*}
By the scaling property,  $	q^{\gamma,\R^d \setminus \wt S}(t,x,y)= R^{-d}	q^{\gamma,\R^d \setminus B(-\e_d, 1)}(t/R^\gamma,x/R,y/R)$ for all $t>0$ and  $x,y \in \R^{d}\setminus \wt S$. Hence, applying the  Dirichlet heat kernel estimates from \cite[Theorem 1.3]{CT11} (for $\gamma<2$), \cite[Subsection 5.2]{GS02} (for $\gamma=2$ and $d=2$) and \cite[Theorem 1.1]{Zha} (for $\gamma=2$ and $d\ge3$), we deduce that  there exist constants $c_1,c_2>0$ depending only on $d$ and $\gamma$ such that for all $t>0$ and $x,y \in \R^{d}\setminus \wt S$,
\begin{equation}\label{e:heatkernel-exterior}		
	\begin{split}
			 	q^{\gamma,\R^d \setminus \wt S}(t,x,y) \le c_1h_{\gamma,R}(t,x)h_{\gamma,R}(t,y) q_\gamma(c_2t,x,y).
	\end{split}		\end{equation}
In particular,  for all $t>0$ and $x,y \in \R^{d}\setminus \wt S$,
\begin{equation}\label{e:heatkernel-exterior-2}		
		q^{\gamma,\R^d \setminus \wt S}(t,x,y) \le c_1 q_\gamma(c_2t,x,y).	\end{equation}

\begin{lemma}\label{l:dirichlet-1}
	Suppose that $\gamma<2$.	There exists $C>0$ depending only on $d$ and $\gamma$  such that for all $\nu \in (0,1)$ and   $y \in E_{\nu}$,
	\begin{align*}
		\sup_{0<s\le 
			R^\gamma, \,z \in \R^d \setminus (S \cup \wt S)}q^{\gamma,\R^d \setminus \wt S}(s, z,y) 
		\le C \delta_D(y)^{-d} (\delta_D(y)/R)^{\frac{\gamma(1-\nu)}{2(1+\nu)}}.
	\end{align*}
\end{lemma}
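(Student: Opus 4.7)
The strategy is to combine the Dirichlet heat kernel upper bound \eqref{e:heatkernel-exterior} with careful geometric estimates on $\delta_{\wt S}$. Since $\gamma<2$ and $d\ge 2$, we have $d>\gamma$, so for $0<s\le R^\gamma$ the bound specializes to
\begin{align*}
q^{\gamma,\R^d\setminus\wt S}(s,z,y)\le c_1\Big(1\wedge \tfrac{\delta_{\wt S}(z)^{\gamma/2}}{s^{1/2}}\Big)\Big(1\wedge \tfrac{\delta_{\wt S}(y)^{\gamma/2}}{s^{1/2}}\Big)\Big((c_2 s)^{-d/\gamma}\wedge \tfrac{c_2 s}{|z-y|^{d+\gamma}}\Big),
\end{align*}
and the plan is to bound the right-hand side uniformly in $s$ and $z$ via the geometric analysis below.

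First I will collect some elementary geometric inputs. From the identity $|y+R\e_d|^2-R^2=|\wt y|^2+2y_dR+y_d^2$, the defining inequality $|\wt y|^{1+\nu}<y_dR^\nu/4$ of $E_\nu$, and Lemma \ref{l:C11}(iii), one obtains $\delta_D(y)\asymp y_d$, $\delta_{\wt S}(y)\asymp y_d$ and $\delta_S(y)\asymp y_d$ with $\nu$-independent constants; in particular $|z-y|\ge \delta_S(y)\ge c\,y_d$ for every $z\notin S$. The same defining inequality also yields the crucial size bound $|y|^2\le c\,y_d^{2/(1+\nu)}R^{2\nu/(1+\nu)}$, which rearranges to $|y|^\gamma/R^{\gamma/2}\le c\,y_d^{\gamma/2}(y_d/R)^{\gamma(1-\nu)/(2(1+\nu))}$.

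The key new input is the $\nu$-independent pointwise bound
\[
\delta_{\wt S}(z)\le 2|z|^2/R\qquad\text{for every }z\in \R^d\setminus(S\cup\wt S),
\]
whose short proof is as follows: the two constraints $z\notin S$ and $z\notin \wt S$ give $|z|^2\ge \pm 2Rz_d$, i.e.\ $2R|z_d|\le|z|^2$, hence $|z+R\e_d|^2-R^2=|z|^2+2Rz_d\le 2|z|^2$, and dividing by $|z+R\e_d|+R\ge R$ yields the claim. Combining with $|z|\le|z-y|+|y|$ and the above size bound on $|y|$, this turns into
\[
\delta_{\wt S}(z)^{\gamma/2}\le c\,\big(|z-y|^\gamma+y_d^{\gamma/(1+\nu)}R^{\gamma\nu/(1+\nu)}\big)/R^{\gamma/2}.
\]

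Finally I will plug these estimates into the heat-kernel bound and run a case analysis on $s\in(0,R^\gamma]$ and on $|z-y|\ge cy_d$. The delicate point, and the reason the claimed bound is not entirely trivial, is that simply using $h_{\gamma,R}(s,z)\le 1$ yields only $\sup q\lesssim y_d^{-d}$; the extra decay factor $(y_d/R)^{\gamma(1-\nu)/(2(1+\nu))}$ arises because $z$ close to $y$ is forced to be close to the origin and hence close to $\partial\wt S$, making $\delta_{\wt S}(z)$ small. In each regime of $s$ and $|z-y|$, the product collapses via straightforward but tedious algebra to at most a constant multiple of $y_d^{-d}(y_d/R)^{\gamma(1-\nu)/(2(1+\nu))}$, the worst regime being $s\sim y_d^\gamma$ and $|z-y|\sim y_d$, where the precise exponent emerges from the rearrangement of $|y|^\gamma/R^{\gamma/2}$ noted above. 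The main technical obstacle will be the bookkeeping across these regimes and verifying that all $\nu$-dependent constants cancel to yield the $\nu$-independent $C=C(d,\gamma)$ claimed in the lemma.
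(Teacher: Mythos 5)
Your proposal is correct, and it takes a genuinely different route from the paper at the key geometric step. Both arguments start from the same Dirichlet heat kernel upper bound \eqref{e:heatkernel-exterior} and from the $s$-independent consequence (the paper's \eqref{e:heatkernel-exterior<2})
\[
\sup_{0<s\le R^\gamma}q^{\gamma,\R^d\setminus\wt S}(s,z,y)\le c\,\delta_{\R^d\setminus \wt S}(z)^{\gamma/2}\,\delta_D(y)^{\gamma/2}\,|y-z|^{-d-\gamma},
\]
together with $\delta_D(y)\asymp\delta_{\R^d\setminus\wt S}(y)\asymp y_d$ and $|y-z|\gtrsim\delta_D(y)$. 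The difference lies entirely in how one bounds $\delta_{\R^d\setminus\wt S}(z)$. The paper splits into two regimes according to whether $|y-z|$ exceeds $4^{-1}R^{\nu/(1+\nu)}\delta_D(y)^{1/(1+\nu)}$: in the far regime it uses the crude triangle bound $\delta_{\R^d\setminus\wt S}(z)\lesssim|y-z|$, while in the near regime it shows $|\wt z|\le R/2$ and uses the angular bound $\delta_{\R^d\setminus\wt S}(z)\le 2R^{-1}|\wt z|^2$. You instead observe the \emph{universal} bound $\delta_{\R^d\setminus\wt S}(z)\le 2|z|^2/R$ for every $z\in\R^d\setminus(S\cup\wt S)$, which is an elegant two-line consequence of the two exterior-ball constraints (one giving $|z|^2+2Rz_d\le 2|z|^2$, the other keeping it nonnegative). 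Combined with $|z|\le|z-y|+|y|$ and the size estimate $|y|^2\lesssim y_d^{2/(1+\nu)}R^{2\nu/(1+\nu)}$ coming from the definition of $E_\nu$, this yields the claimed bound directly for all $z$, without a case split in $|y-z|$ (and, contrary to what you announce, no case analysis in $s$ is needed either, since $s$ cancels once the two $h$-factors are multiplied by $q_\gamma$).

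Your route buys two things. First, it is shorter and makes the constant's independence of $\nu$ transparent: every step produces $\nu$-independent constants, and the single inequality $(\delta_D(y)/R)^{\gamma/2}\le(\delta_D(y)/R)^{\gamma(1-\nu)/(2(1+\nu))}$, valid for all $\nu\in(0,1)$, handles the $|z-y|^\gamma/R^{\gamma/2}$ contribution. Second, it sidesteps a subtlety in the paper's Case 1: there, replacing $|y-z|^{-d-\gamma/2}$ by $(R^{\nu/(1+\nu)}\delta_D(y)^{1/(1+\nu)})^{-d-\gamma/2}$ produces $R$- and $\delta_D(y)$-exponents whose ratio to the target is $(\delta_D(y)/R)^{[\nu(d+\gamma)-\gamma/2]/(1+\nu)}$, which is bounded only when $\nu(d+\gamma)\ge\gamma/2$. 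Your universal bound on $\delta_{\R^d\setminus\wt S}(z)$, applied in the far regime as well, removes this restriction. Overall this is a valid and cleaner proof of the lemma, provided you carry out the final (straightforward) substitution you outline.
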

\begin{proof}
	Let $y \in E_{\nu}$ and $z=(\wt z, z_d) \in \R^d \setminus (S \cup \wt S)$.
	We first note that, by Lemma \ref{l:C11}(iii), we have 
	\begin{align}\label{e:dist-wtS-y}
		\begin{split}
			&\delta_{\R^d \setminus \wt S} (y) \le  y_d + R - \sqrt{R^2-|\wt y|^2} \\
			&\le y_d+R -(R-R^{-1}|\wt y|^2) = y_d+R^{-1}|\wt y|^2 \le 5y_d/4 \le 2\delta_D(y)
		\end{split}
	\end{align}
	and
	\begin{align}\label{e:example-dist-yz}
		|y-w| \ge \delta_S(y) \ge 3\delta_D(y)/8 \quad \text{for all} \;\, w \in \R^d \setminus S.
	\end{align} 
	Thus, 
	\begin{align}\label{e:example-dist-yz1}
		\delta_{\R^d \setminus \wt S}(z) \le \delta_{\R^d \setminus \wt S}(y) + |y-z| \le  (19/3)|y-z|.
	\end{align}
	By  \eqref{e:heatkernel-exterior} and \eqref{e:dist-wtS-y}, we have
	\begin{align}\label{e:heatkernel-exterior<2} 
		\begin{split}
			\sup_{0<s\le 
				R^\gamma}q^{\gamma,\R^d \setminus \wt S}(s, z,y) 
			& \le c	\sup_{0<s\le 
				R^\gamma} 	\bigg( \frac{\delta_{\R^d \setminus \wt S}(z)^{\gamma/2}
				\delta_{\R^d \setminus \wt S}(y)^{\gamma/2}}{s} \frac{s}{|y-z|^{d+\gamma}} \bigg)\\
					& \le c \delta_{\R^d \setminus \wt S}(z)^{\gamma/2}|y-z|^{-d-\gamma} \delta_{D}(y)^{\gamma/2}.
		\end{split}
	\end{align}

	If $|y-z| > 4^{-1}R^{\nu/(1+\nu)} \delta_{D}(y)^{1/(1+\nu)}$, then since 
	$\delta_D(y)<R$,  
	using \eqref{e:example-dist-yz1} and \eqref{e:heatkernel-exterior<2}, we obtain
	\begin{align*} 
		\sup_{0<s\le 
			R^\gamma}q^{\gamma,\R^d \setminus \wt S}(s, z,y) 
		&\le c |y-z|^{-d-\gamma/2} \delta_{D}(y)^{\gamma/2}\\
		&\le c (R^{\nu/(1+\nu)} \delta_{D}(y)^{1/(1+\nu)})^{-d-\gamma/2} \delta_{D}(y)^{\gamma/2}\\
		&\le cR^{-(1-\nu)\gamma/(2+2\nu)} \delta_D(y)^{-d+ (1-\nu)\gamma/(2+2\nu)}.
	\end{align*}
	Suppose that  $|y-z| \le 4^{-1}R^{\nu/(1+\nu)} \delta_{D}(y)^{1/(1+\nu)}$. Then  $|\wt z| \le |\wt y| + R/4 \le R/2$. 
	Hence, it holds that
	$$
	\delta_{\R^d \setminus \wt S} (z) \le 2(R - \sqrt{R^2-|\wt z|^2}) \le  2R^{-1}|\wt z|^2.
	$$
	Using this and the triangle inequality in the first inequality below, $y\in E_{\nu}$ and  $|y-z| \le 4^{-1}R^{\nu/(1+\nu)} \delta_{D}(y)^{1/(1+\nu)}$ in the second, and Lemma \ref{l:C11}(iii) in the last, we get
	\begin{align}\label{e:dist-wtS-z}		
		\begin{split}		
			\delta_{\R^d \setminus \wt S} (z) & \le   4R^{-1}(|\wt y|^2 + |y-z|^2) \\		
			&\le (4R)^{-(1-\nu)/(1+\nu)}y_d^{2/(1+\nu)} + 4^{-1}R^{-(1-\nu)/(1+\nu)}\delta_D(y)^{2/(1+\nu)}\\
			& \le cR^{-(1-\nu)/(1+\nu)}\delta_D(y)^{2/(1+\nu)}.		
		\end{split}	
	\end{align}
	Using \eqref{e:heatkernel-exterior<2},  \eqref{e:example-dist-yz} and   \eqref{e:dist-wtS-z}, we arrive at
	\begin{align*}
		\sup_{0<s\le 
			R^\gamma}q^{\gamma,\R^d \setminus \wt S}(s, z,y) &\le c(R^{-(1-\nu)/(1+\nu)}\delta_D(y)^{2/(1+\nu)})^{\gamma/2} 
		(3\delta_D(y)/8)^{-d-\gamma} \delta_{D}(y)^{\gamma/2}.\\
		&= cR^{-(1-\nu)\gamma/(2+2\nu)} \delta_D(y)^{-d+ (1-\nu)\gamma/(2+2\nu)}.
	\end{align*}
	
\end{proof}

\begin{lemma}\label{l:dirichlet-2}
	There exists $C>0$ depending only on $d$  such that for all   $\nu \in (0,1)$ and $y \in E_{\nu}$,
	\begin{align*}
		\sup_{0<s\le
			R^2,\,z \in \R^d \setminus (S \cup \wt S)}q^{2,\R^d \setminus \wt S}(s, z,y) \le C 
		\delta_D(y)^{-d} (\delta_D(y)/R)^{ (1-\nu)/(1+\nu)}.
	\end{align*}
\end{lemma}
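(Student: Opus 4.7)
My plan is to mirror the proof of Lemma \ref{l:dirichlet-1} with the stable heat-kernel estimate \eqref{e:heatkernel-exterior-2} replaced by the Gaussian estimate \eqref{e:heatkernel-exterior} at $\gamma=2$. Throughout, fix $y\in E_\nu$ and $z\in \R^d\setminus(S\cup\wt S)$. On the range $s\le R^2$ we have $s\wedge R^2=s$, and \eqref{e:dist-wtS-y} gives $\delta_{\R^d\setminus\wt S}(y)\le 2\delta_D(y)$ while \eqref{e:example-dist-yz1} gives $\delta_{\R^d\setminus\wt S}(z)\le (19/3)|y-z|$. The split is at the same threshold as in Lemma \ref{l:dirichlet-1}: $|y-z|$ larger or smaller than $\tfrac14 R^{\nu/(1+\nu)}\delta_D(y)^{1/(1+\nu)}$.

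First I will treat $d>2$, for which $h_{2,R}(s,\cdot)=1\wedge \delta_{\R^d\setminus\wt S}(\cdot)/s^{1/2}$ when $s\le R^2$. In the regime $|y-z|>\tfrac14 R^{\nu/(1+\nu)}\delta_D(y)^{1/(1+\nu)}$, I apply \eqref{e:heatkernel-exterior} with the bounds $h_{2,R}(s,z)\le 1$ and $h_{2,R}(s,y)\le 2\delta_D(y)/s^{1/2}$ and then the elementary identity $\sup_{s>0} s^{-k}e^{-a/s}=c_k a^{-k}$ with $k=(d+1)/2$ and $a=|y-z|^2/(4c_2)$, which yields
\[
q^{2,\R^d\setminus\wt S}(s,z,y)\le c\,\delta_D(y)\,|y-z|^{-(d+1)}.
\]
Substituting $|y-z|\ge \tfrac14 R^{\nu/(1+\nu)}\delta_D(y)^{1/(1+\nu)}$ and using $\delta_D(y)<R$ (which holds by Lemma \ref{l:C11}(iii) and the definition of $E_\nu$) to absorb the remaining $R/\delta_D(y)$ factors into the constant, the exponent $(1-\nu)/(1+\nu)$ emerges after the same algebraic reduction as in the parallel step of Lemma \ref{l:dirichlet-1}.

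In the complementary regime $|y-z|\le \tfrac14 R^{\nu/(1+\nu)}\delta_D(y)^{1/(1+\nu)}$, I invoke \eqref{e:dist-wtS-z} to control the distance of $z$ to $\R^d\setminus\wt S$ by $cR^{-(1-\nu)/(1+\nu)}\delta_D(y)^{2/(1+\nu)}$, bound $h_{2,R}(s,z)\le \delta_{\R^d\setminus\wt S}(z)/s^{1/2}$ and $h_{2,R}(s,y)\le 1$, and again take the supremum in $s$ via $\sup_{s>0}s^{-(d+1)/2}e^{-|y-z|^2/(4c_2 s)}\le c|y-z|^{-(d+1)}$. The lower bound $|y-z|\ge 3\delta_D(y)/8$ from \eqref{e:example-dist-yz} then replaces $|y-z|^{-(d+1)}$ by $c\delta_D(y)^{-(d+1)}$, so
\[
q^{2,\R^d\setminus\wt S}(s,z,y)\le c R^{-(1-\nu)/(1+\nu)}\delta_D(y)^{\frac{2}{1+\nu}-(d+1)}=c\delta_D(y)^{-d}(\delta_D(y)/R)^{(1-\nu)/(1+\nu)},
\]
matching the target exactly.

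Finally, for $d=2$ the quantity $h_{2,R}(s,\cdot)=1\wedge \log(1+\delta_{\R^d\setminus\wt S}(\cdot)/R)/\log(1+s^{1/2}/R)$ must be linearised: the elementary inequalities $\log(1+x)\le x$ for $x\ge 0$ and $\log(1+s^{1/2}/R)\ge (2R)^{-1}s^{1/2}$ for $s\le R^2$ give $h_{2,R}(s,\cdot)\le 2\delta_{\R^d\setminus\wt S}(\cdot)/s^{1/2}$, which brings the analysis back to exactly the same two-case estimates carried out for $d>2$. The main technical point is the balance of powers in the large-separation regime: this is where the inclusion $\delta_D(y)<R$ is essential for absorbing the leftover factor into the constant $C$, and it is the step most sensitive to the range of $\nu$, parallelling the corresponding step of Lemma \ref{l:dirichlet-1}.
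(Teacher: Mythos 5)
Your plan follows the paper's own proof: a two-case split at the threshold $|y-z|\asymp R^{\nu/(1+\nu)}\delta_D(y)^{1/(1+\nu)}$, using the Dirichlet heat-kernel bound \eqref{e:heatkernel-exterior}, the geometric estimates \eqref{e:dist-wtS-y}, \eqref{e:dist-wtS-z} and \eqref{e:example-dist-yz}, and optimization over $s$. The only real difference is which boundary factors you retain. In the small-separation regime you keep only $h_{2,R}(s,z)$, where the paper keeps both factors; in the large-separation regime you keep $h_{2,R}(s,y)$ to get $q\le c\,\delta_D(y)|y-z|^{-(d+1)}$, where the paper instead invokes the boundary-free estimate \eqref{e:heatkernel-exterior-2} to get $c|y-z|^{-d}$. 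Your large-separation variant is exactly the $\gamma=2$ analogue of what the paper does inside Lemma~\ref{l:dirichlet-1}, and all three variants land on the same power of $\delta_D(y)$. The $d=2$ log-linearization is done the same way in both proofs.

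The step you flag as ``most sensitive to the range of $\nu$'' is, however, left unverified, and it is in fact a real restriction. Writing $L_\nu=4^{-1}R^{\nu/(1+\nu)}\delta_D(y)^{1/(1+\nu)}$, the ratio of your large-separation bound $c\,\delta_D(y)L_\nu^{-(d+1)}$ to the target $C\delta_D(y)^{-d}(\delta_D(y)/R)^{(1-\nu)/(1+\nu)}$ equals $(\delta_D(y)/R)^{((d+2)\nu-1)/(1+\nu)}$, which stays bounded only when $\nu\ge 1/(d+2)$; for smaller $\nu$ it blows up as $\delta_D(y)\to0$, so the ``absorb the remaining $R/\delta_D(y)$ factors into the constant'' claim is not honest there. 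The same obstruction is present in the paper's own Case~1 for Lemma~\ref{l:dirichlet-2}, where the leftover factor is $(\delta_D(y)/R)^{((d+1)\nu-1)/(1+\nu)}$ (needing $\nu\ge1/(d+1)$, slightly worse), and in the parallel step of Lemma~\ref{l:dirichlet-1}. So this is inherited rather than a defect specific to your write-up, and it is harmless downstream because assumption \hyperlink{B5-I}{{\bf (B5-I)}} needs only a single $\nu\in(0,1]$; but ``for all $\nu\in(0,1)$'' is not actually delivered by this line of argument. If one wants the full range of $\nu$, in the large-separation regime one should keep both $h$-factors and use that $\delta_{\R^d\setminus\wt S}(z)\le 4R^{-1}(|\wt y|^2+|y-z|^2)$ continues to hold whenever $|\wt z|\le R/2$, handling $|\wt z|>R/2$ (which forces $|y-z|>R/4$) separately via the crude bound $q\le c|y-z|^{-d}\le cR^{-d}\le C\delta_D(y)^{-d}(\delta_D(y)/R)^{(1-\nu)/(1+\nu)}$.
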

\begin{proof}
	Let $y \in E_{\nu}$ and $z=(\wt z, z_d) \in \R^d \setminus (S \cup \wt S)$. If $|y-z| > 4^{-1}R^{\nu/(1+\nu)}$ $\delta_{D}(y)^{1/(1+\nu)}$, then  by  
	\eqref{e:heatkernel-exterior-2},  
	since $R^{-1}\delta_D(y) \le 1$, we get
	\begin{align*} 
		&\sup_{
			s>0}\,q^{2,\R^d \setminus \wt S}(s, z,y)\le c\sup_{s>0}\, s^{-d/2} e^{-c|y-z|^2/s}= c|y-z|^{-d}\\
		&   \le cR^{-d\nu/(1+\nu)}\delta_{D}(y)^{-d/(1+\nu)} \le cR^{-(1-\nu)/(1+\nu)} \delta_D(y)^{-d+ (1-\nu)/(1+\nu)}.
	\end{align*}
	If $|y-z|\le 4^{-1}R^{\nu/(1+\nu)} \delta_{D}(y)^{1/(1+\nu)}$,  then using \eqref{e:heatkernel-exterior} with the fact that $\log(1+s) \asymp s$ for $s \in (0,1)$ in the first line below, \eqref{e:dist-wtS-y} and   \eqref{e:dist-wtS-z} in the second,  and  \eqref{e:example-dist-yz} in the last, we obtain
	\begin{align*}
		\sup_{0<s\le 
			R^2}\,q^{2,\R^d \setminus \wt S}(s, z,y)
		& \le 	c \delta_{\R^d \setminus \wt S}(z)\delta_{\R^d \setminus \wt S}(y)\sup_{0<s\le 
			R^2}\, s^{-1-d/2} e^{-c|y-z|^2/s}\\
		& \le 	c  R^{-(1-\nu)/(1+\nu)}\delta_D(y)^{(3+\nu)/(1+\nu)}\;\sup_{s>0}\, s^{-1-d/2} e^{-c|y-z|^2/s}\\
		&= c R^{-(1-\nu)/(1+\nu)}\delta_D(y)^{(3+\nu)/(1+\nu)}|y-z|^{-d-2} \\
		&\le cR^{-(1-\nu)/(1+\nu)}\delta_D(y)^{-d+(1-\nu)/(1+\nu)}.
	\end{align*}
\end{proof}

\begin{lemma}\label{l:dirichlet-smalltime}
	There exists $C>0$  depending only on $d$ and $\gamma$  such that for all   $\nu \in (0,1)$ and $y \in E_{\nu}$,
	\begin{align*}
	\sup_{z \in \R^d \setminus (S \cup \wt S)}
	\int_0^{\delta_D(y)^{\gamma} (\delta_D(y)/R)^{(1-\nu)\gamma/(2+2\nu)}}
	 q^{\gamma,\R^d \setminus \wt S}(t, z,y)t^{-1-\beta}dt \\
\le   C \bigg(\frac{\delta_D(y)}{R}\bigg)^{(1-\beta)(1-\nu)\gamma/(2+2\nu)}\frac{1}{\delta_D(y)^{d+\alpha}}. 
	\end{align*}
\end{lemma}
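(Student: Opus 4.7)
The approach is straightforward: since $\beta<1$, the singularity $t^{-1-\beta}$ near $t=0$ is integrable against any bound on $q^{\gamma,\R^d\setminus \wt S}(t,z,y)$ that vanishes linearly as $t\to 0$. The uniform bound from \eqref{e:heatkernel-exterior-2} supplies exactly this after a short algebraic manipulation. More precisely, I will first establish the uniform estimate
$$q_\gamma(s,z,y)\le C\,\frac{s}{|z-y|^{d+\gamma}}, \qquad s>0,\; z,y\in \R^d, \; \gamma\in (0,2].$$
For $\gamma<2$ this is immediate from the definition $q_\gamma(s,z,y)=s^{-d/\gamma}\wedge s|z-y|^{-d-\gamma}$. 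For $\gamma=2$, setting $u:=|z-y|^2/(4s)$ reduces the claim to the elementary inequality $u^{1+d/2}e^{-u}\le c$.

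Combining the above with \eqref{e:heatkernel-exterior-2} and the lower bound $|z-y|\ge 3\delta_D(y)/8$ from \eqref{e:example-dist-yz}, which is valid for all $z\in \R^d\setminus(S\cup\wt S)$ and $y\in E_\nu$, I obtain
$$q^{\gamma,\R^d\setminus \wt S}(t,z,y)\le C\,\frac{t}{\delta_D(y)^{d+\gamma}}$$
uniformly in $z\in \R^d\setminus (S\cup \wt S)$. Integrating this against $t^{-1-\beta}$ over $(0,T]$ with $T:=\delta_D(y)^{\gamma}(\delta_D(y)/R)^{(1-\nu)\gamma/(2+2\nu)}$ gives, since $\beta<1$,
$$\int_0^T q^{\gamma,\R^d\setminus \wt S}(t,z,y)\,t^{-1-\beta}\,dt \;\le\; \frac{C}{1-\beta}\,\delta_D(y)^{-d-\gamma}\,T^{\,1-\beta}.$$
The algebraic identity
$$T^{1-\beta}=\delta_D(y)^{\gamma(1-\beta)}\bigl(\delta_D(y)/R\bigr)^{(1-\beta)(1-\nu)\gamma/(2+2\nu)},$$
together with $\alpha=\gamma\beta$, collapses the exponent of $\delta_D(y)$ from $-d-\gamma+\gamma(1-\beta)$ to $-d-\alpha$, which is precisely the bound asserted in the lemma.

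I do not anticipate any genuine obstacle here: the estimate is a direct one-variable integration once the crude polynomial-in-$t$ majorant of $q_\gamma$ has been written down, and the exponent in the definition of the cutoff $T$ has been calibrated precisely so that the substitution produces the stated factor $(\delta_D(y)/R)^{(1-\beta)(1-\nu)\gamma/(2+2\nu)}$. The sharper boundary-decay estimates of Lemmas \ref{l:dirichlet-1}--\ref{l:dirichlet-2}, which exploit the extra decay of $\delta_{\R^d\setminus \wt S}$ near the boundary, play no role in this small-time regime; they will be the essential input for the complementary range $t>T$.
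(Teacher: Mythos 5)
Your proposal is correct and takes essentially the same route as the paper: the paper also reduces to the crude bound $q^{\gamma,\R^d\setminus\wt S}(t,z,y)\le Ct\,|z-y|^{-d-\gamma}$, via \eqref{e:heatkernel-exterior-2} for $\gamma<2$ and via the substitution $u=c|z-y|^2/t$ together with $\sup_u u^{d/2+1}e^{-u}<\infty$ for $\gamma=2$, then invokes \eqref{e:example-dist-yz} to replace $|z-y|$ by $\delta_D(y)$ and integrates $t^{-\beta}$. You have merely packaged the two cases into a single preliminary estimate, which is a tidy but immaterial reorganization.
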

\begin{proof}
	Let $\eps:=(1-\nu)\gamma/(2+2\nu)$,  $y \in E_{\nu}$ and $z \in \R^d \setminus (S\cup\wt S)$. When $\gamma<2$, using 
	\eqref{e:heatkernel-exterior-2}  and \eqref{e:example-dist-yz}, since $\alpha=\gamma\beta$, we obtain
	\begin{align*}
		\int_0^{R^{-\eps}\delta_D(y)^{\gamma + \eps}} q^{\gamma,\R^d \setminus \wt S}(t, z,y)t^{-1-\beta}dt 
		&\le \frac{c_1}{|z-y|^{d+\gamma}}\int_0^{R^{-\eps}\delta_D(y)^{\gamma+ \eps}}  t^{-\beta}dt\\
		&\le c_2 R^{-(1-\beta)\eps} \delta_D(y)^{-d-\alpha + (1-\beta)\eps}.
	\end{align*}	
	When $\gamma=2$, using 
	\eqref{e:heatkernel-exterior-2}   and \eqref{e:example-dist-yz}, since  $\sup_{s>0}s^{d/2+1} e^{-s}<\infty$, 
	we get that
	\begin{align*}
		&\int_0^{R^{-\eps}\delta_D(y)^{2 + \eps}} q^{2,\R^d \setminus \wt S}(t, z,y)t^{-1-\beta}\,dt\\
		& \le c_3	\int_0^{R^{-\eps}\delta_D(y)^{2+ \eps}} t^{-d/2-1-\beta} e^{-c_4|z-y|^2/t}\,dt \\
		& \le c_5	\int_0^{R^{-\eps}\delta_D(y)^{2+ \eps}} t^{-d/2-1-\beta} e^{-c_6\delta_D(y)^2/t}\,dt \\
		&\le c_6^{-(d/2+1)} \ c_7 
		\int_0^{R^{-\eps}\delta_D(y)^{2+ \eps}} t^{-d/2-1-\beta}   \, (t/\delta_D(y)^2)^{d/2+1}\,dt\\
		&=c_8 R^{-(1-\beta)\eps} \delta_D(y)^{-d-2\beta + (1-\beta)\eps}. 
	\end{align*}
	The proof is complete. 
\end{proof}

We now analyze  the difference between the jumping kernel   of $Y^D$ and $Y^{\bH}$.

\begin{lemma}\label{l:gamma-beta-extra-decay}
	There exists $C>0$  depending only on $d$ and $\gamma$   such that for all   $\nu \in (0,1)$ and $x,y \in E_{\nu}$,
	\begin{align*}
		|J^D(x,y)-J^\bH(x,y)| 
		\le  C\bigg(\frac{\delta_D(x) \vee \delta_D(y)}{R}\bigg)^{\tfrac{(1-\beta)(1-\nu)\gamma}{2(1+\nu)}} 
		\frac{1}{(\delta_D(x)\vee\delta_D(y))^{d+\alpha}}.
	\end{align*}
\end{lemma}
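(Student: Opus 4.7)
The plan is to start from the estimate \eqref{e:subordinate-killed-diff}. By the symmetry of $J^D$ and $J^\bH$ in their arguments, I may assume without loss of generality that $\delta_D(y)\ge\delta_D(x)$, so the right-hand side of \eqref{e:subordinate-killed-diff} will naturally produce a bound in terms of $\delta_D(y)=\delta_D(x)\vee\delta_D(y)$. Set the threshold
$$T_0:=\delta_D(y)^\gamma\bigl(\delta_D(y)/R\bigr)^{(1-\nu)\gamma/(2+2\nu)},$$
which satisfies $T_0\le R^\gamma$ since $y\in E_\nu$ forces $\delta_D(y)\le 2y_d\le R$ by Lemma \ref{l:C11}(iii). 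Split
$\int_0^\infty q^{\gamma,\R^d\setminus\wt S}(t,z,y)t^{-1-\beta}\,dt$ into the three pieces $\int_0^{T_0}$, $\int_{T_0}^{R^\gamma}$, and $\int_{R^\gamma}^\infty$.

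For the first piece, Lemma \ref{l:dirichlet-smalltime} directly delivers the target bound
$C(\delta_D(y)/R)^{(1-\beta)(1-\nu)\gamma/(2+2\nu)}\delta_D(y)^{-d-\alpha}$. For the second piece, apply the uniform sup bound from Lemma \ref{l:dirichlet-1} (when $\gamma<2$) or Lemma \ref{l:dirichlet-2} (when $\gamma=2$; note $(1-\nu)/(1+\nu)=\gamma(1-\nu)/(2+2\nu)$ in that case), which gives
$q^{\gamma,\R^d\setminus\wt S}(t,z,y)\le C\delta_D(y)^{-d}(\delta_D(y)/R)^{(1-\nu)\gamma/(2+2\nu)}$ for $t\le R^\gamma$. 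Then $\int_{T_0}^{R^\gamma}t^{-1-\beta}\,dt\le\beta^{-1}T_0^{-\beta}$, and expanding
$T_0^{-\beta}=\delta_D(y)^{-\alpha}(\delta_D(y)/R)^{-\beta(1-\nu)\gamma/(2+2\nu)}$
produces precisely the same target bound. The definition of $T_0$ is the unique choice that balances these first two pieces.

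For the third piece, apply \eqref{e:heatkernel-exterior} together with the estimate $\delta_{\R^d\setminus\wt S}(y)\le 2\delta_D(y)$ from \eqref{e:dist-wtS-y}: for $t\ge R^\gamma$ one finds $h_{\gamma,R}(t,y)\le C(\delta_D(y)/R)^{\gamma/2}$ when $d>\gamma$ (in the borderline case $d=\gamma=2$ the inequality $\log(1+s)\le s$ produces $h_{2,R}(t,y)\le C\delta_D(y)/R$ instead), while $h_{\gamma,R}(t,z)\le 1$ and $q_\gamma(c_2t,z,y)\le Ct^{-d/\gamma}$. Integrating $t^{-d/\gamma-1-\beta}$ over $[R^\gamma,\infty)$ yields $CR^{-d-\alpha}$, so this contribution is at most $C(\delta_D(y)/R)^{\gamma/2}R^{-d-\alpha}$; comparing with the target and using $\delta_D(y)\le R$ together with $\gamma/2\ge(1-\beta)(1-\nu)\gamma/(2+2\nu)$ shows this piece is dominated by the target.

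The main obstacle is the careful exponent bookkeeping in the second piece: it is the precise cancellation between the exponent coming from the sup bound in Lemmas \ref{l:dirichlet-1}--\ref{l:dirichlet-2} and the exponent produced by $T_0^{-\beta}$ that leaves the desired $(1-\beta)(1-\nu)\gamma/(2+2\nu)$; any other choice of splitting threshold would worsen the estimate.
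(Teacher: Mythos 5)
Your proof is correct and follows essentially the same approach as the paper: the same threshold $T_0 = R^{-\eps}\delta_D(y)^{\gamma+\eps}$ (with $\eps=(1-\nu)\gamma/(2+2\nu)$), the same three-way split, Lemma \ref{l:dirichlet-smalltime} on $[0,T_0]$, and the uniform sup bound from Lemmas \ref{l:dirichlet-1}--\ref{l:dirichlet-2} on $[T_0,R^\gamma]$. The only difference is in the tail $[R^\gamma,\infty)$: you extract the boundary factor $h_{\gamma,R}(t,y)\le C(\delta_D(y)/R)^{\gamma/2}$ from \eqref{e:heatkernel-exterior} and then compare to the target using $\gamma/2\ge(1-\beta)\eps$, whereas the paper drops the boundary factor entirely, bounds the tail crudely by $cR^{-d-\alpha}$ via \eqref{e:heatkernel-exterior-2}, and notes at the end that $\delta_D(y)<R$ and $(1-\beta)\eps<d+\alpha$ make $R^{-d-\alpha}$ dominated by the target. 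Your version is a touch sharper but slightly longer; both are valid.
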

\begin{proof}  Let $\eps:=(1-\nu)\gamma/(2+2\nu)$ and $x,y \in E_{\nu}$. By symmetry, without loss of generality, 
	we assume $\delta_D(x) \le \delta_D(y)$.  Using \eqref{e:subordinate-killed-diff}, \eqref{e:heatkernel-exterior} and Lemmas \ref{l:dirichlet-1}, \ref{l:dirichlet-2} and \ref{l:dirichlet-smalltime}, we get
	\begin{align*}	
		&\left| J^D(x,y) - J^\bH(x,y)\right|\\
		&\le c_\beta\sup_{z \in \R^d \setminus (S \cup \wt S)}  \bigg( \int_{0}^{R^{-\eps} \delta_D(y)^{\gamma + \eps}} 
		+ \int_{R^{-\eps}\delta_D(y)^{\gamma + \eps}}^
		{R^\gamma}  + \int_{
			R^\gamma }^\infty \bigg) q^{\gamma,\R^d \setminus \wt S}(t, z,y) 
		t^{-1-\beta}dt \\
		&\le c_\beta\sup_{z \in \R^d \setminus (S \cup \wt S)}   \int_{0}^{R^{-\eps} \delta_D(y)^{\gamma + \eps}} 
		q^{\gamma,\R^d \setminus \wt S}(t, z,y) t^{-1-\beta}dt \\
		& \quad + c_\beta	\sup_{0<s\le
			R^\gamma,\,z \in \R^d \setminus (S \cup \wt S)}q^{\gamma,\R^d \setminus \wt S}(s, z,y) 
		\int_{R^{-\eps}\delta_D(y)^{\gamma + \eps}}^{
			R^\gamma}\frac{dt}{t^{1+\beta}} + c \int_{
			R^\gamma}^\infty \frac{dt}{t^{d/\gamma+1+\beta}}\\
		&\le c(\delta_D(y)/R)^{(1-\beta)\eps} \delta_D(y)^{-d-\alpha} 
		+   cR^{-\eps} \delta_D(y)^{-d+\eps} (R^{-\eps}\delta_D(y)^{\gamma + \eps})^{-\beta} +c  R^{-d-\alpha}\\
		&=	c(\delta_D(y)/R)^{(1-\beta)\eps} \delta_D(y)^{-d-\alpha}  +c  R^{-d-\alpha}.
	\end{align*}  
	Since $\delta_D(y) <R
	$, we have 
	$(\delta_D(y)/R)^{(1-\beta)\eps} \delta_D(y)^{-d-\alpha} 
	>R^{-d-\alpha}$. The proof is complete.	 
\end{proof}

\textsc{Proof of Proposition \ref{p:example-subordinate-killed}.}  \hyperlink{B1}{{\bf (B1)}} clearly holds. 
As mentioned earlier,  \hyperlink{B3}{{\bf (B3)}} follows from \cite[(2.9)]{KSV}.
  Using \cite[Theorem 3.4]{So04} and \cite[Theorem 1.1]{CKP} if $\gamma=2$, and \cite[Theorem 1.1]{CKS10} and  \cite[Theorem 1.2]{CT11}  if $\gamma<2$, we see that $Z^{\gamma, D}$ satisfies either the condition {\bf HK$^{\text h}_{\text B}$} (if $D$ is bounded) 
or {\bf HK$^{\text h}_{\text U}$} (if $D$ is unbounded) in \cite{CKSV22} with $\Phi(r)=r^\gamma$, $C_0=\1_{\gamma \ne 2}$ and the boundary function 
	\begin{align}\label{e:def-hD-gamma}
		h^D_\gamma(t,x,y)= 	\bigg( 1 \wedge \frac{\delta_{D}(x)^{\gamma/2}}{t^{1/2}} \bigg)
			\bigg( 1 \wedge \frac{\delta_{D}(y)^{\gamma/2}}{t^{1/2}} \bigg).
	\end{align}
Thus, by \cite[Example 7.2]{CKSV22} (see also \cite[(1.1) and (1.2)]{KSV}), \hyperlink{B4-c}{{\bf (B4-c)}} holds with 
$\Phi_1=\Phi_1^{\gamma,\beta}$, $\Phi_2=\Phi_2^{\gamma,\beta}$ and  $\ell=\ell^{\gamma,\beta}$.
 For \hyperlink{B5-I}{{\bf (B5-I)}}, using  \eqref{e:subordinate-killed-F} and Lemma \ref{l:gamma-beta-extra-decay}, we get that  for all   $\nu \in (0,1)$ and $x,y \in E_{\nu}$,
 \begin{align*}		&\Big|\sB^D(x,y)- c_{d,-\alpha}F_0^{\gamma,\beta}((y-x)/x_d)\Big|= |x-y|^{d+\alpha} \left|J^D(x,y)-J^\bH(x,y)\right| \\	&\le c \bigg(	\frac{ |x-y|}{ \delta_D(x) \vee  \delta_D(y)}\bigg)^{d+\alpha} 	\bigg(\frac{\delta_D(x) \vee \delta_D(y)}{R}\bigg)^{\tfrac{(1-\beta)(1-\nu)\gamma}{2(1+\nu)}}\\		
 		&\le c \bigg(\frac{\delta_D(x) \vee \delta_D(y) \vee  |x-y|}{\delta_D(x) \wedge \delta_D(y) \wedge |x-y|} \bigg)^{d+\alpha}		\bigg( \frac{(\delta_D(x)\vee\delta_D(y) \vee |x-y|)}{R}\bigg)^{\tfrac{(1-\beta)(1-\nu)\gamma}{2(1+\nu)}}.
 \end{align*}
Hence, \hyperlink{B5-I}{{\bf (B5-I)}} holds true with  any $\nu\in (0,1)$, $\theta_1=d+\alpha$ 
and $\theta_2=(1-\beta)(1-\nu)\gamma/(2+2\nu)$. 

In view of \eqref{e:subordinate-killed-F-2}, it remains to  prove that  $\kappa^D$ satisfies \hyperlink{K3}{{\bf (K3)}} and \eqref{e:C(alpha,p,F)} with $\F=F_0^{\gamma,\beta}$ and $p=\gamma/2$. 
It is known, see \cite[(3.2)]{Pr81}, that there exists 
$
c_*>0$ such that for all  $w \in \R^d$, $r>0$ and $t>0$,
	\begin{align}\label{e:gamma-stable-EP}
		\P_w (\tau^{(\gamma)}_{B(w,r)} \le t)
		=\P_0 (\tau^{(\gamma)}_{B(0,r)} \le t)
		=\P_0 (\max_{0 \le s\le t}|Z^{\gamma}_s|  \ge r)
		 \le 
c_*tr^{-\gamma}.
	\end{align}	
From \eqref{e:gamma-stable-EP}, we see that
	\begin{align}\label{e:gamma-lifetime1}
		 \P_x(\tau^{(\gamma)}_D \le t ) 
			\le \P_x (\tau^{(\gamma)}_{B(x, \delta_D(x))}\le t ) \le 
c_* t \delta_D(x)^{-\gamma}.
	\end{align}
Applying 
\eqref{e:gamma-lifetime1} to \eqref{e:subordinate-killed-killing}, we have that for all $x \in D$ with $\delta_D(x) \ge R/2$,	
\begin{align}\label{e:kappanew8}
		\kappa^D(x) &
		\le  c_\beta \int_0^{\delta_D(x)^\gamma}\P_x(\tau^{(\gamma)}_D \le t ) t^{-1-\beta} dt  
			+ c_\beta \int_{\delta_D(x)^\gamma}^\infty t^{-1-\beta}dt\\
			&\le c\delta_D(x)^{-\gamma} \int_0^{\delta_D(x)^\gamma} t^{-\beta}dt + c \delta_D(x)^{-\alpha} = c\delta_D(x)^{-\alpha} 
			 	\le c R^{-\alpha}. \nn
	\end{align}

We now assume that $x \in D$ with $\delta_D(x)<R/2$. 
Without loss of generality, by choosing $Q_x\in \partial D$ such that $|x-Q_x|=\delta_D(x)$, we assume that $x=(\wt 0,x_d)=(\wt 0, \delta_D(x))$ in CS$_{Q_x}$ and denote $E^{Q_x}_{\nu}(R)$, $S^{Q_x}(R)$ and $\wt S^{Q_x}(R)$ by $E_{\nu}$, $S$ and $\wt S$ respectively.

Repeating the proof of \cite[Lemma 2.4(i)]{CKSV23}, we see that there exists  $c_1=c_1(\gamma,\beta)>0$ independent of $x$ such that  
	\begin{align}\label{e:example-kappa-1}
		\kappa^\bH(x)=c_1x_d^{-\alpha}= c_1\delta_D(x)^{-\alpha}.
	\end{align}
Recall $b_{\gamma,\beta}$ is defined in \eqref{e:defb}. We see that 
 \hyperlink{B4-a}{{\bf (B4-a)}} and \hyperlink{B4-b}{{\bf (B4-b)}} hold with $\Phi_0(r)=(r\wedge 1)^{b_{\gamma,\beta}}\ell(r)$ by Lemma \ref{l:A3-c}. Hence, by Lemmas \ref{l:constant} and \ref{l:C=infty},  $q\mapsto C(\alpha,q,F_0^{\gamma,\beta})$ is a strictly increasing continuous function on $[(\alpha-1)_+, \alpha+ b_{\gamma,\beta})$ with $C(\alpha,(\alpha-1)_+,F_0^{\gamma,\beta})=0$ 
and $\lim_{q \to b_{\gamma,\beta}} C(\alpha,q,F_0^{\gamma,\beta})=\infty$. Thus, there exists a unique constant 
$p \in [(\alpha-1)_+, \alpha+b_{\gamma,\beta}) \cap (0,\infty)$ such that
 	\begin{align}\label{e:example-kappa-2}
		C(\alpha,p,F_0^{\gamma,\beta})=c_1/c_{d, -\alpha}=c_1/\sB^D(x,x).
 	\end{align} 
 Set $\eps_1:=(1-\nu)\gamma/(8\beta+8\nu \beta)$ and $\eps_2:=(1-\nu)\gamma^2/(8d+8\nu d)$. 
 	By \eqref{e:example-kappa-1}, \eqref{e:example-kappa-2} and  \eqref{e:subordinate-killed-killing}, we get
 	\begin{align}\label{e:kappanew1}
 		&\big|\kappa^D(x)-
 		C(\alpha,p,F_0^{\gamma,\beta})\sB^D(x,x)
 		\delta_D(x)^{-\alpha}\big|= \left|\kappa^D(x)-\kappa^\bH(x)\right|\\
 		&\le c_\beta\int_0^{R^{-\eps_1} \delta_D(x)^{\gamma+\eps_1}} 
 		\big(	\P_x(\tau^{(\gamma)}_D \le t ) \vee \P_x(\tau^{(\gamma)}_\bH \le t ) \big)
 		t^{-1-\beta}dt\nn\\
 		&\quad + c_\beta\int_{R^{-\eps_1} \delta_D(x)^{\gamma+\eps_1}}^{R^{\eps_2} \delta_D(x)^{\gamma-\eps_2}} \bigg| 
 		\int_D q^{\gamma,D}(t,x,y)dy -  \int_\bH q^{\gamma,\bH}(t,x,y)dy \bigg| \,t^{-1-\beta}dt\nn \\
 		&\quad 
 		+ c_\beta\int_{R^{\eps_2} \delta_D(x)^{\gamma-\eps_2}}^\infty t^{-1-\beta}dt\nn \\
 		&=:I_1+I_2+I_3. \nn
 	\end{align}
 	For $I_3$, we have
 	\begin{align}\label{e:kappanew2}
 		I_3 \le c R^{-\beta\eps_2} 
 		\delta_D(x)^{-\alpha+\beta\eps_2}.
 	\end{align}

 	Using \eqref{e:gamma-stable-EP}, we see that
 	\begin{align}\label{e:gamma-lifetime}
 		\P_x(\tau^{(\gamma)}_D \le t ) \vee \P_x(\tau^{(\gamma)}_\bH \le t )
 		\le \P_x (\tau^{(\gamma)}_{B(x, \delta_D(x))}\le t ) \le 
 		c_* t \delta_D(x)^{-\gamma}.
 	\end{align}
 	Thus, we get
 	\begin{align}\label{e:kappanew3}
 		I_1 \le 
 		c_*\delta_D(x)^{-\gamma}\int_0^{R^{-\eps_1} \delta_D(x)^{\gamma+\eps_1}} t^{-\beta} dt 
 		= c R^{-(1-\beta)\eps_1} \delta_D(x)^{-\alpha+ (1-\beta)\eps_1}.
 	\end{align} 
 	Set $W:=B(x,  R^{\eps_2/\gamma}\delta_D(x)^{1-\eps_2/\gamma})$.  By Lemma \ref{l:C11}(ii), we have that
 	\begin{align}\label{e:kappanew4}
 		I_2 &\le c_\beta \int_{R^{-\eps_1} \delta_D(x)^{\gamma+\eps_1}}^{R^{\eps_2} \delta_D(x)^{\gamma-\eps_2}} 
 		\bigg( \int_{\R^d \setminus \wt S} q^{\gamma,\R^d \setminus \wt S}(t,x,y)dy 
 		-  \int_S q^{\gamma,S}(t,x,y)dy \bigg) \,t^{-1-\beta}dt \\
 		&\le  c_\beta\int_{R^{-\eps_1} \delta_D(x)^{\gamma+\eps_1}}^{R^{\eps_2} \delta_D(x)^{\gamma-\eps_2}}  
 		\int_{W \setminus \wt S} 
 		\left(q^{\gamma,\R^d \setminus \wt S}(t,x,y)- q^{\gamma,S}(t,x,y)\right) dy \,t^{-1-\beta}dt\nn\\
 		&\quad + c_\beta \int_{R^{-\eps_1} \delta_D(x)^{\gamma+\eps_1}}^{R^{\eps_2} \delta_D(x)^{\gamma-\eps_2}}
 		\int_{\R^d \setminus W}q^{\gamma,\R^d \setminus \wt S}(t,x,y) dy \,t^{-1-\beta}dt\nn\\
 		&=:I_{2,1}+I_{2,2}.\nn
 	\end{align}
 	For any 
 	$0<t\le R^\gamma$  and $y \in W\setminus \wt S$, since $x \in E_{\nu}$,
 	 using symmetry, \eqref{e:subordinate-heatkernel-diff} and Lemmas \ref{l:dirichlet-1}-\ref{l:dirichlet-2}, we have
 	\begin{align*}
 		q^{\gamma,\R^d \setminus \wt S}(t,x,y)- q^{\gamma,S}(t,x,y) 
 		&= 	q^{\gamma,\R^d \setminus \wt S}(t,y,x)- q^{\gamma,S}(t,y,x)\\
 		&=\E_y \Big[q^{\gamma,\R^d \setminus \wt S}(t-\tau^{(\gamma)}_S, Z^\gamma(\tau^{(\gamma)}_S),x); \tau^{(\gamma)}_S<t \Big]\\
 		&\le \sup_{0<s\le 
 			R^\gamma,\, z \in \R^d \setminus (S \cup \wt S)} q^{\gamma, \R^d \setminus \wt S}(s ,z, x) \\
 			&\le c
 	\delta_D(y)^{-d} (\delta_D(y)/R)^{\frac{\gamma(1-\nu)}{2(1+\nu)}}.
 	\end{align*}
 	Hence, we obtain
 	\begin{align}\label{e:kappanew5}
 		&	I_{2,1} \le c \delta_D(y)^{-d} (\delta_D(y)/R)^{\frac{\gamma(1-\nu)}{2(1+\nu)}}
		\int_{R^{-\eps_1} 	\delta_D(x)^{\gamma+\eps_1}}^{R^{\eps_2} \delta_D(x)^{\gamma-\eps_2}}  t^{-1-\beta}dt \int_{W}  dy \\
 			&\le c \delta_D(y)^{-d} (\delta_D(y)/R)^{\frac{\gamma(1-\nu)}{2(1+\nu)}}
			(R^{-\eps_1} \delta_D(x)^{\gamma+\eps_1})^{-\beta}( R^{\eps_2/\gamma}	\delta_D(x)^{1-\eps_2/\gamma})^d\nn\\
 				&=c \delta_D(y)^{-\alpha} (\delta_D(y)/R)^{\frac{\gamma(1-\nu)}{4(1+\nu)}}
				 \le c \delta_D(y)^{-\alpha} (\delta_D(y)/R)^{\beta\eps_2 } .\nn
 	\end{align}
 	For $I_{2,2}$, we see from \eqref{e:gamma-stable-EP} that for all $t>0$,
 	\begin{align*}
 		\int_{\R^d \setminus W} q^{\gamma,\R^d \setminus \wt S}(t,x,y) dy 
 		\le 	 \P_x (\tau^{(\gamma)}_{W} \le t) 
 		\le  ct  R^{-\eps_2} \delta_D(x)^{-\gamma+\eps_2}. 
 	\end{align*}
 	It follows that
 	\begin{align}\label{e:kappanew6}
 		&I_{2,2} \le  c  R^{-\eps_2} \delta_D(x)^{-\gamma+\eps_2} \int_{0}^{R^{\eps_2} \delta_D(x)^{\gamma-\eps_2}} t^{-\beta}dt 
 		=c R^{-\beta \eps_2} \delta_D(x)^{-\alpha+\beta\eps_2}.
 	\end{align}
 	Therefore, combining \eqref{e:kappanew1}, \eqref{e:kappanew2} and \eqref{e:kappanew3}--\eqref{e:kappanew6}, we 
 	get 
 	\begin{align*}
 		\big|\kappa^D(x)-
 		C(\alpha,p,F_0^{\gamma,\beta})\sB^D(x,x)
 		\delta_D(x)^{-\alpha}\big| \le c(R) \delta_D(x)^{-\alpha+\eta_0}
 	\end{align*}
 	where 
	 $\eta_0 := (1-\beta)\eps_1 \wedge \beta\eps_2>0$.  
 	From this and \eqref{e:kappanew8}, we conclude that \hyperlink{K3}{{\bf (K3)}} holds.

Lastly,	by comparing Theorem \ref{t:Green} with \cite[(7.10)]{CKSV22}, we deduce from \eqref{e:example-kappa-2} that \eqref{e:C(alpha,p,F)}  holds with $\F=F_0^{\gamma,\beta}$ and $p=\gamma/2$. The proof is complete. 
\qed

\bigskip
	
Below, we present two more examples, which are generalizations of the process $Y^D$ defined in \eqref{e:def-subordinate-killed}.

 Recall that the functions 
  $J^{\gamma,U,\beta}(x,y)$, $\kappa^{\gamma,D,\beta}$,     
$\sB^{\gamma,D,\beta}$, $F_0^{\gamma,\beta}$, $\Phi_1^{\gamma,\beta}$, $\Phi_2^{\gamma,\beta}$ 
and $\ell^{\gamma,\beta}$ are defined by 
\eqref{e:subordinate-killed-jump}
-\eqref{e:subordinate-killed-F-1} 
and \eqref{e:example-Phi-1}-\eqref{e:example-ell} respectively.  We also recall that the jump kernel of the  isotropic $\alpha$-stable process has density $c_{d,-\alpha}|x-y|^{-d-\alpha}$, and the $\beta$-stable subordinator with Laplace exponent $\lambda^\beta$  has  L\'evy density $c_\beta t^{-1-\beta}$.
    
\begin{example}\label{ex:sum-of-processes}  
Let $\alpha \in (0,2)$, $m \ge 2$ and $0<\gamma_1< \cdots< \gamma_m \le 2$.
Set $\beta_i:=\alpha/\gamma_i$ for $1\le i\le m$. Consider a process  $\wt Y$  corresponding to the generator 
	$$
		L=\sum_{i=1}^m-((-\Delta)^{\gamma_i/2}|_D)^{\beta_i}.
	$$
$\wt Y$ is an independent sum of subordinate killed stable processes whose infinitesimal generators have  
the same fractional order $\alpha$.
Note that the jump kernel  and the killing measure  of $\wt Y$ have densities $\wt J(x,y)$ and  $\wt \kappa(x)$ given by 
$\wt J(x,y) = \sum_{i=1}^mJ^{\gamma_i,D,\beta_i}(x,y)$ and $\wt\kappa(x)=\sum_{i=1}^m\kappa^{\gamma_i,D,\beta_i}(x)$.  	Set 
	$$
		\wt B(x,y):=\sum_{i=1}^{m}	\sB^{\gamma_i,D,\beta_i}(x,y).
	$$ 
In the following, we show that $\wt Y$ satisfies \hyperlink{B1}{{\bf (B1)}}, \hyperlink{B3}{{\bf (B3)}}, \hyperlink{B4-c}{{\bf (B4-c)}} with  
$\Phi_1=\Phi_1^{\gamma_1,\beta_1}$, $\Phi_2=\Phi_2^{\gamma_1,\beta_1}$ and $\ell=\ell^{\gamma_1,\beta_1}$,   \hyperlink{K3}{{\bf (K3)}}, and  \hyperlink{B5-I}{{\bf (B5-I)}} and \eqref{e:C(alpha,p,F)} with  
$\F_0=\F=\frac{1}{m}\sum_{i=1}^m F_0^{\gamma_i,\beta_i}$ and some $p \in (\gamma_1/2, \gamma_m/2)$.
  		
\medskip
  		
\noindent \hyperlink{B1}{{\bf (B1)}}: By symmetry,	\hyperlink{B1}{{\bf (B1)}} clearly holds.

\smallskip		

\noindent \hyperlink{B3}{{\bf (B3)}}: Since each $\sB^{\gamma_i,D,\beta_i}$, $1\le i \le m$, satisfies \hyperlink{B3}{{\bf (B3)}} by Proposition \ref{p:example-subordinate-killed} and
\begin{align*}
	|\wt \sB(x,x)-\wt \sB(x,y)|& \le  \sum_{i=1}^m	|\sB^{\gamma_i,D,\beta_i}(x,x)-\sB^{\gamma_i,D,\beta_i}(x,y)|,
\end{align*}
$\wt \sB$ satisfies \hyperlink{B3}{{\bf (B3)}}.

\smallskip
  		
\noindent \hyperlink{B4-c}{{\bf (B4-c)}}:  For $x,y \in D$, we let
  	\begin{align*}
  			&r_1^{x,y}:=\frac{\delta_D(x) \wedge \delta_D(y)}{|x-y|}, \qquad r_2^{x,y}:=\frac{\delta_D(x) \vee \delta_D(y) }{|x-y|}\\
  			& \text{and} \quad  r_3^{x,y}:=\frac{\delta_D(x) \wedge \delta_D(y)}{(\delta_D(x)\vee\delta_D(y)) \wedge |x-y|}.
  	\end{align*}	
 By Proposition \ref{p:example-subordinate-killed}, we get that for all $x,y \in D$,
  	\begin{align*}
  			\wt \sB(x,y) \asymp \sum_{i=1}^m \Phi_1^{\gamma_i,\beta_i}(r_1^{x,y}) \, 
  			\Phi_2^{\gamma_i,\beta_i}(r_2^{x,y}) \, \ell^{\gamma_i,\beta_i}(r_3^{x,y}).
  	\end{align*}
Hence, it suffices to show that there exists $c_1>0$ such that for all $2\le i\le m$ and $x,y \in D$,
  	\begin{align}\label{e:sum-of-processes-1}
  		I:=	\frac{\Phi_1^{\gamma_i,\beta_i}(r_1^{x,y}) \, \Phi_2^{\gamma_i,\beta_i}(r_2^{x,y}) \, \ell^{\gamma_i,\beta_i}(r_3^{x,y})}
  		{\Phi_1^{\gamma_1,\beta_1}(r_1^{x,y}) \, \Phi_2^{\gamma_1,\beta_1}(r_2^{x,y}) \, \ell^{\gamma_1,\beta_1}(r_3^{x,y})}\le c_1.
  	\end{align}
It suffices to prove \eqref{e:sum-of-processes-1} for $i=2$.   Since $\gamma_1<\gamma_2$, we have $\beta_1>\beta_2$.
If $\beta_1<1/2$, then 
	\begin{align*}
		I\le 	\frac{(r_1^{x,y}  \wedge 1)^{\gamma_2/2} \, (r_2^{x,y} \wedge 1)^{\gamma_2/2-\alpha}}{ (r_1^{x,y} 
			 \wedge 1)^{\gamma_1/2}\, (r_2^{x,y}  \wedge 1)^{\gamma_1/2-\alpha}} \le 1.
	\end{align*}
If $\beta_1 \ge 1/2> \beta_2$, then 
  	\begin{align*}
  		I\le 	\frac{(r_1^{x,y}  \wedge 1)^{\gamma_2/2} \, (r_2^{x,y} \wedge 1)^{\gamma_2/2-\alpha}}{ (r_1^{x,y}  
  		\wedge 1)^{\gamma_1 -\alpha}} \le 	\frac{(r_1^{x,y}  \wedge 1)^{\gamma_2/2}}{ (r_1^{x,y}  \wedge 1)^{\gamma_1/2}}  \le 1.
  	\end{align*}
 If $\beta_1> 1/2=\beta_2$, then 
  	\begin{align*}
  		I&\le 	\frac{(r_1^{x,y}  \wedge 1)^{\gamma_2-\alpha}}{ (r_1^{x,y}  \wedge 1)^{\gamma_1 -\alpha}}
  			\log \bigg( \frac{e}{r_3^{x,y}\wedge 1}\bigg) = (r_1^{x,y}  \wedge 1)^{\gamma_2-\gamma_1}
  			\log \bigg( \frac{e ( r_2^{x,y} \wedge 1)}{r_1^{x,y}\wedge 1}\bigg)  \\
  		&\le  (r_1^{x,y}  \wedge 1)^{\gamma_2-\gamma_1}\log \bigg( \frac{e}{r_1^{x,y}\wedge 1}\bigg)  
  			\le \sup_{0<s \le 1} s^{\gamma_2-\gamma_1} \log (e/s)=c_2.
  \end{align*}
If $\beta_1>\beta_2 > 1/2$, then 
  	\begin{align*}
		I&\le 	\frac{(r_1^{x,y}  \wedge 1)^{\gamma_2-\alpha}}{ (r_1^{x,y}  \wedge 1)^{\gamma_1 -\alpha}}\le 1.
	\end{align*}
Thus, \eqref{e:sum-of-processes-1} holds.

\smallskip
  		
\noindent \hyperlink{B5-I}{{\bf (B5-I)}}: Note that 
$\wt \sB(x,x)= m\sB^{\gamma_1,D,\beta_1}(x,x)=\cdots =m\sB^{\gamma_m,D,\beta_m}(x,x)$ for all $x\in D$. Hence, for all $x,y \in D$, we have
  	\begin{align*}
  		&\Big|\wt\sB(x,y)- \frac{1}{m}\wt \sB(x,x) \sum_{i=1}^m F_0^{\gamma_i,\beta_i}((y-x)/x_d)  \Big|\\
  		&\le \sum_{i=1}^m\Big|\sB^{\gamma_i,D,\beta_i}(x,y)-\sB^{\gamma_i,D,\beta_i}(x,x)  
  				F_0^{\gamma_i,\beta_i}((y-x)/x_d)  \Big|.
  	\end{align*}
Therefore,  since each $\sB^{\gamma_i,D,\beta_i}$, $1\le i \le m$, satisfies \hyperlink{B5-I}{{\bf (B5-I)}} by Proposition \ref{p:example-subordinate-killed}, $\wt \sB$ satisfies \hyperlink{B5-I}{{\bf (B5-I)}}.
  	
\smallskip
  	
\noindent \hyperlink{K3}{{\bf (K3)}} and \eqref{e:C(alpha,p,F)}:  By Proposition \ref{p:example-subordinate-killed}, 
each $\kappa^{\gamma_i,D,\beta_i}$, $1\le i\le m$, satisfies  \hyperlink{K3}{{\bf (K3)}} and \eqref{e:C(alpha,p,F)} with $p=\gamma_i/2$ and $\F=F_0^{\gamma_i,\beta_i}$. Hence, since  $\wt \sB(x,x)= m\sB^{\gamma_1,D,\beta_1}(x,x)=\cdots =m\sB^{\gamma_m,D,\beta_m}(x,x)$ for all $x \in D$, one sees that $\wt \kappa$ satisfies \hyperlink{K3}{{\bf (K3)}} 
with $C_9=\sum_{i=1}^m C(\alpha, \gamma_i/2, F_0^{\gamma_i,\beta_i})$. 
  	
Set $b:= \gamma_1/2$ if $\gamma_1=2$ or $\beta_1<1/2$, and set $b:=\gamma_1-\alpha_1$ otherwise. Then \hyperlink{B4-a}{{\bf (B4-a)}} and \hyperlink{B4-b}{{\bf (B4-b)}} hold with $\Phi_0(r)=(r\wedge 1)^{b}\ell^{\beta_1,\gamma_1}(r)$ by Lemma \ref{l:A3-c}. Using this and Lemmas \ref{l:constant} and \ref{l:C=infty},  we deduce that	 there exists a unique constant $
p \in [(\alpha-1)_+, \alpha+b) \cap (0,\infty)$ such that
  	\begin{align*}
  		C(\alpha,p,\frac{1}{m}\sum_{i=1}^m F_0^{\gamma_i,\beta_i})
  			= \frac{1}{m}\sum_{i=1}^m C(\alpha, \gamma_i/2, F_0^{\gamma_i,\beta_i}).
  	\end{align*} 
Using \eqref{e:def-killing-constant} and Lemma \ref{l:constant}, we also get that
	$$
  		C(\alpha,\gamma_1/2,\frac{1}{m}\sum_{i=1}^m F_0^{\gamma_i,\beta_i}) 
  			=\frac{1}{m}\sum_{i=1}^mC(\alpha, \gamma_1/2, F_0^{\gamma_i,\beta_i})<\frac{1}{m}\
  			sum_{i=1}^mC(\alpha, \gamma_i/2, F_0^{\gamma_i,\beta_i})
  	$$
  and
  	$$
  		C(\alpha,\gamma_m/2,\frac{1}{m}\sum_{i=1}^m F_0^{\gamma_i,\beta_i}) 
  			=\frac{1}{m}\sum_{i=1}^mC(\alpha, \gamma_m/2, F_0^{\gamma_i,\beta_i})>\frac{1}{m}
  			\sum_{i=1}^mC(\alpha, \gamma_i/2, F_0^{\gamma_i,\beta_i}).
  	$$
By Lemma \ref{l:constant}, it follows that $p \in (\gamma_1/2, \gamma_m/2)$.
\end{example}

\smallskip
		
\begin{example}\label{ex:subordinator} 
Suppose also that $D$ is bounded. Let $\gamma \in (0,2]$,  $\beta\in (0,1)$ and $\alpha:=\gamma\beta$. Let $\phi$ be a Bernstein function with  L\'evy triplet $(0,0,\Pi)$. Assume  that  $\Pi(dt)$ has a density $\Pi(t)dt$ satisfying the following property: 
			
\smallskip
			
\noindent \textit{There exist constants $t_0>0$ and $\theta \in (\gamma^{-1}(\alpha-1)_+, 1)$, and a  $\theta$-H\"older continuous function $k:(0,t_0) \to (0,\infty)$ with $k(0+) \in (0,\infty)$ such that}
		\begin{align}\label{e:def-mu-example}
			\Pi(t) = k(t) t^{-1-\beta} \quad \text{for} \;\, t \in (0,t_0).
		\end{align}
		
\noindent	Examples of such Bernstein functions include   $c_\beta\int_0^1 (1-e^{-\lambda t})t^{-1-\beta}dt$ and	
$(\lambda + m)^{\beta}-m^\beta$ $(m>0)$. We refer to \cite[Section 16]{SSV12} for more examples.
			
\smallskip
			
Let $Y^{\gamma,D,\phi}$ be a process corresponding to the generator 
		 $$
			 L=-\phi((-\Delta)^{\gamma/2}|_D).
		 $$ 
Equivalently, define $Y^{\gamma,D,\phi}$ by $Y^{\gamma,D,\phi}_t=Z^{\gamma,D}_{T^\phi_t}$, where $T^\phi$ is a subordinator with Laplace exponent $\phi$  independent of $Z^\gamma$.  	According to \cite[(2.8)-(2.9)]{Oku}, the jump kernel  and the killing measure  of $Y^{\gamma,D,\phi}$ have densities $J^{\gamma,D,\phi}(x,y)$ and  $\kappa^{\gamma,D,\phi}(x)$ given by 
		\begin{align}\label{e:subordinate-killed-general}
				J^{\gamma,D,\phi}(x,y)&=\int_0^\infty q^{\gamma,D}(t,x,y) \Pi(t)dt,\nn\\
				\kappa^{\gamma,D,\phi}(x)&=\int_0^\infty \bigg(1- \int_D q^{\gamma,D}(t,x,y)dy \bigg) \Pi(t)dt. 
		\end{align}

Define for $x,y \in D$,
		$$
			\sB^{\gamma,D,\phi}(x,y)=\begin{cases}
				|x-y|^{d+\alpha}J^{\gamma, D,\phi}(x,y) &\mbox{ if } x \neq y,\\
				c_{d,-\alpha}k(0+)/c_\beta &\mbox{ if } x = y.
			\end{cases}
		$$ 
In this example, we prove that $Y^{\gamma,D,\phi}$ satisfies \hyperlink{B1}{{\bf (B1)}},  \hyperlink{B3}{{\bf (B3)}}, \hyperlink{B4-c}{{\bf (B4-c)}} with  
$\Phi_1=\Phi_1^{\gamma,\beta}$, $\Phi_2=\Phi_2^{\gamma,\beta}$ and $\ell=\ell^{\gamma,\beta}$,  \hyperlink{K3}{{\bf (K3)}}, and  \hyperlink{B5-I}{{\bf (B5-I)}} and \eqref{e:C(alpha,p,F)} with  $\F_0=\F=F_0^{\gamma,\beta}$ and $p=\gamma/2$.
	
\medskip
				
\noindent \hyperlink{B1}{{\bf (B1)}}: Since $ q^{\gamma,D}(t,x,y)= q^{\gamma,D}(t,y,x)$, 	\hyperlink{B1}{{\bf (B1)}} clearly holds.
		
		\smallskip
		
\noindent \hyperlink{B4-c}{{\bf (B4-c)}}: Recall that $Z^{\gamma, D}$  satisfies  the condition {\bf HK$^{\text h}_{\text B}$}  in \cite{CKSV22} with $\Phi(r)=r^\gamma$, $C_0=\1_{\gamma \ne 2}$ and the boundary function $	h^D_\gamma(t,x,y)$ defined 
in \eqref{e:def-hD-gamma}. By \eqref{e:def-mu-example}, 
		\begin{align}\label{e:tail-mu-example}
			\Pi((t,\infty)) \asymp t^{-\beta} \quad \text{for} \;\, t \in (0,t_0/2).
		\end{align}
Thus, by \cite[Example 7.2]{CKSV22},   \hyperlink{B4-c}{{\bf (B4-c)}} holds with 
$\Phi_1=\Phi^{\gamma,\beta}_1, \Phi_2=\Phi^{\gamma,\beta}_2$ and $\ell=\ell^{\gamma,\beta}$.  
		
\smallskip
		
\noindent \hyperlink{B3}{{\bf (B3)}} and \hyperlink{B5-I}{{\bf (B5-I)}}: 	 Set $a_0:=k(0+)/c_\beta$. Then
		\begin{align}\label{e:example-diagonal}
			\sB^{\gamma,D,\phi}(x,x)=a_0 \sB^{\gamma,D,\beta}(x,x) = a_0 c_{d,-\alpha} 
			\quad \text{for all} \;\, x \in D.
		\end{align}
Since $t^{-d/2}e^{-r^2/(4t)} \le ct^{-d/2} (t/r^2)^{(d+2)/2}$ for all $t,r>0$, we have
		\begin{align}\label{e:hke-free-process}
			q^{\gamma,D}(t,x,y) \le  	q^{\gamma,\R^d}(t,x,y) \le c \bigg( t^{-d/\gamma} \wedge \frac{t}{|x-y|^{d+\gamma}}\bigg), 
			 \quad t>0, \, x,y \in D.
		\end{align}
By \eqref{e:subordinate-killed-jump} and \eqref{e:subordinate-killed-general}, we see that for all $x,y \in D$ with $x \ne y$,
		\begin{align*}
			& |	a_0\sB^{\gamma, D, \beta}(x,y) - 	\sB^{\gamma, D,\phi}(x,y)|\,|x-y|^{-d-\alpha}  \\
			&\le  \bigg( \int_0^{t_0} +\int_{t_0}^{\infty} \bigg) \, q^{\gamma,D}(t,x,y) \left|k(0+) t^{-1-\beta} - \Pi(t)\right|dt
			 =:I_1+I_2.
		\end{align*}
For $I_1$, using  \eqref{e:hke-free-process} and  \eqref{e:def-mu-example}, we get
	\begin{align*}
		I_1 &\le \frac{c}{|x-y|^{d+\gamma}}\int_0^{|x-y|^\gamma }  t^{-\beta} |k(0+)-k(t)|dt + c\int_{|x-y|^\gamma }^{t_0} 
			t^{-d/\gamma -1 - \beta} |k(0+)-k(t)|  dt\\
		&\le  \frac{c}{|x-y|^{d+\gamma}}\int_0^{|x-y|^\gamma }   t^{-\beta+\theta}dt + c\int_{|x-y|^\gamma }^{t_0}  
			t^{-d/\gamma -1 - \beta+\theta} dt \le c|x-y|^{-d-\alpha+\gamma\theta}.
	\end{align*}
Further,	 by \eqref{e:hke-free-process}, we have 
		$$
			I_2 \le c t_0^{-d/\gamma} \int_{t_0}^\infty (t^{-1-\beta} + \Pi(t))dt 
				= c \le c (\text{diam}(D))^{d+\alpha-\gamma\theta}\,|x-y|^{-d-\alpha+\gamma\theta}.
		$$ 
Therefore,  we deduce that 
		\begin{align}\label{e:example-diff-jump-phi-beta}
			|a_0	\sB^{\gamma,D,\beta}(x,y) - 	\sB^{\gamma, D,\phi}(x,y)|\le  c|x-y|^{\gamma\theta} \quad \text{for all} \;\; x,y \in D.
		\end{align}
	
If $\alpha \ge 1$, then	by \eqref{e:example-diagonal} and  \eqref{e:example-diff-jump-phi-beta}, since $\sB^{\gamma,D,\beta}$ satisifes \hyperlink{B3}{{\bf (B3)}} by Proposition \ref{p:example-subordinate-killed}, there exists a constant $\theta_0>\alpha-1$  such that for all $x,y \in D$, $x \ne y$, with $|x-y| < \delta_D(x) \wedge \delta_D(y)$,
	\begin{align*}
		&| \sB^{\gamma,D, \phi}(x,x) - \sB^{\gamma, D,\phi}(x,y)| = | a_0 \sB^{\gamma, D, \beta}(x,x) - \sB^{\gamma,D, \phi}(x,y) | \\
		& \le a_0| \sB^{\gamma, D, \beta}(x,x) - \sB^{\gamma, D, \beta}(x,y)| 
			+ | a_0\sB^{\gamma, D, \beta}(x,y) - \sB^{\gamma, D, \beta}(x,y)| \\
		&		\le  c  \bigg(\frac{|x-y|}{ \delta_D(x) \wedge \delta_D(y) \wedge R} \bigg)^{\theta_0} 
			+cR^{\gamma\theta}\bigg(\frac{|x-y|}{R} \bigg)^{\gamma\theta} 	
			\le  c  \bigg(\frac{|x-y|}{ \delta_D(x) \wedge \delta_D(y) \wedge R} \bigg)^{\theta_0 \wedge (\gamma\theta)}.
	\end{align*}
Since $\gamma\theta>(\alpha-1)_+$, by Remark \ref{r:A4},  we deduce that  \hyperlink{B3}{{\bf (B3)}} holds.
	
On the other hand, by \eqref{e:example-diagonal}, it holds that for all $x,y \in D$ and $z \in \bH_{-1}$,
		\begin{align*}
				&|	\sB^{\gamma,D, \phi}(x,y) -  a_0c_{d,-\alpha} F_0^{\gamma,\beta}(z) | \\
				&\le |\sB^{\gamma,D,\phi}(x,y)- a_0 \sB^{\gamma,D, \beta}(x,y) |
					+  a_0| \sB^{\gamma,D, \beta}(x,y) - \sB^{\gamma,D,\beta}(x,x)F_0^{\gamma,\beta}(z) | .
		\end{align*}
Since $\sB^{\gamma,D,\beta}$ satisfies \hyperlink{B5-I}{{\bf (B5-I)}} with $\F_0=\F=F_0^{\gamma,\beta}$ by Proposition 
\ref{p:example-subordinate-killed} and
		\begin{align*}
			 |\sB^{\gamma,D,\phi}(x,y)- a_0 \sB^{\gamma,D, \beta}(x,y) | 
			 	\le  cR^{\gamma\theta}  \bigg(\frac{\delta_D(x) \vee \delta_D(y) \vee |x-y|}{R} \bigg)^{\gamma\theta}
		\end{align*}
by \eqref{e:example-diff-jump-phi-beta}, we conclude that $\sB^{\gamma, D,\phi}$  satisfies  \hyperlink{B5-I}{{\bf (B5-I)}} 
with $\F_0=\F=F_0^{\gamma,\beta}$.
		
\smallskip

\noindent \hyperlink{K3}{{\bf (K3)}} and \eqref{e:C(alpha,p,F)}:   Set $\eps:=(\gamma \theta / (\theta+1)) \wedge (\alpha/\beta)$ and $R_2:=t_0/(2\,\text{diam}(D)^{\gamma-\eps})$.
Choose any $x \in D$. By \eqref{e:subordinate-killed-killing} and  \eqref{e:subordinate-killed-general}, we have
\begin{align*}
	&|a_0\kappa^{\gamma,D, \beta}(x) - 	\kappa^{\gamma,D, \phi}(x)| \nn\\
	&\le \bigg( \int_0^{R_2\delta_D(x)^{\gamma-\eps}}+\int_{R_2\delta_D(x)^{\gamma-\eps}}^\infty\bigg) \,
		\bigg(1- \int_D q^{\gamma,D}(t,x,y)dy \bigg)\, |k(0+) t^{-1-\beta} - \Pi(t)| dt \nn\\
	&=:I_1+I_2.
\end{align*}
By using \eqref{e:gamma-lifetime} and   \eqref{e:def-mu-example}, since 
$R_2\delta_D(x)^{\gamma-\eps} \le R_2 \, \text{diam}(D)^{\gamma-\eps} <t_0$, we have
	\begin{align*}
		I_1	&\le c\delta_D(x)^{-\gamma}\int_0^{R_2 \delta_D(x)^{\gamma-\eps}}  t^{\theta-\beta}dt  
			= c \delta_D(x)^{(\gamma-\eps)\theta - \alpha+\beta\eps-\eps }\le  c\delta_D(x)^{-\alpha+ \beta\eps}.
	\end{align*}
Further, we get from \eqref{e:tail-mu-example} that
	\begin{align*}
		I_2	&\le   \int_{R_2 \delta_D(x)^{\gamma-\eps}}^\infty k(0+)t^{-1-\beta}dt 
			+  \int_{R_2 \delta_D(x)^{\gamma-\eps}}^\infty  \Pi(t)dt  \le  c \delta_D(x)^{-\alpha+ \beta\eps}.
	\end{align*}
Combining the  two displays above, we obtain
	$$
		|a_0\kappa^{\gamma,D, \beta}(x) - 	\kappa^{\gamma, D,\phi}(x)|  \le c\delta_D(x)^{-\alpha+ \beta\eps}.
	$$
Using this and \eqref{e:example-diagonal}, we arrive at
	\begin{align*}
		&|\kappa^{\gamma,D,\phi}(x) - C(\alpha,\gamma/2, F_0^{\gamma,\beta})\sB^{\gamma,\phi}(x,x) \delta_D(x)^{-\alpha}| \\
		&\le a_0 |\kappa^{\gamma,D,\beta}(x) - C(\alpha,\gamma/2, F_0^{\gamma,\beta})\sB^{\gamma,D,\beta}(x,x) 
			\delta_D(x)^{-\alpha}|  + c\delta_D(x)^{-\alpha+\beta\eps}.
	\end{align*}
Therefore, since $Y^{\gamma,D,\beta}$ satisfies \hyperlink{K3}{{\bf (K3)}} and \eqref{e:C(alpha,p,F)} with $\F=F_0^{\gamma,\beta}$ and $p=\gamma/2$ by Proposition \ref{p:example-subordinate-killed}, 
one can conclude that   $Y^{\gamma,D,\phi}$ also satisfies \hyperlink{K3}{{\bf (K3)}} and \eqref{e:C(alpha,p,F)} 
with $\F=F_0^{\gamma,\beta}$ and $p=\gamma/2$. 
\end{example}

\subsection{General jump kernels with explicit boundary functions}
\label{s-example-general}

We start with a technical lemma that compares some quantities in $C^{1,1}$ open set with their analogs in the half-space $\bH$.

\begin{lemma}\label{l:general-coefficient}
Let $\nu \in (0,1]$. The following statements hold.
	
\noindent (i) For any $Q \in \partial D$ and $x,y \in E^Q_\nu$, we have
	\begin{align*}
		&\left|\frac{\delta_D(x) \wedge \delta_D(y)}{|x-y|}- \frac{x_d \wedge y_d}{|x-y|} \right| \vee 	
			\left|\frac{\delta_D(x) \vee \delta_D(y)}{|x-y|}- \frac{x_d \vee y_d}{|x-y|} \right|  \\
		&\le \frac{\delta_D(x) \vee \delta_D(y)}{|x-y|}\bigg(\frac{\delta_D(x) \vee \delta_D(y)}{R}\bigg)^{(1-\nu)/(1+\nu)}.
	\end{align*}
	(ii) There exists $C=C(\nu)>0$ such that	for any $Q \in \partial D$ and  $x,y \in E^Q_\nu$,
	\begin{align*}
		&\left|\frac{\delta_D(x) \wedge \delta_D(y)}{(\delta_D(x)\vee\delta_D(y))\wedge |x-y|}
			- \frac{x_d \wedge y_d}{(x_d \vee y_d) \wedge |x-y|} \right|\\
		& \le C\bigg(\frac{\delta_D(x) \vee \delta_D(y)}{(\delta_D(x) \vee  \delta_D(y)) \wedge |x-y|}\bigg)^2 
			\bigg(\frac{\delta_D(x) \vee \delta_D(y)}{R}\bigg)^{(1-\nu)/(1+\nu)}.
	\end{align*}
\end{lemma}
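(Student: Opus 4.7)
The plan is to prove (i) first and then bootstrap to (ii), since part (ii) is an algebraic consequence of the pointwise estimate in (i) together with elementary manipulations.

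For part (i), the core step is to establish the pointwise bound
$$
|\delta_D(y) - y_d| \le \hat R^{-1} |\tilde y|^2, \qquad y=(\tilde y, y_d) \in E^Q_\nu.
$$
The upper bound $\delta_D(y) \le y_d + \hat R^{-1}|\tilde y|^2$ follows from Lemma \ref{l:C11}(i). For the lower bound, I would invoke Lemma \ref{l:C11}(ii) to obtain $E^Q_\nu \subset S^Q(R) \subset D$ and then compute $\delta_{S^Q(R)}(y) = R - \sqrt{|\tilde y|^2 + (R-y_d)^2}$, using $\sqrt{a^2+b^2}\le a+b^2/(2a)$ with $a=R-y_d>R/2$ to get $\delta_D(y) \ge \delta_{S^Q(R)}(y) \ge y_d - R^{-1}|\tilde y|^2$. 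Next, the definition of $E^Q_\nu$ gives $y_d > 4R^{-\nu}|\tilde y|^{1+\nu}$, hence $|\tilde y|^2 \le 4^{-2/(1+\nu)} R^{2\nu/(1+\nu)} y_d^{2/(1+\nu)}$, which rearranges precisely to $\hat R^{-1}|\tilde y|^2 \le C(\nu)\, y_d (y_d/R)^{(1-\nu)/(1+\nu)}$, and by Lemma \ref{l:C11}(iii) ($y_d \asymp \delta_D(y)$) this becomes $|\delta_D(y)-y_d|\le C'(\nu) \delta_D(y)(\delta_D(y)/R)^{(1-\nu)/(1+\nu)}$. A careful tracking of the constants in fact yields $C'(\nu)\le 1$, which is what allows the bound in (i) to be stated without a multiplicative constant.

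To conclude (i), I apply the elementary inequalities $|a\wedge b - c \wedge d|\le |a-c|\vee |b-d|$ and $|a\vee b - c\vee d|\le |a-c|\vee|b-d|$ (with $a=\delta_D(x), b=\delta_D(y), c=x_d, d=y_d$), together with the monotonicity of $t\mapsto t(t/R)^{(1-\nu)/(1+\nu)}$, to dominate both $|\delta_D(x)-x_d|$ and $|\delta_D(y)-y_d|$ by $(\delta_D(x)\vee\delta_D(y))((\delta_D(x)\vee\delta_D(y))/R)^{(1-\nu)/(1+\nu)}$; dividing by $|x-y|$ gives (i).

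For part (ii), I would write $a=\delta_D(x)\wedge\delta_D(y)$, $b=\delta_D(x)\vee\delta_D(y)$, $c=x_d\wedge y_d$, $d=x_d\vee y_d$, $r=|x-y|$, note that (i) already supplies $|a-c|\vee|b-d|\le C_\nu\, b(b/R)^{(1-\nu)/(1+\nu)}$, and that $b$ and $d$ are comparable with a universal ratio (since $|b-d|\le b/3$ from the explicit constant in (i)). Then I split into four cases according to whether $b\lessgtr r$ and $d\lessgtr r$ and rewrite $\frac{a}{b\wedge r}-\frac{c}{d\wedge r}$ as a single fraction $\frac{a(d\wedge r)-c(b\wedge r)}{(b\wedge r)(d\wedge r)}$, bounding the numerator via $|a-c|$ and $|b-d|$. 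The cases $\{b,d\le r\}$ and $\{b,d>r\}$ give the target bound immediately (with $(b/(b\wedge r))^2$ equal to $1$ or absorbing an extra factor of $b/r\ge 1$ respectively). The mixed cases, say $b\le r<d\le 2b$, use the observation that then $|b-r|\le |b-d|$, reducing to the same two basic estimates. The main (though still mild) obstacle will be the bookkeeping in these mixed cases and verifying that the square in $(b/(b\wedge r))^2$ is exactly what is needed to absorb the worst factor of $b/r$ that appears.
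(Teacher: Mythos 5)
Your proof is correct, and it departs from the paper's in both halves. For part (i), you invoke the general min/max inequality $|a\wedge b-c\wedge d|\vee|a\vee b-c\vee d|\le|a-c|\vee|b-d|$, which passes the pointwise bound $|\delta_D(z)-z_d|\le R^{-1}|\wt z|^2\le 3^{-2/(1+\nu)}\delta_D(z)(\delta_D(z)/R)^{(1-\nu)/(1+\nu)}$ through to the min/max directly with constant $1$; the paper instead uses the ad hoc identity $|x_d-x_d\wedge y_d|=(x_d-y_d)_+\le|x_d-\delta_D(x)|+|\delta_D(y)-y_d|$ (valid under the WLOG reduction $\delta_D(x)\le\delta_D(y)$), which costs a factor $3$ that is then absorbed by the explicit $3^{-2/(1+\nu)}$. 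Your route avoids that bookkeeping entirely and is, if anything, cleaner. For part (ii) the situation is reversed: you put $\tfrac{a}{b\wedge r}-\tfrac{c}{d\wedge r}$ over a common denominator and split into four cases according to $b\lessgtr r$ and $d\lessgtr r$, using $d\le\tfrac43 b$ (from the constant $3^{-2/(1+\nu)}\le\tfrac13$) to control the mixed cases; this works but requires verifying four separate inequalities. The paper instead writes the difference as a two-term telescoping sum — change the numerator first (keeping the denominator $(x_d\vee y_d)\wedge|x-y|$), then change the denominator (keeping the numerator $\delta_D(x)\wedge\delta_D(y)$) — and each term is bounded by a single application of (i) together with the $1$-Lipschitz bound $|s\wedge r-t\wedge r|\le|s-t|$, no casework needed. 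Both proofs land on the stated constant; each buys simplicity in a different half. One small caveat you should make explicit: the comparability $d\asymp b$ (or $|b-d|\le b/3$) that underlies your mixed-case estimates does indeed follow from the precise constant $3^{-2/(1+\nu)}\le 1/3$ together with $\delta_D(y)<R$, so the proof is complete as written, but it deserves to be stated rather than left implicit.
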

\begin{proof}  Let $x,y \in E^Q_{\nu}$. Without loss of generality, we assume that $Q=0$ and $\delta_D(x) \le \delta_D(y)$. By  Lemma \ref{l:C11}(i), (iii), since $x,y \in E^0_\nu$, we have
	\begin{align}\label{e:example2-dist}
		|\delta_D(x)-x_d| \vee |\delta_D(y)-y_d|  &\le R^{-1}(|\wt x| \vee |\wt y|)^2 \\
		&\le 4^{-2/(1+\nu)} R^{-(1-\nu)/(1+\nu)}(x_d\vee y_d)^{2/(1+\nu)}  \nn\\
		&\le 3^{-2/(1+\nu)}R^{-(1-\nu)/(1+\nu)} (\delta_D(x)\vee \delta_D(y))^{2/(1+\nu)}\nn\\
		& = 3^{-2/(1+\nu)}R^{-(1-\nu)/(1+\nu)}  \delta_D(y)^{2/(1+\nu)}.\nn
	\end{align}

(i) Since $\delta_D(x) \le \delta_D(y)$, we have
	\begin{align*}
		&|x_d-x_d\wedge y_d| = |y_d - x_d \vee y_d| = (x_d-y_d) \vee 0 \\
		&= (x_d-\delta_D(x) + \delta_D(y) - y_d + \delta_D(x)-\delta_D(y) ) \vee 0\\
		& \le |x_d- \delta_D(x)| + |\delta_D(y)-y_d|.
	\end{align*}
Hence,  by \eqref{e:example2-dist}, we get that
	\begin{align*}
		&\left| \delta_D(x) \wedge \delta_D(y) -x_d \wedge y_d\right| \vee 	
			\left| \delta_D(x) \vee \delta_D(y) -x_d \vee y_d\right|\nn\\
		&\le (|\delta_D(x) - x_d | + |x_d - x_d \wedge y_d|) \vee (|\delta_D(y)-y_d| + |y_d - x_d \vee y_d|) \\
		&\le 3(|\delta_D(x)-x_d| \vee |\delta_D(y)-y_d|)\\
		&\le 3^{-(1-\nu)/(1+\nu)}R^{-(1-\nu)/(1+\nu)}\delta_D(y)^{2/(1+\nu)}.
	\end{align*}

(ii) Since   $y_d \asymp \delta_D(y)$ by Lemma \ref{l:C11}(iii),  using (i),  we obtain
	\begin{align*}
		&\left| \frac{\delta_D(x)\wedge\delta_D(y)}{(x_d \vee y_d) \wedge |x-y|} 
			-  \frac{x_d \wedge y_d}{(x_d \vee y_d) \wedge |x-y|} \right|   \nn\\
		&\le \frac{c|\delta_D(x)\wedge\delta_D(y)- x_d \wedge y_d|}{\delta_D(y) \wedge |x-y|} 
			\le  \frac{c \delta_D(y)}{\delta_D(y)\wedge |x-y|}\bigg(\frac{\delta_D(y)}{R}\bigg)^{(1-\nu)/(1+\nu)}
	\end{align*}
and
	\begin{align*}
		&\left|\frac{\delta_D(x)\wedge\delta_D(y)}{(\delta_D(x)\vee\delta_D(y))\wedge |x-y|}
			- \frac{\delta_D(x)\wedge\delta_D(y)}{(x_d \vee y_d) \wedge |x-y|} \right|\nn\\
		&= \delta_D(x) \left|\frac{(x_d \vee y_d) \wedge |x-y|- (\delta_D(x)\vee\delta_D(y))\wedge |x-y| }
			{((\delta_D(x)\vee\delta_D(y)) \wedge |x-y|)((x_d \vee y_d) \wedge |x-y|) } \right| \nn\\
		& \le  c\delta_D(x) \left|\frac{(x_d \vee y_d) - (\delta_D(x)\vee\delta_D(y)) }{(\delta_D(y) \wedge |x-y|)^2} \right| \\ 
		&\le  \frac{c\delta_D(x)\delta_D(y)}{ (\delta_D(y) \wedge |x-y|)^2}\bigg(\frac{\delta_D(y)}{R}\bigg)^{(1-\nu)/(1+\nu)} \\
		&\le  c\bigg(\frac{\delta_D(y)}{ \delta_D(y) \wedge |x-y|}\bigg)^2\bigg(\frac{\delta_D(y)}{R}\bigg)^{(1-\nu)/(1+\nu)}.
	\end{align*}
Combining the  two displays above, we arrive at the result. 
\end{proof}

{\it
In this subsection,  we assume that 
 $\Phi_1, \Phi_2, \ell$ are  differentiable  and that
	\begin{align}\label{e:Phi-diff}
		\sup_{r>0} \bigg( \frac{|\Phi_1'(r)|}{r^{-1}\Phi_1(r)} +  \frac{|\Phi_2'(r)|}{r^{-1}\Phi_2(r)} 
			+  \frac{|\ell'(r)|}{r^{-1}\ell(r)} \bigg)<\infty.
	\end{align}
	}
	See Remark \ref{l:regularversion} below.

Let  $\alpha \in (0,2)$ and	$a:D \times D \to (0,\infty)$ be a Borel function satisfying the following properties: 

\smallskip

\noindent  \hypertarget{A1}{{\bf (A1)}} There exists  $C_{12}>1$ such that
	\begin{align*}
		C_{12}^{-1}\le a(x,y)=a(y,x) \le C_{12} \quad \text{for all} \;\, x,y \in D.
	\end{align*}

\noindent \hypertarget{A2}{{\bf (A2)}} If $\alpha \ge 1$, then there exist constants  $\theta_0'>\alpha-1$ and $C_{13}>0$ such that
	\begin{align}\label{e:B-expression-B3}
		|a(x,x) - a(x,y)| \le C_{13} \bigg(\frac{|x-y|}{\delta_D(x) \wedge \delta_D(y)  \wedge  R}\bigg)^{\theta_0'}  
			\quad \text{for all} \;\, x,y \in D.
	\end{align}

\noindent \hypertarget{A3}{{\bf (A3)}} There exist constants $\nu \in (0,1]$, $\theta_1',\theta_2',C_{14}>0$, a non-negative Borel function $f_0$ on $\bH_{-1}$   such that for any $Q \in \partial D$ and  $x,y\in E^Q_\nu(R)$  with $x=(\wt x,x_d)$ in CS$_Q$, 
	\begin{align*}
		&\big|a(x,y)- a(x,x)f_0((y-x)/x_d)\big| + \big|a(x,y)- a(y,y)f_0((y-x)/x_d)\big|\nn\\
		& \le  C_{14} \bigg(\frac{\delta_D(x)\vee\delta_D(y)\vee|x-y|}{\delta_D(x) \wedge \delta_D(y) \wedge |x-y|} \bigg)^{\theta_1'}
				\big( \delta_D(x)\vee\delta_D(y)\vee|x-y|\big)^{\theta_2'}.
			\end{align*}

\begin{remark}\label{r:Holder-continuous} 
Assume that $\theta_0'>(\alpha-1)_+$ and that $a \in C^{\theta_0'}(\overline D \times \overline D)$ 
is symmetric and bounded above and below by positive constants.  Then  $a$ satisfies  \hyperlink{A1}{{\bf (A1)}}, \hyperlink{A2}{{\bf (A2)}} and \hyperlink{A3}{{\bf (A3)}} with $f_0\equiv 1$.
\end{remark} 

We define $\sB^a:D \times D \to (0,\infty)$  by
	\begin{align}\label{e:B-expression}
		\sB^a(x,y)&=a(x,y) \Phi_1\bigg(\frac{\delta_D(x) \wedge \delta_D(y)}{|x-y|}\bigg)
			\Phi_2\bigg(\frac{\delta_D(x) \vee \delta_D(y)}{|x-y|}\bigg)\\
		&\qquad  \times \ell\bigg(\frac{\delta_D(x)\wedge \delta_D(y)}{(\delta_D(x)\vee\delta_D(y))\wedge |x-y|}\bigg).\nn
	\end{align}
For $y=x$, we interpret the above as $\sB^a(x,x)=a(x,x)$. 

\begin{prop}\label{p:example-coefficient-map}
Suppose that $a$ satisfies \hyperlink{A1}{{\bf (A1)}}, \hyperlink{A2}{{\bf (A2)}} and \hyperlink{A3}{{\bf (A3)}}. Then  the function 
$\sB^a$ defined by \eqref{e:B-expression} satisfies \hyperlink{B1}{{\bf (B1)}}, \hyperlink{B3}{{\bf (B3)}}, \hyperlink{B4-c}{{\bf (B4-c)}} and \hyperlink{B5-I}{{\bf (B5-I)}}.
\end{prop}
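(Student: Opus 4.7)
Properties (B1), (B3), and (B4-c) follow rather directly from the structure of $\sB^a$ in \eqref{e:B-expression}. Symmetry (B1) is immediate from the symmetry of $a$ in (A1) and the symmetric form of the $\Phi_1\Phi_2\ell$-factor in the roles of $x$ and $y$. (B4-c) is also immediate, since (A1) gives $a(x,y)\asymp 1$ uniformly on $D\times D$. For (B3), Remark \ref{r:A4} allows me to restrict to $|x-y|<\delta_D(x)\wedge\delta_D(y)\wedge R$; on that region each of the three arguments entering $\Phi_1$, $\Phi_2$, and $\ell$ in \eqref{e:B-expression} exceeds $1$, and since all three functions equal $1$ on $[1,\infty)$, the $\Phi_1\Phi_2\ell$-product reduces to $1$. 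Thus $\sB^a(x,y)=a(x,y)$ in this region, and (B3) reduces directly to (A2).

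The substantive part is (B5-I). The natural candidate is the half-space analog of $\sB^a/\sB^a(x,x)$: for $z=(\wt z,z_d)\in\bH_{-1}$ I set
\[
\F_0(z)=f_0(z)\,\Phi_1\!\left(\frac{1\wedge(1+z_d)}{|z|}\right)\Phi_2\!\left(\frac{1\vee(1+z_d)}{|z|}\right)\ell\!\left(\frac{1\wedge(1+z_d)}{(1\vee(1+z_d))\wedge|z|}\right),
\]
which is exactly the value that $\sB^a(x,y)/a(x,x)$ would take if $(x,y)$ lay in $\bH$ with $x=x_d\e_d$ and $y=x+x_d z$, using $\delta_\bH(x)=x_d$ and $\delta_\bH(y)=x_d(1+z_d)$. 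For $x,y\in E^Q_\nu(R)$ in CS$_Q$ I would split
\[
\sB^a(x,y)-\sB^a(x,x)\F_0\!\bigl((y-x)/x_d\bigr)=J_1+J_2,
\]
where $J_1=\bigl(a(x,y)-a(x,x)f_0((y-x)/x_d)\bigr)P(x,y)$ and $J_2=a(x,x)f_0((y-x)/x_d)\bigl(P(x,y)-P_\bH(x,y)\bigr)$, with $P(x,y)$ the actual $\Phi_1\Phi_2\ell$-product in \eqref{e:B-expression} and $P_\bH(x,y)$ the analogous product built from $x_d$ and $y_d$ in place of $\delta_D(x)$ and $\delta_D(y)$.

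The term $J_1$ is controlled directly by (A3), since $P(x,y)$ is bounded above by the weak-scaling properties of $\Phi_1,\Phi_2,\ell$. The term $J_2$ is where the work lies: Lemma \ref{l:general-coefficient} says that each of the three arguments of $\Phi_1$, $\Phi_2$, and $\ell$ used in $P_\bH$ differs from the corresponding argument used in $P$ by an additive deviation of order $(\delta_D(y)/R)^{(1-\nu)/(1+\nu)}$, possibly amplified by a bounded power of $(\delta_D(x)\vee\delta_D(y))/((\delta_D(x)\vee\delta_D(y))\wedge|x-y|)$. The differentiability hypothesis \eqref{e:Phi-diff} yields $|(\log\Phi_i)'(r)|\le C/r$ and the analogous bound for $\ell$, so a small multiplicative perturbation of the argument produces a multiplicative perturbation of the value of the same order; rewriting the additive deviation above in multiplicative form and propagating it through each factor converts Lemma \ref{l:general-coefficient} into $|P(x,y)-P_\bH(x,y)|\le C\,P_\bH(x,y)\cdot\Xi$, with $\Xi$ of the product form required by \eqref{e:ass-B5-2}. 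The second inequality in \eqref{e:ass-B5-2}, with $\sB^a(y,y)$ in place of $\sB^a(x,x)$, is obtained by the same split using the other bound furnished by (A3).

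The main obstacle I anticipate is the bookkeeping for $J_2$: Lemma \ref{l:general-coefficient}(i)--(ii) provides an additive rather than relative deviation of the ratios, and the ratios themselves may be arbitrarily small, so converting to a relative deviation (needed to exploit \eqref{e:Phi-diff}) inevitably costs factors of the type $\bigl((\delta_D(x)\vee\delta_D(y)\vee|x-y|)/(\delta_D(x)\wedge\delta_D(y)\wedge|x-y|)\bigr)^{c}$. The final exponents $\theta_1,\theta_2$ in (B5-I) must then absorb both the exponents in (A3) and those produced by this conversion; I expect $\theta_2$ to be essentially $\theta_2'\wedge (1-\nu)/(1+\nu)$ and $\theta_1$ to depend on $\theta_1'$, the upper scaling indices $\ub_1,\ub_2$, and the constant in \eqref{e:Phi-diff}.
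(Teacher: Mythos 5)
Your proposal is correct and follows essentially the same path as the paper: (B1), (B4-c) are immediate from (A1); (B3) reduces to $|x-y|<\delta_D(x)\wedge\delta_D(y)\wedge R$ via Remark~\ref{r:A4}, where the $\Phi_1\Phi_2\ell$-product is identically $1$ so (B3) is (A2); and (B5-I) is proved exactly as in the paper's Lemma~\ref{l:A5} by comparing $\sB^a$ with the half-space analog (the paper's $K_0(\e_d,\e_d+\cdot)$, which is your $\F_0$) using \eqref{e:Phi-diff} via the mean value theorem together with Lemma~\ref{l:general-coefficient}. Your two-term split $J_1+J_2$ is a superficially different grouping of the same telescoping decomposition the paper uses, and the resulting $\theta_2=\theta_2'\wedge(1-\nu_0)/(1+\nu_0)$ with $\nu_0\in(0,\nu]\cap(0,1)$ and $\theta_1=\theta_1'\vee 2$ (the factor of $2$ comes from the $\ell$-estimate in Lemma~\ref{l:general-coefficient}(ii), not the upper scaling indices $\ub_1,\ub_2$ you suggest) match the paper's constants, with the minor caveat that one must replace $\nu$ by a strictly smaller $\nu_0<1$ via \eqref{e:E-nu-inclusion} when $\nu=1$ so that the exponent in $\theta_2$ stays positive.
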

\begin{proof} \hyperlink{B1}{{\bf (B1)}} and  \hyperlink{B4-c}{{\bf (B4-c)}} are immediate by  \hyperlink{A1}{{\bf (A1)}}.  For \hyperlink{B3}{{\bf (B3)}}, we assume  $\alpha \ge 1$.  Then by  \hyperlink{A2}{{\bf (A2)}},  for all $x,y \in D$ with 
$|x-y|<\delta_D(x) \wedge \delta_D(y)$, 
	\begin{align*}
		|\sB^a(x,x)-\sB^a(x,y)| = |a(x,x)-a(x,y)| 
			\le   C_{13}\bigg(\frac{|x-y|}{\delta_D(x) \wedge \delta_D(y)  \wedge R}\bigg)^{\theta_0'}.
	\end{align*}
Hence, by Remark \ref{r:A4} (since \hyperlink{B2-a}{{\bf (B2-a)}} and \hyperlink{B4-a}{{\bf (B4-a)}} follows from 
\hyperlink{B4-c}{{\bf (B4-c)}}),  \hyperlink{B3}{{\bf (B3)}} holds.  By Lemma \ref{l:A5} below,  \hyperlink{B5-I}{{\bf (B5-I)}} also holds.   
\end{proof}

Define a kernel $K_0:\bH \times \bH \to [0,\infty)$ by
	\begin{align*}
		K_0(x,y)&=f_0\bigg(\frac{y-x}{x_d}\bigg) \Phi_1\bigg(\frac{x_d \wedge y_d}{|x-y|}\bigg)\,
			\Phi_2\bigg(\frac{x_d \vee y_d}{|x-y|}\bigg)\, \ell\bigg(\frac{x_d\wedge y_d}{(x_d\vee y_d)\wedge |x-y|}\bigg).
	\end{align*}
Observe that $K_0$ satisfies \eqref{e:ass-1-cond}. Hence, by Lemma \ref{l:ass-1}(i),  we have
	\begin{align}\label{e:B-expression-F}
		K_0(x,y)=F_0((y-x)/x_d) \quad \text{for all} \;\, x,y \in \bH,
	\end{align}
where 
	$$
		F_0(z):=K_0(\e_d,\e_d+z).
	$$ 

\begin{lemma}\label{l:A5}
Let $\nu \in (0,1]$ and $\nu_0 \in (0,\nu] \cap (0,1)$.	There exists $C>0$ such that	for any $Q \in \partial D$ and   
$x,y \in E^Q_{\nu_0}(R)$ with $x=(\wt x, x_d)$ in CS$_Q$,
	\begin{align*}
		&\left|\sB^a(x,y)- \sB^a(x,x)F_{0}((y-x)/x_d) \right| + \left|\sB^a(x,y)-  \sB^a(y,y)F_{0}((y-x)/x_d) \right|  \\
		&\le C\bigg(\frac{\delta_D(x)\vee \delta_D(y)\vee|x-y|}{\delta_D(x) \wedge \delta_D(y) \wedge  |x-y|} \bigg)^{\theta_1' \vee 2}
			\bigg( \frac{\delta_D(x) \vee \delta_D(y)\vee|x-y|}{ R}\bigg)^{\theta_2' \wedge ((1-\nu_0)/(1+\nu_0))},
	\end{align*}
where $F_{0}:\bH_{-1} \to [0,\infty)$ is defined as above,  and  $\theta_1',\theta_2'>0$ are the constants in  
\hyperlink{A3}{{\bf (A3)}}. Therefore, $\sB^a$ satisfies \hyperlink{B5-I}{{\bf (B5-I)}}.
\end{lemma}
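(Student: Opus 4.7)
The plan is to apply the triangle inequality twice. Write $r_1, r_2, r_3$ for the three arguments of $\Phi_1, \Phi_2, \ell$ appearing in \eqref{e:B-expression} and $s_1, s_2, s_3$ for the corresponding half-space quantities formed from $x_d, y_d, |x-y|$ (by direct computation with $z = (y-x)/x_d$, these are exactly the arguments entering $K_0(x,y)$, so that $F_0((y-x)/x_d) = f_0((y-x)/x_d)\, \Phi_1(s_1)\Phi_2(s_2)\ell(s_3)$ via \eqref{e:B-expression-F}). Writing $M_1 := \delta_D(x) \wedge \delta_D(y) \wedge |x-y|$ and $M_2 := \delta_D(x) \vee \delta_D(y) \vee |x-y|$, and noting that $M_2 \le cR$ for $x, y \in E^Q_{\nu_0}(R)$, I decompose
\begin{align*}
&\sB^a(x,y) - \sB^a(x,x)F_0\bigl((y-x)/x_d\bigr) \\
&\quad = \bigl[a(x,y) - a(x,x) f_0((y-x)/x_d)\bigr]\, \Phi_1(r_1)\Phi_2(r_2)\ell(r_3) \\
&\qquad + a(x,x) f_0\bigl((y-x)/x_d\bigr)\, \bigl[\Phi_1(r_1)\Phi_2(r_2)\ell(r_3) - \Phi_1(s_1)\Phi_2(s_2)\ell(s_3)\bigr].
\end{align*}
The factor $\Phi_1(r_1)\Phi_2(r_2)\ell(r_3)$ is uniformly bounded: by Proposition \ref{p:example-coefficient-map} and Lemma \ref{l:A3-c}(i), $\sB^a$ satisfies \hyperlink{B2-a}{{\bf (B2-a)}}, so the product is bounded above via the lower bound on $a$ in \hyperlink{A1}{{\bf (A1)}}. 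The boundedness of $f_0$ follows from \hyperlink{A3}{{\bf (A3)}} applied at a suitable reference point together with the two-sided bounds on $a$. The prefactors are thus absorbed into constants and the two brackets may be handled independently.

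The first bracket is bounded directly by \hyperlink{A3}{{\bf (A3)}}, which gives $\le C (M_2/M_1)^{\theta_1'} M_2^{\theta_2'}$. Since $R$ is fixed, $M_2^{\theta_2'} \le R^{\theta_2'}(M_2/R)^{\theta_2'}$, yielding a contribution of the required form. For the second bracket I use the telescoping identity
\begin{align*}
& \Phi_1(r_1)\Phi_2(r_2)\ell(r_3) - \Phi_1(s_1)\Phi_2(s_2)\ell(s_3) \\
&\quad = [\Phi_1(r_1) - \Phi_1(s_1)]\Phi_2(r_2)\ell(r_3) + \Phi_1(s_1)[\Phi_2(r_2) - \Phi_2(s_2)]\ell(r_3) \\
&\qquad + \Phi_1(s_1)\Phi_2(s_2)[\ell(r_3) - \ell(s_3)]
\end{align*}
and estimate each single-factor difference using the log-derivative bound \eqref{e:Phi-diff}. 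Setting $\eta := (M_2/R)^{(1-\nu_0)/(1+\nu_0)}$, Lemma \ref{l:general-coefficient}(i) gives $|r_i - s_i| \le s_2 \eta$ for $i = 1, 2$, and (ii) gives $|r_3 - s_3| \le C (M_2/M_1)^2 \eta$. For each factor difference I split into cases according to whether $r_i/s_i \in [1/2, 2]$: in the close case, the mean value theorem combined with \eqref{e:Phi-diff} and almost monotonicity yields $|\Phi_i(r_i) - \Phi_i(s_i)| \le C \Phi_i(s_i)|r_i - s_i|/s_i$; in the far case, both $r_i$ and $s_i$ are necessarily of order at most the perturbation, and the upper scaling of $\Phi_i$ (resp. $\ell$) provides a direct power-of-$\eta$ bound on each of $\Phi_i(r_i)$ and $\Phi_i(s_i)$. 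Multiplying by uniform bounds on the remaining factors and using $\Phi_1(s_1)\Phi_2(s_2)\ell(s_3) \le C$ produces an overall bound of the form $C(M_2/M_1)^2 \eta$, with the exponent $2$ arising from the $(M_2/M_1)^2$ factor in Lemma \ref{l:general-coefficient}(ii).

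The main obstacle is the case-by-case bookkeeping in the telescoping step, particularly ensuring that the far case (where the ratio $r_i/s_i$ is not close to $1$) is controlled with the correct exponent. The symmetric bound involving $\sB^a(y,y)$ in place of $\sB^a(x,x)$ follows by the same argument using the second inequality in \hyperlink{A3}{{\bf (A3)}} and the obvious symmetric decomposition. Combining the two brackets, with the smaller of $\theta_2'$ (from the first bracket) and $(1-\nu_0)/(1+\nu_0)$ (from the second) appearing in the exponent of $M_2/R$, and the larger of $\theta_1'$ and $2$ appearing in the exponent of $M_2/M_1$, gives the claimed bound. The assertion that $\sB^a$ satisfies \hyperlink{B5-I}{{\bf (B5-I)}} then follows at once with $\nu = \nu_0$, $\theta_1 = \theta_1' \vee 2$, $\theta_2 = \theta_2' \wedge (1-\nu_0)/(1+\nu_0)$, and $\F_0 = F_0$, since all constants are independent of $Q$.
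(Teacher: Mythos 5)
Your proof follows essentially the same strategy as the paper: telescope the difference into an $a$-factor part and single-factor $\Phi$/$\ell$ differences, bound the $a$-part via \hyperlink{A3}{{\bf (A3)}}, and bound the other factors via the log-derivative condition \eqref{e:Phi-diff} together with Lemma \ref{l:general-coefficient}. (The paper telescopes in a slightly different order — it replaces $r_i$ by $r_i'$ one factor at a time and only then swaps $a(x,y)$ for $a(x,x)f_0$, rather than separating the $a$-bracket first — but the ingredients and estimates are the same.)

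The one place where you should be more careful is the ``far case'' in your case split, in which the ratio $r_i/s_i$ lies outside $[1/2,2]$. This situation never actually occurs, and the paper's proof is organized around making that explicit: it first records, as \eqref{e:general-functions}, that $r_i/r_i' \asymp 1$ and $\Phi_i(r_i')/\Phi_i(r_i) \asymp 1$. This comparability is an immediate consequence of Lemma \ref{l:C11}(iii), which gives $x_d \asymp \delta_D(x)$ and $y_d \asymp \delta_D(y)$ for points in $E^Q_\nu(R)$; since the curved quantities $r_1,r_2,r_3$ and the local-coordinate quantities $r_1',r_2',r_3'$ are built from the same $|x-y|$ and from pairwise comparable boundary distances, each ratio $r_i/r_i'$ is uniformly bounded above and below. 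Once this is known, only your ``close case'' ever arises, and the mean value theorem with \eqref{e:Phi-diff} applies cleanly. As written, your far-case argument is not convincing: you claim both $r_i$ and $s_i$ are ``of order at most the perturbation,'' but the perturbation $s_2\eta$ need not be small (and $\Phi_i$ of it need not be small either), so it is not clear the far case would contribute an acceptable error even if it occurred. I would recommend replacing the case split by stating the uniform comparability up front — that is precisely the purpose of \eqref{e:general-functions} in the paper — after which the rest of your argument goes through as you describe and yields the stated exponents $\theta_1' \vee 2$ and $\theta_2' \wedge (1-\nu_0)/(1+\nu_0)$.
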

\begin{proof} Let $Q \in \partial D$. By  \eqref{e:E-nu-inclusion}, we have $ E^Q_{\nu_0}(R) \subset  E^Q_{\nu}(R)$.	In this proof, we use the coordinate system CS$_Q$ and denote $E^Q_{\nu_0}(R)$ by $E_{\nu_0}$. Set $\lambda:=(1-\nu_0)/(1+\nu_0)$.
	
Let $x,y \in E_{\nu_0}$. We assume $\delta_D(x) \le \delta_D(y)$ without loss of  generality. Set
	\begin{align*}
		r_1:=\frac{\delta_D(x)}{|x-y|}, \quad r_2:=\frac{\delta_D(y)}{|x-y|}, \quad  r_3:=\frac{\delta_D(x)}{\delta_D(y) \wedge |x-y|}
	\end{align*}
and
	\begin{align*}
		r_1':=\frac{x_d \wedge y_d}{|x-y|}, \;\quad r_2':=\frac{x_d \vee y_d}{|x-y|}, \;
			\quad r_3':=\frac{x_d \wedge y_d}{(x_d \vee y_d) \wedge |x-y|}.
	\end{align*}
By using  \eqref{e:B-expression} and \eqref{e:B-expression-F},  we have
	\begin{align}\label{e:example-coefficient-1}
		&\sB^a(x,y)-  \sB^a(x,x) F_{0}((y-x)/x_d)=\sB^a(x,y)-  a(x,x) K_0(x,y)\\
		& =a(x,y)\Phi_1(r_1)\Phi_2(r_2)\ell(r_3)-  a(x,x) f_0((y-x)/x_d)\Phi_1(r_1')\Phi_2(r_2')\ell(r_3')\nn
	\end{align}
and
	\begin{align}\label{e:example-coefficient-2}
		&\sB^a(x,y)-  \sB^a(y,y) F_{0}((y-x)/x_d)=\sB^a(x,y)-  a(x,x) K_0(x,y)\\
		& =a(x,y)\Phi_1(r_1)\Phi_2(r_2)\ell(r_3)-  a(y,y) f_0((y-x)/x_d)\Phi_1(r_1')\Phi_2(r_2')\ell(r_3').\nn
	\end{align}

Since  $x_d \asymp \delta_D(x)$ and $y_d \asymp \delta_D(y)$ by Lemma \ref{l:C11}(iii), by using the scaling properties of $\Phi_1,\Phi_2,\ell$, we get
	\begin{align}\label{e:general-functions}
		\frac{r_1'}{r_1}\asymp \frac{r_2'}{r_2} \asymp \frac{r_3'}{r_3} \asymp 
			\frac{\Phi_1(r_1')}{\Phi_1(r_1)} \asymp \frac{\Phi_2(r_2')}{\Phi_2(r_2)}\asymp \frac{\ell(r_3')}{\ell(r_3)} \asymp 1. 
	\end{align}
By \eqref{e:B4-c-bounded} and \eqref{e:general-functions},  there exists $c_1>0$ independent of $Q,x$ and $y$ such that
	\begin{align}\label{e:general-bounded}
		M:=	\max\left\{ \Phi_1(a_1)\Phi_2(a_2)\ell(a_3) : a_i \in \{r_i,r_i'\}, \,1\le i \le 3\right\}	 \le c_1.
	\end{align}
Moreover, using the mean value theorem, \eqref{e:Phi-diff} and \eqref{e:general-functions},  we get
	\begin{align*}
		&\frac{\left|	\Phi_1(r_1)-\Phi_1(r_1') \right|}{\Phi_1(r_1)} 
			\le  \frac{|r_1-r_1'|}{\Phi_1(r_1)}\bigg(\sup_{r_1 \wedge r_1' \le u \le r_1 \vee r_1'} |\Phi_1'(u)|   \bigg) \nn\\
		&\le \frac{c|r_1-r_1'|}{\Phi_1(r_1)}\bigg(\sup_{r_1 \wedge r_1' 
			\le u \le r_1 \vee r_1'} \frac{\Phi_1(u)}{u}\bigg)  \le  \frac{c|r_1-r_1'|}{r_1}.
	\end{align*}
In the same way, we also get $|\Phi_2(r_2)-\Phi_2(r_2')|/\Phi_2(r_2) \le c |r_2-r_2'|/r_2$ and 
$|\ell(r_3)-\ell(r_3')|/\ell(r_3) \le c |r_3-r_3'|/r_3$. By Lemma \ref{l:general-coefficient}, it follows that
	\begin{align}\label{e:general-Phi-1}
		\frac{\left|	\Phi_1(r_1)-\Phi_1(r_1') \right|}{\Phi_1(r_1)}
		&\le   \frac{cr_2}{r_1}  \bigg(\frac{\delta_D(y)}{R}\bigg)^{\lambda}
			= \frac{c \delta_D(y)}{\delta_D(x)}  \bigg(\frac{\delta_D(y)}{R}\bigg)^{\lambda},
	\end{align}
	\begin{align}\label{e:general-Phi-2}
		&\frac{\left|	\Phi_2(r_2)-\Phi_2(r_2') \right|}{\Phi_2(r_2)}	
			\le  \frac{c|r_2-r_2'|}{r_2}\le  c\bigg(\frac{\delta_D(y)}{R}\bigg)^{\lambda} 
	\end{align}
and
	\begin{align}\label{e:general-ell}
		&\frac{\left| \ell(r_3)-\ell(r_3')\right|}{\ell(r_3)}\le  \frac{c}{r_3} 
			\bigg(\frac{ \delta_D(y)}{\delta_D(y) \wedge |x-y|}\bigg)^2 \bigg(\frac{\delta_D(y)}{R}\bigg)^{\lambda} \\
		&=  \frac{c\delta_D(y)^2}{\delta_D(x) (\delta_D(y) \wedge |x-y|)}  \bigg(\frac{\delta_D(y)}{R}\bigg)^{\lambda} 
			\le  c\bigg(\frac{\delta_D(y)}{\delta_D(x) \wedge |x-y|}\bigg)^2  \bigg(\frac{\delta_D(y)}{R}\bigg)^{\lambda}.\nn
	\end{align}
By using  \eqref{e:example-coefficient-1},   the triangle inequality, \eqref{e:general-Phi-1}-\eqref{e:general-ell} 
and  \hyperlink{A3}{{\bf (A3)}},  we obtain
	\begin{align*}
		&	\left|\sB^a(x,y)-   \sB^a(x,x) F_{0}((y-x)/x_d)\right| \\
		&\le a(x,y)\Phi_2(r_2)\ell(r_3)\big|\Phi_1(r_1)- \Phi_1(r_1')\big| \\
		&\quad  + a(x,y) \Phi_1(r_1')\ell(r_3) \big| \Phi_2(r_2)- \Phi_2(r_2')\big| \\
		& \quad  + a(x,y) \Phi_1(r_1')\Phi_2(r_2')\big|\ell(r_3)-\ell(r_3')\big|\\
		&\quad  +\Phi_1(r_1')\Phi_2(r_2')\ell(r_3') \big|a(x,y)- a(x,x)f_0((y-x)/x_d)\big|  \\
		&\le  cMa(x,y) \bigg(\frac{c \delta_D(y)}{\delta_D(x)}  \bigg(\frac{\delta_D(y)}{R}\bigg)^{\lambda}  
			 + \bigg(\frac{\delta_D(y)}{R}\bigg)^{\lambda} +\bigg(\frac{\delta_D(y)}{\delta_D(x) \wedge |x-y|}\bigg)^2 
			  	\bigg(\frac{\delta_D(y)}{R}\bigg)^{\lambda}\bigg)\\
		&\quad +  C_{14}M \bigg(\frac{\delta_D(y)\vee|x-y|}{\delta_D(x) \wedge |x-y|} \bigg)^{\theta_1'}
			\bigg( \frac{\delta_D(y)\vee|x-y|}{ R}\bigg)^{\theta_2'}.
	\end{align*}
Thus, by  \hyperlink{A1}{{\bf (A1)}} and \eqref{e:general-bounded}, we arrive at
	\begin{align*}
		&	\left|\sB^a(x,y)-   \sB^a(x,x) F_{0}((y-x)/x_d)\right| \\
		&\le  c \bigg(\frac{\delta_D(y)\vee|x-y|}{\delta_D(x) \wedge |x-y|} \bigg)^{ \theta_1' \vee 2}
			\bigg( \frac{\delta_D(y)\vee|x-y|}{ R}\bigg)^{\theta_2' \wedge \lambda}.
	\end{align*}

Similarly, we obtain the desired bound for $\left|\sB^a(x,y)-   \sB^a(y,y) F_{0}((y-x)/x_d)\right|$.
The proof is complete. 
\end{proof}

\begin{remark}\label{l:regularversion}
For a Borel function  $\Phi:(0,1) \to (0,\infty)$ such that
	\begin{align*}		
		a^{-1}(r/s)^{-k} \le	\Phi(r)/ \Phi(s) \le  a(r/s)^{k} \quad \text{for all} \;\, 0<s \le r <1,	
	\end{align*}	
with some constants $a>1$ and $k>0$, we define $[\Phi](r)=r^{-k-1}\int_0^r s^k\Phi(s)ds$. 
Then one can easily show that 
	\begin{align*}
		a^{-1} \le	\frac{\Phi(r)}{[\Phi](r)} \le (2k+1)a \quad \text{ and } \quad 
		\frac{|[\Phi]'(r)|}{r^{-1}[\Phi](r)} \le (2k+1)a \quad \text{ for all} \;\; 0<r < 1.
	\end{align*}

 Let $\wt \Phi_1:(0,\infty) \to (0,1)$ be an increasing differentiable function such that $\wt \Phi_1(r) = [\Phi_1](r)$ for $r \in (0,1/2)$ and $\wt \Phi_1(r)=1$ for $r \ge 1$. 
Define $\wt \Phi_2$ and $\wt \ell$ analogously.  By considering  
$\wt \Phi_1, \wt \Phi_2, \wt \ell$ instead of $\Phi_1,\Phi_2,\ell$ respectively, we see that the differentiability assumption and
\eqref{e:Phi-diff} are not  big restrictions.
\end{remark}

\medskip	
It follows from  Lemma \ref{l:C11}(ii) that 
for all $y\in D$ with $\delta_D(y)< R$, there is unique $Q_y \in \partial D$ such that $\delta_D(y)=|y-Q_y|$. For $y\in D$ with $\delta_D(y)<R$, let $\overline y$ be the reflection of $y$ with respect to $\partial D$, that is, 
$\overline y= 2Q_y -y$.  
	
\begin{example}\label{ex:Holder-continuous-2}
Let $\theta \in ((\alpha-1)_+,1)$,  and	   $h:D \times D \to [0,\infty)$ and $\Theta:[0,\infty) \to [0,\infty)$ be $\theta$-H\"older continuous functions. That is, 
		\begin{align}\label{e:h-holder}
			|h(x,y)-h(x',y')| \le C(|x-x'|+|y-y'|)^{\theta} \quad \text{for all} \;\; x,y,x',y' \in D
		\end{align}
and
		\begin{align}\label{e:Theta-holder}
			|\Theta(r)-\Theta(s)| \le C|r-s|^{\theta} \quad \text{for all} \;\; r,s\ge 0
		\end{align}
for some $C>0$.		Suppose that $a: D \times D \to (0,\infty)$ is a Borel function satisfying the following properties: 
There exists $C>0$ such that for all $x,y \in D$,	
		\begin{equation}\label{e:example-general-coefficient-1}
			C^{-1}\le 	a(x,y)=a(y,x) \le C,
		\end{equation}
		\begin{equation}\label{e:example-general-coefficient-2}
				\left|  a(x,x)-a(x,y) \right| \le C|x-y|^{\theta} \quad \mbox{if}\;\, \delta_D(x) \wedge \delta_D(y) >R/2,
		\end{equation}
and
		\begin{equation}
			\bigg| 	a(x,y) - h(x,y) \Theta\bigg(  \frac{|x-y|}{|x-\overline y|}\bigg) \bigg| 
				\le C|x-y|^{\theta} \quad\mbox{if}\;\, \delta_D(x) \vee \delta_D(y) <R.\label{e:example-general-coefficient-3}
		\end{equation}
Then   $a$ satisfies  \hyperlink{A1}{{\bf (A1)}},  \hyperlink{A2}{{\bf (A2)}} and  \hyperlink{A3}{{\bf (A3)}}.
		
Indeed, \hyperlink{A1}{{\bf (A1)}} immediately follows from \eqref{e:example-general-coefficient-1}. For  \hyperlink{A2}{{\bf (A2)}}, we assume that  $\alpha \ge 1$ for the moment. Since  \hyperlink{A1}{{\bf (A1)}} holds, it suffices to show that 
\eqref{e:B-expression-B3} holds for $x,y \in D$ with $|x-y|<\delta_D(x)/4$
(cf.~Remark \ref{r:A4}). Let $x,y  \in D$ with $|x-y|<\delta_D(x)/4$. Suppose that $\delta_D(x)\ge 2R/3$. 
Then $\delta_D(y) \ge \delta_D(x) - |x-y| >3\delta_D(x)/4 \ge R/2$. Thus,  by \eqref{e:example-general-coefficient-2}, we get
		\begin{align*}
			\left|  a(x,x)-a(x,y) \right| \le c|x-y|^{\theta} \le  c R^{\theta} 
				\bigg(\frac{|x-y|}{\delta_D(x) \wedge \delta_D(y) \wedge R}\bigg)^{\theta}.
		\end{align*} 
Now assume that  $\delta_D(x)<2R/3$. Since $|x-y|<\delta_D(x)/4$, we have 
$\delta_D(y) \in [(3/4)\delta_D(x), (5/4)\delta_D(x)] \subset (0, R)$ and 
		\begin{align}\label{e:example-reflection} 
			\frac{|x-y|}{|x-\overline y|}\le 	\frac{|x-y|}{\delta_{D^c}(\overline y)} 
				= \frac{|x-y|}{\delta_{D}(y)} <  \frac{\delta_D(x)}{4\delta_{D}(y)}\le 1/3.
		\end{align}
Note that $a(x,x)=h(x,x) \Theta(0)$ by \eqref{e:example-general-coefficient-3} and hence $\Theta(0)>0$ by 
\eqref{e:example-general-coefficient-1}. Thus, using \eqref{e:h-holder}, \eqref{e:Theta-holder},  
\eqref{e:example-general-coefficient-1}, \eqref{e:example-general-coefficient-3} and \eqref{e:example-reflection}, we obtain 
		\begin{align}\label{e:coefficient-B3-2}
			|a(x,x)- a(x,y)| &\le  \frac{a(x,x)}{\Theta(0)}	
				\left| 	\Theta(0)-\Theta\bigg(  \frac{|x-y|}{|x-\overline y|}\bigg) \right| 
				+  \Theta\bigg(  \frac{|x-y|}{|x-\overline y|}\bigg)|h(x,x)-h(x,y)|  \\
				&\quad  +	\left| 	a(x,y) -h(x,y)\Theta\bigg(  \frac{|x-y|}{|x-\overline y|}\bigg) \right| \nn\\
				&\le \frac{c}{\Theta(0)} \bigg(  \frac{|x-y|}{|x-\overline y|}\bigg)^{\theta} 
					+  c\big( \sup_{s \in [0,1/3]} \Theta(s)\big)|x-y|^{\theta} + c|x-y|^{\theta} \nn\\
				& \le  c\big( \Theta(0)^{-1} + R^{\theta} \sup_{s \in [0,1/3]} \Theta(s) 
					+ R^{\theta}\big) \bigg(\frac{|x-y|}{ \delta_D(y) \wedge R}\bigg)^{\theta} \nn\\
				&\le  c \bigg(\frac{|x-y|}{\delta_D(x) \wedge \delta_D(y) \wedge R}\bigg)^{\theta}.\nn
		\end{align}
Thus, \hyperlink{A2}{{\bf (A2)}} holds.
			
Now we show that  \hyperlink{A3}{{\bf (A3)}} holds. Define 
	$$
		f_1(z)=\Theta(|z|/|(\wt z, - z_d-2)|) \quad \text{and} \quad f_0(z)=f_1(z)/\Theta(0), \quad \;\;z \in \bH_{-1}.
	$$
Since $|z|/|(\wt z, - z_d-2)| \le 1$ for all $z \in \bH_{-1}$, we have
		\begin{align}\label{e:example-reflection-2}
			\sup_{z \in \bH_{-1}}|f_1(z)|\le \sup_{s\in [0,1]}\Theta(s)=:c_1<\infty
		\end{align}
Moreover, since the map $(x,y) \mapsto \Theta (|x-y|/|x-(\wt y, -y_d)|)$ satisfies \eqref{e:ass-1-cond}, 
		\begin{align}\label{e:f1-Theta}
			f_1((y-x)/x_d) = \Theta(|x-y|/|x-(\wt y, -y_d)|) \quad \text{for all} \;\, x,y \in \bH
		\end{align} 		
by Lemma \ref{l:ass-1}(i). Fix $Q \in \partial D$ and let $x=(\wt x,x_d),\, y=(\wt y,y_d) \in E^Q_{1/2}(R)$ in CS$_Q$.  By  \eqref{e:example-general-coefficient-3}, we have  $a(x,x)=h(x,x)\Theta(0)$ and $a(y,y)=h(y,y)\Theta(0)$. Hence, using 
\eqref{e:h-holder} and  \eqref{e:example-reflection-2}, we obtain
		\begin{align}\label{e:B-expression-coefficient-check-1}
			\begin{split}
				&\big|a(x,y)- a(x,x)f_0((y-x)/x_d)\big| + \big|a(x,y)- a(y,y)f_0((y-x)/x_d)\big|\\
				& = \big|a(x,y)- h(x,x)f_1((y-x)/x_d)\big| + \big|a(x,y)- h(y,y)f_1((y-x)/x_d)\big|\\
				&\le 2\big|a(x,y)- h(x,y)f_1((y-x)/x_d)\big|  \\
				&\quad + f_1((y-x)/x_d) \big(  | h(x,x)-h(x,y)| + | h(y,y)-h(x,y)|\big) \\
				& \le  2\big|a(x,y)- h(x,y)f_1((y-x)/x_d)\big|  +c |x-y|^{\theta}.
			\end{split}
		\end{align}
Let $Q_y=(\wt w, w_d) \in \partial D$ in CS$_Q$ be such that $\delta_D(y)=|y-Q_y|$. Then 
$\wt w - \wt y = (y_d-w_d) \nabla \Psi(\wt w)$, where $\Psi=\Psi_Q$ is the function in \eqref{e:local-coordinate}. Thus,  we have
		\begin{align*}
			&|\wt w - \wt y| \le |y_d-w_d||\nabla \Psi(\wt w)-\nabla \Psi(\wt 0)| \
				le \Lambda |y_d-w_d||\wt w| \le \Lambda |y_d-w_d|  (|\wt y| + |\wt w - \wt y|).
		\end{align*}
Since $\Lambda|y_d -w_d| \le (2R)^{-1}|y_d-w_d| \le (2R)^{-1}\delta_D(y)< 1/2$ and $y \in E_{1/2}^Q(R)$, it follows that
		\begin{align}\label{e:B-expression-coefficient-check-2}
			|\wt w - \wt y| \le 2\Lambda |y_d-w_d|  |\wt y|  \le R^{-1} \delta_D(y)   (R y_d^2)^{1/3}
				\le c R^{-2/3} \delta_D(y)^{5/3}.
		\end{align}
We used  Lemma \ref{l:C11}(iii) in the last inequality above. Further, by using   \eqref{e:Psi-bound}, 
\eqref{e:B-expression-coefficient-check-2} and Lemma \ref{l:C11}(iii), we get that
		\begin{align}\label{e:B-expression-coefficient-check-3}
			|w_d| &= |\Psi(\wt w)| \le (4R)^{-1}  (|\wt y|^2 + |\wt w - \wt y|^2) \\
			&\le (4R)^{-1}(Ry_d^2)^{2/3}  + (4R)^{-1} R^{-2/3} \delta_D(y)^{5/3} \nn\\
			&\le  cR^{-1/3}\delta_D(y)^{4/3}.\nn
		\end{align}
Combining  \eqref{e:B-expression-coefficient-check-2} with \eqref{e:B-expression-coefficient-check-3}, since 
$\overline y = (2\wt w- \wt y, 2w_d-y_d)$ and $\delta_D(y)<R$, we deduce that
		\begin{align}\label{e:distance-reflection-0}
			|\overline y- (\wt y, -y_d)|  = 2| ( \wt w - \wt y, w_d)| \le cR^{-1/3}\delta_D(y)^{4/3}.
		\end{align}
By using \eqref{e:Theta-holder} in the first inequality, the facts that $\overline y \in D^c$ and 
$(\wt y, -y_d) \in \wt E_{1/2}^Q(R) \subset D^c$ (see Lemma \ref{l:C11}(ii)) in the third, and \eqref{e:distance-reflection-0} in the last inequality below, it follows that 
		\begin{align}\label{e:distance-reflection}
			\left|   \Theta\bigg(  \frac{|x-y|}{|x-\overline y|}\bigg) - \Theta\bigg(  \frac{|x-y|}{|x-(\wt y,-y_d)|}\bigg)\right| 
			&\le c \bigg|  \frac{|x-y|}{|x-\overline y|} -   \frac{|x-y|}{|x-(\wt y,-y_d)|} \bigg|^{\theta}\\
			&\le c|x-y|^{\theta}\bigg( \frac{||x-(\wt y,-y_d)| 
				- |x-\overline y| | }{|x-\overline y|\,|x-(\wt y,-y_d)|} \bigg)^{\theta}  \nn\\
			&\le c \frac{|x-y|^{\theta} |\overline y- (\wt y, -y_d)| ^{\theta}}{\delta_D(x)^{2\theta}}  \nn\\
			&\le c \bigg( \frac{|x-y|}{\delta_D(x)}\bigg)^{\theta}   \bigg( \frac{\delta_D(y)}{\delta_D(x)}\bigg)^{\theta}  
				\bigg( \frac{\delta_D(y)}{R}\bigg)^{\theta/3}.\nn
		\end{align}
Since  $a(x,x)=h(x,x) \Theta(0)$,  using  \eqref{e:example-general-coefficient-1} and  \eqref{e:h-holder}, we see that
		\begin{equation}\label{e:h-bound}
			h(x,y)\le h(x,x)+ |h(x,x)-h(x,y)|\le \Theta(0)^{-1}a(x,x) + c|x-y|^{\theta} \le  c.
		\end{equation}
		
Now, using  \eqref{e:f1-Theta} and the triangle inequality in the first inequality below, \eqref{e:example-general-coefficient-3}, \eqref{e:h-bound} and  \eqref{e:distance-reflection} in the second, $|x-y|<R<1$ in the last, we obtain
		\begin{align*}
			&\big|a(x,y)- h(x,y)f_1((y-x)/x_d)\big|\\
			&\le 	\left| 	a(x,y) -h(x,y)\Theta\bigg(  \frac{|x-y|}{|x-\overline y|}\bigg) \right|
				 + h(x,y)\left|   \Theta\bigg(  \frac{|x-y|}{|x-\overline y|}\bigg) - \Theta\bigg(  \frac{|x-y|}{|x-(\wt y,-y_d)|}
				 \bigg)\right|\\
			&\le c|x-y|^{\theta} +c \bigg( \frac{|x-y|}{\delta_D(x)}\bigg)^{\theta}   
				\bigg( \frac{\delta_D(y)}{\delta_D(x)}\bigg)^{\theta}  
				\delta_D(y)^{\theta/3}   \\
			&\le c \bigg(\frac{\delta_D(x)\vee\delta_D(y)\vee|x-y|}{\delta_D(x) \wedge \delta_D(y) \wedge |x-y|} \bigg)^{2\theta}
				\big( \delta_D(x)\vee\delta_D(y)\vee|x-y|\big)^{\theta/3}. 
		\end{align*}
Combining this with \eqref{e:B-expression-coefficient-check-1}, we conclude that  \hyperlink{A3}{{\bf (A3)}} holds. 
\end{example}

\smallskip

\begin{example}\label{ex:censored}
Assume that $\alpha \in (1,2)$. Consider a non-local operator
		\begin{align*}
			L^\sB_\alpha f(x)=\text{p.v.}\int_D (f(y)-f(x)) \frac{\sB(x,y)}{|x-y|^{d+\alpha}}dy,
		\end{align*}
where $\sB$ is a  Borel function on $D \times D$ such that 
	\begin{align}\label{e:example-censored-1}
		C^{-1}\le 	\sB(x,y)=\sB(y,x) \le C \quad \text{for all} \;\, x,y \in D
	\end{align}			
for some $C\ge 1$. When $\sB(x,y) \equiv c$ is a constant,  the operator $L^\sB_\alpha$ is called the 
\textit{regional fractional Laplacian} in $D$ and the corresponding process $Y^0$ is called the \textit{censored $\alpha$-stable process} on $D$.

Let  $\theta \in (\alpha-1,1)$.   Suppose that there exist   $C>0$ and  $\theta$-H\"older continuous functions 
$h_1:D \times D \to [0,\infty)$, $h_2:D \times D \to [0,\infty)$ and  $\Theta: [0,\infty) \to [0,\infty)$   such that 
$\sup_{x \in D}h_2(x,x)<\infty$ and  for all $x,y \in D$,
	\begin{align}\label{e:censored-condition}
		\!	\begin{cases}
			\left| \sB(x,x)-\sB(x,y) \right| \le C|x-y|^{\theta} &\mbox{if }\delta_D(x) \wedge \delta_D(y) >R/2,\\[4pt]
			 \displaystyle 	\bigg| 	\sB(x,y) - h_1(x,y) - h_2(x,y) \Theta\bigg(  \frac{|x-y|}{|x-\overline y|}\bigg) \bigg| 
			 \le C|x-y|^{\theta}  &\mbox{if }\delta_D(x) \vee \delta_D(y) <R.
		\end{cases}
	\end{align}
We will prove that \hyperlink{B1}{{\bf (B1)}}, \hyperlink{B3}{{\bf (B3)}},  \hyperlink{B4-c}{{\bf (B4-c)}} and \hyperlink{B5-II}{{\bf (B5-II)}} hold  under \eqref{e:example-censored-1} and \eqref{e:censored-condition}.  
Assume these for the moment. Then we deduce from Theorem \ref{t:BHPnew} that for any subcritical killing  
potential $\kappa(x)$
 satisfying \hyperlink{K3}{{\bf (K3)}} with $C_9=0$  (including 
no killing, i.e.,  $\kappa(x) \equiv 0$), the operator $L^\sB_\alpha-\kappa$ satisfies  the  boundary Harnack principle \eqref{e:TAMSe1.8new} with $p=\alpha-1$.  

\smallskip

Now, we show that $\sB$ satisfies \hyperlink{B1}{{\bf (B1)}},  \hyperlink{B3}{{\bf (B3)}}, \hyperlink{B4-c}{{\bf (B4-c)}} and \hyperlink{B5-II}{{\bf (B5-II)}}.  \hyperlink{B1}{{\bf (B1)}} and \hyperlink{B4-c}{{\bf (B4-c)}} (with $\Phi_1=\Phi_2=\ell\equiv1$) clearly hold by  \eqref{e:example-censored-1}. 

\smallskip

\noindent \hyperlink{B3}{{\bf (B3)}}: Let $x,y \in D$ with $|x-y|<\delta_D(x)/4$. If $\delta_D(x)\ge 2R/3$, then 
$\delta_D(y)>3\delta_D(x)/4>R/2$ so that $	\left| \sB(x,x)-\sB(x,y) \right| \le c|x-y|^{\theta}$ by \eqref{e:censored-condition}. If $\delta_D(x)<2R/3$, then by following the arguments for \eqref{e:coefficient-B3-2}, we get from  \eqref{e:censored-condition} that
	\begin{align*}
		&|\sB(x,x)- \sB(x,y)| = |h_1(x,x)+h_2(x,x)\Theta(0)-\sB(x,y)|\\
		&\le |h_1(x,x)-h_1(x,y)| + h_2(x,x)	\left| 	\Theta(0)-\Theta\bigg(  \frac{|x-y|}{|x-\overline y|}\bigg) \right|   \nn\\
		&\quad +  \Theta\bigg(  \frac{|x-y|}{|x-\overline y|}\bigg)|h_2(x,x)-h_2(x,y)|  
			+	\left| 	\sB(x,y)-h_1(x,y) -h_2(x,y)\Theta\bigg(  \frac{|x-y|}{|x-\overline y|}\bigg) \right| \nn\\
		&\le c|x-y|^{\theta} + c \bigg(  \frac{|x-y|}{|x-\overline y|}\bigg)^{\theta} 
			+  c\big( \sup_{s \in [0,1/3]} \Theta(s)\big)|x-y|^{\theta} + c|x-y|^{\theta} \nn\\
		&\le  c \bigg(\frac{|x-y|}{\delta_D(x) \wedge \delta_D(y) \wedge R}\bigg)^{\theta}.
	\end{align*}
In the second inequality above, we used 	$\sup_{x \in D}h_2(x,x)<\infty$. By Remark \ref{r:A4}, we deduce that \hyperlink{B3}{{\bf (B3)}} holds.

\smallskip

\noindent \hyperlink{B5-II}{{\bf (B5-II)}}:  Define $\mu^1(x)=h_1(x,x)$ and $\mu^2(x)=h_2(x,x)$ for $x \in D$, and
	$$
		\F^1_0(z)=1 \quad \text{and} \quad \F^2_0(z)=\Theta(|z|/|(\wt z, - z_d-2)|) \quad \text{for}  \;\,z \in \bH_{-1}.
	$$
Fix $Q \in \partial D$ and let $x=(\wt x,x_d),\, y=(\wt y,y_d) \in E^Q_{1/2}(R)$ in CS$_Q$.  Using \eqref{e:f1-Theta} in the equality below and  \eqref{e:example-reflection-2} in the first inequality, we obtain
	\begin{align*}
		&	\bigg|\sB(x,y)- \sum_{i=1}^{2}\mu^i(x)\F_0^i((y-x)/x_d) \bigg| + \bigg|\sB(x,y)
			- \sum_{i=1}^{2}\mu^i(y)\F_0^i((y-x)/x_d) \bigg|\nn\\
		& = 	\bigg|\sB(x,y)- h_1(x,x) - h_2(x,x) \Theta \bigg( \frac{|x-y|}{|x-(\wt y, -y_d)|}\bigg)  \bigg| \nn\\
		&\qquad + \bigg|\sB(x,y)- h_1(y,y) - h_2(y,y) \Theta \bigg( \frac{|x-y|}{|x-(\wt y, -y_d)|}\bigg) \bigg|\nn\\
		&\le 	2\bigg|\sB(x,y)- h_1(x,y) - h_2(x,y) \Theta \bigg( \frac{|x-y|}{|x-(\wt y, -y_d)|}\bigg)  \bigg|  \nn\\
		&\qquad + |h_1(x,x)-h_1(x,y)| +  |h_1(y,y)-h_1(x,y)|\nn\\
		&\qquad + \sup_{s \in [0,1]}\Theta(s)\big(|h_2(x,x)-h_2(x,y)| +  |h_2(y,y)-h_2(x,y)|\big)\,  \nn\\
		&\le 2\bigg|\sB(x,y)- h_1(x,y) - h_2(x,y) \Theta \bigg( \frac{|x-y|}{|x-(\wt y, -y_d)|}\bigg)  \bigg| + c|x-y|^\theta.
	\end{align*}
Since $h_2(x,y) \le \sup_{v \in D} h_2(v,v) + cR^\theta \le c$ and $|x-y|<R<1$, using \eqref{e:censored-condition} and  \eqref{e:distance-reflection}, we get
	\begin{align*}
		&\bigg|\sB(x,y)- h_1(x,y) - h_2(x,y) \Theta \bigg( \frac{|x-y|}{|x-(\wt y, -y_d)|}\bigg)  \bigg|\\
		&\le \bigg|\sB(x,y)- h_1(x,y) - h_2(x,y) \Theta\bigg(  \frac{|x-y|}{|x-\overline y|}\bigg)  \bigg| \\
		&\qquad + h_2(x,y) \bigg|\Theta\bigg(  \frac{|x-y|}{|x-\overline y|}\bigg)
			-\Theta \bigg( \frac{|x-y|}{|x-(\wt y, -y_d)|}\bigg)\bigg|\\
		&\le c|x-y|^\theta  + c \bigg( \frac{|x-y|}{\delta_D(x)}\bigg)^{\theta}   \bigg( \frac{\delta_D(y)}{\delta_D(x)}\bigg)^{\theta}  		
			\delta_D(y)^{\theta/3}\\
		&\le c \bigg(\frac{\delta_D(x)\vee\delta_D(y)\vee|x-y|}{\delta_D(x) \wedge \delta_D(y) \wedge |x-y|} \bigg)^{2\theta}
			\big(\delta_D(x)\vee\delta_D(y)\vee|x-y|\big)^{\theta/3}. 
	\end{align*}
Putting the above two displays together, we conclude that \hyperlink{B5-II}{{\bf (B5-II)}} holds.  
\end{example}	

%%%%%%%%%%%%%%%%%%%%%%%%%%%%%%%%%%%%%%%%%%%%%%%%%%%%%%%%%%%%%%%%%%%%%%%%%%%%%%%%%%%%%%%%%%%%%%%%%%%%%
\textbf{Acknowledgements:} The major part of this work was done while 
Zoran Vondra\v{c}ek was Guest Professor of the College of Natural Sciences at Seoul National University within the Brain Pool Program of the National Research Foundation of Korea (NRF). The hospitality of Department of Mathematical Sciences and Research Institute of Mathematics, Seoul National University, is gratefully acknowledged.

The authors thank Ren\'e Schilling for the helpful discussion about the presentation.

%%%%%%%%%%%%%%%%%%%%%%%%%%%%%%%%%%%%%%%%%%%%%%%%%%%
\vspace{.1in}
%\begin{singlespace}

\small

%\end{singlespace}

\end{document}